\theoremstyle{plain}
\newtheorem{mainthm}{Theorem}
\newtheorem{thm}{Theorem}[subsection]
\newtheorem{cor}[thm]{Corollary}
\newtheorem{lem}[thm]{Lemma}
\newtheorem{prop}[thm]{Proposition}
\theoremstyle{definition}
\newtheorem{dfn}[thm]{Definition}
\newtheorem*{claim-nonum}{Claim}
\theoremstyle{remark}
\newtheorem{rem}[thm]{Remark}
\newtheorem{rems}[thm]{Remarks}
\newtheorem{ex}[thm]{Example}
\theoremstyle{plain}
\def\C{\mathcal{C}}
\def\A{\mathcal{A}}
\def\K{\mathcal K}
\def\D{\mathcal{D}}
\def\QH{{\rm QH}}
\def\k{{\bf k}}
\def\F{\mathcal F}
\newcommand{\cobto}{\leadsto}
\newcommand{\id}{\textnormal{id}}
\newcommand{\R}{\mathbb{R}}
\newcommand{\Z}{\mathbb{Z}}
\newcommand{\N}{\mathbb{N}}
\newcommand{\la}{\lambda}
\newcommand{\La}{\Lambda}
\newcommand{\Crit}{\textnormal{Crit\/}}
\newcommand{\fuk}{\mathcal{F}uk}
\renewcommand{\k}{\mathbf{k}}
\def\hfch{H^{0}(\F\mathbf{Ch})}
\newcommand{\tcn}{{\mathcal{C}one}}
\newcommand{\colim@}[2]{%
  \vtop{\m@th\ialign{##\cr
    \hfil$#1\operator@font colim$\hfil\cr
    \noalign{\nointerlineskip\kern1.5\ex@}#2\cr
    \noalign{\nointerlineskip\kern-\ex@}\cr}}%
}
\newcommand{\colim}{%
  \mathop{\mathpalette\colim@{\rightarrowfill@\scriptscriptstyle}}\nmlimits@
}
\renewcommand{\varprojlim}{%
  \mathop{\mathpalette\varlim@{\leftarrowfill@\scriptscriptstyle}}\nmlimits@
}
\renewcommand{\varinjlim}{%
  \mathop{\mathpalette\varlim@{\rightarrowfill@\scriptscriptstyle}}\nmlimits@
}
\newcommand{\pbred}[1]{{\textbf{{\color{red} #1}}}}
\newcommand{\ch}{\mathbf{Ch}}
\newcommand{\Ob}{\text{Obj}}
\newcommand{\md}{\text{mod}}
\newcommand{\dint}{{d_{\text{int}}}}
\newcommand{\dret}{{d_{\text{r-int}}}}
\newcommand{\sdint}{{\bar{d}_{\text{int}}}}
\newcommand{\sdret}{{\bar{d}_{\text{r-int}}}}
\newcommand{\lag}{{\mathcal{L}\text{ag}}}
\newcommand{\lagex}{{\lag}^{\text{(ex)}}}
\newcommand{\lagwex}{{\lag}^{\text{(wex)}}}
\newcommand{\lagmon}{\lag^{\text{(mon,} \mathbf{d} \text{)}}}
\newcommand{\ziso}{{(\text{\tiny{iso}},0)}}
\newcommand{\msc}{{\mathscr{C}}}
\newcommand{\msd}{{\mathscr{D}}}
\newcommand{\msf}{{\mathscr{F}}}
\newcommand{\msh}{{\mathscr{H}}}
\newcommand{\msj}{{\mathscr{J}}}
\newcommand{\msg}{{\mathscr{G}}}
\newcommand{\pbaddress}{biran@math.ethz.ch}
\newcommand{\ocaddress}{cornea@dms.umontreal.ca}
\newcommand{\gaaddress}{giovanni.ambrosioni@math.ethz.ch}
\begin{document}
 
\title{Approximability for Lagrangian submanifolds}

\date{January 17, 2026}

\author{Giovanni Ambrosioni, Paul Biran and Octav Cornea}

\address{Giovanni Ambrosioni, Department of Mathematics, ETH-Z\"{u}rich,
  R\"{a}mistrasse 101, 8092 Z\"{u}rich, Switzerland}
\email{\gaaddress}

\address{Paul Biran, Department of Mathematics, ETH-Z\"{u}rich,
  R\"{a}mistrasse 101, 8092 Z\"{u}rich, Switzerland}
\email{\pbaddress}

\address{Octav Cornea, Department of Mathematics and Statistics,
  University of Montreal, C.P. 6128 Succ.  Centre-Ville Montreal, QC
  H3C 3J7, Canada}
 \email{\ocaddress}


\bibliographystyle{plain}

%

%

\begin{abstract}
  This paper introduces a notion of categorical approximability for metric spaces that can be viewed as a categorification of approximability for metric groups, as defined by Turing in 1938. Approximability as introduced here is a property of metric spaces that is more general than  precompactness.  It is  shown that several classes of Lagrangian submanifolds - closed Lagrangian submanifolds in a cotangent disk bundle; equators on the sphere; weakly exact Lagrangians on the torus - endowed with the spectral metric are approximable in this sense.  Among other geometric applications, we show that there are such examples of spaces of Lagrangians that are approximable but are not precompact.  \end{abstract}

\maketitle

%
%

\tableofcontents 

\section{Introduction} \label{sec-intro}

\subsection{Overview of results} \label{subsec:overview}
Let $(M^{2n},\omega)$ be a symplectic manifold. In this paper we study
metric spaces $(\mathcal{L}ag(M), d_{\gamma})$ consisting of classes $\mathcal{L}ag(M)$ of Lagrangian submanifolds in $M$ endowed with the so-called spectral Lagrangian metric $d_{\gamma}$ (see \S\ref{subsec:nearby-TPC} and \S\ref{subsec:proof_ThmAii} for definitions of this metric in the setting of the paper). The metric space $(\mathcal{L}ag(M), d_{\gamma})$ is separable \cite{Chasse:metrics2} 
but neither complete, nor compact or locally compact. 

Nonetheless we will see that in several meaningful cases this space satisfies a property more general than compactness called  {\em  categorical (metric)  approximability} that we introduce in this paper and that has some interesting symplectic consequences. 

To explain this property assume that $X$ is a subset of a metric space $(Y,d)$, and, more generally,  $d$ may be only a pseudo-metric, and possibly take infinite values.  For $\epsilon>0$, an $\epsilon$-approximation  (also called an $\epsilon$-net) of $X$ in $Y$ is a subspace $X_{\epsilon}\subset Y$ such that for all $x\in X$ there is $y\in X_{\epsilon}$ with $d(x,y)<\epsilon$. Recall that $(X,d)$ is called {\em totally bounded} if, for all $\epsilon > 0$, there is a {\em finite} $\epsilon$-net.  This is equivalent to the space $(X,d)$ being {\em precompact} in the sense that its completion is compact. Obviously, a compact metric space is totally bounded, but not vice-versa.

Categorical approximability in the sense used here is weaker than the totally bounded property in  that  the $\epsilon$-nets no longer need to be finite but we only require that they be {\em generated} by a finite set in a sense made more explicit in the next definition.  

\begin{dfn}\label{def:approx} Assume that $(X,d)\subset (Y,d)$ is an inclusion of metric spaces such that the points in $Y$ are the objects of a triangulated category $\mathcal{C}$. For $\epsilon > 0$, we say that 
$X$ is   {\em categorically metric $\epsilon$-approximable} in $\mathcal{C}$ if there exists a subspace $X_{\epsilon}\subset Y$ such that the elements of $X_{\epsilon}$ are the objects of a {\em finitely generated} (in the sense of triangular generation) subcategory of $\mathcal{C}$ and, additionally, $X_{\epsilon}$ is an $\epsilon$-net for $X$.  If this property is satisfied for all $\epsilon >0$ we say that $X$ is {\em categorically metric approximable} in $\mathcal{C}$. \end{dfn}

To put this definition in context see Remark \ref{rem:turing} below. Of course, producing categories $\C$ that are triangulated and also carry a natural (pseudo) metric is not obvious. One source of such structures is a type of categories called {\em triangulated persistence categories} (TPC) that
were introduced in \cite{BCZ:tpc} (the main relevant definitions are recalled in \S\ref{sec:alg1}).  Indeed, we will use in the paper a more precise version of approximability that is specific to TPCs. This version, in Definition \ref{def:TPC-approx} (see also Definition \ref{def:simple_approx}), is called {\em TPC approximability} and is applied to TPC refinements of the derived Fukaya category.  In Definition \ref{def:approx} the triangular structure and the metric are unrelated.  However, TPC approximability  ties these two structures in a natural fashion (we could have tied the two structures already in
Definition \ref{def:approx} through some simple axioms but we preferred to keep the presentation simpler at this point and we have skipped this aspect). 
An important notion associated with Definition \ref{def:approx}  (see again Definition \ref{def:simple_approx} for the TPC case) is that of an $\epsilon$-approximating family. This is a finite family of objects  of $\C$, often denoted by $\F_{\epsilon}$, with the property that $X_{\epsilon}$ from Definition \ref{def:approx} is included in the triangulated completion of the family $\F_{\epsilon}$.

\begin{rem}\label{rem:usual_triang}
In triangulated category theory there exists a different notion of approximability,
unrelated to any underlying metric structures, like in \cite{Neeman:approx_tri}. This is why we use the word {\em metric} in our definition. We will drop this qualifier frequently in this paper as the only meaning used here is that of Definition \ref{def:approx} and its TPC variants.
\end{rem}

\

We will also use a weaker notion of approximability, called {\em retract categorical (metric) $\epsilon$-approximability} which is obtained by replacing in Definition \ref{def:approx} the condition that $X_{\epsilon}$ is an $\epsilon$-net for $X$ by the weaker requirement that for each $x\in X$
there exists an object $k_{x}$ in $\C$ such that $x\oplus k_{x}$ satisfies
$d(x\oplus k_{x}, X_{\epsilon}) < 2\epsilon$. To emphasize, the stronger form, in Definition \ref{def:approx}, is equivalent to being able to pick $k_{x}=0, \ \forall x$, and replacing  $2\epsilon$ by $\epsilon$ (the factor $2$ here is irrelevant as we are interested in $\epsilon$-approximability for all $\epsilon >0$; in practice, it is a consequence of some of the conventions in the paper, see Remark \ref{rem:relation-TPCapp}). 

Retract approximability has the advantage to be easier to estimate algebraically, as will be discussed below, while at the same time being sufficiently robust to bound from above the complexity (in a sense that will be made precise) of the objects of $X$ by the complexity of the elements in $X_{\epsilon}$. A corresponding notion of $TPC$ {\em retract approximability} is  defined  in \S\ref{subsubsec:approx_TPC} and it is this form that will be used in our applications.

The main result of this paper is that several natural classes of Lagrangians
are categorically approximable.

\begin{mainthm} \label{thmmain1} The following three classes of Lagrangians $\mathcal{L}ag(M)$ are approximable, as below.
\begin{itemize} 
\item[i.] Closed, exact Lagrangians in unit cotangent disk bundles $M=D^{\ast} N$ for any closed manifold $N$
(and any metric on $N$) are  TPC approximable.
\item[ii.] Equators on the $2$-sphere, $M=S^{2}$, are  TPC retract approximable.
\item[iii.] Non-contractible Lagrangians on the $2$-torus, $M=\mathbb{T}^{2}$, are  TPC retract approximable.
\end{itemize} 
In all three cases  TPC approximability takes place in appropriate TPC refinements of the 
certain derived Fukaya categories and the relevant spaces of Lagrangian submanifolds $\mathcal{L}ag(M)$ are endowed with the metric structure given by the spectral metric $d_{\gamma}$ in the cases i and ii and with a metric that restricts to the spectral metric on each Hamiltonian isotopy class at point iii. 
\end{mainthm}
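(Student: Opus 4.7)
The overall strategy I would pursue is uniform across the three cases: reduce TPC (retract) $\epsilon$-approximability to the construction, for each $\epsilon > 0$, of a finite family $\mathcal{F}_\epsilon$ of ``model'' Lagrangians such that every $L$ in the target class admits an iterated cone decomposition from the objects of $\mathcal{F}_\epsilon$ whose attaching morphisms live at filtration level at most $\epsilon$. In the TPC framework introduced in \S\ref{sec:alg1}, such a decomposition automatically forces the resulting iterated cone to lie within $d_\gamma$-distance $\epsilon$ of $L$, so the triangular completion of $\mathcal{F}_\epsilon$ yields the required $\epsilon$-net (up to a possible direct summand in the retract case). The entire proof then consists in upgrading three generation-type theorems for the relevant derived Fukaya categories to \emph{quantitative} generation statements.

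For part (i), I would start from the Abouzaid-type theorem that the compact Fukaya category of $D^{\ast}N$ is split-generated by cotangent fibers. The plan is: fix an $\epsilon$-net $\{x_1,\dots,x_k\} \subset N$ with respect to the chosen metric, set $\mathcal{F}_\epsilon = \{D^{\ast}_{x_i}N\}_{i=1}^{k}$, and show that for any closed exact Lagrangian $L \subset D^{\ast}N$ the intersections $L \cap D^{\ast}_{x_i}N$ assemble, via a Morse/Cerf-type scan over the net, into an iterated cone presentation of $L$. The filtration levels of the cone attaching maps are controlled by the action values of these intersections, which in turn are bounded by the mesh $\epsilon$ of the net together with the (bounded) primitive of the Liouville form restricted to $L$. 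Because the exact setting produces strict quasi-isomorphisms rather than only retract equivalences, this yields genuine (non-retract) TPC approximability.

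Parts (ii) and (iii) follow an analogous template but exploit split-generation of the Fukaya categories of $S^{2}$ and $\mathbb{T}^{2}$ by, respectively, a single equator and any pair of transverse non-contractible circles. For (ii) I would take $\mathcal{F}_\epsilon$ to be a finite collection of equators distributed so that every equator lies within spectral distance $\epsilon$ of some element of $\mathcal{F}_\epsilon$; the bridging filtered morphisms come from Hamiltonian isotopies of Hofer length $O(\epsilon)$ between nearby equators. For (iii) I would use finitely many linear circles of a finite set of rational slopes together with a finite grid of translates, dense enough to $\epsilon$-approximate any weakly exact non-contractible Lagrangian; here the mirror-symmetric picture of $\mathbb{T}^{2}$ guides the choice of the generating family. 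In both cases, because the generation is only up to summands, the construction produces a direct summand and hence only retract TPC $\epsilon$-approximability, which is exactly what the statement claims.

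The main obstacle in each case is the passage from qualitative to quantitative generation. The abstract generation theorems for Fukaya categories guarantee the existence of iterated cone presentations but give no a priori control on the filtration/action level of the attaching morphisms, while TPC approximability demands precisely such control. The geometric ingredient needed is to realize every cone in the decomposition through an explicit Lagrangian cobordism or surgery whose area/Hofer data is bounded by the mesh $\epsilon$ of the chosen net, and then to verify that the cobordism machinery transports these area bounds into filtration bounds on the corresponding morphisms in the TPC refinement. Converting geometric $\epsilon$-mesh data into TPC filtration bounds via the Lagrangian cobordism framework is, in my view, the technical heart of the theorem in all three cases.
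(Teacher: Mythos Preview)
Your overall framework --- reduce TPC (retract) approximability to a quantitative generation statement, and interpret the latter as producing iterated cones with controlled filtration --- is exactly right and matches the paper's organizing principle. However, in each of the three cases the specific mechanism you propose for obtaining the quantitative bound does not work, and the paper's actual mechanisms are substantially different.

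For part (i), the fatal issue is uniformity in $L$. You write that the filtration levels of the attaching maps are bounded by the mesh $\epsilon$ together with the ``(bounded) primitive of the Liouville form restricted to $L$''. But the variation of this primitive is \emph{not} uniformly bounded as $L$ ranges over all closed exact Lagrangians in $D^{\ast}N$ --- this would amount to Viterbo's conjecture, which is open in general, and indeed Corollary~\ref{cor:no-t-b} shows the space is not even totally bounded. So your bound, as stated, depends on $L$, whereas approximability requires the \emph{same} finite family $\mathcal{F}_\epsilon$ to $\epsilon$-approximate every $L$. The paper circumvents this entirely: it embeds $D^{\ast}N$ in a Lefschetz fibration via Giroux's construction (using a carefully designed Morse function on $N$ with small total variation but large gradient almost everywhere), and shows that an iterated Dehn twist along matching spheres disjoins \emph{any} $L$ from $D^{\ast}N$ with Hofer energy bounded by a constant depending only on $\epsilon$, not on $L$. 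The uniform energy bound comes from the area of a planar region in the base $\mathbb{C}$ of the fibration. Seidel's Dehn-twist exact triangles then convert this disjunction into the required iterated cone presentation with controlled weight.

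For parts (ii) and (iii), your proposal is to take $\mathcal{F}_\epsilon$ to be a finite $\epsilon$-net of equators (resp.\ circles) in the spectral metric. But this is precisely total boundedness of $(\mathcal{L}ag(M), d_\gamma)$, which Corollary~\ref{cor:no-t-b} says is false for $S^2$. The whole point of categorical approximability is that the $\epsilon$-net lives not in $\mathcal{L}ag(M)$ itself but in the (infinitely larger) triangulated completion $\langle \mathcal{F}_\epsilon \rangle^{\Delta}$. The paper's mechanism here is a persistence refinement of Abouzaid's split-generation criterion: it shows that if the open-closed map hits the unit of quantum cohomology at filtration level $\leq \epsilon$, then every object is $\epsilon/2$-retract-approximable by iterated cones from the generating family. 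The proof of (ii) and (iii) is then an explicit computation of the persistence open-closed map on a Hochschild chain built from $N$ great circles through the poles (resp.\ a grid of longitudes and one latitude), showing that the energy cost to reach the unit is $O(1/N)$.
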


The symplectic form on the cotangent disk bundles $D^{\ast}N$ is the derivative of the 
Liouville form $\la$ and the exactness of Lagrangians is understood with respect to $\la$.
The relevant Fukaya categories $\C$ are made precise in the body of the paper. The $\epsilon$-approximating families in the three cases consist of: fibers of the cotangent-disk bundle in the first case; great circles on the sphere connecting the north and south poles in the second; one latitude and a (finite) collection of longitudes on the torus in the third. 

\

To place the next result in context notice that if $(\mathcal{L}ag(M),d_{\gamma})$ would be totally bounded in the case $M=D^{\ast}N$, then the space of exact, closed Lagrangian submanifolds in the unit cotangent bundle would have bounded diameter in the spectral metric (because totally bounded implies bounded). This statement was conjectured by Viterbo and it was proven in some cases by Shelukhin  \cite{Shl:vit-2},\cite{Shl:vit-1}  but is not known in full generality. Our next result claims that, in general, these spaces of Lagrangian submanifolds are not totally bounded. 

\begin{cor}\label{cor:no-t-b} The spaces $(\mathcal{L}ag(M),d_{\gamma})$ are not totally bounded when
$M=D^{\ast}N$ and $N$ is endowed with a hyperbolic metric and $\mathcal{L}ag(M)$ consists of closed exact Lagrangians, as well as when $M=S^{2}$ and $\mathcal{L}ag(S^{2})$ consists of equators.
\end{cor}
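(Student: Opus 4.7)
A metric space fails to be totally bounded precisely when there exist $\epsilon_0 > 0$ and an infinite subset whose points are pairwise at distance at least $\epsilon_0$. The plan in each case is therefore to exhibit such a family of Lagrangians. This goal is consistent with Viterbo's conjecture on boundedness of the $d_\gamma$-diameter, since bounded metric spaces need not be totally bounded (as exemplified by the unit ball of an infinite-dimensional Banach space), so a positive answer to Viterbo would not obstruct the corollary.

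For $M = D^{\ast}N$ with $N$ hyperbolic, I would exploit the infinitude of primitive free homotopy classes of loops on $N$. Given a sequence $\gamma_k$ of pairwise non-conjugate primitive closed geodesics on $N$, one applies a Hamiltonian perturbation supported near the cotangent lift of $\gamma_k$ (for example a Dehn-twist-type operator) to the zero section $0_N$, producing a family of closed exact Lagrangians $L_k$. A lower bound on $d_\gamma(L_k, L_\ell)$ for $k \neq \ell$ would then be extracted from a filtered Floer-theoretic invariant: Floer chords between $L_k$ and $L_\ell$ project to loops in distinct primitive free homotopy classes, and the systole of the hyperbolic metric should supply a uniform action-level separation that survives in the persistence barcode provided by the TPC refinement in part (i) of Theorem \ref{thmmain1}. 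The main obstacle will be rigorously translating this systolic lower bound into a pairwise $d_\gamma$-bound, in the spirit of \cite{Shl:vit-1,Shl:vit-2}.

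For $M = S^{2}$ and equators, the argument has to be more delicate since every equator is Hamiltonian isotopic to the standard one. My plan is to construct equators $L_k$ by iteratively composing a fixed local twist (supported in a small disk on $S^{2}$) with rotations of the sphere, so that each $L_k$ has a prescribed and strictly increasing geometric intersection pattern with the great circles appearing in the approximating family of part (ii) of Theorem \ref{thmmain1}. The persistence barcodes of the associated Floer complexes against a fixed reference equator should then grow in a quantitatively controlled way across the sequence, and this growth ought to translate into a pairwise lower bound on $d_\gamma(L_k, L_\ell)$, completing the proof. As in the hyperbolic case, the decisive and most delicate step is producing a quantitative lower bound on the spectral metric from such barcode data.
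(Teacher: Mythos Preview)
Your strategy and the paper's diverge at a crucial point. You aim to exhibit an infinite $\epsilon_0$-separated family by bounding $d_\gamma(L_k,L_\ell)$ pairwise from below. The paper never does this. Instead it uses the approximability machinery itself: by Remark~\ref{rem:tb_complex}, if $(X,d_\gamma)$ were totally bounded then the $\epsilon$-weighted cone-length $N^{r}_{\Phi}(a;\F_\epsilon,2\epsilon)$ would be \emph{uniformly bounded} over $a\in X$. So it suffices to produce a sequence $L_k$ with $N^r(L_k;\F_\epsilon,2\epsilon)\to\infty$. This is achieved by taking $L_k=\Psi^k L$ for a single Hamiltonian diffeomorphism $\Psi$ (a geodesic-flow type map on $D^*N$, a Dehn twist on $S^2$) and a fixed $L$, then invoking Proposition~\ref{prop:lower-bd}: cone-length is bounded below by the number of long bars in $HF(F,\Psi^kL)$ for $F$ in the approximating family. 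On $D^*N$ hyperbolic, that bar-count grows exponentially in $k$ by a direct count of geodesic arcs (Lemmas~\ref{lem:fibers},~\ref{lem:parrila}); on $S^2$ it grows linearly (Proposition~\ref{prop:complex_S2}). The spectral distances $d_\gamma(L_k,L_\ell)$ themselves never enter.

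Your proposed mechanism has a real gap. For $D^*N$, if $L_k$ is obtained from $0_N$ by a Hamiltonian supported near the lift of $\gamma_k$, then $d_\gamma(L_k,L_\ell)$ is bounded above by the Hofer norm of the composed isotopy, and there is no evident reason this should be bounded \emph{below} by the systole: the infinite bars of $HF(L_k,L_\ell)$ correspond to the contractible component (they compute $H_*(N)$), not to the classes $[\gamma_k],[\gamma_\ell]$, so the action separation you describe governs finite bars, not the spectral spread that defines $d_\gamma$. Extracting a pairwise $d_\gamma$ lower bound from finite-bar data is exactly what the cone-length/approximability formalism is designed to replace. For $S^2$ the same objection applies: growing intersection numbers and growing barcodes say nothing direct about $d_\gamma(L_k,L_\ell)$, since all equators are Hamiltonian isotopic and one needs control on the spread of the two infinite bars only. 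The paper's detour through cone-length (Proposition~\ref{prop:lower-bd} and Remark~\ref{rem:tb_complex}) is not incidental---it is the mechanism that converts barcode growth into failure of total boundedness without ever estimating $d_\gamma$ pairwise.
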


The metric completion  of the space $(\mathcal{L}ag(M),d_{\gamma})$ has been studied under 
the name of Humili\`ere completion (see, for instance, \cite{AGHI:micro}). Thus, Corollary \ref{cor:no-t-b} can be reformulated to say that, in the cases listed, this completion is not compact.

\

The proof of Corollary \ref{cor:no-t-b} is related to a natural application of categorical approximability that is of interest in itself and is worth pointing out here. Namely, one can use categorical approximability of $(X,d)\subset (Y,d)$ to define and study notions of complexity for both $X$ and for its points. There are several variants but the general recipe is the same. One starts with some notion of complexity for triangulated categories, particularly for finitely generated ones,  as well as for the objects of such categories. We will make some of these choices explicit later in the paper but they include, in the case of categories, the Rouquier dimension, generation time, the minimal number of triangular generators, and, for objects, the cone-length (originating in homotopy theory \cite{Gan:cat} \cite{Cor:cone-LS}), complexity in the sense of Dimitrov-Haiden-Katzarkov-Kontsevich  \cite{DHKK:dyn_cat}.  
 Assuming that $c(-)$ is such a choice of complexity for the objects of a 
 triangulated category one can then define the corresponding $\epsilon$-complexity of an element $x\in X$ by: 
 $$\bar{c}(x; \epsilon)=\inf \{ c(y) \ | \  y\in X_{\epsilon}\ ,\  d(x,y)\leq \epsilon\} .$$ The complexity of $x$, without reference to $\epsilon$, can then be analyzed through the asymptotic behaviour of $\bar{c}(x; \epsilon)$  when $\epsilon \to 0$.  A similar construction can be applied to $X$ itself starting  with some notion of complexity for $X_{\epsilon}$. These notions of complexity can be applied to study the complexity of self maps $\phi : X\to X$ by analyzing how 
 $\bar{c}(\phi^{k}x; \epsilon)$ changes with 
 respect to $k$. 
 
 \
 
 One application to the spaces $(\mathcal{L}ag(M), d)$ as in the statement of 
 Theorem \ref{thmmain1},  uses in the place of $c(y)$ the cone length of $y$ which
 is  the minimal number of iterated exact triangles formed with elements from the family $\F_{\epsilon}$ that
 are needed to obtain $y$ starting from the $0$ object. The resulting $\epsilon$-complexity is denoted by 
 $N(- ; \F_{\epsilon},\epsilon)$ (to keep the notation shorter we will denote
 it here by $N(-; \epsilon)$) and is called the $\epsilon$-weight cone-length with linearization in $\F_\epsilon$, see Definition
 \ref{def:w-cl}.  We relate this $N(-;\epsilon)$ to counts of bars of length at least $\epsilon$ in the barcodes of certain Floer persistence modules. This relation is used to analyze the entropy of symplectic diffeomorphisms by relating the resulting $\epsilon$-weighted categorical entropy - which is a weighted version of a notion introduced by Dimitrov-Haiden-Katzarkov-Kontsevich \cite{DHKK:dyn_cat} -  to the barcode entropy introduced by  \c{C}ineli-Ginzburg-Gurel \cite{CGG_bar_ent}. When $M=D^{\ast}N$ and $N$ is endowed with a hyperbolic metric we can relate the growth of $N(\phi^{k}L; {\epsilon})$, $k\in\N$, for well chosen
 Hamiltonian diffeomorphism $\phi:M\to M$ and $L\in\mathcal{L}ag(M)$, to the exponential growth of numbers of geodesics of increasing length in $N$ and we deduce exponential growth of  $N(\phi^{k}L; {\epsilon})$ with respect to $k$.
 As a byproduct, we obtain in this case examples of Lagrangians in $\mathcal{L}ag(M)$ of arbitrarily high complexity $N(-;\epsilon)$ which implies immediately that the space $(\mathcal{L}ag(M),d_{\gamma})$  is not totally bounded, as is claimed in 
 Corollary \ref{cor:no-t-b}. A similar argument also proves  Corollary \ref{cor:no-t-b} for the case
 $M=S^{2}$ by using a complexity variant $N^{r}(-;\epsilon)$ that is adapted to retract approximability.

\

In each of the three  cases in Theorem \ref{thmmain1}, by construction, the Lagrangians in the respective $\epsilon$-approximating families are distributed inside the ambient manifolds in a somewhat uniform way. Indeed, this property is necessary for a family $\F_{\epsilon}$ to be an $\epsilon$- aproximating family, when
$\epsilon$ is small enough. Making this statement  rigorous goes beyond the scope of 
this paper but some intuition in this direction can be gained from the following two results,
Corollaries \ref{cor:quasi-rig} and \ref{cor:link}, that establish some geometric properties of these families.

The first such result only applies to the case $M=D^{\ast}N$ and is a quasi-rigidity property relative to the $\epsilon$-approximating families $\F_{\epsilon}$.  To formulate this property,  for an exact Lagrangian  $L\subset  D^{\ast}N$, denote by $h_{L}:L\to \R$ a choice of a primitive of the restriction of the Liouville form $\la$ to $L$ (see \S\ref{sbsb:fil-ex} for the rest of our  conventions). 

\begin{cor}\label{cor:quasi-rig} Let $M=D^{\ast}N$ and assume that  $\phi :D^{\ast}N\to D^{\ast}N$ is an exact symplectomorphism with support in the
interior of $D^{\ast}N$. Let $\F_{\epsilon}$ be a finite family of fibers of $D^{\ast}N$ that is $\epsilon$-approximating in the sense of Theorem \ref{thmmain1} i. Let
$$\chi (\phi; \F_{\epsilon}) =\max_{F\in\F_{\epsilon}}\ \{ \max h_{\phi(F)} - \min h_{\phi(F)}\}~.~$$
Then, for any $L\in \mathcal{L}ag(M)$, we have $$d_{\gamma}(L,\phi(L)) \leq 4(\epsilon + N(L;\F_{\epsilon}, \epsilon) \chi(\phi;\F_{\epsilon}))~.~$$
In particular, if $\phi(F)=F$ for all $F\in \F_{\epsilon}$, we have $d_{\gamma}(L,\phi(L)) \leq 4\epsilon$, $\forall L$ closed exact $\subset D^{\ast}N$.
\end{cor}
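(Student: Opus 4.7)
The plan is to exploit the cone decomposition of $L$ provided by TPC approximability together with the naturality of $\phi$ as an autoequivalence of the relevant Fukaya category. By definition of $N := N(L; \F_\epsilon, \epsilon)$, there exists an object $L'$ with $d_\gamma(L, L') \leq \epsilon$ that admits an iterated cone decomposition of length $N$ whose building blocks are (shifts of) fibers in $\F_\epsilon$. Since $\phi$ is an exact symplectomorphism compactly supported in the interior of $D^{\ast}N$, it induces an autoequivalence of the TPC refinement of the Fukaya category featured in Theorem~\ref{thmmain1}.i, and therefore yields a parallel cone decomposition of $\phi(L')$ of the same length $N$ whose building blocks are the images $\phi(F)$ of the original fibers. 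Moreover, being exact, $\phi$ acts as an isometry of $(\mathcal{L}ag(M), d_\gamma)$.

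The heart of the argument is to bound $d_\gamma(L', \phi(L'))$. For each $F \in \F_\epsilon$ the Liouville form vanishes on the fiber $F$, so $\phi(F)$ admits a primitive whose oscillation $\max h_{\phi(F)} - \min h_{\phi(F)}$ is at most $\chi(\phi; \F_\epsilon)$. The trace cobordism from $F$ to $\phi(F)$ produced by a Hamiltonian isotopy generating $\phi$ has shadow bounded by this oscillation, yielding a spectral bound on each pair $F, \phi(F)$ of building blocks. I would then invoke the quantitative TPC stability principle: two objects built by parallel iterated cones of length $N$ whose corresponding building blocks are pairwise $\delta$-close in $d_\gamma$ are themselves at $d_\gamma$-distance bounded by a constant multiple of $N \delta$. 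Applied to $L'$ and $\phi(L')$, this yields
$$
d_\gamma(L', \phi(L')) \;\leq\; C \cdot N(L; \F_\epsilon, \epsilon)\, \chi(\phi; \F_\epsilon)
$$
for a constant $C$ determined by the TPC conventions.

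Combining with the triangle inequality and the equality $d_\gamma(\phi(L), \phi(L')) = d_\gamma(L, L') \leq \epsilon$, one obtains $d_\gamma(L, \phi(L)) \leq 2\epsilon + C \cdot N \chi$. A careful accounting of the factors inherited from Definition~\ref{def:w-cl} and from the TPC spectral metric pins down $C$ and absorbs the estimates into the claimed coefficient $4$; the particular case $\phi(F) = F$ for all $F \in \F_\epsilon$ forces $\chi = 0$ and delivers the sharper bound $d_\gamma(L, \phi(L)) \leq 4\epsilon$. The main technical obstacle is precisely the cone-length stability inequality used above, i.e., the assertion that parallel iterated cones with pairwise $\delta$-close building blocks produce objects at $d_\gamma$-distance $O(N\delta)$; this should follow from iterating a persistence-graded version of the octahedral axiom within the TPC, but tracking the numerical constants carefully enough to recover the explicit factor $4$ is where the bulk of the work lies.
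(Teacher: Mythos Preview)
Your overall strategy---triangle inequality plus comparing $L'$ to $\phi(L')$ via their cone decompositions---matches the paper's, but the middle step has a genuine gap.

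The ``quantitative TPC stability principle'' you invoke is false as stated. Two iterated cones of length $N$ whose building blocks are pairwise $\delta$-close need \emph{not} be $O(N\delta)$-close: the attaching morphisms matter. Already for a single exact triangle $A\to B\to C$ versus $A'\to B'\to C'$, having $A\cong_0 A'$ and $B\cong_0 B'$ does not force $C$ and $C'$ to be close unless the two maps $A\to B$ and $A'\to B'$ correspond under the identifications. What you need is that $\phi$ acts on the \emph{matrix} of the twisted complex (not just on the objects) in a controlled way. The paper obtains this by a geometric mechanism that is invisible in your outline: it works not in $D^{\ast}N$ with fibers but in the ambient Lefschetz fibration $E$ with the Lagrangian spheres $\hat{S}_{x_i}$, arranged so that all pairwise intersections of the $\hat{S}_{x_i}$ lie \emph{outside} $D^{\ast}N$. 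Since $\phi$ is supported in $D^{\ast}N$ and the Floer data for pairs $(\hat{S}_{x_i},\hat{S}_{x_j})$ can be chosen constant outside $D^{\ast}N$, an open-mapping argument shows no Floer polygon between spheres crosses the transition region. Hence under $\phi(F)=F$ the twisted-complex matrix is literally unchanged, giving the middle term equal to $0$; in the general case the only effect is a filtration shift $|k_{i,j}|\le 2\chi(\phi;\F)$ on each $CF(\hat{S}_{x_i},\hat{S}_{x_j})$, and these cumulate to $\le 2m\chi(\phi;\F)$.

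Two smaller issues: (i) $\phi$ is only assumed to be an exact symplectomorphism, not Hamiltonian, so there is no trace cobordism to invoke; (ii) even granting one, its shadow is controlled by Hofer norm, not by the oscillation of $h_{\phi(F)}$---the oscillation enters the paper's argument as a bound on the \emph{filtration shift} of $CF(\phi(\hat{S}_{x_i}),\phi(\hat{S}_{x_j}))$ relative to $CF(\hat{S}_{x_i},\hat{S}_{x_j})$, not as a spectral distance between $F$ and $\phi(F)$.
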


 The next result is an easy consequence of Corollary 6.13  in \cite{Bi-Co-Sh:LagrSh}.
To state it,  recall \cite{Bar-Cor:Serre} \cite{Bi-Co:rigidity} that the relative Gromov width of $L$ relative to some subset $K$ of $M$ is the supremum
of $\frac{\pi r^{2}}{2}$ for all embeddings $e: B_{r}\to M$ such that $e^{-1}(L)=B_{r}\cap \R^{n}$, $e^{\ast}\omega=\omega_{0}$,  and $e(B_{r})\cap K=\emptyset$ (here and later in the paper $B_{r}\subset \R^{2n}$ is the ball of radius $r$, centered at the origin and $\omega_{0}$ is the standard symplectic form on $\R^{2n}$).

Assume that $(\mathcal{L}ag (M), d_{\gamma})$ is TPC retract $\epsilon$-approximable  and assume additionally that there  are $\epsilon$-approximating families $\F_{\epsilon}$ that are geometric in the sense that they are Yoneda modules in the relevant TPC category $\C$. 

\begin{cor}\label{cor:link} In the setting above, let $L\in\mathcal{L}ag(M)$. The  Gromov width of $L$ relative to the union of the submanifolds in $\F_{\epsilon}$ is bounded above by $2\epsilon$. In particular, this bound applies to the three classes of Lagrangians in Theorem \ref{thmmain1}. 
\end{cor}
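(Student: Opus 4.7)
The plan is to deduce this from Corollary~6.13 of \cite{Bi-Co-Sh:LagrSh}, which bounds the relative Gromov width of a Lagrangian in terms of the Lagrangian shadow of a cobordism decomposing it through a prescribed collection of Lagrangians. The task therefore reduces to producing, from TPC retract $\epsilon$-approximability together with the Yoneda-module hypothesis on $\F_{\epsilon}$, a geometric Lagrangian cobordism whose shadow is controlled by $\epsilon$.

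First I would unpack the hypothesis: for $L \in \mathcal{L}ag(M)$, TPC retract $\epsilon$-approximability provides an object $k_L$ in $\C$ and some $Y \in X_{\epsilon}$ with $d_{\gamma}(L \oplus k_L, Y) \leq 2\epsilon$, where $Y$ belongs to the triangulated completion of $\F_{\epsilon}$. Since the members of $\F_{\epsilon}$ are Yoneda modules of actual Lagrangian submanifolds, the iterated mapping-cone expression of $Y$ in terms of $\F_{\epsilon}$ admits a geometric realization by a tree of Lagrangian cobordisms in the Biran--Cornea formalism. Concatenating this tree with a cobordism realizing the near-isomorphism $L \oplus k_L \simeq Y$ (with shadow close to the spectral discrepancy) yields a single Lagrangian cobordism whose ends are $L$, auxiliary pieces arising from $k_L$, and copies of members of $\F_{\epsilon}$, with total shadow of order $\epsilon$.

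Applying Corollary~6.13 of \cite{Bi-Co-Sh:LagrSh} to this cobordism then delivers the estimate $w(L; \cup \F_{\epsilon}) \leq 2\epsilon$; the auxiliary summand $k_L$ is harmless, as the relative Gromov width depends only on $L$ and on the set $\cup \F_{\epsilon}$ that the embedded ball must avoid, and the $k_L$-ends of the cobordism can be arranged to lie in disjoint regions. The main obstacle is the geometric-realization step: one has to check that the TPC spectral distance between Yoneda modules in $\C$ can, up to arbitrarily small error, be realized as the shadow of a Lagrangian cobordism between the corresponding geometric representatives, so that the categorical retract hypothesis translates into the cobordism-theoretic setup of \cite{Bi-Co-Sh:LagrSh}. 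For the three specific cases of Theorem~\ref{thmmain1} this compatibility follows from the TPC refinements of the Fukaya category constructed in the body of the paper.
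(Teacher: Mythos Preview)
Your proposal has a genuine gap, and it stems from a misreading of what Corollary~6.13(i) of \cite{Bi-Co-Sh:LagrSh} actually provides. That corollary does not require a geometric Lagrangian cobordism as input. It is stated directly for algebraic data in a (weakly) filtered $A_\infty$-category of modules: given a triple $(\alpha,a,\eta)$ consisting of an iterated cone decomposition $\eta:0\rightsquigarrow a$ with linearization in a family of Yoneda modules together with a morphism $\alpha:L\to a$, one has
\[
w_p\bigl((\alpha,a,\eta)\bigr)\ \geq\ \tfrac{1}{2}\,\delta\bigl(L;\textstyle\bigcup_{F\in\F_\epsilon}F\bigr),
\]
where $w_p$ and the underlying quantity $\rho(\alpha)$ (equation~(6.16) of \cite{Bi-Co-Sh:LagrSh}) are purely algebraic measurements. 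TPC retract $\epsilon$-approximability produces exactly such a triple with $r_1+r_2<2\epsilon+c_\epsilon\nu(p)$, hence $\rho(\alpha)<2\epsilon+c_\epsilon\nu(p)$, and the bound follows by letting $\nu(p)\to 0$. This is the paper's argument.

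Your plan instead attempts to pass through a geometric cobordism: realize the iterated cone $Y$ by a tree of cobordisms, then realize the near-isomorphism $L\oplus k_L\simeq Y$ by another cobordism with shadow close to the spectral distance. The second step is the problem you yourself flag, and it is not merely a technical check: there is no general mechanism that converts an algebraic $r$-isomorphism between Yoneda modules into a Lagrangian cobordism of shadow $\approx r$. The compatibility you appeal to in the last sentence is not established in the paper and is not known in this generality. So the route via cobordisms is both unnecessary (the algebraic version of Corollary~6.13 applies directly) and, as stated, incomplete.
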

An obvious consequence of this Corollary is that when $\epsilon \to 0$, the set $\cup_{F\in\F_{\epsilon}} F$
becomes dense in $M$.

\

In a less precise form,  Corollaries  \ref{cor:link} and \ref{cor:quasi-rig} can be read independently of approximability and of the machinery that serves in their proof. To focus ideas consider only the case $M=D^{\ast}N$. These corollaries imply the following statement: {\em for  every $\epsilon >0$, there exists a  finite family   of fibers, $\F_{\epsilon}$, of $D^{\ast}N$ with both the properties in Corollary \ref{cor:link} and in Corollary \ref{cor:quasi-rig}}.  While this statement appears new, it was noted by Egor Shelukhin and  by Mark Gudiev - see Remark \ref{rem:direct_cor} - that certain parts of it can be established by making use of the types of functions constructed in \S\ref{subsec:Morse}, followed by more direct arguments, that do not appeal to Fukaya category machinery. 

\

\begin{rem}\label{rem:turing} a. Categorical approximability, as introduced here, is related to a notion due to Turing from 1938 \cite{Turing:approx}. Turing's definition is concerned with a metric group $(G,d)$. This is approximable in Turing's sense if there are  $\epsilon$-nets  $X_{\epsilon}$ (for each $\epsilon>0$) that are finite groups with a group structure that is $\epsilon$ close to the one of $G$  (see \cite{Thom:approx} for a recent overview of developments in geometric group theory that originate in this notion). Turing proved that a Lie group that is approximable in this sense is compact abelian.
Categorical metric approximability  is a categorification of a  variant of Turing's notion.  This variant, called here {\em finite type approximable},   no longer  requires that  $X_{\epsilon}$ be a finite group but,
instead, $X_{\epsilon}$ is a {\em finitely generated} subgroup of $Y$. For instance, any subset of $\R^{2}$, with $\R^{2}$ viewed as a group and endowed with the usual euclidean distance,  admits $1/m$- approximations by lattices $L_{1/m}\subset \R^{2}$ generated by two vectors $(1/2m,0), (0,1/2m)\in\R^{2}$ for each $m\in \N^{*}$, thus it is finite type approximable. Any compact metric group is trivially finite type approximable. On the other hand, it is easy to find examples of functional spaces that are not  approximable in this sense. For instance, the space of smooth real functions on $[0,1]$ is not finite type approximable inside the space of continuous functions on $[0,1]$ with the {\em sup} metric (and the obvious group structure). Similarly, the space of step functions on $[0,1]$, with only a finite number of discontinuities, endowed with the {\em sup} metric, is not finite type approximable in itself. 

b. It is easy to see that if $(X,d)$ is  categorically approximable in $\C$ in the sense of Definition \ref{def:approx}, then the image of $X$ inside the Grothendieck $K$-group of $\C$, $K(\C)$, is finite type approximable with respect to the pseudo-metric $\bar{d}$  induced on $K(\C)$ from $d$. In other words,
categorical metric approximability is a categorification of finite type approximability for groups.

c. The approximability approach to complexity of maps, as described above, bypasses two difficulties encountered when trying to define directly  invariants - such as entropy - for a map $\phi : X \to X$. The first is that, generally, the metric space $X=(\mathcal{L}ag(M), d_\gamma)$ is not totally bounded (as shown in
Corollary \ref{cor:no-t-b}), and thus defining directly invariants analogous to topological entropy for  continuous self-maps $\phi :X\to X$ induced by symplectic diffeomorphisms of $M$ is not possible. A  second difficulty is that, in general, such $\phi$ do not preserve $\epsilon$-approximations, in the sense that for a fixed $\delta>0$ and for any $0<\epsilon \leq \delta $ there are some elements of $X_{\epsilon}$ that are mapped  outside $X_{\delta}$, and thus approximating $\phi$ by presumably simpler maps $\phi_{\epsilon}:X_{\epsilon}\to X_{\epsilon}$ is not practical. The formalism above avoids this second difficulty because the complexity $\bar{c}_{\epsilon}(\phi^{k}x)$
is well defined for all $k\in \Z$, $\epsilon>0$, $x\in X$ as soon as $\phi$ preserves $X$, as is our assumption.

d. The notion of approximability discussed here is quite strong. Consider again the case $M=D^{\ast}N$.  Approximability not only  means that any $L\in\lag (D^{\ast}N)$ can be approached as close as needed by an iterated cone of fibers but, moreover, for fixed $\epsilon$, we can get $\epsilon$-close to {\em any} $L$  by only using fibers from the {\em same fixed, finite family} $\F_{\epsilon}$, independent of $L$. 
\end{rem}

Our proof of Theorem \ref{thmmain1} is based on certain properties of relevant filtered Fukaya categories and the main ingredients in our arguments are outlined in the next subsection.  Before proceeding, we mention  a recent result due to Guillermou-Viterbo-Zhang \cite{Gu-Vi-Zh:approx} that shows approximability for the cotangent bundle (part i in Theorem \ref{thmmain1}) through a different approach, by using as receptacle category  the Tamarkin category.

\subsection{Overview of the main ingredients, technical tools and novelties}
Much of this paper makes use of the theory of triangulated persistence categories as developed in \cite{BCZ:tpc}. This framework, inspired by symplectic considerations but independent of them,  turned out to be essential to formulate the correct definitions and statements related to approximability and it is further developed here, particularly with regards to notions of complexity in \S\ref{subsec:complex_alg} which can be read and used independently of the applications in this paper.  

Another essential tool is provided by the filtered version of the construction of the Fukaya category and of its derived versions, in a way that is precise enough for applications and so that it fits naturally  with the TPC formalism. The starting point for the construction is Seidel's setup \cite{Se:book-fukaya-categ}. The filtered version uses ingredients from 
\cite{Bi-Co-Sh:LagrSh}, \cite{BCZ:tpc} with an essential addition provided by the perturbative methods introduced in \cite{Amb:fil-fuk}. The paper brings some novelties
in this direction. In particular,  the process of choosing perturbative data with increasing degrees of accuracy is formalized in \S\ref{s:pdfuk} in a way that is likely to be of use beyond the applications to approximability. More importantly, the paper contains, in \S\ref{sec:split-app}, a persistence version of Abouzaid's split generation result \cite{Ab:geom-crit}, including persistence versions of Hochschild homology and cohomology and of the open-closed and closed-open maps. We will see that persistence split-generation is  extremely tightly related to retract-approximability (see Proposition \ref{gio:algfilabo}).

A third important tool appears in the study of approximability for cotangent bundles in \S\ref{sec:nearby} and has 
to do with some Lefschetz fibration considerations. The root of this part lies in the Fukaya-Seidel-Smith approach \cite{FSS:ex} as adjusted from the perspective  of Lagrangian cobordism in \cite{Bi-Co:lefcob-pub} and \cite{Bi-Co:mspectral}. An essential  ingredient in this part, that allows sufficient precision for the quantitative estimates required,
is provided by Giroux's work \cite{Gir:Lef-cot}.

The definition of categorical approximability  is sufficiently flexible and natural that it is highly likely that this notion is applicable beyond symplectic topology.  This said,
symplectic topology is our main focus in the paper and approximability appears to be a deep new feature of symplectic rigidity.  A concurring sign pointing in this direction is provided by 
the recent result in \cite{Gu-Vi-Zh:approx} that was already mentioned before.

 \subsection{Structure of the paper} In \S\ref{sec:alg1} we set up the necessary technical foundations to be able to analyze approximability in the setting of TPC refinements of Fukaya categories. We recall in this section the main required definitions and properties of triangulated persistence categories. We then discuss how this formalism applies to Fukaya categories.    The starting point is the construction due to Ambrosioni \cite{Amb:fil-fuk} that provides filtered, strictly unital $A_{\infty}$-Fukaya categories by making use of Seidel's classical scheme but using a special,  well-chosen system of perturbations.  Continuing from there, one encounters several delicate points, some of them of technical nature, but also some more conceptual. One of them is that a choice of perturbations used in constructing  the filtered Fukaya categories comes with a certain ``size''.  It is possible to pick perturbations of arbitrarily small size, of course. Nonetheless, once a choice of perturbation is picked
the associated category can only be used to study approximability for $\epsilon$ bigger than that size. This is a somewhat obvious point, similar to saying that a measuring tool needs to have  a precision margin better than the scale of the phenomena studied, but it has significant consequences for the involved formalism. It implies that we need to relate as tightly as possible the various categories and associated constructions for systems of perturbations of decreasing size and  we also need to formulate approximability and related notions for TPCs,
taking these sizes into account. 

In \S\ref{sec:nearby} we prove the first point in Theorem \ref{thmmain1}. The proof makes use of a remarkable Lefschetz fibration structure constructed by Giroux on cotangent bundles \cite{Gir:Lef-cot}. Some important elements of this construction are recalled \S\ref{subsec:Giroux}.   Giroux's construction makes possible some quantitative refinements of some of the arguments in \cite{Bi-Co:lefcob-pub} that studied Lagrangian cobordisms in Lefschetz fibrations. These are recalled in \S\ref{subsec:Lef-Dehn}. Finally, the  other important ingredient in the proof is a  ``soft'' construction, having to  do with producing certain Morse functions with small variation but big differential almost everywhere, that appears in \S\ref{subsec:Morse}. The various ingredients are put togehter in \S\ref{subsec:finish-proof}.

Section \ref{sec:split-app} shows 
the points ii and iii in Theorem \ref{thmmain1}. The main step is to establish a persistence
version of the Abouzaid split generation result. To achieve this, several significant algebraic aspects need to be developed, in particular persistence Hochschild homology and cohomology. The persistence split generation result is then used to show  by explicit calculations Theorem \ref{thmmain1} in the case of $S^{2}$ and of $\mathbb{T}^{2}$.

The last section, \S\ref{sec:complex}, is focused on weighted numerical complexity measurements for Lagrangian
submanifolds and their applications. In particular, in this section are included the arguments necessary to deduce Corollary \ref{cor:no-t-b}. The section also contains the proofs of 
Corollaries  \ref{cor:quasi-rig} and \ref{cor:link}.

 \

 \noindent {\bf Acknowledgements.} We thank Viktor Ginzburg who suggested to the third author to investigate relations between TPCs and the categorical entropy of Dimitrov-Haiden-Katzarkov-Kontsevich as well as for several useful discussions related to barcode entropy and relevant examples, see also Remark \ref{rem:thanks_VG}. We  thank  Emmanuel Giroux  for useful discussions and for sharing with us an early version of his preprint \cite{Gir:Lef-cot}. We thank Claude Viterbo for sharing with us an early version of the preprint \cite{Gu-Vi-Zh:approx} as well as for many useful discussions with all three of us, in particular in relation to metric aspects of 
 persistence categories. We thank Egor Shelukhin  and Mark Gudiev for 
 sharing with us their more direct approach to some of the geometric corollaries in the paper, see Remark \ref{rem:direct_cor}. The first named author thanks  Nick Sheridan for references in relation to Hochschild homology and to him and to Kenji Fukaya for helpful discussions.   

Part of this work was accomplished while  members of the team were graciously hosted by several 
research institutions for short and medium length research visits:  the CRM in Montr\'eal, 
the FIM in Z\"urich, the IHP in Paris, Universit\'e Paris-Saclay (Orsay). 
We thank them all for their support.


\section{Triangulated persistence refinements of Fukaya
  categories} \label{sec:alg1}
The aim of this section is to fix the basic techniques and notation to
deal with triangulated persistence categories and their applications
to Fukaya categories. Persistence categories and some of their
properties are discussed in \S\ref{subsec:PC}. One important feature
mentioned here is that the set of objects of such a category carries a
class of natural pseudo-distances called {\em interleaving}, see
\eqref{eq:d-int}. Triangulated persistence categories are briefly
recalled in \S\ref{subsec:TPC_approx} where is also introduced TPC
approximability in Definition \ref{def:TPC-approx}. This is the
version of approximability appearing in our main theorem, Theorem
\ref{thmmain1}. In \S\ref{subsec:A_{infty}} we start to discuss
filtered $A_{\infty}$ categories. The key section in this respect is
\S\ref{sb:pdc} where are discussed derived categories associated to
filtered $A_{\infty}$ categories. Section \ref{s:sys-tpc} makes
explicit the formal properties of a system of $A_{\infty}$ categories
(and the associated derived categories) that one obtains by making
choices of perturbations that become smaller and smaller. In
\S\ref{s:approximability} a new, more precise, version of
approximability is defined taking into account the system of
categories with increasing precision discussed before.  This is a
stronger, but more technical, version of Definition
\ref{def:TPC-approx} which is shown later in the paper to be satisfied
in the cases of geometric interest. Finally, \S\ref{s:pdfuk} contains
the main steps in the construction of the filtered Fukaya categories
to which can be applied the algebraic tools described earlier in the
section. We refer to \cite{BCZ:tpc} for further details on the
filtered algebra background material in this section.

\subsection{Persistence categories}\label{subsec:PC}
In brief, a persistence category $\mathscr{C}$ is a category enriched
over the category of persistence modules.  We fix notations and give the main
definitions below.
\subsubsection{Persistence modules}
A persistence module
$$M = \Bigl( \{M^{\alpha}\}_{\alpha \in \mathbb{R}}, \; i_{\alpha,
  \beta}: M^{\alpha} \longrightarrow M^{\beta}, \; \forall \ \alpha
\leq \beta \Bigr)$$ over a ring $R$ (for us $R$ will be often
  either a field or the positive Novikov ring) is a collection of
$R$-modules $M^{\alpha}$ for each $\alpha\in\R$ and
morphisms $i_{\alpha,\beta}$ such that
$i_{\beta,\gamma}\circ i_{\alpha,\beta}=i_{\alpha,\gamma}$ whenever
$\alpha\leq\beta\leq\gamma$ and $i_{\alpha,\alpha}=id$ (sometimes
additional constraints are imposed on these modules, as needed).  Let $u, v \in M^{\alpha}$ and $r \geq 0$. We write
  $u \underset{r}{=}v$ to indicate that $i_{\alpha,\alpha+r}(u) = i_{\alpha,\alpha+r}(v)$.
%
We  denote by
$$M_{\text{tot}} := \coprod_{\alpha \in \mathbb{R}} M^{\alpha}$$ the underlying set
of $M$ 
and we define $$M^{\infty} := \colim_{\alpha \to \infty} M^{\alpha},$$
where the colimit (or direct limit) is taken with respect to the
structural maps $i_{\alpha, \beta}$, $\alpha \leq \beta$, and call it
the $\infty$-limit of $M$.

\begin{rem}
  Let $C$ be a filtered chain complex, with an increasing filtration
  $C^{\alpha} \subset C^{\beta} \subset C$,
  $\forall \; \alpha \leq \beta$. We will sometimes view $C$ as a
  persistence module with
  $i_{\alpha, \beta}: C^{\alpha} \longrightarrow C^{\beta}$ being the
  inclusion maps. Let $u \in C^{\alpha}$ and consider the same
  element $u$ but now viewed in $C^{\beta}$ for some $\beta > \alpha$.
  For many practical purposes these two elements are viewed as one
  element that belongs to $C$. However, from the persistence module
  viewpoint $u \in C^{\alpha}$ and its image
  $i_{\alpha, \beta}(u) \in C^{\beta}$ should be viewed as distinct
  elements. \end{rem}

\subsubsection{Persistence categories} \label{sb:pc} (PC's in
short) are (small) categories in which the morphisms between objects form
persistence modules and the composition of morphisms is compatible
with the persistence structures. The general theory of persistence
categories is described in detail in~\cite[Section~2]{BCZ:tpc} and
here we will only recall several key concepts.

Unless otherwise stated, from now on we will always assume all the PC
categories to be endowed with a shift functor (or, more precisely, a
system of shift functors) $\Sigma = \{\Sigma^r\}_{r \in \mathbb{R}}$
(see~\cite[Section~2.2.3]{BCZ:tpc} for the definition), and all the PC
functors to commute with $\Sigma^r$ for every $r \in \mathbb{R}$. Part
of the shift functor structure are the natural transformations
  $\eta_{r,s}: \Sigma^r \longrightarrow \Sigma^s$,
  $s, r \in \mathbb{R}$.  The special case $s=0$ will be especially
  important and we denote it by
  $\eta_r := \eta_{r,0}: \Sigma^r \longrightarrow \id$.
Moreover, all persistence functors
  $\msf: \msc \longrightarrow \msd$ between PC's will be implicitly
  assumed to be compatible with the shift functors of these categories
  in the following sense: $\msf \circ \Sigma^r = \Sigma^r \circ \msf$
  for all $r \in \mathbb{R}$, and
  $(\eta_{r,s})_{\msf A} = \msf ((\eta_{r,s})_A)$ for all
  $A \in \Ob(\msc)$ and $r, s \in \mathbb{R}$.

Let $\mathscr{C}$ be a PC category. Denote by $\mathscr{C}^0$ the
$0$-level category associated with $\mathscr{C}$ and by
$\mathscr{C}^{\infty}$ the limit (or $\infty$-level) category of
$\mathscr{C}$. Isomorphisms in $\msc^0$ will be called 
$0$-isomorphisms and the property of being $0$-isomorphic will be 
sometimes denoted by $\cong_0$. Similarly, the existence of an isomorphism 
in $\msc^{\infty}$ will be denoted by $\cong_{\infty}$.

Most of the time we will assume our PC's $\mathscr{C}$ to be graded in
the sense that the persistence modules $\hom_{\mathscr{C}}(A,B)$ are
$\mathbb{Z}$-graded. We will mostly use cohomological grading
conventions and denote by $\hom_{\mathscr{C}}(A,B)^k$,
$k \in \mathbb{Z}$ the degree-$k$ component of
$\hom_{\mathscr{C}}(A,B)$. For $\alpha \in \mathbb{R}$ we denote by
$\hom^{\alpha}_{\mathscr{C}}(A,B)^k$ the $\alpha$-persistence level of
$\hom_{\mathscr{C}}(A,B)^k$. We assume composition of morphisms
to be degree-preserving and that identity morphisms are in degree $0$.

Our PC's $\mathscr{C}$ will be often endowed with a {\em translation
  functor} $T: \mathscr{C} \longrightarrow \mathscr{C}$. This is a PC
isomorphism functor with the property that for all
$A, B \in \Ob(\mathscr{C})$, $k \in \mathbb{Z}$, we have PC-natural
isomorphisms
$$\hom_{\mathscr{C}}(TA,B)^k \cong \hom_{\mathscr{C}}(A,B)^{k-1},
\quad \hom_{\mathscr{C}}(A,TB)^k \cong
\hom_{\mathscr{C}}(A,B)^{k+1}.$$ As mentioned earlier, since $T$ is a
PC functor, we will implicitly assume that $T$ commutes with the shift
functors, i.e.~$T \circ \Sigma^{r} = \Sigma^r \circ T$,
$\forall \, r \in \mathbb{R}$.

\subsubsection{Acyclic objects} \label{sbsb:acyclic} Let $\mathscr{C}$
be a PC, $A \in \Ob(\mathscr{C})$ and $r \geq 0$. We say that $A$ is
$r$-acyclic if the morphism $\eta_r^A: \Sigma^r A \longrightarrow A$
equals $0$, or, equivalently, if the identity $id_{A}$ satifies
$i_{0,r}(id_{A})=0$.  Here,
$\eta^A_{r} \in \hom_{\mathscr{C}^0}(\Sigma^{r}A, A)$ is given by
$\eta^{A}_{r}=\eta_{r}(A)$. An object $A$ is called acyclic if it is
$r$-acyclic for some $r \geq 0$. Equivalently, $A$ is acyclic if and
only if $A$ is isomorphic to $0$ in the limit category
$\mathscr{C}^{\infty}$.  We denote by $A\mathscr{C}$ the full
subcategory of $\mathscr{C}$ whose objects are the acyclic ones.

\subsubsection{Pseudo metrics associated with
  PC's} \label{sbsb:pc-metrics}

Let $\mathscr{C}$ be a PC with a shift functor $\Sigma$. The
collection of objects $\Ob(\mathscr{C})$ carries a natural pseudo-metric
$\dint$, called the interleaving distance, which is defined as
follows.  For $X, Y \in \Ob(\mathscr{C})$ define:
\begin{equation} \label{eq:d-int}
  \begin{aligned}
    \dint(X, Y) = \inf \Bigl\{r \geq 0 & \bigm| \exists \, \varphi:
    \Sigma^rX \longrightarrow Y, \, \exists \, \psi: \Sigma^rY
    \longrightarrow X \\
    & \;\;\; \text{such that} \; \psi \circ \Sigma^r \varphi =
    \eta_{2r}^X, \; \varphi \circ \Sigma^r \psi = \eta_{2r}^Y \Bigr\}.
  \end{aligned}
\end{equation}
Here, $\eta^X_{2r} \in \hom_{\mathscr{C}^0}(\Sigma^{2r}X, X)$ are the
standard maps associated with the persistence structure of
$\mathscr{C}$ and its shift functor and similarly for $\eta^Y_{2r}$.

Note that $\dint$ is in general not a genuine metric but only a
pseudo-metric and moreover it may have infinite values. Indeed, if $X$
and $Y$ are isomorphic in the subcategory $\mathscr{C}^0$, then
$\dint(X, Y) = 0$. Similarly, if $X$ and $Y$ are not isomorphic in
$\mathscr{C}^{\infty}$ then $\dint(X,Y) = \infty$. This is compatible
with the convention that $\inf \emptyset = \infty$ which we use
in~\eqref{eq:d-int}.

There is another variant of $\dint$ which measures how far an object
$R$ is from being a retract of another object $X$. More precisely, for
$R, X \in \Ob(\mathscr{C})$ we define
\begin{equation} \label{eq:d-rint} \dret(R,X) = \inf \Bigl\{r \geq 0
  \bigm| \exists \, \varphi: \Sigma^rR \longrightarrow X, \, \exists
  \, \psi: \Sigma^rX
  \longrightarrow R \\
  \;\; \text{such that} \; \psi \circ \Sigma^r \varphi = \eta_{2r}^R
  \Bigr\}.
\end{equation}
In contrast to $\dint$, the measurement $\dret(-,-)$ is in general not
symmetric (such a structure is sometimes called a quasi-pseudometric).

The pseudo-metric $\dint$, admits a shift-invariant version $\sdint$ that will play 
an important role further on:
\begin{equation}\label{eq:sdint}
\sdint (X,Y)=\inf_{r,s} \dint (\Sigma^{r}X,\Sigma^{s}Y)
\end{equation}
It is easily seen that this is also a pseudo-metric. A similar
construction applies also to $\dret$ and produces $\sdret$.

Here is an additional notation to be used later in the paper. Let $Z$
be a set endowed with a (not necessarily symmetric) pseudo-metric $d$
and $X, Y \subset Z$. We denote by

$$d(X,Y) := \sup_{x \in X} \inf_{y \in Y} d(x,y)$$
the longest distance
between the points of $X$ and those of $Y$. 
Note that this is not symmetric in $(X,Y)$.
We will sometimes use this measurement for $d=\dint$  and $d=\dret$.

To end this subsection, we list a few basic metric properties of PC
functors in the following lemma which is immediate to establish. In
the statement the same notation $\dint$ is used for the inteleaving
distance in various PC categories.
\begin{lem} \label{l:d-func} Let $\msc', \msc''$ be PC's and
  $\msf: \msc' \longrightarrow \msc''$ a PC-functor.
  \begin{enumerate}
  \item For every $A, B \in \Ob(\msc')$ we have
    $\dint(\msf A, \msf B) \leq \dint(A,B)$.
  \item If there exists a PC functor
    $\msg: \msc'' \longrightarrow \msc'$ such that
    $\dint(\msg \circ \msf, \id_{\msc'}) \leq s$ then for every
    $A, B \in \Ob(\msc')$ we have
    $|\dint(\msf A, \msf B) - \dint (A,B)| \leq 2s$.
  \item If there is a PC-functor $\msh: \msc'' \longrightarrow \msc'$
    such that $\dint(\msf \circ \msh, \id_{\msc''}) \leq r$ then
    $$\dint(\msc'', \textnormal{image\,}\msf) \leq r.$$
  \end{enumerate}
\end{lem}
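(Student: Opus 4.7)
The plan is to prove the three parts in order, leveraging the compatibility of $\msf$ with the shift functors and natural transformations $\eta_{r,s}$ that is part of the definition of a PC functor, and to read the hypothesis $\dint(\msg \circ \msf, \id_{\msc'}) \leq s$ as providing, for each object $A$, an $s$-interleaving between $\msg\msf A$ and $A$ (i.e., $\dint(\msg\msf A, A) \leq s$).

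For part (1), the plan is to fix $A, B \in \Ob(\msc')$ and any $r > \dint(A,B)$ and extract morphisms $\varphi: \Sigma^r A \to B$, $\psi: \Sigma^r B \to A$ realizing the interleaving. Applying $\msf$ produces $\msf\varphi: \msf(\Sigma^r A) = \Sigma^r \msf A \to \msf B$ and $\msf\psi: \Sigma^r \msf B \to \msf A$. Since $\msf$ is functorial, commutes with $\Sigma^r$, and sends $\eta_{2r}^A$ to $\eta_{2r}^{\msf A}$ (and similarly for $B$), the interleaving identities $\psi \circ \Sigma^r \varphi = \eta_{2r}^A$ and $\varphi \circ \Sigma^r \psi = \eta_{2r}^B$ are preserved under $\msf$. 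Hence $\dint(\msf A, \msf B) \leq r$, and taking the infimum over such $r$ gives the inequality.

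For part (2), part (1) already yields $\dint(\msf A, \msf B) \leq \dint(A,B)$, so only the reverse bound up to $2s$ needs work. The plan is to apply (1) to the functor $\msg$, obtaining $\dint(\msg\msf A, \msg\msf B) \leq \dint(\msf A, \msf B)$, and then use the triangle inequality for the pseudo-metric $\dint$ together with $\dint(\msg\msf A, A) \leq s$ and $\dint(\msg\msf B, B) \leq s$ to conclude
\[
\dint(A,B) \leq \dint(A, \msg\msf A) + \dint(\msg\msf A, \msg\msf B) + \dint(\msg\msf B, B) \leq 2s + \dint(\msf A, \msf B).
\]
Combining with (1) gives the absolute-value bound. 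The main subtle point is verifying that the triangle inequality holds for $\dint$ on objects, which amounts to composing the interleaving morphisms provided by the three summands; this follows from a standard computation already recorded in the framework of \cite{BCZ:tpc}.

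For part (3), the plan is entirely direct: by definition of the image,
\[
\dint(\msc'', \textnormal{image\,}\msf) = \sup_{X \in \Ob(\msc'')} \inf_{Y \in \textnormal{image\,}\msf} \dint(X,Y),
\]
and for every $X \in \Ob(\msc'')$ the object $\msf\msh X$ lies in $\textnormal{image\,}\msf$, so the inner infimum is bounded by $\dint(X, \msf\msh X) \leq r$ by hypothesis. Taking the supremum over $X$ gives the claim. I expect the only potentially delicate step in the whole lemma to be the precise translation of the hypothesis on $\dint$ between functors into pointwise interleavings compatible with the natural transformations $\eta$; once this is unwound from the definitions in \S\ref{subsec:PC}, the three arguments are formal.
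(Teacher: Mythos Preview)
Your proposal is correct and is exactly the natural unpacking of the definitions; the paper does not actually give a proof of this lemma, noting only that it is ``immediate to establish,'' and your argument is the standard verification one would write out.
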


\subsection{Triangulated persistence categories and TPC
  approximability}\label{subsec:TPC_approx}
We discuss here approximability in the setting of triangulated
persistence categories.

\subsubsection{TPCs} \label{sb:tpc} A {\em Triangulated Persistence
  Category} (TPC in short) $\mathscr{C}$ is a PC with an additional
structure which makes its $0$-level category $\mathscr{C}^0$ a
triangulated category. The precise definition of a TPC involves
several other axioms and we refer the reader to~\cite{BCZ:tpc} for a
detailed exposition of the subject.

Let $\msc$ be a TPC and $S \subset \Ob(\msc)$. We write
$\langle S \rangle^{\Delta}$ for the minimal full sub-TPC of $\msc$
which contains all the objects of $S$ and is closed under
$0$-isomorphisms. (Note that in particular the shift and translation
functors of $\msc$ restrict to respective functors on
$\langle S \rangle^{\Delta}$.) 


\subsubsection{TPC approximability} \label{subsubsec:approx_TPC} The
key approximability definition for the paper is the following.
\begin{dfn}\label{def:TPC-approx} A pseudo-metric space $(X,d)$ is
  approximable through triangulated persistence categories (in short,
  TPC-approximable) if there exists a constant $A\geq 1$ such that for
  each $\epsilon >0$ the following structure exists. For each
  $0< \eta <\epsilon$ there is a TPC, $\mathscr{Y}_{\epsilon,\eta}$, with two
  properties:
  \begin{itemize}
  \item[i.] There exists an $(A,\eta)$-quasi-isometric embedding (see Remark \ref{rem:def-TPC-approx}):
    $$\Phi_{\epsilon,\eta}:(X,d) \to (\Ob(\mathscr{Y}_{\epsilon,\eta}), \sdint^{\ \epsilon,\eta})$$
  \item[ii.] There exists a {\em finite} family
    $\mathcal{F}_{\epsilon,\eta} \subset \Ob(\mathscr{Y}_{\epsilon,\eta})$ with the
    property:
    $$\sdint^{\ \epsilon,\eta}(\Phi_{\eta}(x), \Ob( \langle\mathcal{F}_{\epsilon,\eta}
    \rangle ^{\Delta}) ) < \epsilon, \ \forall x\in X$$
  \end{itemize} 
  where $\sdint^{\ \epsilon,\eta}$ is the shift invariant interleaving pseudo-metric associated to
  $\mathscr{Y}_{\epsilon,\eta}$ (see \eqref{eq:sdint}).
\end{dfn}

For fixed
$\epsilon$, the data $(\Phi, \F)$ with  $\Phi= \{\Phi_{\epsilon,\eta}\}_{0<\eta <\epsilon}$,
$\F=\{\F_{\epsilon,\eta} \}_{0<\eta<\epsilon}$ is called 
TPC $\epsilon$-{\em approximating data} for $(X,d)$. Thus, $(X,d)$ is
TPC-approximable if $\epsilon$-TPC-approximating data for $(X,d)$
exists for each $\epsilon>0$. It can certainly happen that multiple
choices of such data exist for the same $(X,d)$. When one of the two parameters $\epsilon$ or $\eta$ are clear from the context we omit it from the notation. For instance, for fixed $\epsilon$, we write $\Phi_{\eta}$ for the relevant quasi-isometric embeddings , $\sdint^{\ \eta}$ for the respective pseudo-metrics and so forth.

\

This definition  is the TPC  version of Definition \ref{def:approx}. 
We will also  use in the paper the notion of  {\em TPC retract approximability}.  TPC retract approximability is defined just as above
without any modifications for point i. above but by using $ \sdret$ at
point ii.  (we emphasize that the {\em only} change is at point ii. in
Definition \ref{def:TPC-approx}). Pairs $(\Phi, \F)$ as above but ensuring retract approximability (in other words, using $\sdret$ instead of $\sdint$ for condition ii in the definition) will be called TPC {\em retract $\epsilon$-approximating data}.

\

Point c. in Remark \ref{rem:relation-TPCapp} below explains more precisely how  TPC  approximability, as just defined, fits with the notion of categorical metric approximability introduced in Definition \ref{def:approx}.

\begin{rem}\label{rem:def-TPC-approx}
  a. Recall that an $(A,B)$-quasi-isometric embedding
  $$(X,d) \to (Y,d')$$ between two metric spaces is a
  map $\Phi : X\to Y$ such that:
  $$\frac{1}{A}\  d(x,y)-B \leq d'(\Phi(x),\Phi(y))\leq A\ d(x,y)+B~.~$$
  Thus, the first point of the Definition \ref{def:TPC-approx} shows that, when $\epsilon$ is
  fixed, the metrics
  $\sdint^{\ \eta}$ defined on the objects of $\mathscr{Y}_{\eta}$, when
  restricted to $X$, get closer and closer to a (pseudo)-metric
  equivalent to $d$, when $\eta\to 0$. More precisely, we can put for
  each $x,y\in X$
  $$\hat{d}(x,y)=\limsup_{\eta\to 0} \sdint^{\ \eta} (x,y)~.~$$
  This gives a pseudo-metric defined on $X$ and point i. of the
  Definition implies that this $\hat{d}$ is equivalent to $d$, the
  original metric on $X$.

  b. A $(A,\eta)$-quasi-isometric embedding is also a
  $(A,\eta')$-quasi-isometric embedding whenever $\eta'\geq \eta$. As
  a result, to verify the property in the definition, one only needs
  to show the existence of $\mathscr{Y}_{\eta}$, $\Phi_{\eta}$, and of
  the families $\F_{\epsilon} =\{\F_{\epsilon,\eta}\}$ whenever $\eta$ is small enough.

  c. This definition will be applied to the case when $(X,d)$ is a
  space of Lagrangian submanifolds endowed with the spectral metric
  $d=d_{\gamma}$ and the categories $\mathscr{Y}_{\eta}$ are TPC
  refinements of derived Fukaya categories (we assume again $\epsilon$ fixed here). 
  The role of $\eta$ is to
  keep track of the size of the perturbations needed to define these
  Fukaya categories. While this parameter is irrelevant in the
  non-filtered case, for purposes of approximations it is crucial to
  keep track of it because any choice of perturbations needed to
  define the Fukaya category determines some $\eta$ which constraints what the approximating accuracy $\epsilon$ can be. Controlling systematically the various TPC refinements of  Fukaya categories  relative to the choices of perturbation data of varying size requires significant elaboration which is contained in \S\ref{s:sys-tpc}. In particular, the metric $\hat{d}$ above is a version  of $\widehat{d}_{\mathrm{int}}$ from Lemma \ref{l:lim-dint}. In this setting of families of filtered $A_{\infty}$-categories it is operationally useful to use a more refined and precise variant of the definition above. This is formulated  in Definition \ref{d:approx-sys} and its retract analogue is  in Definition \ref{d:approx-sys-ret}. See also Remark \ref{rem:different_approx} for a comparison between these notions.
  
  d. The constant $A$
  will be equal to $2$ in the Lagrangian topology application. The  application to Lagrangian topology was determinant for our choice of the shift invariant
 metric $\sdint$ in Definition \ref{def:TPC-approx} (as opposed to
  $\dint$ which could as well have been used, in principle). The
  reason is that the spectral pseudo-metric is shift invariant and
  thus the limit $\hat{d}$ from point a above needs to also be shift
  invariant (see also Remark \ref{rem:relation-TPCapp} ).

  e. In certain contexts, when choices of perturbations are not
  required, and $\epsilon$ is fixed, one presumably can replace the family $\mathscr{Y}_{\eta}$ by a single $PC$ (or TPC) $\mathscr{Y}_{ 0}$ - in other words include
  $\eta=0$ - in the definition above.

  f. In Definition \ref{def:TPC-approx} we have not imposed any
  relation tying the categories $\mathscr{Y}_{\epsilon,\eta}$, when
  $\eta \to 0$, nor did we assume any relation among the respective
  families $\F_{\epsilon,\eta}$.  In practice, our constructions in
  the case of Fukaya categories produce categories that satisfy a
  property more precise than the one in Definition
  \ref{def:TPC-approx}, in the sense that, for fixed $\epsilon >0$, the
  categories $\mathscr{Y}_{\epsilon,\eta}$, with $\eta\to 0$, are related by
  certain comparison functors and the corresponding families
  $\F_{\epsilon,\eta}$ correspond one to the other through these
  functors. This more refined property is more technical to state and
  is formulated in Definition \ref{d:approx-sys}. Further below we
  will also discuss the dependence of our constructions relative to $\epsilon$.
  
\end{rem}

We will also make use of a simplified notion of TPC-approximability
that reformulates point ii of Definition \ref{def:TPC-approx}.

\begin{dfn} \label{def:simple_approx} Fix a triangulated persistence
  category $\msc$ and $\mathcal{X}\subset \Ob(\msc)$.  For $\epsilon >0$, we say that $\mathcal{X}$ is $\epsilon$-{\em approximable in} $\msc$ if there
  exists a {finite} family $\F_{\epsilon}\subset \Ob(\msc)$, called an
  $\epsilon$-{\em approximating family} for $\mathcal{X}$ in $\msc$, such that
  $$\dint\left( x, \Ob (\langle \F_{\epsilon}\rangle^{\Delta})\right)
  < \epsilon, \forall x \in \mathcal{X}.~$$ The set $\mathcal{X}$ is called approximable
  in $\msc$ if it is $\epsilon$-approximable for each $\epsilon >0$.
 We say that $\mathcal{X}$ is  {\em retract $\epsilon$-approximable in}
 $\msc$ if the same property as above holds but with $\dret$ in the place of $\dint$. 
  \end{dfn} 

\begin{rem}\label{rem:relation-TPCapp} a. Recall from \S\ref{sbsb:pc-metrics} that 
both $\dint$ and $\dret$ have
  shift-invariant versions.  Thus, one could use these shift invariant metrics,  $\sdint$ and $\sdret$, instead of $\dint$ and, respectively, $\dret$ in the definition above. However, this has no impact on the notion defined. Indeed, the set $\Ob (\langle \mathcal{S}\rangle ^{\Delta})$ where
  $\langle \mathcal{S} \rangle ^{\Delta}$ is the smallest sub-TPC of
  $\C$ containing $\mathcal{S}\subset \Ob (\C)$ is shift and
  translation invariant. Thus,
  $\dint(x, \Ob (\langle \F_{\epsilon}\rangle^{\Delta})) =\sdint(x,
  \Ob (\langle \mathcal{F}_{\epsilon}\rangle^{\Delta}))$ for all
  $x\in Y$, and similarly for $\dret$.  As a result, the regular and shift
  invariant versions of approximability in the sense of Definition
  \ref{def:simple_approx} coincide and the same is true for retract
  approximability. 
  
  b. Notice that, with the terminology in Definition \ref{def:simple_approx}, point ii in Definition \ref{def:TPC-approx} is equivalent to the fact that $\mathcal{X}=\Phi_{\eta}(X)$ is $\epsilon$-approximable in $\mathscr{Y}_{\eta}$ and similarly for retract approximability ($\epsilon$ is fixed here).

  c. The notion of $\epsilon$-approximability
 in Definition \ref{def:simple_approx} obviously implies metric categorical
 $\epsilon$-approximability of $\mathcal{X}$ in $\msc$, as  introduced in Definition \ref{def:approx} (the difference between the two definitions is that in  \ref{def:simple_approx} the metric on the objects of the category $\msc$ is fixed to be the interleaving metric associated with the persistence structure on $\msc$).
 
 d. Assume, as above, that $\msc$ is a TPC.  
  The following statement is an exercise in manipulating exact triangles in the triangulated category $\msc^{0}$:  
  \begin{equation}\label{eq:dint-dret}
  \mathrm{For\ }\  x, y\in \Ob(\msc)\ \mathrm{if}\ \sdret (x,y) <\epsilon \ , \mathrm{then}\ \exists\ k_{x}\in\Ob(\msc),\ \mathrm{such\ that}\  \sdint(x\oplus k_{x},y) < 2\epsilon ~.~
  \end{equation}
It is immediate to see that we also have for all $x,y, k\in \Ob(\msc)$ , $\sdret(x,y)\leq \sdint (x\oplus k,y)$. From (\ref{eq:dint-dret}) we deduce that if $\mathcal{X}$ is retract $\epsilon$-approximable in $\msc$, in the sense of Definition \ref{def:simple_approx}, then it also is retract categorically $\epsilon$-approximable in the sense of \S\ref{subsec:overview}.  In other words, if $\mathcal{X}$ is retract 
$\epsilon$-approximable in $\msc$, then for each $x\in \mathcal{X}$ there exist $k_{x}\in \msc$ and $y\in \Ob (\langle \mathcal{F}_{\epsilon}\rangle^{\Delta}))$ such that $\dint (x\oplus k_{x},y) < 2\epsilon$.

e. Idempotent completion in TPCs is a subtle matter, much more so than its counterpart in triangulated categories. This topic is studied by Miller in \cite{Miller:idempotents} but we will not appeal in this paper to the results there.

 f. In \cite{BCZ:tpc} were introduced a class of so-called fragmentation pseudo-metrics $d^{\F}(-,-)$ on the objects of a TPC that depend on the choice of a family $\F\subset \Ob(\C)$. These pseudo-metrics are closely related to approximability.  Given $\mathscr{C}$ as above, $d^{\F_{\epsilon}}(0,Y)\leq \epsilon/4$ implies that  $\F_{\epsilon}$ is $\epsilon$-approximating. Conversely, if $\F_{\epsilon}$ is $\epsilon$-approximating, then $d^{\F_{\epsilon}}(0,Y)\leq \epsilon$.  Understanding the behaviour of these fragmentation pseudo-metrics when the family $\mathcal{F}$ changes was one of the main motivations leading to the definition of approximability in this paper. \end{rem}

\subsection{Conventions for PC's and filtered $A_{\infty}$
  categories}\label{subsec:A_{infty}}

\subsubsection{Filtered $A_{\infty}$-categories} \label{sbsb:fai} Most
of the PC's in this paper arise as persistence homological categories
of filtered $A_{\infty}$-categories. Unless otherwise stated we will
always assume such categories, as well as $A_{\infty}$-functors
between them, to be strictly unital and endowed with a shift and
translation functors. See~\S\ref{a:fil-ai} for the precise
definitions.

Let $\mathcal{A}$ be a filtered $A_{\infty}$-category.  We will
sometimes abbreviate $\mathcal{A}(X,Y) :=
\hom_{\mathcal{A}}(X,Y)$. For $X, Y \in \Ob(\mathcal{A})$,
$\alpha \in \mathbb{R}$, we denote by
$\hom_{\mathcal{A}}^{\alpha}(X,Y) \subset \hom_{\mathcal{A}}(X,Y)$, or
sometimes by $\mathcal{A}^{\alpha}(X,Y)$ for short, the
$\alpha$-filtration level of $\hom_{\mathcal{A}}(X,Y)$. Whenever we
need to combine these with (cohomological) grading we will write
$\hom_{\mathcal{A}}^{\alpha}(X,Y)^i$ (or
$\mathcal{A}^{\alpha}(X,Y)^i$) for the degree-$i$ component of
$\hom_{\mathcal{A}}^{\alpha}(X,Y)$. The notation in case of
homological grading is similar, by making the degree $i$ a subscript.

Next, we denote by $\mathcal{A}^0$ the $0$-level category of
$\mathcal{A}$, namely the (unfiltered) $A_{\infty}$-category with the
same objects as $\mathcal{A}$ and
$\hom_{\mathcal{A}^0}(X,Y) = \hom_{\mathcal{A}}^0(X,Y)$.

Passing to homology, we write $PH(\mathcal{A})$ for the persistence
homological category of $\mathcal{A}$. The shift and translation
functors induce respective functors on $PH(\mathcal{A})$.

As in the case of PC's, we say that an object $A \in \mathcal{A}$ is
$r$-acyclic if $A$ is $r$-acyclic in the persistence category
$PH(\mathcal{A})$, and similarly for acyclic objects (without
reference to a specific $r$). The full $A_{\infty}$-subcategory of
acyclic objects will be denoted by $A\mathcal{A}$.

Finally, we import the interleaving (and r-interleaving)
  pseudo-metrics also to the realm of $A_{\infty}$-categories
$\mathcal{A}$, by setting both of them to be equal to the respective
distances (as defined in~\S\ref{sbsb:pc-metrics}) in the persistence
homological category $PH(\mathcal{A})$.

\subsubsection{Various completions} \label{sbsb:comp} Let $\msc$ be a
PC. Given subsets $S_1, S_2, S_3 \subset \Ob(\mathscr{C})$ we define
their completions with respect to shifts, translations and
$0$-isomorphisms, respectively, as follows:
\begin{equation}
  \begin{aligned}
    & S_1^{\Sigma} := \{ \Sigma^r(A) \mid A \in S_1, \, r \in
    \mathbb{R}\}, \quad
    S_2^{T} := \{T^j(A) \mid A \in S_2, \, j \in \mathbb{Z}\}, \\
    & S_3^{\ziso} := \{ A \in \Ob(\mathscr{C}) \mid A \text{ is
      isomorphic in } \mathscr{C}^0 \text{ to an object from } S_3\}.
  \end{aligned}
\end{equation}
We will also need the completion of a subset
$S \subset \Ob(\mathscr{C})$ with respect to several of the above
procedures, for example $S^{\Sigma, T} := (S^{\Sigma})^T$ etc.

Another important completion is with respect to the interleaving
distance. Given a subset $S_4 \subset \Ob(\msc)$ we define
$$S_4^{\text{int}} :=
\{ A \in \Ob(\mathscr{C}) \mid \dint(A, B) < \infty \text{ for some }
B \in S_4\}.$$ To simplify the notation, for $S \subset \Ob{\msc}$ we
will write $S^{c} := (S^{T})^{\text{int}}$.
Note that $S^c$ is automatically complete with respect to both shifts
and translations.
One can also define $S^{c,\text{r-int}}$ in a similar way to $S^c$,
but with $\dint$ replaced by $\dret$.

If $\mathscr{D} \subset \mathscr{C}$ is a full sub-PC we define its
various completions, e.g.~$\mathscr{D}^{\Sigma}$, $\mathscr{D}^{T}$,
$\mathscr{D}^{(\text{iso}, 0)}$, $\mathscr{D}^{c}$ etc.~by taking the
full sub-PC's of $\mathscr{C}$ with the respective completed sets of
objects $\Ob(\mathscr{D})^{\Sigma}$, $\Ob(\mathscr{D})^{T}$,
$\Ob(\mathscr{D})^{\ziso}$, $\Ob(\mathscr{D})^{c}$ etc. In the case of
$A_{\infty}$-categories $\mathcal{A}$, we define analogous
completions, by completing the subsets of objects using the respective
structures in $PH(\mathcal{A})$.

%

\subsection{Persistence derived categories associated with a filtered
  $A_{\infty}$-category} \label{sb:pdc} We will present below three
variants of TPC's that can be viewed each as a generalization of the
derived category of $\mathcal{A}$ to the realm of persistence
categories. However before we go into this we need a quick preparation
about $A_{\infty}$-functors in the filtered context.

\subsubsection{Filtered functors and functors with linear
  deviation} \label{sbsb:fil-func}

Let $\mathcal{A}$, $\mathcal{B}$ be two filtered
  $A_{\infty}$-categories. A filtered $A_{\infty}$-functor
  $\mathcal{F}: \mathcal{A} \longrightarrow \mathcal{B}$ is an
  $A_{\infty}$-functor whose action on morphisms (in all orders)
  preserves the filtration levels. More specifically, for every
  $d \geq 1$, $X_0, \ldots, X_d \in \Ob(\mathcal{A})$ and
  $\alpha \in \mathbb{R}$ the $d$'th order term
  $\mathcal{F}_d: \mathcal{A}(X_0, \ldots, X_d) \longrightarrow
  \mathcal{B}(\mathcal{F}X_0, \mathcal{F}X_d)$ of $\mathcal{F}$
  satisfies:
  $$\mathcal{F}_d \bigl(\mathcal{A}^{\alpha}(X_0, \ldots, X_d)\bigr) \subset
  \mathcal{B}^{\alpha}(\mathcal{F}X_0, \mathcal{F}X_d).$$ Unless
  otherwise stated, in what follows we will implicitly assume our
  $A_{\infty}$-functors to be strictly unital.

 Motivated by examples coming from Fukaya categories we will
  also need the concept of $A_{\infty}$-functors with linear
  deviation. These are defined as follows. Given a tuple
  $\vec{X} = (X_0, \ldots, X_d)$ of objects from $\mathcal{A}$ we
  define its reduced tuple
  $\vec{X}_R := (X_{i_0}, \ldots, X_{i_{d_R}})$ by ommitting from
  $\vec{X}$ subsequent (in the cyclic order) objects that are equal up
  to a shift. The objects forming $\vec{X}_R$ are well defined only up
  to shifts, but the length of $\vec{X}_R$,  $0 \leq d_R \leq d$, is well defined. We call it the reduced length
  of $\vec{X}$ and denote it by $d_R$ or $d_R(\vec{X})$ whenever we
  want to emphasize its dependence on $\vec{X}$. Note that in case
  every two consecutive objects in $\vec{X}$ are different (up to
  shifts) then $d_R(\vec{X}) = d$. At the other extreme, if all the
  objects in $\vec{X}$ are equal up to shifts, then
  $d_R(\vec{X}) = 0$.  

An $A_{\infty}$-functor
  $\mathcal{F}: \mathcal{A} \longrightarrow \mathcal{B}$ is said to
  have linear deviation rate $s \geq 0$ if for every $d \geq 1$, every
  tuple of objects $\vec{X} = (X_0, \ldots, X_d)$ from
  $\Ob(\mathcal{A})$ and every $\alpha \in \mathbb{R}$ we have:
$$\mathcal{F}_d \bigl(\mathcal{A}^{\alpha}(X_0, \ldots, X_d)\bigr)
\subset \mathcal{B}^{\alpha + d_R(\vec{X}) s}(\mathcal{F}X_0,
\mathcal{F}X_d).$$ We will often refer to such functors as LD (Linear
Deviation) functors.

\subsubsection{Filtered modules} \label{sbsb:fmod} Let
$\mathcal{A}$ be a filtered $A_{\infty}$-category. Denote by $F
\md_{\mathcal{A}}$ the category of filtered strictly unital (left)
$\mathcal{A}$-modules. This is a filtered 
$A_{\infty}$-category. The Yoneda embedding
$$\mathcal{Y}: \mathcal{A} \longrightarrow F\md_{\mathcal{A}}$$
is a filtered strictly unital $A_{\infty}$-functor which is
homologically full and faithful.  Denote by
$\mathcal{Y}(\mathcal{A}) \subset F \md_{\mathcal{A}}$ the image
category of $\mathcal{A}$ under $\mathcal{Y}$. Denote by
$\mathcal{Y}(\mathcal{A})^{\Delta} \subset F \md_{\mathcal{A}}$ the
triangulated completion of $\mathcal{Y}(\mathcal{A})^{\Delta}$,
namely the minimal full subcategory of $F \md_{\mathcal{A}}$ which is
closed under mapping cones over morphisms with filtration level
$\leq 0$ and quasi-isomorphisms of filtration level $\leq 0$
(these are maps of non-positive filtration level that induce an isomorphism in the $0$-level homological category). Note
that due to our assumptions on the filtered $A_{\infty}$ category $\mathcal{A}$, the category
$\mathcal{Y}(\mathcal{A})$ is closed under shifts and translations,
and therefore the same holds for $\mathcal{Y}(\mathcal{A})^{\Delta}$.
The construction of $\mathcal{Y}(\mathcal{A})^{\Delta}$ can be
explicitly carried out by iteratively taking mapping cones over
morphisms of filtration level $\leq 0$ and adding at each stage
$0$-quasi-isomorphic modules. The resulting
$A_{\infty}$-category $\mathcal{Y}(\mathcal{A})^{\Delta}$ is filtered
and strictly unital $A_{\infty}$ and carries a translation and a shift
functor. The persistence-homological category of
$\mathcal{Y}(\mathcal{A})^{\Delta}$
$$PD(\mathcal{A}) := PH(\mathcal{Y}(\mathcal{A})^{\Delta})$$
is a TPC which we call the {\em persistence derived} category of
$\mathcal{A}$.

Let $\mathcal{A}$, $\mathcal{B}$ be two filtered
$A_{\infty}$-categories. Let
  $\mathcal{F}: \mathcal{A} \longrightarrow \mathcal{B}$ be an
  $A_{\infty}$-functor with a given linear deviation rate
  (see~\S\ref{sbsb:fil-func} and~\S\ref{a:func-LD}). Then the
  push-forward operation from~\S\ref{ap:pshf} gives rise to a {\em
    filtered} $A_{\infty}$-functor
  $\mathcal{F}_*: F\md_{\mathcal{A}} \longrightarrow
  F\md_{\mathcal{B}}$.  More importantly for us, it sends
$\mathcal{Y}(\mathcal{A})^{\Delta}$ to
$\mathcal{Y}(\mathcal{\mathcal{B}})^{\Delta}$ hence it gives rise to a
filtered $A_{\infty}$-functor
$$\mathcal{F}_* : \mathcal{Y}(\mathcal{\mathcal{A}})^{\Delta}
\longrightarrow \mathcal{Y}(\mathcal{\mathcal{B}})^{\Delta}.$$ Passing
to homology we obtain a TPC functor
$$PD(\mathcal{F}): PD(\mathcal{A}) \longrightarrow PD(\mathcal{B}).$$

\subsubsection{A larger derived category} \label{sbsb:fmod-2} As in
the previous sections, let $\mathcal{A}$ be a filtered
$A_{\infty}$-category. We will now construct a somewhat larger version
of the category $PD(\mathcal{A})$ from~\S\ref{sbsb:fmod} by adding
also acyclic modules.

Denote by $AF\md_{\mathcal{A}} \subset F\md_{\mathcal{A}}$ the full
$A_{\infty}$-subcategory of acyclic modules (see~\S\ref{sbsb:fai} for
the definition of the acyclic subcategory). Consider now the minimal
$A_{\infty}$ full subcategory of $F \md_{\mathcal{A}}$ which contains
both the image of the filtered Yoneda embedding
$\mathcal{Y}(\mathcal{A})$ as well as $AF\md_{\mathcal{A}}$ and is
closed under mapping cones over morphisms with filtration level
$\leq 0$ and quasi-isomorphisms of filtration level $\leq 0$.  Denote
this filtered $A_{\infty}$-category by
$\langle \mathcal{Y}(\mathcal{A}), AF\md_{\mathcal{A}}
\rangle^{\Delta}$. Define now the complete persistence derived
category of $\mathcal{A}$ to be the persistence homology category of
the latter, namely:
$$PD^{c}(\mathcal{A}) := PH \Bigl( \langle \mathcal{Y}(\mathcal{A}),
AF\md_{\mathcal{A}}\rangle^{\Delta} \Bigr).$$ The discussion
from~\S\ref{sbsb:fmod} on functors carries over to this case. More
specifically, if $\mathcal{A}$, $\mathcal{B}$ are filtered
$A_{\infty}$-categories and
$\mathcal{F}: \mathcal{A} \longrightarrow \mathcal{B}$ is an
$A_{\infty}$-functor with linear deviation then we obtain a TPC
functor:
$$PD^c(\mathcal{F}): PD^c(\mathcal{A})
\longrightarrow PD^c(\mathcal{B})$$ induced by $\mathcal{F}$.

There is an alternative description of $PD^c(\mathcal{A})$, by
completing $PD(\mathcal{A})$ with respect to the interleaving distance, or even by applying  the same
type of completion to $\mathcal{Y}(\mathcal{A})^{\Delta}$ and then
passing to persistence homology. The result turns out to be the same:
\begin{lem} \label{l:PDc}
  $PD^{c}(\mathcal{A}) = \bigl(PD(\mathcal{A})\bigr)^{c} = PH \Bigl(
  (\mathcal{Y}(\mathcal{A})^{\Delta})^{c} \Bigr)$.
\end{lem}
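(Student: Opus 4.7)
The plan is to prove all three objects are literally equal as full $A_{\infty}$-subcategories (or PC-subcategories) of $F\md_{\mathcal{A}}$, by tracking objects. The second equality $(PD(\mathcal{A}))^c = PH((\mathcal{Y}(\mathcal{A})^{\Delta})^c)$ is essentially tautological from the definitions collected in \S\ref{sbsb:comp}: both sides are the full subcategory of $PH(F\md_{\mathcal{A}})$ whose objects are obtained from $\Ob(\mathcal{Y}(\mathcal{A})^{\Delta})$ by completing with translations and with the interleaving-distance completion, and by the convention in \S\ref{sbsb:fai} the interleaving pseudo-metric for the $A_{\infty}$-category $\mathcal{Y}(\mathcal{A})^{\Delta}$ is by definition the one in $PD(\mathcal{A})$. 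So the real content is the first equality, which reduces to showing that
\begin{equation*}
\langle \mathcal{Y}(\mathcal{A}),\, AF\md_{\mathcal{A}}\rangle^{\Delta} \quad \text{and} \quad (\mathcal{Y}(\mathcal{A})^{\Delta})^c
\end{equation*}
have the same objects in $F\md_{\mathcal{A}}$ (both are already full subcategories closed under shifts and translations). I denote the first by $\mathcal{Y}^{\Delta,c}$ and the second by $\mathcal{Y}_{\text{dist}}$.

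First I would prove $\mathcal{Y}_{\text{dist}} \subseteq \mathcal{Y}^{\Delta,c}$. Given $M \in \mathcal{Y}_{\text{dist}}$, there is some $N \in \mathcal{Y}(\mathcal{A})^{\Delta}$ with $\dint(M,N) \leq r < \infty$. By definition of $\dint$ (see~\eqref{eq:d-int}) this yields a filtration-$0$ morphism $\varphi: \Sigma^{r} M \longrightarrow N$ participating in an $r$-interleaving with a companion $\psi: \Sigma^{r} N \longrightarrow M$. A standard persistence-theoretic fact (the cone of an interleaving morphism is acyclic) then shows that $C := \Cone(\varphi)$ is $2r$-acyclic, hence $C \in AF\md_{\mathcal{A}}$. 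Rotating the exact triangle $\Sigma^{r} M \to N \to C \to T\Sigma^{r} M$, the object $\Sigma^{r} M$ is itself obtained as a mapping cone (up to translation) of a filtration-$0$ morphism between $T^{-1}C$ and $N$, both of which lie in $\langle \mathcal{Y}(\mathcal{A}), AF\md_{\mathcal{A}}\rangle^{\Delta}$. Since $\mathcal{Y}^{\Delta,c}$ is closed under shifts, this places $M$ in $\mathcal{Y}^{\Delta,c}$.

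Next I would prove the reverse $\mathcal{Y}^{\Delta,c} \subseteq \mathcal{Y}_{\text{dist}}$ by induction on the number of cone operations needed to build an object of $\mathcal{Y}^{\Delta,c}$ from $\mathcal{Y}(\mathcal{A})$ and $AF\md_{\mathcal{A}}$. The base case splits in two: Yoneda objects are trivially in $\mathcal{Y}_{\text{dist}}$, and any $r$-acyclic $A$ satisfies $\dint(A,0) \leq r/2 < \infty$ (take the interleaving morphisms to be zero; the composition is $\eta_{r}^{A} = 0$), while $0 \in \mathcal{Y}(\mathcal{A})^{\Delta}$ as $\Cone(\id)$. The inductive step amounts to stability of $\mathcal{Y}_{\text{dist}}$ under filtered cones: given $X_{1}, X_{2} \in \mathcal{Y}_{\text{dist}}$ with witnesses $Y_{i} \in \mathcal{Y}(\mathcal{A})^{\Delta}$ and common interleaving rate $r$, and a filtration-$0$ morphism $\varphi: X_{1} \longrightarrow X_{2}$, one constructs an induced filtration-$0$ morphism $\widetilde{\varphi}: \Sigma^{2r} Y_{1} \longrightarrow Y_{2}$ by composing the interleaving maps $g_{1}: \Sigma^{r} Y_{1} \to X_{1}$ and $f_{2}: \Sigma^{r} X_{2} \to Y_{2}$ with $\Sigma^{r}\varphi$. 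Its cone lies in $\mathcal{Y}(\mathcal{A})^{\Delta}$, and an octahedral-axiom chase using the acyclicity of the interleaving cones bounds $\dint(\Cone(\varphi), \Cone(\widetilde{\varphi})) \leq 2r$, which gives $\Cone(\varphi) \in \mathcal{Y}_{\text{dist}}$. Closure under $0$-quasi-isomorphisms is immediate since these preserve $\dint$.

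The main obstacle is the inductive step above: producing the witness $\Cone(\widetilde{\varphi})$ and verifying the interleaving bound requires a careful diagram chase through the octahedral axiom in the TPC $PH(F\md_{\mathcal{A}})$, tracking shifts and filtration levels. This is the $A_{\infty}$-filtered avatar of the classical ``$1$-Lipschitz cone'' principle of persistence, and the bookkeeping is somewhat delicate because morphisms between shifted objects change filtration level; nonetheless it is a purely formal consequence of the TPC axioms recalled in \S\ref{sb:tpc}. Once this step is established, both inclusions combine to give $\mathcal{Y}^{\Delta,c} = \mathcal{Y}_{\text{dist}}$, completing the proof.
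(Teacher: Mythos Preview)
Your proposal is correct. The paper states Lemma~\ref{l:PDc} without proof, so there is no argument to compare against directly; however, your approach is exactly the one the surrounding material points toward. The second equality is indeed tautological from the convention in~\S\ref{sbsb:fai} that $\dint$ on a filtered $A_\infty$-category is computed in its persistence homological category. For the first equality, your two inclusions are precisely the content of the two auxiliary lemmas the paper records just afterwards: Lemma~\ref{l:a-iso} (finite $\dint$ $\Leftrightarrow$ almost isomorphic, i.e.\ related by a map with acyclic cone) handles $\mathcal{Y}_{\text{dist}} \subseteq \mathcal{Y}^{\Delta,c}$, and Lemma~\ref{l:cmp-TPC} (the $\dint$-completion of a sub-TPC is again a sub-TPC) is exactly the closure-under-cones statement you need for $\mathcal{Y}^{\Delta,c} \subseteq \mathcal{Y}_{\text{dist}}$. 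Your octahedral chase in the inductive step is the substance of Lemma~\ref{l:cmp-TPC}, so you have in effect reproved it; citing it would shorten the argument considerably.
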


\subsubsection{$r$-isomorphisms} \label{sbsb:r-iso} In the context of
TPC's there is an important notion of $r$-isomorphism that leads to
interesting measurements and can also be used to arrive at the same
completion $PD^c(\mathcal{A})$ described in~\S\ref{sbsb:fmod-2}.

Let $\mathscr{C}$ be a TPC, $A,B \in \Ob(\mathscr{C})$,
$\phi: A \longrightarrow B$ a morphism in $\hom_{\mathscr{C}^0}(A,B)$
and $r \geq 0$. We say that $\phi$ is an $r$-isomorphism if $\phi$ can
be completed to an exact triangle in the (triangulated) category
$\mathscr{C}^0$
$$A \xrightarrow{\ \phi \ } B \xrightarrow{\ \ \ }
K \xrightarrow{\ \ \ } TA$$ with $K \in \Ob(\mathscr{C})$ $r$-acyclic
(see~\S\ref{sbsb:acyclic}). For example, the standard morphism
$\eta^A_r : \Sigma^r A \longrightarrow A$ is always an $r$-isomorphism
(this is in fact one of the axioms defining TPC's), but of course
there are many other examples. We refer the reader to~\cite{BCZ:tpc}
for more details on the concept of $r$-isomorphisms.

Completing with respect to $r$-isomorphisms is somewhat subtle because
of the following. The existence of an $r$-isomorphism
$\phi: A \longrightarrow B$ does not imply that there exists an
$r$-isomorphism $B \longrightarrow A$, hence this does not lead to a
symmetric relation on pairs of objects $(A,B)$. Nor is this
``relation'' transitive (if $\phi: A \longrightarrow B$ is an
$r_1$-isomorphism and $r_2$-isomorphism $\psi: B \longrightarrow C$ is
an $r_2$-isomorphism then $\psi \circ \phi: A \longrightarrow C$ is in
general only an $(r_1+r_2)$-isomorphism). However by the results
of~\cite{BCZ:tpc}, if $\phi: A \longrightarrow B$ is an
$r$-isomorphism then there exists a $2r$-isomorphism
$\Sigma^r B \longrightarrow A$. This leads to the following
definition: two objects $A, B \in \Ob(\msc)$ are called {\em almost
  isomorphic} if there is an $r\geq 0$ and $s \in \mathbb{R}$ and an
$r$-isomorphism $\phi: A \longrightarrow \Sigma^s B$. By the results
of~\cite{BCZ:tpc}, being ``almost isomorphic'' is an equivalence
relation. Moreover, we have:
\begin{lem} \label{l:a-iso} Let $\msc$ be a TPC and Let
  $A, B \in \Ob(\msc)$. Then $A$ is almost isomorphic to $B$ if and
  only if $\dint(A,B) < \infty$.
\end{lem}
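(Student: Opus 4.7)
My plan is to exploit the symmetry of the two conditions: both essentially assert that $A$ and $B$ are related by a pair of maps up to a controllable acyclic error. I would split the proof into the two implications and handle each using the TPC axioms and properties recalled in~\S\ref{sb:tpc} and~\S\ref{sbsb:r-iso}.

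For the direction ``almost isomorphic $\Rightarrow \dint(A,B) < \infty$'', I would first reduce to the shift-free case. Given an $r$-isomorphism $\phi: A \to \Sigma^s B$, the triangle inequality for $\dint$ separates the problem into bounding $\dint(A, \Sigma^s B)$ and $\dint(\Sigma^s B, B)$; the latter is at most $|s|$ using only the structural $\eta$-maps and the shift functor axioms. Thus it suffices to handle the case $s=0$. For an $r$-isomorphism $\phi: A \to B$, I would invoke the fact recalled in~\S\ref{sbsb:r-iso} that $\phi$ admits a $2r$-isomorphism partner $\psi: \Sigma^r B \to A$ provided by the results of~\cite{BCZ:tpc}. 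Taking $\varphi := \phi \circ \eta^A_r: \Sigma^r A \to B$, one then checks, using naturality of $\eta$ and the compatibility relations between $\phi$ and $\psi$ (which arise from their being completions of the same acyclic triangle), that the pair $(\varphi, \psi)$ satisfies the interleaving identities~\eqref{eq:d-int} at a level linear in $r$.

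For the converse ``$\dint(A,B) < \infty \Rightarrow$ almost isomorphic'', fix $r \geq 0$ for which there exist interleaving maps $\varphi: \Sigma^r A \to B$, $\psi: \Sigma^r B \to A$ realizing~\eqref{eq:d-int}. I would complete $\varphi$ to an exact triangle $\Sigma^r A \xrightarrow{\varphi} B \xrightarrow{q} K \xrightarrow{\partial} T\Sigma^r A$ in $\msc^0$ and aim to show $K$ is acyclic; this would exhibit $\varphi$ as an $N$-isomorphism for $N$ proportional to $r$, giving an almost isomorphism between $A$ and $B$. The starting observation is the vanishing $q \circ \eta^B_{2r} = q \circ \varphi \circ \Sigma^r \psi = 0$, which by naturality of $\eta$ translates to $\eta^K_{2r} \circ \Sigma^{2r} q = 0$. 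Running the symmetric argument on the triangle generated by $\psi$ and combining the two resulting factorizations through the cofibers $T\Sigma^{*}A$, $T\Sigma^{*}B$ should force $\eta^K_N$ itself to vanish for an $N$ of size linear in $r$, proving $K$ is acyclic.

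The main obstacle I anticipate lies in this second direction: upgrading the single vanishing $\eta^K_{2r} \circ \Sigma^{2r} q = 0$ to unconditional acyclicity $\eta^K_N = 0$. This requires weaving together the two triangles coming from $\varphi$ and $\psi$, exploiting the interaction of $\Sigma$, $T$ and $\eta$, and tracking shifts throughout the diagram chase. However, all these manipulations are pure TPC bookkeeping and rely on the axioms and companion lemmas already developed in~\cite{BCZ:tpc}, so no ingredient beyond those recalled in~\S\ref{sb:tpc}--\S\ref{sbsb:r-iso} is expected to be necessary.
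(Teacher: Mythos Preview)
The paper does not actually supply a proof of this lemma; it is stated as a consequence of the general TPC machinery developed in~\cite{BCZ:tpc}, in the same spirit as the preceding fact about the existence of a $2r$-isomorphism partner $\Sigma^r B \to A$. Your outline is consistent with how these results are established there. In particular, the forward implication is essentially \cite[Lemma~2.85]{BCZ:tpc} (which the present paper invokes explicitly elsewhere, e.g.\ in the proof of Proposition~\ref{prop:estimates-in-r} and in the remark following Proposition~\ref{gio:algfilabo}): an $r$-isomorphism $A\to B$ forces $\dint(A,B)\leq r$, and your reduction to $s=0$ via $\dint(\Sigma^s B,B)\leq |s|$ is the natural way to absorb the shift. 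For the converse, your plan to complete the interleaving map $\varphi:\Sigma^r A\to B$ to a triangle and prove acyclicity of the cone is the right target; the observation $q\circ\eta^B_{2r}=q\circ\varphi\circ\Sigma^r\psi=0$ is the correct first step. The passage from this to $\eta^K_N=0$ for some finite $N$ is, as you anticipate, the only nontrivial point, and it is handled in~\cite{BCZ:tpc} by the weighted-octahedron and related TPC axioms rather than by a bare diagram chase in an ordinary triangulated category (where such an inference would fail). So your identification of both the strategy and the obstacle is accurate; there is simply nothing further in the present paper to compare against.
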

It follows that completing $PD(\mathcal{A})$ with respect to almost
isomorphisms gives precisely the same category as $PD^c(\mathcal{A})$.

To end this discussion, let us also mention the following useful fact:
completing a TPC with respect to the interleaving distance always
yields a TPC. More precisely
\begin{lem} \label{l:cmp-TPC} Let $\mathscr{C}$ be a TPC and
  $\mathscr{D} \subset \mathscr{C}$ a full sub-TPC of
  $\mathscr{C}$. Then $\mathscr{D}^{c} \subset \mathscr{C}$ is also a
  sub-TPC.
\end{lem}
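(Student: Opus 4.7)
The plan is to unpack what $\mathscr{D}^{c}$ being a sub-TPC of $\mathscr{C}$ actually requires, and then check each closure property separately. Since $\mathscr{D}$ is already a sub-TPC, it is closed under the translation functor $T$, so $\Ob(\mathscr{D})^{T} = \Ob(\mathscr{D})$ and therefore $\mathscr{D}^{c} = \mathscr{D}^{\text{int}}$. By Lemma \ref{l:a-iso}, $\Ob(\mathscr{D}^{c})$ coincides with the set of objects $A \in \Ob(\mathscr{C})$ that are almost isomorphic to some object of $\mathscr{D}$. The facts that almost-isomorphism is an equivalence relation (\S\ref{sbsb:r-iso}) and that $\dint$ satisfies the triangle inequality will both be used throughout.

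To be a sub-TPC, the full sub-PC $\mathscr{D}^{c}$ must be closed under (a) all shifts $\Sigma^{r}$, (b) the translation $T$, and (c) the formation of mapping cones over arbitrary morphisms in $\mathscr{C}^{0}$; once (a)--(c) are established, the rest of the TPC axioms are inherited automatically from $\mathscr{C}$ because they are universal diagrammatic conditions and $\mathscr{D}^{c}$ is full. Properties (a) and (b) are immediate from Lemma \ref{l:d-func}(1) together with the invertibility of $\Sigma^{r}$ and $T$: these functors preserve $\dint$, and they send $\mathscr{D}$ into itself, so they send any object almost isomorphic to $B \in \mathscr{D}$ to an object almost isomorphic to the corresponding shift/translate of $B$, still in $\mathscr{D}$. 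Closure under $0$-isomorphism also follows since $X \cong_{0} Y$ forces $\dint(X,Y)=0$, and the triangle inequality then preserves finiteness of the distance to $\mathscr{D}$.

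The substantial step is (c). Given $f \colon A \to B$ in $\mathscr{C}^{0}$ with $A, B \in \Ob(\mathscr{D}^{c})$, I would pick $A', B' \in \Ob(\mathscr{D})$ together with $r$-isomorphisms $\phi_{A} \colon A' \to \Sigma^{s_{A}} A$ and $\phi_{B} \colon B' \to \Sigma^{s_{B}} B$ coming from the almost-isomorphism characterisation. Using the TPC axiom recalled in \S\ref{sbsb:r-iso} that each $r$-isomorphism admits a $2r$-isomorphism ``pseudo-inverse'' (after a shift), I would transport $f$ to a morphism $f' \colon \Sigma^{u} A' \to \Sigma^{v} B'$ entirely inside $\mathscr{D}^{0}$. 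Since $\mathscr{D}$ is a sub-TPC, the cone $C(f')$ exists inside $\mathscr{D}$.

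The main obstacle is then the diagram chase showing that $C(f)$ is almost isomorphic to an appropriate shift of $C(f')$. The plan is to iterate the octahedral axiom in $\mathscr{C}^{0}$ on the triangles witnessing that $\phi_{A}$ and (a pseudo-inverse of) $\phi_{B}$ are $r$-isomorphisms: each such triangle has an acyclic third vertex, and the octahedral axiom forces the ``comparison cone'' between $C(f)$ and the shifted $C(f')$ to sit in a triangle whose other two vertices are acyclic. By the standard fact that the class of acyclic objects is closed under extensions in $\mathscr{C}^{0}$, this comparison cone is itself acyclic, which translates into a finite interleaving distance between $C(f)$ and a shift of $C(f')$. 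Hence $C(f) \in \Ob(\mathscr{D}^{c})$, completing the verification and establishing that $\mathscr{D}^{c}$ is a sub-TPC.
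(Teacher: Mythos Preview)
The paper does not actually supply a proof of this lemma; it is stated without argument, presumably because it follows from routine TPC manipulations in the sense of \cite{BCZ:tpc}. Your proposal is a correct way to fill in the details and is precisely the sort of argument one would expect.

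One small imprecision in your last paragraph: the two applications of the octahedral axiom do not produce a single exact triangle relating $C(f)$ and $C(f')$ with two acyclic side-vertices. Rather, applying octahedral to $A' \xrightarrow{\phi_A} A \xrightarrow{f} B$ (after absorbing shifts) gives an exact triangle $C(\phi_A) \to C(f\phi_A) \to C(f) \to TC(\phi_A)$, hence an $r$-isomorphism $C(f\phi_A) \to C(f)$; and applying it again to $A' \xrightarrow{f\phi_A} B \xrightarrow{\psi_B} B'$ gives an $r'$-isomorphism $C(f\phi_A) \to C(f')$. You then conclude via transitivity of almost-isomorphism rather than via a single triangle. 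The end result is of course the same, and the rest of your outline (closure under $\Sigma$, $T$, $0$-isomorphism; fullness implying the remaining TPC axioms transfer) is fine.
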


\subsubsection{Filtered twisted complexes} \label{sbsb:ftw} Let
$\mathcal{A}$ be a filtered $A_{\infty}$-category. There is another
variant of a TPC associated with $\mathcal{A}$ which is based on
twisted complexes (see~\cite[Chapter~I,
Section~3(l)]{Se:book-fukaya-categ} for unfiltered
$A_{\infty}$-categories and~\cite[Section~2.5]{BCZ:tpc} for the case
of filtered dg-categories. The case of filtered
$A_{\infty}$-categories is treated in detail in~\S\ref{ap:ftc}.

Roughly speaking, one defines first the additive enlargement
$\mathcal{A}^{\oplus}$ of $\mathcal{A}$ as
in~\cite{Se:book-fukaya-categ} (where it is denoted by
$\Sigma \mathcal{A}$, but in our case $\Sigma$ is already used for
other purposes), and defines a filtration structure on it by an
obvious extension of the filtrations on $\mathcal{A}$. Recall that, by
assumption, $\mathcal{A}$ is closed under shifts and translations.
Next one forms a new filtered $A_{\infty}$-category $F Tw \mathcal{A}$
whose objects are all twisted complexes $(X, q_X)$ over $\mathcal{A}$
with $X \in \Ob(\mathcal{A}^{\oplus})$ and differential
$q_X \in \hom^{0}_{\mathcal{A}^{\oplus}}(X,X)$ lying at filtration
level $\leq 0$. The morphisms in this category come from
$\mathcal{A}^{\oplus}$ and are thus filtered. One then defines the
$A_{\infty}$-operations on $FTw \mathcal{A}$ in the standard way and
the fact that $\mathcal{A}$ itself is filtered implies that
$FTw \mathcal{A}$ is filtered too. Finally note that since
$\mathcal{A}$ is strictly unital the same holds for
$F Tw \mathcal{A}$.

The filtered $A_{\infty}$-category $\mathcal{A}$ embeds into
$Tw \mathcal{A}$ in an obvious way, the embedding being a filtered
$A_{\infty}$-functor which is full and faithful (on the chain
level). Moreover, $Tw \mathcal{A}$ is pre-triangulated in the filtered
sense (which in particular means that it is closed under formation of
filtered mapping cones). We denote by $PH(F Tw \mathcal{A})$ the
persistence homological category of $F Tw \mathcal{A}$. Standard
arguments show that $PH(F Tw \mathcal{A})$ is a TPC with translation
and shift functors induced from those of $\mathcal{A}$.

An important property of $PH(F Tw \mathcal{A})$ is the following.  Let
$\mathcal{A}$, $\mathcal{B}$ be two filtered
$A_{\infty}$-categories. Let
$\mathcal{F}: \mathcal{A} \longrightarrow \mathcal{B}$ be a filtered
$A_{\infty}$-functor. Then there is a canonical extension of
$\mathcal{F}$ to a filtered, strictly unital, $A_{\infty}$-functor
$Tw \mathcal{F}: FTw \mathcal{A} \longrightarrow FTw \mathcal{B}$.
Moreover, $Tw \mathcal{F}$ induces a homological functor
$$PH(Tw \mathcal{F}): PH(F Tw \mathcal{A}) \longrightarrow PH(FTw
\mathcal{B})$$ which is a TPC functor. 

\subsection{Systems of categories with increasing
  accuracies} \label{s:sys-tpc}

In what follows we will deal with families of TPC's that should be
thought of as approximations of a (currently not yet defined) limit TPC. The
TPC's occurring in the family are parametrized by a space controlling
the accuracy-level of their approximation. This situation naturally
occurs when dealing with Fukaya categories that can be parametrized by
different choices of perturbation data. The setting and definitions
below were conceived with the example of Fukaya categories in
mind. Nevertheless we believe that a general axiomatic framework may
prove useful in other cases too.

We will axiomatize these notions in three different settings below,
starting with the most general one - a system of PC's.

\subsubsection{The case of PC's} \label{sb:sys-pc}
Let $(\mathcal{P}, \preceq)$ be a directed set (i.e.~a preordered set
such that
$\forall \; p', p'' \in \mathcal{P}, \; \exists \; q \in \mathcal{P}$
with $p', p'' \preceq q$), together with a function
$\nu: \mathcal{P} \longrightarrow \mathbb{R}_{\geq 0}$ that has the
following properties:
\begin{enumerate}
\item For every $p \preceq q$ we have $\nu(p) \geq \nu(q)$ (i.e.~$\nu$
  is decreasing with respect to $\preceq$).
\item For every $\delta>0$ there exists $p \in \mathcal{P}$ such that
  $\nu(p) \leq \delta$.
\end{enumerate}

We will refer to $(\mathcal{P}, \preceq , \nu)$ as the parameter space
for our system of PC's. For brevity we will omit $\preceq$ and $\nu$
from the notation and simply write $\mathcal{P}$ for the triple. We
will refer to $\nu(p)$ as the norm or size of the parameter
$p \in \mathcal{P}$.

Consider now a family
$\widehat{\msc} = \{\msc_p\}_{p \in \mathcal{P}}$ of PC's $\msc_p$
parametrized by $p \in \mathcal{P}$. We will denote the shift and
translation functors of all the $\msc_p$'s by the same notation,
$\Sigma$ and $T$, respectively. Assume further that for every
$p \preceq q$ we are given two sets
$\msj_{p,q} = \mathscr{J}_{p,q}(\widehat{\msc})$,
$\msj_{q,p} = \mathscr{J}_{q,p}(\widehat{\msc})$, of PC functors
$\msc_p \longrightarrow \msc_q$, $\msc_q \longrightarrow \msc_p$,
respectively, with the following properties: \label{pp:sys-functors}
\begin{enumerate}
\item $\msj_{p,p} = \{\id_{\msc_p}\}$ for every $p \in \mathcal{P}$.
\item \label{i:0-iso-sys-1} If $p \preceq q$ then all the functors in
  $\msj_{p,q}$ are mutually $0$-isomorphic. More specifically, for
  every two functors $\msf, \msg \in \msj_{p,q}$ there is a natural
  isomorphism $\msf \cong \msg$ of persistence level $0$ (in other
  words, $\msf$ and $\msg$ are isomorphic in the $0$-level category
  $\text{pc-fun}^0(\msc_p, \msc_q)$ of the PC of PC-functors
  $\text{pc-fun}(\msc_p, \msc_q)$). Here and in what follows we will
  abbreviate this by writing $\msf \cong_0 \msg$. Furthermore, we
  require the same thing also for any two functors in $\msj_{q,p}$.
\item \label{i:0-iso-sys-2} If $p \preceq q \preceq r$ then for every
  $\msf \in \msj_{p,q}$, $\msg \in \msj_{q,r}$, $\msh \in \msj_{p,r}$
  we have $\msg \circ \msf \cong_0 \msh$.
\item \label{i:d-int-func} If $p \preceq q$ then for every
  $\msf \in \msj_{p,q}$ and $\msh \in \msj_{q,p}$ we have
  $$\dint(\msh \circ \msf, \id_{\msc_p}), \, \dint(\msf \circ
  \msh, \id_{\msc_q}) \leq C | \nu(p) - \nu(q)|,$$ for some constant
  $C$ that depends only on the entire family $\widehat{\msc}$ but
  neither on $p, q$ nor on $\msf, \msg$. Here the two $\dint$'s
  stand for the interleaving distances on the PC's of PC functors
  $\msc_p \longrightarrow \msc_p$ and $\msc_q \longrightarrow \msc_q$.
\end{enumerate}
In what follows we will denote by $\widehat{\msc}$ the entire data
above, namely both the family of categories
$\{\msc_p\}_{p \in \mathcal{P}}$ as well the sets of functors
$\msj_{p,q}$ and refer to $\widehat{\msc}$ as a {\em system of PC's
  with increasing accuracy}, or sometimes a {\em system of PC's} for
short. We will call the functors in $\msj_{p,q}$ comparison functors
and typically denote such a functor by
$\msh_{p,q}: \msc_p \longrightarrow \msc_q$.

\begin{rem} \label{rem:pc-limits}
  \begin{enumerate}
  \item Given a system $\widehat{\msc}$ of PC's with increasing accuracy,
    all the $\infty$-level categories $\msc^{\infty}_p$, corresponding
    to the PC's $\msc_p$ in the system $\widehat{\msc}$, are mutually
    equivalent. From the persistence viewpoint, as $p$ ``grows'' (with
    respect to $\preceq$) the comparison between $\msc_p$ and
    $\msc_q$ for $p \preceq q$ becomes more and more
    precise. \label{i:rem-PC-1}
  \item A single PC $\msc$ makes a special case of the definition
    above. We can view it as a system of PC's parametrized by a
    $1$-point parameter space $\mathcal{P} = \{*\}$. In this case we
    set the norm function $\nu$ to be trivial, $\nu(*) =0$.
  \end{enumerate}
\end{rem}

\subsubsection{Functors of systems of PC's} \label{sbsb:func-sys-pc}
Next we discuss functors between systems of PC's with increasing
accuracy. Let $\widehat{\msc}$ and $\widehat{\msd}$ be two systems of
PC's with increasing accuracy. Whenever we need to distinguish between
the additional structures of each of the systems $\widehat{\msc}$ and
$\widehat{\msd}$ we will use $\widehat{\msc}$ and $\widehat{\msd}$
subscripts or superscripts, depending on notational convenience (for
example, the parameter space of $\widehat{\msc}$ will be denoted
$\mathcal{P}_{\widehat{\msc}}$, the comparison functors
$\msh_{p,q}^{\widehat{\msc}}$, etc. but we will denote the preorders
and size of parameters in both systems by $\preceq$ and $\nu$
respectively).

A functor
$\widehat{\msf} : \widehat{\msc} \longrightarrow \widehat{\msd}$
consists of the following structures:
\begin{enumerate}
\item A map
  $\widehat{\msf}: \mathcal{P}_{\widehat{\msc}} \longrightarrow
  \mathcal{P}_{\widehat{\msd}}$ between the parameter spaces of
  $\widehat{\msc}$ and of $\widehat{\msd}$ (which for simplicity we
  have denoted by $\widehat{\msf}$ too). 
\item A family of PC functors
  $\bigl\{ \msf_p: \msc_p \longrightarrow
  \msd_{\widehat{\msf}(p)}\bigr\}_{p \in
    \mathcal{P}_{\widehat{\msc}}}$, parametrized by
  $\mathcal{P}_{\widehat{\msc}}$.
\end{enumerate}
These two structures are required to have the following additional
properties. The map $\widehat{\msf}$ from~(1) above should satisfy
that for every $p,q \in \mathcal{P}_{\widehat{\msc}}$ with
$p \preceq q$ we have $\widehat{\msf}(p) \preceq
\widehat{\msf}(q)$. Moreover, we require that
$\nu(\widehat{\msf}(p)) \leq \nu(p)$ for every
$p \in \mathcal{P}_{\widehat{\msc}}$. The functors $\msf_p$ are
required to satisfy the following two conditions. First, for every
$p \preceq q$ in $\mathcal{P}_{\widehat{\msc}}$ and any choices of
comparison functors $\msh^{\widehat{\msc}}_{p,q}$,
$\msh^{\widehat{\msd}}_{\widehat{\msf}(p), \widehat{\msf}(q)}$ the
following diagram:
\begin{equation} \label{sys-fun-diag-1} \xymatrixcolsep{3pc}
  \xymatrixrowsep{3pc} \xymatrix{ \msc_p \ar[r]^-{\msf_p}
    \ar[d]_-{\msh^{\widehat{\msc}}_{p,q}} & \msd_{\widehat{\msf}(p)}
    \ar[d]^-{\msh^{\widehat{\msd}}_{\widehat{\msf}(p), \widehat{\msf}(q)}}\\
    \msc_q \ar[r]^-{\msf_q} & \msd_{\widehat{\msf}(q)}}
\end{equation}
commutes up to natural isomorphisms in the $0$-level category
$\text{pc-fun}^0(\msc_p, \msd_{\widehat{\msf}(q)})$ of the PC of
PC-functors $\text{pc-fun}(\msc_p, \msd_{\widehat{\msf}(q)})$.
Similarly, we also require that for every $p \preceq q$ in
$\mathcal{P}_{\widehat{\msc}}$ and every choice of comparison functors
$\msh^{\widehat{\msc}}_{q,p}$,
$\msh^{\widehat{\msd}}_{\widehat{\msf}(q), \widehat{\msf}(p)}$ the
diagram:
\begin{equation} \label{sys-fun-diag-2} \xymatrixcolsep{3pc}
  \xymatrixrowsep{3pc} \xymatrix{ \msc_p \ar[r]^-{\msf_p} &
    \msd_{\widehat{\msf}(p)}
    \\
    \msc_q \ar[u]^-{\msh^{\widehat{\msc}}_{p,q}} \ar[r]^-{\msf_q} &
    \msd_{\widehat{\msf}(q)}
    \ar[u]_-{\msh^{\widehat{\msd}}_{\widehat{\msf}(p),
        \widehat{\msf}(q)}} }
\end{equation}
commutes up to natural isomorphisms in the $0$-level category
$\text{pc-fun}^0(\msc_q, \msd_{\widehat{\msf}(p)})$ of the PC of
PC-functors $\text{pc-fun}(\msc_q, \msd_{\widehat{\msf}(p)})$.

It remains to define natural transformations between functors of
systems of PC's. Let $\widehat{\msc}, \widehat{\msd}$ be two systems of PC's and
$\widehat{\msf}, \widehat{\msg}: \widehat{\msc} \longrightarrow
\widehat{\msd}$ two functors having the same action on the parameter
spaces (i.e.~the two maps
$\widehat{\msf}, \widehat{\msg}: \mathcal{P}_{\widehat{\msc}}
\longrightarrow \mathcal{P}_{\widehat{\msd}}$ coincide). A natural
transformation
$\widehat{\theta}: \widehat{\msf} \longrightarrow \widehat{\msg}$ of
shift $s \in \mathbb{R}$, consists of a family of persistence natural
transformations $\theta_p : \msf_p \longrightarrow \msg_p$ for every
$p \in \mathcal{P}$ each of which of shift $s$. In addition the
$\theta_p$ are required to have the following weak compatibility with
the comparison functors. For every $p \preceq q$ and every
$X \in \Ob(\msc_p)$ there exist $0$-isomorphisms
$$\tau_{p,q}: \msh_{\widehat{\msf}(p), \widehat{\msf}(q)}(\msf_p (X))
\longrightarrow \msf_q (\msh_{p,q}(X)), \quad \sigma_{p,q}:
\msh_{\widehat{\msf}(p), \widehat{\msf}(q)}(\msg_p(X)) \longrightarrow
\msg_q(\msh_{p,q}(X)),$$ such that
$$\sigma_{p,q} \circ \msh_{\widehat{\msf}(p), \widehat{\msf}(q)}(\theta_p(X)) = 
\theta_q(\msh_{p,q}(X)) \circ \tau_{p,q}$$ in
$\hom_{\msd_{\widehat{\msf}(q)}} \bigl( \msh_{\widehat{\msf}(p),
  \widehat{\msf}(q)}(\msf_p (X)), \msg_q(\msh_{p,q}(X))
\bigr)$. Finally, we require a similar compatibility also with the
comparison functors $\msh_{q,p}$.

\subsubsection{The case of TPC's} \label{sb:sys-tpc} A system of TPC's
with increasing accuracy is simply a system $\widehat{\msc}$ of PC's
as in~\S\ref{sb:sys-pc} with the following additional
assumptions. Each of the categories $\msc_p$, $p \in \mathcal{P}$, in
the system is assumed to be a TPC. Moreover, all the functors in the
collections $\msj_{p,q}$ and $\msj_{q,p}$ are assumed to be TPC
functors. The notion of functors of PC's (and natural transformations
between them) from~\S\ref{sbsb:func-sys-pc} extends to the setting of
systems of TPC's in a straightforward way, by just requiring the
functors ${\msf}_p$ to be TPC functors.

Note also that point~\eqref{i:rem-PC-1} of Remark~\ref{rem:pc-limits}
carries over to systems
$\widehat{\msc} = \{\msc_p\}_{p \in \mathcal{P}}$ of TPC's, namely the
$\infty$-levels $\msc^{\infty}_p$, $p \in \mathcal{P}$, are mutually
equivalent in the triangulated sense.

\subsubsection{Systems of filtered
  $A_{\infty}$-categories} \label{sb:sys-ainfty} There is also a
notion of system of filtered $A_{\infty}$-categories with increasing
accuracy. The definition is analogous to that given for a system of
PC's in~\S\ref{sb:sys-pc} above, but with one significant change
regarding the comparison functors between the $q$th-category and the
$p$th one when $p \preceq q$.

In the $A_{\infty}$-case a system of categories with increasing
accuracy consists of the following. A family
$\widehat{\mathcal{A}} = \{\mathcal{A}_p\}_{p \in \mathcal{P}}$ of
filtered $A_{\infty}$-categories $\mathcal{A}_p$, parametrized by
$p \in \mathcal{P}$. Note that we do not assume these
$A_{\infty}$-categories to be pretriangulated. The parameter space
$\mathcal{P}$ is endowed with the same structures $\nu$ and $\preceq$
as in~\S\ref{sb:sys-tpc}. For $p, q \in \mathcal{P}$ with
$p \preceq q$ we have two collections $\mathcal{J}_{p,q}$ and
$\mathcal{J}_{q,p}$ of comparison $A_{\infty}$-functors with the
following properties. The functors
$\mathcal{H}_{p,q} \in \mathcal{J}_{p,q}$ from the first collection
are filtered $A_{\infty}$-functors. The functors
$\mathcal{H}_{q,p} \in \mathcal{J}_{q,p}$ from the second collection
are assumed to be $A_{\infty}$-functors
$\mathcal{H}_{q,p}: \mathcal{A}_q \longrightarrow \mathcal{A}_p$ with
linear deviation rate $\leq C | \nu(p) - \nu(q)|$, for some constant
$C$ that depends only on $\widehat{\mathcal{A}}$ (and not on $p,
q$). Moreover, the functors from $\mathcal{J}_{p,q}$ and
$\mathcal{J}_{q,p}$ are assumed to satisfy the $A_{\infty}$-analogs of
the properties listed in~\S\ref{sb:sys-tpc} on
page~\pageref{pp:sys-functors}, with the following obvious
modifications. Instead of $0$-isomorphisms $\cong_0$ between two
functors mentioned at points~\eqref{i:0-iso-sys-1}
and~\eqref{i:0-iso-sys-2} on page~\pageref{pp:sys-functors} we will
now require that we have $0$-isomorphisms between the respective functors in the persistence homology category of the
filtered $A_{\infty}$-functors
$PH (F\text{fun}_{A_{\infty}}(\mathcal{A}_p, \mathcal{A}_q))$, and
similarly for the comparison functors in
$PH(\text{fun}^{\text{LD}}(\mathcal{A}_q, \mathcal{A}_p))$. The
assumptions in point~\eqref{i:d-int-func} on
page~\pageref{pp:sys-functors} will be now replaced by requiring 
$\dint(\mathcal{F} \circ \mathcal{H}, \id) \leq C | \nu(p) - \nu(q)|$
for the interleaving distance on
$PH (\text{fun}_{A_{\infty}}(\mathcal{A}_p, \mathcal{A}_p))$, and
similarly for $\dint(\mathcal{H} \circ \mathcal{F}, \id)$.

The notion of functors between systems of PC's (as well as natural
transformations between them) has an $A_{\infty}$-analog, following
the preceding principle.

\subsubsection{Modules and Yoneda embeddings of
  systems} \label{sbsb:yoneda-sys}

Let $\widehat{\mathcal{A}} = \{\mathcal{A}_p\}_{p \in \mathcal{P}}$ be
a system of filtered $A_{\infty}$-categories. Denote by
$F\md_{\mathcal{A}_p}$ the $A_{\infty}$-category of filtered
$\mathcal{A}_p$-modules. Recall that this is a filtered
$A_{\infty}$-category (in fact a filtered dg-category). These
categories fit into a system
$$F\md(\widehat{\mathcal{A}}) := \{ F\md_{\mathcal{A}_p} \}_{p \in
  \mathcal{P}}$$ of filtered $A_{\infty}$-categories, where for
$p \preceq q$ the comparison functors
$$(\mathcal{H}_{p,q})_* : F\md_{\mathcal{A}_p} \longrightarrow
F\md_{\mathcal{A}_q}$$ are just the push-forward functors induced by
the comparison functors $\mathcal{H}_{p,q}$ of the system
$\widehat{\mathcal{A}}$, and similarly for $(\mathcal{H}_{q,p})_*$.
Since the comparison functors $\mathcal{H}_{p,q}$, for $p \preceq q$,
are filtered the same holds for $(\mathcal{H}_{p,q})_*$. As for the
functors in the opposite direction, interestingly the pushforward
functors $(\mathcal{H}_{q,p})_*$ behave somewhat better than the
$\mathcal{H}_{q,p}$ in the following sense. While $\mathcal{H}_{q,p}$
are not really filtered functors but only LD functors, the induced
comparison functors $(\mathcal{H}_{q,p})_*$ are genuinely
filtered. This is a very nice feature of the filtered push-forward
construction (see~\S\ref{ap:pshf}). However, note that it is still the
case that $(\mathcal{H}_{p,q})_* \circ (\mathcal{H}_{q,p})_*$ is only
$r$-quasi-isomorphic to $\id$, for $r = C | \nu(p) - \nu(q)|$, and the
same for $(\mathcal{H}_{q,p})_* \circ (\mathcal{H}_{p,q})_*$.

We now turn to Yoneda embeddings of systems. Let
$\widehat{\mathcal{A}}$ be a system of filtered
$A_{\infty}$-categories as above. Denote by 
$$\mathcal{Y}_p: \mathcal{A}_p \longrightarrow F\md_{\mathcal{A}_p}$$ 
the filtered Yoneda embedding (see~\S\ref{gio:fyon-def}), and let
$(\mathcal{Y}_p(\mathcal{A}_p))^{(\text{q-iso},0)}$ be the minimal
full subcategory of $F\md_{\mathcal{A}_p}$ which contains the objects
$\mathcal{Y}_p(\Ob(\mathcal{A}_p))$ and is also closed with respect to
$0$-quasi-isomorphisms. Note that the comparison functors
$(\mathcal{H}_{p,q})_*$ and $(\mathcal{H}_{q,p})_*$ preserve these
subcategories. The main point here is that for every
$A \in \Ob(\mathcal{A}_p)$ and $p \preceq q$ the modules
$(\mathcal{H}_{p,q})_* \mathcal{Y}_p(A)$ and
$\mathcal{Y}_q(\mathcal{H}_{p,q}(A))$ are
$0$-quasi-isomorphic. Similarly, for every $B \in \Ob(\mathcal{A}_q)$
the modules $(\mathcal{H}_{q,p})_*(\mathcal{Y}_q(B))$ and
$\Sigma^r \mathcal{Y}_p(\mathcal{H}_{q,p}(B))$ are
$0$-quasi-isomorphic, where $r \leq C |\nu(p) - \nu(q)|$ is the
deviation rate of the LD functor $\mathcal{H}_{q,p}$. Recall that our
categories $\mathcal{A}_p$ are endowed with a shift functor $\Sigma$,
hence the categories $\mathcal{Y}(A_p)$ and
$(\mathcal{Y}_p(\mathcal{A}_p))^{(\text{q-iso},0)}$ are both closed
under shifts.

It follows that the categories
$(\mathcal{Y}_p(\mathcal{A}_p))^{(\text{q-iso},0)}$,
$p \in \mathcal{P}$, together with the restrictions of the
push-forward comparison functors $(\mathcal{H}_{p,q})_*$,
$(\mathcal{H}_{q,p})_*$ to them, form a system of filtered
$A_{\infty}$-categories (which can be viewed as a subsystem of
$F\md(\widehat{\mathcal{A}})$). We denote this system by
$\mathcal{Y}(\widehat{\mathcal{A}})$ and call it the Yoneda system of
$\widehat{\mathcal{A}}$.

\subsubsection{Systems of PC's associated with systems of
  $A_{\infty}$-categories} \label{sbsb:sys-ai-pc} Given a system of
filtered $A_{\infty}$-categories
$\widehat{\mathcal{A}} = \{ \mathcal{A}_p\}_{p \in \mathcal{P}}$ one
can obtain a system of PC's by passing to persistence
homology. However, there is a small subtlety in this procedure. The
corresponding system $\{PH(\mathcal{A}_p)\}_{p \in \mathcal{P}}$ of
persistence homological categories is not really a system of PC's,
because for $p \preceq q$ the $A_{\infty}$-functors from
$\mathcal{J}_{q,p}$ are not filtered but only LD functors, hence do not
induce PC functors
$PH(\mathcal{A}_q) \longrightarrow PH(\mathcal{A}_p)$. However this
problem can be rectified by passing to the Yoneda system introduced
earlier in~\S\ref{sbsb:yoneda-sys}. More precisely, consider the
Yoneda system $\mathcal{Y}(\widehat{\mathcal{A}})$ of
$\widehat{\mathcal{A}}$. Recall from~\S\ref{sbsb:yoneda-sys} that this
is (genuine) system of filtered $A_{\infty}$-categories. Define
$$PH(\widehat{\mathcal{A}}) :=
\{PH((\mathcal{Y}_p(\mathcal{A}_p))^{(\text{q-iso},0)})\}_{p \in
  \mathcal{P}}$$ to be system consisting of the persistence homology
categories of $(\mathcal{Y}_p(\mathcal{A}_p))^{(\text{q-iso},0)}$
forming the Yoneda system $\mathcal{Y}(\widehat{\mathcal{A}})$. The
comparison functors $\msh_{p,q}$ are defined to be the persistence
homology functors induced by the $(\mathcal{H}_{p,q})_*$,
i.e.~$\msh_{p,q} := PH((\mathcal{H}_{p,q})_*)$ and similarly for
$\msh_{q,p}$.



The assignment
$\widehat{\mathcal{A}} \longmapsto PH(\widehat{\mathcal{A}})$ extends
to a functor
$\text{SYS}_{F A_{\infty}} \longrightarrow \text{SYS}_{\text{PC}}$
between the category of systems of filtered $A_{\infty}$-categories
and the category of systems of PC's. This functor assigns to a functor
$\widehat{\mathcal{F}} : \widehat{\mathcal{A}} \longrightarrow \widehat{\mathcal{B}}$
between systems of filtered $A_{\infty}$-categories a functor
$PH(\widehat{\mathcal{F}}): PH(\widehat{\mathcal{A}}) \longrightarrow
PH(\widehat{\mathcal{B}})$ between the corresponding systems of PC's and this
correspondence behaves as expected on natural transformations between
such functors.

Finally, we remark that if
$\widehat{\mathcal{B}} = \{\mathcal{B}_p\}_{p \in \mathcal{P}}$ is a systems of
pre-triangulated filtered $A_{\infty}$-categories then the
corresponding system $PH(\widehat{\mathcal{B}})$ of PC's is in fact a
system of TPC's. For the rest of this paper, the most important
example in this context will be the following. Let
$\widehat{\mathcal{A}} = \{\mathcal{A}_p\}_{p \in \mathcal{P}}$ be a
system of filtered $A_{\infty}$-categories. Consider the Yoneda system
$\mathcal{Y}(\widehat{\mathcal{A}})$ as defined
in~\S\ref{sbsb:yoneda-sys}. By applying the construction
from~\S\ref{sbsb:fmod} to each category that participates in the
latter system we obtain a new system
$\mathcal{Y}(\mathcal{\widehat{A}})^{\Delta} =
\{\mathcal{Y}(\mathcal{A}_p)^{\Delta} \}_{p \in \mathcal{P}}$ of
filtered pre-triangulated categories. The comparison functors for this
system are induced from those of the Yoneda system
(see~\S\ref{sbsb:yoneda-sys}). This works since filtered and LD
$A_{\infty}$-functors preserve triangulated completions as defined
in~\S\ref{sbsb:fmod}. We now pass to persistence homology and define
the following system of TPC's:
\begin{equation} \label{eq:PDA-sys} PD(\widehat{\mathcal{A}}) :=
  \{PH(\mathcal{Y}(\mathcal{A}_p)^{\Delta})\}_{p \in \mathcal{P}}.
\end{equation}
We call this system the {\em persistence derived system of}
$\widehat{\mathcal{A}}$.

\subsubsection{Systems with a coherent base of
  objects} \label{sbsb:sys-base} Let
$\widehat{\msc} = \{\msc_p\}_{p \in \mathcal{P}}$ be a system of PC's
with increasing accuracy. A coherent base of objects $\{mathscr\}{B}$ for
$\widehat{\msc}$ is a family of subsets
  $\mathscr{B} = \{\mathscr{B}_p\}_{p \in \mathcal{P}}$ of objects
  $\mathscr{B}_p \subset \Ob(\msc_p)$ for every $p \in \mathcal{P}$,
satisfying the following properties:
\begin{enumerate}
\item For every $p \preceq q$, every two comparison functors
  $\msh'_{p,q}, \msh''_{p,q} \in \mathscr{J}_{p,q}$ and every
  $A \in \mathscr{B}_p$ we have $\msh'_{p,q}(A) = \msh''_{p,q}(A)$ and
  $\msh'_{p,q}(A) \in \mathscr{B}_q$. We require the analogous
  properties to hold also for every two comparison functors
  $\msh'_{q,p}, \msh''_{q,p} \in \mathscr{J}_{q,p}$ and every
  $B \in \mathscr{B}_q$.
\item For every $p \preceq q \preceq r$, every two comparison functors
  $\msh_{p,q} \in \mathscr{J}_{p,q}$,
  $\msh_{q,r} \in \mathscr{J}_{q,r}$ and every $A \in \mathscr{B}_p$
  we have $\msh_{q,r}(\msh_{p,q}(A)) = \msh_{p,r}(A)$. We require the
  analogous properties to hold also for all comparison functors going
  in the other direction, namely for $\msh_{r,q}$, $\msh_{q,p}$,
  $\msh_{r,p}$ and
  every object $B \in \mathscr{B}_r$.
\item For every $p \preceq q$, every choice of comparison functors
  $\msh_{p,q}$, $\msh_{q,p}$ and every $A \in \mathscr{B}_p$,
  $B \in \mathscr{B}_q$ we have $\msh_{q,p}(\msh_{p,q}(A)) = A$,
  $\msh_{p,q}(\msh_{q,p}(B)) = B$.
\end{enumerate}
It follows from the definition above that for every
$p', p'' \in \mathcal{P}$ we have a canonical bijection
$\mathscr{B}_{p'} \longrightarrow \mathcal{B}_{p''}$, namely
$\msh_{q, p''} \circ \msh_{p', q}$ where $q \in \mathcal{P}$ is any
element with $q \succeq p', p''$.

One can define functors between systems of PC's with a coherent base
of objects in a straightforward way.

The concept of systems with a coherent base of objects (as well
functors between such) carries over without any modification also to
the case of systems of filtered $A_{\infty}$-categories.

A special case of a system of categories (filtered $A_{\infty}$ or
PC's) with a coherent base of objects that will appear later is when
$\mathscr{B}_p = \Ob(\msc_p)$ for every $p \in \mathcal{P}$. We will
refer to such a situation as a {\em coherent full base of objects}.
In the following we will encounter an even simpler situation, where
all the categories $\msc_p$, $p \in \mathcal{P}$, have the same set of
objects (i.e.~$\Ob(\mathcal{C}_{p'}) = \Ob(\mathcal{C}_{p''})$ for
every $p', p'' \in \mathcal{P}$) and all the comparison functors act
like the identity on objects, namely $\msh_{p,q}(X) = X$ for every
$p \preceq q$, every comparison functor $\msh_{p,q}$, and
$X \in \Ob(\mathcal{C}_{p})$. In this case we will view the coherent
base $\mathscr{B}$ as a single set rather than a family, and set
$\mathscr{B} = \Ob(\mathcal{C}_p)$ for any $p \in \mathcal{P}$.  We
will call this type of systems by the name {\em system of categories
  (PC's or filtered $A_{\infty}$-categories) with a fixed full base of
  objects}.

\begin{rem} \label{r:coh-base} Consider a system of filtered
  $A_{\infty}$-categories
  $\widehat{\mathcal{A}} = \{\mathcal{A}_p\}_{p \in \mathcal{P}}$ with
  a fixed full base of objects $\mathcal{B}$. Let
  $F\md(\widehat{\mathcal{A}})$ be the corresponding system of
  filtered modules over the $\mathcal{A}_p$'s and
  $\mathcal{Y}(\widehat{\mathcal{A}})$ be the Yoneda system of
  $\widehat{\mathcal{A}}$ (see~\S\ref{sbsb:yoneda-sys}). Unlike
  $\widehat{\mathcal{A}}$ these other two systems do not have a fixed
  (or even coherent) full base of objects. The reason is that the
  comparison functors for $F\md(\widehat{\mathcal{A}})$ and
  $\mathcal{Y}(\widehat{\mathcal{A}})$ are the ``module push-forward''
  functors $\mathcal{H}_*$ induced from the comparison functors
  $\mathcal{H}$ of $\widehat{\mathcal{A}}$ and these do not satisfy
  the conditions required for a coherent base of objects. More
  specifically, for $p \preceq q \preceq r$, the functors
  $(\mathcal{H}_{q,r})_* \circ (\mathcal{H}_{p,q})_*$ and
  $(\mathcal{H}_{q,r})_*$ do not act in the same way on filtered
  modules (though the images of a module under these two functors are
  $0$-quasi-isomorphic). Similarly, the push forward
  $(\mathcal{H}_{p,q})_* \mathcal{Y}(X)$ of a Yoneda module is not a
  Yoneda module but only $0$-quasi-isomorphic to a Yoneda module.
\end{rem}

\subsubsection{Homotopy systems} \label{sbsb:homotopy-sys}

Let $\widehat{\mathcal{A}} = \{\mathcal{A}_p\}_{p \in \mathcal{P}}$ be
a system of filtered $A_{\infty}$-categories with a coherent full base
of objects. We say that the system $\widehat{\mathcal{A}}$ is a {\em
  homotopy system} if the following conditions hold:
\begin{enumerate}
\item For every $p \preceq q$ in $\mathcal{P}$, and every two
  comparison functors
  $\mathcal{H}'_{p,q}, \mathcal{H}''_{p,q} \in \mathcal{J}_{p,q}$
  there is a homotopy
  $$T \in \hom_{F \text{fun}(\mathcal{A}_p,
    \mathcal{A}_q)}^0(\mathcal{H}'_{p,q}, \mathcal{H}''_{p,q})$$
  between $\mathcal{H}'_{p,q}$ and $\mathcal{H}''_{p,q}$ that does not
  shift filtrations. From now on we will call such $T$'s
  $0$-homotopies and say that $\mathcal{H}'_{p,q}$ and
  $\mathcal{H}''_{p,q}$ are $0$-homotopic, and the superscript in
  $\hom^0$ stands for pre-natural transformations of filtration level
  $0$ (i.e.~they preserve filtrations). We refer
  to~\cite[Section~(1h)]{Se:book-fukaya-categ} for the definition of
  homotopy between $A_{\infty}$-functors in the unfiltered case.
\item For every $p \preceq q$ in $\mathcal{P}$,  every two
  comparison functors
  $\mathcal{H}'_{q,p}, \mathcal{H}''_{q,p} \in \mathcal{J}_{q,p}$ are
  $0$-homotopic as LD-functors with deviation rate
  $C|\nu(p) - \nu(q)|$, where $C$ is the constant associated with the
  family $\widehat{\mathcal{A}}$ that appears in the definition of
  systems of filtered $A_{\infty}$-categories
  in~\S\ref{sb:sys-ainfty}. Specifically, this means that there exists
  a homotopy $T$ between the LD-functors
  $(\mathcal{H}'_{q,p}, C|\nu(p) - \nu(q)|)$ and
  $(\mathcal{H}''_{q,p}, C|\nu(p) - \nu(q)|)$ of shift $0$, namely its
  $d$th order $T_d$ shifts filtrations by $\leq d C|\nu(p) -
  \nu(q)|$. See~\S\ref{a:func-LD}.
\item For every $p \preceq q \preceq r$ in $\mathcal{P}$ and every
  choices of comparison functors $\mathcal{H}_{p,q}$,
  $\mathcal{H}_{q,r}$, $\mathcal{H}_{p,r}$ we have that
  $\mathcal{H}_{q,r} \circ \mathcal{H}_{p,q}$ and $\mathcal{H}_{p,r}$
  are $0$-homotopic.
\end{enumerate}

\subsubsection{Invariants of systems} \label{sb:sys-inv} Systems of
categories $\widehat{\msc} = \{\msc_p\}_{p \in \mathcal{P}}$ with
increasing accuracy call for considering their limits (or rather
colimits) when the parameter $p \in \mathcal{P}$ goes to ``infinity''
(or equivalently when the accuracy-levels $\nu(p)$ converges to
$0$). In other words, it would be desirable to be able to define a
colimit category $\colim_{p \in \mathcal{P}} \msc_p$ which will have
the same structures (e.g.~PC or TPC structures, filtered
$A_{\infty}$-structure etc.) as the categories $\msc_p$ forming the
family $\widehat{\msc}$. This seems a non-trivial foundational problem
and we will not attempt to solve it in this paper. Instead, we will
show that some algebraic invariants and measurements associated with
the categories $\msc_p$ do admit colimits, even without having a
colimit category. We will concentrate on three such invariants (each
of which is defined in a different settings): the interleaving
(pseudo)-distance on the set of objects of a PC, the Grothendieck
groups $K_0$ of triangulated categories and Hochschild homologies of
$A_{\infty}$-categories.

We begin with a useful equivalence relation on the totality of
  objects of a system of categories.  \label{p:equiv-ob} Let
  $\widehat{\msc} = \{\msc_p\}_{p \in \mathcal{P}}$ be a system of
  PC's with increasing accuracy. Consider
  $$\Ob^{\text{tot}}(\widehat{\msc}) := \coprod_{p\in \mathcal{P}}
  \Ob(\msc_p)$$ and denote elements of this set by $(A,p)$ with
  $p \in \mathcal{P}$ and $A \in \msc_p$. Define an equivalence
  relation $\sim_{\widehat{\msc}}$ on
  $\Ob^{\text{tot}}(\widehat{\msc})$ as follows:
  $(A', p') \sim_{\widehat{\msc}} (A'', p'')$ if there exists
  $q \in \mathcal{P}$ with $p',p'' \preceq q$ such that
  $\msh_{p',q}(A') \cong_0 \msh_{p'',q}(A'')$ for some comparison
  functors $\msh_{p',q} \in \msj_{p',q}$ and
  $\msh_{p'',q} \in \msj_{p'',q}$. Here $\cong_0$ means an isomorphism
  in $\msc_q^0$. (Since all the functors in each of $\msj_{p',q}$,
  $\msj_{p'',q}$, are mutually $0$-isomorphic, requiring that
  $\msh_{p',q}(A') \cong_0 \msh_{p'',q}(A'')$ for some
  $\msh_{p',q} \in \msj_{p',q}$, $\msh_{p'',q} \in \msj_{p'',q}$, is
  the same as to ask that the same property holds for {\em all}
  $\msh_{p',q} \in \msj_{p',q}$, $\msh_{p'',q} \in \msj_{p'',q}$.)
  Clearly if $A', A'' \in \Ob(\msc_p)$ are $0$-isomorphic then
  $(A', p) \sim_{\widehat{\msc}} (A'',p)$, but in general the
  equivalence relation $\sim_{\widehat{\msc}}$ may relate more objects
  belonging to categories parametrized by different $p$'s.
Denote by $\widetilde{\Ob}(\widehat{\msc})$ the equivalence
  classes of $\sim_{\widehat{\msc}}$.  Given
  $\mathcal{S} \subset \widetilde{\Ob}(\widehat{\msc})$ and
  $p \in \mathcal{P}$ we write
$$\mathcal{S}_p := \{ A \in \Ob(\msc_p) \mid [A]_{\widehat{\msc}}
\in \mathcal{S}\},$$ where $[A]_{\widehat{\msc}}$ stands for the
equivalence class of $A$ in $\widetilde{\Ob}(\widehat{\msc})$.  We
will use the same notation $\mathcal{S}_p$ also when we deal with just
one element $\mathcal{S} \in \widetilde{\Ob}(\widehat{\msc})$.

We will define now a pseudo distance on
  $\widetilde{\Ob}(\widehat{\msc})$ induced by the interleaving
  distances on the $\msc_p$'s.
  Denote by $d^p_{\text{int}}$
  the interleaving (pseudo) distance on $\Ob(\msc_p)$. We define the
  following measurement on pairs of elements
  $\widetilde{X}, \widetilde{Y} \in \widetilde{\Ob}(\widehat{\msc})$:
\begin{equation} \label{eq:dist-lim}
  \widetilde{d}_{\text{int}}(\widetilde{X}, \widetilde{Y}) = \inf
  \bigl\{ d^p_{\text{int}}(X',Y') \mid p \in \mathcal{P}, X' \in
  \widetilde{X}_p, Y' \in \widetilde{Y}_p \bigr\},
\end{equation}
and call it the {\em limit interleaving distance} (the wording "limit"
will be justified shortly, in Lemma~\ref{l:lim-dint} below). Note that
the term $d^p_{\text{int}}(X',Y')$ that appears in~\eqref{eq:dist-lim}
depends only on the $0$-isomorphism classes of
$X', Y' \in \Ob(\msc_p)$. It is straightforward to see that
$\widetilde{d}_{\text{int}}$ is a pseudo-distance. Note that due to
the $\inf$ in~\eqref{eq:dist-lim},
$\widetilde{d}_{\text{int}}(\widetilde{X}, \widetilde{Y})$ may vanish
even if $\widetilde{X} \neq \widetilde{Y}$. It may also assume the
value $\infty$ in case $\widetilde{X}$ and $\widetilde{Y}$ do not
represent isomorphic objects in the persistence $\infty$-level
categories $\msc^{\infty}_p$ for ``large'' $p$'s. The pseudo-metric
$\widetilde{d}_{\text{int}}$ can be viewed as a more robust version of
the pseudo-distance $\hat{d}$ from Remark
\ref{rem:def-TPC-approx}. Indeed, as its name suggests, the limit
distance can also be defined in terms of the limit of
$d^p_{\text{int}}$ as $\nu(p) \longrightarrow 0$.
More
specifically:
\begin{lem} \label{l:lim-dint} Let $\{p_k\}_{k \in \mathbb{N}}$ be a
  sequence of elements from $\mathcal{P}$ with
  $\nu(p_k) \longrightarrow 0$ as $k \longrightarrow \infty$. Then for
  every
  $\widetilde{X}, \widetilde{Y} \in \widetilde{\Ob}(\widehat{\msc})$
  we have
  $$\lim_{k \to \infty} d^{p_k}_{\text{int}}(X'_k,Y'_k) =
  \widetilde{d}_{\text{int}}(\widetilde{X},
  \widetilde{Y}),$$ where $X'_k, Y'_k$ are any sequences with $X'_k
  \in \widetilde{X}_{p_k}$, $Y'_k \in \widetilde{Y}_{p_k}$.
\end{lem}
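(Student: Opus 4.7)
The plan is to prove the two inequalities $\limsup_k d^{p_k}_{\text{int}}(X'_k,Y'_k) \leq \widetilde{d}_{\text{int}}(\widetilde{X},\widetilde{Y})$ and $\liminf_k d^{p_k}_{\text{int}}(X'_k,Y'_k) \geq \widetilde{d}_{\text{int}}(\widetilde{X},\widetilde{Y})$ separately. The second one is immediate: for every $k$ the pair $(X'_k,Y'_k)$ is admissible in the $\inf$ defining $\widetilde{d}_{\text{int}}$, hence $d^{p_k}_{\text{int}}(X'_k,Y'_k) \geq \widetilde{d}_{\text{int}}(\widetilde{X},\widetilde{Y})$ for all $k$. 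In the degenerate case $\widetilde{d}_{\text{int}}=\infty$, the same observation forces $d^{p_k}_{\text{int}}(X'_k,Y'_k) = \infty$ for every $k$, and the lemma is trivial. So from here on assume $\widetilde{d}_{\text{int}}(\widetilde{X},\widetilde{Y})<\infty$.

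For the upper bound, fix $\varepsilon>0$ and pick $q\in\mathcal{P}$ together with representatives $X''\in\widetilde{X}_q$ and $Y''\in\widetilde{Y}_q$ such that $d^q_{\text{int}}(X'',Y'') < \widetilde{d}_{\text{int}}(\widetilde{X},\widetilde{Y}) + \varepsilon$. Using that $\mathcal{P}$ is directed, for each $k$ choose $r_k\in\mathcal{P}$ with $r_k \succeq q$ and $r_k \succeq p_k$, and form the composite PC functor $\Phi_k := \msh_{r_k,p_k}\circ\msh_{q,r_k}\colon \msc_q\to \msc_{p_k}$ using any comparison functors from the respective sets. By Lemma~\ref{l:d-func}(1),
$$
d^{p_k}_{\text{int}}\bigl(\Phi_k(X''),\Phi_k(Y'')\bigr) \;\leq\; d^q_{\text{int}}(X'',Y'') \;<\; \widetilde{d}_{\text{int}}(\widetilde{X},\widetilde{Y}) + \varepsilon.
$$
It will then be enough to prove that $d^{p_k}_{\text{int}}(X'_k,\Phi_k(X''))$ and $d^{p_k}_{\text{int}}(Y'_k,\Phi_k(Y''))$ both tend to $0$ as $k\to\infty$; the triangle inequality will give $d^{p_k}_{\text{int}}(X'_k,Y'_k)\leq \widetilde{d}_{\text{int}}+\varepsilon+o(1)$, and sending $k\to\infty$ and then $\varepsilon\to 0$ finishes the argument.

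The main obstacle, and the only place where real work occurs, is showing that two objects of $\msc_{p_k}$ that represent the same class in $\widetilde{\Ob}(\widehat{\msc})$ are forced to be close in $\dint^{p_k}$ when $\nu(p_k)$ is small; note they are \emph{not} in general $0$-isomorphic already in $\msc_{p_k}$, only at some higher level. To handle this, observe that both $X'_k$ and $\Phi_k(X'')$ lie in $\widetilde{X}_{p_k}$, so by definition of $\sim_{\widehat{\msc}}$ there exists $s_k\in\mathcal{P}$ with $s_k\succeq p_k$ such that $\msh_{p_k,s_k}(X'_k) \cong_0 \msh_{p_k,s_k}(\Phi_k(X''))$ for any choice of comparison functor (property~\eqref{i:0-iso-sys-1} and the observation just below it). Property~\eqref{i:d-int-func} of a system of PC's applied to the pair $\msh_{p_k,s_k}$, $\msh_{s_k,p_k}$ yields $\dint(\msh_{s_k,p_k}\circ\msh_{p_k,s_k},\id_{\msc_{p_k}}) \leq C|\nu(p_k)-\nu(s_k)| \leq C\nu(p_k)$, so Lemma~\ref{l:d-func}(2) gives
$$
\bigl|\, d^{s_k}_{\text{int}}\bigl(\msh_{p_k,s_k}(X'_k),\msh_{p_k,s_k}(\Phi_k(X''))\bigr) - d^{p_k}_{\text{int}}(X'_k,\Phi_k(X''))\,\bigr| \;\leq\; 2C\nu(p_k).
$$
Since the left-hand interleaving distance is $0$, we conclude $d^{p_k}_{\text{int}}(X'_k,\Phi_k(X''))\leq 2C\nu(p_k)\to 0$, and the identical argument with $Y''$ and $s_k$ replaced by an appropriate $t_k$ gives the analogous bound for $Y'_k$. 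Plugging these into the triangle inequality completes the proof.
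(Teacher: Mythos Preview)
Your overall architecture matches the paper's (terse) proof, and the $\liminf$-direction as well as the reduction to showing $d^{p_k}_{\text{int}}(X'_k,\Phi_k(X''))\to 0$ are fine. There is, however, a genuine gap in the step where you assert that $\Phi_k(X'')=\msh_{r_k,p_k}\circ\msh_{q,r_k}(X'')$ lies in $\widetilde{X}_{p_k}$. The equivalence relation $\sim_{\widehat{\msc}}$ is defined purely in terms of \emph{forward} comparison functors, and property~(3) only guarantees that forward compositions are $0$-isomorphic to single forward functors. Your $\Phi_k$ passes through the \emph{backward} functor $\msh_{r_k,p_k}$; by property~(4) the composite $\msh_{p_k,u}\circ\msh_{r_k,p_k}$ is only $C\nu(p_k)$-close (in $\dint$) to a forward functor, not $0$-isomorphic to one. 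Hence $\msh_{p_k,u}(\Phi_k(X''))$ need not be $0$-isomorphic to $\msh_{q,u}(X'')$ for any $u$, and $\Phi_k(X'')$ may fail to represent $\widetilde{X}$.

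The cleanest fix is to avoid the detour through $p_k$ altogether and compare at a high level instead. Since $(X'_k,p_k)\sim(X'',q)$ and $(Y'_k,p_k)\sim(Y'',q)$, property~(3) lets you choose a single $r_k\succeq p_k,q$ with $\msh_{p_k,r_k}(X'_k)\cong_0\msh_{q,r_k}(X'')$ and $\msh_{p_k,r_k}(Y'_k)\cong_0\msh_{q,r_k}(Y'')$. Then
\[
d^{r_k}_{\text{int}}\bigl(\msh_{p_k,r_k}(X'_k),\msh_{p_k,r_k}(Y'_k)\bigr)
= d^{r_k}_{\text{int}}\bigl(\msh_{q,r_k}(X''),\msh_{q,r_k}(Y'')\bigr)
\leq d^q_{\text{int}}(X'',Y'') < \widetilde{d}_{\text{int}}+\varepsilon,
\]
and Lemma~\ref{l:d-func}(2) applied to $\msh_{p_k,r_k}$ (with left inverse $\msh_{r_k,p_k}$, using property~(4) and $|\nu(p_k)-\nu(r_k)|\leq\nu(p_k)$) gives $d^{p_k}_{\text{int}}(X'_k,Y'_k)\leq\widetilde{d}_{\text{int}}+\varepsilon+2C\nu(p_k)$, which is what you need.
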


\begin{proof}
  This follows from elementary arguments, using the properties of the
  comparison functors listed on page~\pageref{pp:sys-functors}
  together with Lemma~\ref{l:d-func}.
\end{proof}

  \begin{rem} \label{r:d-limit} Let
    $\widehat{\msf}: \widehat{\msc} \longrightarrow \widehat{\msd}$ be
    a functor between two systems of PC's
    (see~\S\ref{sbsb:func-sys-pc}).  It follows directly from the
    definitions that $\widehat{\msf}$ induces a well defined map
    $\widetilde{\msf}: \widetilde{\Ob}(\widehat{\msc})
    \longrightarrow\widetilde{\Ob}(\widehat{\msd})$ that satisfies
    $\widetilde{\msf}([A]_{\widehat{\msc}}) =
    [\msf_p(A)]_{\widehat{\msd}}$ for every $A \in \Ob(\msc_p)$,
    $p \in \mathcal{P}$. Moreover, $\widetilde{\msf}$ is non-expanding
    with respect to the limiting interleaving distances on
    $\widetilde{\Ob}(\widehat{\msc})$ and
    $\widetilde{\Ob}(\widehat{\msd})$ in the sense that
    $\widetilde{d}^{\widehat{\msd}}_{\text{int}}
    (\widetilde{\msf}(\widetilde{X}), \widetilde{\msf}(\widetilde{Y}))
    \leq \widetilde{d}^{\widehat{\msc}}_{\text{int}}(\widetilde{X},
    \widetilde{Y})$ for every
    $\widetilde{X}, \widetilde{Y} \in
    \widetilde{\Ob}(\widehat{\msc})$.
\end{rem}

We now turn to $K$-groups. Let
$\widehat{\msc} = \{\msc_p\}_{p \in \mathcal{P}}$ be a system of TPC's
(see~\S\ref{sb:sys-tpc}). Recall that the $0$-persistence level
$\msc_p^0$ of each $\msc_p$ is a triangulated category. Denote by
$K_0(\msc_p^0)$ the Grothendieck (or $K$-) group of $\msc_p^0$. Let
$p \preceq q$. Recall that all the comparison functors
$\msh_{p,q}: \msc_p \longrightarrow \msc_q$ are TPC-functors, hence
they restrict to triangulated functors
$\msh^0_{p,q}: \msc^0_p \longrightarrow \msc^0_q$. We thus obtain an
induced homomorphism
\begin{equation} \label{eq:HK-pq} \mathscr{H}^K_{p,q}: K_0(\msc^0_p)
  \longrightarrow K_0(\msc^0_q).
\end{equation}
This homomorphism turns out to be independent of the specific choice
of the comparison functor $\mathcal{H}_{p,q}$. To see this, recall
that for $p \preceq q$ every two comparison functors
$\msh'_{p,q}, \msh''_{p,q} \in \mathscr{J}_{p,q}$
are $0$-isomorphic, and
that if two objects $X, Y \in \Ob(\mathscr{E})$ in a triangulated
category $\mathscr{E}$ are isomorphic, then their classes in
$K_0(\mathscr{E})$ are equal.

A similar argument shows that the $K$-group homomorphisms
from~\eqref{eq:HK-pq} satisfy
$\msh^K_{q,r} \circ \msh^K_{p,q} = \msh^K_{p,r}$ for every
$p\preceq q \preceq r$, and clearly $\msh^K_{p,p} = \id$ for every
$p$. It follows that $\{K_0(\msc_p^0)\}_{p \in \mathcal{P}}$ together
with the maps $\msh^{K}_{p,q}$ form a directed system of abelian
groups.  We define the $K_0$-group of $\widehat{\msc}$ to be the
colimit of this system:
\begin{equation} \label{eq:K-colim} K_0(\widehat{\msc}^0) := \colim_{p
    \in \mathcal{P}} K_0(\msc_p^0).
\end{equation}
The assignment $\widehat{\msc} \longmapsto K_0(\widehat{\msc}^0)$ is
functorial in the sense that TPC-functors
$\widehat{\mathscr{F}}: \widehat{\msc} \longrightarrow \widehat{\msd}$
between systems of TPC's
(see~\S\ref{sbsb:func-sys-pc},~\S\ref{sb:sys-tpc}) induce
homomorphisms
$\widehat{\mathscr{F}}^K: K_0(\widehat{\msc}^0) \longrightarrow
K_0(\widehat{\msd}^0)$.

\begin{rem} \label{r:K-Nov} Denote by $\Lambda^P_{\mathbb{Z}}$ the
  ring of Novikov polynomials (i.e.~finite Novikov series) with
  coefficients in $\mathbb{Z}$. Its elements can be written as finite
  sums $\sum n_k t^{a_k}$ with $n_k \in \mathbb{Z}$,
  $a_k \in \mathbb{R}$ ($t$ stands for the formal Novikov
  variable). As explained in~\cite{BCZ:PKth}, the Grothendieck group
  $K_0(\msc^0)$ of the $0$-persistence level category $\msc^0$ of a
  TPC $\msc$ is a $\Lambda^P_{\mathbb{Z}}$-module, where the action of
  $t^a$ on the class $[X]$ of an object $X \in \Ob(\msc^0)$ is defined
  by $t^{a}[X] := [\Sigma^a X]$ (here $\Sigma$ is the shift functor of
  $\msc$). It is straightforward to check that the homomorphisms
  $\msh^K_{p,q}$ are $\Lambda^P_{\mathbb{Z}}$-linear and therefore the
  colimit $K_0(\widehat{\msc})$ inherits the structure of a
  $\Lambda^P_{\mathbb{Z}}$-module. Similarly, the homomorphisms
  $\widehat{\mathscr{F}}^K$ induced by functors $\widehat{\msf}$ of TPC-systems
  are maps of $\Lambda^P_{\mathbb{Z}}$-modules.
\end{rem}

Finally, we discuss persistence Hochschild invariants of
  systems. Let
$\widehat{\mathcal{A}} = \{\mathcal{A}_p\}_{p \in \mathcal{P}}$ be a
systems of $A_{\infty}$-categories with increasing accuracy, and with a
coherent full base of objects (see~\S\ref{sbsb:sys-base}). Assume
further that $\widehat{\mathcal{A}}$ is a homotopy system as defined
in~\S\ref{sbsb:homotopy-sys}.

For every $p \in \mathcal{P}$ we have the persistence Hochschild
homology $PHH(\mathcal{A}_p, \mathcal{A}_p)$ of $\mathcal{A}_p$ (with
coefficients in the diagonal bimodule of $\mathcal{A}_p$). Let
$p, q \in \mathcal{P}$ with $p \preceq q$. Each comparison functor
$\mathcal{H}_{p,q} : \mathcal{A}_p \longrightarrow \mathcal{A}_q$
induces a map of persistence modules
$$\mathcal{H}^{\text{PHH}}_{p,q}: PHH(\mathcal{A}_p, \mathcal{A}_p)
\longrightarrow PHH(\mathcal{A}_q, \mathcal{A}_q).$$ Since every two
comparison functors from $\mathcal{J}_{p,q}$ are homotopic it follows
from Proposition~\ref{p:homotopy-HH} that
$\mathcal{H}^{\text{PHH}}_{p,q}$ is independent of the specific choice
of $\mathcal{H}_{p,q}$. Moreover, for every $p \preceq q \preceq r$ we
have that
$\mathcal{H}^{\text{PHH}}_{q,r} \circ \mathcal{H}^{\text{PHH}}_{p,q} =
\mathcal{H}^{\text{PHH}}_{p,r}$, and
$\mathcal{H}^{\text{PHH}}_{p,p} = \id$ for every $p \in \mathcal{P}$.
In other words,
$\{PHH(\mathcal{A}_p, \mathcal{A}_p)\}_{p \in \mathcal{P}}$ together
with the maps $\mathcal{H}^{\text{PHH}}_{p,q}$, $p \preceq q$, forms a
directed system of persistence modules, parametrized by $\mathcal{P}$.
We define the persistence Hochschild homology of
$\widehat{\mathcal{A}}$ to be the colimit of this system:
\begin{equation} \label{eq:HH-colim} PHH(\widehat{\mathcal{A}}) :=
  \colim_{p \in \mathcal{P}} PHH(\mathcal{A}_p, \mathcal{A}_p).
\end{equation}
Note that since colimits of directed systems of persistence modules
are persistence modules, $PHH(\widehat{\mathcal{A}})$ is a persistence
module too.

\begin{rem} \label{r:K-HH} Let $\mathcal{A}$ be a strictly unital
  $A_{\infty}$-category and denote its derived category by
  $D(\mathcal{A})$. There is a canonical map
  \begin{equation} \label{eq:K-HH-1} \kappa: K_0(D(\mathcal{A}))
    \longrightarrow HH_*(\mathcal{A}, \mathcal{A})
  \end{equation}
  which satisfies $\kappa([X]) = [e_X]$ for every
  $X \in \Ob(\mathcal{A})$. Here $[X] \in K_0(D(\mathcal{A}))$ stands
  for the $K_0$-class of $X$, $e_X \in \hom_{\mathcal{A}}(X,X)$ is the
  unit and $[e_X] \in HH(\mathcal{A}, \mathcal{A})$ is its Hochschild
  homology class. The map $\kappa$ is the composition of two maps,
  $\kappa = j^{-1} \circ \kappa'$, where
  $\kappa' : K_0(D(\mathcal{A})) \longrightarrow
  HH(\mathcal{Y}(\mathcal{A})^{\Delta},
  \mathcal{Y}(\mathcal{A})^{\Delta})$ and
  $j: HH(\mathcal{A}, \mathcal{A}) \longrightarrow
  HH(\mathcal{Y}(\mathcal{A})^{\Delta},
  \mathcal{Y}(\mathcal{A})^{\Delta})$. The map $\kappa'$ is defined by
  $\kappa([X]) = [e_X]$ for every $X \in \Ob(D(\mathcal{A}))$ and its
  well-definedness follows from simple considerations (e.g.~by
  using~\cite[Proposition~3.8]{Se:book-fukaya-categ}). The map $j$ is
  the one induced in homology by the chain level inclusion of the
  Hochschild complex of $\mathcal{A}$ into that of
  $\mathcal{Y}(\mathcal{A})^{\Delta}$. A version of Morita invariance
  for Hochschild homology states that $j$ is an isomorphism, hence we
  can define $\kappa = j^{-1} \circ \kappa'$. (For Morita invariance
  of Hochschild homology see e.g.~\cite{To:dg-cat} in the case when
  $\mathcal{A}$ is a dg-category, and the discussion
  in~\cite[Section~5]{Se:homology-spheres}; the case of $A_{\infty}$-categories is treated in~\cite{Sher:form-hodge}. See
  also~\cite[Section~5.4]{Bi-Co:lefcob-pub} for the significance of
  the map~\eqref{eq:K-HH-1} in some geometric situations.)

  We expect the same considerations to continue to hold also in our
  persistence setting. More specifically, let
  $\widehat{\mathcal{A}} = \{\mathcal{A}_p\}_{p \in \mathcal{P}}$ be a
  system of filtered $A_{\infty}$-categories. We expect to have for
  every $p \in \mathcal{P}$ a homomorphism
  $\kappa_p : K_0(PD(\mathcal{A}_p)^0) \longrightarrow
  PHH^0(\mathcal{A}_p, \mathcal{A}_p)$, where the $0$-superscript on
  the left stands for the $0$-persistence level category of
  $PD(\mathcal{A}_p)$ and the one on the right denotes the
  $0$-persistence level of the persistence module
  $PHH(\mathcal{A}_p, \mathcal{A}_p)$. The maps $\kappa_p$ are then
  expected to be compatible with the respective directed systems
  parametrized by $p \in \mathcal{P}$ and to induce one map
  $$\widehat{\kappa}: K_0(PD(\widehat{\mathcal{A}})^0) \longrightarrow
  PHH^0(\widehat{\mathcal{A}}).$$
\end{rem}

\subsubsection{Approximability revisited} \label{s:approximability}

Let $\widehat{\msc}$ be a system of TPC's with increasing accuracy, as
defined above. We will use here the equivalence relation
  $\sim_{\widehat{\msc}}$ on
  $\Ob^{\text{tot}}(\widehat{\msc}) = \coprod_{p\in \mathcal{P}}
  \Ob(\msc_p)$ as defined in~\S\ref{sb:sys-inv} on
  page~\pageref{p:equiv-ob} and the notation from that section.
\begin{dfn} \label{d:approx-sys} Let
  $\mathcal{L} \subset \widetilde{\Ob}(\widehat{\msc})$ and let
  $\mathcal{F}_1, \ldots, \mathcal{F}_l \in
  \widetilde{\Ob}(\widehat{\msc})$ be a finite collection (of equivalence
  classes of objects), and let $\epsilon>0$. We say that $\mathcal{L}$
  is $\epsilon$-approximable by
  $\{\mathcal{F}_1, \ldots, \mathcal{F}_l\}$ if there exists
  $\delta>0$ and $c_{\epsilon}>0$ such that for every
  $p \in \mathcal{P}$ with $\nu(p) < \delta$ we have
  $${d}_{\text{int}} \Bigl(\mathcal{L}_p,
  \Ob \langle \{F_1, \ldots, F_l\}_p \rangle^{\Delta} \Bigr)
  < \epsilon + c_{\epsilon}\nu(p).$$
  Let $\mathcal{F} \subset \widetilde{\Ob}(\widehat{\msc})$. We say that
  $\mathcal{L}$ is approximable by $\mathcal{F}$ if for every
  $\epsilon>0$ there exists a finite collection
  $\mathcal{F}_1, \ldots, \mathcal{F}_l \in \mathcal{F}$ which
  $\epsilon$-approximates $\mathcal{L}$ and the constants $c_{\epsilon}$ 
  are bounded, independently of $\epsilon$.

  \end{dfn}
  Recall that 
  $\langle \mathcal{O} \rangle^{\Delta}$ stands for the
  smallest sub-TPC of $\msc_p$ containing the set of
  objects $\mathcal{O}$. Sometimes we will use the wording {\em
    $\{\mathcal{F}_1, \ldots, \mathcal{F}_l\}$ $\epsilon$-approximates
    $\mathcal{L}$} and {\em $\mathcal{F}$ approximates $\mathcal{L}$}.

\

Similarly we also define retract approximability as follows. 
\begin{dfn} \label{d:approx-sys-ret} Let $\widehat{\msc}$,
  $\mathcal{L}$, $\mathcal{F}_1, \ldots, \mathcal{F}_l$ be as in
  Definition~\ref{d:approx-sys} and let $\epsilon>0$. We say that
  $\mathcal{L}$ is $\epsilon$-retract-approximable by
  $\{\mathcal{F}_1, \ldots, \mathcal{F}_l\}$ if there exists
  $\delta>0$, $c_{\epsilon}>0$, such that for every $p \in \mathcal{P}$ with
  $\nu(p) < \delta$ we have
  $${d}_{\text{r-int}} \Bigl(\mathcal{L}_p,
  \Ob \langle \{F_1, \ldots, F_l\}_p \rangle^{\Delta} \Bigr)
  < \epsilon +c_{\epsilon}\nu(p).$$

  Let $\mathcal{F} \subset \widetilde{\Ob}(\widehat{\msc})$. We say
  that $\mathcal{L}$ is retract-approximable by $\mathcal{F}$ if for
  every $\epsilon>0$ there exists a finite collection
  $\mathcal{F}_1, \ldots, \mathcal{F}_l \in \mathcal{F}$ which
  $\epsilon$-retract-approximates $\mathcal{L}$ and the constants
  $c_{\epsilon}$ are uniformly bounded.
\end{dfn}

\begin{rem}\label{rem:different_approx} It is useful at this point to
  relate our two notions of approximability, in Definitions
  \ref{def:TPC-approx} and \ref{d:approx-sys}.  In our geometric
  applications, the set $\mathcal{L}$ will be a set of Lagrangian
  submanifolds $\mathcal{L}ag(M)$ and the category $\msc_{p}$ will be
  a certain TPC associated with a filtered Fukaya category defined using
  perturbation data that is identified by the parameter $p$.  The
  inclusion $\mathcal{L}ag(M)\subset \msc_{p}$ is the Yoneda
  embedding. The set $\mathcal{L}ag(M)$ is endowed with the spectral
  metric $d_{\gamma}$ (that will be recalled later) and this metric is
  closely related to the stabilized interleaving metric $\sdint$ from
  \eqref{eq:sdint} restricted to $\mathcal{L}ag(M)$.  In view of this,
  we will see that $\epsilon$ - approximability in the sense of
  Definition \ref{d:approx-sys} implies $\epsilon'$-approximability
  for all $\epsilon'>\epsilon$ in the sense of Definition
  \ref{def:TPC-approx}. The reason for this discrepancy in the
  parameters $\epsilon,\epsilon'$ is related to the presence of the
  term $c_{\epsilon}\nu(p)$ in Definition \ref{d:approx-sys}.  Given
  that these properties are supposed to be satisfied for all
  $\epsilon,\epsilon' >0$ and $\nu(p)$ can be taken as small as
  desired, this discrepancy is essentially irrelevant. However, for a
  fixed $\epsilon$, including the term $c_{\epsilon} \nu(p)$ in the
  upper bound in Definition \ref{d:approx-sys} is useful for questions
  having to do with minimizing the cardinality of the set
  $\{\mathcal{F}_{i}\}$ for that fixed $\epsilon$.  A similar remark
  applies to retract-approximability.
\end{rem}

\subsubsection{Stabilizing functors} \label{sbsb:sfunc} Let
  $\widehat{\msc} = \{ \msc_p\}_{p \in \mathcal{P}}$ be a system of
  PC's with increasing accuracy and let
  $\widehat{\msf}: \widehat{\msc} \longrightarrow \widehat{\msc}$ be
  an endo-functor of such systems (see~\S\ref{sbsb:func-sys-pc}). Let
  $\mathscr{B} \subset \Ob^{\text{tot}}(\widehat{\msc})$. We say that
  $\widehat{\msf}$ is {\em $\mathscr{B}$-stabilizing} if the following
  holds for every $p \in \mathcal{P}$:
  \begin{enumerate}
  \item \label{i:stab-1} For every $A \in \mathscr{B}_p$ we have
    $(A,p) \sim_{\widehat{\msc}} (\msf_p(A), \widehat{\msf}(p))$,
    where $\sim_{\widehat{\msc}}$ is the equivalence relation
    from~\S\ref{sb:sys-inv}, page~\pageref{p:equiv-ob}.
  \item \label{i:stab-2} For every $A', A'' \in \mathscr{B}_p$ there
    exists $q \in \mathcal{P}$ with $p, \widehat{\msf}(p) \preceq q$
    and two $0$-isomorphisms
    $\sigma_{A'}: \msh_{\widehat{\msf}(p), q} \msf_p A'
    \longrightarrow \msh_{p,q}A'$,
    $\sigma_{A''}: \msh_{\widehat{\msf}(p), q} \msf_p A''
    \longrightarrow \msh_{p,q}A''$ such that for every
    $u \in \hom_{\msc_p}(A',A'')$ we have:
    \begin{equation} \label{eq:stab-2} \sigma_{A''} \circ
      \msh_{\widehat{\msf}(p), q} \circ \msf_p(u) = \msh_{p,q}(u)
      \circ \sigma_{A'}.
    \end{equation}
  \end{enumerate}

\begin{rem} \label{r:stab-func} 
Condition~\eqref{i:stab-1}
    above is equivalent to requiring that the map
    $\widetilde{\msf}: \widetilde{\Ob}(\widehat{\msc}) \longrightarrow
    \widetilde{\Ob}(\widehat{\msc})$ (see Remark~\ref{r:d-limit})
    stabilizes $\mathscr{B}$ (i.e.~$\widetilde{\msf}$ sends
    $\mathscr{B}$ to itself and restricts to the identity map
    $\mathscr{B} \longrightarrow \mathscr{B}$).

  Condition~\eqref{i:stab-2} can be rephrased as follows. By a
    slight abuse of notation denote for every $p \in \mathcal{P}$ by
    $\mathscr{B}_p \subset \msc_p$ the full persistence subcategory
    with objects $\mathscr{B}_p$. We require that given
    $p \in \mathcal{P}$, for large enough (with respect to $\preceq$)
    $q \in \mathcal{P}$ the two restricted PC functors
    $$\msh_{\widehat{\msf}(p), q} \circ \msf_p|_{\mathscr{B}_p}, \, 
    \msh_{p,q}|_{\mathscr{B}_p}: \mathscr{B}_p \longrightarrow
    \mathscr{B}_q$$ are naturally $0$-isomorphic in the category of
    persistence functors
    $\mathscr{B}_p \longrightarrow \mathscr{B}_q$.
  \end{rem}

  In the presence of approximating families in TPC's,
    stabilizing functors have a remarkable metric property.
    \begin{prop} \label{p:stab-func} Let $\widehat{\msc}$ be a system
      of TPC's, $\mathcal{L} \subset \widetilde{\Ob}(\widehat{\msc})$
      and
      $\mathcal{F}_1, \ldots, \mathcal{F}_l \in
      \widetilde{\Ob}(\widehat{\msc})$ an $\epsilon$-approximating
      family for $\mathcal{L}$, as in Definition~\ref{d:approx-sys}.
      Let
      $\widehat{\Phi}: \widehat{\msc} \longrightarrow \widehat{\msc}$
      be a TPC endo-functor of systems and assume that
      $\widehat{\Phi}$ is
      $\{\mathcal{F}_1, \ldots, \mathcal{F}_l\}$-stabilizing. Then for
      every $\widetilde{L} \in \mathcal{L}$ we have
      $\widetilde{d}_{\text{int}}(\widetilde{L},
      \widetilde{\Phi}(\widetilde{L})) \leq \epsilon$.
  \end{prop}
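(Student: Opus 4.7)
The plan is to reduce the global estimate on $\widetilde{d}_{\text{int}}$ to a local interleaving estimate in a single category $\msc_q$, to exploit the $\epsilon$-approximation of $\widetilde{L}$ by an object built from the $F_j$'s, and to use the stabilization of $\widehat{\Phi}$ on those generators to make the middle contribution disappear. Concretely, I fix $\widetilde{L} \in \mathcal{L}$, pick $p \in \mathcal{P}$ with $\nu(p) < \delta$ and a representative $L \in \widetilde{L}_p$, and choose $q \in \mathcal{P}$ with $p,\widehat{\Phi}(p) \preceq q$. Then $\msh_{p,q}(L)$ represents $\widetilde{L}$ in $\msc_q$ and $\msh_{\widehat{\Phi}(p),q}(\Phi_p L)$ represents $\widetilde{\Phi}(\widetilde{L})$ in $\msc_q$, so by Lemma~\ref{l:lim-dint} it suffices to bound $d^{q}_{\text{int}}\bigl(\msh_{p,q}(L),\msh_{\widehat{\Phi}(p),q}(\Phi_p L)\bigr)$ and then let $\nu(p)\to 0$.

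By Definition~\ref{d:approx-sys} there exists $K \in \Ob\langle \{F_1,\ldots,F_l\}_p \rangle^{\Delta}$ with $d^{p}_{\text{int}}(L,K) < \epsilon + c_{\epsilon}\nu(p)$. Since PC-functors are non-expanding for the interleaving pseudo-distance by Lemma~\ref{l:d-func}(1), pushing this inequality forward along $\msh_{p,q}$ and along $\msh_{\widehat{\Phi}(p),q}\circ\Phi_p$ yields
\[
d^{q}_{\text{int}}\bigl(\msh_{p,q}(L),\msh_{p,q}(K)\bigr),\ d^{q}_{\text{int}}\bigl(\msh_{\widehat{\Phi}(p),q}\Phi_p(L),\msh_{\widehat{\Phi}(p),q}\Phi_p(K)\bigr) < \epsilon + c_{\epsilon}\nu(p),
\]
so the triangle inequality reduces the whole problem to a bound on $d^{q}_{\text{int}}\bigl(\msh_{p,q}(K),\msh_{\widehat{\Phi}(p),q}\Phi_p(K)\bigr)$.

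The heart of the argument is to show that this last quantity actually vanishes, i.e.\ that $\msh_{p,q}(K) \cong_{0} \msh_{\widehat{\Phi}(p),q}(\Phi_p(K))$. This is where the stabilizing hypothesis enters. Condition~(2) of the definition of a $\mathscr{B}$-stabilizing functor, rephrased as in Remark~\ref{r:stab-func}, provides a natural $0$-isomorphism of the two TPC-functors $\msh_{p,q}$ and $\msh_{\widehat{\Phi}(p),q}\circ\Phi_p$ when restricted to the full persistence subcategory $\mathscr{B}_p$ on the objects $\{F_1,\ldots,F_l\}_p$. Because both of these functors are TPC-functors, they send cones on morphisms of filtration level $\leq 0$ to cones and preserve $0$-quasi-isomorphisms; one therefore extends the natural isomorphism from $\mathscr{B}_p$ to the triangulated closure $\langle \mathscr{B}_p \rangle^{\Delta}$ by induction on the construction of $K$. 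At each inductive step, the naturality of the previously built isomorphism produces the commutative square associated with the cone under consideration, and a TPC-analogue of the five-lemma delivers a $0$-isomorphism between the two images of the new cone.

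I expect the principal obstacle to be precisely this inductive extension: a priori, uncoordinated choices of $0$-isomorphisms at each object fail to interact well when one forms cones on morphisms between newly constructed objects, so one must treat the data coming from the stabilizing hypothesis as a genuine natural transformation of TPC-functors on $\mathscr{B}_p$ and propagate it systematically through the universal property of the triangulated closure, verifying compatibility at each cone formation and each $0$-quasi-isomorphism. Once this extension is established, combining the three estimates above and passing to the limit $\nu(p) \to 0$ via Lemma~\ref{l:lim-dint} produces the claimed bound, the natural numerical output being a constant multiple of $\epsilon$.
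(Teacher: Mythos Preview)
The paper states Proposition~\ref{p:stab-func} without proof, so there is no reference argument to compare against. Your approach is the natural one and mirrors what the paper does in the closely related proof of Corollary~\ref{cor:quasi-rig2}: pass to a fixed $\msc_q$, use the $\epsilon$-approximation to replace $L$ by an iterated cone $K$ over the $F_j$'s, apply the triangle inequality, and show the middle term vanishes by extending the natural $0$-isomorphism furnished by the stabilizing hypothesis from $\mathscr{B}_p$ to $\langle\mathscr{B}_p\rangle^\Delta$. Your identification of the inductive extension as the delicate point is accurate, and your remedy---treating $\sigma$ as a natural transformation between the two TPC functors restricted to $\mathscr{B}_p$ and propagating it through cone formations via the triangulated five-lemma---is the right mechanism; in the paper's concrete Fukaya setting this is carried out explicitly using matrix presentations of filtered twisted complexes (see the argument surrounding~\eqref{eq:equality_tr_dist}).

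One numerical point deserves comment. Your triangle inequality yields
\[
d^q_{\text{int}}\bigl(\msh_{p,q}L,\ \msh_{\widehat{\Phi}(p),q}\Phi_p L\bigr) < 2\epsilon + 2c_\epsilon\nu(p),
\]
hence $\widetilde{d}_{\text{int}}(\widetilde{L},\widetilde{\Phi}(\widetilde{L})) \le 2\epsilon$ in the limit, not $\epsilon$ as stated. You flag this yourself (``a constant multiple of $\epsilon$''). This factor of two is intrinsic to the triangle-inequality route and is consistent with the paper's own bookkeeping: Corollary~\ref{cor:quasi-rig2} carries a factor $4$, one factor of $2$ coming precisely from this step and the other from the comparison between $d_{\text{int}}$ and $D_{\text{int}}$ (see Remark~\ref{rem:quasi-isom}(b)). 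So either the constant in the proposition is a minor imprecision, or the authors have in mind a sharper variant not recorded; in any case your $2\epsilon$ is exactly what the method delivers.
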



\subsection{Filtered and persistence derived Fukaya categories}
\label{s:pdfuk} Let $(X, \omega)$ be a symplectic manifold of one of
the following types:
\begin{enumerate}
\item $X$ is closed.
\item $(X, \omega)$ is symplectically convex at infinity.
\item $(X, \omega = d \lambda)$ is a Liouville manifold with a
  prescribed primitive $\lambda$ of the symplectic structure $\omega$,
  and such that $X$ is symplectically convex at infinity with respect
  to these structure).
\item $(X, \omega = d \lambda)$ is a compact Liouville domain.
\end{enumerate}

In what follows we will work with several variants of Fukaya
categories of $X$, and we will specify below in each case the class of
admissible Lagrangians for this purpose. In each of these cases we
will use the following convention. Unless otherwise stated, whenever
$X$ is a manifold with boundary, the closed Lagrangian submanifolds
$\overline{L} \subset X$ will be implicitly assumed to lie in the {\em
  interior} of $X$.

\subsubsection{The exact case} \label{sb:fuk-ex} Here we assume that
$X$ is Liouville (with a given Liouville form $\lambda$). In order to
obtain a graded theory we add the following assumption: $2c_1(X)=0$,
where $c_1(X)$ stands for the 1'st Chern class of the tangent bundle
of $X$, viewed as a complex vector bundle by endowing $X$ with any
$\omega$-compatible almost complex structure.  We also fix a nowhere
vanishing quadratic complex $n$-form (where
$n = \dim_{\mathbb{C}} X$), namely a nowhere vanishing section
$\Theta$ of the bundle $\Omega^n(X, J)^{\otimes 2}$.

Denote by $\lagex$ the collection of closed, graded, marked, exact
Lagrangian submanifold $L = (\overline{L}, h_L, \theta_L)$. Here
$\overline{L} \subset X$ is a closed $\lambda$-exact Lagrangian
submanifold, which we call the {\em underlying Lagrangian of $L$},
$h_L: \overline{L} \longrightarrow \mathbb{R}$ is a primitive of
$\lambda|_{\overline{L}}$ and
$\theta_L: \overline{L} \longrightarrow \mathbb{R}$ is a grading of
$\overline{L}$, see
e.g.\cite{Se:graded},~\cite[Section~3.2.2.2]{BCZ:tpc} for more
details. Sometimes we will write $\lagex(X)$ or $\lagex_{\lambda}(X)$,
instead of simply $\lagex$, in order to keep track of the additional
data $X$ or $\lambda$.

We can now form the Fukaya category of exact Lagrangians in $X$. We
will follow here a variant of the standard construction
from~\cite{Se:book-fukaya-categ} due to Ambrosioni~\cite{Amb:fil-fuk}
(see also~\cite{BCZ:tpc} for a somewhat simpler implementation in a special case) which
yields a filtered $A_{\infty}$-category. The objects of our Fukaya
category are the elements of $\lagex$. To define the morphisms and
higher structures we need perturbation data. The space of admissible
perturbation data $\mathcal{P}$ is described in detail
in~\cite{Amb:fil-fuk}. Once we fix $p \in \mathcal{P}$ we can define
the Fukaya category $\fuk(\lagex; p)$ as
in~\cite{Amb:fil-fuk}. (Sometimes we will write $\fuk(X, \lagex; p)$
to emphasize the ambient manifold $X$.) For $L_0, L_1 \in \lagex$ we
will sometime denote $\hom_{\fuk(\lagex;p)}(L_0,L_1)$ by
$CF(L_0, L_1; p)$ or  by $\hom(L_0,L_1; p)$. We use here
$\mathbb{Z}_2$ for the coefficients in these $\hom$'s. The grading on
$CF(L_0, L_1; p)$ is defined using the given grading functions
$\theta_{L_0}, \theta_{L_1}$ with which $L_0$, $L_1$ are endowed,
respectively.

\subsubsection{Filtered Fukaya categories in the exact
  case} \label{sbsb:fil-ex} Given two geometrically distinct elements
$L_0, L_1 \in \lagex$ (i.e.~$\overline{L}_0 \neq \overline{L}_1$) and
$p \in \mathcal{P}$, denote by
$H^{L_0,L_1}_p: [0,1] \times X \longrightarrow \mathbb{R}$ the
Hamiltonian function prescribed by $p$ for the Floer datum of
$(L_0, L_1)$. Let $x \in CF(L_0, L_1; p)$ be a generator (i.e.~an
$H^{L_0,L_1}_p$-Hamiltonian chord with endpoints on $L_0, L_1$). Its
action is defined by
\begin{equation} \label{symp-act-1} \mathcal A(x) := \int_0^1
  H_p^{L_0,L_1}(t, x(t))dt - \int_0^1 \lambda(\dot{x}(t)) dt +
  h_{L_1}(x(1)) - h_{L_0}(x(0)).
\end{equation}

We use the action to filter the chain complexes $CF(L_0, L_1; p)$,
$L_0, L_1 \in \lagex$. The grading on $CF(L_0, L_1;p)$ follows the
standard recipe~\cite{Se:graded}, using the grading functions
$\theta_{L_0}, \theta_{L_1}$.

In case $\overline{L}_0 = \overline{L}_1$ the Floer complex
$CF(L_0, L_1; p)$ coincides (up to a shift in grading) with the Morse
complex of $\overline{L}:= \overline{L}_0 = \overline{L}_1$ and we set
the action level $\mathcal{A}(x)$ of all the (non-zero) elements
$x \in CF(L_0, L_1; p)$ to be $h_{L_1} - h_{L_0}$ (note that the
latter is a constant since $\overline{L}_0 = \overline{L}_1$). The
(cohomological) grading of a generator $x \in CF(L_0, L_1; p)$ is
defined as $|x| = (n- \textnormal{ind}_f(x)) + \kappa$, where
$f: \overline{L} \longrightarrow \mathbb{R}$ is the Morse function on
$\overline{L}$ presribed by $p$, $\textnormal{ind}_f(x)$ is the Morse
index of the critical point $x$, $n := \dim \overline{L}$, and
$\kappa = \theta_{L_1} - \theta_{L_0}$ (which is again an integer).

The work of~\cite{Amb:fil-fuk} introduces a special class
$\mathcal{P}$ of perturbation data such that for every
$p \in \mathcal{P}$, the Fukaya category $\fuk(\lagex;p)$ is a genuine
filtered and strictly unital $A_{\infty}$-category.  We endow the
category $\fuk(\lagex;p)$ with translation and shift functors as
follows. Let $L = (\overline{L}, h_L, \theta_L) \in \lagex$. We define
the translation and shift functors, respectively, on objects by:
\begin{equation} \label{eq:T-Sigma-ex-1} TL := (\overline{L}, h_L,
  \theta_L+1), \quad \Sigma^r L := (\overline{L}, h_L+r, \theta_L), \;
  \forall r \in \mathbb{R}.
\end{equation}
These extend to maps on $\hom_{\fuk(\lagex;p)}$'s in the obvious way
and furthermore to filtered $A_{\infty}$-functors
$\fuk(\lagex;p) \longrightarrow \fuk(\lagex;p)$ with trivial higher
order terms.  With these definitions we have natural isomorphisms:
\begin{equation} \label{eq:T-Sigma-ex-2}
  \begin{aligned}
    & CF(T^kL_0, T^lL_1; p)^i \cong
    CF(L_0, L_1; p)^{i-k+l}, \; \forall \; k,l \in \mathbb{Z}, \\
    & CF^{\alpha}(\Sigma^rL_0, \Sigma^sL_1;p) \cong
    CF^{\alpha+r-s}(L_0, L_1; p), \; \forall \; \alpha, r, s \in
    \mathbb{R}.
  \end{aligned}
\end{equation}
The superscripts $i, i-k+l$ in the first line stand for degrees and
the superscripts $\alpha, \alpha+r-s$ in the second line are action
levels.  In the first line we have used cohomological grading
conventions. In homological grading conventions the isomorphism from
the first line takes the following form:
$CF(T^kL_0, T^lL_1; p)_j \cong CF(L_0, L_1; p)_{j+k-l}, \; \forall \;
k,l \in \mathbb{Z}$. In case no confusion between grading and  action levels may arise
we will sometimes also write $CF^i(L_0, L_1;p)$ instead of
$CF(L_0, L_1; p)^i$ and similarly in homology.

Before we proceed we remark that the filtered Fukaya category
$\fuk(\lagex;p)$ depends on the choice of $\lambda$ in two
ways. Firstly, the set of objects $\lagex$ depends on $\lambda$ and
secondly the filtrations on the hom's depend  on $\lambda$.
Whenever we need to emphasize the dependence on $\lambda$ we will
write $\lagex_{\lambda}$ for the set of objects and denote this
category by $\fuk(\lagex_{\lambda}, \lambda ;p)$. (The second
$\lambda$ in the notation indicates that the action is measured with
respect to $\lambda$.) Note that if $\lambda' = \lambda + dG$, where
$G: X \longrightarrow \mathbb{R}$ is a smooth function, compactly
supported in the interior of $X$, then there is a canonical
isomorphism of filtered $A_{\infty}$-categories
\begin{equation} \label{eq:can-iso-fuk-lam} \fuk(\lagex_{\lambda},
  \lambda;p) \longrightarrow \fuk(\lagex_{\lambda'}, \lambda';p).
\end{equation}
Its action on objects is given by
$(\overline{L}, h, \theta) \longmapsto (\overline{L},
h+G|_{\overline{L}}, \theta)$. Its action on morphisms is induced by
the identity map and it has zero higher order terms. We will often
identify all the categories $ \fuk(\lagex_{\lambda'}, \lambda';p)$
with $\lambda'$ as above.

However if $\lambda'$ differs form $\lambda$ by a non-exact closed
form then the collection $\lagex$ entirely changes, so in that case
$\fuk(\lagex_{\lambda}, \lambda ;p)$ and
$\fuk(\lagex_{\lambda'}, \lambda' ;p)$ have completely different
objects and there is no general way to compare these categories.

We will discuss further important properties of the categories
$\fuk(\lagex;p)$ in~\S\ref{sb:sys-fuk} below.

\subsubsection{The monotone case} \label{sb:fuk-mon} In case $X$ is
closed, or symplectically convex at infinity we will consider monotone
Lagrangian submanifolds. (Note that the existence of monotone
Lagrangians in $X$ is not obvious, there are  global
obstructions, e.g.~that the ambient manifold $X$ is itself monotone.)

To obtain a graded Floer theory we follow Seidel's approach for graded
Lagrangians submanifolds~\cite{Se:graded}. Fix
$N \in \mathbb{Z}_{\geq 2}$ for which the image of $2c_1(X)$ in
$H^2(X; \mathbb{Z}_N)$ vanishes (we assume that such an $N$ does
exist). Fix an $N$-fold Maslov covering
$Gr^N_{\text{Lag}}(X, \omega) \longrightarrow Gr_{\text{Lag}}(X,
\omega)$ of the Lagrangian Grassmannian bundle of $(T(X), \omega)$.

Denote by $\Lambda$ the Novikov field with coefficients in
$\mathbb{Z}_2$, namely:
\begin{equation} \label{eq:Nov-ring} \Lambda = \Bigl\{
  \sum_{k=0}^{\infty} a_k T^{\lambda_k} \mid a_k \in \mathbb{Z}_2,
  \lim_{k \to \infty} \lambda_k = \infty \Bigr\},
\end{equation}
and let $\Lambda_0 \subset \Lambda$ be the positive Novikov ring:
\begin{equation} \label{eq:Nov-ring-0} \Lambda_0 = \Bigl\{
  \sum_{k=0}^{\infty} a_k T^{\lambda_k} \mid a_k \in \mathbb{Z}_2,
  \lambda_k\geq 0, \lim_{k \to \infty} \lambda_k = \infty \Bigr\}.
\end{equation}

Denote by $\lagmon$ the collection of closed, graded, monotone
Lagrangian submanifolds whose Maslov-$2$ pseudo-holomorphic disk count (through a generic point and for a generic $J$ that tames $\omega$; see for instance \cite{Bi-Co-Sh:LagrSh} \S 3.5 page 66 for a definition) is
$\mathbf{d} \in \Lambda_0$. We write these elements as triples
$L = (\overline{L}, a_L, s_L)$, where:
\begin{itemize}
\item $\overline{L} \subset X$ is a monotone Lagrangian submanifold
  with Maslov-$2$ disk count $\mathbf{d}$.
\item $s_L$ is a $Gr^N_{\text{Lag}}(X, \omega)$-grading of $L$. More
  specifically,
  $s_L: \overline{L} \longrightarrow Gr^N_{\text{Lag}}(X, \omega)$ is
  a lift of the canonical section
  $\overline{L} \longrightarrow Gr_{\text{Lag}}(X,
  \omega)$. See~\cite{Se:graded} for more details.
\item $a_L \in \mathbb{R}$ is a real number that will be used for
  action filtration purposes in~\S\ref{sbsb:fil-mon} below.
\end{itemize}

\subsubsection{Filtered Fukaya categories in the monotone
  case} \label{sbsb:fil-mon} Let
$L_0 = (\overline{L}_0, a_{L_0}, s_{L_0})$,
$L_1 = (\overline{L}_1, a_{L_1}, s_{L_1})$ be two geometrically
distinct elements in $\lagmon$. Let $p \in \mathcal{P}$, and
$H^{L_0,L_1}_p: [0,1] \times X \longrightarrow \mathbb{R}$ the
Hamiltonian function prescribed by $p$ for the Floer datum of
$(L_0, L_1)$. Recall that the Floer complex $CF(L_0, L_1;p)$ is a free
$\Lambda$-module generated by the $H^{L_0,L_1}_p$-Hamiltonian chords
$x$ with endpoints on $\overline{L}_0, \overline{L}_1$.

Let $0 \neq P(T) \in \Lambda$ and $x$ be an
$H^{L_0,L_1}_p$-Hamiltonian chord as above. We define the action of
$P(T)x$ to be:
\begin{equation} \label{eq:action-mon} \mathcal{A}\bigl(P(T)x\bigr) :=
  -\lambda_0 + \int_0^1 H_p^{L_0,L_1}(t, x(t))dt + a_{L_1} - a_{L_0},
\end{equation}
where $\lambda_0 \in \mathbb{R}$ is the minimal exponent that appears
in the formal power series of $P(T) \in \Lambda$, i.e.
$P(T) = a_0 T^{\lambda_0} + \sum_{i=1}^{\infty} a_i T^{\lambda_i}$
with $a_0 \neq 0$ and $\lambda_i > \lambda_0$ for every $i\geq 1$.  We
now extend $\mathcal{A}$ to $CF(L_0,L_1;p)$ as follows. For a
non-trivial element
$c = P_1(T)x_1 + \cdots + P_l(T)x_l \in CF(L_0, L_1;p)$ we define
$$\mathcal{A}(c) = \max \{ \mathcal{A}(P_k(T) x_k) \mid 1\leq k \leq
l\},$$ and finally we set $\mathcal{A}(0) = -\infty$. The filtration
on $CF(L_0,L_1;p)$ is then defined by:
$$CF^{\alpha}(L_0,L_1;p) := 
\{c \in CF(L_0,L_1; p) \mid \mathcal{A}(c) < \alpha \}.$$ Standard
arguments show that the Floer differential $\mu_1$ preserves this
filtration. It is important to note however, that the filtration
levels $CF^{\alpha}(L_0,L_1;p)$ of $CF(L_0,L_1;p)$ are {\em not}
$\Lambda$-modules but rather $\Lambda_0$-modules.

The grading on $CF(L_0, L_1;p)$, when
$\overline{L}_0 \neq \overline{L}_1$, is defined using the $N$-fold
maslov covering $Gr^N_{\text{Lag}}(X, \omega)$ and the gradings
$s_{L_0}, s_{L_1}$ following the recipe from~\cite{Se:graded}. In
contrast to the case described in~\S\ref{sbsb:fil-ex}, here we only
obtain a $\mathbb{Z}_N$-grading.

As in the exact case, following~\cite{Amb:fil-fuk} there is a class of
perturbation data $\mathcal{P}$ such that for every
$p \in \mathcal{P}$ the Fukaya category $\fuk(\lagmon;p)$ is a genuine
filtered and strictly unital $A_{\infty}$-category. The total category
is $\Lambda$-linear, however when viewed as a filtered
$A_{\infty}$-category, $\fuk(\lagmon;p)$ is only $\Lambda_0$-linear.

Similarly to the exact case, here too we have translation and shift
functors. For the translation functor, recall from~\cite{Se:graded} that the $N$-fold
Maslov covering
$Gr^N_{\text{Lag}}(X, \omega) \longrightarrow Gr_{\text{Lag}}(X,
\omega)$ comes with a $\mathbb{Z}_N$-action
$\rho: \mathbb{Z}_N \longrightarrow
\text{Aut}\Bigl(Gr^N_{\text{Lag}}(X, \omega) \longrightarrow
Gr_{\text{Lag}}(X, \omega) \Bigr)$.  We now
define for $L = (\overline{L}, a_L, s_L) \in \lagmon$:
\begin{equation} \label{eq:T-Sigma-mon-1} TL := (\overline{L}, a_L,
  \rho(1) \circ s_L), \quad \Sigma^r L := (\overline{L}, a_L+r, s_L),
  \; \forall r \in \mathbb{R}.
\end{equation}
Both $T$ and $\Sigma^r$ extend to filtered $A_{\infty}$-functors
$\fuk(\lagmon;p) \longrightarrow \fuk(\lagmon;p)$ and the identities
from~\eqref{eq:T-Sigma-ex-2} continue to hold (with the grading being
taken modulo $N$).

\subsubsection{The weakly exact case} \label{sb:fuk-wex} Assume that
$X$ is closed, or symplectically convex at infinity. we will consider
in this case also weakly exact closed Lagrangian submanifolds
$\overline{L} \subset X$. By weakly exact we mean that
$\langle [\omega], A \rangle = 0$ for all classes
$A \in \textnormal{image\,} (\pi_2(X,L) \longrightarrow
H_2(X,L))$.  A necessary condition for such Lagrangians
to exist is that $X$ is symplectically aspherical,
i.e.~$\langle [\omega], A \rangle = 0$ for all spherical classes
$A \in \textnormal{image\,} (\pi_2(X) \longrightarrow H_2(X))$. Also
note that if $(X, \omega)$ is exact then every exact Lagrangian is
automatically weakly exact, regardless of the chosen primitive
$\lambda$ of $\omega$.)

Denote by $\lagwex$ the collection of triples
$L = (\overline{L}, a_L, \theta_L)$, where $\overline{L} \subset X$ is
a closed weakly exact Lagrangian submanifold, $a_L \in \mathbb{R}$ and
$\theta_L: \overline{L} \longrightarrow \mathbb{R}$ is a grading of
$\overline{L}$ as in~\S\ref{sb:fuk-ex}. Similarly to the monotone
case, we will work here with coefficients in the Novikov ring
$\Lambda$ and its positive version $\Lambda_0$. The Floer complexes
$CF(L_0, L_1; p)$ are defined exactly as in the monotone
case~\S\ref{sbsb:fil-mon}. For the action filtration we follow the
recipe from the monotone case, while for the grading we use the recipe
described in the exact case. More specifically, the action filtration
is defined by~\eqref{eq:action-mon} and the grading is defined as
in~\S\ref{sbsb:fil-ex}. The defintion of the translation functor is
the same as in~\eqref{eq:T-Sigma-ex-1} and the shift functors
$\Sigma^r$ are defined as in~\eqref{eq:T-Sigma-mon-1}. The identities
from~\eqref{eq:T-Sigma-ex-2} continue to hold.

\subsubsection{Systems of Fukaya categories and their
  TPC's} \label{sb:sys-fuk} Let $(X, \omega)$ be a symplectic manifold
as at the beginning of~\S\ref{s:pdfuk} and let $\lag$ be a subset of
one of the collections $\lagex$, $\lagmon$ or $\lagwex$ (depending on
the type of $X$). We assume that $\lag$ is closed under all
translations and all shifts.

Let $\mathcal{P}$ be the space of perturbation data, as
in~\S\ref{sbsb:fil-ex} and~\S\ref{sbsb:fil-mon}, and denote for every
$p \in \mathcal{P}$ by $\fuk(\lag;p)$ the full
$A_{\infty}$-subcategory (of the respective Fukaya categories
$\fuk(\lagex;p)$, $\fuk(\lagmon;p)$ or $\fuk(\lagwex;p)$) with the
objects $\lag$. The categories $\fuk(\lag;p)$ inherit the structure of
filtered $A_{\infty}$-categories from their ambient categories.

We will further restrict the space of perturbation data $\mathcal{P}$
to satisfy the following condition. Fix a constant $B > 0$, and
consider only perturbation data $p \in \mathcal{P}$ such that for
every pair of geometrically distinct objects $L_0, L_1 \in \lag$ with
(i.e.~$\overline{L}_0 \neq \overline{L}_1$) we have
\begin{equation} \label{eq:H-smaller-B} H_p^{L_0,L_1}(t,x) \leq B, \;
  \forall \; t \in [0,1], x \in X.
\end{equation}
Recall from \cite{Amb:fil-fuk} that for every $p \in \mathcal{P}$ and $L_0, L_1$ as above, the
function $H_p^{L_0, L_1}$ is strictly positive. As we will see later,
the particular choice of the constant $B$ will not play an important
role for our considerations (intuitively, it should be a small
number), and its main purpose is to have a uniform bound on all the
Hamiltonian functions that appear in the perturbation data. By a
slight abuse of notation we will denote the space of perturbation data
satisfying the additional condition~\eqref{eq:H-smaller-B} also by
$\mathcal{P}$.

Define now $\nu: \mathcal{P} \longrightarrow \mathbb{R}_{>0}$ by:
\begin{equation} \label{eq:nu-fuk}
  \nu(p) := \sup \Bigl\{H_p^{L_0, L_1}(t,x) \bigm| L_0, L_1 \in \lag
  \text{\ are geometrically distinct, }
  (t,x) \in [0,1]\times X\Bigr\}.
\end{equation}

Next, define a preorder on $\mathcal{P}$. For $p, q \in \mathcal{P}$
we define $p \preceq q$ if for every two geometrically distinct
$L_0, L_1 \in \lag$ we have
$$H_q^{L_0, L_1}(t,x) \leq H_p^{L_0, L_1}(t,x), \; \forall \; (t,x) \in
[0,1] \times X.$$ Note that when endowed with $\preceq$, the set
$\mathcal{P}$ becomes directed (i.e.~in addition to being a preorder
we have that
$\forall \; p', p'' \in \mathcal{P}, \; \exists \; q \in \mathcal{P}$
with $p', p'' \preceq q$).

Let $p, q \in \mathcal{P}$ with $p \preceq q$. From ~\cite{Amb:fil-fuk},
there exists a distinguished class of filtered, strictly unital,
continuation $A_{\infty}$-functors
$\mathcal{H}_{p,q}: \fuk(\lag;p) \longrightarrow
\fuk(\lag;q)$. Different functors in this class are $0$-homotopic
Moreover, we also have a distinguished class of strictly unital
continuation $A_{\infty}$-functors
$\mathcal{H}_{q,p}: \fuk(\lag;q) \longrightarrow \fuk(\lag;p)$ with
linear deviation rate $\leq 2(\nu(p) - \nu(q))$ (see~\S\ref{a:func-LD}
for the precise definition). Here too, any two functors in this class
are $0$-homotopic (the homotopy being a homotopy of LD-functors of a
given deviation rate; see~\S\ref{a:homotopy}).

By~\cite{Amb:fil-fuk} the functors $\mathcal{H}_{p,q}$ staisfy the
conditions listed in~\S\ref{sb:sys-ainfty}, hence the family of
categories $\{\fuk(\lag;p)\}_{p \in \mathcal{P}}$ together with the
comparison functors $\mathcal{H}_{p,q}$, $\mathcal{H}_{q,p}$,
$p \preceq q$, become a system of filtered $A_{\infty}$-categories
which we denote by $\widehat{\fuk}(\lag)$.

The system $\widehat{\fuk}(\lag)$ has in fact two additional important
properties. The first is that all the categories $\fuk(\lag;p)$ have
the same set of objects $\lag$ and the comparison functors act like
the identity on this set. In the terminology of~\S\ref{sbsb:sys-base},
the system $\widehat{\fuk}(\lag)$ has a fixed full base of objects.
The second property is that $\widehat{\fuk}(\lag)$ is in fact a
homotopy system, according to the definitions
from~\S\ref{sbsb:homotopy-sys}. This follows immediately from the
discussion above on the homotopy properties of the comparision
functors.

Note however that although $\widehat{\fuk}(\lag)$ has a fixed full
base of objects, this does not hold anymore for the associated system
of filtered modules $F\md(\widehat{\fuk}(\lag))$ or the Yoneda system
$\mathcal{Y}(\widehat{\fuk}(\lag))$ (in particular, they cannot even
be considered as homotopy systems). This has nothing to do with Fukaya
categories and the reason is purely algebraic - see
Remark~\ref{r:coh-base}. Nevertheless, the fact that
$\widehat{\fuk}(\lag)$ is a homotopy system is still useful, as it
allows to define some of the limit invariants introduced
in~\S\ref{sb:sys-inv}, e.g.~the persistence Hochschild homology
$PHH(\widehat{\fuk}(\lag))$ of the system $\widehat{\fuk}(\lag)$ as
defined in~\eqref{eq:HH-colim}.

Returning to $\widehat{\fuk}(\lag)$, we will turn it into a genuine
system of TPC's with increasing accuracy. There are two ways to do
this, each leading to a slightly different system. The first way is to
use the persistence derived Fukaya category using filtered
modules as described in~\S\ref{sbsb:fmod}, and the second one is to
use the construction from~\S\ref{sbsb:fmod-2}.

We begin with the first implementation. We use the parameter space
$\mathcal{P}$, the size function
$\nu: \mathcal{P} \longrightarrow \mathbb{R}_{>0}$ and the pre-order
$\preceq$ discussed above. The categories in the system of TPC's will
be the persistence derived categories $PD(\fuk(\lag; p))$,
$p \in \mathcal{P}$. The comparison functors
for $p \preceq q$ are defined to be
$$\msh_{p,q} := PD (\mathcal{H}_{p,q}): PD(\fuk(\lag; p))
\longrightarrow PD(\fuk(\lag; q)),$$ where we use here the notation
and construction from~\S\ref{sbsb:fmod}. For $p \preceq q$ we define
$\msh_{q,p}: PD(\fuk(\lag; q)) \longrightarrow PD(\fuk(\lag;
p))$ in the same way. Note that although $\mathcal{H}_{q,p}$ has
linear deviation, its induced functor on the persistence derived
categories is still a TPC-functor. See~\S\ref{sbsb:fmod} for more
details. We will denote this system of TPC's by
$PD(\widehat{\fuk}(\lag))$.

The second implementation is very similar, only that it appeals to the
construction from~\S\ref{sbsb:fmod-2}. The categories in the system
are now $PD^c(\fuk(\lag;p))$, $p \in \mathcal{P}$, and the comparison
functors are $\msh^c_{p,q} := PD^c(\mathcal{H}_{p,q})$ and similarly
for $\msh^c_{q,p}$. We denote this system of TPC's by
$PD^c(\widehat{\fuk}(\lag))$.

Finally, we remark that although $\widehat{\fuk}(\lag)$ is a system
with a fixed full base of objects (and even a homotopy system),
neither $PD(\widehat{\fuk}(\lag))$ nor $PD^c(\widehat{\fuk}(\lag))$
has a coherent full base of objects.

\subsubsection{Functors associated with
  symplectomorphisms} \label{sbsb:symplecto-fun} Let $(X, \omega)$ be
a symplectic manifold of one of the types listed at the beginning
of~\S\ref{s:pdfuk} and $\lag$ be a subset of the collection of
Lagrangians in $X$ as at the beginning of~\S\ref{sb:sys-fuk}. Denote
by $\mathcal{P}$ the space of admissible perturbation data
parametrizing the system of filtered Fukaya categories with objects
$\lag$.

Let $\phi : X \longrightarrow X$ be a symplectic diffeomorphism
compactly supported in the interior of $X$. We will define below an endo-functor of each of the systems of TPC's $PD(\widehat{\fuk}(\lag))$ and
$PD^c(\widehat{\fuk}(\lag))$, that is induced by $\phi$.

In order to define such a functor we need several additional
assumptions on $\phi$. First of all we assume that $\phi$ preserves
the collection of underlying Lagrangians $\overline{L}$ with
$L \in \lag$. The other assumptions on $\phi$ depend on the collection
of Lagrangians (exact, monotone or weakly exact) which $\lag$ is
associated with.

We begin with the case of exact Lagrangians. Here we first need to
keep in mind the Liouville form $\lambda$. Let
$\lag_{\lambda} \subset \lagex_{\lambda}$ be a subset which is closed
under translations and shifts. We will make the following additional
assumptions: $\phi$ is assumed to be exact, namely
$\phi^*\lambda = \lambda + dF$ for some smooth function
$F: X \longrightarrow \mathbb{R}$ which is compactly supported inside
the interior of $X$. We also assume that $\phi$ preserves the homotopy
class of the quadratic complex $n$-form $\Theta$ (among non-vanishing
forms of this type, and after indentifying $(T(X),J)$ with
$(T(X), \phi^*J)$). We also assume that $\phi$ is graded
(see~\cite{Se:graded} for the definition of a graded
symplectomorphism) and denote its action on graded Lagrangians by
$(\overline{L}, \theta) \longmapsto (\phi(\overline{L}),
\phi_*(\theta))$.

Under the above assumptions we obtain a bijective map,
\begin{equation} \label{eq:phi-obj-lam-1} \Ob(\fuk(\lag, \lambda; p))
  \longrightarrow \Ob(\fuk(\lag, \lambda';q)), \quad (\overline{L}, h,
  \theta) \longmapsto (\phi(\overline{L}), h \circ
  \phi^{-1}|_{\phi(\overline{L})}, \phi_*(\theta)),
\end{equation}
where $\lambda' := (\phi^{-1})^*\lambda$. Here $p,q$ are any two
choices of perturbation data (indeed, below we will need to work with
$p \neq q$).

By the assumptions on $\phi$ we have
$\lambda' = \lambda - d(F \circ \phi^{-1})$, hence after composing the
preceding map with the canonical
isomorphism~\eqref{eq:can-iso-fuk-lam} we obtain a bijection
\begin{equation} \label{eq:phi-obj}
  \begin{aligned}
    & \phi: \Ob(\fuk(\lag, \lambda;p)) \longrightarrow \Ob(\fuk(\lag,
    \lambda;q)),
    \\
    & (\overline{L}, h, \theta) \longmapsto (\phi(\overline{L}), h
    \circ \phi^{-1}|_{\phi(\overline{L})} - F|_{\overline{L}} \circ
    \phi^{-1}|_{\phi(\overline{L})}, \phi_*(\theta)),
  \end{aligned}
\end{equation}
which by abuse of notation we still denote by $\phi$.

Next we define a map
$\phi_* : \mathcal{P} \longrightarrow \mathcal{P}$ by pushing forward
the structures in the perturbation datum. Specifically, let
$p \in \mathcal{P}$ be a perturbation data. Recall that $p$ assigns to
each tuple of Lagrangians in $\lag$ some auxiliary structures.  In the
simplest case, when this tuple $\vec{L} = (L_0, \ldots, L_d)$ has no
consecutive entries (in the cyclic sense) with the same underlying
Lagrangians, the perturbation $p$ assigns to $\vec{L}$ a tuple
$(K^p, J^p)$ with two components: the first is a family
$K^p =\{K^p_S \in \Omega^1(S; C_0^{\infty}(X))\}_{S \in \mathcal{S}}$
of $1$-forms parametrized by the space $\mathcal{S}$ of
boundary-punctured disks (with any number $\geq 2$ of punctures). Each
member of the family $K^p$ is a $1$-form $K_S^p$ on the punctured disk
$S$ with values in the space of compactly supported smooth functions
on $X$ (which should be viewed as Hamiltonian functions). The second
component $J^p$ is a family $\{J_S^p\}_{S \in \mathcal{S}}$ of (domain
dependent) almost complex structure, or more precisely each $J^p_S$ is
itelsf an $S$-parametrized family $\{J^p_{S,z}\}_{z \in S}$ of
$\omega$-compatible almost complex structures on $X$ which are also
compatible with the symplectic convexity of $(X, \omega)$ at infinity
(in case $X$ is not closed).  The push forward $\phi_*(p)$ of $p$ by
$\phi$ assigns to $\phi(\vec{L}) := (\phi(L_0), \ldots, \phi(L_d))$
the perturbation data $\phi_*(p) = (\phi_* K^p, \phi_* J^p)$, where
\begin{equation} \label{eq:push-forward}
  (\phi_*K^p)_S (\xi) :=  K^p_S(\xi) \circ \phi^{-1}, \; \forall
  \xi \in T(S), \quad (\phi_* J^p)_{S,z} := D \phi \circ J_{S,z}
  \circ D \phi^{-1}, \; \forall S \in \mathcal{S}, z \in S.
\end{equation}
This completes the definition of $\phi_*(p)$ in the case when
$\vec{L}$ does not contain consecutive entries whose underlying
Lagrangians coincide.

When the tuple of Lagrangians does contain consecutive entries with
the same underlying Lagrangians the perturbation data $p$ are defined
over the space clusters (see~\cite{Amb:fil-fuk}, and
also~\cite[Section~3.3]{BCZ:tpc} where these are called decorated
clusters of punctured disks). In essence these combine punctured disks
and some trees attached to some of the boundary arcs of the punctured
disks. Over each punctured disk in the cluster the perturbation data
are as before (namely, they consits of a pair $(K^p,J^p)$ as described
above). Over each tree the perturbation data consists of a Morse data,
which is a collection of paramter-depending Morse functions and
Riemannian metrics assigned to each edge of the graph (and depending
on a parameter varying along each edge). The push forward by $\phi$
of the Morse data (in the tree part of a cluster) is simply done by
composing the Morse functions with $\phi^{-1}$ and similarly for the
Riemannian metrics. Finally, over each punctured disk in a cluster we
use the same recipe as in~\eqref{eq:push-forward}.

The space of admissible perturbation data
$\mathcal{P}$ is closed under the action of $\phi$, namely for every
$p \in \mathcal{P}$ we have $\phi_*(p) \in \mathcal{P}$. This follows
from the definition of $\mathcal{P}$ in~\cite{Amb:fil-fuk}. Moreover,
it follows from~\eqref{eq:nu-fuk} that for every $p \in \mathcal{P}$ we
have $\nu(\phi_*(p)) = \nu(p)$ and that for every $p \preceq q$ we
have $\phi_*(p) \preceq \phi_*(q)$.

The diffeomorphism $\phi$ also induces obvious chain isomorphisms,
defined for every $L_0, L_1 \in \lag_{\lambda}$ and
$p \in \mathcal{P}$:
\begin{equation} \label{eq:phi-chain} \phi^{CF} : CF(L_0, L_1;
  \lambda; p) \longrightarrow CF(\phi(L_0), \phi(L_1); \lambda;
  \phi_*(p)).
\end{equation}
Note that we are using here the same Liouville form $\lambda$ both in
the domain and target of $\phi^{CF}$, as we have done also
in~\eqref{eq:phi-obj}. Since
$(\phi^{-1})^*\lambda = \lambda - d(F \circ \phi^{-1})$ it follows
that the chain map $\phi^{CF}$ preserves action filtrations.

The maps $\phi^{CF}$ extend to a filtered $A_{\infty}$-functor (in
fact an $A_{\infty}$-isomorphism)
\begin{equation} \label{eq:phi-fun-1} \phi^{\fuk}_p: \fuk(\lag,
  \lambda; p) \longrightarrow \fuk(\lag, \lambda; \phi_*(p))
\end{equation}
whose first order terms are $\phi^{CF}$ and with vanishing higher
order terms. These functors fit together to a functor of systems of
filtered $A_{\infty}$-categories,
$$\widehat{\phi}^{\fuk}: \widehat{\fuk}(\lag, \lambda) \longrightarrow
\widehat{\fuk}(\lag, \lambda),$$ as defined at the end
of~\S\ref{sb:sys-ainfty} (see also~\S\ref{sbsb:func-sys-pc}).

Passing to the persistence derived level, we obtain by~\S\ref{sb:pdc}
TPC functors:
\begin{equation} \label{eq:phi-fun-2} PD(\phi^{\fuk}_p): PD(\fuk(\lag,
  \lambda, p)) \longrightarrow PD(\fuk(\lag, \lambda, \phi_*(p))).
\end{equation}

The collection of functors $PD(\phi_p^{\fuk})$, $p \in \mathcal{P}$,
and the push forward map
$\phi_*: \mathcal{P} \longrightarrow \mathcal{P}$, form together one
functor of systems of TPC's, according to the definitions
from~\S\ref{sb:sys-tpc}:
\begin{equation} \label{eq:phi-fun-3} PD(\widehat{\phi}^{\fuk}) :
  PD(\widehat{\fuk}(\lag, \lambda)) \longrightarrow
  PD(\widehat{\fuk}(\lag, \lambda)).
\end{equation}
The compatibility with respect to the comparison functors, as required
by diagrams~\eqref{sys-fun-diag-1} and~\eqref{sys-fun-diag-2}, follows
by standard arguments.

There is also a similar functor
$PD^c(\widehat{\phi}^{\fuk}) : PD^c(\widehat{\fuk}(\lag, \lambda))
\longrightarrow PD^c(\widehat{\fuk}(\lag, \lambda))$, defined
analogously to~\eqref{eq:phi-fun-3}, in which we just replace $PD$ by
$PD^c$ everywhere in~\eqref{eq:phi-fun-2}.

The definition of $\phi^{\fuk}_{p}$, $\widehat{\phi}^{\fuk}$ and their
persistence derived versions $PD(\phi^{\fuk}_{p})$,
$PD(\widehat{\phi}^{\fuk})$ in the monotone and the weakly exact cases
is very similar to the above. In these cases, the action on the
perturbation data $p \mapsto \phi_*(p)$ is precisely the same as in
the exact case described above. The action of $\phi$ on the grading in
the weakly exact case is the same as in the exact case, while in the
montone case it follows the receipe
from~\cite[Section~2.b]{Se:graded}. Finally, the action of $\phi$ on
the middle parameter $a_L \in \mathbb{R}$ of an object
$L = (\overline{L}, a_L, -)$ is the identity in both the monotone and
the weakly exact cases.

\subsection{Ungraded setting} \label{sb:fuk-ungraded} Sometimes it
will be more convenient to consider Fukaya categories without any
grading. This can be done in each of the settings described
in~\S\ref{sb:fuk-ex} -~\S\ref{sb:fuk-wex}, by simply omitting the
grading component from the objects $L \in \lag$ and viewing the hom's
as ungraded chain complexes. In the ungraded context, whenever we deal
with triangulated structures we just set the translation functor to be
$\id$. Note also that all the considerations from~\S\ref{sb:sys-fuk}
carry over to the ungraded setting in a straightforward way.


\section{Nearby Approximability}\label{sec:nearby}
 
 The aim of this section is to prove an approximability result, Theorem \ref{thm:nearby},  which is formulated in \S \ref{subsec:state}. In the last subsection, \S\ref{subsec:nearby-TPC},  the first point  of Theorem  \ref{thmmain1} is  seen to easily follow  from Theorem \ref{thm:nearby}. 

 \subsection{Setting}\label{subsec:back} We start by providing some necessary notation and background.
 
 Let $(N^{n},\mathtt{g})$ be a closed $n$-manifold with Riemannian metric $\mathtt{g}$. 
 We denote by $D_{r}^{\ast}N$ the closed $r$-disk cotangent bundle of $N$, with respect to  the  metric corresponding to $\mathtt{g}$ (under the identification $T^{\ast}N\cong TN$, induced by $\mathtt{g}$) $$D_{r}^{\ast}(N)\subset T^{\ast}N~.~$$
 We endow $T^{\ast}N$ with the canonical symplectic form
 $\omega=d\lambda$ with $\lambda$ the Liouville $1$-form on $T{^\ast}N$.
 
 We consider the class $\lagex(D^{\ast}_{r}N)$ of all closed, {\em exact} Lagrangian submanifolds in the interior of $D_{r}^{\ast}N$, see \S\ref{sb:fuk-ex}. For $r=1$, we simplify the notation by omitting the subscript $r$. There are filtered Fukaya categories with objects the elements in $\lagex(D^{\ast}_{r}N)$ denoted by $\fuk(\lagex(D^{\ast}_{r}N); p)$ where $p\in\mathcal{P}$ is a choice of perturbations, as described in  
\S\ref{sbsb:fil-ex}.  The categories associated with various perturbation choices are related through 
comparison functors, as described in
\S\ref{sb:sys-fuk}.  To simplify the various constructions below we denote $\mathcal{A}_{p}(r)=\fuk(\lagex(D^{\ast}_{r}N); p)$. We  omit $p$ from the notation, when this parameter is fixed and no confusion is possible and in case $r=1$ we write $\mathcal{A}_{p}$ instead of $\mathcal{A}_{p}(r)$. 
 
The considerations below are independent of $r$ and thus we fix $r=1$.
 The Yoneda functor embeds $\mathcal{A}_{p}$ inside the category 
 of filtered $A_{\infty}$ modules, $
 F\md_{\mathcal{A}_{p}}$ (see \ref{sbsb:fmod}),
 $$\mathcal{Y}_{p}: \mathcal{A}_{p}\hookrightarrow F\md_{\mathcal{A}_{p}}~.~$$
 
 For a point $x\in N$, let $\overline{F}_{x}$ be the cotangent fiber through $x$,
 $$\overline{F}_{x}=\pi^{-1}(x)$$ where  $\pi : T^{\ast}N\to N$  is the projection of the cotangent bundle. Such a fiber also admits markings and we denote by $F_{x}= (\overline{F}_{x}, h_{F_{x}}, \theta_{F_{x}})$ a marked fiber, with underlying Lagrangian submanifold $\overline{F}_{x}$ ($h_{F_{x}}$ 
 is a constant function in this case). Each $F_{x}$ defines a filtered $A_{\infty}$-module still denoted  
 $$F_{x}\in F\md_{\mathcal{A}_{p}}~.~$$  
 For this it is necessary to define the perturbations required for
 defining $\mu_{d}$ operations involving tuples of closed Lagrangians and one copy of $\overline{F}_{x}$. 
 Denoting marked Lagrangians and the  associated Yoneda modules  by the same symbols simplifies notation and it is generally non-problematic, however it is important to keep track of the precise context, namely of the underlying $A_{\infty}$-category.
 With these conventions, we have $F_{x}(L)=CF(L,F_{x}; p)$ as filtered chain complexes.  
 
\

We denote by ${\mathcal{A}^{F}_{p}}$ the filtered full sub-category of $F\md_{\mathcal{A}_{r}}$ that contains the image  of $\mathcal{Y}_{p}$, and all the modules ${F}_{x}$, $x\in N$ (with all possible markings on the fibers $\overline{F}_{x}$). As before,
in case we want to render explicit both the perturbation parameter $p$ and the radius of the 
disk bundle $D^{\ast}_{r}N$ we write $\mathcal{A}^{F}_{p}(r)$. 

\

The next step is to complete each category ${\mathcal{A}^{F}_{p}}$ with respect to mapping cones over morphisms with filtration level $\leq 0$ and quasi-isomorphisms of filtration level $\leq 0$ and denote the result by $[\mathcal{A}^{F}_{p}]^{\Delta}$. Finally, we let 
$\msc_{p}$ be the associated TPC, $\msc_{p}=PH([\mathcal{A}^{F}_{p}]^{\Delta})$. The categories 
$\mathcal{A}^{F}_{p}$ fit into a system of filtered categories for varying $p$, as in \S\ref{sb:sys-ainfty}.  Thus we have functors $\mathcal{H}_{p,q}:\mathcal{A}^{F}_{p}\to \mathcal{A}^{F}_{q}$ defined using the 
push-forward construction of filtered modules. More specifically,
recall that for $p \preceq q$ we have a continuation functor
$\mathcal{H}^{\text{cont}}_{p,q} : \mathcal{A}_p \longrightarrow
\mathcal{A}_q$. The functor $\mathcal{H}_{p,q}$ is defined as
$\mathcal{H}_{p,q} := (\mathcal{H}^{\text{cont}}_{p,q})_*$, the
induced push-forward of modules. The functors $\mathcal{H}^{\text{cont}}_{p,q}$ preserve filtration whenever $p\preceq q$. We also have functors $\mathcal{H}^{\text{cont}}_{q,p}$, however these are $A_{\infty}$-functors with a possibly non-vanishing linear deviation  (see \S\ref{sb:sys-ainfty}).

Summing up,  we obtain comparison functors, still denoted  
$\mathcal{H}_{p,q}: \msc_{p}\to \msc_{q}$. The categories $[\mathcal{A}^{F}_{p}]^{\Delta}$ are pre-triangulated and, as in \S\ref{sb:sys-fuk}, as a result the system  $\widehat{\msc}(D^{\ast}N) =(\msc_{p}, \mathcal{H}_{p,q})$ forms a system of TPCs with increasing accuracy in the sense of \S\ref{s:sys-tpc}.

 \subsection{The main statement, and the idea of the proof}\label{subsec:state}

Using the system $\widehat{\msc}(D^{\ast}(N))$ of TPCs we can now formulate the main result of the section.

\

To proceed, recall the equivalence relation in \S\ref{sb:sys-inv} defined on 
$$\Ob^{\text{tot}}(\widehat{\msc}(D^{\ast}N)) := \coprod_{p\in \mathcal{P}} \Ob(\msc_p)~.~$$
 The set of associated equivalence classes
is denoted by $\widetilde{\Ob}(\widehat{\msc}(D^{\ast}N))$. 
All exact, marked, Lagrangians $L\in \mathcal{L}ag^{(ex)}(D^{\ast}N)$ correspond to well-defined equivalence classes in $\widetilde{\Ob}(\widehat{\msc}(D^{\ast}N))$  because $\mathcal{H}_{p,q}$ takes Yoneda modules to modules $0$-equivalent to Yoneda modules whenever $p\preceq q$. We denote
by $\mathcal{L}(D^{\ast}N)\subset \widetilde{\Ob}(\widehat{\msc}(D^{\ast}N))$ the set of equivalence classes of all such, closed, exact, marked $L$'s. We formulate a similar property for a family of marked fibers $\F=\{F_{x_{1}},\ldots, F_{x_{i}},\ldots\}$ of $D^{\ast}N$. We say that the family
$\F$ satisfies property ($\ast$) if:

\begin{itemize}
\item[($\ast$)]
 {\it for all $p \preceq q$, for every marked fiber
$F_{x} \in \F$ we have $\mathcal{H}_{p,q}(F_{x}) \cong_0 F_{x}$, where the $F_{x}$ on the right-hand side of $\cong_0$ stands for the  module corresponding to 
$F_{x}$ in $\Ob(\msc_{q})$.} 
\end{itemize}
If this property is satisfied by the family $\F$, then  each marked fiber $F_{x}\in \F$ corresponds to a well-defined equivalence class $\mathscr{F}_{x}\in\widetilde{\Ob}(\widehat{\msc}(D^{\ast}N))$.

\begin{thm}\label{thm:nearby}
Fix $\epsilon>0$. It is possible to construct the system $\widehat{\msc}(D^{\ast}(N))$ such that  there is a finite family of marked fibers $\F_{\epsilon}=\{F_{x_{1}}, \ldots, F_{x_{l}}\}$ that satisfies property {\rm ($\ast$)} and, moreover, the set $\mathcal{L}(D^{\ast}N)$ is $\epsilon$-approximable in the sense of Definition \ref{d:approx-sys} by $\{\mathscr{F}_{x_{1}},\ldots, \mathscr{F}_{x_{l}}\}$.
\end{thm}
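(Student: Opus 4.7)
The plan is to combine the three ingredients laid out in the section preamble: Giroux's Lefschetz fibration on $D^{\ast}N$, the Lagrangian cobordism reinterpretation of Dehn-twist decompositions in Lefschetz fibrations, and the Morse function construction of \S\ref{subsec:Morse} with small $C^0$-oscillation but generically large derivative. The overall structure mirrors the Fukaya--Seidel--Smith split-generation-by-fibers statement, but it has to be carried out filter-quantitatively and uniformly in $L$, with a single finite family $\F_{\epsilon}$ that simultaneously satisfies property $(\ast)$.

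For a prescribed $\epsilon > 0$ I would first invoke Giroux \cite{Gir:Lef-cot} to produce a Lefschetz fibration $\pi : D^{\ast}N \to D^{2}$ whose vanishing thimbles are (isotopic to) a finite collection of cotangent fibers $\overline{F}_{x_1}, \dots, \overline{F}_{x_l}$; these, with suitable markings, form the family $\F_{\epsilon}$. For any $L \in \lagex(D^{\ast}N)$, after a small Hamiltonian perturbation arranging that $\pi|_L$ behaves generically, the Lagrangian cobordism formalism of \cite{Bi-Co:lefcob-pub}, \cite{Bi-Co:mspectral} supplies a Lagrangian cobordism from $L$ to an iterated matched sum of shifted thimbles. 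Passing to the filtered module category, this translates into an iterated cone expression for the Yoneda module of $L$ whose leaves are the Yoneda modules of the $F_{x_j}$, i.e.~an object of $\langle \{F_{x_j}\}_{p} \rangle^{\Delta}$ that is $\dint$-close to $\mathcal{Y}_{p}(L)$.

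The quantitative heart of the argument is forcing each attaching morphism in this iterated cone to lie at filtration level $\leq \epsilon$. For this, I would run the decomposition of the previous paragraph after precomposing with the Hamiltonian flow of a function $f$ of the type built in \S\ref{subsec:Morse}: $\|f\|_{C^0} \leq \epsilon / C$, with $C$ depending only on the combinatorial length of the decomposition (which depends only on $N$ through Giroux's fibration), yet with $\|df\|$ large on a subset of $N$ of large measure so that the Dehn-twist mechanism still effectively separates $L$ from the singular fibers at each step. The $C^0$-bound directly controls the symplectic action of the Floer strips mediating each attachment and hence the filtration shift. Coherence across the parameter space $\mathcal{P}$, and therefore property $(\ast)$, is obtained by choosing the perturbation data for tuples whose underlying Lagrangians all lie in $\{\overline{F}_{x_1}, \dots, \overline{F}_{x_l}\}$ to be independent of $p$; then the continuation functors $\mathcal{H}_{p,q}$ act by canonical $0$-quasi-isomorphisms on the corresponding modules. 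The residual $c_{\epsilon}\nu(p)$ term of Definition \ref{d:approx-sys} absorbs the LD deviation of $\mathcal{H}_{q,p}$ and the action drift of the transversality-enforcing perturbations.

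The main obstacle is contained in the previous paragraph: forcing every attaching morphism below the target $\epsilon$ \emph{simultaneously} and \emph{uniformly in $L$}, while the finite family $\F_{\epsilon}$ stays fixed once $\epsilon$ is chosen. Qualitative generation of the Fukaya category of $D^{\ast}N$ by cotangent fibers is classical; the novelty is the quantitative uniform control, and the reconciliation of ``small Hamiltonian'' (for filtration) with ``large displacement'' (for the Dehn-twist mechanism to actually produce the decomposition) is exactly what the soft Morse function trick of \S\ref{subsec:Morse} is designed to deliver.
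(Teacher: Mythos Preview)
Your proposal has the right ingredients but misidentifies both the role of the Morse function and the source of the $\epsilon$-bound.

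The Morse function $\varphi = \varphi_{N,K,\delta}$ of \S\ref{subsec:Morse} is not applied as a Hamiltonian to $L$; it is the \emph{input to Giroux's construction}: the Lefschetz fibration $h = f + ig : T^*N \to \mathbb{C}$ has $f|_N = \varphi$ and $\Crit(h) = \Crit(\varphi)$. Its small oscillation $K$ (together with the large-gradient condition, via Corollary~\ref{cor:size_proj}) controls the planar region $h(D^*_r N) \subset \mathbb{C}$: roughly, this region has horizontal extent $\lesssim K$ and vertical extent $\lesssim r$. After doubling singularities to obtain $\bar h : E \to \mathbb{C}$ with matching spheres $\hat{S}_{x_i}$, one applies iterated Dehn twists $\hat\tau_{m,\dots,0}$ to $L$; these contribute \emph{arbitrarily small} weight (via Mak--Wu cobordisms with tiny support), and the resulting Lagrangian avoids all thimbles (Proposition~\ref{prop:disj}). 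The crux is Lemma~\ref{lem:disj-energy}: the flow of $-X^{\bar f}$ pushes $\hat\tau_{m,\dots,0}(L)$ entirely out of $D^*_{2r}N$ with Hofer energy $\leq 8Kr/\delta$. Pulling the Seidel exact triangles back to modules over $D^*_rN$, the \emph{first} term of the iterated cone becomes $\epsilon$-acyclic---that is the sole place $\epsilon$ enters. The attaching morphisms are \emph{not} required to have small filtration (the fiber modules enter with arbitrary shifts $\Sigma^{p(i)}F_{x_{s(i)}}$), and the combinatorial length $l = \#\Crit(\varphi)$ is not ``fixed by $N$ through Giroux's fibration'' but grows as $\epsilon \to 0$, since the fibration itself depends on $\epsilon$ through $\varphi$.

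For property~$(\ast)$: freezing perturbation data on fiber-only tuples does not directly address the issue, because the $F_{x_i}$ are modules rather than objects of $\mathcal{A}_p$, and the comparison maps $\mathcal{H}_{p,q}$ act by module push-forward. The paper instead realizes each $F_{x_i}$ as the pull-back under $\xi_p$ of the Yoneda module $\mathcal{Y}(\hat S_{x_i})$ in the ambient category $\mathscr{D}_p(E)$; since comparison functors send Yoneda modules to $0$-isomorphic Yoneda modules, $(\ast)$ follows.
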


In other words, for some $\beta = \beta(\epsilon) >0$ we have that for each $L\in\mathcal{L}ag^{(ex)}(D^{\ast}N)$ and each $p$ with 
$\nu(p)\leq \beta$ there is an iterated cone $C$ inside $\msc_{p}$ constructed by using  the marked fibers $(\overline{F}_{x_{i}},  \alpha_{x_{i}})$, for some constants $\alpha_{x_{i}}\in \R$ (and some choice of grading),  such that $C$ is at interleaving  distance at most $\epsilon$ from $L$ in $\msc_{p}$.  Of course, we identify here a Lagrangian $L$ with its corresponding Yoneda module. Moreover, fitting the categories $\msc_{p}$ in the system $\widehat{\msc}(D^{\ast}N)$
renders the relevant construction independent of the choice of $p$, as long as $\nu(p)$ is small enough.

\

We will see in \S\ref{subsec:nearby-TPC} that this result immediately implies approximability in the sense of Definition \ref{def:TPC-approx} and thus  point i of Theorem \ref{thmmain1} is established.

\begin{rem}\label{rem:dep_on_e} The dependence of the statement 
on $\epsilon$ is subtle. It is possible to construct the system $\widehat{\msc}(D^{\ast}N)$ such that each category $\msc_{p}$ is independent of $\epsilon$ (for $\nu(p)$ small) and  all the disk bundle fibers, and in particular  all the elements of  $\F_{\epsilon}$, for all $\epsilon$,  are represented as modules in $\msc_{p}$. However, the system
$\widehat{\msc}(D^{\ast}N)$ also depends on the functors $\mathcal{H}_{p,q}$ and these functors need to be so that they satisfy property ($\ast$) relative to the family of fibers $\F_{\epsilon}$. In our argument, this property follows from a geometric construction that is the central part of the proof and that requires a fixed $\epsilon$ as starting point. As a result, when $\epsilon$ changes, apriori, the system $\widehat{\msc}(D^{\ast}N)$ changes too, along with the family
$\F_{\epsilon}$ whose number of elements increases when 
$\epsilon$ tends to $0$.  See also Remark \ref{rem:compare_data}.
\end{rem}

\

\subsubsection{Outline of the proof of Theorem \ref{thm:nearby}.} 

The  proof proceeds roughly as follows.  We first establish the existence
of some special Morse functions that have small variation and large gradient away from their critical points. More precisely, denote by $B_{\eta}(x)$ the disk in $N$ (with respect to the metric $g$) of radius $\eta$ and center at $x\in N$.  
For any values $K, \delta >0$ and sufficiently small $\eta >0$ there
exists a Morse function $f:N\to [0, K]$ such that $||d f_{x}||\geq \delta$ for all points $x\in N\backslash (\cup_{z\in Crit(f)} B_{\eta}(z))$.  We then consider a Lefschetz 
fibration structure on $T^{\ast}N$, $p:T^{\ast}N\to \mathbb{C}$, 
as constructed by  Giroux \cite{Gir:Lef-cot}, such that the real part of the projection $p$ coincides with a Morse function
 $f$ with the properties above, with $1>\delta>0$ fixed and $K$ small. We then use a method introduced in \cite{Bi-Co:lefcob-pub} to surger each exact Lagrangian with a certain number of (compactifications) of the fibers through $Crit(f)$ and  use a Hamiltonian isotopy to push the result away from $D^{\ast}N$.  This part of the construction takes place in the total space $E$ of a modified Lefschetz fibration that is constructed along the approach in \cite{Bi-Co:lefcob-pub}.  Algebraically,
this process  expresses the Lagrangian as an iterated cone of fibers and an $s$-acyclic term, where $s$ is at most equal to the Hofer energy needed to generate the previous Hamiltonian displacement. The key point is to control this energy. This is where  the fact that $K$ is small (and $\delta$ is relatively big) comes into play:  through some precise estimates that appear in Giroux's construction we show that this energy  can be related to the area of a certain planar region corresponding to the projection on $\mathbb{C}$ of $D^{\ast}N$, and, as a result, it is bounded uniformly, independently of $L$, by a constant times $K$.

The  steps required for the proof of Theorem \ref{thm:nearby} are each contained in a subsection below, as follows.  In \S\ref{subsec:Morse} we outline the construction of the special Morse functions with small variation but big differential away from the critical points. This is a soft result, but not completely trivial. In \S\ref{subsec:Giroux} we review the main ingredients in Giroux's construction. Next, in \S\ref{subsec:Lef-Dehn} we
recall the approach from \cite{Bi-Co:lefcob-pub} and adjust it to Giroux's set-up. Most of the proof -  in \S\ref{subsec:Morse}, \S\ref{subsec:Giroux}, \S\ref{subsec:Lef-Dehn} and \S\ref{subsec:finish-proof} up to \S\ref{subsubsec:back_to_alg} -  is entirely geometric and serves to establish the statement in Proposition \ref{lem:disj_fibers} which, essentially,  claims that:

{\em  Given $\epsilon >0$ there exists a Lefschetz fibration $p:E\to \mathbb{C}$ that contains $D^{\ast}N$ and such that there are exact Lagrangian spheres $\hat{S}_{1},\ldots, \hat{S}_{l}$ in $E$, each of them intersecting $D^{\ast}N$ along a (different) fiber of $D^{\ast}N$, with the property that, for any exact, closed Lagrangian $L\subset D^{\ast}N$, the iterated Dehn twist $\tau_{\hat{S}_{l}}(\tau_{\hat{S}_{l-1}} (\ldots \tau_{\hat{S}_{1}}(L)).. )$ can be disjoined from $D^{\ast}N$ by a Hamiltonian diffeomorphism of Hofer norm at most $\epsilon$.}

The last part 
of the proof of Theorem \ref{thm:nearby}, in \S\ref{subsubsec:back_to_alg}, translates this geometric result in an appropriate system  $\widehat{\msc}(D^{\ast}N)$. 

\subsection{Special Morse functions }\label{subsec:Morse}
\begin{prop}\label{prop:Morse}
Let $(N^{n},\mathtt{g})$ be a smooth, closed riemannian manifold . For any $K>0$, $1>\delta >0$ and  
$\eta>0$ there exists a Morse function $\varphi=\varphi_{N, K,\delta}:N\to [0,K]$ with the property
that $ ||d\varphi(x)||\geq\delta$ for all $x\in N\backslash (\cup_{z\in Crit(\varphi_{N, K,\delta})} B_{\eta_{z}}(z))$, where $0<\eta_{z}\leq \eta$, $\forall z\in Crit(\varphi_{N, K,\delta})$.
\end{prop}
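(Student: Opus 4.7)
The argument is local-to-global: construct an explicit model function on Euclidean space with the required range and gradient bounds, and then patch copies together via a fine atlas of $N$.

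\emph{Local model.} On $\mathbb{R}^n$ consider
$$F_\omega(x_1, \ldots, x_n) := \frac{K}{2n} \sum_{i=1}^{n} \bigl(1 + \cos(\omega x_i)\bigr).$$
This function takes values in $[0,K]$, is Morse, and its critical set is the lattice $(\pi/\omega)\mathbb{Z}^n$. Its Euclidean gradient is $\nabla F_\omega(x) = -\frac{K\omega}{2n}(\sin(\omega x_1), \ldots, \sin(\omega x_n))$. At any point whose Euclidean distance to the nearest lattice point is at least $\pi/(2\omega)$, at least one coordinate satisfies $|x_i - (\pi/\omega)\mathbb{Z}| \geq \pi/(2\omega\sqrt{n})$, forcing $\|\nabla F_\omega\| \geq \frac{K\omega}{2n}\sin(\pi/(2\sqrt{n}))$. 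By choosing $\omega$ large enough that both $\pi/(2\omega) < \eta$ and the latter lower bound exceeds, say, $3\delta$, one obtains a Morse model with critical points enclosed in balls of radius $\leq \eta$ and gradient bounded below by $3\delta$ outside those balls.

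\emph{Transfer to $N$.} Since $N$ is compact, fix a finite open cover $\{U_\alpha\}_{\alpha=1}^L$ of $N$ by geodesic balls, with coordinate charts $\phi_\alpha \colon U_\alpha \to V_\alpha \subset \mathbb{R}^n$ chosen so that on each $U_\alpha$ the metric $(\phi_\alpha^{-1})^* \mathtt{g}$ is $C^0$-close (say within a factor of $2$) to the Euclidean metric on $V_\alpha$. Pick a smooth partition of unity $\{\chi_\alpha\}$ subordinate to $\{U_\alpha\}$, with a uniform bound $\|d\chi_\alpha\| \leq C_0$. For generic translation vectors $c_\alpha \in \mathbb{R}^n$, define
$$\varphi(x) := \sum_{\alpha=1}^{L} \chi_\alpha(x)\, F_\omega\bigl(\phi_\alpha(x) + c_\alpha\bigr).$$
As a convex combination of values in $[0,K]$, $\varphi$ takes values in $[0,K]$. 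Differentiating, $d\varphi$ splits into a ``partition of unity'' part of size $O(K C_0)$ and a ``high frequency'' part of size $\sim \omega$ times bounded quantities. For $\omega$ sufficiently large and the $c_\alpha$ chosen so that the lattices $(\pi/\omega)\mathbb{Z}^n - c_\alpha$ pulled back to $N$ are in general position across overlapping charts, the high-frequency part dominates and does not cancel on a set of positive measure, so $\|d\varphi\|_{\mathtt{g}} \geq 2\delta$ outside a neighborhood $W$ of a finite set of points, each surrounded by a ball of radius $\leq \eta/2$. A final generic $C^2$-small perturbation supported inside $W$ makes the resulting function Morse while preserving the gradient bound in $N \setminus W$; the critical points of this perturbed function are then contained in balls $B_{\eta_z}(z)$ with $\eta_z \leq \eta$, as required.

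\emph{Main obstacle.} The only delicate step is verifying the lower bound on $\|d\varphi\|_{\mathtt{g}}$ in the overlap regions $U_\alpha \cap U_\beta$, where the high-frequency contributions from different local copies of the model could in principle cancel. This is controlled by two choices made in tandem: first, taking the frequency $\omega$ much larger than the Lipschitz constants of both the $\chi_\alpha$ and the chart distortions, so that the $O(\omega)$ contribution from each $F_\omega \circ \phi_\alpha$ strictly dominates the $O(1)$ partition-of-unity contribution; and second, choosing the translations $c_\alpha$ from a residual (generic) subset of $(\mathbb{R}^n)^L$, so that the critical lattices of the different $F_\omega \circ (\phi_\alpha + c_\alpha)$ are in general position and cannot cancel on more than a finite union of small neighborhoods. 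The final Morse perturbation is entirely standard, since Morse functions are dense in $C^\infty(N,\mathbb{R})$ and the perturbation can be taken arbitrarily small in the $C^2$ norm and supported where the gradient is already small.
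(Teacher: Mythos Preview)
Your approach is genuinely different from the paper's, and the gap is exactly where you flag it: the overlap regions. But your resolution of this ``main obstacle'' is not a proof.

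Here is the problem. In an overlap $U_\alpha \cap U_\beta$ the leading part of $d\varphi$ is a convex combination $\chi_\alpha w_\alpha + \chi_\beta w_\beta$, where each $w_\alpha$ is a vector field of size $\sim \omega$ oscillating with the lattice of $F_\omega$ in the $\phi_\alpha$-coordinates. These two fields \emph{do} cancel on a set of roughly $\omega^n$ points per unit volume, and your task is to show that around each such accidental zero the gradient recovers the bound $\delta$ within distance $\leq \eta/2$. That requires a uniform lower bound of order $\omega^2$ on the smallest Hessian eigenvalue at \emph{every} such critical point. Generic choice of the translations $c_\alpha$ only makes $\varphi$ Morse; it does not furnish this quantitative bound. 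For instance, already in one dimension with identity transition maps, at the point $x_0$ where $\chi_\alpha(x_0)=\chi_\beta(x_0)=1/2$ and with $\omega(c_\alpha-c_\beta)\equiv\pi$, one gets a critical point whose second derivative from the high-frequency part vanishes identically; nearby choices of $c$ give nearby critical points with Hessian of order $O(1)$ rather than $O(\omega^2)$, and then the $\eta$-ball condition fails when $\eta\ll\delta$. Ruling this out uniformly over all accidental critical points, all overlaps (including triple and higher), and nonlinear transition maps $\phi_\beta\circ\phi_\alpha^{-1}$ is a genuine quantitative transversality problem that you have not addressed. ``Residual $(c_\alpha)$'' is not enough: you need a fixed-distance-from-discriminant argument, with the discriminant depending on $\omega$.

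For comparison, the paper avoids interference entirely. It argues by induction on $\dim N$: start from any Morse function $f_0$, precompose with suitable diffeomorphisms near the critical points (``shrinking'') to force $\|df_1\|\geq\delta$ outside $\eta$-balls at the cost of a bounded increase in variation, and then post-compose with a fine sawtooth (``folding'') to bring the range back into $[0,K]$. Folding creates new critical submanifolds along regular level sets $W=f_1^{-1}(a)$, and these are Morsified by adding $f_W\colon W\to\mathbb{R}$ supplied by the inductive hypothesis in dimension $n-1$. The construction is global and explicit; no cancellation of oscillating charts ever arises.
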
 
Notice that we assume here that the radius $\eta_{z}$ of the ball $B_{\eta_{z}}(z)$ around 
the critical points $z$ of $f$ depends on the critical point.  We also assume that all these balls $B_{\eta_{z}}$ are pairwise disjoint. 
\begin{proof}  To start the proof we fix a Morse function $f_{0}:N\to \R$
as well as some small quantity $\alpha>0$. We assume that the function $f_{0}$
and the metric  $\mathtt{g}$ are standard inside $B_{\alpha}(z)$ in the neighbourhoods $B_{\alpha}(z)$ for each $z\in Crit(f_{0})$. These two conditions
are not essential for the argument, they just simplify some of the calculations
in the proof. With these assumptions,  the gradient (calculated with respect to $\texttt{g}$) 
$\nabla f_{0}$ is standard, of the form $k(\ldots, \pm x_{i}\partial/\partial x_{i},\ldots)$ with respect to coordinates $(x_{1},\ldots, x_{i},\ldots, x_{n})$
inside each $B_{\alpha}(z)$; $k$ is a constant that depends on $z$. We also assume that $f_{0}$ has $m$ critical points $z_{0},\ldots, z_{m-1}$ and $f_{0}(z_{j})=jB$ for some positive constant $B$.
After possibly
multiplying $f_{0}$ with a large positive constant we may assume that $||df_{0}||\geq \delta$ outside the union of all the balls $B_{\alpha}(z)$.  We denote by $V=(m-1)B$ the maximal value of $f_{0}$.  We let $U_{z}(\mu)=f_{0}^{-1}([f_{0}(z)-\mu, f_{0}(z)+\mu])$ and 
$U'_{z}(\mu)=f_{0}^{-1}([f_{0}(z)-\mu/2, f_{0}(z)+\mu/2])$ and assume that $B_{\alpha}(z)$
is contained in $U'_{z}(\mu)$. We fix $\mu>0$ so  that all
these sets $U_{z}(\mu)$ are pairwise disjoint and to simplify notation below
we denote $U_{z}=U_{z}(\mu)$, $U'_{z}=U'_{z}(\mu)$.

We will modify $f_{0}$ with two aims:  diminish the variation $V$ and decrease the size of the neighbourhoods $B_{z}(\alpha)$, in both cases as much as needed. Of course, the price to pay is that we will add many new additional critical points
to those of $f_{0}$. The argument uses induction on the dimension of $N$. Thus, we assume the Proposition established for all manifolds of dimension at most $n-1$ (it is an easy exercise to prove the result in dimension $1$).

The modifications of the function $f_{0}$ are of two types. 

The first is a change inside a small
neighbourhood of each critical point $z$ of $f_{0}$. We will later refer to this step as ``shrinking''. To simplify notation we fix such a critical
point and its critical value $v=f(z)$. We consider $\eta <\alpha/2$ and a ball $B_{\eta}(z)$ as well as the larger ball $B_{2\eta}$.
We then consider a diffeomorphism $\psi_{z}:U'_{z}\to U_{z}$ with the following properties:
\begin{itemize}
\item[-] $\psi_{z}(f_{0}^{-1}(v\pm\mu/2))=f_{0}^{-1}(v\pm\mu)$.
\item[-] $\psi_{z}$ preserves  $f^{-1}(v)$.
\item[-] $\psi_{z}$ maps  $B_{2\eta}(z)$ to $B_{\alpha}(z)$ while keeping $z$ fixed and on $B_{\eta}(z)$ is acts by $x \to rx$ where 
$r>1$ is given by $r^{2} =\frac{\delta}{\eta k}$ (this obviously assumes $\eta$ small enough, see also the next point).
\item[-] we assume $\eta$ small enough such that that $2r\eta = 2\sqrt{\frac{\delta\eta}{k}} <\alpha$.
\item[-]  $\psi_{z}$ 
transports $f_{0}^{-1}(v-\mu/2)$ along the flow of $-\nabla f_{0}$, mapping it to
$f_{0}^{-1}(v-\mu)$, and it transports 
 $f_{0}^{-1}(v+\mu/2)$ along the flow of $+\nabla f_{0}$, sending it to $f_{0}^{-1}(v+\mu)$.
\end{itemize}
It is easy to construct such a diffeomorphism, see Figure \ref{fig:expand}. 
\begin{figure}
\includegraphics[scale=0.9]{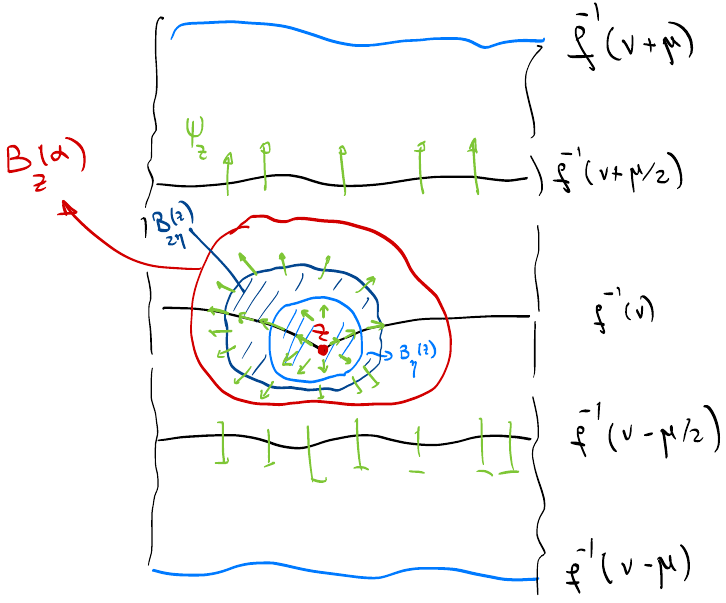}
  \centering
  \caption{The diffeomorphism $\psi_{z}$.} \label{fig:expand}
\end{figure}

We now define
a new function $f_{1}$ as follows: on the set $U'_{z}$ it is given by $f_{0}\circ \psi_{z}$; on the set $f_{0}^{-1}(-\infty, v-\mu/2]$ it is equal to $f_{0}-\mu/2$; on the set $f_{0}^{-1}[v+\mu/2,\infty)$ it is equal to $f_{0}+\mu/2$.
Notice that this function is continuous, of total variation $V+\mu$,
but not smooth along $f_{0}^{-1}(v\pm \mu/2)$. However,
it can be smoothed along these two hypersurfaces, and by this smoothing - still denoted $f_{1}$ -  does not add any new critical points and  $||df_{1}(x)||\geq \delta$ for all points $x$ outside $B_{2\eta}(z)$. 

The total variation of the  perturbed function $f_{1}$ is smaller than $V+2\mu$, by taking the smoothing sufficiently small. 
We now discuss the role of two of the  properties of $\psi_{z}$. The role of the third  property is to ensure that $||df_{1}(x)||\leq \delta$ for $x\in B_{\eta}(z)$ (this property will be important for us later in the proof of the proposition). 
This is easily seen as $||\nabla f_{0}(x)||= k ||x||$ inside $B_{\alpha}(z)$
and thus $||\nabla f_{1}(x)||=r k ||r x||= r^{2}k ||x||=\delta\frac{||x||}{\eta}$ for $x\in B_{\eta}(z)$. The role of the fourth property is to allow for
enough ``room'' so that $||df_{1}(x)||\geq \delta $ for the points $x\in B_{2\eta}(z)\backslash B_{\eta}(z)$.

We apply this procedure for each critical point of $f_{0}$ and we call the resulting function
still $f_{1}$. Its variation is  $ < V+2m\mu$.   
It is useful to add an appropriate constant to the function $f_{1}$ so that its minimum is $0$. 
With this convention, we notice that the critical values of the function $f_{1}$ are (as close as desired, by taking the smoothings small enough) to, in order, $0$,  $B+\mu/2$, $2B+3\mu/2$ and so forth. In particular, these critical values
are separated by $\approx B+\mu$. To fix ideas we will assume also $\mu << B$. 

We denote $C=V+2m\mu$ and notice that this constant is independent of $\eta$.  Therefore, if further arguments require us to reduce $\eta$, this does not affect this upper bound for the variation of $f_{1}$, nor for the difference between successive critical values which remains $\approx B+\mu$.  We denote by $V_{1}$ the maximal value of  $f_{1}$.
To summarize, the function $f_{1}$ is Morse and satisfies:
\begin{equation}\label{eq:fone}
||df_{1}(x)||\geq \delta, \ \forall x\notin \cup_{z\in \Crit(f_{1})}B_{\eta}(z)\,  \  \mathrm{and}\ \ \min(f_{1})=0,\ \  \max (f_{1})= V_{1} < C ~.~
\end{equation}
Moreover, as explained before we also have:
\begin{equation}\label{eq:fone'}
||df_{1}(x)||\leq \delta \ \ \forall x \in \cup_{z\in \Crit(f_{1})}B_{\eta}(z)\ , B\leq f(z_{i+1})-f(z_{i})\leq B + 3\mu/2~.~
\end{equation}
An important consequence of property (\ref{eq:fone'}) is that the variation
of $f_{1}$ over each of the balls $B_{\eta}(z)$ is bounded from above by
$2\eta\delta$, and thus it can be reduced as needed by making $\eta$ small.

\

The second step is a ``folding'' procedure around regular hypersurfaces of the form $f_{1}^{-1}(a)$. This procedure produces a function of small variation, but  adds
many new additional critical points.  

We first explain the prototype of this  folding construction, and its impact on the variation of the function. The description of folding is as follows. Pick a value $a\in [C/2, C)$. To describe the folding we assume that $a$ is a regular value of $f_{1}$ and that $W_{1}=f^{-1}_{1}(a)$ does not intersect any of the balls $B_{\eta}(z)$.

We consider the following function $\phi_{1}:[0, C]\to [0,a]$
defined by $$\phi_{1}(t)=t \ \ \mathrm{when}\  t \leq a \ \ , \ 
\phi_{1}(t)=a-t \ \ \mathrm{when}\  t\geq a~.~$$
Clearly, this function is continuous but not smooth. Nonetheless, we define $f'_{2}=\phi_{1}\circ f_{1}$. We notice that the variation of $f'_{2}$ is $a$ and that outside the 
balls $B_{\eta}(z)$ we still have $||df'_{2}||\geq \delta$ except along $W_{1}$, where 
$f'_{2}$ is not differentiable. 

Our aim is to perturb the function $f'_{2}$ to obtain a Morse
function $f_{2}$ such that, for a fixed $\xi>0$  we have:
\begin{equation}\label{eq:ftwo}
||df_{2}(x)||\geq \delta, \ \forall x\notin \cup_{z\in \Crit(f_{2})}B_{\eta_{z}}(z)\ , \ \min(f_{2})=0,\ \max (f_{2})= V_{2} < \xi + a \ \ 
  ~.~
  \end{equation}
 First, for $\epsilon>0$
we let $K_{\epsilon, a}=f^{-1}_{1}([a-\epsilon, a+\epsilon])$. We pick $\epsilon_{1}>0$
small enough such that $K_{\epsilon_{1}, a}\cap B_{\eta}(z)=\emptyset$ for all critical points
$z$ of $f_{1}$. The function $f_{2}$
has the following form:
\begin{equation}\label{eq:perturb}
f_{2}=\phi_{2}\circ f_{1}+(\beta \circ f_{1})\cdot f_{W_{1}}~.~
\end{equation}
Here $\phi_{2}: [0,C]\to [0,a]$ is smooth, it coincides with $\phi_{1}$ 
outside the interval $(a-\epsilon_{1}/2, a+\epsilon_{1}/2)$,  $\phi_{2}$ is Morse 
with a unique maximum at $a$, and $\phi_{2}\leq \phi_{1}$ - see Figure \ref{fig:first-fold}.
\begin{figure}
\includegraphics[scale=0.9]{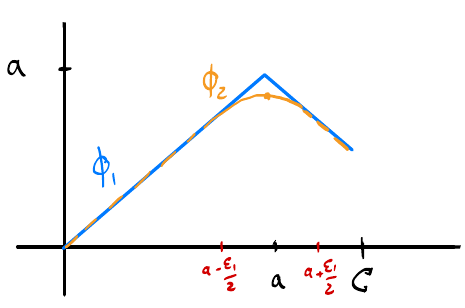}
  \centering
  \caption{The graphs of the functions $\phi_{1}$ and $\phi_{2}$.} \label{fig:first-fold}
\end{figure}

 The function $\beta$ is a smooth bump function $\beta:\R\to [0,1]$, which vanishes outside
$( a-\epsilon_{1}, a+\epsilon_{1})$ is increasing on $(a-\epsilon_{1}, a)$, decreasing on $(a, a+\epsilon_{1})$
and $\beta(a)=1$. The function $f_{W_{1}}: W_{1}\to \mathbb{R}$ is obtained through the inductive hypothesis, by applying  the Proposition to $W_{1}$. More precisely, we 
fix on $W_{1}$ the metric induced from $\mathtt{g}$ and construct the function $f_{W_{1}}$ such that it is Morse, with minimal value equal to $0$, and with maximal value $C_{1}< \xi$, and 
with $||df_{W_{1}}(z)||\geq 2\delta$, outside the union of balls $\hat{B}_{\eta_{z}}(z)$, $\eta_{z}\leq \eta$, for each critical point $z$ of $f_{W_{1}}$. The notation $\hat{B}_{-}(-)$
 indicates that the respective balls are in $W_{1}$. It is easy to see that by adjusting conveniently
 the profiles of the functions $\phi_{2}$ and $\beta$ we may ensure the desired properties for
 $f_{2}$. Notice that the critical points of $f_{2}$ are of two types,  critical points of $f_{1}$,
 and critical points of $f_{W_{1}}$,  with their Morse index raised by $1$ when viewed
 as critical points of $f_{2}$.
 
 \
 
 We will now apply this folding construction for not just one hypersurface such as $W_{1}$ before but for more hypersurfaces at the same time. First, recall the constant $K$ in the statement of the proposition and  fix a natural number  $l$ such that $ \frac{B+3\mu/2}{2^{l}}<K/4$. 

We consider a function $f_{0}$ as in the shrinking procedure and apply that construction for $\eta < \frac{B}{2^{l+1}\delta}$. We denote by $f_{1}$ the resulting function.

Consider the interval $J_{i}=[f_{1}(z_{i}), f_{1}(z_{i+1})]$, $0\leq i < m-1$, 
and divide it into $2^{l}$ equal pieces denoted
$I_{k,i}=(f_{1}(z_{i})+\frac{k S_i}{2^{l}}, f_{1}(z_{i})+\frac{(k+1)S_{i}}{2^{l}})$
where $S_{i}= f_{1}(z_{i+1})-f_{1}(z_{i})$.
Consider a continuous function:
$\varphi_{i}: J_{i}\to [0,\infty)$ with the following properties: 
\begin{itemize}
\item[-] $\varphi_{i}$ is  smooth  on each of the intervals  $I_{k,i}$ for $k=0, 1,\ldots 2^{l}-1$, with constant slope equal to $+1$ on $I_{k,i}$  when  $k$ is even and with slope equal to $-1$ when $k$ is odd. 
\item[-] We let $a_{k,i}= \frac{kS_{i}}{2^{l}}$,  $0<k\leq 2^{l}-1$. The minimal value of ${\varphi}_{i}$ is $0$ and is attained at the points $a_{k,i}$ with 
$k$ even. The maximal value of ${\varphi}_{i}$ is $\frac{S_{i}}{2^{l}}$ and is attained at the points $a_{k,i}$ with $k$ odd.
\end{itemize}
Given that $S_{i}\geq B$ we have $S_{i}/2^{l}\geq B/2^{l}>2\eta \delta$. This implies by (\ref{eq:fone'}) that all the values $a_{k,i}$ are regular values
for $f_{1}$ and that the hypersurfaces $W_{k,i}=f^{-1}(a_{k,i})$ do not 
intersect $B_{\eta}(z_{i})$ and $B_{\eta}(z_{i+1})$. We  piece together
the functions ${\varphi}_{i}$ to a new continuous function $\hat{\phi}_{1}$
defined as follows: $\hat{\phi}_{1}$ is equal to $\varphi_{i}$ on each 
interval $J_{i}$ for $i$ even and is equal to $-\varphi_{i}$ on the intervals
$J_{i}$ for $i$ odd. This means that the function $\hat{\phi}_{1}$ is smooth
at each value $f(z_{i})$, with slope alternating between $+1$ and $-1$, depending 
on the parity of $i$. Moreover, the hypersurfaces $W_{k,i}$ do not intersect
any of the balls $B_{\eta}(z)$, $z\in Crit(f_{1})$. Finally, the variation of 
$\hat{\phi}_{1}$ is at most $2 \max {\frac{S_{i}}{2^{l}}}\leq 2\frac{B+3\mu/2}{2^{l}}<K/2$, see Figure \ref{fig:saw}.
\begin{figure}
\includegraphics[scale=1.3]{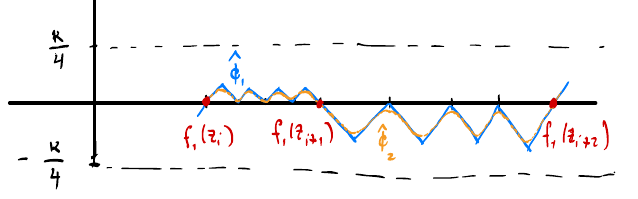}
  \centering
  \caption{The graphs of the functions $\hat{\phi}_{1}$ and $\hat{\phi}_{2}$.} \label{fig:saw}
\end{figure}

Next, we consider a Morse smoothing $\hat{f}_{2}$ of $\hat{\phi}_{1}\circ f_{1}$ that coincides
with $\hat{\phi}_{1}\circ f_{1}$ except on the sets 
$K_{\epsilon_{2}, a_{k,i}}=f^{-1}_{1}([a_{k,i}-\epsilon_{2}, a_{k,i}+\epsilon_{2}])$. More precisely, $\hat{f}_{2}$
is given by a formula similar to (\ref{eq:perturb}) with some straightforward modifications:  the place of $\phi_{2}$ is taken
by a Morse smoothing $\hat{\phi}_{2}$ of $\hat{\phi}_{1}$ with maxima at $a_{k,i}$ for $ki$ odd, and minima
at $a_{k,i}$ for $ki$ even; on each $K_{\epsilon_{2}, a_{k,i}}$ the place of the function $f_{W_{1}}$ is taken by a corresponding 
function $f_{W_{a_{k,i}}}: W_{a_{k,i}}\to \mathbb{R}$ of variation smaller than $\frac{S_{i}}{2^{l+1}}$; the bump function
$\beta:\R\to [-1,1]$ is supported in the union of the intervals $(a_{k,i}-\epsilon_{2}, a_{k,i}+\epsilon_{2})$, it is monotone on each of the intervals $(a_{k,i}-\epsilon_{2}, a_{k,i})$ and $(a_{k,i}, a_{k,i}+\epsilon_{2})$, it attains its maximum equal
to $1$ at the points $a_{k,i}$ for $ki$ odd and its minimum, equal to $-1$, at the points $a_{k,i}$ for $ki$ even ($0< k \leq 2^{l}-1$, $0\leq i < m-1$). 

As a result of all these choices the function $\hat{f}_{2}$ has a variation bounded from above by $K$ and the function $\varphi_{N,K,\delta}$ in the statement is defined
by adding a constant to $\hat{f}_{2}$ such that the resulting
function, denoted $\varphi_{N,K,\delta}$, is positive, with minimal value equal to $0$ and with a maximum value smaller than $K$. This function satisfies the other properties claimed in the statement.
\end{proof}

\begin{rem}\label{rem:conf_metric} In the arguments appearing later in the paper it is useful to assume, in addition to the properties in Proposition \ref{prop:Morse}, that the norm $||d\varphi_{N,K,\delta}||\leq 1$ over all of $N$. This is not so easy to achieve while keeping the metric $\texttt{g}$ fixed. However, it will be enough for us that there is a metric 
$\bar{\texttt{g}}$ on $N$, conformal to $\texttt{g}$, with $\bar{\texttt{g}}=\alpha \texttt{g}$, with $\alpha:N\to [1,\infty)$ such that the rest of the properties claimed in the proposition remain true, relative to $\bar{\texttt{g}}$,
and in addition $||d\varphi_{N,K,\delta}||_{\bar{\texttt{g}}}\leq 1$.  This is very easy to see, by adjusting the metric $\texttt{g}$, outside the neighbourhoods $B_{\eta_{z}}(z)$, at those points $x\in N$ where the norm $||d\varphi_{N,K,\delta}(x)||$ (in the metric $\texttt{g}$) is greater than $1$.
\end{rem}

\subsection{Review of Giroux's construction}\label{subsec:Giroux} We will review here the parts of \cite{Gir:Lef-cot} that will be relevant later on in the proof of Theorem \ref{thm:nearby}. The type of Lefschetz fibration considered (Definition 1 in \cite{Gir:Lef-cot}) is a Liouville manifold $(W, d\lambda)$ endowed
with a smooth map 
$$h\colon W\to \mathbb{C}$$
such that $h$ has the following properties: 
\begin{itemize}
\item[1.] The singularities of $h$ are of the form 
$$h(z)=h(0)+\sum_{j}z^{2}_{j} $$ in coordinates $(z_{1},\ldots, z_{n})$ centered at the singularity and in which $\omega=d\lambda$ is a positive $(1,1)$ form at $0$.
\item[2.] The distribution $\ker  dh \subset TW$ consists of symplectic subspaces and the singular connection provided by its symplectic orthogonal complement is complete.
\item[3.]  The manifold $W$ is exhausted by Liouville domains $(W_{k}, \lambda|_{W_{k}})$ such that for
every $w\in \mathbb{C}$ and for all sufficiently large $k\geq k_{w}$, the fiber $F_{w}=h^{-1}(w)$
intersects $\partial W_{k}$ transversely along a positive contact submanifold of $\partial W_{k}$, and
the Liouville field on $F_{w}$ dual to $\lambda|_{F_{w}}$ is complete.
\end{itemize}

The main result in Giroux's paper is the following.

\begin{thm}\label{thm:gir}[Giroux \cite{Gir:Lef-cot}] Let $N$ be a closed manifold, $\varphi:N\to \mathbb{R}$ a Morse function and $\nu$ an adapted gradient of $\varphi$ satisfying the Morse-Smale condition. Then $\varphi$ extends to a Lefschetz fibration (in the sense above) $h=f+i g\colon T^{\ast}N\to \mathbb{C}$ whose imaginary part is the function:
$$ g\colon T^{\ast}N\to \mathbb{R}\ ,\  (q,p) \mapsto\ g(q,p):=  \langle p, \nu (q)\rangle$$
and whose real part $f$ coincides with $\varphi$ on the $0$-section and is homogenous of degree $1$ in the variable $p$ near infinity.
\end{thm}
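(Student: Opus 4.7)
The plan is to construct $h = f + ig$ in two stages: a direct global definition of $g$ with identification of the singular locus, and a staged construction of $f$ realising the prescribed complex-Morse normal form near critical points of $\varphi$ and degree-one homogeneity in $p$ near infinity.

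For the imaginary part, set $g(q,p) := \langle p, \nu(q)\rangle$, which is already linear (hence degree-one homogeneous) in $p$. Since $\nu$ is a gradient-like vector field for $\varphi$, one has $\nu(q)=0$ iff $q\in \mathrm{Crit}(\varphi)$, so the locus where $dg=0$ in $T^*N$ is exactly $\{(z,0):z\in \mathrm{Crit}(\varphi)\}$; these will be the singularities of $h$. Near such a $z$, choose a metric for which $\nu=\mathrm{grad}\,\varphi$ and Morse coordinates $(x_1,\dots,x_n)$ giving $\varphi(x)=\varphi(z)+\sum_j\sigma_j x_j^2$ with $\sigma_j\in\{\pm 1\}$, hence $\nu=\sum_j 2\sigma_j x_j\,\partial_{x_j}$; using dual cotangent coordinates $p_j$ one gets $g(x,p)=2\sum_j\sigma_j x_j p_j$. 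Defining locally $f_{\mathrm{loc}}(x,p):=\varphi(z)+\sum_j(x_j^2-p_j^2)$ and $z_j:=x_j+i\sigma_j p_j$, one computes $h=\varphi(z)+\sum_j z_j^2$, which is the required complex-Morse singularity; the canonical $\omega=\sum dp_j\wedge dx_j$ is of type $(1,1)$ and positive at the origin in these coordinates, verifying axiom~(1).

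Globally, $f$ is assembled from three regimes: it equals $\varphi$ on the zero section, equals $f_{\mathrm{loc}}$ in a chart around each $(z,0)$, and near infinity has the form $f(q,p):=|p|\,\psi(q,p/|p|)$ for some smooth function $\psi$ on the unit cosphere bundle $S^*N$ of an auxiliary metric, making $f$ degree-one homogeneous in $p$. Interpolation between these regimes is carried out via smooth cut-offs in $|p|$, with $\psi$ chosen compatibly with the local normal forms and so that no new critical points of $h$ appear. Axioms~(2) and~(3) then reduce to the pointwise positivity $\{f,g\}>0$ off the singular locus (symplecticity of the fibres, i.e.~$\omega|_{\ker dh}$ non-degenerate) and to completeness of the symplectic connection together with the exhaustion/contact-transversality condition. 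The former is explicit in the local quadratic model and is preserved by the interpolation for a generic $\psi$; the latter follows from the degree-one homogeneity of $h$ near infinity, which gives linear growth of $|dh|$ and hence uniform control of horizontal lifts over compact subsets of $\mathbb{C}$.

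The main obstacle is the global construction of $\psi$: the local quadratic model at each critical point of $\varphi$ prescribes a specific behaviour of $f$ (and thus of $\psi$) on $S^*N$ over a neighbourhood of $z\in\mathrm{Crit}(\varphi)$, and these prescriptions must be glued coherently across $N$ without introducing zeros of $df\wedge dg$ or changing the sign of $\{f,g\}$. This is precisely where the Morse--Smale hypothesis on $\nu$ is used: its transverse stable/unstable stratification of $N$ provides the flexibility to extend $\psi$ coherently across the complementary regions, avoiding obstructions from recurrent gradient trajectories.
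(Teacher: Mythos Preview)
This theorem is attributed to Giroux \cite{Gir:Lef-cot}; the present paper does not supply a proof but only summarizes the structure of Giroux's construction (the list of properties i--v following the theorem statement) and refers to \cite{Gir:Lef-cot} for all arguments. So there is no proof here to compare against, only the paper's description of Giroux's $f$.

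That said, your sketch departs from Giroux's construction in a way that leaves a genuine gap. You propose assembling $f$ from three ``regimes'': equal to $\varphi$ on the zero section, equal to the local quadratic model $f_{\mathrm{loc}}$ near critical points, and equal to $|p|\,\psi(q,p/|p|)$ near infinity. But ``on the zero section'' is a codimension-$n$ condition, not a regime in the radial variable $|p|$: you have not said what $f$ is for small positive $|p|$ over points $q$ far from $\mathrm{Crit}(\varphi)$, and that is exactly where the fibrewise symplecticity $\{f,g\}>0$ must be checked. Giroux's answer (property~i in the paper's summary) is to take a single \emph{global} leading term $f_0(q,p)=\varphi(q)-\tfrac{1}{2}\nabla^2\varphi(q)(p,p)$, which specializes to your $f_{\mathrm{loc}}$ in Morse charts and to $\varphi$ on the zero section simultaneously, and then to add a correction $f_1$ supported away from the zero section.

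Your second gap is the claim that $\{f,g\}>0$ ``is preserved by the interpolation for a generic $\psi$''. This is not a transversality statement, and genericity of a function on $S^*N$ gives no mechanism for a Poisson bracket on $T^*N$ to be globally one-signed. In Giroux's construction (as recounted in the paper), the correction has the form $f_1=\tau_1\,C\,r\,f^\infty-(1-\tau_0)f_0$ and positivity is achieved by taking the constant $C$ large; the paper explicitly says that the argument that large $C$ works is the substantive content, pointing to \S E of \cite{Gir:Lef-cot}. Your account of where Morse--Smale enters is also speculative; the paper does not spell this out, and the hypothesis is more plausibly connected to the thimble/unstable-manifold identification (property~iii) than to an obstruction-theoretic extension of $\psi$.
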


A vector field $\nu$ is called adapted to $\varphi$ if $d\varphi(\nu) >0$ away from the critical points of $\varphi$ and near each critical point $a$ there exists
a local coordinate chart that expresses $\varphi$ in Morse form, $\varphi(x)=\varphi(a)+\sum_{j} \epsilon_{j} x^{2}_{j}$, and $\nu$ in linear 
form, $\nu(x)=2\sum_{j}\epsilon_{j}x_{j}\partial_{x_{j}}$, $\epsilon_{j}\in \{-1,+1\}$.

In our case, it is convenient to assume that $\nu$ is an actual gradient 
vector field of $\varphi$ with respect to a riemannian metric $\texttt{g}$ on 
$M$, \begin{equation}\label{eq:nu-grad}
\nu= grad _{\texttt{g}}(\varphi)~.~
\end{equation}
We will also assume that each critical point of $\varphi$ is unique on its critical level.

A few properties of Giroux's construction are important in our proof and we will review them now.

\begin{itemize}
\item[i.] The function $f:T^{\ast}N\to \mathbb{R}$ has the form $f=f_{0}+f_{1}$ with: 

\begin{equation}\label{eq:expression_re}
f_{0}(q,p)=\varphi(q)-\frac{1}{2}\nabla^{2}\varphi (q)(p,p)~.~
\end{equation}
Here $\nabla^{2}\varphi (q)$ is the  covariant second derivative of $\varphi$ at $q$ viewed as bilinear map on $T^{\ast}N$. The function $f_{1}:T^{\ast}N\to \mathbb{R}$ is supported away from the $0$ section $N\subset T^{\ast}N$. The precise construction of this perturbative term  $f_{1}$ requires much of the effort in \cite{Gir:Lef-cot}. Our notation is somewhat different from Giroux's and, to facilitate the correspondence
between the two papers, we indicate the differences here - our function $f_{0}$ is denoted $f^{0}$ in \cite{Gir:Lef-cot} and the function $f_{1}$ here has the form:
$$ f_{1}= \tau_{1} f^{1} - (1-\tau_{0}) f^{0}$$
where the notation on the right side of the equality is that in \cite{Gir:Lef-cot}. Namely, $\tau_{i}$ are functions only depending on the radial distance $r$ from the $0$-section such that $\tau_{1}$ vanishes for small values of $r$ and equals a large positive constant $C$ for $r$ large enough, while $\tau_{0}$ equals $1$ for small $r$  and vanishes away from the $0$-section. The function $f^{1}$ is of the form $f^{1}=r f^{\infty}$ and $f^{\infty}$ is a well chosen function defined on the sphere cotangent bundle $f^{\infty}:ST^{\ast}N\to \R$.

\item[ii.] The only critical points of $h$ are along the $0$ section and they
coincide with the critical points of $\varphi$.

\item[iii.] For each critical point $a\in Crit(\varphi)$, the vertical thimble  along the vertical straight half-line originating at $h(a)$ and pointing upwards coincides with the  unstable manifold of the Hamiltonian vector field  $X^{f}$  at $a$.

\item[iv.] The Poisson bracket $\{f, g\}$ is positive at each point $x\in T^{\ast}(N)\backslash Crit(\varphi)$ 
\end{itemize}
Our arguments require a quantitative complement to property iv which we state next:

\begin{itemize}
\item[v.] Assume that $d\varphi (\nu)\geq \delta$ outside $\cup_{a\in Crit(\varphi)}B_{\epsilon}(a)$, for some small $\epsilon$ such that the balls $B_{\epsilon}(a)$, $B_{\epsilon}(b)\subset N$ are disjoint for all $a\not=b\in Crit(\varphi)$.  
With this assumption, the construction of $f$ can be made such that 
$$\{f,g\}(x)\geq \delta,  \ \forall x\in T^{\ast}N\backslash \cup_{a\in Crit(\varphi)} B'_{\epsilon}(a)$$
where we denote by $B'_{\epsilon}(-)$ the respective balls of radius $\epsilon$ in $T^{\ast}N$.
\end{itemize}

A reformulation of property v will play an important role in our argument.

\begin{cor}\label{cor:size_proj}
Under the assumptions in property \textnormal{v} above, we have that $dh(X^{f})$ is purely imaginary and $Im(dh(X^{f}(x)))\geq \delta$ for each $x\in T^{\ast}N$ outside any of the balls $B'_{\epsilon}(a)$, $a\in Crit(h)$.
\end{cor}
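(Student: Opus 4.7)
The plan is to compute $dh(X^f)$ directly and read off the claim from property (v).

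First, decomposing $h = f + ig$ gives
\[
dh(X^f) = df(X^f) + i\, dg(X^f),
\]
so it suffices to check that the real part vanishes identically and that the imaginary part is $\{f,g\}$ (up to a positive sign coming from the conventions fixed in \S\ref{subsec:Giroux}). For the real part, since $X^f$ is the Hamiltonian vector field of $f$ one has $df(X^f) = \{f,f\} = 0$ at every point of $T^*N$; in particular $dh(X^f)$ is purely imaginary everywhere, which is the first assertion of the corollary.

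For the imaginary part, recall the standard identity $dg(X^f) = \{f,g\}$ (with the Hamiltonian/Poisson bracket conventions used throughout the paper; the sign is fixed by compatibility with property (iv), which asserts $\{f,g\} > 0$ away from the critical set). Substituting this into the decomposition above yields
\[
\mathrm{Im}\bigl(dh(X^f(x))\bigr) \;=\; \{f,g\}(x)
\]
for every $x \in T^*N$. Applying property (v) now gives $\mathrm{Im}(dh(X^f(x))) \geq \delta$ for all $x \in T^*N \setminus \bigcup_{a \in \mathrm{Crit}(\varphi)} B'_\epsilon(a)$, completing the proof.

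There is no real obstacle here: the whole content of the corollary is the translation of the quantitative refinement (v) of Giroux's positivity statement $\{f,g\}>0$ into the language of the Lefschetz projection $h$. The only point that needs care is matching the sign conventions for $X^f$ and the Poisson bracket so that the imaginary part of $dh(X^f)$ comes out with the correct sign; once these are fixed as in \S\ref{subsec:Giroux}, both assertions are immediate.
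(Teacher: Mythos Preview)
Your proof is correct and essentially identical to the paper's: both compute $dh(X^f) = df(X^f) + i\,dg(X^f)$, observe that the real part vanishes (you via $\{f,f\}=0$, the paper via the equivalent geometric statement that $X^f$ is tangent to the level sets of $f = p_1\circ h$), and identify the imaginary part with $\{f,g\}$ so that property~(v) gives the bound.
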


Properties i, ii, iii, and iv appear explicitly in \cite{Gir:Lef-cot}. Property 
v follows easily from the proof of property iv as given in \S E \cite{Gir:Lef-cot}. To give a few more details, here are our conventions: $\omega (Y,X^{f})=df(Y)$, $\{f,g\}=\omega (X^{f},X^{g})=dg(X^{f})=-df(X^{g})$. The arguments in \cite{Gir:Lef-cot} make use of the vector field $\tilde{\nu}= -X^{g}$ that is easily seen to be a lift of $\nu$ to $T^{\ast}N$. 
It is shown that the quantity $df(\tilde{\nu})=\{f,g\}$ is positive at all points different from the critical points of $h$, which is the claim at  iv. The proof of this positivity depends on the construction of the perturbative term $f_{1}$ and appeals to a choice of the constant $C>0$ that has appeared at point i above. When this constant is taken sufficiently big, $df(\tilde{\nu})$ is  seen to be non-negative and to only vanish at the critical points of $h$. By possibly taking $C$ even bigger,  the same argument shows that property v is also true. 

Starting from property v the statement in the corollary is immediate. Indeed,
$\{f,g\}=dg (X^{f})= d(p_{2}\circ h)(X^{f})=
dp_{2}\circ (dh (X^{f}))=dh( X^{f})$ where $p_{2}:\mathbb{C}\to \R$
is the projection on the second coordinate. We have used the fact that for each
point $x$, $dh( X^{f}(x))$ is a vertical vector in $\mathbb{C}$. This is 
true because $X^{f}(x)$ is tangent to the level hypersurface  $f^{-1}(c)=(p_{1}\circ h)^{-1}(c) =h^{-1} (\{(c,y) : \ y\in \R\})$ with $c=f(x)$ and $p_{1}:\mathbb{C}\to \R$ the projection on the first coordinate. 

\subsection{Disjunction through Dehn twists}\label{subsec:Lef-Dehn} In this section we consider  Giroux's Lefschetz fibration from \cite{Gir:Lef-cot}, as recalled in \S\ref{subsec:Giroux},  and apply to it the construction in \cite{Bi-Co:lefcob-pub}, in particular the arguments in Section 4.4 in that paper.  The methods in \cite{Bi-Co:lefcob-pub}  as reflected 
in Proposition 2.3.1 (see also \S  2.3 \cite{Bi-Co:lefcob-arxiv} for more details on the relevant construction), together with the doubling of singularities in \S 4.4.2 \cite{Bi-Co:lefcob-pub} (see  Figure 17 there) applied to the Lefschetz fibration $h\colon T^{\ast}N\to \mathbb{C}$ from \S\ref{subsec:Giroux} lead to a new Lefschetz fibration $\bar{h}: E\to \mathbb{C}$ which is schematically represented in Figure \ref{fig:new_fibr}.  
\begin{figure} [h]
  \includegraphics[scale=0.9]{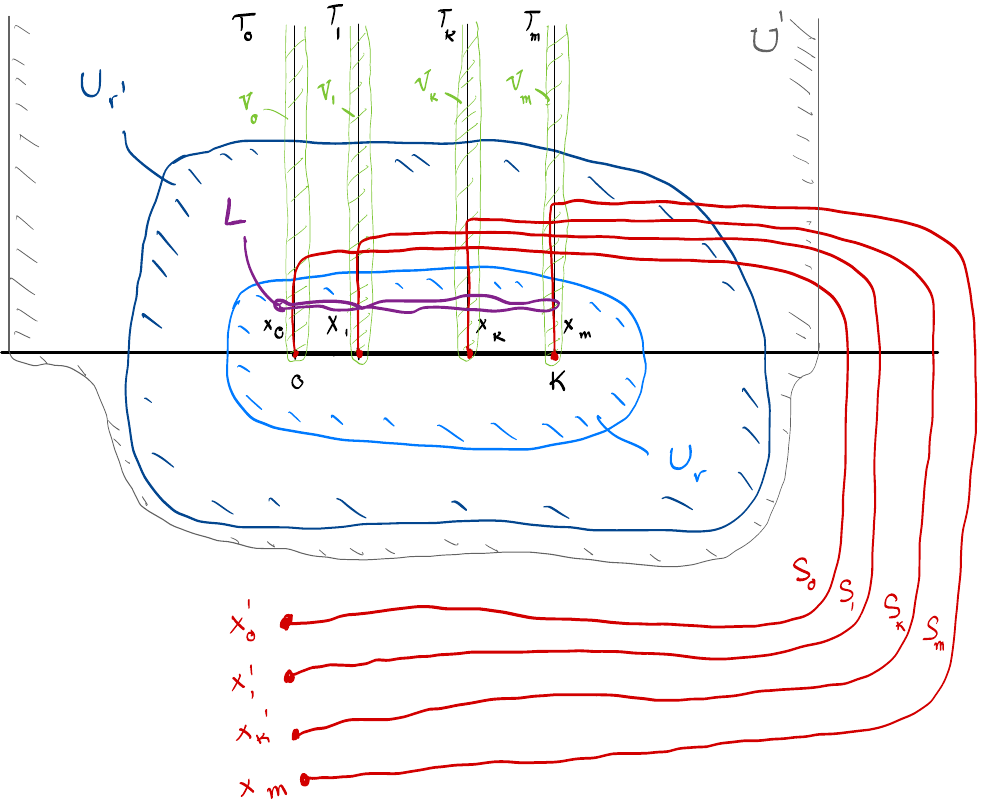}
  \centering
  \caption{The fibration $\bar{h}:E\to \mathbb{C}$.} \label{fig:new_fibr}
\end{figure}
Here are the main relevant properties of $\bar{h}$.  We assume the setting 
in Theorem \ref{thm:gir}. In particular, we have the 
Morse function $\varphi: N\to \mathbb{R}$ with a unique critical point on each critical level and that is such that the pair $(\varphi, \texttt{g})$ is  Morse-Smale where $\texttt{g}$ is a fixed Riemannian metric on $N$ and the properties  i.-v. from \S\ref{subsec:Giroux} are satisfied. Furthermore, we have:
\begin{itemize}
\item[a.] The Morse function $\varphi: N\to \mathbb{R}$ has values in $[0,K]$, having $0$ as minimal value and $K$ for maximal value. 
\item[b.]  We put $U_{r}=h(D_{r}^{\ast}N)$  where the disk cotangent bundle  $D_{r}^{\ast}N$ is defined relative to the metric $\texttt{g}$, and 
similarly $U_{r'}=h(D_{r'}^{\ast}N)$ - both these sets are represented in Figure \ref{fig:new_fibr}.
\item[c.]  Fix a constant $R>0$ big enough such that $U_{r'}\subset [-R,R]\times\mathbb{R}$. There is a neighbourhood $U'\subset \mathbb{C}$ of $U_{r'}\cup ([-R, R]\times [0,\infty))$ such that $h$  and  $\bar{h}$ agree on $h^{-1}(U')$.
\item[d.] The fibration $\bar{h}$ has one additional critical point $x'_{i}$ for 
each critical point $x_{i}$ of $h$. For each pair $x_{i}, x'_{i}$ of such critical points, $x_{i}$ and $x'_{i}$ are related through a matching cycle $S_{i}\subset E$ (which is a Lagrangian sphere) whose image onto $\mathbb{C}$ is like in Figure \ref{fig:new_fibr}. 
\item[e.] The vertical thimbles pointing up and originating at the critical points 
$x_{i}$ are denoted by $\tau_{i}$. They are unstable manifolds of the Hamiltonian vector field $X^{\bar{f}}$ at each critical point $x_{i}$ with $\bar{f}= Re (\bar{h})$ (recall from iii. that $\bar{f}= f$ over $U'$).
\item[f.] Fix some $\delta >0$ and assume that $h= f+i g$ satisfies $\{f,g\}(x)\geq \delta$ at all points $x$ outside
small balls $B'_{\epsilon}(x_{i})$, $x_{i}\in Crit(\varphi)$, as at point v. in \S\ref{subsec:Giroux}. Under this assumption we fix small neighbourhoods $V_{i}\subset E$  for each of the thimbles $\tau_{i}$ such that $V_{i}\supset B'_{\epsilon}(x_{i})$. The projection of the $V_{i}$'s onto $\mathbb{C}$ is as in the picture. 
\end{itemize}
In our arguments the constants $\delta>0$, $K>0$ are fixed in advance with 
$\delta < 1$ and the function $\varphi=\varphi_{N,K,\delta}$ is given  by Proposition \ref{prop:Morse}. The key property that will be used further below in the proof is provided by Corollary \ref{cor:size_proj}: 
\begin{equation}\label{eq:Ham_big}
 \ \  d\bar{h}(X^{\bar{f}}(x))\in i [\delta,+\infty) \subset  i\mathbb{R}\subset \mathbb{C}\ \   , \forall\ x\in \bar{h}^{-1}(U')\backslash (\cup_{i} V_{i}) ~.~
 \end{equation} 
 
 We will also assume that, after possibly a conformal change of metric, as in  Remark \ref{rem:conf_metric}, passing from the arbitrary metric $\mathtt{g}$, to a conformally equivalent metric $\bar{\mathtt{g}}=\alpha \mathtt{g}$ with $\alpha:N\to [1,\infty)$, we have $||d\varphi||\leq 1$.  We will use this metric $\bar{\mathtt{g}}$ throughout from this point on and discuss this modification further below in \S\ref{subsec:general}. 
 This means that
$||\nu ||\leq 1$ - see (\ref{eq:nu-grad}) - and further implies by the description of $g$ in Theorem \ref{thm:gir} that:
\begin{equation}\label{eq:width}
U_{r}\subset \mathbb{R}\times [-r,r], \ \  \forall r \in \mathbb{R}
\end{equation} 

The description of $f_{0}$ from the point i, in \S\ref{subsec:Giroux} implies that, for $r$ sufficiently small, we  have 
\begin{equation}\label{eq:length}
U_{r}\subset [- C_{\varphi} r , K + C_{\varphi} r]\times [-r,r]
\end{equation}
where $C_{\varphi}$ is a positive constant depending on the $C^{2}$ norm
of $\varphi$.

In this context, the main result that we need from \cite{Bi-Co:lefcob-pub} is next.

\begin{prop}[\cite{Bi-Co:lefcob-pub}] \label{prop:disj}For any Lagrangian $L\subset D^{\ast}_{r}N$, and any arbitrarily small neighbourhoods $N_{i}$ of the  matching cycles $S_{i}$,  there exist models of the Dehn twists $\tau_{S_{i}}$ that are  supported in $N_{i}$ and  such that  $\tau_{S_{m}} \circ \ldots  \circ \tau_{S_{2}}\circ \tau_{S_{1}}\circ \tau_{S_{0}}(L)$ does not intersect any of the thimbles $\tau_{i}$.
\end{prop}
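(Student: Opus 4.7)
The plan is to leverage the pairwise disjointness of the matching cycles $S_i$ to reduce to a local analysis near each individual $S_i$, and then to implement a model Dehn twist which transports $L\cap \tau_i$ onto the complementary hemisphere of $S_i$, away from $\tau_i$.

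First I would enumerate the critical points of $\varphi$ as $x_0,\ldots,x_m$ by increasing value. The doubled critical points $x'_i$ of $\bar h$ lie at distinct points of $\mathbb{C}$ (cf.\ Figure~\ref{fig:new_fibr}), and the arcs $\bar h(S_i)$ joining $\bar h(x_i)$ to $\bar h(x'_i)$ can be drawn pairwise disjoint; it follows that the Lagrangian spheres $S_i\subset E$ themselves are pairwise disjoint. Hence one may shrink the prescribed neighbourhoods $N_i$ so that they are pairwise disjoint, in which case the composition $\tau_{S_m}\circ\cdots\circ\tau_{S_0}$ acts as the identity outside the union of the $N_i$'s and simultaneously applies each local Dehn twist inside the respective $N_i$.

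The second and central step treats a single matching cycle $S_i$. By properties c, d and e listed above, the portion of $S_i$ lying over $U'$ coincides with the initial segment of the vertical thimble $\tau_i$; since $L\subset D^{\ast}_{r}N\subset \bar h^{-1}(U_r)\subset \bar h^{-1}(U')$, every intersection point of $L$ with $\tau_i$ lies on this segment and hence on $S_i\cap N_i$. I would then work in a Weinstein neighbourhood of $S_i\cong S^n$ identified with a neighbourhood of the zero section in $T^*S^n$, in which Seidel's normal-form model of the Dehn twist $\tau_{S_i}$ is available. By tuning the amplitude and radial cut-off of the twist one can produce a model supported in $N_i$ whose restriction to $S_i$ itself is the antipodal map. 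Consequently every intersection point $p\in L\cap S_i$ is transported by $\tau_{S_i}$ to its antipode, which lies on the hemisphere of $S_i$ containing the thimble descending from $x'_i$. This hemisphere projects under $\bar h$ into a region of $\mathbb{C}$ disjoint from the vertical ray $\bar h(\tau_i)$; combined with the facts that $\tau_{S_i}$ is the identity outside $N_i$ and that $\tau_i\setminus N_i$ cannot meet $L$ (because $L\cap\tau_i\subset N_i$), this yields $\tau_{S_i}(L)\cap\tau_i=\emptyset$.

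Finally, the disjointness of the supports yields the proposition: the image $\tau_{S_m}\circ\cdots\circ\tau_{S_0}(L)$ equals the original $L$ modified by the $i$-th local Dehn twist inside each $N_i$ independently, and each modification separately kills the intersection with the corresponding $\tau_i$ without affecting intersections with any $\tau_j$, $j\neq i$. The main difficulty lies in the local step: one must verify that the model Dehn twist can be arranged so that $\tau_{S_i}(L)$ not only avoids the zero section $S_i$ at the images of $L\cap S_i$, but avoids the whole of $\tau_i$, including the part of $\tau_i\cap N_i$ lying off $S_i$. This is controlled by arranging the Hamiltonian profile defining the twist to act sharply in the fibre directions so that images stay close to $S_i$, together with the fact that the antipode of any $p\in L\cap S_i$ lies in an open hemisphere of $S_i$ whose $\bar h$-projection is uniformly separated from $\bar h(\tau_i)\cap \bar h(N_i)$.
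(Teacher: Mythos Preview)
The paper does not prove this proposition; it is cited from~\cite{Bi-Co:lefcob-pub} (Proposition~2.3.1 together with~\S4.4.2 there). So let me assess your argument on its own.

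There are two genuine gaps.

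\textbf{The disjointness reduction fails.} Your first step assumes that the matching cycles $S_i$ are pairwise disjoint, so that the iterated twist decomposes into independent local modifications. But the $S_i$ are fixed by the construction of~\S\ref{subsec:Lef-Dehn}, and later in the paper (proof of Corollary~\ref{cor:quasi-rig2}, around Figure~\ref{fig:new_fibr2}) the perturbed spheres $\hat S_{x_i}=\eta(S_i)$ are explicitly described as having intersection points outside $U''$; since $\eta$ is a single Hamiltonian diffeomorphism, the $S_i$ themselves intersect. Once the supports $N_i$ overlap, the order in $\tau_{S_m}\circ\cdots\circ\tau_{S_0}$ (by increasing critical value of $\varphi$) is essential, and the composition is not the simultaneous application of commuting local twists. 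Your argument discards exactly this structure. Note also the tension in your own setup: you need each $\sigma_i=\bar h(S_i)$ to begin along the vertical ray $\gamma_i$ (so that $L\cap\tau_i\subset S_i$), yet you also want all the $\sigma_i$ to be pairwise disjoint; arranging both simultaneously is delicate and is not what the paper's construction does.

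\textbf{The local step is incomplete.} Even granting disjoint $S_i$, the antipodal action controls only $\tau_{S_i}(L)\cap S_i$, which does land in the hemisphere near $x'_i$ and hence misses $\tau_i\cap S_i$. But the Dehn twist does not carry $L\cap N_i$ rigidly to a neighbourhood of $x'_i$: in the Weinstein model, a point of $L\cap N_i$ at cotangent height $t$ is moved by the normalized geodesic flow for time $g(t)\in[0,\pi]$, so as $t$ ranges over the support the image $\tau_{S_i}(L\cap N_i)$ sweeps once around the sphere and in particular passes over the equator, precisely where $\tau_i$ peels off from $S_i$. Your last paragraph correctly flags this as the crux, but ``sharp profile plus projection'' is not carried out. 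A cleaner route would be to show that $\bar h\bigl(\tau_{S_i}^{-1}(\tau_i)\bigr)$ avoids $U_r$, but this too requires a careful analysis of how the twist acts on $\tau_i$ in the fibration picture --- which is closer to the actual approach in~\cite{Bi-Co:lefcob-pub}.
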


\subsection{Proof of Theorem \ref{thm:nearby}}\label{subsec:finish-proof}

The proof of the theorem will be completed in four steps. In \S\ref{sububsec:disjunction-we} we start from the setting in \S\ref{subsec:Lef-Dehn} and use (\ref{eq:Ham_big}), (\ref{eq:length}) and Proposition \ref{prop:disj} to show that, if $r$ is sufficiently small,  the iterated Dehn twist $$\tau_{m,\ldots, 1,0}L= \tau_{S_{m}} \circ \ldots  \circ \tau_{S_{2}}\circ \tau_{S_{1}}\circ \tau_{S_{0}}(L)$$ can be disjoined from $D^{\ast}_{r}(N)$ through a Hamiltonian isotopy of energy 
at most $8Kr/\delta$.  In \S\ref{subsec:exact-seq} we reformulate the disjunction result in Lemma \ref{lem:disj-energy} in terms of Lagrangian spheres that restrict to fibers $F_{x_{i}}\subset D^{\ast}_{r}(N)$. In \S\ref{subsubsec:back_to_alg} we establish an approximability type result in Proposition \ref{prop:estimates-in-r} that still involves the constants $K$, $r$, $\delta$. Finally, the proof concludes in \S\ref{subsec:general} where we get rid of the restriction of $r$ being sufficiently small, showing that the conformal change of metric   $\texttt{g}$ does not affect the argument and that
the constants $K$, $\delta$ in Proposition \ref{prop:estimates-in-r} can be picked as needed to deduce the statement of  Theorem \ref{thm:nearby}.
\subsubsection{Disjunction with controlled energy.}\label{sububsec:disjunction-we}
The aim in this subsection is to prove the following Lemma. To state it we fix
the constants $0<\delta < 1$, $K>0$
 and the function $\varphi=\varphi_{N,K,\delta}$ given  by Proposition \ref{prop:Morse}, just as discussed in \S\ref{subsec:Lef-Dehn}.  We
also assume that points a. - f. there are satisfied as well as the inclusions in (\ref{eq:Ham_big}) and (\ref{eq:length}). 

\begin{lem}\label{lem:disj-energy} For $r$ sufficiently small, for any (marked) exact Lagrangian $L\subset D^{\ast}_{r} N$ the iterated Dehn twist $\tau_{m, \ldots, 1,0}L$ can be disjoined from $D^{\ast}_{2r} (N)$ inside $E$ with less  than $8Kr/\delta$ energy. 
\end{lem}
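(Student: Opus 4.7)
The plan is to disjoin $\tau_{m,\ldots,1,0}L$ from $D^{\ast}_{2r}N$ by using (a compactly supported cut-off of) the Hamiltonian flow of $\bar{f} = \mathrm{Re}(\bar{h})$ to push it vertically upward in the base $\mathbb{C}$. By Corollary \ref{cor:size_proj} and \eqref{eq:Ham_big}, at every point of $\bar{h}^{-1}(U') \setminus \bigcup_{i} V_{i}$ the vector $d\bar{h}(X^{\bar{f}})$ is purely imaginary with modulus $\geq \delta$; hence $\bar{f}$ is conserved along its own flow, and $\mathrm{Im}(\bar{h})$ increases at rate at least $\delta$. Combining \eqref{eq:width} with \eqref{eq:length}, $D^{\ast}_{r}N$ projects into $[-C_{\varphi}r, K+C_{\varphi}r]\times[-r,r]$ while $D^{\ast}_{2r}N$ projects into $\mathbb{R}\times[-2r,2r]$, so it suffices to increase $\mathrm{Im}(\bar{h})$ by a strict amount greater than $3r$.

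Concretely, I would take $T = 4r/\delta$ and $H = \rho\cdot(\bar{f}-c)$, where $\rho\colon E\to[0,1]$ is a compactly supported cut-off equal to $1$ on an open neighborhood of the trajectory of $\tau_{m,\ldots,1,0}L$ under $X^{\bar{f}}$ for $t\in[0,T]$, and $c$ normalizes the minimum of $\bar{f}$ on that neighborhood to $0$. On the trajectory itself $H = \bar{f}-c$ is conserved, so the oscillation of $H$ there equals that of $\bar{f}$ on the initial Lagrangian. Proposition \ref{prop:disj} allows the Dehn twists to be supported in arbitrarily small neighborhoods $N_{i}$ of the matching cycles $S_{i}$, so $\tau_{m,\ldots,1,0}L$ lies in $D^{\ast}_{r}N$ together with a negligibly small neighborhood of $\bigcup_{i} S_{i}$, and \eqref{eq:length} bounds its $\bar{f}$-oscillation by $K+2C_{\varphi}r + o(1)$ as $N_{i}\to\emptyset$. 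The Hofer norm of the resulting isotopy is then $T\cdot\mathrm{osc}(H) \leq (4r/\delta)(K + O(r))$, which is less than $8Kr/\delta$ once $r$ is taken sufficiently small.

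The main obstacle is guaranteeing that $\tau_{m,\ldots,1,0}L$, together with its forward trajectory, actually avoids $\bigcup_{i} V_{i}$ so that \eqref{eq:Ham_big} applies throughout. Proposition \ref{prop:disj} provides disjointness only from the thimbles $\tau_{i}$, whereas each $V_{i}$ is constrained by point f. to contain the ball $B'_{\epsilon}(x_{i})$ from property v. To handle this I would choose $\epsilon$ (and correspondingly the parameter $\eta$ in Proposition \ref{prop:Morse}) small in advance, independently of $K$, $\delta$, and $L$, so that the displacement energy required to push $\tau_{m,\ldots,1,0}L$ off the small set $\bigcup_{i} B'_{\epsilon}(x_{i})$ is $O(\epsilon^{2})$ and absorbs into the lower-order terms of the energy estimate. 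After this preliminary perturbation, the $V_{i}$ can be chosen as thin $X^{\bar{f}}$-invariant tubes around the thimbles $\tau_{i}$ (exploiting property iii., which identifies $\tau_{i}$ with the unstable manifold of $X^{\bar{f}}$ at $x_{i}$), and so the flow preserves disjointness from $\bigcup_{i} V_{i}$; conservation of $\bar{f}$ together with point c. keeps the trajectory inside $\bar{h}^{-1}(U')$.
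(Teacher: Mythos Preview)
Your overall strategy---flow by $\bar f$ for time $\sim 4r/\delta$, combined with an oscillation bound $\sim 2K$ on $\bar f$ to get Hofer energy $\sim 8Kr/\delta$---is exactly the paper's. (A minor point: the paper flows with $-X^{\bar f}$ rather than $+X^{\bar f}$, since the pieces of $L'$ lying near the matching cycles already sit below the strip in the base; pushing upward would drag them back through the projection of $D^\ast_{2r}N$.)

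The genuine gap is in your treatment of the $V_i$ obstacle. The balls $B'_\epsilon(x_i)$ and the parameter $\epsilon$ are part of the Lefschetz--Morse data fixed in point~f.\ of \S\ref{subsec:Lef-Dehn} \emph{before} $r$ is chosen. Your preliminary displacement energy $O(\epsilon^2)$ is therefore a fixed positive constant (depending on $K,\delta$ through the number of critical points, but not on $r$), while the target bound $8Kr/\delta$ tends to zero with $r$; the former can never be absorbed into the latter. The paper bypasses any additive cost from the pre-displacement by a conjugation trick: choose \emph{any} ``repelling'' Hamiltonian isotopy $\psi$ supported inside $W_{K,r}=\bar h^{-1}([-K/2,3K/2]\times[-2r,\infty))\supset D^\ast_{2r}N$ that pushes $L'$ off $\bigcup_i V_i$---its Hofer norm is irrelevant---then apply the displacing flow $\phi$ of energy $<8Kr/\delta$ to $L''=\psi(L')$. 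Since $\psi$ is supported in the very set $W_{K,r}$ one is disjoining from, $\psi^{-1}$ acts as the identity on $\phi(L'')$, so $\psi^{-1}\phi\psi$ displaces $L'$ from $W_{K,r}$, and by bi-invariance of the Hofer norm $\|\psi^{-1}\phi\psi\|_H=\|\phi\|_H<8Kr/\delta$.
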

\begin{proof} To simplify notation denote $L'= \tau_{m, \ldots, 1,0}L$. Recall
that the constant $C_{\varphi}$ from (\ref{eq:length}) only depends on $\varphi$. We start by choosing $r>0$ such that $rC_{\phi}<K/4$. We also pick the constant $r'$ such that $r'=2r$. Therefore,
we deduce from (\ref{eq:length}) that 
$$U_{r}\subset [-K/4, 5K/4]\times [-r,r]$$
and, similarly $$U_{2r}\subset [-K/2, 3K/2]\times [-2r,2r]~.~$$
Assume for a moment that 
\begin{equation}\label{eq:assumption-dis}
L' \subset [\bar{h}^{-1}(U') \backslash \ (\cup_{i} V_{i}) ] \cup (\cup_{i} N'_{i})
\end{equation}
where $N'_{i}\supset N_{i}$ are neighbourhoods of the matching cycles $S_{i}$, possibly slightly larger than $N_{i}$. In that case, inspecting Figure \ref{fig:new_fibr} and  using (\ref{eq:Ham_big}), we see that the Hamiltonian flow $- X^{\bar{f}}$ moves $L'$ outside of $W_{K,r}=\bar{h}^{-1}([-K/2, 3K/2]\times [-2r,\infty))$ in time 
less than $4r/\delta$. At the same time the variation of $\bar{f}$ over $W_{K,r}$ is at most $2K$. It follows  that the Hofer displacement energy
of $L'$ from $W_{K,r}$, and, in particular, from $D^{\ast}_{2r}(N)\subset W_{K,r}$ 
 is less than $8Kr/\delta$.

We now intend to show that assumption (\ref{eq:assumption-dis}) is satisfied by some Lagrangian $L''=\psi (L')$ for some Hamiltonian diffeomorphism $\psi$ that will be described next. To this purpose, we notice that the closures of the neighbourhoods $V_{i}$ of the thimbles $\tau_{i}$ are included in the interior of $W_{K,r}$. Around each thimble $\tau_{i}$ we can find a Hamiltonian isotopy $\psi_{i}$ supported in a slightly larger neighbourhood $V'_{i}\subset W_{K,r}$ that is ``repelling'' away from
the thimble $\tau_{i}$ and such that $\psi_{i}$ leaves the thimble $\tau_{i}$ invariant (as a set) and moves the intersection $L'\cap V_{i}$  outside of $V_{i}$ , without creating any new intersections with $V_{i}$. For different $i$'s these Hamiltonian isotopies have disjoint support and thus they can be incorporated in a single isotopy whose time-one map, $\psi$, has
the property that $L''=\psi (L')$ satisfies (\ref{eq:assumption-dis}). 

We next apply to $L''$ the estimate for the displacement Hofer energy relative to $W_{K,r}$. Thus, there exists a Hamiltonian isotopy $\phi$ of energy less than
$8Kr/\delta$ such that  $\phi (L'')\cap W_{K,r}=\emptyset$. As the support of
$\psi$ is inside $W_{K,r}$ we deduce that $\psi^{-1}\phi\psi (L') \cap W_{K,r}=\emptyset$ and, by the bi-invariance of the Hofer norm, we also have $||\psi^{-1}\phi\psi||_{H}=||\phi||_{H}< 8Kr/\delta$ which concludes the proof of the lemma.

\end{proof}

\begin{rem}\label{rem:small_Dehn}
Notice that, as in Proposition \ref{prop:disj}, the support of the Dehn twists can be assumed to be contained in neighbourhoods of the $\hat{S}_{i}$ that are as small as desired.

\end{rem}
\subsubsection{Replacing matching cycles by Lagrangian spheres that restrict to fibers}\label{subsec:exact-seq} In this subsection we rewrite the disjunction energy estimate from Lemma \ref{lem:disj-energy} in terms of Dehn twists where the matching cycles $S_{i}$ are replaced by perturbations  $\hat{S}_{i}$ such that the intersection of $\hat{S}_{i}$ with $D^{\ast}_{r}(N)$ coincides with the cotangent fiber $F_{x_{i}}$.

We assume that $r$ is sufficiently small such that for each 
fiber $\overline{F}_{x_{i}}\subset D^{\ast}_{r}(N)$ (we recall $\{x_{i}\}_{i}=Crit(\varphi)$)  the image $\bar{h}(\overline{F}_{x_{i}})$ is included in $V_{i}$, see 
(\ref{eq:expression_re}),  and recall the expression of $g= Im(h)$ from Theorem \ref{thm:gir}. In particular, all these images are disjoint. We then 
consider a Hamiltonian diffeomorphism $\eta$ that is supported in $D^{\ast}_{\frac{3}{2}r}(N)$ and that deforms $S_{i}\cap D^{\ast}_{r}(N)$ to $F_{x_{i}}$. We will 
assume that the ``bends'' of $S_{i}$ in Figure \ref{fig:new_fibr} take place 
outside of $D^{\ast}_{\frac{3}{2}r}(N)$. We denote $\hat{S}_{i}=\eta(S_{i})$. We may also assume that $\eta$ is constructed such that $\hat{S}_{i}\cap D^{\ast}_{r}(N) = \overline{F}_{x_{i}}$ and $\eta(D^{\ast}_{r}(N))= D^{\ast}_{r}(N)$. Constructing such an $\eta$ is a simple exercise  by comparing the local expressions of the fiber $x_{i}$ and that of the thimble
$\tau_{i}$. Obviously, the submanifolds $\hat{S}_{i}$ are also Lagrangian spheres and we pick models for the Dehn twists relative to $\hat{S}_{i}$ such 
that $\tau_{\hat{S}_{i}}=\eta \circ \tau_{S_{i}}\circ \eta^{-1}$. This means, in particular,
that for any Lagrangian $L\subset D^{\ast}_{r}(N)$ we can write: 
$$\tau_{\hat{S}_{i}}(L)= \eta ( \tau_{S_{i}} (\eta^{-1}(L)))~.~$$
By iterating this relation we deduce that the iterated Dehn twist
$$\hat{\tau}_{m,\ldots, 1,0} (L) = \tau_{\hat{S}_{m}}\circ \ldots  \circ \tau_{\hat{S}_{1}}\circ\tau_{\hat{S}_{0}} (L)$$
satisfies
$$\hat{\tau}_{m,\ldots, 1,0} (L) =\eta( \tau_{m,\ldots,1,0} (\eta^{-1}(L))~.~$$
We know from the proof of Lemma \ref{lem:disj-energy} that there exists a Hamiltonian diffeomorphism $\phi'$ that, for any Lagrangian $L''$ (in our class), displaces $\tau_{m,\ldots,1,0}(L'')$ from $W_{K,r}$ and is of energy less than $8Kr/\delta$.  For a fixed Lagrangian $L$ we apply this property to $L''=\eta^{-1}(L)$  and, using the fact that the support of $\eta$ is included in $W_{K,r}$, we deduce that $\eta \circ \phi'\circ \eta^{-1}$ displaces $\hat{\tau}_{m,\ldots, 1, 0}(L)$ from  $W_{K,r}$. The Hofer norm of $\eta\circ \phi'\circ \eta^{-1}$ equals that of $\phi'$ and is thus smaller than $8Kr/\delta$.
In summary:
\begin{prop}\label{lem:disj_fibers} With the notation above, and for $r$ sufficiently small,  the conclusion  of Lemma \ref{lem:disj-energy} also applies to the iterated Dehn twist $\hat{\tau}_{m,\ldots, 1,0}(L)$ with each Dehn twist $\tau_{\hat{S}_{i}}$ having support in a neighbourhood of $\hat{S}_{i}$ that can be assumed as small as desired. \end{prop}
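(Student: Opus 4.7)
The strategy is to reduce Proposition \ref{lem:disj_fibers} directly to Lemma \ref{lem:disj-energy} via a Hamiltonian conjugation argument, exploiting the bi-invariance of the Hofer norm. The point is that while the matching cycles $S_i$ coming from Giroux's construction do not a priori restrict to cotangent fibers inside $D^*_r(N)$, we can straighten them into the desired configuration by a Hamiltonian diffeomorphism supported inside the disk bundle.

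The plan is as follows. First, I would choose $r$ small enough so that (using the explicit formula for $g = \mathrm{Im}(h)$ from Theorem \ref{thm:gir} combined with the expression for $f_0$ in \eqref{eq:expression_re}) the image $\bar{h}(\overline{F}_{x_i})$ lies inside the neighborhood $V_i$ of the thimble $\tau_i$. Since each $V_i$ sits near a different critical point, the projections of the fibers $\overline{F}_{x_i}$ to $\mathbb{C}$ are pairwise disjoint. By comparing the local normal forms near $x_i$ of the thimble $\tau_i$ (an unstable manifold of $X^{\bar f}$) and of the cotangent fiber $\overline{F}_{x_i}$, I would construct a Hamiltonian diffeomorphism $\eta$ of $E$ supported in $D^*_{3r/2}(N)$ with $\eta(D^*_r(N)) = D^*_r(N)$ and with the property that $\eta(S_i \cap D^*_r(N)) = \overline{F}_{x_i}$, arranging the "bends" of the matching cycles $S_i$ to lie outside $D^*_{3r/2}(N)$. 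Setting $\hat{S}_i := \eta(S_i)$ gives Lagrangian spheres restricting to cotangent fibers on $D^*_r(N)$.

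The second step is to define the Dehn twists along $\hat{S}_i$ by conjugation, $\tau_{\hat{S}_i} := \eta \circ \tau_{S_i} \circ \eta^{-1}$, using Remark \ref{rem:small_Dehn} to arrange the support of $\tau_{S_i}$ inside a small neighborhood of $S_i$ (so that $\tau_{\hat{S}_i}$ is supported in a correspondingly small neighborhood of $\hat{S}_i$). An immediate induction on the composition yields
\[
\hat{\tau}_{m,\ldots,1,0}(L) \;=\; \eta\bigl( \tau_{m,\ldots,1,0}(\eta^{-1}(L))\bigr)
\]
for any exact $L \subset D^*_r(N)$. Since $\eta^{-1}(L) \subset D^*_{3r/2}(N) \subset D^*_{2r}(N)$, Lemma \ref{lem:disj-energy} (applied with slightly enlarged $r$, which does not affect the estimates up to a constant absorbed into the choice of $r$) provides a Hamiltonian diffeomorphism $\phi'$ of Hofer norm $< 8Kr/\delta$ displacing $\tau_{m,\ldots,1,0}(\eta^{-1}(L))$ from $W_{K,r} \supset D^*_{2r}(N)$.

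To conclude, I would observe that $\mathrm{supp}(\eta) \subset W_{K,r}$, which implies that $\eta \circ \phi' \circ \eta^{-1}$ displaces $\hat{\tau}_{m,\ldots,1,0}(L)$ from $W_{K,r}$, hence in particular from $D^*_{2r}(N)$. By bi-invariance of the Hofer norm, $\|\eta \circ \phi' \circ \eta^{-1}\|_H = \|\phi'\|_H < 8Kr/\delta$, giving the bound. The main obstacle in this plan is the construction of the straightening diffeomorphism $\eta$: one must simultaneously arrange that $\eta$ preserves $D^*_r(N)$, maps $S_i \cap D^*_r(N)$ to $\overline{F}_{x_i}$ for every $i$, and has its support confined to $D^*_{3r/2}(N)$ (so that the post-bends of $S_i$ are untouched). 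This is feasible because, near $x_i \in N$, both the thimble $\tau_i$ and the fiber $\overline{F}_{x_i}$ are small Lagrangian disks transverse to $N$ with matching tangent planes after a suitable choice of symplectic basis, so a standard Weinstein neighborhood-type interpolation, carried out locally at each critical point and cut off by a bump function in the radial direction, produces the desired $\eta$.
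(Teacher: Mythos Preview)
Your proposal is correct and follows essentially the same argument as the paper: construct $\eta$, set $\hat{S}_i=\eta(S_i)$, define $\tau_{\hat{S}_i}$ by conjugation, iterate to get $\hat{\tau}_{m,\ldots,1,0}(L)=\eta(\tau_{m,\ldots,1,0}(\eta^{-1}(L)))$, and conclude via bi-invariance of the Hofer norm. One minor simplification: since you already arranged $\eta(D^*_r(N))=D^*_r(N)$, you have $\eta^{-1}(L)\subset D^*_r(N)$ directly, so Lemma~\ref{lem:disj-energy} applies without any need to enlarge $r$.
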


\subsubsection{Translating geometry into algebra}\label{subsubsec:back_to_alg}
In this section, we interpret the geometric disjunction result in 
Proposition \ref{lem:disj_fibers} in homological terms in
a Fukaya TPC  system $\widehat{\msc}(D^{\ast}_{r}(N))$.
The purpose here is to translate homologically the statement in Proposition \ref{lem:disj_fibers}. 
Two systems of related Fukaya categories will be important for us here, with components associated with the perturbation choice $p\in\mathcal{P}$ as below:
\begin{equation}\label{eq:components_cat}
\fuk(\mathcal{L}ag^{(ex)}(D^{\ast}_{r}N), p)\hookrightarrow \fuk(\mathcal{L}ag^{(ex)}E,p)~.~
\end{equation}
The category on the left is the one appearing in \S\ref{subsec:back} and the one on the right is
constructed just as in \S\ref{sbsb:fil-ex}. Its objects are marked exact, closed Lagrangian submanifolds in
$E$, the total space $E$ of the Lefschetz fibration  from \S\ref{subsec:Lef-Dehn}. An important point has to do with the choice of perturbations $p$: they are first picked on $D^{\ast}_{r}N$ and then extended to $E$. This is why there is an inclusion of $A_{\infty}$-categories relating the two sides. These categories fit into systems of filtered $A_{\infty}$-categories as in \S\ref{sb:sys-ainfty}. Further, as in \S\ref{sb:sys-fuk}, we 
obtain the system of TPCs $\widehat{PD}(\fuk(\mathcal{L}ag^{(ex)}(E))$. We also have the system of TPCs from \S\ref{subsec:back}, $\widehat{\msc}(D^{\ast}_{r}N) =(\msc_{p}(r), \mathcal{H}_{p,q})$. Notice that we include in the notation the radius $r$ of the relevant disk bundle. The categories $\msc_{p}(r)$ are homotopy categories of filtered  modules over the filtered Fukaya category $\mathcal{A}_{p}(r)=\fuk(\lagex(D^{\ast}_{r}N); p)$ and we have an inclusion of 
TPCs: $$ J_{p,r}:\msc_{p}(r)\hookrightarrow  H(F\md_{\mathcal{A}_{p}(r)})~.~$$

Because the perturbations on $E$ extend the perturbations on $D^{\ast}_{r}N$, the inclusion (\ref{eq:components_cat}) induces  
 pull-back TPC functors:
$$\xi_{p}:PD(\fuk(\mathcal{L}ag^{(ex)}(E), p)) \to H(F\md_{\mathcal{A}_{p}(r)})~.~$$
These functors commute with the comparison functors $\mathcal{H}_{p,q}$ if we  choose the perturbations used to define the comparison $\mathcal{H}_{p,q}$ functors for $D^{\ast}_{r}N$ by restriction of the perturbations used to define the corresponding functors for $E$. Notice also that, because the Lagrangian spheres $\hat{S}_{i}$ from \S\ref{subsec:exact-seq} intersect 
$D^{\ast}_{r}N$ along the fibers $\overline{F}_{x_{i}}$ it follows
that the module $F_{x_{i}}$ corresponding to the fiber $\overline{F}_{x_{i}}$ (see \S\ref{subsec:back}) is the pull-back of the Yoneda module $\mathcal{Y}(\hat{S}_{i})$. In particular, this module belongs to the image of $J_{p,r}$.  In $E$, the comparison functors $\mathcal{H}_{p,q}$ preserve the (Yoneda) modules associated with $\hat{S}_{i}$ for $p\preceq q$ and thus we deduce that, with these choices of perturbations, the family
$\mathcal{F}=\{F_{x_{i}}\}_{i}$ satisfies property ($\ast$)
from  the statement of Theorem \ref{thm:nearby}.
The next result brings us very close to the statement of this  theorem .

\begin{prop}\label{prop:estimates-in-r}
Assuming the setting and notation above, for $r$ sufficiently small and for a choice of perturbations $p$ with $\nu(p)$ sufficiently small, any exact, marked Lagrangian $L\subset D^{\ast}_{r}(N)$ satisfies in $\msc_{p}(r)$:
$$d_{int}\left( L, \Ob\  \langle\{F_{x_{0}},\ldots, F_{x_{m}}\} \rangle^{\Delta}\right) \leq 48 Kr/\delta~.~$$  
\end{prop}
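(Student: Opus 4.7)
The plan is to combine two ingredients: (a) iterated filtered Seidel exact triangles in the Fukaya category of $E$ that express $\hat{\tau}_{m,\ldots,1,0}(L)$ as an iterated cone built from $L$ and the Lagrangian spheres $\hat{S}_0,\ldots,\hat{S}_m$; (b) the geometric disjunction from Proposition~\ref{lem:disj_fibers}, which implies that $\hat{\tau}_{m,\ldots,1,0}(L)$, viewed as a module on $\mathcal{A}_p(r)$ via $\xi_p$, is almost acyclic in the interleaving sense. Pulling back the cone decomposition under $\xi_p$ and cancelling the almost acyclic piece yields the estimate.

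First, working inside the filtered Fukaya category $\fuk(\mathcal{L}ag^{(ex)}(E);p)$ for a $p \in \mathcal{P}$ with $\nu(p)$ small, Seidel's long exact sequence applied to each twist $\tau_{\hat{S}_i}$ yields, in filtered form, an exact triangle
\[
CF(\hat{S}_i,\Lambda_{i-1};p)\otimes \hat{S}_i \longrightarrow \Lambda_{i-1} \longrightarrow \Lambda_i,
\]
where $\Lambda_{-1}:=L$ and $\Lambda_i:=\tau_{\hat{S}_i}(\Lambda_{i-1})$. By Remark~\ref{rem:small_Dehn} we may make the support of each $\tau_{\hat{S}_i}$ arbitrarily small so that the filtration shift introduced by each triangle is bounded by an arbitrarily small quantity, in particular one can arrange this shift to be absorbed by a bound of the form $c(r+\nu(p))$ for a geometric constant $c$. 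Iterating from $i=0$ to $i=m$, we obtain a presentation of $\Lambda_m=\hat{\tau}_{m,\ldots,1,0}(L)$ as an iterated cone over $L$ and (shifts of) the $\hat{S}_i$ inside $PD(\fuk(\mathcal{L}ag^{(ex)}(E);p))$.

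Second, apply the TPC pullback functor $\xi_p$ to this iterated cone. Because the spheres $\hat{S}_i$ were constructed in \S\ref{subsec:exact-seq} to satisfy $\hat{S}_i\cap D^*_r N=\overline{F}_{x_i}$, the image $\xi_p(\mathcal{Y}(\hat{S}_i))$ is $0$-quasi-isomorphic to the fiber module $F_{x_i}$, up to a shift $\Sigma^{a_i}$ absorbed by the marking (again the parameters $a_i$ can be controlled by $r$). Since $\xi_p$ is a TPC functor, the iterated cone is transported, and we obtain that $\xi_p(\mathcal{Y}(\hat{\tau}_{m,\ldots,1,0}(L)))$ is interleaving-close, inside $\msc_p(r)$, to an iterated cone built from $L$ and the $F_{x_i}$'s; rearranging in the triangulated category $\msc_p^0(r)$ lets us express $L$ itself as an iterated cone involving the $F_{x_i}$ and the single error term $\xi_p(\mathcal{Y}(\hat{\tau}_{m,\ldots,1,0}(L)))$, with total accumulated filtration shift bounded by $c'(r+\nu(p))$.

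Third, Proposition~\ref{lem:disj_fibers} provides a Hamiltonian diffeomorphism of Hofer norm $<8Kr/\delta$ that disjoins $\hat{\tau}_{m,\ldots,1,0}(L)$ from $D^*_{2r}(N)$. A standard filtered continuation argument then shows that for every $L'\in\mathcal{L}ag^{(ex)}(D^*_r N)$ and every $p$ with $\nu(p)$ small enough relative to $Kr/\delta$, the persistence module $HF(L',\hat{\tau}_{m,\ldots,1,0}(L);p)$ is $(8Kr/\delta)$-acyclic. This means exactly that the Yoneda module of $\hat{\tau}_{m,\ldots,1,0}(L)$, restricted via $\xi_p$, is an $(8Kr/\delta)$-acyclic object of $\msc_p(r)$, so $d_{int}(\xi_p(\mathcal{Y}(\hat{\tau}_{m,\ldots,1,0}(L))),0)\leq 8Kr/\delta$. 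Inserting this into the iterated cone decomposition of Step~2 and using that the interleaving distance behaves subadditively across a bounded number of cones (each contributing a factor of at most $2$ in the worst case due to the $0$-acyclic passage through $\dret$-to-$\dint$ comparisons analogous to~\eqref{eq:dint-dret}), one obtains the estimate $d_{int}(L,\Ob\langle F_{x_0},\ldots,F_{x_m}\rangle^{\Delta})\leq 48Kr/\delta$ for $r$ and $\nu(p)$ sufficiently small.

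The main obstacle is Step~3, namely the careful translation of the geometric disjunction bound into a filtered-acyclicity bound for the Yoneda module after pullback by $\xi_p$; one must check that the filtration shift governed by the Hofer energy survives both the reduction to hom-complexes $CF(L',\hat{\tau}_{m,\ldots,1,0}(L);p)$ with test objects $L'$ in $D^*_r N$ and the perturbation-data choices underlying $\xi_p$. A secondary subtlety is bookkeeping the filtration shifts accumulated by the $m+1$ iterated Seidel triangles in Step~1 and by the marking adjustments in Step~2 in a way that vanishes as $\nu(p)\to 0$, so that no term other than the $8Kr/\delta$ piece survives (up to the multiplicative constant $6$) in the final estimate.
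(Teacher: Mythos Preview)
Your overall strategy matches the paper's: combine iterated Seidel triangles for the Dehn twists $\tau_{\hat{S}_i}$ with the disjunction bound of Proposition~\ref{lem:disj_fibers}, work in $\mathscr{D}_p(E)$, and pull back via $\xi_p$. The ingredients and the final constant are the right ones.

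There is, however, a genuine imprecision in your Step~3. You conclude that the pullback module $\xi_p(\mathcal{Y}(\hat{\tau}_{m,\ldots,0}(L)))$ is $(8Kr/\delta)$-acyclic from the pointwise statement that $HF(L',\hat{\tau}_{m,\ldots,0}(L);p)$ is $(8Kr/\delta)$-acyclic for every test object $L'\subset D^*_rN$. But a module morphism whose components are each null-homotopic need not be null-homotopic as a module morphism (the homotopies need not be coherent), so pointwise acyclicity does not directly yield acyclicity of the module. The paper avoids this entirely: the Hamiltonian $\Phi$ of Hofer norm $<c:=8Kr/\delta$ carries $L_0:=\hat{\tau}_{m,\ldots,0}(L)$ to a Lagrangian $L_0'$ \emph{disjoint} from $D^*_rN$, giving a weighted strict exact triangle $0\to L_0'\to L_0$ of weight $\leq 2c$ in $\mathscr{D}_p(E)$. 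Since $L_0'\cap D^*_rN=\emptyset$, the pullback $\xi_p(\mathcal{Y}(L_0'))$ is \emph{exactly} $0$. Your Step~3 is easily repaired along these lines: $\dint(\xi_p(L_0),0)\leq\dint(\xi_p(L_0),\xi_p(L_0'))\leq\dint(L_0,L_0')\leq 2c$, using that $\xi_p$ is a TPC functor and hence non-expanding.

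Structurally, the paper then takes $L_0'$ as the \emph{first} term $X_0$ of the cone sequence, so that after pullback the sequence starts from $0$ on the nose and ends at (something $6c$-close to) $L$; this eliminates the ``rearranging'' you allude to in Step~2. One further detail you gloss over: the term $\hat{S}_i\otimes HF(\hat{S}_i,\Lambda_{i-1})$ in the Seidel triangle is not a single shift of $\hat{S}_i$. The paper decomposes the persistence module $HF(\hat{S}_i,\Lambda_{i-1})$ into elementary bars and uses the octahedral axiom to refine each Seidel triangle into several triangles whose $Z$-terms are genuine shifts $\Sigma^a\hat{S}_i$, so that after pullback the linearization honestly lies in $\{F_{x_0},\ldots,F_{x_m}\}^{\Sigma,T}$.
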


\begin{proof} To simplify notation denote in this proof $c= 8Kr/\delta$, and 

\begin{equation} \label{eq:amb_Fuk}
\mathscr{D}_{p}(E):=PD(\fuk(\mathcal{L}ag^{(ex)}(E), p))~.~
\end{equation}

We first notice that to prove the claim it is enough to show:
\begin{lem}\label{lem:in_large} There are strict exact, weighted triangles in
$\mathscr{D}_{p}(E)$ of the form:
\begin{equation}\label{eq:iterated_cones1}
\Delta_{i} \ : \ Z_{i}\longrightarrow X_{i}\longrightarrow X_{i+1} \  , \ 0\leq i \leq m-1
\end{equation}
 such that $X_{0}$ is the Yoneda module of a Lagrangian disjoint from $D^{\ast}_{r}N$,  $ X_{m}= L$, and with the $Z_{i}$ of the form $\Sigma^{p(i)}\hat{S}_{x_{q(i)}}$ for some integers $p(i), q(i)$,  or possibly $Z_{i}=0$, and of total weight not more than $3c$. 
\end{lem}

Indeed, in that case, by the construction in Lemma 2.87
in \cite{BCZ:tpc} we obtain that there is a similar sequence  of triangles
\begin{equation*}\label{eq:iterated_cones2}
\Delta'_{i} \ : \ Z'_{i}\longrightarrow X'_{i}\longrightarrow X'_{i+1}
\end{equation*}
with $Z'_{j}$ again of the form $\Sigma^{l(j)}\hat{S}_{x_{s(j)}}$ or $=0$,
 with each $\Delta'_{i}$  exact in
 $(\mathscr{D}_{p}(E))^{0}$, with, $X'_{0}=X_{0}$, and with the last term $X'_{m}$ carrying a $6c$-isomorphism $X'_{m}\to L$. This means
by Lemma 2.85  \cite{BCZ:tpc} that $d_{int}^{\mathscr{D}_{p}(E)}(L,X'_{m})\leq 6c$. We now pull-back the triangles 
$\Delta'_{i}$ to  $H(F\md_{\mathcal{A}_{p}(r)})$. This produces triangles: 
$$
\Delta''_{i} \ : \ Z''_{i}\longrightarrow X''_{i}\longrightarrow X''_{i+1}
$$
that are exact in $(H(F\md_{\mathcal{A}_{p}(r)}))^{0}$, with $X''_{0}=\xi_{p}(X'_{0})=0$, with $Z''_{j}$ of the form
$\Sigma^{l(j)}F_{x_{s(j)}}$ (or $=0$) and with $\dint (X''_{m},L)\leq 6c$. Given that $\msc_{p}(r)$ contains the triangular completion of all the fiber modules $F_{x_{i}}$ we deduce that $X''_{i}\in \Ob (\msc_{p}(r))$ for all $i$ and this ends the proof of 
the Proposition \ref{prop:estimates-in-r} up to showing the Lemma \ref{lem:in_large}.
\end{proof}

\begin{rem} \label{rem:pull}
It is useful to emphasize that the pull-back $\xi_{p}$ is not necessarily full and faithful. In particular, 
 $\xi_{p}: HF(\hat{S}_{x_{i}},\hat{S}_{x_{j}})\to \hom_{\msc_{p}(r)}(F_{x_{i}}, F_{x_{j}})$ is not necessarily either injective nor surjective.
\end{rem}

\begin{proof}[Proof of Lemma \ref{lem:in_large}]
To construct the desired sequence of strict exact triangles we first fix the notation
$L_{m+1}=L$,  $L_{m-i}=\tau_{\hat{S}_{i}}(L_{m-i+1})$ so that $L_{0}=\hat{\tau}_{m,\ldots, 1,0}(L)$.
The statement of Proposition \ref{lem:disj_fibers} shows that $L_{0}$ can be  disjoined from $D^{\ast}_{r}(N)$ by a Hamiltonian isotopy $\Phi$ of Hofer norm $||\Phi||_{H} < c=\frac{8Kr}{\delta}$. 
By taking $p$ such that $\nu(p) \ll c- ||\Phi||_{H}$ we deduce, in TPC terminology,  that there is a weighted strict exact triangle in $\mathscr{D}_{p}(E)$:
\begin{equation}\label{eq:first-ex} 
0\longrightarrow L_{0}' \longrightarrow L_{0}\longrightarrow 0
\end{equation}
of weight $2c$, with $L_{0}'$ the Yoneda module of a Lagrangian  $L'_{0}\subset E$ disjoint from $D^{\ast}_{r}(N)$. This will be taken as the first triangle in our sequence (in other words we take $X_{0}=L_{0}'$). 

To construct the next triangles, 
we consider the homological interpretation of the Dehn twist, as
in Seidel's work  \cite{Se:book-fukaya-categ}, with the exception that we want to express the output  in a persistence context.  
Namely, we claim that in $\mathscr{D}_{p}(E)$ we have weighted strict exact triangles
\begin{equation}\label{eq:exact-tr}\hat{S}_{x_{m-i}}\otimes HF(\hat{S}_{x_{m-i}}, L_{i})\longrightarrow L_{i-1}\longrightarrow L_{i}~.~
\end{equation}
Here the Floer homology group $HF(\hat{S}_{x_{m-i}},L_{i})$ is a persistence module and the tensor product is a tensor product in the persistence realm. Moreover, to avoid complicating the notation, we identify the Lagrangians $L_{i}$ with their associated filtered Yoneda modules. 

The  triangle (\ref{eq:exact-tr}) is a persistence refinement of  Seidel's
classical Dehn-twist exact triangle.  This refinement is a strict exact triangle in the terminology of TPC's, of a weight denoted by $\epsilon_{i}$ (see \cite{BCZ:tpc}) and, additionally, $\epsilon_{i}$ depends on the size of the support of the Dehn twist $\tau_{\hat{S}_{i}}$, and can thus be made arbitrarily small.  This  feature of (\ref{eq:exact-tr})  can be seen most easily through the cobordism approach to the Dehn twist  due to Mak-Wu \cite{Mak-Wu:Dehn-twist} which, combined with the Lagrangian cobordism results in \cite{Bi-Co-Sh:LagrSh}, provides an upper bound for the weight of the triangle in terms of the shadow of the cobordism constructed in \cite{Mak-Wu:Dehn-twist}. This shadow can be made as close to $0$ as needed by diminishing the  neighbourhood of $\hat{S}_{i}$ that contains the support of $\tau_{\hat{S}_{i}}$.

To proceed, we pick the Dehn twists $\tau_{\hat{S}_{i}}$ with
sufficiently small support such that the weights $\epsilon_{i}$ of the triangles
(\ref{eq:exact-tr}) sum up to less than $c$.  We now notice that the persistence module $HF(\hat{S}_{x_{m-i}}, L_{i})$ can be viewed as the homology of a direct sum  $S$ of (translates of) elementary filtered complexes
$E_{2}(a, b)=\k(a, b:  da=0, db=a)$ and $E_{1}(c)=\k(c : dc=0)$ with the generators $x$ in both cases filtered with values $v(x)\in \R$, and $v(b)> v(a)$. The generators have degrees $0$ for $a$ and $c$, and $1$ for $b$.
The first type of filtered module can be written as a cone
$$E_{2}(a,b)=Cone \left( E_{1}(b)[-1]\to  E_{1}(a) \right)$$ over the obvious isomorphism, $b\to a$. Making use of this decomposition of $HF(\hat{S}_{x_{m-i}}, L_{i})$, the triangle (\ref{eq:exact-tr})
can be refined to a sequence of exact triangles in $(\mathscr{D}_{p}(E))^{0}$ of the form
\begin{equation}\label{eq:tr-ref}
\Delta_{i,j} \ \ \ : \ \ \ \hat{S}_{x_{i}}\otimes  E_{1}(d_{j})\to L_{i,j}\to L_{i,j+1}
\end{equation}
for $0\leq j\leq m(i)$
where $L_{i,0}=L_{i}$ and $d_{j}$ is one of the generators of type $a,b, c$ that appear in the sum $S$, followed by a last strict exact triangle of weight $\epsilon_{i}$ of
 the form 
 $$0\longrightarrow L_{i,m(i)}\longrightarrow L_{i+1}~.~$$
 This refinement is an immediate consequence of the octahedral axiom.

Notice that $\hat{S}_{x_{i}}\otimes E_{1}(x)$ is isomorphic to
$\Sigma^{v(x)}\hat{S}_{x_{i}}$. Therefore, by splicing together
all these exact triangles, starting with (\ref{eq:first-ex}) and following in order with $\Delta_{0, j}$, $0\leq  j\leq m(0)$ followed by   $\Delta_{1,j}$, \ $0\leq  j \leq m(1)$ and so forth, we obtain a sequence of exact triangles of the form required and of total weight not more than $3c$ which concludes the proof of the Lemma.
\end{proof}

\subsubsection{Conclusion of the proof.}\label{subsec:general} 
Theorem \ref{thm:nearby} is formulated in terms of the unit disk
bundle $D^{\ast}N$ with respect to some metric $\mathtt{g}$ but 
Proposition \ref{prop:estimates-in-r} is established for a disk bundle
$D^{\ast}_{r}N$ with $r$ small and with respect to a metric 
$\overline{\mathtt{g}}$ that is conformally equivalent to $\mathtt{g}$, as at the end of \S\ref{subsec:Lef-Dehn}.
 
To show Theorem \ref{thm:nearby} it suffices to show that for {\em any $r$}, and {\em any} metric $\mathtt{g}$ and any $L\subset D^{\ast}_{r}(N)$ we have 
 $$d_{int}(L, \Ob \langle \mathcal{F}_{\epsilon}
 \rangle^{\Delta})) < \epsilon+c_{\epsilon}\nu(p)$$ where $\mathcal{F}_{\epsilon}$ is a finite family of (modules of) fibers  depending on $\epsilon$ and on $r$. The interleaving distance $d_{int}(-)$ is defined on the 
 triangulated persistence category $\msc_{p}(r)$ for the perturbation parameter $p$ such that $\nu(p)$ is small enough. Moreover, these perturbation parameters, and the respective TPCs, are supposed to fit into a coherent family of TPCs as in Definition \ref{d:approx-sys} and 
 the assumption ($\ast$) from Theorem \ref{thm:nearby} should be satisfied.
  
  \
  
 We fix $\epsilon > 0$ and the metric $\mathtt{g}$. We keep the constant $0<\delta < 1$ as in the previous subsections.  
 We intend to use  Proposition \ref{prop:estimates-in-r}. We need to address the following points: 
 \begin{itemize}
 \item[i.] The proposition applies to the metric $\bar{\mathtt{g}}=\alpha \mathtt{g}$ with $\alpha:N\to [1,\infty)$ (chosen such  that we have $||d\varphi||\leq 1$, see Remark \ref{rem:conf_metric}) and not directly to $\mathtt{g}$.
 \item[ii.] The proposition applies to only values of $r$ that are sufficiently small.
 \end{itemize}
 
 For i. we denote by $|| - ||_{\mathtt{g}}$ and respectively by
 $||-||_{\bar{\mathtt{g}}}$ the norms with respect to the two metrics and 
 notice that for $a\in T^{\ast}(N)$ we have $||a||_{\bar{\mathtt{g}}}=\frac{1}{\sqrt{\alpha}}||a||_{\mathtt{g}}$. As a result $D^{\ast}_{r,\mathtt{g}}(N)\subset D^{\ast}_{r, \bar{\mathtt{g}}}(N)$ (where we add the metrics in the notation for the respective disk bundles).  Therefore,  the statement for the metric $\bar{\mathtt{g}}$ implies automatically  the corresponding result for the metric $\mathtt{g}$. 
 
 \
 
 It remains to discuss  point ii.  We want to show the statement for an arbitrary  value $R>0$. We pick $K>0$ such that  $48 K R/\delta \leq \epsilon$ and a Morse function $\varphi=\varphi_{N,K,\delta}$, as in Proposition \ref{prop:Morse}.  This determines the family $\mathcal{F}_{\epsilon}$ of fibers $F_{x_{i}}$, one for each critical point $x_{i}$ of $\varphi$.  We also consider the metric $\bar{\mathtt{g}}=\alpha \mathtt{g}$ obtained from $\mathtt{g}$ by rescaling as in Remark \ref{rem:conf_metric} with $\alpha :N\to [0,\infty)$ such that with respect to $\bar{\mathtt{g}}$ we have $||d\varphi||\leq 1$.

In case $R$ is small enough, the statement of Proposition  \ref{prop:estimates-in-r} directly applies and there is nothing further to prove. Assuming this is not the case, consider a sufficiently small $r$ such that the estimate in the statement of Proposition  \ref{prop:estimates-in-r}
\begin{equation}\label{eq:ineq3}d_{int}(L, \ \Ob \langle \{F_{x_{0}},\ldots, F_{x_{m}}\} \rangle^{\Delta} ) \leq 48 Kr/\delta 
\leq \epsilon \frac{r}{R}\end{equation}
is valid in $\msc_{p} (r)$ - the TPC associated with the exact, marked Lagrangians in $D^{\ast}_{r, \bar{\mathtt{g}}}(N)$ with respect
to perturbation data $p$ with $\nu(p)$ sufficiently small.
 
We consider the following rescaling map $\psi_{R,r}: T^{\ast} N\to T^{\ast} N$, $\psi_{R,r}(q,p)=(q, \frac{r}{R}p)$.
 Obviously, $\psi_{R,r}$ maps symplectomorphically $(D^{\ast}_{R,\bar{\mathtt{g}}}(N), \frac{r}{R}\omega)$ to $(D^{\ast}_{r,\bar{\mathtt{g}}}(N), \omega)$ (where $\omega$ is the standard form).  The map $\psi_{R,r}$  preserves the fibers $F_{x_{i}}$.  Using the rescaling $\psi_{R,r}$, the estimate (\ref{eq:ineq3})  applies as well 
to $(D^{\ast}_{R,\bar{\mathtt{g}}}(N), \frac{r}{R}\omega)$.  We emphasize that  this unit disk bundle is viewed as symplectic manifold with the non-canonical symplectic form $\frac{r}{R}\omega$. There is a corresponding TPC that we will denote by $\msc_{\bar{p}} (R; \frac{r}{R}\omega)$ where the perturbation data $\bar{p}$ is also obtained by pull-back through the rescaling map. In this TPC we have the inequality: 
\begin{equation}\label{eq:ineq-res} d_{int}(L, \ \Ob \langle \{ F_{x_{0}},\ldots, F_{x_{m}}\}\rangle^{\Delta}  ) \leq  \epsilon \frac{r}{R}~.~
\end{equation} 
The TPC that interests us is $\msc_{q}(R)$, associated with the symplectic manifold $(D^{\ast}_{R,\bar{\mathtt{g}}}(N),\omega)$ and with  perturbation data $q$ such that $\nu(q)$ is sufficiently small.
 It is easy to see that there is an isomorphism of categories (but not of TPC's):
 $$\Phi :   \msc_{\bar{p}} \left(R; \frac{r}{R}\omega\right) \longrightarrow \msc_{\bar{p}}(R)$$
 such that, on geometric objects, $\Phi $ rescales the relevant primitives,   $$\Phi ( (L, f_{L})) = \left(L, \frac{R}{r} f_{L}\right)$$ and, for morphisms, it identifies $\hom^{\alpha}_{\msc_{\bar{p}} (R; \frac{r}{R}\omega)}(X,Y)$ with 
 $\hom^{\alpha\frac{R}{r}}_{\msc_{\bar{p}}(R)}(\Phi (X),\Phi (Y))$.
 In brief, $\Phi$ preserves all algebraic structures except that it rescales all action values and shifts by $\frac{R}{r}$.  Consequently, the interleaving distance is also rescaled by $\frac{R}{r}$. Hence, the inequality (\ref{eq:ineq-res}) is also rescaled by  the same constant when passing to $\msc_{\bar{p}}(R)$. Thus we get:
 $$d_{int}(L,\ \Ob \langle \{ F_{x_{0}},\ldots, F_{x_{m}}\}\rangle ^{\Delta } ) \leq \epsilon $$ in $\msc_{\bar{p}}(R)$.
 We now consider the system of TPCs $\msc_{\bar{p}}(R)$ for varying
 $p$ and notice that, by pushing-forward also the comparison 
 functors from $\widehat{\msc}(D^{\ast}_{r,\bar{\mathtt{g}}}(N))$,
 these categories fit into a system $\widehat{\msc}(D^{\ast}_{R}N)$
 such that property $(\ast)$ is satisfied relative to the same family
 $\mathcal{F}_{\epsilon}$. To conclude, the claim of Theorem \ref{thm:nearby} is satisfied in $\widehat{\msc}(D^{\ast}_{R}N)$
 for $R=1$.
 
 \begin{rem}\label{rem:loose} The inequalities that we obtain in the proof
 of this theorem are not optimal in the sense that we do not try to minimize the number
 of fibers $F_{x_{0}}, \ldots, F_{x_{m}}$ in the argument and we do not try to optimize the other choices in the construction.  As a result,  at the end of the proof we obtain an inequality claiming that, with our choices and $\nu(p)$ small enough, the relevant interleaving distance is not larger than $\epsilon$, while with optimal choices, the expected inequality would be that the interleaving distance has $\epsilon +c_{\epsilon}\nu(p)$ as upper bound (for some universal constant $c_{\epsilon}$ and for $\nu(p)$ small enough). 
 \end{rem}
 
 \subsection{Theorem \ref{thm:nearby} implies Theorem \ref{thmmain1} i. and more}\label{subsec:nearby-TPC}
 
 Theorem \ref{thmmain1} i. claims that the (pseudo)-metric space $(\lag^{(ex)}(D^{\ast}N), d_{\gamma})$ is TPC-approximable in the sense of  Definition \ref{def:TPC-approx}.  Here $d_{\gamma}$ is the spectral metric whose 
 definition is recalled below, in Remark \ref{rem:spectral_d}.
 
 \
 
To show this statement we need to show that for any $\epsilon>0$ there exists $\epsilon$-TPC-approximating
data $\Phi=\{\Phi\}_{\eta}$, $\F=\{\F_{\epsilon,\eta}\}$ as in Definition \ref{def:TPC-approx}. 
By Remark \ref{rem:def-TPC-approx} b it is enough to show this for $\eta < \epsilon$ sufficiently small.
Of course, we will
 deduce the existence of this data out of Theorem \ref{thm:nearby}. 
 We will actually produce two choices of such data, each with its own advantages. 
  
 \begin{rem}\label{rem:spectral_d}
 For completeness we provide a simple description of the spectral pseudo-metric $d_{\gamma}$ defined on 
 $\mathcal{L}ag^{(ex)}(D^{\ast}N)$. For $L, L'\in \mathcal{L}ag^{(ex)}(D^{\ast}N)$ we consider the persistence
module $HF(L,L'; H^{L,L'})$ where $H^{L,L'}:[0,1]\times D^{\ast} N \to \R$ is an  admissible Hamiltonian function in the sense of standard Floer theory (see also \S \ref{sbsb:fil-ex}). We define 
$$v (L,L';  H^{L,L'})=b_{max}(HF(L,L'; H^{L,L'}))-b_{min}(HF(L,L'; H^{L,L'}))~.~$$ Here, if $\mathcal{M}$ is a persistence module with a finite number of bars  such that $\mathcal{M}=\oplus_{i\in I} [a_{i}, b_{i}) \oplus_{j\in J} [c_{j}, \infty)$ (with $a_{i}, b_{i}, c_{j}\in \R$, $a_{i}<b_{i}$) we put $b_{max}(\mathcal{M})= \max_{j\in J} c_{j}$ and $b_{min}(\mathcal{M})=\min_{j\in J} c_{j}$.
The definition of the spectral metric is:
$$d_{\gamma}(L,L')=\limsup_{|| H^{L,L'} ||_{C^{0}}\to 0}  \ v(L,L'; H^{L,L'}) ~.~$$
It is non-trivial but well-known that this is finite and that it satisfies the properties of a pseudo-metric.
The objects in $\mathcal{L}ag^{(ex)}(D^{\ast}N)$ are marked Lagrangians, they come with fixed primitives and gradings. The spectral metric does not ``see'' the choices of grading  and primitive in the sense that, with the notation of the paper, $d_{\gamma}(T^{r}\Sigma^{s}L,L')=d_{\gamma}(L,L')$ for all $r\in \Z$, $s\in \R$
 (here $T$ indicates a shift in grading and $\Sigma$ one in ``action''). Thus, this pseudo metric descends to the actual space of exact, closed Lagrangian submanifolds in $D^{\ast}N$ without any additional structure and, remarkably, 
 it is non-degenerate on this space. 
 \end{rem} 
 
 \subsubsection{Local $\epsilon$-TPC-approximating data.}\label{subsubsec:local_approx_d}

Our aim here is to use Theorem \ref{thm:nearby} to show that we can obtain approximating data $(\Phi,\F)$
for $(\lag^{(ex)}(D^{\ast}N), d_{\gamma})$ in the sense of Definition \ref{def:TPC-approx} where the categories $\mathscr{Y}_{\eta}$ are of the form $\msc_{p}$ and thus complete the proof of Theorem \ref{thmmain1} i.

\

There are a few differences between the type of approximability used in the  statement of Theorem \ref{thm:nearby} (namely Definition \ref{d:approx-sys}) and that of 
the Definition \ref{def:TPC-approx} used in Theorem \ref{thmmain1}. 

 First,  Theorem \ref{thm:nearby} makes use of the interleaving pseudo-metric on $\msc_{p}^{\epsilon}$ and not of the shift invariant pseudo metric. However, by Remark \ref{rem:relation-TPCapp},  the inequality claimed in Theorem \ref{thm:nearby}  remains true with respect to the shift invariant interleaving pseudo-metric $\bar{d}^{\msc_{p}^{\epsilon}}_{int}$, as defined in equation (\ref{eq:sdint}). 

A second difference is that Definition \ref{d:approx-sys} contains an inequality 
of the form $ \dint (-, -) < \epsilon + c_{\epsilon}\nu(p)$ while in Definition \ref{def:TPC-approx} the  term $c_{\epsilon}\nu(p)$ does not appear.  To address this point, pick some positive $\epsilon' <\epsilon$ and recall from Remark \ref{rem:dep_on_e} that the system of TPCs, $\widehat{\msc}
(D^{\ast}N) =\{\msc_{p} \}$ in Theorem \ref{thm:nearby} can be defined for this $\epsilon'$
 through  the construction of the Lefschetz fibration  $\bar{h}:E\to \mathbb{C}$, as in \S\ref{subsec:Lef-Dehn} (see also Remark \ref{rem:compare_data}). To avoid confusion we will include  $\epsilon'$ in the notation, and thus write $\msc_{p}^{\epsilon'}$ and $\widehat{\msc}^{\epsilon'}(D^{\ast}N)$. We  denote the action of the Yoneda embedding on objects by 
$$\mathcal{Y}_{\epsilon',p}:\mathcal{L}ag^{(ex)}(D^{\ast}N) \to \Ob(\msc_{p}^{\epsilon'})~.~$$
Theorem \ref{thm:nearby} shows that for this $\epsilon' $,  there is a finite family of fibers $\{F_{x_{1}},\ldots F_{x_{l}}\}$ such that for any 
$\delta$ sufficiently small the set $\mathcal{L}ag^{(ex)}(D^{\ast}N)$ is $\epsilon'$-approximable,
in the sense of Definition \ref{d:approx-sys},  by the family of modules $\mathscr{F}_{x_{i}}$ corresponding to the $F_{x_{i}}$ in each triangulated persistence category $\msc_{p}^{\epsilon'}$ with $\nu(p) \leq \delta$.  In this case, if we put:
$$\Phi_{\epsilon,\eta}= \mathcal{Y}_{\epsilon', p_{\eta}}$$ for $p_{\eta}$ depending on $\eta$ and picked such that $\nu(p_{\eta})<\eta/4$,  and we denote $\F_{\epsilon,\eta}=\{F_{x_{1}},\ldots, F_{x_{l}}\}$, then   the system $$(\Phi, \F)=(\Phi_{\epsilon,\eta}, \F_{\epsilon,\eta})$$ satisfies
the second point in Definition \ref{def:TPC-approx} relative to $\epsilon$ for $\eta$ small enough (such that $\eta/4 < (\epsilon -\epsilon')/c_{\epsilon'}$). 

A third and last difference is that the first point in Definition \ref{def:TPC-approx} is not 
present in Definition \ref{d:approx-sys}.  This has to do with the comparison between the spectral metric on the domain of the maps $\Phi$ and the interleaving pseudo-metric defined on the target of
these maps.
Thus, to show that $(\Phi$, $\F)$ as above are a choice of TPC $\epsilon$-approximating data for $$(\lag^{(ex)}(D^{\ast}N), d_{\gamma})\ , \ $$ in the terminology from \S \ref{subsubsec:approx_TPC}
it remains to show that  $\mathcal{Y}_{\epsilon', p}$ also satisfies the first point in Definition \ref{def:TPC-approx}, namely that $\mathcal{Y}_{\epsilon', p}$ is  a $(A,\eta)$-quasi-isometric embedding  where 
the pseudo-metric on $\Ob (\msc_{p}^{\epsilon'})$ is the interleaving metric, the pseudo-metric
on the domain is $d_{\gamma}$, and  the constant
$A>0$ can be picked independent of all other choices. 
 
 \
 
 We will see that the statement is true for $A=2$. We start by revisiting the definition of the interleaving distance from equation 
 (\ref{eq:d-int}). An equivalent definition is 
 \begin{equation*} \label{eq:d-int2}
  \begin{aligned}
d_{int}(X,Y)=\inf\{r\geq 0 \ | \ \exists\ \phi\in\hom^{r}(X,Y), \psi\in \hom^{r}(Y,X) \\  \     \mathrm{\ \ such\ that}\ \psi\circ \phi =i_{0,2r}(id_{X}), \  \phi \circ \psi = i_{0,2r}(id_{Y})\} \ , 
\end{aligned}
\end{equation*}
 where we recall that
 $X,Y\in \Ob(\C)$, $\C$ is a persistence category (see \S\ref{sb:pc}) 
 and $$i_{\alpha,\beta}: \hom^{\alpha}(X,Y)\to \hom^{\beta}(X,Y)$$ are the persistence structural maps. An alternative definition
of an interleaving type pseudo-metric is the following:
 
 \begin{equation} \label{eq:d-int3}
  \begin{aligned}
D_{int}(X,Y)=\inf\{a+b \ | \ \exists\ \phi\in\hom^{a}(X,Y), a \geq 0, \psi\in \hom^{b}(Y,X), b\geq 0 \\     \mathrm{such\ that}\ \psi\circ \phi =i_{0,a+b}(id_{X}), \  \phi \circ \psi = i_{0,a+b}(id_{Y})\}
\end{aligned}
\end{equation} 

It is immediate to see that:

\begin{equation}\label{eq:ineq_var_inter}
\frac{1}{2} D_{int}(X,Y)\leq d_{int}(X,Y)\leq D_{int}(X,Y)~.~
\end{equation}

It is obviously also possible to stabilize the distance $D_{int}$, just as in 
formula (\ref{eq:sdint}), thus putting
\begin{equation*}\label{eq:sdint2}
\bar{D}_{int} (X,Y)=\inf_{r,s\in \R} D_{int} (\Sigma^{r}X,\Sigma^{s}Y)
\end{equation*}
and we again have $\frac{1}{2} \bar{D}_{int} \leq \bar{d}_{int}\leq    \bar{D}_{int}$.
The corresponding interleaving type pseudo-metric, $\bar{D}^{\msc_{p}^{\epsilon}}$, is defined on each of the  categories $\msc_{p}^{\epsilon}$ and, for all $L,L'\in\mathcal{L}ag^{(ex)}(D^{\ast}N)$ we can consider the limit:
 $$\hat{D}_{int}(L,L')=\limsup_{\nu(p)\to 0} \bar{D}_{int}^{\msc_{p}^{\epsilon}}(L,L')~.~$$
 where the limit is taken in the system of categories $\hat{\msc}_{p}^{\epsilon}(D^{\ast}N)$. This is again a pseudo-metric defined on
 $\lag^{(ex)}(D^{\ast}N)$. We claim that we have:
 \begin{equation}\label{eq:equality_int}
 \hat{D}_{int}(L,L')= d_{\gamma}(L,L') \ \ \forall \ L,L'\in \lag^{(ex)}(D^{\ast}N)
 \end{equation}
 
 This identity is well-known in the subject, the argument can be 
 found in 3.4.1.2.(i) \  in  \cite{BCZ:tpc} (see also Remark 3.26 there).
 
 Taking into account $\nu(p_{\eta})< \eta/4$ and the definition of $\nu(p)$ from 
 (\ref{eq:nu-fuk}) it is immediate to see that (\ref{eq:equality_int}) implies that the maps $\mathcal{Y}_{\epsilon', p_{\eta}}$ are $(2,\eta)$-quasi-isometric embeddings when $\eta$ is sufficiently small and thus finishes the proof of Theorem \ref{thmmain1} i.

 \begin{rem}\label{rem:div_int_metr}
 It is clear from the argument above that the interleaving type pseudo-metric $D_{int}$ from Definition \ref{eq:d-int3} is more directly tied to the spectral metric compared to the pseudo-metric $d_{int}$ from Definition \ref{eq:d-int}. On the other hand, $d_{int}$, when applied to the homotopy category of filtered chain complexes, coincides with the bottleneck distance from persistence theory - as can be seen through the isometry theorem as in, for instance, \S 2.2 \cite{PRSZ:top-pers} - and thus is, in a way, the more ``standard'' notion.
 \end{rem}

 \subsubsection{Ambient $\epsilon$-TPC-approximating data.} \label{subsubsec:amb_approx_d} We will see here that the
 proof of Theorem \ref{thm:nearby} also provides a different sort of  $\epsilon$-approximating data  (in the terminology from \S \ref{subsubsec:approx_TPC}) for the 
 space $(\lag^{(ex)}(D^{\ast}_{r}N), d_{\gamma})$. It is important to note from the outset
 that, by contrast to \S\ref{subsubsec:local_approx_d}, we obtain {\em retract} approximating data in this case - see Corollary \ref{thm:nearby_far}.
  
 For  a fixed  $\epsilon >0$ consider some  positive $\epsilon'<\epsilon$.
 Recall from (\ref{eq:amb_Fuk}) the persistence derived Fukaya categories
 $$\mathscr{D}_{p}(E)=PD(\fuk(\mathcal{L}ag^{(ex)}(E), p))$$
 as well as the Lagrangian spheres $\hat{S}_{x_{i}}\in \mathcal{L}ag^{(ex)}(E)$ that appear
 in Lemma \ref{lem:in_large} that can be constructed for $\epsilon' $ (for some small $r$). 
 The corresponding Yoneda embeddings yield maps:
 $$\mathcal{Y}'_{\epsilon',p}:\mathcal{L}ag^{(ex)}(D^{\ast}_{r}N) \to \Ob(\mathscr{D}_{p}(E))~.~$$
 We define  the family $\F'_{\epsilon}=\{\dots,\hat{S}_{x_{i}},\ldots\}$ where here $\hat{S}_{x_{i}}$ represents
 the Yoneda module of the respective Lagrangian sphere (these are preserved by the comparison 
 functors $\mathcal{H}_{p,q}$, $p\preceq q$). We put $\Phi'_{\epsilon,\eta}=\mathcal{Y}'_{\epsilon',p_{\eta}}$
 where $\nu(p_{\eta})\leq \eta/4$, just as in \S\ref{subsubsec:local_approx_d}. Here the system 
 of perturbations $p$ is defined relative to  $\lag^{(ex)}(E)$ and thus $\nu(-)$ is calculated for Hamiltonians
 defined over $E$. The proof that $\Phi'_{\eta}$ is a $(2,\eta)$-quasi-isometric  embedding is the same 
 as in \S\ref{subsubsec:local_approx_d} because the identity (\ref{eq:equality_int}) remains true even if the interleaving type
 distance $\hat{D}_{int}$ is computed in the larger category $\mathscr{D}_{p}(E)$ instead of $\msc_{p}(r)$ (this is due again
 to the argument in 3.4.1.2 (i) in \cite{BCZ:tpc}).
 
 It remains to show that $\F'_{\epsilon}$ is a retract $\epsilon$-approximating family for $\lag^{(ex)}(D^{\ast}_{r}N)$ in each 
 $\mathscr{D}_{p_{\eta}}(E)$ for $\eta$ sufficiently small.
 
 To show this we proceed as in the proof of Proposition \ref{prop:estimates-in-r}. From the proof of this proposition we 
 know that there is a sequence (\ref{eq:iterated_cones2}) of exact triangles in $\mathscr{D}_{p}(E)$:
 \begin{equation}\label{eq:iterated_cones3}
 \Delta'_{i} \ : \ Z'_{i}\longrightarrow X'_{i}\longrightarrow X'_{i+1} \ , \ 0\leq i < m
\end{equation}
with $Z'_{j}$ of the form $\Sigma^{l(j)}\hat{S}_{x_{s(j)}}$ or $=0$,
 with each $\Delta'_{i}$  exact in
 $(\mathscr{D}_{p}(E))^{0}$, with $X'_{0}$ the Yoneda module of a Lagrangian disjoint from $D^{\ast}_{r}N$, and 
 such that  there is a $6c$-isomorphism $$f: X'_{m}\to L~.~$$
 
 We represent all iterated cones in (\ref{eq:iterated_cones3}) by attachment of cones in the pre-triangulated
 category of filtered $A_{\infty}$-modules (this is possible because the sequence (\ref{eq:iterated_cones3})  is in $(\mathscr{D}_{p}(E))^{0}$ ) and we consider the obvious compositions $u_{j}: X_{0}'\to X'_{j}$. 
 The sequence (\ref{eq:iterated_cones3}) induces a new iterated sequence of cones (in the same category):
   \begin{equation}\label{eq:iterated_cones4}
 \tilde{\Delta}_{i} \ : \ Z'_{i}\longrightarrow \tilde{X}_{i}\longrightarrow \tilde{X}_{i+1} \ , \ 0\leq i < m
\end{equation}
with $\tilde{X}_{i}= \mathrm{Cone}\ (u_{i})$. We first remark that because $X_{0}'$ is a Yoneda module all $\tilde{X}_{i}$'s are objects of $\mathscr{D}_{p}(E)$. 
We now consider the cone attachment $$X'_{0}\stackrel{u_{m}}{\longrightarrow} X'_{m} \longrightarrow \tilde{X}_{m}~.~$$ Because $X'_{0}$ is the Yoneda module of a Lagrangian that does not intersect $D^{\ast}_{r}N$ it follows
that the composition $f\circ u_{m}$ is $0$-chain homotopic to the null map. As a result, the map $f$ induces
another $6c$-isomorphism $\tilde{f}:\tilde{X}_{m}\to L\oplus TX'_{0}$. This means that 
$$d_{\text{r-int}}^{\mathscr{D}_{p}(E)}(L,\tilde{X}_{m})\leq 6c$$ and, moreover,
in $(\mathscr{D}_{p}(E))^{0}$ we have that $\tilde{X}_{0}\cong 0$. 

As a result, similarly to Proposition \ref{prop:estimates-in-r}, we obtain:
\begin{cor} \label{cor:estimate-in-r-E}
With the notation above, for $r$ sufficiently small and for a choice of perturbations $p$ with $\nu(p)$ sufficiently small, any exact, marked Lagrangian $L\subset D^{\ast}_{r}(N)$ satisfies in $\mathscr{D}_{p}(E))$:
$$d_{\text{r-int}}(\ L\ , \Ob\  \langle\{\hat{S}_{x_{0}},\ldots, \hat{S}_{x_{m}}\} \rangle ^{\Delta}\ ) \leq 48 Kr/\delta~.~$$  
\end{cor}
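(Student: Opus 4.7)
The plan is to recycle, in the larger category $\mathscr{D}_{p}(E)$, the iterated-cone decomposition already produced in the proof of Proposition \ref{prop:estimates-in-r}, and then to modify it so that the initial term becomes $0$-acyclic rather than merely disjoint from $D^{\ast}_{r}N$. Once this is done, the conclusion is essentially a formal manipulation of exact triangles, using that the interleaving weight of the decomposition is bounded by $c := 8Kr/\delta$ (via Lemma \ref{lem:disj-energy} and Proposition \ref{lem:disj_fibers}).

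More precisely, fix $L \in \mathcal{L}ag^{(ex)}(D^{\ast}_{r}N)$ and start from the sequence of strict exact triangles \eqref{eq:iterated_cones3} in $(\mathscr{D}_{p}(E))^{0}$ constructed in Lemma \ref{lem:in_large}: each $Z'_{i}$ is either zero or a shift of some $\hat{S}_{x_{s(i)}}$, $X'_{0}$ is the Yoneda module of a Lagrangian $L'_{0}\subset E$ disjoint from $D^{\ast}_{r}N$, and there is a $6c$-isomorphism $f:X'_{m}\to L$. I would lift all these exact triangles to honest filtered mapping-cone attachments in the pre-triangulated filtered $A_{\infty}$-category $\mathcal{Y}(\fuk(\mathcal{L}ag^{(ex)}(E);p))^{\Delta}$ (this is possible since each $\Delta'_{i}$ is exact in the $0$-level category). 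This produces canonical composition morphisms $u_{j}:X'_{0}\to X'_{j}$, $0\leq j\leq m$, of filtration level $\leq 0$.

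The core step is then to form the new sequence of triangles $\tilde{\Delta}_{i}$ from \eqref{eq:iterated_cones4}, with $\tilde{X}_{i}:=\mathrm{Cone}(u_{i})$, so that $\tilde{X}_{0}\cong 0$ in $(\mathscr{D}_{p}(E))^{0}$ and the same $Z'_{i}$'s appear as cones in successive stages. The key geometric input is that $\overline{L'_{0}}\cap D^{\ast}_{r}N=\emptyset$, so the Floer complex $CF(L'_{0},L;p)$ vanishes (for $\nu(p)$ small enough that the Hamiltonian perturbation is too small to create chords between disjoint closed Lagrangians). This forces the composition $f\circ u_{m}$ to be $0$-chain-homotopic to zero. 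Using the octahedral axiom applied to the triangle $X'_{0}\to X'_{m}\to \tilde{X}_{m}$, the map $f$ induces a morphism $\tilde{f}:\tilde{X}_{m}\to L\oplus T X'_{0}$ which is still a $6c$-isomorphism (its cone fits into an extension of the cone of $f$ by zero).

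The final step is to package this into the retract pseudo-metric. From $\tilde{X}_{0}\cong_{0} 0$ and the sequence $\tilde{\Delta}_{i}$, a straightforward induction shows $\tilde{X}_{m}\in \Ob \langle\{\hat{S}_{x_{0}},\ldots,\hat{S}_{x_{m}}\}\rangle^{\Delta}$. Since $\tilde{f}:\tilde{X}_{m}\to L\oplus TX'_{0}$ is a $6c$-isomorphism, Lemma \ref{l:a-iso} (together with the defining property \eqref{eq:d-rint} of $\dret$) yields $\dret(L,\tilde{X}_{m})\leq 6c=48Kr/\delta$, which is exactly the claim. The one delicate point to track carefully in the write-up is the interaction between the $0$-null-homotopy of $f\circ u_{m}$ and the weights of the triangles $\tilde{\Delta}_{i}$: one must ensure that the octahedral construction does not inflate the total weight beyond $6c$. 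This is the same bookkeeping already performed in the proof of Proposition \ref{prop:estimates-in-r} (via Lemma 2.87 of \cite{BCZ:tpc}) and I would invoke it directly rather than redo it.
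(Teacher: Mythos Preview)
Your proposal is correct and follows essentially the same route as the paper: start from the sequence \eqref{eq:iterated_cones3}, cone off the maps $u_j:X'_0\to X'_j$ to get the new sequence \eqref{eq:iterated_cones4} with $\tilde X_0\cong_0 0$, use disjointness of $L'_0$ from $D^*_rN$ to trivialize $f\circ u_m$, and extract the $6c$-isomorphism $\tilde f:\tilde X_m\to L\oplus TX'_0$. One small point: Lemma~\ref{l:a-iso} is not the right citation for the last step (it only gives finiteness of $\dint$); the bound $\dret(L,\tilde X_m)\le 6c$ follows instead by composing the $6c$-interleaving maps between $\tilde X_m$ and $L\oplus TX'_0$ with the inclusion/projection of the $L$-summand.
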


The arguments in the proof of Theorem \ref{thm:nearby} that are found in \S\ref{subsec:general} can then be pursued
in this somewhat different setting to get rid of the dependence on $r, K,\delta$, and they lead to a conclusion that parallels
the similar statement for $(\Phi, \F)$:

\begin{cor} \label{thm:nearby_far} 
In the setting above and for a fixed $\epsilon>0$ we have: 

a. There exists a Lefschetz fibration $\bar{h}:E\to \mathbb{C}$ as before such that the family
$\F'_{\epsilon}$ of the Yoneda modules of the Lagrangian spheres $\{\hat{S}_{x_{0}}, \ldots, \hat{S}_{x_{m}}\}$ is a retract $\epsilon$-approximating family for $(\mathcal{L}ag^{(ex)}(D^{\ast}_{r}N), d_{\gamma})$ in $\mathscr{D}_{p}(E)$ for each $p$ with $\nu(p)$ sufficiently small. Moreover, $(\Phi',\F'_{\epsilon})$ is TPC retract $\epsilon$-approximating for  $(\mathcal{L}ag^{(ex)}(D^{\ast}_{r}N), d_{\gamma})$. 

b. For each $L\in\lag^{(ex)}(D^{\ast}_{r}N)$ there exists
$X_{L}\subset \lag^{(ex)}(E)$ disjoint from $D^{\ast}_{r}N$ such that 
$$\dint (L\oplus X_{L}, \Ob\  \langle\{\hat{S}_{x_{0}},\ldots, \hat{S}_{x_{m}}\} \rangle ^{\Delta}) <\epsilon + c_{\epsilon}\nu(p)$$ in the category $\mathscr{D}_{p}(E)$ and for $\nu(p)$ small enough.
\end{cor}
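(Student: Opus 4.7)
The plan is to bootstrap Corollary \ref{cor:estimate-in-r-E}, which already gives the retract interleaving bound $d_{\text{r-int}}^{\mathscr{D}_p(E)}(L,\Ob\langle \F'_\epsilon\rangle^\Delta) \leq 48Kr/\delta$, into the full statement by applying the same rescaling scheme as in \S\ref{subsec:general}, and then to deduce part (b) directly from the chain-level construction of $\tilde{X}_m$ described just above the statement. The key observation is that the rescaling argument, originally carried out for $d_{\text{int}}$ in $\msc_p(r)$, works equally well for $d_{\text{r-int}}$ in the larger ambient category $\mathscr{D}_p(E)$, because the rescaling isomorphism $\Phi$ simply scales all action levels uniformly and therefore preserves the structure of $r$-isomorphisms up to the same multiplicative factor.

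For part (a), I fix $\epsilon'<\epsilon$ and choose $K>0$, $0<\delta<1$ with $48KR/\delta<\epsilon'$, where $R$ is the prescribed radius; take $\varphi=\varphi_{N,K,\delta}$ from Proposition~\ref{prop:Morse}. Since $D^*_{R,\mathtt{g}}N\subset D^*_{R,\bar{\mathtt{g}}}N$, it suffices to argue for the conformal metric $\bar{\mathtt{g}}$. Pick $r$ small enough that Corollary~\ref{cor:estimate-in-r-E} applies on $D^*_{r,\bar{\mathtt{g}}}N$, then transport the estimate to $D^*_{R,\bar{\mathtt{g}}}N$ via the symplectic rescaling $\psi_{R,r}$ and the category isomorphism $\Phi\colon \msc_{\bar p}(R;(r/R)\omega)\to \msc_{\bar p}(R)$; since $\Phi$ multiplies all filtration levels by $R/r$, it multiplies $d_{\text{r-int}}$ by $R/r$ as well, yielding
\[
d_{\text{r-int}}^{\mathscr{D}_p(E)}(L,\Ob\langle \F'_\epsilon\rangle^\Delta)\leq \epsilon'
\]
for every $L\in\lag^{(ex)}(D^*_{R,\mathtt{g}}N)$, provided $\nu(p)$ is small enough. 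To verify the TPC retract $\epsilon$-approximating condition for $(\Phi',\F'_\epsilon)$, one sets $\Phi'_{\epsilon,\eta}=\mathcal{Y}'_{\epsilon',p_\eta}$ with $\nu(p_\eta)<(\epsilon-\epsilon')/c_{\epsilon'}$ and checks the quasi-isometric embedding property exactly as in \S\ref{subsubsec:local_approx_d}: the identity $\hat{D}_{\text{int}}=d_\gamma$ from \eqref{eq:equality_int} continues to hold for the ambient category $\mathscr{D}_p(E)$ by the argument recalled from 3.4.1.2(i) of \cite{BCZ:tpc}, so $\Phi'_{\epsilon,\eta}$ is a $(2,\eta)$-quasi-isometric embedding.

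For part (b), I use the object $\tilde{X}_m=\text{Cone}(u_m)$ constructed in the lines preceding Corollary~\ref{cor:estimate-in-r-E}: it belongs to $\Ob\langle \F'_\epsilon\rangle^\Delta$ (being built as an iterated cone over translates of the $\hat{S}_{x_i}$), and it comes equipped with a $6c$-isomorphism $\tilde{f}\colon \tilde{X}_m\to L\oplus TX'_0$, where $X'_0$ is the Yoneda module of a Lagrangian $X_L\subset E$ disjoint from $D^*_rN$. By the symmetry statement for $r$-isomorphisms recalled in \S\ref{sbsb:r-iso} (existence of a $6c$-isomorphism $\tilde{X}_m\to L\oplus TX'_0$ yields a $12c$-isomorphism in the opposite direction up to a shift), we obtain $d_{\text{int}}^{\mathscr{D}_p(E)}(L\oplus TX'_0,\tilde{X}_m)\leq 12c$, so
\[
d_{\text{int}}^{\mathscr{D}_p(E)}(L\oplus TX'_0,\Ob\langle \F'_\epsilon\rangle^\Delta)\leq 12c.
\]
Applying the same rescaling argument as in part (a) converts this into the desired bound $\dint(L\oplus X_L,\Ob\langle \F'_\epsilon\rangle^\Delta)<\epsilon+c_\epsilon\nu(p)$ once $T$ is absorbed into a choice of grading on $X_L$.

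The main obstacle is verifying that the rescaling argument of \S\ref{subsec:general} remains valid in the larger ambient category $\mathscr{D}_p(E)$ rather than just in $\msc_p(r)$. This requires checking that the Lagrangian spheres $\hat{S}_{x_i}$ and the cone-attachment morphisms used to build $\tilde{X}_m$ are compatible with the rescaling $\psi_{R,r}$; this is ensured by the key geometric property $\hat{S}_{x_i}\cap D^*_rN=\overline{F}_{x_i}$ together with the fact that the rescaling acts on morphisms purely by scaling action levels, leaving the homotopy type of the continuation/comparison functors unchanged. Once this compatibility is established, the argument proceeds mechanically and the loss of an additive term $c_\epsilon\nu(p)$ (as discussed in Remark~\ref{rem:loose}) is absorbed by choosing $\nu(p)$ sufficiently small.
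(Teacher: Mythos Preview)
Your proposal is correct and follows essentially the same approach as the paper: the paper itself simply states that ``the arguments in the proof of Theorem~\ref{thm:nearby} that are found in \S\ref{subsec:general} can then be pursued in this somewhat different setting to get rid of the dependence on $r, K, \delta$,'' and you have spelled this out accurately. One minor inefficiency: from the $6c$-isomorphism $\tilde{f}\colon \tilde{X}_m\to L\oplus TX'_0$ you can conclude $d_{\text{int}}(L\oplus TX'_0,\tilde{X}_m)\leq 6c$ directly by Lemma~2.85 of \cite{BCZ:tpc} (exactly as in the proof of Proposition~\ref{prop:estimates-in-r}), without the detour through the $12c$-isomorphism in the opposite direction; but since this constant is absorbed by the rescaling anyway, your looser bound is harmless.
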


While the result above is established for small $r$, given that we can construct  the
Lefschetz fibration as appropriate, the same result can be inferred for all values of $r$.
 
 \begin{rem} \label{rem:compare_data} a. The advantages of the approximating data $(\Phi, \F_{\epsilon})$ from \S\ref{subsubsec:local_approx_d} are that, first, it is more intrinsic relative to the cotangent disk bundle $D^{\ast}N$ and, second, it does not involve retracts. To expand on the first point, 
 fix some $\epsilon>0$. The category $\msc_{p}$, and the object level part of the Yoneda embedding $\mathcal{Y}_{p}:\mathcal{L}ag^{(ex)}(D^{\ast}N) \to \Ob(\msc_{p})$ that form the family $\Phi$ are constructed - as in \S\ref{subsec:back} - for some admissible choice of perturbation data  $p$, 
 independently of $\epsilon$ and any other auxiliary construction. The dependence on $\epsilon$ only appears when we fix
 a set of fibers $\F_{\epsilon}$ and require that the associated modules be preserved  by the
 comparison maps $\mathcal{H}_{p,q}$, $p\preceq q$ - this is our assumption ($\ast$). 
 The proof of Theorem \ref{thm:nearby} shows that it is possible
 to find comparison maps preserving the finite family of modules associated to $\F_{\epsilon}$ but it is 
 not clear whether there is a choice of comparison maps that preserves each fiber module $F_{x}$, $\forall x\in N$  (see also Remark \ref{rem:dep_on_e}).  
 
 b. The approximating data $(\Phi',\F'_{\epsilon})$  described in the current subsection 
 is highly dependent on the choice of Lefschetz fibration $\bar{h}:E\to \mathbb{C}$ and also is only retract approximating.  On the other hand, the category
 $$\mathscr{D}_{p}(E)= PD \fuk(\lag^{(ex)}(E),p)$$ has the property that all  its objects can be written as finite
 iterated cones of Yoneda modules of closed Lagrangians, and as a result each $\hom_{\mathscr{D}_{p}(E)}(X,Y)$ is a 
 finite type persistence module (in the sense that the associated bar-code contains only a finite number of bars) for all $X,Y\in \Ob(\mathscr{D}_{p}(E))$. Moreover, the retract $\epsilon$-approximating family $\F'_{\epsilon}$ consists of the Yoneda modules of the Lagrangian spheres $ \hat{S}_{x_{i}}\subset E$.  
 By contrast, this may not be true for the elements of the family $\F_{\epsilon}$ which are defined as modules over  the category  $\fuk (\lag^{(ex)}(D^{\ast}N),p)$ corresponding to fibers. Moreover, for two fibers $F_{x_{1}}, F_{x_{2}}$,  $\hom_{\msc_{p}}(F_{x_{1}}, F_{x_{2}})$ is not identified with some variant of Floer
 homology $HF(F_{x_{1}}, F_{x_{2}})$ and, indeed, it is not necessarily of finite type.  \end{rem}


\newpage

\section{Abouzaid's splitting principle in the filtered case and approximability} \label{sec:split-app}
In this section we prove the second and third points in Theorem \ref{thmmain1}. To this aim, we establish a persistence version of the split-generation criterion due to Abouzaid \cite{Ab:geom-crit} (for the closed monotone case see \cite{she:fanofuk}). The key point is that the energy cost of reaching the unit in the quantum homology of the ambient symplectic manifold through the open-closed map can be directly interpreted in terms of  retract  approximability, as defined in \S\ref{subsubsec:approx_TPC}.  We provide two examples of computations, completing the proof of Theorem \ref{thmmain1}. 
Along the way,  we adjust to this persistence setting the various
ingredients in Abouzaid's result, namely the open-closed and closed open maps and their relation to a persistence version of Hochschild homology.


\subsection{The results}
Let $(X,\omega)$ be a monotone symplectic manifold. We work in the setting outlined in \S\ref{sb:fuk-mon}. We recall that given a Novikov element $\mathbf{d} \in \Lambda_0$, then $\lagmon$ is the collection of closed, graded and monotone Lagrangians $L = (\overline{L}, a_L, s_L)$ with Maslov-$2$ disk count equal to $\mathbf{d}$. We fix a choice of $\mathbf{d}\in \Lambda_0$ for the rest of this chapter. Recall from \S\ref{sb:sys-fuk} that we associated with $\lagmon$ a homotopy system of $A_\infty$-categories (see \S\ref{sbsb:homotopy-sys} for the definition) $\widehat{\fuk}(\lagmon)$ indexed by the space $\mathcal{P}$ of perturbation data, such that each filtered and strictly unital $A_\infty$-category $\fuk(\lagmon;p)$, $p\in \mathcal{P}$, has $\lagmon$ as set of objects. We will often abbreviate and write the system as $\widehat{\mathcal{A}}:=\widehat{\fuk}(\lagmon)$ and each component as 
$$\mathcal{A}_p:= \fuk(\lagmon;p)~.~$$ We omit the manifold $X$ from the notation as it is also fixed for the remainder of the section.

A subset $\mathcal{B}\subset \lagmon$ induces a full $A_\infty$-subcategory $\mathcal{B}_p$ of $\mathcal{A}_p$ for any perturbation datum $p\in \mathcal{P}$. This  ultimately leads to a subsystem $\widehat{\mathcal{B}}$ of $\widehat{\mathcal{A}}$. 

The following geometric criterion for retract approximability,
in the sense of Definition \ref{d:approx-sys-ret} with $\lagmon \subset \text{Obj}(\widehat{\mathcal{A}})$, is the main result 
of the section.

\begin{thm}\label{gio:wabomain}
	Let $\epsilon>0$. If there is a finite subset $\mathcal{F}_\epsilon\subset\lagmon$ such that the colimit persistence open-closed map $$\widehat{OC}\colon PHH_*(\widehat{\mathcal{F}_\epsilon})\to QH^{*+n}(X,\Lambda)_{\mathbf{d}}$$ satisfies $$i_{0,\epsilon}(u_{\mathbf{d}})\in \text{image}\left(PHH^\epsilon(\widehat{\mathcal{F}_\epsilon})\to QH^\epsilon(X,\Lambda)_{\mathbf{d}}\right)$$ where $u_\mathbf{d}\in QH^0(X,\Lambda)_\mathbf{d}$ is the projection of the quantum cohomology unit  $u\in QH(X,\Lambda)$ to $QH(X,\Lambda)_\mathbf{d}$ and $i_{0,\epsilon}$ is part of the structure maps of the quantum cohomology persistence module, then $\lagmon$ is $\frac{\epsilon}{2}$-retract-approximable by $\mathcal{F}_\epsilon$ in $PD(\widehat{\fuk}(\lagmon))$.
\end{thm}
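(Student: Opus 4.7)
For a perturbation datum $p \in \mathcal{P}$ with $\nu(p)$ small, each $L \in \lagmon$ gives a filtered Yoneda module $\mathcal{Y}(L)$ over $\mathcal{A}_p := \fuk(\lagmon;p)$. Restricting along the full $A_\infty$-inclusion $\iota\colon (\mathcal{F}_\epsilon)_p \hookrightarrow \mathcal{A}_p$ and then inducing back via the two-sided bar construction yields
$$R_p(L) := \mathcal{Y}(L)|_{(\mathcal{F}_\epsilon)_p} \otimes_{(\mathcal{F}_\epsilon)_p} (\mathcal{F}_\epsilon)_p,$$
an object of $\Ob\langle (\mathcal{F}_\epsilon)_p\rangle^\Delta \subset PD(\mathcal{A}_p)$, together with a filtration-preserving evaluation $A_\infty$-module map $\mu_L \colon R_p(L) \to \mathcal{Y}(L)$. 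The entire argument reduces to quantifying, in terms of $\epsilon$, how close $\mu_L$ is to admitting a section in the persistence sense.

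\textbf{Paragraph 2.} The persistence Hochschild package will control the failure of $\mu_L$ to be invertible. Following Abouzaid's strategy \cite{Ab:geom-crit}, a persistence version of the Cardy relation should produce, for any cycle $\xi \in PHH^\epsilon(\widehat{\mathcal{F}_\epsilon})$ with $\widehat{OC}(\xi) = i_{0,\epsilon}(u_\mathbf{d})$, a chain-level factorization in $CF(L,L;p)$ of the form
$$\widehat{OC}(\xi) \cdot e_L \;=\; \mu_L \circ s_\xi,$$
where $\cdot$ denotes the module-action of $QH(X,\Lambda)_\mathbf{d}$ on $HF(L,L)$ and $s_\xi\colon \Sigma^\epsilon \mathcal{Y}(L) \to R_p(L)$ is a section built canonically from $\xi$. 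The hypothesis of the theorem guarantees the existence of such a $\xi$. Because $L \in \lagmon$ has Maslov-$2$ disk count $\mathbf{d}$, the module action of $u_\mathbf{d}$ on $HF(L,L)$ is the identity, so $\widehat{OC}(\xi) \cdot e_L = i_{0,\epsilon}(e_L)$; the identity above thus expresses the persistence identity $\eta^L_\epsilon\colon \Sigma^\epsilon L \to L$ as a composition factoring through $R_p(L)$, up to a $c_\epsilon\nu(p)$-error absorbed in the colimit system of Definition \ref{d:approx-sys-ret}.

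\textbf{Paragraph 3.} Splitting the total shift $\epsilon$ symmetrically gives $\eta^L_\epsilon = \psi \circ \Sigma^{\epsilon/2}\varphi$ with $\varphi\colon \Sigma^{\epsilon/2}L \to R_p(L)$ and $\psi\colon \Sigma^{\epsilon/2}R_p(L) \to L$, using only the shift functors available in the TPC; this is exactly the defining condition in \eqref{eq:d-rint}, hence $\dret(L, R_p(L)) \leq \epsilon/2$. Since $R_p(L) \in \Ob\langle (\mathcal{F}_\epsilon)_p\rangle^\Delta$ for every sufficiently small $p$, the conclusion of Definition \ref{d:approx-sys-ret} is satisfied with approximation parameter $\epsilon/2$ and family $\mathcal{F}_\epsilon$, and any chain-level error of size $O(\nu(p))$ is absorbed into the constant $c_{\epsilon/2}$. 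The principal technical obstacle is not the categorical deduction above but the construction of the persistence $\widehat{OC}$, $CO$ and Hochschild maps together with the chain-level filtered Cardy relation used in Paragraph 2: the moduli of annuli (and their degenerations to a pair of disks) must be equipped with perturbation data drawn from $\mathcal{P}$ so that the induced maps are either strictly filtered or LD-functors with controlled deviation in the sense of \S\ref{sbsb:fil-func}, and so that the chain homotopy witnessing Cardy shifts filtrations by at most $O(\nu(p))$. Once this filtered framework is in place, the argument above is a persistence transcription of Abouzaid's classical proof.
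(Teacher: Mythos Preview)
Your approach is essentially the paper's: your $R_p(L)$ is the bar-type module the paper calls $\mathcal{U}_K^N$ (the image under the extended Yoneda embedding of the iterated twist $T_{\vec{L}}K$), your $\mu_L$ is its full-contraction map $\xi$, and your Paragraph~2 is the filtered Cardy relation of Proposition~\ref{gio:propabo}, which together with the unitality of $CO$ feeds the hypothesis into the algebraic criterion of Proposition~\ref{gio:algfilabo}.

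Two points deserve correction. First, the ``symmetric splitting'' in Paragraph~3 is not available as written: from $s_\xi\colon \Sigma^{\epsilon}L\to R_p(L)$ (shift $\epsilon$) and $\mu_L\colon R_p(L)\to L$ (shift $0$) you only get the asymmetric bound $d'_{\text{r-int}}(L,R_p(L))\le \epsilon/2$, hence $\sdret(L,R_p(L))\le \epsilon/2$; the maps $\varphi,\psi$ realizing $\dret\le\epsilon/2$ exist only after replacing $R_p(L)$ by $\Sigma^{-\epsilon/2}R_p(L)$. Since $\langle(\mathcal F_\epsilon)_p\rangle^{\Delta}$ is shift-closed this is harmless for the conclusion, but it is exactly the content of the lemma in~\S\ref{WRSG} equating $\sdret$ with $\bar d'_{\text{r-int}}$, not a free symmetrization.

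Second, the passage from the colimit hypothesis on $\widehat{OC}$ to a bound $R(u_{\mathbf d},OC_p^{\mathcal F_\epsilon})\le \epsilon + c_\epsilon\nu(p)$ has real content that your sketch hides. A class in $PHH^{\epsilon}(\widehat{\mathcal F_\epsilon})$ must be represented at some finite $p$, and pushing it along the comparison maps $\mathcal H_{q,p}$ (which are only LD with rate $2|\nu(p)-\nu(q)|$) shifts filtration by an amount proportional to the \emph{length} of the Hochschild chain. The paper handles this via the length filtration: one fixes $N_{\mathcal F_\epsilon}$ so that the relevant class lives in $F^{N_{\mathcal F_\epsilon}}PHH$, shows this $N$ is $p$-independent, and obtains $R(u_{\mathbf d},OC_p^{\mathcal F_\epsilon})\le \epsilon + 2N_{\mathcal F_\epsilon}\nu(p)$ via diagram~\eqref{gio:colimitoc2}. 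Your phrase ``absorbed in the colimit system'' is correct only once this uniform length bound is established.
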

\noindent The open closed map $OC$, the sense in which we take a colimit, and the various other notions appearing in the statement of Theorem \ref{gio:wabomain} and its proof, among them, the persistence Hochschild homology $PHH( - )$, will be made explicit below.
We now restate the points $(ii)$ and $(iii)$ of Theorem \ref{thmmain1} as corollaries of Theorem \ref{gio:wabomain}.
\begin{cor}\label{S2T2}
The classes $\lag^{\text{(mon,} \mathbf{0} \text{)}}(S^2)$ and $\lagwex(\mathbb{T}^2)$ are TPC retract approximable.
\end{cor}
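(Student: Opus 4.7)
The plan is to verify the hypothesis of Theorem~\ref{gio:wabomain} in each case: for every $\epsilon > 0$ I will exhibit a finite family $\mathcal{F}_{\epsilon}$ of Lagrangians in the relevant class such that the unit $u_{\mathbf{d}}$ lies in the image of the persistence open--closed map $\widehat{OC}$ at filtration level at most $\epsilon$. Once this is established, Theorem~\ref{gio:wabomain} immediately yields $\frac{\epsilon}{2}$-retract approximability, and letting $\epsilon \to 0$ gives TPC retract approximability.

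In the case of $\lag^{\text{(mon,} \mathbf{0} \text{)}}(S^{2})$ I would fix two antipodal points $N, S \in S^{2}$ and take $\mathcal{F}_{\epsilon}$ to be a family of $k(\epsilon)$ evenly spaced meridians, i.e.\ great circles through $N$ and $S$. Each such great circle is an equator, monotone with Maslov--$2$ disk count $\mathbf{d} \equiv 0 \pmod{2}$ since the two bounding hemispheres each contribute one disk through a generic point. Any two meridians in $\mathcal{F}_{\epsilon}$ intersect transversely after a $C^{2}$-small Hamiltonian perturbation localized near the poles, at two points close to $N$ and $S$, and the thin lunes bounded by consecutive meridians have area bounded by $\epsilon / C$ for a universal constant $C$. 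The classical Hochschild cycle constructed from a cyclic sequence of such intersection points maps under $OC$ to the unit of the summand $QH(S^{2}, \Lambda)_{\mathbf{0}}$, and its filtration is controlled by the maximal lune area; passing to $\widehat{OC}$, where $\nu(p)$ can be made arbitrarily small inside the defining colimit, delivers the required bound.

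For $\lagwex(\mathbb{T}^{2})$ I would take $\mathcal{F}_{\epsilon}$ to consist of one fixed latitude together with $k(\epsilon)$ evenly spaced longitudes. A latitude and a longitude intersect transversely in a single point, and the rectangles cut out in the fundamental domain by consecutive longitudes all have area $O(1/k)$. The standard computation of the Fukaya category of $\mathbb{T}^{2}$ generated by two transverse non-contractible circles yields an explicit Hochschild cycle on $\mathcal{F}_{\epsilon}$ whose image under $OC$ is the unit of $QH(\mathbb{T}^{2}, \Lambda) = H^{*}(\mathbb{T}^{2}, \Lambda)$; its filtration is again bounded by the maximal rectangle area, which is $\leq \epsilon$ for $k(\epsilon)$ sufficiently large.

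The main obstacle is quantitative rather than topological. In both cases the identification of $u_{\mathbf{d}}$ with a class in the image of $OC$ is classical, but matching the filtration of an explicit Hochschild preimage to the geometric area scale, uniformly over the system of filtered Fukaya categories as $\nu(p) \to 0$, requires careful bookkeeping. This is precisely what the systems-of-$A_{\infty}$-categories formalism of \S\ref{s:sys-tpc} and the colimit definitions of $PHH(\widehat{\mathcal{F}_{\epsilon}})$ and of $\widehat{OC}$ are designed to accommodate, so that the area bounds on the lunes and rectangles translate cleanly into filtration bounds on elements of the colimit without being spoiled by the auxiliary perturbations.
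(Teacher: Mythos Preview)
Your overall strategy is exactly the one the paper follows: verify the hypothesis of Theorem~\ref{gio:wabomain} by producing, for each $\epsilon>0$, a finite family of straight Lagrangians together with an explicit Hochschild cycle whose $OC$-image is the unit up to a Novikov weight governed by the slice areas.

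For $S^2$ your description is close to the paper's Proposition~\ref{OCS2}, though two points deserve correction. First, no Hamiltonian perturbation near the poles is needed: two distinct great circles through $N$ and $S$ already intersect transversely in exactly those two points, and the paper uses these honest intersections. Second, you should name the cycle: with $n_i,s_i\in CF(L_i,L_{i+1})$ and $n_i',s_i'\in CF(L_{i+1},L_i)$ the north and south poles, the relevant length-two Hochschild cycle is $\vec{\gamma}=\sum_{i=1}^N n_i\otimes s_i'$. The verification that $d_{CC}(\vec{\gamma})=0$ uses a pairwise cancellation over $\mathbb{Z}_2$, and $OC(\vec{\gamma})=T^{1/2N}u_{S^2}$ by counting a single lune containing the chosen maximum of $f^{S^2}_p$. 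The filtration bound then reads $R(u,OC)\leq \tfrac{1}{2N}+2\nu(p)$, which via Corollary~\ref{gio:keycor} gives accuracy $\tfrac{1}{4N}+2\nu(p)$.

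For $\mathbb{T}^2$ there is a genuine gap. There is no ``standard'' length-two cycle on one latitude and one longitude hitting the unit: the linear $OC$ on $CF(L,L)$ misses it for degree reasons. The paper's construction (Proposition~\ref{prop235}) uses the family $\{L_y,L_1,\dots,L_N\}$ and length-four tensors $\gamma^j=a^j_{xy}\otimes a^{j+1}_{yx}\otimes a^{j+1}_{xy}\otimes a^j_{yx}$, where the $a$'s are the unique intersection points. The naive sum $\sum_j\gamma^j$ is \emph{not} a Hochschild cycle; one must add correction terms of the form $\tilde{q}^h_{1/N}(T)\,e_j\otimes a^j_{xy}\otimes a^j_{yx}$ involving the strict units and a theta-like Novikov series to kill the residual boundary. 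Only after this correction does one obtain a genuine cycle $\vec{\gamma}$ with $OC(\vec{\gamma})$ equal to a series in $T$ whose lowest exponent is $\tfrac{1}{N}$, yielding accuracy $\tfrac{1}{2N}+3\nu(p)$. Your proposal does not anticipate this correction, and without it the argument does not close.
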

\noindent The proof of Theorem \ref{gio:wabomain} is contained in \S\ref{gio:proofwabomain}. The proof of Corollary \ref{S2T2} occupies \S\ref{gio:proofS2T2}. The proof of the last two points of Theorem \ref{thmmain1} follows directly from this Corollary and is concluded in \S\ref{subsec:proof_ThmAii}.


\subsection{Proof of Theorem \ref{gio:wabomain}}\label{gio:proofwabomain}
We split the proof of Theorem \ref{gio:wabomain} into several steps. \S\ref{WRSG} contains a persistence version of Abouzaid's algebraic split-generation criterion. The main result, in Proposition \ref{gio:algfilabo}, shows that retract approximability 
is naturally present in that context. \S\ref{gio:ffuk} recalls some of the main technical ingredients in the definition of the filtered Fukaya categories $\fuk(\lagmon;p)$ following \cite{Amb:fil-fuk} in sufficient detail so that, in the following sections \ref{gio:coproduct}, \ref {AQH}, \ref{gio:oc} we are able to set-up in the persistence setting the main geometric constructions needed for the split-generation criterion. Finally, in \S\ref{gio:aboappr} we put the constructions together and prove Theorem \ref{gio:wabomain}.

\subsubsection{Retract approximability}\label{WRSG}
Let $\mathcal{A}$ be a filtered $A_{\infty}$-category over $\Lambda$, such as one of the filtered Fukaya categories $\A_{p}$ from the beginning of \S\ref{sec:split-app}, and
let $\mathcal{B}\subset \text{Obj}(\mathcal{A})$. We regard $\mathcal{B}$ as a full $A_\infty$-subcategory of $\mathcal{A}$. In contrast to our general conventions, we do not assume here that $\mathcal{B}$ is closed under shifts or translations. We define a filtered $(\mathcal{A},\mathcal{A})$-bimodule $\overline{\mathcal{B}}$, which will be called the $\mathcal{B}$-bar bimodule. For two objects, $A$ and $B$,  of $\mathcal{A}$, we set, with the notation in Appendix \ref{a:fil-ai},  $$\overline{\mathcal{B}}(A,B):=\mathcal{Y}^r(A)\otimes_\mathcal{B}\mathcal{Y}^l(B) = \bigoplus_{d\geq 0}\bigoplus_{L_0,\ldots, L_d\in  \text{Ob}(\mathcal{B})}\mathcal{A}(A,L_0,\ldots, L_d,B)~.~$$ 
Here $\mathcal{Y}^l$ and $\mathcal{Y}^r$ denote the left and right filtered Yoneda embeddings, defined in Appendix \ref{gio:fyon-def}, and we implicitly view $\mathcal{Y}^r(A)$ and $\mathcal{Y}^l(B)$ as $A_\infty$-modules over $\mathcal{B}$. The tensor product $\otimes_\mathcal{B}$ is the usual derived tensor product over $\mathcal{B}$. Note that $\mathcal{A}(A,B)$ is not a summand in $\overline{\mathcal{B}}(A,B)$. We will often write a pure tensor $\gamma_1\otimes a_1\otimes \cdots \otimes a_d\otimes \gamma_2 \in \mathcal{A}(A,L_0,\ldots, L_d,B)$ as $\vec{\gamma}$. The bar differential $\mu^{\text{bar}}_{0|1|0}$ (which we will also denote by $d_{\text{bar}}$) is defined via \begin{align*}\mu^{\text{bar}}_{0|1|0}(\vec{\gamma}):=& \sum_{i=0}^d\mu_{1+i}(\gamma_1,a_1,\ldots, a_i)\otimes a_{i+1}\otimes \cdots \otimes a_d\otimes \gamma_2\\ +& \sum_{i\leq j} \gamma_1\otimes a_1\otimes \cdots \otimes a_{i-1}\otimes \mu_{j-i}(a_i,\ldots, a_j)\otimes a_{j+1}\otimes \cdots\otimes a_d\otimes \gamma_2\\ +&\sum_{i=1}^{d+1} \gamma_1 \otimes a_1 \otimes \cdots \otimes a_{i-1}\otimes \mu_{d-i+2}(a_i,\dots, a_d,\gamma_2) \end{align*} while the higher operations $\mu^{\text{bar}}_{l|1|r}$ are defined via\begin{itemize}
	\item for $l\geq 1$ and $r=0$: \begin{align*}\mu^{\text{bar}}_{l|1|0}(x_1,\ldots, x_l, \vec{\gamma}) :=& \sum_{i=0}^{d}\mu_{l+i+1}(x_1,\ldots, x_l, \gamma_1,a_1,\ldots, a_i)\otimes a_{i+1}\otimes \cdots \otimes a_d\otimes \gamma_2 \end{align*}
	\item for $l=0$ and $r\geq 1$: \begin{align*} \mu^{\text{bar}}_{0|1|r}(\vec{\gamma},y_1,\ldots, y_r):= \sum_{i=1}^{d+1} \gamma_1\otimes a_1\otimes \cdots \otimes a_{i-1}\otimes \mu_{d+r-i+2}(a_i,\ldots, a_d,\gamma_2,y_1,\ldots, y_r) \end{align*}
	\item for $l,r\geq 1$: $\mu^{\text{bar}}_{l|1|r}=0$ \end{itemize}
It is straightforward to see that $\overline{\mathcal{B}}$ is filtered, since $\mathcal{A}$ is. 
There is an obvious length filtration \begin{equation}\label{eq:lengthfiltration}
F^N\overline{\mathcal{B}}(A,B)= \bigoplus_{d=0}^N\bigoplus_{L_0,\ldots, L_d\in \text{Ob}(\mathcal{B})}\mathcal{A}(A,L_0,\ldots, L_d,B), \quad N\geq 0,
\end{equation} which induces $A_\infty$-bimodules $F^N{\mathcal{B}}$. The $A_\infty$-structure maps on $\mathcal{A}$ induce a filtered morphism of $(\mathcal{A},\mathcal{A})$-bimodules $$\mu:=\mu^{\overline{\mathcal{B}}} \colon \overline{\mathcal{B}}\to \Delta_\mathcal{A}$$ in an obvious way, where $\Delta_\mathcal{A}$ stands for the diagonal bimodule of $\mathcal{A}$.

Let now $K\in \text{Obj}(\mathcal{A})$ and consider the filtered chain map 
\begin{equation}\label{eq:bimod_map1}
	\mu^{\overline{\mathcal{B}}} (K) :=\mu^{\overline{\mathcal{B}}}_{0|1|0}\colon \overline{\mathcal{B}}(K,K)\to \mathcal{A}(K,K) ~.~
\end{equation} 
Denote by $$[\mu^{\overline{\mathcal{B}}}(K)]\colon H(\overline{\mathcal{B}}(K,K))\to H(\mathcal{A}(K,K))$$ the morphism of persistence modules induced by $\mu^{\overline{\mathcal{B}}}(K).$

In the context above,  given an object $K$ of $\mathcal{A}$, we denote its strict unit by $e_K\in \mathcal{A}^{\leq 0 }(K,K)$ and by $[e_K]\in \hom_{PD(\mathcal{A})}^0(K,K)$ its homology class in the TPC $PD(\mathcal{A})$. 

Finally, before stating the main result, we need one more notion. 
Let $V$ and $W$ be persistence modules and $f\colon V\to W$ a persistence morphism. We will make use of a measurement that estimates the energy gap
separating an element in $W$ from the image of $f$. More precisely, for  $w\in W_{r}$, we put:
\begin{equation}\label{eq:en_gap}
	R(w,f):=\inf\left\{s\geq r \ \lvert \ \exists v\in V_s, \ f_s(v) = i_{r,s}^W(w)\right\}
\end{equation} where $i^W_{r,s}\colon W_r\to W_s$ denotes the persistence structure map for $W$. If $i_{r,s}^W(w)\notin \text{image}(f_s)$ for all $s\geq r$ we set $R(w,f)=\infty$. 

\ 

Below and in what follows we denote by $\mathcal{F}\mathbf{Ch}$ the filtered $dg$-category of filtered chain complexes over $\Lambda$, and by $\hfch$ its persistence homological category in cohomological degree $0$, which is a TPC (see \cite[Section 2.5.2]{BCZ:tpc}). We recall that in our conventions, a filtered chain complex over $\Lambda$ is a chain complex over $\Lambda$, filtered by an increasing sequence of $\Lambda_0$-subcomplexes indexed by the real line (see Appendix \ref{a:sb:fil-ai}).

\begin{prop}\label{gio:algfilabo}
	Let $K$ be an object of $\mathcal{A}$, $\mathcal{B}$ be a finite full $A_\infty$-subcategory of $\mathcal{A}$ and $\alpha\in \mathbb{R}_{\geq 0}$. The following statements are equivalent:
	\begin{enumerate}
		\item\label{(1)} The estimate $R([e_K],[\mu^{\overline{\mathcal{B}}}(K)])\leq \alpha$ holds.
		\item\label{(2)} The map $\mu^{\overline{\mathcal{B}}}(K)\in \text{hom}_{(\hfch)^0}(\overline{\mathcal{B}}(K,K), \mathcal{A}(K,K))$ is an $\alpha$-isomorphism in $\hfch$.
		\item\label{(3)} The object $K$ is $\frac{\alpha}{2}$-retract-approximable in $PD(\mathcal{A})$ by the family $\text{Obj}(\mathcal{B})$ in the sense of Definition \ref{def:simple_approx}.
	\end{enumerate}
	Moreover, each of the above statements holds for all objects $K$ of $\mathcal{A}$ if and only if the cone of $\mu^{\overline{\mathcal{B}}}(K)$ seen as a morphism of $(\mathcal{A},\mathcal{A})$-bimodules is $\alpha$-acyclic in $H^0(F\text{bimod}_{\mathcal{A}, \mathcal{B}})$.
\end{prop}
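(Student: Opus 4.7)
The plan is to prove the equivalences in the cyclic order $(1) \Rightarrow (2) \Rightarrow (3) \Rightarrow (1)$, and then derive the bimodule reformulation.

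For $(2) \Rightarrow (1)$, recall that being an $\alpha$-isomorphism in $\hfch$ means that the cone $C$ of $\mu^{\overline{\mathcal{B}}}(K)$ is $\alpha$-acyclic, i.e.\ $\eta_\alpha^C = 0$. Using the associated long exact sequence of persistence modules and chasing the image of $i_{0,\alpha}([e_K])\in H^\alpha(\mathcal{A}(K,K))$, one obtains a lift under $[\mu^{\overline{\mathcal{B}}}(K)]$, which gives $R([e_K],[\mu^{\overline{\mathcal{B}}}(K)])\leq \alpha$. For the converse $(1) \Rightarrow (2)$, the essential point is that $\mathcal{A}(K,K)$ is an $A_\infty$-algebra and $\overline{\mathcal{B}}(K,K)$ is naturally a left $A_\infty$-module over it via the first-factor action (using the operations $\mu_{l|1|0}^{\text{bar}}$), with $\mu^{\overline{\mathcal{B}}}(K)$ a module map. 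If a lift $\vec{\gamma}$ of $i_{0,\alpha}([e_K])$ exists, then for any $[w]\in H^r(\mathcal{A}(K,K))$ the element $[w\cdot \vec\gamma]\in H^{r+\alpha}(\overline{\mathcal{B}}(K,K))$ satisfies $[\mu^{\overline{\mathcal{B}}}(K)]([w\cdot\vec\gamma]) = i_{r,r+\alpha}([w])$ by unitality, so every element of $H(\mathcal{A}(K,K))$ is hit up to shift $\alpha$ by $[\mu^{\overline{\mathcal{B}}}(K)]$, and a symmetric argument using the right module structure shows that the cone is $\alpha$-acyclic.

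For $(2) \Rightarrow (3)$, the idea is to convert the bar resolution into a twisted complex over $\mathcal{B}$. Using the length filtration $F^N\overline{\mathcal{B}}(K,K)$ from \eqref{eq:lengthfiltration}, one interprets the filtered bimodule $\overline{\mathcal{B}}$ pulled back along $\mathcal{Y}$ as (the limit of) an explicit filtered twisted complex $Z^N$ over $\mathcal{B}$ built from the objects $L_i\in \Ob(\mathcal{B})$ appearing as middle factors; here we use that $\mathcal{B}$ is finite, so each length level is finite-dimensional over $\Lambda$. By~\cite[Section 5]{Ab:geom-crit}, the map $\mu^{\overline{\mathcal{B}}}(K)$ evaluated at $K$ lifts to a morphism $Z\to \mathcal{Y}(K)$ in $PD(\mathcal{A})$, and $Z$ lies in $\Ob\langle \Ob(\mathcal{B})\rangle^{\Delta}$. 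Condition $(2)$ for $K$ then yields via the algebra trick in Abouzaid's original argument a splitting of $\mathcal{Y}(K)$ off of $Z$ up to shift $\alpha/2$ on both sides, giving the $\alpha/2$-retract-approximation as required by Definition~\ref{def:simple_approx}. For the reverse $(3) \Rightarrow (2)$, given a retract up to shift $\alpha/2$ of $\mathcal{Y}(K)$ from some $Z\in \Ob\langle\Ob(\mathcal{B})\rangle^{\Delta}$, the map $\mu^{\overline{\mathcal{B}}}(Z)$ is a $0$-isomorphism (since $Z$ is already in the triangulated envelope, the bar resolution is split quasi-exact), and functoriality of the bar construction in the module variable together with retraction transports this to an $\alpha$-isomorphism $\mu^{\overline{\mathcal{B}}}(K)$ in $\hfch$.

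The main obstacle in this program is the precise persistence bookkeeping in the step $(2) \Rightarrow (3)$: Abouzaid's construction of the idempotent and its splitting is homologically well-understood in the unfiltered setting, but in our filtered setting one must verify that all auxiliary choices (the splittings, the idempotent constructed from the lift $\vec\gamma$ of $i_{0,\alpha}([e_K])$, and the passage from bar resolution to twisted complex) can be made so as to preserve filtrations up to the advertised shift $\alpha/2$. This requires careful analysis of how the deviation constants propagate through the tensor product and through the passage to idempotents, relying on strict unitality of $\mathcal{A}$ (and of the system $\widehat{\fuk}(\lagmon)$).

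Finally, for the statement that the conditions hold for all $K$ simultaneously, one reformulates: a chain map $\mu^{\overline{\mathcal{B}}}(K)$ between hom-complexes of $\mathcal{A}$ is an $\alpha$-iso in $\hfch$ for every $K$ exactly when the $(\mathcal{A},\mathcal{A})$-bimodule morphism $\mu^{\overline{\mathcal{B}}}\colon \overline{\mathcal{B}}\to \Delta_{\mathcal{A}}$ has $\alpha$-acyclic cone when evaluated at any $(K,K)$, which, since acyclicity of a bimodule is detected object-wise, is equivalent to the cone of $\mu^{\overline{\mathcal{B}}}$ being $\alpha$-acyclic in $H^0(F\text{bimod}_{\mathcal{A},\mathcal{B}})$.
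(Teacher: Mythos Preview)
Your proposal has the right overall shape but leaves the two substantive steps unsubstantiated.

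For $(1)\Rightarrow(2)$: showing that $[\mu^{\overline{\mathcal{B}}}(K)]$ admits a homological section $[w]\mapsto[w\cdot\vec\gamma]$ gives only surjectivity up to shift $\alpha$; $\alpha$-acyclicity of the cone also requires that kernel classes die after shift $\alpha$, and your ``symmetric argument using the right module structure'' does not supply this --- a second section is still only surjectivity data. The paper works at the chain level instead: it defines a product $\star$ on $C_\mathcal{B}(K,K)=\text{Cone}(\mu^{\overline{\mathcal{B}}}(K))$ satisfying the Leibniz rule with respect to $\mu_{0|1|0}^{C_\mathcal{B}}$, and verifies that $H(\vec{x}):=\vec{x}\star(\vec{h}+a_K)$, where $\mu^{\overline{\mathcal{B}}}(\vec{h})=e_K+da_K$ with $\vec h\in\overline{\mathcal{B}}^{\leq\alpha}(K,K)$, is an explicit contracting homotopy of shift $\leq\alpha$. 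Your homological picture is a shadow of this chain-level computation, not a replacement for it.

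For the passage to (3): the step you yourself flag as ``the main obstacle'' is the heart of the proposition, and you do not carry it out; the appeal to an ``algebra trick'' and to idempotents is misplaced (no idempotent is constructed or split). The paper proves $(1)\Rightarrow(3)$ directly: for each $N$ it builds a concrete iterated cone $U^N_K\in\Ob\langle\Ob(\mathcal{B})\rangle^\Delta$ out of twistings $T_{\vec L}K$, and verifies by hand a filtered commutative square (explicit maps $\lambda,\overline\lambda,\xi$ and an explicit chain homotopy) identifying $[\mu^{\overline{\mathcal{B}}}(K)]$ with post-composition by $\xi$; this gives $\bar{d'}_{\text{r-int}}(K,U^N_K)\leq\tfrac12 R([e_K],[\mu^{\overline{\mathcal{B}}}(K)])$ immediately. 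The cycle is then closed via $(3)\Rightarrow(1)$ rather than your $(3)\Rightarrow(2)$: writing the approximating object as a filtered twisted complex $(\bigoplus_i\Sigma^{\beta_i}X_i[d_i],(q_{ij}))$ with retraction data $f=(f_i),g=(g_i)$, the strict-unit identity $e_K=\mu_2^{FTw}(f,g)=\sum\mu_{2+k}(f_i,q_{ii_1},\ldots,q_{i_kj},g_j)$ exhibits $[e_K]$ as the image of an explicit element of $\overline{\mathcal{B}}^{\leq\alpha}(K,K)$, with no functoriality detour.
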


The notion of $r$-isomorphism for $r\geq 0$ is introduced in \cite{BCZ:tpc} and is recalled in \S\ref{sbsb:r-iso}, and $r$-acyclicity is recalled in \S\ref{sbsb:acyclic}.

\begin{rem} 
	Note that by \cite[Lemma 2.85]{BCZ:tpc}, if one of the statements in Proposition \ref{gio:algfilabo} holds, then the interleaving distance in $\hfch$ between $\overline{\mathcal{B}}(K,K)$ and $\mathcal{A}(K,K)$ is $\leq \alpha$.
\end{rem}
Before proving Proposition \ref{gio:algfilabo}, we start with two simple auxiliary results that will be used in the proof.
Let $\mathcal{C}$ be a PC, and consider objects $R$ and $X$ of $\mathcal{C}$. The measurement $\dret(R,X)$ has been defined in (\ref{eq:d-rint}). We define  a similar measurement $d'_{\text{r-int}}$ via \begin{align*}
	d'_{\text{r-int}}(R,X):=\frac{1}{2}\inf\Bigl\{r_1+r_2\bigm|&\, r_1,r_2\geq 0, \, \exists \, \varphi: \Sigma^{r_1}R \longrightarrow X,  \, \exists
	\, \psi: \Sigma^{r_2}X
	\longrightarrow R \\ &
	\;\; \text{such that} \; \psi \circ \Sigma^{r_2} \varphi = \eta_{r_1+r_2}^R
	\Bigr\}.
\end{align*} Of course we have $$d'_{\text{r-int}}(R,X) \leq \dret(R,X)\leq2 d'_{\text{r-int}}(R,X).$$

In the proof of Proposition \ref{gio:algfilabo} it is more convenient to use  $d'_{\text{r-int}}$ instead of $d_{\text{r-int}}$.
This is possible because their shift-invariant versions agree:
\begin{lem} For any objects $R$ and $X$ of $\mathcal{C}$ we have
	$$\sdret(R,X) = \bar{d'}_{\text{r-int}}(R,X)$$ where $\bar{d'}_{\text{r-int}}$ is defined as in (\ref{eq:sdint}) but with $\sdret$ replaced by $ d'_{\text{r-int}}(R,X)$.
\end{lem}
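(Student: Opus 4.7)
The inequality $\bar{d'}_{\text{r-int}}(R,X) \leq \sdret(R,X)$ is immediate, since $d'_{\text{r-int}} \leq \dret$ pointwise (take $r_1 = r_2 = r$ in the definition of $d'_{\text{r-int}}$) and taking the infimum over all pairs of shifts preserves this inequality. So the content of the lemma is the reverse inequality $\sdret(R,X) \leq \bar{d'}_{\text{r-int}}(R,X)$.

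The idea is to \emph{symmetrize} asymmetric data for $d'_{\text{r-int}}$ by absorbing the asymmetry into the shift functor. Suppose that $d'_{\text{r-int}}(R,X') < \tau$ for some object $X'$, witnessed by non-negative $r_1, r_2$ with $r_1 + r_2 < 2\tau$ and morphisms $\varphi \colon \Sigma^{r_1} R \to X'$ and $\psi \colon \Sigma^{r_2} X' \to R$ satisfying $\psi \circ \Sigma^{r_2} \varphi = \eta^R_{r_1+r_2}$. Set $u := (r_1 - r_2)/2 \in \mathbb{R}$ (which may be negative) and $Y := \Sigma^{u} R$. I plan to produce, from the data above, morphisms $\varphi' \colon \Sigma^{(r_1+r_2)/2} Y \to X'$ and $\psi' \colon \Sigma^{(r_1+r_2)/2} X' \to Y$ such that $\psi' \circ \Sigma^{(r_1+r_2)/2} \varphi' = \eta^Y_{r_1+r_2}$, proving that $\dret(Y, X') \leq (r_1+r_2)/2$.

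The construction is forced by bookkeeping of shifts: using the identity $\Sigma^\tau Y = \Sigma^{\tau + u} R = \Sigma^{r_1} R$, the morphism $\varphi$ itself serves as $\varphi'$, while $\psi' := \Sigma^{u} \psi$ defines a morphism $\Sigma^{\tau} X' = \Sigma^{u + r_2} X' \to \Sigma^{u} R = Y$. The required composition identity then follows from applying $\Sigma^{u}$ to the hypothesis $\psi \circ \Sigma^{r_2} \varphi = \eta^R_{r_1+r_2}$ and using naturality of the shift functors together with the defining equation $\Sigma^{u}(\eta^R_{r_1+r_2}) = \eta^Y_{r_1+r_2}$ (the $(\eta_{r,s})$ are natural with respect to all PC-functors, including the $\Sigma^{u}$). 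The only subtle point will be to keep track of which shift is being applied where; once this is checked, the symmetrization argument is routine.

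Combining this symmetrization with shift invariance of both distances under simultaneous shifts of both arguments, we obtain $\sdret(R,X') \leq d'_{\text{r-int}}(R,X')$ for every object $X'$. Applying this to $X' = \Sigma^t X$ and taking the infimum over $t \in \mathbb{R}$ (together with the shift invariance of $\sdret$) yields $\sdret(R,X) \leq \bar{d'}_{\text{r-int}}(R,X)$, completing the proof.
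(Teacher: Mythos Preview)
Your proof is correct and uses essentially the same symmetrization idea as the paper: absorb the asymmetry $r_1 - r_2$ into a shift of one of the two objects by $(r_1-r_2)/2$. The paper shifts $X$ (setting $\alpha = (r_1-r_2)/2$ and working with $\Sigma^{-\alpha}X$), while you shift $R$ (setting $u = (r_1-r_2)/2$ and working with $Y = \Sigma^u R$); this is an immaterial difference, and your additional intermediate step $\sdret(R,X') \leq d'_{\text{r-int}}(R,X')$ is just a slightly more explicit packaging of the same argument.
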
 
\begin{proof}
	Assume $d'_{\text{r-int}}(R,X)<r$. Let $\varphi: \Sigma^{r_1}R \longrightarrow X$ and $\psi: \Sigma^{r_2}X
	\longrightarrow R$ such that $\psi \circ \Sigma^{r_2} \varphi = \eta_{r_1+r_2}^R$ and $r_1+r_2=2r$. Assume (without loss of generality) that $r_1\geq r_2$ and define $\alpha :=\frac{r_1-r_2}{2}$. Note that $r-\alpha = r_2$ Then $\Sigma^{-\alpha}\varphi\colon \Sigma^{r}R\to \Sigma^{-\alpha}X$ and $\psi\colon \Sigma^{-\alpha+r}X\to R$ show that $\sdret(R,X)<r$. The opposite inequality is obvious.
\end{proof}



	
	\begin{proof}[Proof of Proposition \ref{gio:algfilabo}]
		
		We start by assuming (\ref{(1)}) and proving (\ref{(3)}). We construct a TPC version of Diagram A.4 in \cite[Appendix A]{Ab:geom-crit}. We refer to this diagram as the (filtered) Abouzaid's algebraic split generation diagram. 
		
		Let $K$ be an object of $\mathcal{A}$ and $\vec{L}=(L_0,\ldots, L_d)$ a tuple of objects of the full subcategory $\mathcal{B}$ and $N\geq 1$ such that the unit $[e_K]$ lies in the image of $\mu^{\overline{\mathcal{B}}} =\mu^{\overline{\mathcal{B}}}(K)$ (we omit $K$ here to ease notation) restricted to $F^N\overline{\mathcal{B}}^{\leq \alpha}(K,K)$. Recall that here $F^N$  indicates the $N$th term in the length filtration of $\overline{\mathcal{B}}$ (as in \ref{eq:lengthfiltration}), while the exponent $\leq \alpha$ stands for the real filtration level.

		Consider the filtered twisted complex $$T_{\vec{L}}K:= T_{L_0}\cdots T_{L_d}K$$ obtained by composing twistings by objects from the tuple $\vec{L}$, as explained in \S\ref{filtwistings} and consider the inclusion $$i_{\vec{L},K}\colon K\to T_{\vec{L}}K$$ which is a composition of morphisms of twisted complexes induced by strict units (see \S\ref{filtwistings}). Let $N\geq 1$. We define $i^N_K$ to be the direct sum over all the maps $i_{\vec{L},K}$ where $\vec{L}$ varies over all tuples of objects of $\mathcal{B}$ of length $\leq N+1$.
		We define the filtered twisted complex $$U^N_K:=\text{Cone}(i^N_K)$$ and its image $$\mathcal{U}^N_K:=\widetilde{\mathcal{Y}}_{U^N_K}$$ under the filtered extended Yoneda embedding $\widetilde{\mathcal{Y}}\colon FTw\mathcal{A}\to F\md_\mathcal{A}$ constructed in \S\ref{extendedYoneda}. By construction, the underlying vector space of the chain complex of $\mathcal{U}^N_K$ applied to an object $Q$ of $\mathcal{A}$ has the form
		$$\bigoplus_{d\leq N}\bigoplus_{\vec{L}=(L_0,\ldots, L_d)\subset \mathcal{B}} \mathcal{A}(Q,L_0)\otimes \mathcal{A}(\vec{L})\otimes \mathcal{A}(L_d,K).$$
		It is an iterated filtered mapping cone built with objects in $\mathcal{B}$.
		
		
		We have the following result. 
		\begin{lem}\label{Hcommabouzlemma}
			There is a commutative persistence diagram
			\begin{equation} \label{Hcommabouz} \xymatrixcolsep{3pc}
				\xymatrixrowsep{3pc} \xymatrix{ H_*(F^N\overline{\mathcal{B}}(K,K))\ar[r]^-{\overline{\lambda}}
					\ar[d]_-{[\mu^{\overline{\mathcal{B}}}]} & \hom_{PD(\mathcal{A})}(K, U^N_K)
					\ar[d]^-{\xi\circ -}\\
					\hom_{PH(\mathcal{A})}(K,K) \ar[r]^-{\lambda} & \hom_{PD(\mathcal{A})}(K, K)}
			\end{equation}
			where $PH(\mathcal{A})$ denotes the persistence homological category of $\mathcal{A}$  (\S\ref{sbsb:fai}) and $PD(\mathcal{A})$ its persistence derived category constructed using $A_\infty$-modules (\S\ref{sbsb:fmod}).
		\end{lem}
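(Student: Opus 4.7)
The plan is to follow Abouzaid's argument from the appendix of \cite{Ab:geom-crit} while carrying out every construction inside the filtered $A_{\infty}$-category $FTw\mathcal{A}$ (see \S\ref{sbsb:ftw}), so that each morphism in the diagram automatically lies at a prescribed filtration level and descends to a persistence-module morphism. Throughout, I would use the filtered extended Yoneda functor $\widetilde{\mathcal{Y}}\colon FTw\mathcal{A}\to F\md_{\mathcal{A}}$ constructed in \S\ref{extendedYoneda} to translate between twisted complexes and modules, noting that both the summand inclusions $T_{\vec{L}}K\hookrightarrow \bigoplus_{\vec{L}'}T_{\vec{L}'}K$ and the cone-projection to $U^N_K$ are filtered morphisms of filtration level $\leq 0$.

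First I would define $\overline{\lambda}$ on the chain level. Given a pure tensor $\vec{\gamma}=\gamma_1\otimes a_1\otimes\cdots\otimes a_d\otimes\gamma_2\in \mathcal{A}(K,L_0,\ldots,L_d,K)$ with $d\leq N$, I would view $\vec{\gamma}$ as specifying a morphism of filtered twisted complexes $K\to T_{\vec{L}}K$ by assembling its components along the obvious multi-index using the direct-sum description of the underlying object of $T_{\vec{L}}K$ and the filtered $A_{\infty}$-operations of $\mathcal{A}$. Post-composition with the inclusion $T_{\vec{L}}K\to \bigoplus T_{\vec{L}'}K$ and with the natural map into $U^N_K=\textnormal{Cone}(i^N_K)$ produces a morphism in $\hom_{FTw\mathcal{A}}(K,U^N_K)$; applying $\widetilde{\mathcal{Y}}$ and passing to persistence homology yields the desired element of $\hom_{PD(\mathcal{A})}(K,U^N_K)$. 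Since each $\gamma_i$ and $a_i$ sits at a specific filtration level and the filtered $A_{\infty}$-operations add filtration levels, $\overline{\lambda}$ is a morphism of persistence modules.

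Next I would identify $\xi\colon U^N_K\to K$ as the Abouzaid evaluation morphism associated with the defining cone. Concretely, $\xi$ is obtained from the natural connecting map of the exact triangle $K\xrightarrow{i^N_K}\bigoplus_{\vec{L}} T_{\vec{L}}K\to U^N_K$ combined with the "bar resolution'' structure on $U^N_K$; equivalently, by unraveling the twisted-complex structure of $U^N_K$ one obtains on each summand a map induced by the filtered $A_{\infty}$-operations $\mu_{d+2}$ of $\mathcal{A}$, fitting together into a closed morphism into $K$ precisely because $i^N_K$ is built from strict units. The commutativity of \eqref{Hcommabouz} then reduces to the identity
\[
\xi\circ\overline{\lambda}(\vec{\gamma})\;=\;\mu^{\overline{\mathcal{B}}}_{0|1|0}(\vec{\gamma})
\]
in $\hom_{\mathcal{A}}(K,K)$, which holds on the nose by construction: both sides are the alternating sum of $A_{\infty}$-operations on the tensor $\gamma_1\otimes a_1\otimes\cdots\otimes a_d\otimes\gamma_2$ prescribed by the differential of the bar bimodule, and the Yoneda-theoretic map $\lambda$ is the identity on the corresponding unreduced bar element. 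This is exactly Abouzaid's diagram A.4 of \cite{Ab:geom-crit}, transported to $FTw\mathcal{A}$.

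The main obstacle is not the chain-level algebra (which is formally identical to the unfiltered case) but the filtered bookkeeping: one must check that the inclusions $i_{\vec{L},K}$, the cone $U^N_K$, the filtered extended Yoneda $\widetilde{\mathcal{Y}}$, the higher $A_{\infty}$-homotopies implementing the commutativity, and in particular the chain representative of $\xi$, all preserve filtration levels; equivalently, one must check that no part of the construction shifts filtrations. This follows from the filtered versions of twisted complexes and Yoneda developed in \S\ref{sbsb:ftw} and Appendix~\ref{a:ftc}, together with the fact that the structure maps defining $U^N_K$ and $i^N_K$ use only strict units and filtered $A_{\infty}$-operations, but the verification has to be done summand by summand on the length filtration $F^N\overline{\mathcal{B}}$. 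Once this is carried out, the chain-level diagram immediately descends to the commutative diagram \eqref{Hcommabouz} of persistence modules.
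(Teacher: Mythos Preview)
Your approach is essentially the same as the paper's, but there is a genuine gap: the identity
\[
\xi\circ\overline{\lambda}(\vec{\gamma})\;=\;\lambda\circ\mu^{\overline{\mathcal{B}}}_{0|1|0}(\vec{\gamma})
\]
does \emph{not} hold on the nose at the chain level. If you unravel the definitions, $(\lambda\circ\mu^{\overline{\mathcal{B}}}(\vec{\gamma}))_{l|1}$ applied to $(x_1,\ldots,x_l,y)$ is the single term $\mu_{l+2}(x_1,\ldots,x_l,y,\mu_{d+2}(\gamma_1,a_1,\ldots,a_d,\gamma_2))$, whereas $(\mu_2^{\md}(\overline{\lambda}_{\vec{\gamma}},\xi))_{l|1}$ is a double sum of terms of the form $\mu_{j+d-i+2}(\ldots,\mu_{l-j+i+2}(\ldots),\ldots)$. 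These are two different pieces of the $A_\infty$-relation for $\mu_{l+d+3}$, and they only agree modulo the remaining terms of that relation.

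What is missing is an explicit chain homotopy $H\colon F^N\overline{\mathcal{B}}(K,K)\to \md_{\mathcal{A}}(\mathcal{Y}^l_K,\mathcal{Y}^l_K)$, given by the full contraction $(H_{\vec{\gamma}})_{l|1}(x_1,\ldots,x_l,y)=\mu_{l+d+3}(x_1,\ldots,x_l,y,\gamma_1,a_1,\ldots,a_d,\gamma_2)$. The computation that $\lambda\circ\mu^{\overline{\mathcal{B}}}+\mu_2^{\md}(\overline{\lambda},\xi)=\mu_1^{\md}(H_{\vec{\gamma}})+H_{d_{\text{bar}}(\vec{\gamma})}$ is precisely the $(l+d+2)$-th $A_\infty$-equation, and one must then check that $H$ itself preserves filtration (immediate, since $\mu_{l+d+3}$ does). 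Your final paragraph mentions ``higher $A_\infty$-homotopies implementing the commutativity'' in passing, but this contradicts your earlier claim that the square commutes strictly; the homotopy is not a technicality to be handled later but the heart of the argument.
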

				\begin{proof}[Proof of Lemma \ref{Hcommabouzlemma}]
			
			We define a chain level version of Diagram (\ref{Hcommabouz}).
			
			\begin{equation} \label{commabouz} \xymatrixcolsep{3pc}
				\xymatrixrowsep{3pc} \xymatrix{ F^N\overline{\mathcal{B}}(K,K) \ar[r]^-{\overline{\lambda}}
					\ar[d]_-{\mu^{\overline{\mathcal{B}}}} & \md_\mathcal{A}(\mathcal{Y}^l_K,\mathcal{U}^N_K)
					\ar[d]^-{\mu_2^{\md}(-,\xi)}\\
					\mathcal{A}(K,K) \ar[r]^-{\lambda} & \md_\mathcal{A}(\mathcal{Y}^l_K,\mathcal{Y}^l_K)}
			\end{equation}
			which will be commutative up to a filtered chain homotopy $H\colon F^N\overline{\mathcal{B}}(K,K)\to \md_\mathcal{A}(\mathcal{Y}^l_K,\mathcal{Y}^l_K)$.\\ We now define the maps appearing in Diagram (\ref{commabouz}) and the chain homotopy $H$. The map $\lambda$ is a variation of the Yoneda embedding and is defined in \S\ref{lambdamap}. The map $\mu_2^{\md}$ is the composition in the filtered $dg$-category of filtered $A_\infty$-modules over $\mathcal{A}$. We define $\overline{\lambda}, \xi$, and $H$ below. 
			
			Let $\vec{\gamma}=\gamma_1\otimes a_1\otimes \cdots \otimes a_d\otimes \gamma_2\in F^N\overline{\mathcal{B}}(K,K)$.\begin{enumerate}
				\item[i.] We define $$\overline{\lambda}(\vec{\gamma}) =: \overline{\lambda}_{\vec{\gamma}}\in  \md_\mathcal{A}(\mathcal{Y}^l_K,\mathcal{U}^N_K)$$ to be the morphism of $A_\infty$-modules $$\overline{\lambda}_{\vec{\gamma}} = \sum_{j=0}^d\mathcal{Y}_{j+1}(\gamma_1,a_1,\ldots,a_j)\otimes a_{j+1}\otimes \cdots \otimes a_d\otimes \gamma_2$$ where $\mathcal{Y}_{j+1}$ is the $(j+1)$-component of the (left) Yoneda embedding $\mathcal{Y}$ (see \S\ref{gio:fyon-def}). More explicitly, we can write the component $$(\overline{\lambda}_{\vec{\gamma}})_{l|1}\colon \mathcal{A}(X_0,X_1)\otimes \cdots \otimes \mathcal{A}(X_l,K)\to \mathcal{U}^N_K(X_0)$$ as $$(\overline{\lambda}_{\vec{\gamma}})_{l|1}(x_1,\ldots, x_l,y) = \sum_{j=0}^d\mu_{j+l+2}(x_1,\ldots, x_l,y,\gamma_1,a_1,\ldots,a_j)\otimes a_{j+1}\otimes \cdots \otimes a_d\otimes \gamma_2.$$
				\item[ii.] The map $\xi\in \md_\mathcal{A}(\mathcal{U}^N_K,\mathcal{Y}^l_K)$ is the full contraction map, that is, $$\xi_{l|1}\colon \mathcal{A}(X_0,X_1)\otimes \cdots \otimes \mathcal{A}(X_{l-1},X_l)\otimes \mathcal{U}^N_K(X_l)\to \mathcal{A}(X_0,K)$$ contracts an input coming from a summand of length $d\leq N$ in $\mathcal{U^N_K}$ via $\mu_{l+d+1}$.
				\item[iii.] The map $H\colon F^N\overline{\mathcal{B}}(K,K)\to \md_\mathcal{A}(\mathcal{Y}^l_K,\mathcal{Y}^l_K)$ is also induced by a full contraction, but in the following way: we define  $$H(\vec{\gamma})=: H_{\vec{\gamma}}\in \md_\mathcal{A}(\mathcal{Y}^l_K,\mathcal{Y}^l_K)$$ to be the morphism of $A_\infty$-modules with $l|1$ component given by $$(H_{\vec{\gamma}})_{l|1}(x_1,\ldots, x_l,y) := \mu_{l+d+3}(x_1,\ldots, x_l,y,\gamma_1,a_1\ldots, a_d,\gamma_2).$$
			\end{enumerate}		
			It is straightforward to see that $\xi$ is a filtered morphism of $A_\infty$-modules (that is, it lies in $\md_\mathcal{A}(\mathcal{U}^N_K,\mathcal{Y}^l_K)^{\leq 0}$), and that all the five maps appearing in Diagram (\ref{commabouz}) are filtered chain maps.\\
			We prove that the diagram commutes up to $H$, that is, that $$\lambda\circ \mu^{\overline{\mathcal{B}}}(\vec{\gamma})+\mu_2^\md(\overline{\lambda}_{\vec{\gamma}},\xi) = \mu_1^\md(H_{\vec{\gamma}})+H_{\mu_{0|1|0}^{bar}(\vec{\gamma})}$$ for all $\vec{\gamma}\in F^N\overline{\mathcal{B}}(K,K)$. Let $l\geq 0$ and $\vec{X}=(X_0,\ldots, X_l)$ be a tuple of objects of $\mathcal{A}$. Consider $x_1\otimes \cdots\otimes x_l\in \mathcal{A}(\vec{X})$,  $y\in \mathcal{A}(X_l,K)$ and $\vec{\gamma}\in F^N\overline{\mathcal{B}}(K,K)$ as above. We compute the four terms in the equation above, applied to a composable element $x_1\otimes \cdots \otimes x_l\otimes y$ as above: we have $$\begin{aligned}
					\mu_2^\md\left(\overline{\lambda}_{\vec{\gamma}},\xi\right)_{l|1}(x_1,\ldots, x_l,y) = &\sum_{j=0}^l\sum_{i=0}^d \mu_{j+d-i+2}\bigl(x_1,\ldots, x_j,\mu_{l-j+i+2}(x_{j+1},\ldots, x_l,y, \gamma_1,a_1,\ldots, a_i),\\& \qquad \qquad  a_{i+1},\ldots, a_d, \gamma_2\bigr)
				\end{aligned}$$		
and $$\left(\lambda\circ\mu^{\overline{\mathcal{B}}}(\vec{\gamma})\right)_{l|1}(x_1,\ldots, x_l,y):= \mu_{l+2}(x_1,\ldots, x_l,y,\mu_{d+2}(\gamma_1,a_1,\ldots, a_d,\gamma_2))$$
and \begin{align*}\left(H_{\mu^\text{bar}_{0|1|0}(\vec{\gamma})}\right)_{l|1}(x_1,\ldots, x_l,y)= & \sum_{i=0}^d\mu_{l+d-i+3}(x_1,\ldots, x_l,y,\mu_{1+i}(\gamma_1,a_1,\ldots, a_i), a_{i+1}, \ldots, a_d, \gamma_2)\\ +& \sum_{i\leq j} \mu_{l+i+d-j+3}(x_1,\ldots, x_l,y,\gamma_1, a_1, \ldots , a_{i-1}, \mu_{j-i+1}(a_i,\ldots, a_j),\\ & \qquad a_{j+1}, \ldots, a_d, \gamma_2)\\ +&\sum_{i=1}^{d+1} \mu_{l+i+2}(x_1,\ldots, x_l,y,\gamma_1, a_1 , \ldots , a_{i-1},\mu_{d-i+2}(a_i,\dots, a_d,\gamma_2)) \end{align*}
and \begin{align*}\mu_1^{\md}(H_{\vec{\gamma}})_{l|1}(x_1,\ldots, x_l,y)=&\sum_{i=0}^l \mu_{i+1}(x_1,\ldots, x_{i},(H_{\vec{\gamma}})_{l-i|1}(x_{i+1},\ldots, x_l,y))\\
					 & +\sum_{i=0}^l(H_{\vec{\gamma}})_{i|1}(x_1,\ldots, x_i,\mu_{l-i+1}(x_{i+1},\ldots, x_l,y))\\ &+ 
					\sum_{i=0}^l\sum_{j=i+1}^l (H_{\vec{\gamma}})_{l-j+i|1}(x_1,\ldots,x_{i},\mu_{j-i}(x_{i+1},\ldots, x_j),x_{j+1},\ldots, x_l,y)\\
					=& \sum_{i=0}^l \mu_{i+1}(x_1,\ldots, x_{i},\mu_{l-i+d+3}(x_{i+1},\ldots, x_l,y, \gamma_1,a_1\ldots, a_d,\gamma_2))\\
					& +\sum_{i=0}^l\mu_{i+d+3}(x_1,\ldots, x_i,\mu_{l-i+1}(x_{i+1},\ldots, x_l,y), \gamma_1,a_1\ldots, a_d,\gamma_2)\\ &+ 
					\sum_{i=0}^l\sum_{j=i+1}^l \mu_{l-j+i+d+3}(x_1,\ldots,x_{i},\mu_{j-i}(x_{i+1},\ldots, x_j),x_{j+1},\ldots, x_l,y,\\ & \qquad \qquad \quad \gamma_1,a_1\ldots, a_d,\gamma_2)
				\end{align*}
The sum of the above terms is the $(d+l+2)$-th term in the $A_\infty$-equations for $\mathcal{A}$, hence it vanishes. This proves the claim
		\end{proof}
		
		Consider the identity $id_K\in \hom_{PD(\mathcal{A})}^0(K,K)$. It is straightforward to see that, in the notation of Lemma \ref{Hcommabouzlemma} $$\bar{d'}_{\text{r-int}}(K,U^N_K)\leq \frac{1}{2}R(id_K,\xi\circ -).$$
		The fact that (\ref{(1)}) implies (\ref{(3)}) is then a consequence of the following simple result.
		\begin{lem}\label{ezlemma}
			Consider a commutative square of persistence modules and maps
			\begin{equation}
				\label{persistencediagram2} \xymatrixcolsep{3pc}
				\xymatrixrowsep{3pc} \xymatrix{ V' \ar[r]^-{h^V}
					\ar[d]_-{f'} & V
					\ar[d]^-{f}\\
					W' \ar[r]^-{h^W} & W}
			\end{equation}
			Then for any $w\in W_r$ we have $$R(w,f)\leq \inf\left\{R(w',f')\ \lvert \ r'\geq  r, \ w'\in W'_{r'} \text{ such that } h^W_{r'}(w')= i^W_{r,r'}(w)\right\}.$$
		\end{lem}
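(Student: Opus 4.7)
The plan is a direct diagram chase using the defining property of $R(-,-)$ from equation~(\ref{eq:en_gap}). I would first fix a representative on the right-hand side: choose $r' \geq r$ and $w' \in W'_{r'}$ with $h^W_{r'}(w') = i^W_{r,r'}(w)$, and set $\alpha := R(w', f')$. By definition of $R$ this means that for every $\varepsilon > 0$ there exists $s \in [r', \alpha + \varepsilon]$ and $v' \in V'_s$ with $f'_s(v') = i^{W'}_{r', s}(w')$.

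The key step will be to push this witness across the top edge of the square: define $v := h^V_s(v') \in V_s$. Then the commutativity of the square~\eqref{persistencediagram2}, together with naturality of the persistence structure maps (which is built into the hypothesis that $h^W$ is a persistence morphism), gives
\begin{equation*}
f_s(v) \;=\; f_s(h^V_s(v')) \;=\; h^W_s(f'_s(v')) \;=\; h^W_s(i^{W'}_{r',s}(w')) \;=\; i^W_{r',s}(h^W_{r'}(w')) \;=\; i^W_{r',s}(i^W_{r,r'}(w)) \;=\; i^W_{r,s}(w).
\end{equation*}
This shows that $v$ witnesses $R(w,f) \leq s \leq \alpha + \varepsilon = R(w',f') + \varepsilon$. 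Letting $\varepsilon \to 0$ gives $R(w,f) \leq R(w',f')$, and then taking the infimum over all admissible pairs $(r', w')$ yields the desired inequality.

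There is no real obstacle here: the only subtlety worth flagging is the case $R(w', f') = \infty$, for which the inequality is vacuous, and the degenerate case in which no $(r', w')$ satisfying $h^W_{r'}(w') = i^W_{r,r'}(w)$ exists, in which case the infimum is $+\infty$ by the convention $\inf \emptyset = \infty$ and the claim is again vacuous. In the application to Diagram~\eqref{Hcommabouz}, this Lemma is exactly what is needed to transport a witness of~(\ref{(1)}) (an element of $F^N \overline{\mathcal{B}}^{\leq \alpha}(K,K)$ hitting $[e_K]$) into a short retract $K \to U^N_K$, so no further algebraic content is required beyond this diagram chase.
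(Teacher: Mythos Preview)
Your proof is correct and is exactly the straightforward diagram chase one would expect; the paper does not spell out a proof of this lemma at all, treating it as a ``simple result'' and moving directly to its application.
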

		Indeed, since Diagram (\ref{Hcommabouz}) commutes and $\lambda([e_K])=id_K$, we get $$\bar{d'}_{\text{r-int}}(K,U^N_K)\leq \frac{1}{2} R(id_K,\xi\circ -)\leq \frac{1}{2}R\left([e_K],[\mu^{\overline{\mathcal{B}}}(K)]\right)\leq \frac{\alpha}{2}$$ which is point (\ref{(3)}) in the statement of the proposition.\\

		We prove that statement (\ref{(1)}) implies (\ref{(2)}). Consider the map $\mu^{\overline{\mathcal{B}}}\colon \overline{\mathcal{B}}\to \Delta_\mathcal{A}$ as a map of $(\mathcal{A},\mathcal{A})$-bimodules. Consider the $(\mathcal{A},\mathcal{A})$-bimodule $C_\mathcal{B}:=\text{Cone}(\mu^{\overline{\mathcal{B}}})$. Denote the $A_\infty$-bimodule maps of $C_\mathcal{B}$ as $\mu^{C_\mathcal{B}}_{l|1|r}$ for $l,r\geq 0$.
		Let $X_0,X_1$ and $X_2$ be objects of $\mathcal{A}$ and consider $\vec{a}=a_1\otimes\cdots \otimes a_d\in C_\mathcal{B}(X_0,X_1)$ and $\vec{b}=b_1\otimes \cdots \otimes b_n\in C_\mathcal{B}(X_1,X_2)$. We define $$\vec{a}\star \vec{b}:= \sum_{j=0}^{n-1}\sum_{k=0}^{d-1}a_1\otimes \cdots \otimes x_{a}\otimes \mu_{d-k+j}(a_{k+1},\ldots, a_d,b_1,\ldots, b_j)\otimes b_{j+1}\otimes \cdots \otimes b_n\in C_\mathcal{B}(X_0,X_2).$$ It is straightforward to see that $\star$ satisfies the Leibniz rule with respect to $\mu^{C_\mathcal{B}}_{0|1|0}$.

		We consider $C_\mathcal{B}(K,K)$, which is the cone of $\mu^{\overline{\mathcal{B}}}(K)\colon \overline{\mathcal{B}}(K,K)\to \mathcal{A}(K,K)$. Since $R(e_K,[\mu^{\overline{\mathcal{B}}}])\leq \alpha$, there exists a cycle $\vec{h}\in \overline{\mathcal{B}}^{\leq \alpha}(K,K)$ such that $\mu^{\overline{\mathcal{B}}}(\vec{h})$ is a representative of the unit of the object $K$ in $\mathcal{A}^{\leq 0}(K,K)$. More explicitly, we write $$\mu^{\overline{\mathcal{B}}}(\vec{h})= e_K + da_K$$ for some $a_K\in \mathcal{A}^{\leq 0}(K,K)$. We assume without loss of generality that $\vec{h}= h_1\otimes \cdots \otimes h_n$, $n\geq 1$ is a pure tensor.  We define a map $H\colon C_\mathcal{B}(K,K)\to C_\mathcal{B}(K,K)$ and prove it is a contracting homotopy. For a pure element $\vec{x} = x_1\otimes \cdots \otimes x_d\in \text{Cone}(\mu^{\overline{\mathcal{B}}})$, where $d\geq 1$, we define
		$$H(\vec{x}):=\vec{x}\star (\vec{h}+a_K)$$
		Then $$\begin{aligned}
			\mu^{C_\mathcal{B}}_{0|1|0}(H(\vec{x}) + H(\mu^{C_\mathcal{B}}_{0|1|0}\vec{x}) =& \mu^{C_\mathcal{B}}_{0|1|0}(\vec{x}\star (\vec{h}+a_K)) + \mu^{C_\mathcal{B}}_{0|1|0}(\vec{x})\star (\vec{h}+a_K)\\ =& \vec{x}\star\mu^{C_\mathcal{B}}_{0|1|0}(\vec{h}+a_K) = \vec{x}\star e_K = \vec{x}
		\end{aligned}$$ that is, $H$ is indeed a contracting homotopy. It follows that $C_\mathcal{B}(K,K)$ is $\alpha$-acyclic in $\hfch$, or, equivalently, that $\mu^{\mathcal{B}}(K)$ is an $\alpha$-isomorphism, i.e. statement (\ref{(2)}) in Proposition \ref{gio:algfilabo}.\\

		The fact that (\ref{(2)}) implies (\ref{(1)}) is quite straightforward: by Proposition 2.28(i) in \cite{BCZ:tpc}, there exists a right $\alpha$-inverse $$\psi\in \hom_{(\hfch)^0}(\Sigma^\alpha \mathcal{A}(K,K),\overline{\mathcal{B}}(K,K))$$ of $\mu^{\overline{\mathcal{B}}}$, that is a map such that $\mu^{\overline{\mathcal{B}}}\circ \psi = \eta^{\mathcal{A}(K,K)}_\alpha$, which in turn equals the identity seen as a map with shift $\alpha$, so that the conclusion follows.\\

		We prove that (\ref{(3)}) implies (\ref{(1)}). Suppose there is an iterated cone $C_K$ over $\mathcal{B}$ in $PD(\mathcal{A})^0$ such that $\sdret(K,C_K)\leq \alpha$. Pick maps $\varphi\in \hom_{PD(\mathcal{A})}^0(\Sigma^{r_1}K, C_K)$ and $\psi\in \hom_{PD(\mathcal{A})^0}(\Sigma^{r_2}C_K,K)$ such that $$r_1+r_2=\alpha, \quad \text{and} \quad \psi\circ\Sigma^{r_2}\varphi = \eta^K_{\alpha}.$$
		We represent $C_K$ by a filtered twisted complex $$\left(\bigoplus_{i=1}^n \Sigma^{\beta_i}X_i[d_i], (q_{ij})_{ij}\right)$$ where $X_i$ is an element of $\mathcal{B}$ for all $i$. Let $f\in FTw^{\leq0}(\Sigma^{r_1}K, C_K)$ be a representative of $\varphi$ and let $g\in FTw^{\leq0}(\Sigma^{r_2}C_K, K)$ be a representative of $\psi$. Then $f$ can be written as a matrix $(f_1,\ldots, f_n)$ where $f_i\in FTW^{\leq r_1-\beta_i}(K,X_i)[d_i]$. Similarly, $g$ can be written as a matrix $(g_1,\ldots, g_n)^T$ where $g_i\in FTW^{\leq r_2+\beta_i}(X_i,K)[-d_i]$. Since our $A_\infty$-category $\mathcal{A}$ is strictly unital, we have that $$e_K=\mu_2^{FTw}(f,g) = \sum_{i=1}^n\sum_{j=1}^n\sum_{i<i_1<\ldots<i_k<j}\mu_{2+k}(f_i, q_{ii_1},\ldots,q_{i_{k}j}, g_j).$$ Note that every tensor $f_i\otimes q_{ii_1}\otimes \ldots\otimes q_{i_{k}j}\otimes g_j$ is an element of $\overline{\mathcal{B}}^{\leq \alpha}(K,K)$. This proves (\ref{(1)}).
		
		This completes the proof of Proposition \ref{gio:algfilabo}.		
	\end{proof}

\subsubsection{Perturbation data leading to filtered $A_\infty$-Fukaya categories}\label{gio:ffuk}
We recall here the choice of perturbation data introduced in  \cite{Amb:fil-fuk}.  
		
The construction of the Fukaya category as it appears in Seidel's book, \cite{Se:book-fukaya-categ}, does not produce, in
general, a filtered category. Indeed, let $p$ be a perturbation datum in the sense of Seidel. Let $\vec{L}:=(L_0,\ldots,L_d)$ be a tuple of Lagrangians in $\lagmon$ such that any two subsequent Lagrangians (in cyclic order) are geometrically different. Let $\vec{\gamma}=(\gamma_1,\ldots, \gamma_d)\in CF(\vec{L};p)$ be a tuple made of generators of the Floer complexes (these are defined with respect to Hamiltonian perturbations prescribed by $p$) and let $\gamma_+\in CF(L_0,L_d;p)$ be another generator. Let $u$ be a perturbed $J$-holomorphic polygon, defined on some punctured disk $S$ with $d$ entries and one exit, that contributes to the $\gamma_+$-coefficient in the expression of $\mu_d(\gamma_1,\ldots,\gamma_d)$. By definition, this $u$ satisfies a perturbed pseudoholomorphic equation with a perturbation term corresponding to a $1$-form $K^p$ (see Section 8 in  \cite{Se:book-fukaya-categ}). Standard computations give us that the energy $E(u)$ of $u$ satisfies $$0\leq E(u)\leq \omega(u) + \sum_{i=1}^d\int_0^1H^{L_{i-1},L_i}_p\circ \gamma_i \ dt - \int_0^1H^{L_{0},L_d}_p\circ \gamma_+ \ dt + \int_{S}R^p\circ u $$ where $R^p \in \Omega^2(S, C^\infty(M))$ is the so-called curvature form of the perturbation datum $K^p$ induced by $p$ on the punctured disk $S$. In conformal coordinates $(s,t)$ on $S$ we can locally write $$K^p = Fds + G dt$$ and the curvature term has then the form $$R^p = \left(\partial_sG - \partial_t F +\omega\left(X^{F}, X^{G}\right)\right)ds\wedge dt$$ where $X^{F}, X^{G}$ are the Hamiltonian vector fields induced by $F$ and $G$. Writing $\mathbb{A}_p$ for the action functional defined with respect to $p$, we can rewrite the action-energy estimate above as $$\mathbb{A}_p(T^{\omega(u)}\gamma_+)\leq \sum_{i=1}^d\mathbb{A}_p(\gamma_i) + C^p(u)$$ where $C^p(u):= \int_{S}R^p\circ u$, is called the \textit{curvature term} of the polygon $u$ associated with the choice of the perturbation data $p$. We formalize the conclusion of this computation as follows.

\

\textbf{Negative curvature requirement:} 
\begin{eqnarray}
\nonumber \mathrm{To\ construct\ filtration\ preserving\ maps\ that\ are\ defined\ by\ means\ of\ counts\ } \\ 
\label{eq:neg_c_req} \mathrm{ \ of\  perturbed}\ J- \mathrm{\ holomorphic\ curves\  it\ is\ enough\ to\ show\ that\ we\ can\ pick} \\ 
\nonumber  \mathrm{ \ perturbation\ data\ such\ that\  the\ curvature\ terms\ admit\  a\ strictly\ negative\ } \\ 
\nonumber  \mathrm{\  constant\ as \  an\ } \  apriori  \mathrm{\ uniform\ upper\  bound. \hspace{1in}}
\end{eqnarray}

\

The requirement for a  \textit{stricly negative} upper bound is due to the fact that, for transversality reasons, we might need to introduce further, arbitrarily small perturbations. 
	
\
	
We now outline, following \cite{Amb:fil-fuk}, the construction of the perturbation data $\mathcal{P}$ 
satisfying (\ref{eq:neg_c_req}).  This construction depends on a parameter $\frac{1}{2}<\delta<1$
that we now fix. 

We describe the choices made for $p\in \mathcal{P}$ and for a tuple $\vec{L}$ of cyclically different geometric Lagrangians as above. 
To start,  \label{gio:FD} consider a couple of different Lagrangians $L_0$, $L_1$ in $\lagmon$.
The Hamiltonian perturbation function $H^{L_0,L_1}_p$ assigned to this couple, and part of the perturbation data
 $p$, is subject to the condition 
 $$\delta\nu(p)<H^{L_0,L_1}_p(t,x)<\nu(p)$$ 
 where $\nu$ is defined in equation (\ref{eq:nu-fuk}).  Consider now a triple $\vec{L}=(L_0,L_1,L_2)$ of Lagrangians in $\lagmon$. Let $S$ be the unique equivalence class of disks with three punctures, equipped with strip-like ends (two entries and one exit, see \cite[Section (8d)]{Se:book-fukaya-categ}, \cite[Section 2.1]{Amb:fil-fuk}). The Hamiltonian perturbation $K_p^S\in \Omega^1(S,C^{\infty}_c(X\times [0,1]))$ on $S$ is required to have a controlled behaviour on  the strip-like ends near punctures, and moreover it essentially vanishes away from the strip-like ends. More precisely: 
	        \begin{itemize}
		\item[-] on the $i$th negative  strip-like end, $i=1,2$, $K_p^S$ restricts to a monotone homotopy from the Hamiltonian Floer datum to the zero map, that is $$K_p^S = (1+\beta_p^{L_{i-1},L_i}(s+1))H^{L_{i-1},L_i}dt$$ with strip-like ends coordinate $(t,s)\in [0,1]\times (-\infty,0]$, where $\beta_p^{L_{i-1},L_i}:\mathbb R \to [0,1]$ is a smooth, increasing and surjective function with derivative supported in $[0,1]$;
		\item[-] on the positive strip-like end the Hamiltonian part $K_p^S$ restricts to a monotone homotopy from the zero map to the Hamiltonian Floer datum, that is $$K_p^S = \beta_p^{L_{0},L_2}(s)H^{L_{0},L_2}dt$$ with strip-like ends coordinate $(t,s)\in [0,1]\times [0,\infty)$, where $\beta_p^{L_{0},L_2}:\mathbb R \to [0,1]$ is a smooth, increasing and surjective function with derivative supported in $[0,1]$;
		\item[-] $K^S_p$ vanishes away from strip-like ends.
	\end{itemize} 
Assume for a moment tranversality for $K_p^S$ as above and consider a $K_p^S$-Floer polygon $u$ joining Floer generators $\gamma_1\in CF(L_0,L_1;p)$ and $\gamma_2\in CF(L_1,L_2;p)$ to $\gamma_+\in CF(L_0,L_2;p)$. Then the curvature term $C^p(S,u)$ of $u$ is supported in the strip-like ends only and its a simple computation to see that $$C^p(S,u)<\nu(p)(1-2\delta)<0$$ as, by assumption, $\frac{1}{2}<\delta<1$ . It follows that the map $\mu_2$ on $\vec{L}$ defined via $K_p^S$ is filtered. Perturbation data for longer tuples of cyclically different Lagrangians is obtained by piecing together lower order choices (see \cite[Section 3.2]{Amb:fil-fuk}). Transversality is achieved by the recipe in \cite{Se:book-fukaya-categ}. Given that $\nu(p)(1-2\delta)$ is negative (and not only non-positive), we can achieve both transversality and filtration preserving maps $\mu_d$ by considering small enough additional perturbations (see \cite[p. 479]{Amb:fil-fuk}). Extending this perturbation system  to arbitrary tuples of Lagrangian (and not only cyclically different ones) is taken care by using clusters type moduli spaces mixing Floer polygons and Morse-pearly trajectories (as, for instance, in the pearl homology construction in \cite{Bi-Co:rigidity}, or in \cite{Cor-La:Cluster-2}, \cite{she:fanofuk}). Full details of this construction appear in \cite{Amb:fil-fuk}. We remark that we work with the quantum model of $CF(L,L)$ in order to achieve (strict) units at vanishing filtration levels (cfr. \cite[Section 3.3]{Amb:fil-fuk}).

\begin{rem} \label{rem:const_ham} In the construction outlined above it is possible to require, in addition, that
 if $L_0\pitchfork L_1$ intersect transversally, then $H^{L_0,L_1}_p=H^{L_1,L_0}_p$ are constant. This is not a necessary assumption, but it makes computations simpler. Notice, however, that we cannot require Hamiltonian Floer data for transversally intersecting Lagrangians to vanish identically.
\end{rem}

	\subsubsection{The coproduct}\label{gio:coproduct}

	
	We recall that given an $A_\infty$-category $\mathcal{A}$ we denote by $\Delta_\mathcal{A}$ its diagonal $A_\infty$-bimodule. 
	We fix a finite family $\mathcal{B}$ of Lagrangians in $\lagmon$ such that any two distinct elements in $\mathcal{B}$ intersect transversally. We recall that $\mathcal{B}$ induces a finite full $A_\infty$-subcategory $\mathcal{B}_p$ of $\mathcal{A}_p=\fuk(\lagmon;p)$ for any choice of perturbation datum $p\in \mathcal{P}$, as explained on page \pageref{sec:split-app}. We also fix an object $K\in \lagmon$. 
	The aim here is to refine the perturbative choices described in the last section and show the next result.
	\begin{prop}\label{coproductmain} For any $p\in \mathcal{P}$ and any object $K\in \lagmon$ there is a non-empty space of perturbation data $\mathcal{P}(p,\mathcal{B},K)$ such that for each $q\in  \mathcal{P}(p,\mathcal{B},K)$ there exists a morphism of $(\mathcal{B}_p,\mathcal{B}_p)$-bimodules $$\Delta=\Delta^{\mathcal{B},K}_q\colon \Delta_{\mathcal{B}_p}\to \mathcal{Y}_{p}^l(K)\otimes \mathcal{Y}_{p}^r(K)$$ which shifts filtration by $\leq 2\nu(p)$ and extends the usual coproduct map. Moreover, in the case wher $\mathcal{B}=\{L\}$ consists of a single Lagrangian, $\Delta^\mathcal{B}_q$ is filtered.
	\end{prop}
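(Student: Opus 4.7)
\textbf{Proof plan for Proposition~\ref{coproductmain}.}

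First, I would specify the moduli problem underlying the coproduct. The component $\Delta^{\mathcal{B},K}_{q;l|1|r}$ is to be defined as a count of perturbed $J$-holomorphic maps from a disk $S$ with $l+r+3$ boundary marked points, whose boundary arcs are labelled cyclically by $(B_0,\dots,B_l,B'_0,\dots,B'_r,K)$ for tuples in $\mathcal{B}$. Of the marked points, $l+r+1$ are input punctures: one distinguished puncture between the arcs $B_l$ and $B'_0$ carries the diagonal input, while the remaining ones carry the bimodule-action morphisms. The other two are output punctures adjacent to the unique $K$-arc, carrying generators of $CF(B_0,K;q)$ and $CF(K,B'_r;q)$. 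The bimodule equations follow from the standard analysis of the codimension-one boundary strata of these moduli spaces; in the unperturbed/unfiltered limit, this moduli problem specializes to the usual coproduct map, so $\Delta^{\mathcal{B},K}_q$ is automatically an extension of the latter.

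Next, I would construct $\mathcal{P}(p,\mathcal{B},K)$ by extending Ambrosioni's recipe from~\S\ref{gio:ffuk} to these new moduli spaces. On every strip-like end joining two $\mathcal{B}$-objects we reuse the perturbation data from $p$; on the two new $K$-strip-like ends we prescribe Hamiltonian Floer data $H^{B_0,K}_q,\,H^{K,B'_r}_q \in \mathcal{P}$ (hence bounded between $\delta\nu(p)$ and $\nu(p)$), together with monotone interpolations to the trivial datum on the disk interior. The bulk perturbations are chosen inductively following \cite{Amb:fil-fuk} to achieve transversality while keeping the curvature $2$-form $R^q$ strictly controlled and supported near the punctures. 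Since all of this only constrains $q$ on a contractible non-empty set of auxiliary data, $\mathcal{P}(p,\mathcal{B},K)$ is non-empty.

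The filtration-shift estimate is then a direct consequence of the action-energy inequality: for any contributing polygon $u$, one has
$$0 \le E(u) \le \mathcal{A}(\gamma^{\text{in}}) - \mathcal{A}(\gamma^{\text{out}}_1) - \mathcal{A}(\gamma^{\text{out}}_2) + C^q(u),$$
where $C^q(u) = \int_{S} u^{*} R^q$ is the total curvature integral. The $\mathcal{B}$-end contributions to $C^q(u)$ are non-positive by construction of $\mathcal{P}$, whereas each of the two $K$-end contributions is bounded by $\nu(p)$, because the monotone interpolation of the Hamiltonian there integrates to at most $\sup H \le \nu(p)$. Summing gives $\mathcal{A}(\gamma^{\text{out}}_1) + \mathcal{A}(\gamma^{\text{out}}_2) \le \mathcal{A}(\gamma^{\text{in}}) + 2\nu(p)$, which is the claimed shift.

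For the special case $\mathcal{B} = \{L\}$, every boundary arc of a contributing disk is labelled either $L$ or $K$, and the two $K$-ends lie symmetrically on the two sides of the single $K$-arc. Exploiting Remark~\ref{rem:const_ham}, after a generic small perturbation ensuring $L \pitchfork K$, I would choose $H^{L,K}_q = H^{K,L}_q$ constant and prescribe the monotone interpolations at the two $K$-ends to be mirror images of each other. The resulting $K$-end contributions to $C^q(u)$ then cancel exactly, leaving only the non-positive $\mathcal{B}$-contribution, so $\Delta^{\{L\},K}_q$ is filtered. I expect the main obstacle to lie precisely here: preserving this exact cancellation under the arbitrarily small additional perturbations required for transversality. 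This should be handled by restricting to perturbations that respect the reflection symmetry between the two $K$-ends, and by absorbing any residual slack into the strictly negative margin $\nu(p)(2\delta-1)$ produced by the $\mathcal{B}$-corners.
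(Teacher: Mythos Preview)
Your moduli-space setup and the curvature argument giving the uniform $\leq 2\nu(p)$ shift follow the paper closely. One omission worth flagging: the paper works with the cluster model throughout (Morse--Floer hybrid configurations as in \cite{Amb:fil-fuk}), which is how repeated boundary labels---unavoidable for finite $\mathcal{B}$, and automatic when $\mathcal{B}=\{L\}$---are handled.

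The single-Lagrangian argument has a genuine gap. Both $K$-punctures are \emph{positive} (output) strip-like ends; on each the Hamiltonian interpolation runs from $0$ in the bulk to $H$ at the puncture, so each curvature contribution $\int\beta'(\sigma)\,H\,d\sigma\,d\tau=\int_0^1 H\,d\tau$ is strictly positive. Prescribing the two profiles as ``mirror images'' of one another does not reverse either sign: the contributions add, they do not cancel. Moreover, when $\mathcal{B}=\{L\}$ every input lies in $CF(L,L)$ and is Morse in the pearly model, so there are no Floer $\mathcal{B}$-corners at all and hence no ``strictly negative margin $\nu(p)(2\delta-1)$'' from which to borrow.

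The paper's route for $\mathcal{B}=\{L\}$ is different and rests precisely on the cluster model: because the input side is entirely Morse, the basic configuration for $\Delta_{0|1|0}$ is a pearly trajectory on $L$ landing on a $J$-disk with boundary on $L\cup K$ and two output corners, and the paper argues that this class of curves poses no filtration problem. To repair your argument you should work in the cluster framework and analyse the Morse-input bigon directly, rather than seeking a reflection symmetry between the two output ends.
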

	\noindent We will often drop $q$, $\mathcal{B}$ and $K$ from the notation of the coproduct and simply write it as $\Delta$ when there is no risk of confusion. The proof of Proposition \ref{coproductmain} will be sketched in the remaining of this subsection.
	\begin{rem} a. The perturbations in the class $\mathcal{P}(p,\mathcal{B},K)$ are specific to the definition of
	the coproduct map $\Delta$.

b. As it will be apparent from the proof, one could refine the choice of the Floer and perturbation data for elements in the finite family $\mathcal{B}$ and get a map of $A_\infty$-bimodules $\Delta$ which shifts filtration by an arbitrarily small amount. Since this is not particularly useful for the aim of this paper, we do not explain the details here.

c. We remark that the fact that $\Delta$ shifts filtration by $\leq 2\nu(p)$ is equivalent to saying that the map $\eta_{2\nu(p)}\circ \Delta\colon \Delta_{\mathcal{B}_p}\to \Sigma^{-2\nu(p)}(\mathcal{Y}_{p}^l(K)\otimes \mathcal{Y}_{p}^r(K))$ is filtered. Here $\Sigma^{-2\nu(p)}(\mathcal{Y}_{p}^l(K)\otimes \mathcal{Y}_{p}^r(K))$ is the filtered bimodule with $(\Sigma^{-2\nu(p)}(\mathcal{Y}_{p}^l(K)\otimes \mathcal{Y}_{p}^r(K)))^{\leq \alpha} = (\mathcal{Y}_{p}^l(K)\otimes \mathcal{Y}_{p}^r(K))^{\leq \alpha + 2\nu(p)}$ and $\eta_{2\nu(p)}$ is the structural map from \S\ref{sb:pc} (see also \cite[Section 2.2.3]{BCZ:tpc}).

\end{rem}

\begin{figure}
	\begin{subfigure}{.5\textwidth}
		\centering
		\includegraphics[width=.7\linewidth]{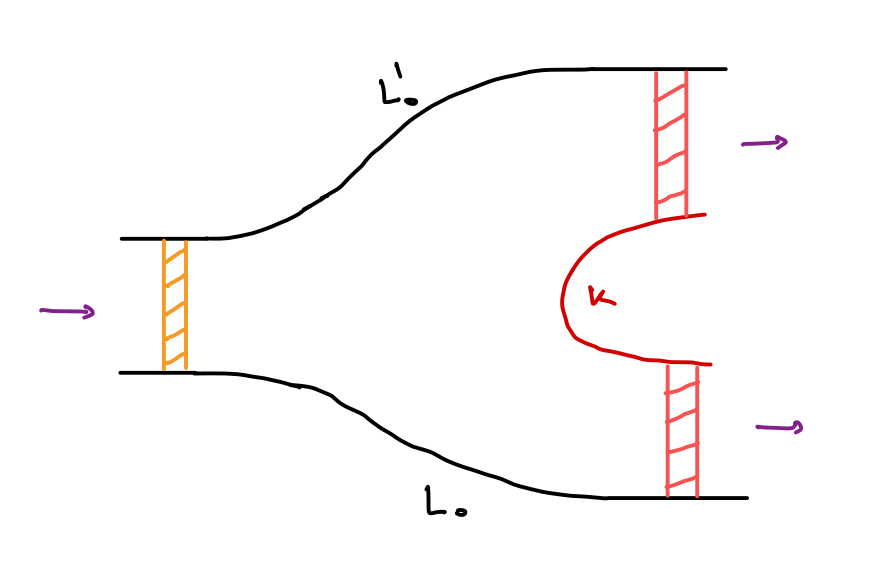}
		\caption{A possible configuration counted for $\Delta_{0|1|0}\colon CF(L_0,L_0')\to CF(L_0,K)\otimes CF(K,L_0').$}
		\label{fig:sfig1}
	\end{subfigure}\hspace{1em}
	\begin{subfigure}{.5\textwidth}
		\centering
		\includegraphics[width=.7\linewidth]{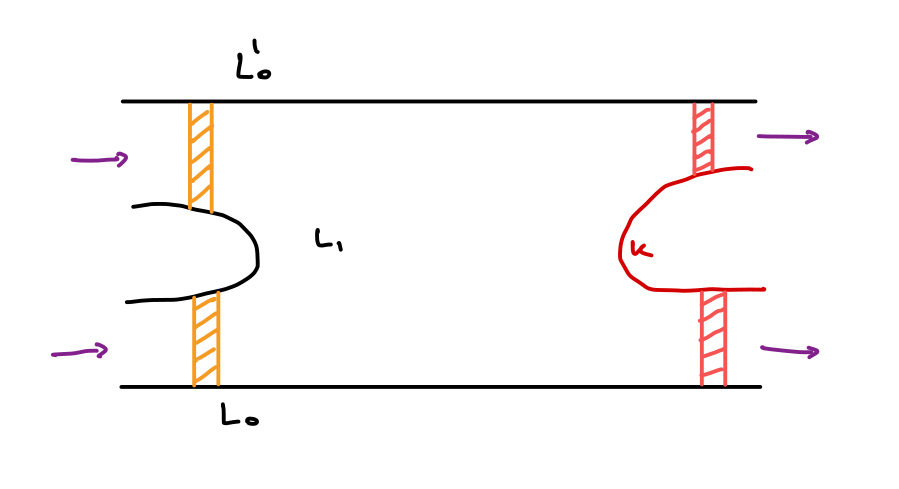}
		\caption{A possible configuration counted for $\Delta_{1|1|0}\colon CF(L_0,L_1)\otimes CF(L_1,L_0')\to CF(L_0,K)\otimes CF(K,L_0').$}
		\label{fig:sfig2}
	\end{subfigure}\hspace{1em}
	\begin{subfigure}{.5\textwidth}
		\centering
		\includegraphics[width=.7\linewidth]{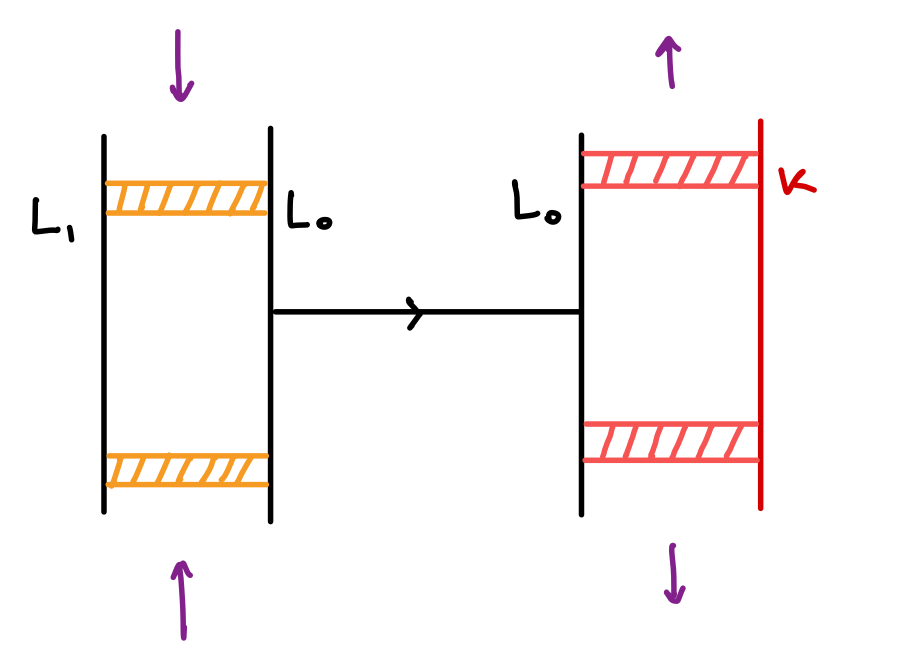}
		\caption{A possible configuration counted for $\Delta_{1|1|0}\colon CF(L_0,L_1)\otimes CF(L_1,L_0)\to CF(L_0,K)\otimes CF(K,L_0).$ The horizontal line segment represents a finite Morse flowline on $L_0$.}
		\label{fig:sfig2}
	\end{subfigure}
	\caption{Examples of configurations contributing to the coproduct $\Delta^{\mathcal{B},K}_q\colon \Delta_{\mathcal{B}_p}\to \mathcal{Y}_{p}^l(K)\otimes \mathcal{Y}_{p}^r(K)$. The colored strips indicate the support of the non-trivial parts of Hamiltonian perturbation data.}
	\label{fig:coproduct}
\end{figure}
\begin{proof}
The construction of $\Delta$ depends on the construction of certain moduli spaces whose definition is 
well understood (see \cite{Ab:geom-crit}, \cite{she:fanofuk}). Therefore, we will only sketch the construction and only emphasize the adjustments required to ensure that $\Delta$ has a controlled shift in action filtration. We sketch in Figure \ref{fig:coproduct} some configurations of clusters contributing to the definition of $\Delta$.

 Let $\epsilon>0$ and fix a regular filtered perturbation datum $p\in \mathcal{P}$ of size $\nu(p)=\epsilon$. $\Delta$ will consists of a family of linear maps $$\Delta_{l|1|r}\colon CF(L_0,\ldots, L_l)\otimes CF(L_l,L_0')\otimes CF(L_0',\ldots, L'_r)\to CF(L_0,K)\otimes CF(K,L'_r)$$ for any $l,r\geq 0$ and any two tuples $\vec{L}=(L_0,\ldots, L_l)$ and $\vec{L'}=(L'_0,\ldots, L'_r)$ of Lagrangians in $\mathcal{B}$, where all the Floer chain complexes are defined with respect to $p$. The source spaces for $\Delta_{l|1|r}$ are parametrized by  moduli spaces of clusters (following the terminology in  \cite{Amb:fil-fuk}) $\mathcal{R}^{l,r,2}_C(\vec{L},\vec{L'},K)$ labelled by $\vec{L}$, $\vec{L'}$ and $K$ defined as follows:
\begin{enumerate}
	\item We start with the moduli space $\mathcal{R}^{l,r}(\vec{L},\vec{L'},K):=\mathcal{R}^{l+r+3}(\vec{L}\cup\vec{L'}\cup K)$ of disks with $l+r+3$ marked points, labelled by our fixed Lagrangians as follows: we choose a marked point and label by $K$ the first arc going clockwise from this point and then keep on labelling clockwise successive arcs first with $\vec{L}$, and then with $\vec{L'}$.
	\item Over $\mathcal{R}^{l,r}(\vec{L},\vec{L'},K)$ we have a bundle $$\pi^{l,r}(\vec{L},\vec{L'},K)\colon \mathcal{S}^{l,r}(\vec{L},\vec{L'},K)\to \mathcal{R}^{l,r}(\vec{L},\vec{L'},K)$$ such that given $r\in \mathcal{R}^{l,r}(\vec{L},\vec{L'},K)$ its preimage is the standard unit disks with labels and prescribed marked points but with the marked points where two arcs corresponding to geometrically different Lagrangians meet replaced by a puncture.
	\item We endow $\mathcal{R}^{l,r}(\vec{L},\vec{L'},K)$ with strip-like ends in the usual manner: near marked points adjacent to the arc labelled by $K$, we choose positive strip-like ends, while on the remaining $l+r+1$ marked points we choose negative ones (this choice will be required to be consistent with gluing and breaking of disks, as usual).
	\item We consider the compactification $\overline{\mathcal{R}^{l,r}(\vec{L},\vec{L'},K)}$ of $\mathcal{R}^{l,r}(\vec{L},\vec{L'},K)$ which has the structure of a manifold with corners
of dimension $l+r+1$, and add collar neighbourhoods to some boundary components as \cite[p. 14]{Amb:fil-fuk} to get the needed moduli space  $\mathcal{R}_C^{l,r}(\vec{L},\vec{L'},K)$
	\item Over this moduli space we define the bundle $$\pi_C^{l,r}(\vec{L},\vec{L'},K)\colon \mathcal{S}_C^{l,r}(\vec{L},\vec{L'},K)\to \mathcal{R}_C^{l,r}(\vec{L},\vec{L'},K)$$ where fibers are cluster of marked disks with punctures with $l+r+1$ entries and two (adjacent) exits.
\end{enumerate}
Similarly to the case of filtered $A_\infty$-categories (sketched in \S\ref{gio:ffuk} and worked out in \cite{Amb:fil-fuk}), there is a space $\mathcal{P}^{\Delta}(p,\mathcal{B},K)$ of universal choices of perturbation data on the family of bundles of clusters $\pi_C^{l,r}(\vec{L},\vec{L'},K)$ (with fixed $K$), and an associated compatibility requirement with the fixed choice of perturbation datum $p\in \mathcal{P}$. 

Given a choice of $q\in \mathcal{P}^{\Delta}(p,\mathcal{B},K)$, of tuples $\vec{L}$ and $\vec{L'}$ as above,
 and of generators $$\gamma_0\otimes \cdots \otimes \gamma_l\otimes \gamma\otimes\gamma'_0\otimes \cdots \otimes \gamma'_r \in CF(L_0,\ldots, L_l)\otimes CF(L_l,L_0')\otimes CF(L_0',\ldots, L'_r)$$ and $$\gamma^+_1\otimes \gamma^+_2\in CF(L_0,K)\otimes CF(K,L'_r)$$ we have the moduli spaces $$\mathcal{M}^{l+r+1}_{\vec{L},\vec{L'},K}(\gamma_0,\ldots, \gamma_l;\gamma;\gamma'_1,\ldots, \gamma_r;\gamma^+_1,\gamma^+_2;q)$$ of Morse-Floer clusters $u$ defined for the perturbation datum $q$, joining such generators. One then defines $\Delta_{l|1|r}$ by counting rigid clusters $u$ in these moduli spaces, weighted by their symplectic area (by multiplication with $T^{\omega(u)}$, with $T$ the Novikov ring variable). Standard compactness and transversality arguments then show that for a generic choice of $q\in \mathcal{P}^{\Delta}(p,\mathcal{B},K)$ these maps fit together into a morphism of $A_\infty$-bimodules $$\Delta\colon \Delta_{\mathcal{B}_p}\to \mathcal{Y}^l_{p}(K)\otimes \mathcal{Y}^r_{p}(K)$$ which, of course, depends on the choice of the perturbation datum $q$. Most importantly, the filtered properties of $\Delta$ strongly depend on the choice of $q$: for an arbitrary choice of $q$ there is no reason to expect that the associated morphism $\Delta$ has a controlled shift in filtration. 
However, the same idea introduced in \cite{Amb:fil-fuk} (and outlined in \S\ref{gio:ffuk}) for the case of the $\mu_d$-maps can be replicated here to control the shift of the map $\Delta$. 
 Notice that the term $\Delta_{0|1|0}\colon CF(L,L)\to CF(L,K)\otimes CF(K,L)$ counts Floer curves with a Morse input (recall that we defined $CF(L,L)$ using the pearly model) and two Floer outputs (as $K\notin \mathcal B$ by assumption), which do not pose any problem from a filtered point of view. 
On the other hand, we encounter some novel problems relative to the constructions described previously when dealing with two different Lagrangians $L_0\neq L_0' $ and the map $ \Delta_{0|1|0}\colon CF(L_0,L_0')\to CF(L_0,K)\otimes CF(K,L_0')$. In this case, the methods from \cite{Amb:fil-fuk} give a map of shift $\leq 2\nu(p)-\delta\nu(p)\leq 2\nu(p)$. Since the perturbation data for higher order components $\Delta_{l|1|r}$ of $\Delta$ are constructed --following \cite{Amb:fil-fuk}-- inductively, this shifts propagates. It is important to remark that given the structure of the moduli spaces defining $\Delta_{l|1|r}$ this shift does not grow proportionally to the order, but remains $\leq 2\nu(p)$. Indeed near corners of $\overline{\mathcal{R}_C^{l,r}(\vec{L},\vec{L'},K)}$, the inductively constructed perturbation data come from gluing of perturbation data associated with configurations with $\leq 3$ marked points, of which at most one is a configuration with one entry and two exit (that is, on which $\Delta_{0|1|0}$ is modeled on) and contributes positively to the curvature (and the contribution is bounded above by $2\nu(p)$ as explained above); all the other configuration involved in the gluing have only one output, and do not contribute to the curvature (see Figure \ref{fig:cop_near_bdry}).
\end{proof}
\begin{figure}
	\includegraphics[scale=0.2]{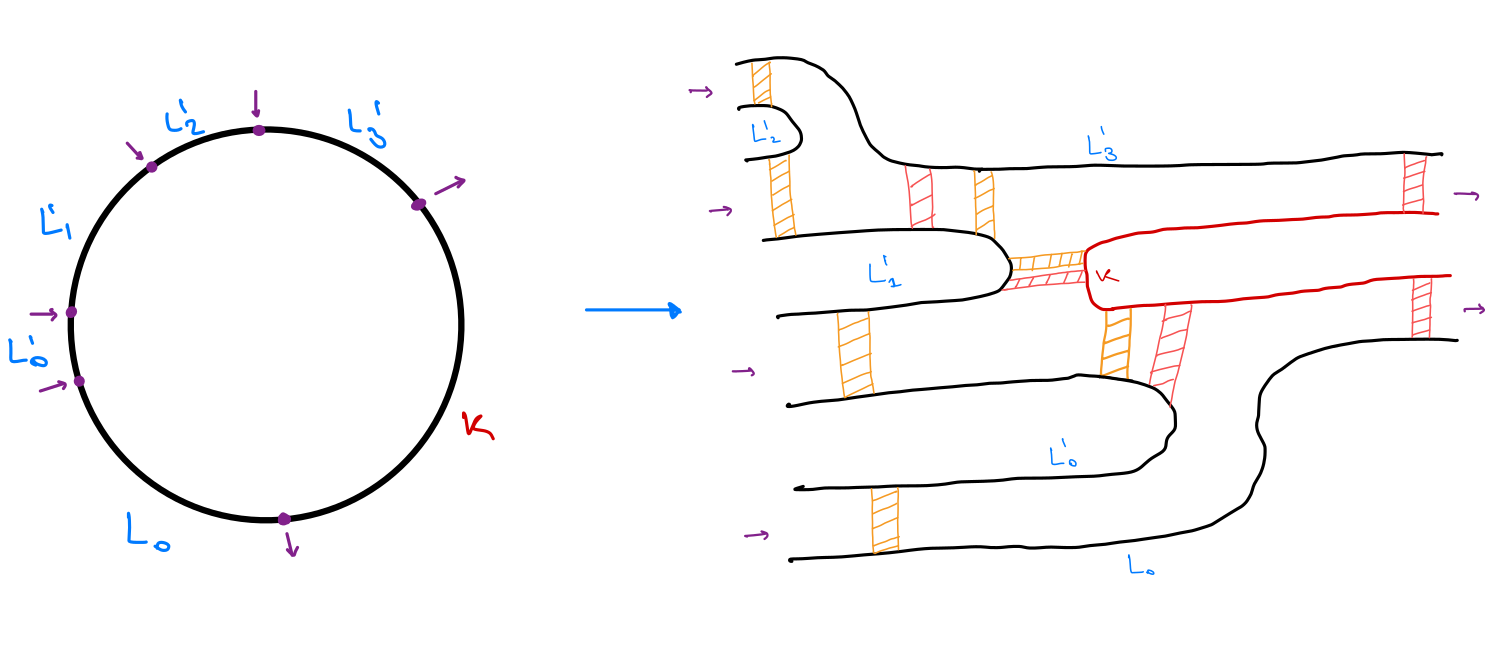}
	\centering
	\caption{A curve contributing to $\Delta_{0|1|3}\colon CF(L_0,L_0',L_1',L_2',L_3')\to CF(L_0,K)\otimes CF(K,L_3')$ whose source space lies near a corner point in $\overline{\mathcal{R}_C^{0,3}(\vec{L},\vec{L'},K)}$. The orange strips represent the region where perturbation data contribute negatively to the curvature, while the red strips where it contributes positively. The total curvature is then bounded from above by $2\nu(p)$.} \label{fig:cop_near_bdry}
\end{figure}

\begin{rem}
	It is clear from the proof of Proposition \ref{coproductmain} that the shift $\leq 2\nu(p)$ of the map $\Delta$ is not sharp for our choices of perturbations (indeed, $2\nu(p)-\delta\nu(p)$ can be used as an upper bound). However, we decided to stick to $2\nu(p)$ for notational ease. Since in Theorem \ref{gio:wabomain} we work with a system of $A_\infty$-categories, this difference is irrelevant.
\end{rem}
\subsubsection{Ambient quantum cohomology}\label{AQH} The material briefly recalled here is well-known
(see \cite{she:fanofuk} for a possible reference). 
Let $(X,\omega)$ be a closed and monotone symplectic manifold. We consider a Morse function $f\colon X\to \mathbb{R}$ with a unique maximum $u\in X$ and a Riemannian Metric $g$ on $X$ such that the pair $(f,g)$ is Morse-Smale.  We define the quantum cochain complex $CQ^*(f,g;\Lambda)$ with $\Lambda$ coefficients associated with the pair $(f,g)$ as its Morse cochain complex over $\Lambda$, that is coindexed by the Morse index. The differential $d$ counts negative gradient flow lines connecting critical points with index difference equal to one. In the following, we
will denote $CQ(f , g ; \Lambda)$ by $CQ(X; \Lambda)$ when the specific choice of Morse-Smale data is not important for our discussion. We filter this complex by setting every generator at zero action level and using the standard filtration on $\Lambda$. The homology of this complex is denoted by $QH(X;\Lambda)$ and  is called the quantum cohomology of $X$. Note that neither the vector space nor the persistence structure of $QH(X;\Lambda)$ depends on the choice of the Morse-Smale pair. We endow $\QH(X,\La)$ with the standard $\mathbb{R}/2\mathbb{Z}$ grading, with the Novikov variable having degree $0$.

The vector space $QH(X;\Lambda)$ is endowed with a product which deforms the usual Morse product, denoted by $*$ and called the quantum product. This product is defined at the chain level by counting (and weighting in the Novikov coefficient by the symplectic area of) $J$-holomorphic spheres with three marked points, which  lie in stable and unstable manifolds of perturbations of the Morse function $f^M$, much like in the definition of the $A_\infty$-maps for tuples of identical Lagrangians in the definition of the filtered Fukaya category. The quantum product with the first Chern class $c_1(X)$ defines an endomorphism of $QH(X;\Lambda)$ and hence we have a decomposition $$QH^*(X;\Lambda) = \bigoplus_{\mathbf{d}} QH^*(X;\Lambda)_\mathbf{d}$$ where the index corresponds to eigenvalues $\mathbf{d}\in \Lambda$ of this endomorphism, and $QH(X,\Lambda)_\mathbf{d}$ is the eigenspace associated with the eigenvalue $\mathbf{d}\in \Lambda$. It is well known that the Fukaya category associated with the class $\lagmon$ is non-trivial if and only if $\mathbf{d}$ is an eigenvalue of the map above. Moreover, all the eigenvalues $\mathbf{d}$ in fact lie in $\Lambda_0$.


\subsubsection{The persistence open-closed and closed-open maps}\label{gio:oc}
In this section we first define a persistence version of the open-closed map $OC$ relating Fukaya categories with ambient quantum cohomology. This  follows standard constructions in the subject  (cfr. \cite{Ab:geom-crit, she:fanofuk}) therefore we only give details sufficient to justify the persistence aspects.

Let $\mathcal{B}\subset \lagmon$ be a finite subset. Let $\epsilon>0$ and fix a regular filtered perturbation datum $p\in \mathcal{P}$ of size $\nu(p)=\epsilon$. 

We now outline the construction of the persistence open-closed map 
\begin{equation}\label{eq:oc_map}
OC=OC^\mathcal{B}_p\colon PHH(\mathcal{B}_p)\to QH(X,\Lambda)
\end{equation} where $\mathcal{B}_p$ is the full subcategory of $\fuk(\lagmon;p)$ induced by $\mathcal{B}$, and $PHH(\mathcal{B}_p)$ stands for the Hochschild homology of $\mathcal{B}_p$ with coefficients in the diagonal bimodule of $\mathcal{B}_p$ (see Appendix \ref{app:PHH}). 

\

Recall that $OC$ shifts degree up by $n \mod 2$. At the chain level, $OC$ consists of a family of linear maps $$OC_d\colon CF(L_0,L_1)\otimes \cdots \otimes CF(L_{d-1},L_d)\otimes CF(L_d,L_0)\to CQ(X,\Lambda) $$ for each tuple $\vec{L}=(L_0,\ldots, L_d)$ of Lagrangians in $\mathcal{B}$, where all the Floer chain complexes are defined with respect to the perturbation data $p$. 
The construction of these maps is based on moduli spaces and corresponding choices of perturbation data that follows the scheme used to define the filtered
$A_{\infty}$ operations $\mu_{d}$ in \S \ref{gio:ffuk} with a few modifications that we now outline. 

\begin{itemize}
\item[-]The perturbed $J$-holomorphic polygons $u$ used to define $OC_{d}$
only have negative strip-like ends near the punctures (by contrast, in the $\mu_{d}$ case, there is one positive strip-like end).
\item[-]  There is an interior marked point $x_{u}$ in the interior of the domain of $u$. 
\item[-] There is a fixed Morse smale pair $(f^X,g^X)$ on $X$ as in the definition $QH(X,\Lambda)$, and an $\omega$-compatible almost complex structure $J$ on $X$ as needed to define the quantum product on $QH(X,\Lambda)$. The equation satisfied by $u$ is $J$-holomorphic (without Hamiltonian perturbations)
in a small neighbourhood of $x_{u}$.
\item[-]  Given $$\gamma_1\otimes\cdots\otimes \gamma_{d+1}\in CF(L_0,L_1)\otimes \cdots \otimes CF(L_d,L_0)$$ and a critical point $x$ of $f^X$, there are moduli spaces $$\mathcal{M}^{d+1,1}_{\vec{L}}(\gamma_1,\ldots, \gamma_{d+1};x;q)$$ defined just like in the definition
of the $\mu_{d}$'s except that the output condition is replaced by the requirement that $u(x_{u})$ is carried by a trajectory of $-\nabla_{g^{X}}f^{X}$ to $x$.  
\item[-] $OC_{d}$ is defined by counting rigid curves as in the moduli space
above. Of course, these moduli spaces are defined with respect to regular perturbation data that is compatible  in the obvious sense with $p$.
\end{itemize}

\begin{figure}
	\includegraphics[scale=0.4]{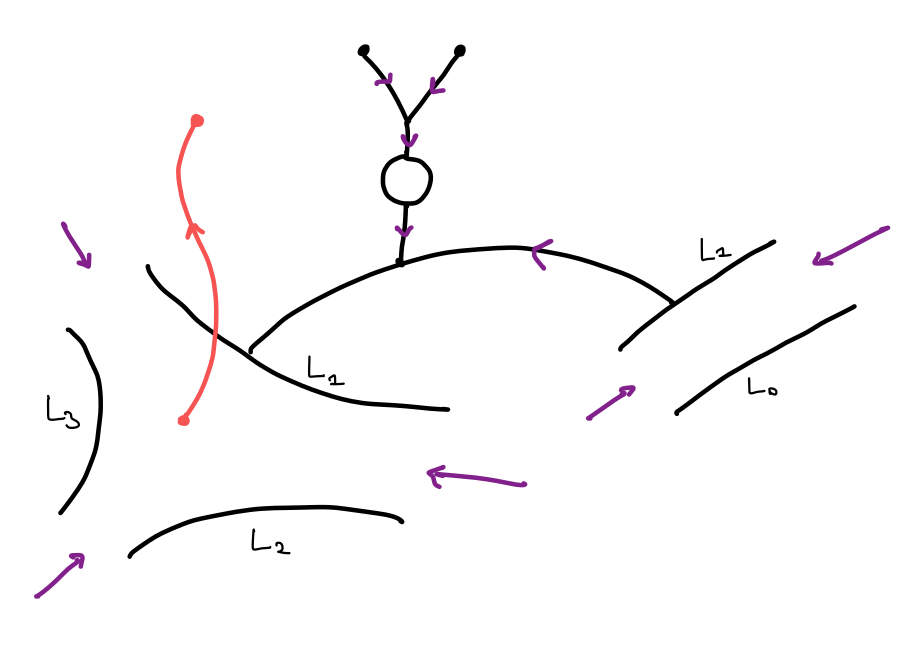}
	\centering
	\caption{A curve counted in the definition of $OC_7$ for the tuple $\vec{L}=(L_0,L_1,L_1,L_1,L_3,L_2,L_1,L_0)$.} \label{fig:OC}
\end{figure}

In this case (because there are no positive-strip-like ends), the arguments showing that the $A_{\infty}$ operations from \S\ref{gio:ffuk} are filtered apply directly.  As a result the $OC$ map described above induces  a morphism of persistence modules as in (\ref{eq:oc_map}). Moreover, we also have the following result whose proof is completely analogous to \cite[Corollary 2.11]{she:fanofuk}.
\begin{lem}
	For any $\mathcal{B}$ and any $p\in \mathcal{P}$, $\text{image}(OC^\mathcal{B}_p)\subset QH(X,\Lambda)_\mathbf{d}$, where $\mathbf{d}\in \Lambda_0$ is the parameter prescribing our class of monotone Lagrangians.
\end{lem}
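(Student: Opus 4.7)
The plan is to verify the standard eigenvalue identity for the open-closed map: $c_1 * OC(\alpha) = \mathbf{d} \cdot OC(\alpha)$ for every class $\alpha \in PHH(\mathcal{B}_p)$. The argument is purely algebraic once the appropriate closed-open map and its compatibility with $OC$ are in place, and the persistence refinement plays no role for the lemma itself since we are only asserting a set inclusion in $QH(X,\Lambda)$.

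First I would construct a closed-open map
\begin{equation*}
CO = CO^{\mathcal{B}}_p : QH^*(X,\Lambda) \longrightarrow HH^*(\mathcal{B}_p, \mathcal{B}_p)
\end{equation*}
by counting perturbed $J$-holomorphic disks with one interior marked point constrained by a descending manifold of a Morse-Smale pair on $X$ and a single positive boundary puncture, with the usual negative punctures distributed cyclically and labelled by tuples of Lagrangians from $\mathcal{B}$. The moduli spaces and perturbation data for $CO$ are modeled on those of $OC$ but with reversed Floer puncture orientation; standard cobordism arguments on the one-dimensional strata show that $CO$ is a unital algebra homomorphism.

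Next I would establish the module-intertwining identity
\begin{equation*}
x * OC(\alpha) \;=\; OC\!\bigl(CO(x) \cdot \alpha \bigr), \qquad x \in QH^*(X,\Lambda),\ \alpha \in PHH(\mathcal{B}_p),
\end{equation*}
where the right-hand side uses the canonical cap action of $HH^*(\mathcal{B}_p, \mathcal{B}_p)$ on $HH_*(\mathcal{B}_p, \mathcal{B}_p)$. This is proved by considering a one-parameter family of moduli spaces of disks with two interior marked points (one constrained by the Morse cycle representing $x$, the other free and mapped to a Morse cycle of $X$) and cyclic boundary punctures; as one parameter goes to the two extremes, one recovers $x * OC(\alpha)$ on one side and $OC(CO(x) \cdot \alpha)$ on the other. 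Transversality and the compatibility of perturbation data across the boundary strata are handled exactly as in the construction of $OC$ itself.

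The key geometric input is the computation $CO(c_1) = \mathbf{d} \cdot e_{\mathcal{B}_p}$ in $HH^0(\mathcal{B}_p, \mathcal{B}_p)$, where $e_{\mathcal{B}_p}$ denotes the Hochschild cohomology unit. The zeroth-order component of $CO(c_1)$ evaluated on $L \in \mathcal{B}$ counts Maslov-$2$ $J$-holomorphic disks with boundary on $L$ passing through a generic cycle representing $c_1 \in H^2(X)$; by the standard linking/cobordism argument (a Poincaré-dual cycle of $c_1$ is realized by a divisor intersected generically once by each Maslov-$2$ disk through a point), this count agrees with the weighted Maslov-$2$ disk count through a generic point, which is $\mathbf{d}$ by the definition of our class. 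Combining the pieces,
\begin{equation*}
c_1 * OC(\alpha) \;=\; OC\!\bigl(CO(c_1) \cdot \alpha\bigr) \;=\; OC(\mathbf{d}\,\alpha) \;=\; \mathbf{d} \cdot OC(\alpha),
\end{equation*}
so $OC(\alpha) \in QH(X,\Lambda)_{\mathbf{d}}$ as claimed. The main obstacle is the moduli-theoretic verification of the intertwining identity with the cluster-type perturbation data from~\S\ref{gio:ffuk}; however, since the argument only requires chain-level equalities modulo boundary, the negative-curvature constraint on perturbations does not interfere and the usual gluing/transversality analysis applies verbatim.
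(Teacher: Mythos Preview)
Your proposal is correct and is precisely the argument the paper has in mind: the paper's own ``proof'' consists of a single sentence deferring to \cite[Corollary~2.11]{she:fanofuk}, and what you have written is essentially a sketch of Sheridan's argument there (the $OC$/$CO$ module compatibility together with the identification $CO(c_1)=\mathbf{d}\cdot e$). One small remark: your justification of $CO(c_1)=\mathbf{d}\cdot e_L$ via ``a divisor intersected once by each Maslov-$2$ disk'' is a bit imprecise as stated; the clean version (as in Auroux and Sheridan) is that the evaluation at the interior marked point of the moduli of Maslov-$2$ disks with one boundary and one interior marked point represents a pseudocycle Poincar\'e dual to $c_1$, which directly gives the identity---but this is a matter of phrasing rather than a gap.
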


\  

We end this section by briefly recalling from \cite{Bi-Co:rigidity} the definition of the linear part of the closed-open map. Let $p\in \mathcal{P}$ and consider the $\omega$-compatible almost complex structure $J_p^K$-prescribed by $p$ for $CF(K,K)$. Then $CO_p^K\colon CQ(X,\Lambda)\to CF(K,K)$ is defined by counting rigid $J_p^K$-pearly trajectories starting at a critical point of $f_p^X$ and ending at a critical point of $f^K_p$ (see \cite[Figure 3]{Bi-Co:Yasha-fest} with $x=e_K$). Since there is no Hamiltonian perturbation involved, it is straightforward to see that $CO_p^K$ induces a persistence map 

\begin{equation}\label{eq:co_map}
CO=CO_p^K\colon QH(X,\Lambda)\to HF(K,K).
\end{equation} Recall (see \cite[Proposition 2.9]{she:fanofuk}) that $CO$ restricted to $QH(X,\Lambda)_{\mathbf{d}'}$ vanishes unless $\mathbf{d}'=\mathbf{d}$. We recall that $CO$ is unital in the sense that it sends the projection to $QH(X,\Lambda)_{\mathbf{d}}$ of the unit to the unit in $HF(K,K)$.

\begin{rem}
	It is possible to define a version of the richer, more general, (non-linear) closed-open map, that is a persistence map $$CO^\mathcal{B}_p\colon QH^*(X;\Lambda)\to PHH^*(\mathcal{B}_p),$$ where $PHH^*(\mathcal{B}_p)$ denotes the persistence Hochschild cohomology of the $A_\infty$-category $\mathcal{B}_p$, but this goes beyond the scope of this paper.
\end{rem}



\subsubsection{Abouzaid retract-approximability criterion for a fixed choice of perturbations}\label{gio:aboappr}
Consider a finite family $\mathcal{B}\subset\lagmon$ of Lagrangians and an element $\mathcal{K}\in \lagmon\setminus \mathcal{B}$. We recall the class $\mathcal{P}(\mathcal{B})\subset\mathcal{P}$ of perturbations adapted to $\mathcal{B}$ defined in \S\ref{gio:coproduct}. Let $p\in \mathcal{P}(\mathcal{B})$. The coproduct $\Delta_p\colon \Delta_{\mathcal{B}_p}\to \mathcal{Y}_{p}^l(K)\otimes \mathcal{Y}_{p}^r(K)$ constructed as in \S\ref{gio:coproduct} (where we often dropped the subscript $p$ from the notation) induces a  filtered chain map, which we still denote by $\Delta_p$, $$\Delta_p\colon CC_*(\mathcal{B}_p)\to \Sigma^{-2\nu(p)}CC_*(\mathcal{B}_p,\mathcal{Y}^l(K)\otimes \mathcal{Y}^r(K))$$ (see \cite[Equation 2.178]{Gan:thesis}) defined by  $$\Delta_p(\gamma_1\otimes \cdots \otimes \gamma_{d+1}) := \sum_{i=0}^{d+1}\sum_{j=i}^{d+1}(\Delta_p)_{d-j|1|i}(\gamma_{j+1},\ldots, \gamma_{d+1},\gamma_1,\ldots,\gamma_i)\otimes \gamma_{i+1}\otimes \cdots \otimes \gamma_j.$$ Consider the filtered chain isomorphism $\varphi: CC_*(\mathcal{B}_p,\mathcal{Y}^l(K)\otimes \mathcal{Y}^r(K))\cong \overline{\mathcal{B}_p}(K,K)$ given by rotating pure tensors once to the left.  It is defined on a pure tensor $$a\otimes b\otimes \gamma_1\otimes \cdots \otimes \gamma_n\in  CC_*(\mathcal{B}_p,\mathcal{Y}^l(K)\otimes \mathcal{Y}^r(K))$$ as $$\varphi(a\otimes b\otimes \gamma_1\otimes \cdots \otimes \gamma_n):= b\otimes \gamma_1\otimes \cdots \otimes \gamma_n\otimes a.$$  In the following we will consider the composition $$(\Sigma^{-2\nu(p)}\varphi)\circ \Delta_p\colon CC_*(\mathcal{B}_p)\to \Sigma^{-2\nu(p)}\overline{\mathcal{B}_p}(K,K),$$ and, by a slight abuse of notation, still denote this by $\Delta_p$.


\begin{prop}\label{gio:propabo}
	Let $p\in \mathcal{P}$. Then there is a commutative diagram of persistence modules and maps
	\begin{equation}		\label{persistencediagram} \xymatrixcolsep{3pc}
	\xymatrixrowsep{3pc} \xymatrix{PHH(\mathcal{B}_p) \ar[r]^-{\Delta_p}
		\ar[d]_-{OC^\mathcal{B}_p} & \Sigma^{-2\nu(p)} H(\overline{\mathcal{B}_p}(K,K))
		\ar[d]^-{\Sigma^{-2\nu(p)}\mu^{\overline{\mathcal{B}_p}}}\\
		QH(X,\Lambda) \ar[r]^-{\eta_{2\nu(p)} \circ CO^K_p} & \Sigma^{-2\nu(p)}HF(K,K) }
\end{equation}
	where $\eta_{2\nu(p)}\colon HF(K,K)\to \Sigma^{-2\nu(p)}HF(K,K)$ is the standard map induced by the structural maps of the persistence module $HF(K,K)$ as in \cite[Section 2.2.4]{BCZ:tpc}.
\end{prop}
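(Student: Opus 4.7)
The plan is to construct a filtered chain homotopy between the two compositions
$$\mu^{\overline{\mathcal{B}_p}} \circ \Delta_p \quad \text{and} \quad \eta_{2\nu(p)} \circ CO^K_p \circ OC^\mathcal{B}_p$$
viewed as maps $CC_*(\mathcal{B}_p) \to \Sigma^{-2\nu(p)} CF(K,K)$. The homotopy will be obtained from a parametrized moduli space of Cardy-type configurations, following the scheme of Abouzaid \cite{Ab:geom-crit} (see also \cite{Gan:thesis,she:fanofuk}), but with perturbation data chosen according to the recipe of \cite{Amb:fil-fuk} outlined in \S\ref{gio:ffuk} and \S\ref{gio:coproduct} so that the curvature contributions are controlled.

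First I would define, for each tuple $\vec{L}=(L_0,\ldots,L_d)$ of Lagrangians in $\mathcal{B}$, a moduli space $\mathcal{R}^{d+1}_{\vec{L},K}[0,1]$ of clusters whose principal stratum consists of a disk carrying $d+1$ boundary punctures labelled cyclically by $\vec{L}$ (all negative, with Floer datum prescribed by $p$) together with either one interior marked point connected by a finite Morse flow line of $f^X$ to a boundary puncture on $K$, or one additional boundary arc labelled by $K$ bounded by two negative punctures. A real parameter $s\in[0,1]$ interpolates between these two models: at $s=0$ the interior marked point collides with the boundary arc labelled by $K$ giving rise to Cardy-type degenerations that compute $\mu^{\overline{\mathcal{B}_p}} \circ \Delta_p$, while at $s=1$ the $K$-labelled boundary arc shrinks to an interior marked point and the resulting configuration is the concatenation of an $OC$-type disk with a $CO^K_p$-type pearly trajectory. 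On the codimension-one strata arising from bubbling off of punctures one recovers, in the standard fashion, the terms corresponding to the Hochschild differential and the Floer differential on $CF(K,K)$.

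The universal choice of perturbation datum on this family would be constructed inductively on the order $d$, compatibly with the choices already made for $p$, for the coproduct maps of Proposition \ref{coproductmain}, for the $OC^\mathcal{B}_p$ map of \S\ref{gio:oc}, and for the $CO_p^K$ map. Following the inductive scheme of \cite{Amb:fil-fuk}, it suffices to check that near each boundary stratum of $\overline{\mathcal{R}^{d+1}_{\vec{L},K}[0,1]}$ the inductively defined perturbation data can be glued while keeping the total curvature bounded above by $2\nu(p)$. The key observation is that at most one of the pieces appearing in a gluing is a configuration with two negative boundary punctures on $K$ (coming from $\Delta_p$), contributing at most $2\nu(p)$ to the curvature as established in the proof of Proposition \ref{coproductmain}; all other pieces (standard polygons defining $\mu_d$, or $OC$/$CO$ configurations with a single output, or $CO_p^K$ pearly trajectories without Hamiltonian perturbations) have non-positive curvature contribution. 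Counting rigid elements (weighted by their symplectic area in the Novikov variable) then produces a family of linear maps
$$H_d \colon CF(L_0,L_1)\otimes\cdots\otimes CF(L_d,L_0) \longrightarrow CF(K,K)$$
that shift filtration by at most $2\nu(p)$, i.e.\ assemble into a chain map $H: CC_*(\mathcal{B}_p) \to \Sigma^{-2\nu(p)} CF(K,K)$.

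Examining the codimension-one boundary strata of the parametrized moduli space in the standard way will show that $H$ is a chain homotopy witnessing
$$\Sigma^{-2\nu(p)}\mu^{\overline{\mathcal{B}_p}} \circ \Delta_p \;-\; \eta_{2\nu(p)}\circ CO^K_p\circ OC^\mathcal{B}_p \;=\; \partial \circ H + H \circ b,$$
where $b$ is the Hochschild differential; passing to persistence homology yields the commutativity of~\eqref{persistencediagram}. The main technical obstacle is the second paragraph above: arranging the family of perturbation data on the one-parameter Cardy moduli space so that (a) it is consistent with all the already-fixed data defining $p$, $\Delta_p$, $OC^\mathcal{B}_p$, and $CO^K_p$ on the various boundary and corner strata, (b) transversality holds for the resulting $J$-holomorphic problem, and (c) the total curvature along the interpolation stays within the budget $2\nu(p)$ dictated by Proposition \ref{coproductmain}. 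The latter condition is not automatic and uses crucially that the strictly negative upper bound on curvature from \eqref{eq:neg_c_req} leaves enough slack for the transversality perturbations, exactly as in \cite{Amb:fil-fuk}.
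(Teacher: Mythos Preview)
Your proposal follows the same overall strategy as the paper: construct a chain homotopy $H\colon CC_*(\mathcal{B}_p)\to \Sigma^{-2\nu(p)} CF(K,K)$ via a one-parameter family of Cardy-type moduli spaces, with perturbation data chosen in the style of \cite{Amb:fil-fuk} so that the curvature stays bounded by $2\nu(p)$. Your identification of the key filtration point --- that the only positive curvature contribution comes from the coproduct-type piece and is already bounded by $2\nu(p)$ via Proposition~\ref{coproductmain}, while all other pieces contribute non-positively --- is exactly the argument the paper makes.

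However, your geometric description of the interpolating moduli space is imprecise and would not, as written, assemble into a correct compactification. The paper does not use disks carrying simultaneously an interior marked point and a $K$-labelled boundary arc; it uses genuine \emph{annuli} $\mathcal{R}_C^{d,A}(\vec{L},K)$, with the inner boundary circle entirely labelled by $K$ (carrying a single Morse marked point mapped to $\Crit(f^K_p)$) and the outer boundary carrying the $\vec{L}$-labelled punctures. The interpolating parameter is the annular modulus $\rho\in(1,\infty)$: as $\rho\to 1$ the annulus pinches into a nodal pair of disks giving $\mu^{\overline{\mathcal{B}_p}}\circ\Delta_p$, while as $\rho\to\infty$ the configuration breaks along a gradient trajectory of $f^X$ into an $OC$-type disk glued to a $CO^K_p$ pearly trajectory. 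In particular, your picture of ``an interior marked point colliding with the $K$-arc'' at one end and ``the $K$-arc shrinking to an interior marked point'' at the other does not match either limit, and the two punctures bounding the $K$-arc in the coproduct disk are \emph{positive} (exits), not negative as you wrote. These are fixable details --- the underlying idea is right --- but the moduli-space bookkeeping needs to be redone with the annular model.
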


\noindent Consider the measurement $R(-,-)$ introduced in (\ref{eq:en_gap}) 
and denote by $u_\mathbf{d}$ the projection of the unit in $QH(X,\La)$ to $QH(X,\La)_\mathbf{d}$.
\begin{cor}\label{gio:keycor}
	Let $p\in \mathcal{P}$. 
	If $R(u_\mathbf{d},OC^\mathcal{B}_p)\leq \alpha$, then $\lagmon$ is $\frac{\alpha}{2}+\nu(p)$-retract approximable in $PD(\mathcal{A}_p)$ by the family $\mathcal{B}$.
\end{cor}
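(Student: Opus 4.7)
The plan is to deduce the corollary from Proposition \ref{gio:propabo} by a diagram chase, and then invoke Proposition \ref{gio:algfilabo} (which ties the quantity $R([e_K],[\mu^{\overline{\mathcal{B}}}(K)])$ to retract-approximability) to get the retract-approximability of $\lagmon$. The input $R(u_\mathbf{d}, OC^\mathcal{B}_p) \leq \alpha$ means that, for every $\epsilon > 0$, there is a Hochschild cycle $c \in PHH^\alpha(\mathcal{B}_p)$ with $OC^\mathcal{B}_p(c) = i_{0,\alpha}(u_\mathbf{d})$ in $QH^\alpha(X,\Lambda)$. Since one will have to work only with a fixed object $K \in \lagmon$ at a time, the first step is to fix such a $K \notin \mathcal{B}$ (the case $K \in \mathcal{B}$ being trivial) and assemble the ingredients: the commutative square from Proposition \ref{gio:propabo}, the unital property of the linear closed--open map $CO^K_p$ (so that $CO^K_p(u_\mathbf{d}) = [e_K]$), and the bimodule-level map $\mu^{\overline{\mathcal{B}_p}}(K) \colon \overline{\mathcal{B}_p}(K,K) \to \mathcal{A}_p(K,K)$ from \S\ref{WRSG}.

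Next, I would chase the commutative diagram \eqref{persistencediagram} evaluated on the cycle $c$. The image $\Delta_p(c)$ lies in $\Sigma^{-2\nu(p)} H(\overline{\mathcal{B}_p}(K,K))$, that is, at real filtration level $\alpha + 2\nu(p)$ in $H(\overline{\mathcal{B}_p}(K,K))$. Going the other way around the diagram yields
\[
\Sigma^{-2\nu(p)}\mu^{\overline{\mathcal{B}_p}}(\Delta_p(c)) \;=\; \eta_{2\nu(p)} \circ CO^K_p(i_{0,\alpha}(u_\mathbf{d})) \;=\; \eta_{2\nu(p)} \circ i_{0,\alpha}([e_K]),
\]
where in the last equality I use the unitality of $CO^K_p$ on $u_\mathbf{d}$. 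Unwinding what $\eta_{2\nu(p)} \circ i_{0,\alpha}$ means as a persistence map, this identity shows precisely that the unit class $[e_K] \in HF^0(K,K)$, after being pushed up to filtration level $\alpha + 2\nu(p)$ by the persistence structure maps, lies in the image of $[\mu^{\overline{\mathcal{B}_p}}(K)]$ at level $\alpha + 2\nu(p)$. In the notation of \eqref{eq:en_gap} this reads
\[
R\bigl([e_K],\, [\mu^{\overline{\mathcal{B}_p}}(K)]\bigr) \;\leq\; \alpha + 2\nu(p).
\]

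Finally, I would apply Proposition \ref{gio:algfilabo} (the equivalence of (1) and (3)) to the object $K$ inside the filtered $A_\infty$-category $\mathcal{A}_p$ with the finite full subcategory $\mathcal{B}_p$: it converts the above estimate into the statement that $K$ is $(\alpha + 2\nu(p))/2 = \alpha/2 + \nu(p)$-retract-approximable in $PD(\mathcal{A}_p)$ by the family $\mathrm{Obj}(\mathcal{B}_p) = \mathcal{B}$, in the sense of Definition \ref{def:simple_approx}. Since $K \in \lagmon$ was arbitrary, this gives the desired retract-approximability of the whole class $\lagmon$.

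I do not anticipate any serious obstacle: the diagram chase is formal once Proposition \ref{gio:propabo} is in hand, and unitality of $CO$ together with the algebraic split-generation criterion of Proposition \ref{gio:algfilabo} do all of the real work. The only subtlety to double-check is keeping track of the shift $\Sigma^{-2\nu(p)}$ coming from the fact that the coproduct map $\Delta$ constructed in Proposition \ref{coproductmain} shifts filtration by at most $2\nu(p)$ rather than being strictly filtered --- this is exactly what gives rise to the extra $\nu(p)$ summand in the final bound $\alpha/2 + \nu(p)$, as opposed to a clean $\alpha/2$.
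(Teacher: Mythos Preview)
Your proposal is correct and follows essentially the same route as the paper's proof. The paper packages your explicit diagram chase into citations of Lemma~\ref{ezlemma} (pushing the bound on $R$ across a commutative square) and Lemma~\ref{ezlemma2} (absorbing the $2\nu(p)$-shift into the bound on $R$), but the underlying argument --- chase the square of Proposition~\ref{gio:propabo}, use unitality of $CO^K_p$ to identify the bottom element with $[e_K]$, conclude $R([e_K],[\mu^{\overline{\mathcal{B}_p}}(K)])\leq \alpha+2\nu(p)$, then invoke Proposition~\ref{gio:algfilabo} --- is identical.
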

The proof of the Corollary will make use of the following simple result.
\begin{lem}\label{ezlemma2}
	Let $f\colon V\to W$ be a map of persistence modules and consider $w\in W_r$. Given $\delta>0$ we have $$R(w,f)\leq R(i_{r,r+\delta}(w),\Sigma^{-\delta}f)+\delta.$$ 
\end{lem}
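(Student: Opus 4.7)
The plan is essentially to unpack the definition of $R$ on both sides and observe that the two infima differ only by a restriction on the range of the index. First I would fix the shift convention as used throughout the paper, $(\Sigma^{-\delta}W)^{\alpha} = W^{\alpha+\delta}$ (and analogously for $V$), so that the element $w':=i_{r,r+\delta}(w)\in W^{r+\delta}$ is naturally an element at persistence level $r$ of $\Sigma^{-\delta}W$, and so that the component of $\Sigma^{-\delta}f\colon \Sigma^{-\delta}V\to \Sigma^{-\delta}W$ at level $s$ is literally $f_{s+\delta}\colon V^{s+\delta}\to W^{s+\delta}$.

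With those identifications in place, the definition of $R$ gives
$$R(w',\Sigma^{-\delta}f)=\inf\bigl\{s\geq r \bigm| \exists\, v\in V^{s+\delta},\ f(v)=i^{W}_{r,s+\delta}(w)\bigr\},$$
and after the change of variable $s'=s+\delta$ this reads
$$R(w',\Sigma^{-\delta}f)+\delta=\inf\bigl\{s'\geq r+\delta \bigm| \exists\, v\in V^{s'},\ f(v)=i^{W}_{r,s'}(w)\bigr\}.$$
The quantity $R(w,f)$ is the same infimum, taken instead over the larger set $s'\geq r$. Since an infimum over a larger set can only be smaller, this immediately yields $R(w,f)\leq R(w',\Sigma^{-\delta}f)+\delta$. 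The case $R(w',\Sigma^{-\delta}f)=\infty$ is vacuous.

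The only point requiring any care is the matching of shift conventions between the paper's notation $\Sigma^{-\delta}$ and the filtration levels $W^{\alpha}$; once this is set up correctly, the lemma is immediate and there is no genuine obstacle. I expect the argument to occupy only a few lines of actual text.
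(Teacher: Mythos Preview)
Your proof is correct and essentially identical to the paper's: both unpack the shift convention $(\Sigma^{-\delta}W)_s=W_{s+\delta}$, identify $i^{\Sigma^{-\delta}W}_{r,s}(w')=i^W_{r,s+\delta}(w)$, and reduce the inequality to the substitution $s'=s+\delta$. The paper phrases it as ``pick a witness $s$ for the right-hand side and deduce $s+\delta\geq R(w,f)$'', while you phrase it as a direct comparison of infima, but these are the same argument.
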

\begin{proof}
	Consider $\Sigma^{-\delta}f\colon \Sigma^{-\delta}V\to \Sigma^{-\delta}W$ and $i^W_{r,r+\delta}(w)\in (\Sigma^{-\delta}W)_r$. Let $s\geq r$ such that there is $v\in (\Sigma^{-\delta}V)_s$ with $(\Sigma^{-\delta}f)_s(v) = i^{\Sigma^{-\delta}W}_{r,s}(i^W_{r,r+\delta}(w))$. Since $i^{\Sigma^{-\delta}W}_{r,s} = i^{W}_{r+\delta,s+\delta}$ and $(\Sigma^{-\delta}V)_s=V_{s+\delta}$ the above is equivalent to $f_{s+\delta}(v) = i^W_{r,s+\delta}(w)$. In particular, $s+\delta\geq R(w,f)$.
\end{proof}
\begin{proof}[Proof of Corollary \ref{gio:keycor}]	
Assume that $R(u,OC^\mathcal{B}_p)\leq \alpha$. By Lemma \ref{ezlemma}, Proposition \ref{gio:propabo}, Lemma \ref{ezlemma2} and the fact that $CO$ is unital, we directly get $R([e_K],[\mu^{\overline{\mathcal{B}}}])\leq \alpha+\nu(p)$ for all $K\in \lagmon$. Using Proposition \ref{gio:algfilabo} this implies the claim.	
\end{proof}
\begin{proof}[Proof of Proposition \ref{gio:propabo}.]
The proof is essentially a combination of the proofs of Theorem 8.1.1 in \cite{Bi-Co:Yasha-fest} and Lemma 2.15 in \cite{she:fanofuk} while keeping track of the changes in filtration. 

We outline the construction of a chain homotopy $H\colon CC_*(\mathcal{B}_p)\to \Sigma^{-2\nu(p)}CF(K,K)$
that makes  Diagram (\ref{persistencediagram}) commutative. 
Given a tuple $\vec{L}:=(L_0,\ldots, L_d)$ of elements chosen from the family $\mathcal{B}$, we intend to define $$H\colon CF(L_0,\ldots, L_d,L_0)\to \Sigma^{-2\nu(p)}CF(K,K).$$ This again consists of three steps: picking the correct moduli space (this part is already present in the literature);
adjust the spaces of perturbations conveniently so that the construction is compatible with
filtrations (following the methods from \cite{Amb:fil-fuk}); ensure that the energy bounds are
indeed as required so that the resulting chain homotopy preserves filtrations.

The source spaces for this $H$ are parametrized by a moduli space denoted $\mathcal{R}_C^{d,A}(\vec{L},K)$ (the superscript $A$ indicates that \textit{annuli} appear in the domain) labelled along the boundary by $\vec{L}$ and $K$ and defined as follows:
\begin{enumerate}
	\item We consider the moduli space $\mathcal{R}^{d,A}(\vec{L},K)$ of annular configurations
consisting of an annulus $S^{1}\times [1,\rho]\subset \mathbb{C}$ of unit internal radius and with outside radius $\rho\in (1,\infty)$.  We fix marked points  $z_0=1$, $z_1=-\rho$ as well as $d-1$ other marked points  $z_2,\ldots, z_d$, $|z_{i}|=\rho,\ 1\leq i \leq d$ along the exterior boundary of the annulus, ordered in clockwise direction starting from $z_{1}$.  We  label  the arcs on the exterior circle clockwise by $\vec{L}$ starting from $x_{1}$, and we label the internal circle by $K$.
	\item Over $\mathcal{R}^{d,A}(\vec{L},K)$ we have a bundle $$\pi^{d,A}(\vec{L},K)\colon \mathcal{S}^{d,A}(\vec{L},K)\to \mathcal{R}^{d,A}(\vec{L},K)$$ such that the fiber of a point in
	$\mathcal{R}^{d,A}(\vec{L},K)$  is a boundary-punctured annulus with the interior circle of radius one, the exterior circle of radius some $\rho\in (1,\infty)$ and such that the marked points on the exterior circle that are adjacent to two geometrically distinct Lagrangian labels are replaced with punctures.
	\item We endow $\pi^{d,A}(\vec{L},K)$ with a compatible choice of strip-like ends, all of negative type, associated with each of the punctures at the previous point.
	\item We consider the compactification $\overline{\mathcal{R}_C^{d,A}(\vec{L},K)}$ of $\mathcal{R}_C^{d,A}(\vec{L},K)$. This moduli space of clusters mixes flow lines, disks, and polygons, just as in the definition of the operations $\mu_{d}$ from \S\ref{gio:ffuk}, but also has one annular component.  The boundary of $\overline{\mathcal{R}^{d,A}_C(\vec{L},K)}$ in the case $d=1$ is drawn in Figure \ref{fig:deg1}.
	\item There are incidence conditions for each of the marked points and punctures. The punctures correspond to generators of the respective Floer complexes $CF(L_{i},L_{i+1};p)$ (with $p$ the perturbation data); the marked points on the exterior circle (they are so that the adjacent edges have the same label, $L_{j}$, for some $j$) are mapped to critical points of the fixed Morse
	function on $L_{j}$ that is part of $p$; the marked point $z_{o}$ is mapped to a critical
	point of the fixed Morse function on $K$, which is part of the perturbation data $p$. Perturbation data for all of this is picked such that it is compatible with $p\in \mathcal{P}$ and are constructed as in \cite{Amb:fil-fuk}.
	\end{enumerate}
	\begin{figure}\label{deg1}
		\includegraphics[scale=0.7]{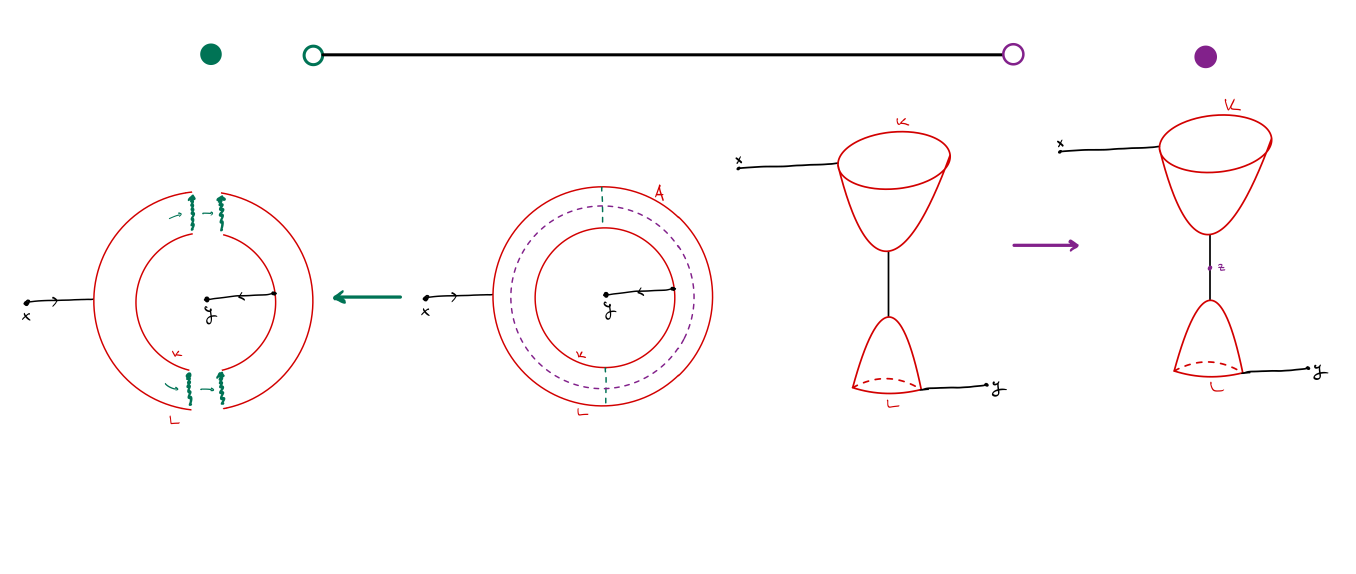}
		\centering
		\caption{A schematic description of the interior of $\overline{\mathcal{R}^{1,A}_C(L,K)}$, isomorphic to a closed interval, and the fibers over it. In the middle, given Lagrangians $L$ and $K$ we see configurations contributing to the $y$-summand (where $y\in \text{Crit}(f_K)$) of $H(x)$ (where $x\in \text{Crit}(f_L)$). On the left (green dot) we see the configuration contributing to $\mu_2\circ \Delta$, while on the right (purple dot) a configuration contributing to $OC\circ CO$ (breaking at a critical point $z$ of $f_X$).} \label{fig:deg1}
	\end{figure}

The construction sketched above appeared before in the literature -  see \cite{she:fanofuk} as well as \cite{Bi-Co:Yasha-fest} - and the structure of the compactification $\overline{\mathcal{R}_C^{d,A}(\vec{L},K)}$ implies that it is a chain homotopy as desired. From the point of view of the current 
paper the only additional point of interest is that no further modifications are needed to ensure
that this homotopy is filtration preserving as a map to $\Sigma^{-2\nu(p)}CF(K,K)$ (that is, it shifts filtration by $\leq 2\nu(p)$ as a map to $CF(K,K)$). 
\end{proof}

\begin{rem}
	a. The statement of Proposition  \ref{gio:propabo} contains an intrinsic limitation in the 
	sense  that for a fixed perturbation $p$ it allows to discuss retract-approximability with accuracy bounded below by a constant times $\nu(p)$. Gaining adequate control on the structures associated with a $p$ that varies so that the accuracy level gets below some fixed $\epsilon$ was the main motivation behind the notion of systems of categories with increasing accuracy in \S\ref{s:sys-tpc}. 
	
	b. We could refine the statement of Proposition \ref{gio:propabo} and avoid the shift $\Sigma^{-2\nu(p)}$ in the bottom right of Diagram (\ref{persistencediagram}). This can be done by choosing more elaborate perturbation data for the maps appearing in the diagram. This would have the result of allowing for a slightly better approximation accuracy given a fixed perturbation datum $p$, but the remark above would continue to apply.
	
	b. Note that when $\mathcal{B}$ consists of a single Lagrangian $L$, then $R(u_\mathbf{d}, OC^\mathcal{B}_p)\leq \alpha$ implies that $\lagmon$ is $\frac{\alpha}{2}$-retract approximated in $PD(\mathcal{A}_p)$ by the family $\mathcal{B}$. That is, there is no $\nu(p)$ term appearing in this case, in contrast to the general statement in Corollary \ref{gio:keycor}. The reason is that, in this case, the map $\Delta$ is filtered (see Proposition \ref{coproductmain}); the proof uses a version of Proposition \ref{gio:aboappr} where no shift functor $\Sigma^{-2\nu(p)}$ appears. The only place where this refinement will be used is Proposition \ref{singlelagsphere}.
\end{rem}

\subsubsection{Colimit of open-closed maps}

In this section we prove that the open-closed map behaves well with respect to continuation functors and define
the colimit open-closed map that appears in the statement of Theorem \ref{gio:wabomain}.

Consider the system $$\widehat{\fuk}(\lagmon)= \left\{\left\{\fuk(\lagmon;p)\right\}_{p\in \mathcal{P}}, \left\{\mathcal{H}_{p,q},\mathcal{H}_{q,p}\right\}_{p\preceq q}\right\}$$ of filtered Fukaya categories associated with $\lagmon$. As explained in \S\ref{sb:sys-fuk}, this is in fact a homotopy system in the sense of \S\ref{sbsb:homotopy-sys}.
As a result of that, via Proposition \ref{p:homotopy-HH}, we get a directed system $$\left\{\left\{PHH(\fuk(\lagmon;p))\right\}_{p\in \mathcal{P}}, \left\{\mathcal{H}_{p,q}^{PHH}\right\}_{p\preceq q}\right\} $$ of persistence modules, where $\mathcal{H}_{p,q}^{PHH}$ is the persistence map induced in persistence Hochschild homology by the (family of) continuation functors $\mathcal{H}_{p,q}$ ($\mathcal{H}_{p,q}^{PHH}$ is uniquely defined by Proposition \ref{p:homotopy-HH}). In particular, the colimit $$PHH(\widehat{\fuk}):=\colim_{p\in \mathcal{{P}}}PHH(\fuk(\lagmon;p))$$ is well-defined and a persistence module, as explained in \S\ref{sb:sys-inv}. Moreover, since the system $\widehat{\fuk}(\lagmon)$ has, in particular, a fixed full base of objects (see \S\ref{sbsb:sys-base}), the following is true: given a subfamily $\mathcal{B}\subset \lagmon$ we get a colimit $$PHH(\widehat{\mathcal{B}}):=\colim_{p\in \mathcal{P}}PHH(\mathcal{B}_p)$$ where $\mathcal{B}_p$ is the full $A_\infty$-subcategory of $\fuk(\lagmon;p)$ induced by $\mathcal{B}$ and $\widehat{\mathcal{B}}$ is the subsystem of $A_\infty$-categories of $\widehat{\fuk}$ induced by $\{\mathcal{B}_p\}_{p\in \mathcal{P}}$.\\

For any $N\geq 1$, we denote by $F^{N}PHH(\mathcal{B}_p)$ the persistence homology of $F^{N}CC(\mathcal{B})$, that is, the $N$-th level in the length filtration of the Hochschild chain complex (see \S\ref{app:PHH}). \\ 
Let $p\preceq q$ and consider the continuation $A_\infty$-functor $\mathcal{H}_{q,p}\colon \mathcal{B}_q\to \mathcal{B}_p$ restricted to $\mathcal{B}_q$. We recall (see \S\ref{sb:sys-fuk}, \cite{Amb:fil-fuk}) that these functors have linear deviation rate $\leq 2(\nu(p)-\nu(q))$. We write $s(q,p):= 2(\nu(p)-\nu(q))$. In particular, $\mathcal{H}_{q,p}$ does \textit{not} induce a map in persistence Hochschild homology, as the shift in filtration at the chain level blows up as $N\to \infty$. To overcome this problem, we use the length filtration on $CC(\mathcal{B}_q)$. Note that, at the chain level, $\mathcal{H}_{q,p}$ induces for any $N\geq 1$ a chain map $F^NCC(\mathcal{B}_q)\to F^NCC(\mathcal{B}_q)$ of shift $\leq Ns(q,p)$. In particular, by precomposing and postcomposing with structural maps $\eta$ associated with the shift functor (see \S\ref{subsec:PC}), $\mathcal{H}_{q,p}$ induces a persistence map $$\mathcal{H}_{q,p}^{F^NPHH}\colon \Sigma^{-2N\nu(q)}F^NPHH(\mathcal{B}_q)\to \Sigma^{-2N\nu(p)}F^NPHH(\mathcal{B}_p).$$
The next lemma lemma follows directly from the properties of the continuation functors.
\begin{lem}\label{gio:homcont}
	Let $p\in \mathcal{P}$. The family $$\left\{\mathcal{H}_{q,p}^{F^NPHH}\colon \Sigma^{-2N\nu(q)}F^NPHH(\mathcal{B}_q)\to \Sigma^{-2N\nu(p)}F^NPHH(\mathcal{B}_p)\right\}_{q\in \mathcal{P}, \ p\preceq q}$$ defines a homomorphism of directed systems of persistence modules.
	
\end{lem}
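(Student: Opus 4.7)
The plan is to verify two things: first, that each individual map $\mathcal{H}_{q,p}^{F^NPHH}$ is a well-defined morphism of persistence modules between the shifted length-$N$ Hochschild homologies, and second, that these maps are compatible with the structural maps of the source directed system in the sense required by a homomorphism of directed systems. The source system's structural maps, for $q \preceq q' $ both $\succeq p$, are induced from the filtered (zero-deviation) $A_\infty$-functors $\mathcal{H}_{q,q'}$, combined with the standard $\eta$ natural transformations that identify the shifts $\Sigma^{-2N\nu(q)}$ and $\Sigma^{-2N\nu(q')}$ (this makes sense because $\nu(q)\geq \nu(q')$).

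For well-definedness, recall that $\mathcal{H}_{q,p}$ is an LD functor with deviation rate at most $s(q,p)=2(\nu(p)-\nu(q))$. The induced chain map $CC(\mathcal{H}_{q,p})\colon CC(\mathcal{B}_q)\to CC(\mathcal{B}_p)$ is built from the higher components of $\mathcal{H}_{q,p}$ applied to subtuples of the input, so it automatically preserves the length filtration, and its restriction to $F^N CC(\mathcal{B}_q)$ shifts action filtration by at most $Ns(q,p)=2N(\nu(p)-\nu(q))$. This is precisely the discrepancy absorbed by passing from $\Sigma^{-2N\nu(q)}$ to $\Sigma^{-2N\nu(p)}$, so the resulting map $\Sigma^{-2N\nu(q)} F^N CC(\mathcal{B}_q)\to \Sigma^{-2N\nu(p)} F^N CC(\mathcal{B}_p)$ is filtration-preserving. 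Passing to persistence homology gives $\mathcal{H}_{q,p}^{F^NPHH}$.

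For the naturality condition, fix $p\preceq q\preceq q'$; the task is to show that $\mathcal{H}_{q',p}^{F^NPHH}$ agrees with the composition of the source structural map with $\mathcal{H}_{q,p}^{F^NPHH}$ through the intermediate object $\Sigma^{-2N\nu(q)} F^N PHH(\mathcal{B}_q)$. By the homotopy-system property of $\widehat{\fuk}(\lagmon)$ recalled in \S\ref{sbsb:homotopy-sys}, the LD functors $\mathcal{H}_{q,p}\circ \mathcal{H}_{q',q}$ and $\mathcal{H}_{q',p}$ are $0$-homotopic as LD functors of the common deviation rate $s(q',p)$, using the telescoping identity $s(q,p)+s(q',q)=s(q',p)$. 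Proposition~\ref{p:homotopy-HH}, applied to the length-$N$ truncated Hochschild complexes, then shows that the two functors induce the same persistence morphism on the shifted $F^N PHH$, which is exactly the required compatibility.

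The main obstacle is not conceptual but combinatorial: one must track the shifts $\Sigma^{-2N\nu(\cdot)}$ together with the contributions coming from the LD deviation rates consistently at every length $N$, which dictates the $\Sigma^{-2N\nu(\cdot)}$ scaling present in the statement and is the reason the naive (untruncated) $PHH$ does not fit into a directed system. Once this bookkeeping is in place, the homotopy-system axioms together with Proposition~\ref{p:homotopy-HH} yield the conclusion without further input.
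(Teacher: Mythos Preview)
Your approach is essentially the one the paper has in mind; the paper's own proof is the single sentence ``follows directly from the properties of the continuation functors,'' and your expansion of this into well-definedness plus a compatibility check via homotopy of functors is exactly what that sentence is hiding.

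There is, however, a directional slip in your compatibility argument. The structural maps of the source system, as you correctly say, come from the \emph{filtered} functors $\mathcal{H}_{q,q'}$ for $q\preceq q'$. The compatibility you need is therefore that $\mathcal{H}_{q',p}^{F^NPHH}\circ\mathcal{H}_{q,q'}^{F^NPHH}=\mathcal{H}_{q,p}^{F^NPHH}$ as maps from the $q$-term to the $p$-term; at the functor level this means comparing $\mathcal{H}_{q',p}\circ\mathcal{H}_{q,q'}$ (forward then backward) with $\mathcal{H}_{q,p}$. You instead wrote the backward-backward composition $\mathcal{H}_{q,p}\circ\mathcal{H}_{q',q}$ versus $\mathcal{H}_{q',p}$, with the intermediate object sitting at $q$ rather than $q'$. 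Your telescoping identity $s(q,p)+s(q',q)=s(q',p)$ fits the backward-backward picture, but that is not the square you need. For the correct (mixed) composition the deviation rates work out equally well, since $\mathcal{H}_{q,q'}$ has rate $0$ and $\mathcal{H}_{q',p}$ has rate $s(q',p)\leq s(q,p)$.

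A second point: Proposition~\ref{p:homotopy-HH} is stated only for \emph{filtered} functors, while here at least one of the functors involved is LD. The extension to $0$-homotopic LD functors (restricted to the length-$N$ truncation, which absorbs the linear deviation into a uniform shift) is routine, but it is an extra step you are invoking implicitly. The paper likewise leaves this implicit under ``properties of the continuation functors,'' which in the Fukaya setting include the full package of composition-up-to-homotopy results for continuation maps, beyond the bare axioms of a homotopy system listed in \S\ref{sbsb:homotopy-sys}.
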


Exploiting again the fact that the length filtration is preserved by maps induced on Hochschild chains by $A_\infty$-functors, we define $$F^NPHH(\widehat{\mathcal{B}}):=\colim_{p\in \mathcal{P}}F^NPHH(\mathcal{B}_p)$$ for any $N\geq 1$. 

As a result of Lemma \ref{gio:homcont} and of the fact that $\nu(q)\to 0$ as $q\to \infty$, we get a unique map $$H^{F^NPHH}_{\infty,p}\colon F^NPHH(\widehat{\mathcal{B}})\to \Sigma^{-2N\nu(p)}F^NPHH(\mathcal{B}_p)$$ for any $p\in \mathcal{P}$, such that the diagram 
\begin{equation}		\label{gio:colimitcont} \xymatrixcolsep{3pc}
	\xymatrixrowsep{3pc} \xymatrix{\Sigma^{-2N\nu(q)}PHH(\mathcal{B}_q) \ar[rr]
		\ar[dr]_-{\mathcal{H}^{F^NPHH}_{q,p}} & & F^NPHH(\widehat{\mathcal{B}})
		\ar[dl]^-{\mathcal{H}^{F^NPHH}_{\infty,p}}\\
		& \Sigma^{-2N\nu(p)}F^NPHH(\mathcal{B}_p)& }
\end{equation}
commutes for all $q\in \mathcal{P}$ with $p\preceq q$.

So far, what we proved are intrinsic properties of homotopy systems, with no geometric considerations. We now move to geometry and bring in the open-closed maps.

We consider the quantum cohomology $QH(X,\Lambda)$ as a trivial directed system of persistence modules parametrized by $\mathcal{P}$.
\begin{lem}\label{gio:homoc}
	The family of persistence maps $$\left\{OC^\mathcal{B}_p\colon PHH(\mathcal{B}_p)\to QH(X,\Lambda)\right\}_{p\in \mathcal{P}}$$ defines a homomorphism of directed systems of persistence modules. 
\end{lem}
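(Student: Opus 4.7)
The plan is to reduce the lemma to checking, for every pair $p \preceq q$ in $\mathcal{P}$, that the persistence square
\begin{equation*}
\xymatrix{PHH(\mathcal{B}_p) \ar[r]^{\mathcal{H}_{p,q}^{PHH}} \ar[rd]_{OC^\mathcal{B}_p} & PHH(\mathcal{B}_q) \ar[d]^{OC^\mathcal{B}_q} \\ & QH(X,\Lambda)}
\end{equation*}
commutes, which is the only nontrivial condition since the target system $QH(X,\Lambda)$ is taken to be constant. Because the map $\mathcal{H}_{p,q}^{PHH}$ is, by Proposition \ref{p:homotopy-HH}, independent of the specific choice of continuation functor in $\mathcal{J}_{p,q}$, it suffices to exhibit, for a single convenient choice $\mathcal{H}_{p,q} \in \mathcal{J}_{p,q}$, a persistence chain homotopy between $OC^\mathcal{B}_q \circ \mathcal{H}_{p,q}^{CC}$ and $OC^\mathcal{B}_p$.

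The principal construction is a parametrized open-closed map. For each tuple $\vec{L} = (L_0, \ldots, L_d)$ in $\mathcal{B}$ one introduces a moduli space of disks with $d+1$ negative boundary punctures and one interior marked point, equipped with a continuation-type perturbation datum: near one distinguished strip-like end the Hamiltonian perturbation is given by a monotone homotopy between the $q$-Floer datum and the $p$-Floer datum (this is the continuation part), near every other strip-like end by the $p$-Floer datum, and the interior marked point is incidence-constrained to lie on an unstable trajectory of $-\nabla_{g^X} f^X$ ending at a critical point of $f^X$, exactly as in \S\ref{gio:oc}. Rigid counts in these moduli spaces define a chain-level map $H: CC_*(\mathcal{B}_p) \to CQ(X;\Lambda)$. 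A standard analysis of the codimension-one boundary of these moduli spaces (decomposing into either ``pure $p$'' or ``pure $q$ with a continuation strip collar'' configurations, together with the usual Floer and pearl breakings) yields the identity $\partial H + H \partial = OC^\mathcal{B}_q \circ \mathcal{H}_{p,q}^{CC} - OC^\mathcal{B}_p$.

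The persistence content of the lemma lies entirely in showing that $H$ is filtration-preserving. This follows from the monotonicity device of \cite{Amb:fil-fuk} reviewed in \S\ref{gio:ffuk}: since $p \preceq q$ means $H^{L_i,L_j}_q \leq H^{L_i,L_j}_p$ pointwise, one can choose the $1$-form defining the perturbation on each punctured disk so that its curvature is non-positive in the bulk and strictly negative on the continuation strip, with an additional strictly negative contribution on every other strip-like end coming from the non-degeneracy requirement $\delta \nu(p) < H^{L_i,L_j}_p$. The action-energy inequality then forces every rigid configuration to decrease action by a uniformly positive amount, so, after inductively choosing sufficiently small transversality perturbations as in \cite{Amb:fil-fuk}, $H$ is a genuine filtered chain homotopy and therefore descends to a persistence chain homotopy, giving $OC^\mathcal{B}_q \circ \mathcal{H}_{p,q}^{PHH} = OC^\mathcal{B}_p$ in the persistence category.

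The main obstacle is the coherent construction of the continuation-type perturbation scheme across all tuples $\vec{L}$ and all orders $d \geq 0$ simultaneously, in such a way that the boundary degeneration of the parametrized moduli spaces produces exactly the two wanted composites and nothing else, while the curvature bound above remains uniform. This is where the cluster/decorated-disk framework of \cite{Amb:fil-fuk}, together with the compatibility requirements of \S\ref{sb:sys-fuk} between the Floer data of $p$ and $q$ under $\mathcal{H}_{p,q}$, does all the heavy lifting; no fundamentally new ingredient beyond those already assembled in \S\ref{gio:ffuk}--\S\ref{gio:oc} is required.
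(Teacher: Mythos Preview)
Your proposal is correct and follows essentially the same approach as the paper, which in fact only sketches the argument: the paper states that one must show $OC^\mathcal{B}_q\circ \mathcal{H}^{PHH}_{p,q}=OC^\mathcal{B}_p$ by constructing a filtered chain homotopy, noting this is ``very similar to the proof of Proposition~\ref{gio:propabo} and is omitted here.'' Your write-up is considerably more detailed than what the paper provides, correctly identifying the parametrized open-closed moduli space and the negative-curvature mechanism from \cite{Amb:fil-fuk} as the source of the filtration bound.
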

\begin{proof}
	To show that $\{OC^\mathcal{B}_p\}_p$ is a homomorphism of directed systems, we have to prove that $OC^\mathcal{B}_q\circ \mathcal{H}^{PHH}_{p,q}=OC^\mathcal{B}_p$. To this aim, one defines a chain homotopy between the two maps at the chain level, which are filtered chain maps, and shows that it is filtered. This is very similar to the proof of Proposition \ref{gio:propabo} and is omitted here.
\end{proof}

As a result of Lemma \ref{gio:homoc}, we obtain a unique persistence morphism $$\widehat{OC}^\mathcal{B}:=\colim_{p\in \mathcal{P}}OC^\mathcal{B}_p\colon PHH(\widehat{\mathcal{B}})\to QH(X,\Lambda)$$ such that the diagram 
\begin{equation}		\label{gio:colimitoc} \xymatrixcolsep{3pc}
	\xymatrixrowsep{3pc} \xymatrix{PHH(\mathcal{B}_p) \ar[rr]
		\ar[dr]_-{OC^\mathcal{B}_p} & & PHH(\widehat{\mathcal{B}})
		\ar[dl]^-{\widehat{OC}^\mathcal{B}}\\
		& QH(X,\Lambda)& }
\end{equation}
commutes for all $p\in \mathcal{P}$. The morphism $\widehat{OC}^\mathcal{B}$ is called the \textit{colimit open-closed map} for $\mathcal{B}$ and is the persistence map appearing in the statement of Theorem \ref{gio:wabomain}.\\

In fact, we will prove more.

\begin{lem}
	Let $p\in \mathcal{P}$. The persistence diagram \begin{equation}		\label{gio:colimitoc2} \xymatrixcolsep{3pc}
		\xymatrixrowsep{3pc} \xymatrix{\Sigma^{-2N\nu(p)}F^NPHH(\mathcal{B}_p) 
			\ar[d]_-{\Sigma^{-2N\nu(p)}OC^\mathcal{B}_p}  & &F^NPHH(\widehat{\mathcal{B}}) \ar[ll]^-{\Sigma^{-2N\nu(p)}\mathcal{H}^{F^NPHH}_{\infty,p}}
			\ar[d]^-{\widehat{OC}^\mathcal{B}}\\
			\Sigma^{-2N\nu(p)}QH(X,\Lambda) & & QH(X,\Lambda) \ar[ll] }
	\end{equation} commutes, where the bottom map is induced by the structural maps of the persistence module $QH(X,\Lambda)$.
\end{lem}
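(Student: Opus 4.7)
The plan is to invoke the universal property of the colimit $F^NPHH(\widehat{\mathcal B})=\colim_{q\in\mathcal P}F^NPHH(\mathcal B_q)$. By the defining properties of $\widehat{OC}^{\mathcal B}$ and $\mathcal H^{F^NPHH}_{\infty,p}$ (diagrams \eqref{gio:colimitcont} and \eqref{gio:colimitoc}), precomposing the square in question with the canonical map $\iota_q\colon F^NPHH(\mathcal B_q)\to F^NPHH(\widehat{\mathcal B})$ for $q$ with $p\preceq q$ reduces the claim to the following finite-level identity: for every such $q$,
$$\Sigma^{-2N\nu(p)}OC^{\mathcal B}_p\circ \mathcal H^{F^NPHH}_{q,p}=\iota^{QH}_{q,p}\circ\Sigma^{-2N\nu(q)}OC^{\mathcal B}_q,$$
as maps $\Sigma^{-2N\nu(q)}F^NPHH(\mathcal B_q)\to\Sigma^{-2N\nu(p)}QH(X,\Lambda)$, where $\iota^{QH}_{q,p}$ is the persistence structural map of $QH(X,\Lambda)$ associated with the inequality $\nu(q)\leq\nu(p)$ (which holds because $p\preceq q$).

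To establish this identity, I would construct a filtered chain homotopy on $F^NCC(\mathcal B_q)$ between the two chain-level representatives. The homotopy is defined by counting rigid clusters carrying one interior marked point decorated as in the definition of $OC$ from \S\ref{gio:oc} (whose output lies in $CQ(X,\Lambda)$), together with negative boundary strip-like ends on each of which the Hamiltonian datum is a one-parameter continuation interpolating between the $p$-data and the $q$-data, precisely of the same type used in \cite{Amb:fil-fuk} to define $\mathcal H_{q,p}$. Standard compactness and transversality arguments on the resulting one-parameter family of moduli spaces identify the two ends of the homotopy with, on one side, the composition $OC^{\mathcal B}_p\circ\mathcal H^{CC}_{q,p}$ (the continuation strips concentrate at the boundary punctures) and, on the other side, $OC^{\mathcal B}_q$ followed by the persistence inclusion $QH(X,\Lambda)\to\Sigma^{-2N(\nu(p)-\nu(q))}QH(X,\Lambda)$ (the continuation data degenerate to the constant $q$-data). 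This is structurally parallel to the argument behind Lemma \ref{gio:homoc}, which treats the easier, strictly-filtered, forward-continuation case.

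The main obstacle is quantitative: the constructed chain homotopy must shift filtration by at most $2N(\nu(p)-\nu(q))$ on the length-$N$ part of the Hochschild complex, in order to yield, after suspension, a chain homotopy between the two maps appearing in the displayed identity above. Each continuation strip-like end appearing in a cluster contributes a curvature term bounded above by $2(\nu(p)-\nu(q))$; since a cluster contributing at the $N$th level of the length filtration has at most $N$ such ends, the total curvature of any such cluster is bounded above by $2N(\nu(p)-\nu(q))$, which is exactly the required shift. The interior perturbation near the marked point is modelled on the fixed Morse--Floer data defining $OC$ and does not contribute to the curvature difference. The strict-negativity margin of the perturbation data from \cite{Amb:fil-fuk} (recalled in \S\ref{gio:ffuk}) ensures that the generic small further perturbations needed for transversality do not spoil this filtration bound, completing the argument.
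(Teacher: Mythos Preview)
Your proposal is correct and follows essentially the same route as the paper. The paper's proof also reduces to the finite-level square~\eqref{gio:colimitoc3} for $p\preceq q$ (your displayed identity), proves its commutativity by the same chain-homotopy argument as Lemma~\ref{gio:homoc} while tracking the filtration shift, and then passes to the colimit over $q$; your version simply spells out the curvature bookkeeping that the paper leaves implicit.
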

\begin{proof}
	Let $p,q$ in $\mathcal{P}$ such that $p\preceq q$. Consider the persistence diagram
	\begin{equation}		\label{gio:colimitoc3} \xymatrixcolsep{3pc}
		\xymatrixrowsep{3pc} \xymatrix{\Sigma^{-2N\nu(p)}F^NPHH(\mathcal{B}_p) 
			\ar[d]_-{OC^\mathcal{B}_p}  & & F^NPHH(\mathcal{B}_q) \ar[ll]^-{\mathcal{H}^{F^NPHH}_{q,p}}
			\ar[d]^-{OC^\mathcal{B}_q}\\
			\Sigma^{-2N\nu(p)}QH(X,\Lambda) & & QH(X,\Lambda) \ar[ll] }
	\end{equation}
	This diagram commutes, and the proof goes along the same lines as the proof of Lemma \ref{gio:homoc}, with the slight complication that one has to keep track of the shift in filtration. For varying $q$ such that $p \preceq q$, the above diagram induce a commutative diagram of direct systems of persistence modules. Note that the colimit of $\Sigma^{Ns(q,p)}QH(X,\Lambda)$ over this family is equal to $\Sigma^{-2N\nu(p)}QH(X,\Lambda)$. This proves the claim.
\end{proof}

\subsubsection{Proof of Theorem \ref{gio:wabomain}}\label{subsec:proof_split_app}

It is known that the infinity-level open closed map provides an isomorphism $PHH^\infty(\mathcal{B}_p)\cong QH^\infty(X,\Lambda)$ as vector spaces \cite{Gan:thesis}, and that the unit $u\in QH^0(X,\Lambda)$ persists to $QH^\infty(X,\Lambda)$. Any element in $PHH^\infty(\mathcal{B}_p)$ can be represented by a Hochschild chain of length $\leq N_p$ for some $N_p\geq 0$. Moreover, we can choose $N_p$ such that the spectral invariants of $F^{N_p}PHH(\mathcal{B}_p)$ are optimal, i.e. they agree with those of $PHH(\mathcal{B}_p)$ for for the homology classes that persist to infinity in both persistence modules. We claim that $N_p$ is independent of $p\in \mathcal{P}$. This follows from the following fact: by forgetting filtrations, the functors $\mathcal{H}_{q,p}$ induce a unique map $PHH^\infty(\mathcal{B}_q) \to PHH^\infty(\mathcal{B}_p)$ which is an isomorphism  (since, forgetting filtrations, continuation funtors are quasi-equivalence of $A_\infty$-categories) and, hence, preserve the length filtration (note that, of course, all of this holds for the maps induced by $\mathcal{H}_{p,q}$ on $PHH^\infty$). We write $N_\mathcal{B}:=N_p$.

At this point, the proof of Theorem \ref{gio:wabomain} becomes a matter of book-keeping.

	Let $\mathcal{F}^\epsilon\subset \lagmon$ as in the statement. We work with the notation above in the case $\mathcal{B}=\mathcal{F}^\epsilon$. Consider the constant $N_{\mathcal{F}^\epsilon}$ introduced above. Let now $a\in PHH^\epsilon(\widehat{\mathcal{F}^\epsilon})$ be such that $\widehat{OC}(a)=i_{o,\epsilon}(u_\mathbf{d})$, where $u_\mathbf{d}\in QH^0(X,\Lambda)_\mathbf{d}$ is the projection of the cohomological unit in $QH(X,\Lambda)$. From the above it follows that $a$ can be seen as an element in $F^{N_{\mathcal{F}^\epsilon}}PHH(\widehat{\mathcal{F}^\epsilon})$. From this and the commutativity of Diagram (\ref{gio:colimitoc2}) we get $$R(u, OC^{\mathcal{F}^\epsilon}_p)\leq \epsilon+2N_{\mathcal{F}^\epsilon}\nu(p)$$ for all $p\in \mathcal{P}$. Using Proposition \ref{gio:propabo}, we conclude that $\lagmon$ is $\frac{\epsilon}{2}$-retract approximable by $\mathcal{F}^\epsilon$ in the system $\widehat{\fuk}(\lagmon)$ in the sense of Definition \ref{d:approx-sys-ret}. \qed

\begin{rem}
	The complex formalism in the proof of Theorem \ref{gio:wabomain} is due to the lack of a concept of colimit of homotopy system of filtered $A_\infty$-categories. Indeed, the open-closed map appearing in the statement of Theorem \ref{gio:wabomain} should be thought as the open-closed map associated with a limit Fukaya category (see the discussion at the beginning of \S\ref{sb:sys-inv}).
\end{rem}

%
	
	
	\subsection{Proof of Corollary \ref{S2T2}}\label{gio:proofS2T2}

	We split the proofs of the two geometric situations into subsections. The basic idea is the same: cut the ambient manifold into more and more equally spaced straight slices. We will keep on writing Fukaya categories as $\mathcal{A}_p$ and systems of Fukaya categories as $\widehat{\mathcal{A}}$ as indicated at the beginning of Section \ref{sec:split-app}.
\subsubsection{Approximability of equators on the $2$-sphere} In what follows we will refer to a closed monotone Lagrangian $L\subset S^2$ as an "equator". Note that these Lagrangians are precisely the embedded curves in $S^2$ which separate it into two domeains of equal area.

The first Chern class of $S^2$ vanishes in $H^2(S^2;\mathbb{Z}_2)$, so the eigenspace of the quantum homology $QH(S^2,\Lambda)$ associated to the zero eigenvalue is the whole space. \\ In this subsection, we will only consider (without loss of generality) perturbation data $p\in \mathcal{P}$ satisfying the following two conditions:
\begin{enumerate}
	\item Let $L\in \lag^{\text{(mon,} \mathbf{0} \text{)}}(S^2)$ be an equator on $S^2$ and consider the Morse function $f^L_p\colon L\to \mathbb{R}$ in the $p$-Floer datum of $L$. We assume that $f^L_p$ has a unique maximum $e_L\in L$ and a unique minimum $\text{pt}_L\in L$. We will always assume that both $e_L$ and $\text{pt}_L$ are neither the north nor south poles of $S^2$.
	\item Consider the Morse function $f^{S^2}_p\colon S^2\to \mathbb R$ in the $p$-Floer datum for $S^2$. We assume that $f^{S^2}_p$ has a unique maximum $u_{S^2}\in S^2$ and a unique minimum $\text{pt}_{S^2}\in S^2$ and no other critical points. We further assume that both $u_{S^2}$ and $\text{pt}_{S^2}$ differ from both the north and south poles of $S^2$, 
\end{enumerate}
We will abuse notation and write this "restriction" of $\mathcal{P}$ again as $\mathcal{P}$ and consider the homotopy system of $A_\infty$-categories $\widehat{\mathcal{A}}= \widehat{\fuk}(\lag^{\text{(mon,} \mathbf{0} \text{)}}(S^2))$ as parametrized by this restricted $\mathcal{P}$.
\begin{prop}\label{singlelagsphere}
	Let $L\in \lag^{\text{(mon,} \mathbf{0} \text{)}}(S^2)$ be an equator on $S^2$. Then $\lag^{\text{(mon,} \mathbf{0} \text{)}}(S^2)$ is $\frac{1}{4}$-retract approximable by $L$.
\end{prop}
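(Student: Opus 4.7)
The plan is to apply Theorem \ref{gio:wabomain} to the singleton family $\mathcal{F}_{1/2} := \{L\}$ with $\epsilon = 1/2$, invoking the single-Lagrangian refinement mentioned at the end of \S\ref{gio:aboappr}: since $\mathcal{B} = \{L\}$, the coproduct $\Delta$ produced by Proposition \ref{coproductmain} is genuinely filtered, so that no $\nu(p)$ shift arises in Corollary \ref{gio:keycor}, and the criterion of Theorem \ref{gio:wabomain} with $\epsilon = 1/2$ will immediately give the desired $\frac{1}{4}$-retract approximation. Throughout, I will normalize the symplectic form so that $\omega(S^2) = 1$; then every equator separates $S^2$ into two open disks of area $\frac{1}{2}$, and the quantum eigenvalue decomposition is trivial, $QH(S^2, \Lambda)_{\mathbf{0}} = QH(S^2, \Lambda)$, since $2c_1(S^2) \equiv 0$ in characteristic two. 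The required verification is that the colimit open-closed map
\[
\widehat{OC} \colon PHH_*(\widehat{\{L\}}) \longrightarrow QH^{*+1}(S^2, \Lambda)
\]
carries an element of filtration $< 1/2$ onto the image $i_{0, 1/2}(u)$ of the unit $u \in QH(S^2, \Lambda)$.

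The heart of the proof is an explicit computation of the length-zero component $OC_0 \colon CF(L, L; p) \to CQ(S^2; \Lambda)$ for perturbation data $p \in \mathcal{P}$ of small size $\nu(p)$. Using the pearl model, I would choose, as allowed by the standing assumptions on $\mathcal{P}$, a Morse function on $L \cong S^1$ with a unique maximum $e_L$ and unique minimum $\text{pt}_L$, together with a Morse function on $S^2$ whose unique maximum $u_{S^2}$ represents the cohomological unit $u$ and lies in the interior of one of the two disks bounded by $L$. A standard moduli-count (analogous to the computations in \cite{Bi-Co:Yasha-fest, she:fanofuk}) then produces exactly one rigid perturbed $J$-holomorphic disk contributing to $OC_0(\text{pt}_L)$: the Maslov-$2$ disk of area $\frac{1}{2}$ whose boundary marked point maps to $\text{pt}_L$ and whose interior marked point is constrained to $u_{S^2}$. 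Weighted by its symplectic area, this contribution yields
\[
[OC_0(\text{pt}_L)] \;=\; T^{1/2} \cdot u \quad \in \; QH(S^2, \Lambda),
\]
modulo additional $O(\nu(p))$ action shifts coming from the Hamiltonian term $\int H_p\,dt$ in the action formula \eqref{eq:action-mon}. Equivalently, the Novikov multiple $T^{-1/2} \text{pt}_L \in CF(L, L; p)$ is a Hochschild cycle of filtration $\frac{1}{2} + O(\nu(p))$ which maps under $OC_0$ to $u$.

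Passing to the colimit over $p \in \mathcal{P}$ as constructed in \S\ref{gio:aboappr}, the $O(\nu(p))$ perturbation shifts become arbitrarily small, so that $u$ lies in the image of $PHH^{1/2}(\widehat{\{L\}})$ under $\widehat{OC}$. By the single-Lagrangian version of Theorem \ref{gio:wabomain} (in which the coproduct $\Delta$ does not contribute a $\nu(p)$ shift), this yields $\frac{1}{4}$-retract-approximability of $\lag^{(\text{mon}, \mathbf{0})}(S^2)$ by $\{L\}$.

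The main technical obstacle I anticipate is the careful bookkeeping of the action filtration in the monotone setting: the Hamiltonian perturbations required for transversality of the various moduli spaces entering the chain-level definition of $OC_0$ introduce action shifts of order $\nu(p)$, and the identification in the colimit of $T^{1/2} u$ (viewed as an element of the $\Lambda_0$-submodule $QH^{1/2}$) with $i_{0, 1/2}(u)$ must be done compatibly with the continuation functors. The homotopy-system machinery developed in \S\ref{s:sys-tpc} and the properties of continuation functors recalled in \S\ref{sb:sys-fuk} are exactly designed to handle this, and together with the rigid disk count above they close the argument.
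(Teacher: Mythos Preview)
Your computation of $OC_0(\text{pt}_L)$ is incorrect, and the error is fatal for the argument. You claim that the only rigid configuration contributing to $OC_0(\text{pt}_L)$ is a single Maslov-$2$ disk of area $\tfrac12$, giving $OC_0(\text{pt}_L)=T^{1/2}u_{S^2}$. But you have overlooked the constant (Maslov-$0$) disk at $\text{pt}_L$: its interior marked point sits at $\text{pt}_L\in S^2$, which lies in the unstable manifold of the minimum $\text{pt}_{S^2}$ of $f^{S^2}_p$, and this configuration is rigid. The correct value is
\[
OC(\text{pt}_L)=\text{pt}_{S^2}+T^{1/2}u_{S^2},
\]
which the paper records explicitly in the remark immediately following the proof. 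Since $u_{S^2}$ and $\text{pt}_{S^2}$ are $\Lambda$-linearly independent in $QH(S^2,\Lambda)$, no Novikov rescaling of $\text{pt}_L$ can hit $i_{0,\epsilon}(u_{S^2})$: for instance $OC(T^{-1/2}\text{pt}_L)=T^{-1/2}\text{pt}_{S^2}+u_{S^2}\neq u_{S^2}$. Thus the length-zero part of the open-closed map does \emph{not} reach the unit in this $\mathbb{Z}_2$-coefficient setting, and the hypothesis of Theorem~\ref{gio:wabomain} is not verified by your element.

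The paper's proof avoids this by working one step higher in the Hochschild length filtration. It takes the length-one chain $\text{pt}_L\otimes\text{pt}_L\in CC_*(\{L\})$, observes that it is a Hochschild cycle (the two cyclic $\mu_2$-contractions cancel over $\mathbb{Z}_2$), and computes directly that
\[
OC(\text{pt}_L\otimes\text{pt}_L)=T^{1/2}u_{S^2},
\]
with no $\text{pt}_{S^2}$ term. Since $\text{pt}_L\otimes\text{pt}_L$ lies at filtration $\leq 0$ (indeed $PHH_*(L_p)$ is independent of $p$ here), this gives $R(u_{S^2},OC)\leq\tfrac12$, and the single-Lagrangian refinement of Theorem~\ref{gio:wabomain} then yields $\tfrac14$-retract-approximability. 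The moral: the right Hochschild witness here lives in length one, not length zero.
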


\begin{proof}
	Let $p\in \mathcal{P}$. It is easy to see
	that at the chain level we have $$OC_p^L(\text{pt}_L\otimes \text{pt}_L) = T^{\frac{1}{2}}u_{S^2}.$$ Moreover, since $d_{CC}$ is cyclic, $\text{pt}_L\otimes \text{pt}_L$ is a cycle in $CC_*(L,L)$. Note that the persistence Hochschild homology $PHH_*(L_p)$ does not depend on $p\in \mathcal{P}$. The claim then follows by Theorem \ref{gio:wabomain}.
\end{proof}
\begin{rem}
a. We actually proved that for a single equator the constant $\delta>0$ appearing in Definition \ref{d:approx-sys-ret} can be taken to be arbitrarily big. This is due to the fact that for the self-Floer homology of $L$ we do not need any Hamiltonian perturbation.

b. The linear component $ HF(L,L)\to QH(S^2)$ of the open-closed map does not hit the unit in our setting: indeed $OC(e_L)=0$ by a degree argument while $OC(\text{pt}_L) = \text{pt}_{S^2}+T^{\frac{1}{2}}u_{S^2}$. Note however, that if we would have worked with coefficients in the Novikov field over the real numbers, the situation would have been different, as $T^{-1\frac{1}{2}}\text{pt}_{S^2}+u_{S^2}$ is the projection of the unit in the zero eigensummand of quantum homology of $\mathcal{C}P^1$ over this coefficient field.\\ Note that as $\text{pt}_{S^2}+T^{\frac{1}{2}}u_{S^2}$ and $u_{S^2}$ are non-homologous cycles in $CQ(S^2)$, it follows that $[\text{pt}_{L}]$ and $[\text{pt}_{L}\otimes \text{pt}_{L}]$ generate $HH(L)$.

c. As we know that $OC^L\colon HH(L)\to QH(S^2)$ is an isomorphism, it follows from the computation above that $e_L\in PHH^\infty(L,L)$ has to be a boundary. In fact, it equals $T^{-\frac{1}{2}}d_{CC}(a_L\otimes a_L\otimes a_L)$, where $a_L = \text{pt}_L+T^{\frac{1}{4}}e_L$; indeed notice that $a_L\in CF(L,L)$ is a cycle satisfying $$\mu_k(a_L,\ldots, a_L) =T^{\frac{1}{2}}e_L \text{  for any }k\geq 2.$$
Hence, $[e_L]$ has boundary depth $\frac{1}{2}$ in $PHH(L)$. Note that $[e_L\otimes \text{pt}_L]$ also has boundary depth equal to $\frac 12$. Indeed $$d_{CC}(a_L^{\otimes 4}) = T^{\frac{1}{2}}(e_L\otimes \text{pt}_L + \text{pt}_L\otimes e_L)$$ and $\text{pt}_L\otimes e_L = d_{CC}(\text{pt}_L\otimes e_L\otimes e_L)$.
\end{rem}

We will now show that we can approximate the Fukaya category of $S^2$ with better accuracy by considering larger approximating families. Fix an equator $S^1\subset S^2$ as a reference and consider the set $\mathcal{E}\subset \lag^{\text{(mon,} \mathbf{0} \text{)}}(S^2)$ great circles which pass trough the north $n\in S^2$ and south pole $s\in S^2$. 
Of course, any two Lagrangians in $\mathcal{E}$ intersect transversally. Recall that in our setting, given a perturbation datum $p\in \mathcal{P}$, the Hamiltonian part of the $p$-Floer datum associated to a pair $(L_0,L_1)$ of transversally intersecting Lagrangians is assumed to be constant (see page \pageref{gio:FD}), and this constant is $<\nu(p)$, the size of $p$, by definition. Moreover, we require $H^{L_0,L_1}_p=H^{L_1,L_0}_p$. Hence the Floer complexes $CF(L_0,L_1;p)$ and $CF(L_1,L_0;p)$ are generated by the (finitely many) intersection points in $L_0\cap L_1$, and these lie at the same filtration level. Suppose $L_1\in \mathcal{E}$ is obtained by rotating another equator $L_0\in \mathcal{E}$ by an angle smaller than $\pi$. Of course, $L_0\cap L_1$ consists of the north and south poles only: we work with the convention that in the complex $CF(L_0,L_1;p)$ the north pole $n_{0}\in CF(L_0,L_1;p)$ has grading $1$ and the south pole $s_{0}\in CF(L_0,L_1;p)$ has grading $0$, while in the complex $CF(L_1,L_0;p)$ this is viceversa (and the poles are denoted by $n_{1}$ and $s_{1}$ in this complex).

The relevant notion of approximability for the next result is the retract version of Definition \ref{def:simple_approx}.
\begin{prop}\label{OCS2}
	Let $p\in \mathcal{P}$ and $N\geq 2$. Consider a finite family of great circles $\mathcal{E}(N)=\{L_1,\ldots, L_N\}\subset \mathcal{E}$ on $S^2$, all passing through the north and south poles and such that $L_i$ lies at angle difference $\frac{\pi}{2N}$ from $L_{i+1}$. Then $\lag^{\text{(mon,} \mathbf{0} \text{)}}(S^2)$ is retract-approximable in $\mathcal{A}_p$ by the family $\mathcal{E}(N)$ with accuracy $\frac{1}{4N}+2\nu(p)$.
\end{prop}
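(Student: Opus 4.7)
The plan is to apply Theorem \ref{gio:wabomain} (via Corollary \ref{gio:keycor}) to the approximating family $\mathcal{F}_\epsilon := \mathcal{E}(N)$, for which it suffices to exhibit a persistence Hochschild cycle $c \in CC_*(\mathcal{E}(N)_p)$ whose action filtration level is at most $\tfrac{1}{2N} + 2\nu(p)$ and whose image under the open-closed map represents the unit $u_{\mathbf{d}} = u_0 \in QH(S^2,\Lambda)$ at that filtration level. Indeed, by Corollary \ref{gio:keycor} this would give $(\tfrac{1}{4N} + \nu(p))$-retract approximability, which is a fortiori the stated $\tfrac{1}{4N} + 2\nu(p)$ bound.

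\textbf{Geometric input.} The key geometric feature is that any two adjacent equators $L_i, L_{i+1} \in \mathcal{E}(N)$ meet at angle $\pi/(2N)$ along the north and south poles $\{n, s\}$, and they bound two "small" lunes of symplectic area $\tfrac{1}{2N}$ (when the sphere has total area $1$). Denote by $n_{i,i+1} \in CF(L_i, L_{i+1};p)$ and $s_{i+1,i} \in CF(L_{i+1}, L_i;p)$ appropriate markings of these intersection points (by the conventions of \S\ref{gio:ffuk}, recalled in Remark \ref{rem:const_ham}, the Hamiltonian for this Floer datum can be taken constant with value $\leq \nu(p)$, so each such generator lies at action filtration level $\leq \nu(p)$).

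\textbf{The Hochschild cycle.} Consider the length-$2$ candidate
\[
c \;=\; \sum_{i=1}^{N-1} \; n_{i,i+1} \otimes s_{i+1,i} \;\in\; \bigoplus_{i=1}^{N-1} CF(L_i,L_{i+1};p) \otimes CF(L_{i+1},L_i;p) \;\subset\; CC_*(\mathcal{E}(N)_p).
\]
Each tensor $n_{i,i+1} \otimes s_{i+1,i}$ lies in filtration $\leq 2\nu(p)$. I would first verify that $c$ (possibly after adding correction terms at the same filtration level) is closed under the Hochschild differential: the internal $\mu_1$-terms vanish mod $2$ because the two strips of equal area between $n$ and $s$ cancel, and the $\mu_2$-terms $\mu_2(s_{i+1,i}, n_{i,i+1})$ and $\mu_2(n_{i,i+1}, s_{i+1,i})$ land in the quantum cohomologies of $L_{i+1}$ and $L_i$ respectively, where they can either be checked to vanish at filtration $\leq 2\nu(p)$ or absorbed by adding lower-length Hochschild correction terms (of the form $\text{pt}_{L_j}$) that themselves sit at filtration $\leq \nu(p)$. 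This is the main bookkeeping step.

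\textbf{Computation of $OC(c)$.} By the construction of the open-closed map in \S\ref{gio:oc}, each summand $OC^{\mathcal{E}(N)}_p(n_{i,i+1} \otimes s_{i+1,i})$ counts perturbed pseudo-holomorphic bigons with boundary on $L_i \cup L_{i+1}$ and one interior marked point constrained to lie on an unstable manifold of $f^{S^2}_p$. The rigid bigons of minimal area are precisely the two small lunes of area $\tfrac{1}{2N}$; each passes through almost every point of the sphere and hence contributes $T^{1/(2N)} u_{S^2}$ (with possible additional contributions from the larger lunes weighted by higher powers of $T$, which only help us). After summation this gives
\[
OC^{\mathcal{E}(N)}_p(c) \;=\; T^{1/(2N)} \, u_{S^2} \;+\; (\text{terms of strictly lower action}),
\]
possibly up to boundary terms in $CQ(S^2;\Lambda)$ of filtration $\leq 2\nu(p)$. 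Since $c$ itself lies at filtration $\leq 2\nu(p)$, filtered-ness of $OC$ forces all contributions to lie at action level $\leq 2\nu(p)$, and the unit $u_{\mathbf{d}}$ is hit in $QH^{1/(2N)+2\nu(p)}(S^2,\Lambda)$.

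\textbf{Conclusion.} This shows $R(u_{\mathbf{d}}, OC^{\mathcal{E}(N)}_p) \leq \tfrac{1}{2N} + 2\nu(p)$. Invoking Corollary \ref{gio:keycor} then yields retract-approximability of $\lag^{\text{(mon,} \mathbf{0} \text{)}}(S^2)$ by $\mathcal{E}(N)$ in $PD(\mathcal{A}_p)$ with accuracy $\tfrac{1}{4N} + 2\nu(p)$, as claimed.

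\textbf{Main obstacle.} The delicate point is the cycle condition: verifying that $c$ (with at most length-$1$ Hochschild correction terms living at filtration $\leq \nu(p)$) is closed under $d_{CC}$. This requires a careful local analysis of the Floer strips between the two pole intersections and of the triangles among triples $(L_i, L_{i+1}, L_j)$, including those involving non-adjacent great circles. Everything reduces to elementary computations on $S^2$ thanks to the $\mathbb{Z}_2$-cancellation of symmetric lunes and triangles with reflected orientations, but the combinatorial bookkeeping is the only genuinely non-formal part of the argument; the $OC$-computation itself is then essentially dictated by the area of the minimal lune.
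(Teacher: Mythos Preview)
Your overall strategy---produce a low-action Hochschild cycle in $CC_*(\mathcal{E}(N)_p)$, compute its image under $OC$, and invoke Corollary~\ref{gio:keycor}---is exactly the paper's approach. The gap is in the construction of the cycle itself.

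The paper's cycle is the \emph{cyclic} sum $\vec{\gamma} = \sum_{i=1}^{N} n_i \otimes s_i'$, where $L_{N+1} := L_1$ and $n_i \in CF(L_i, L_{i+1})$, $s_i' \in CF(L_{i+1}, L_i)$ denote the north and south poles. A direct lune count gives $\mu_2(n_i, s_i') = T^{1/(2N)} e_i$ and $\mu_2(s_i', n_i) = T^{1/(2N)} e_{i+1}$, so $d_{CC}(n_i \otimes s_i') = T^{1/(2N)}(e_i + e_{i+1})$; the cyclic sum then telescopes to $0$ over $\mathbb{Z}_2$ with no correction terms whatsoever. Your sum runs only from $i=1$ to $N-1$, so $d_{CC}(c)$ leaves the uncancelled remainder $T^{1/(2N)}(e_1 + e_N)$. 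Your proposed fix of adding length-$0$ chains $\text{pt}_{L_j}$ cannot absorb this: in the pearl model $d_{CC}(\text{pt}_{L_j}) = \mu_1(\text{pt}_{L_j}) = 0$. The missing ingredient is precisely the wrap-around summand $n_N \otimes s_N'$ coming from $CF(L_N, L_1)$; once it is included the ``main obstacle'' you flag simply disappears.

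Your $OC$ computation is also too loose. A lune of area $1/(2N)$ does not pass through almost every point of the sphere; it covers a $1/(2N)$-fraction of it. The paper handles this by placing the critical point $u_{S^2}$ of $f_p^{S^2}$ in the slice between $L_1$ and $L_2$: then only the summand $n_1 \otimes s_1'$ can contribute to the $u_{S^2}$-coefficient, and one reads off $OC_p^{\mathcal{E}(N)}(\vec{\gamma}) = T^{1/(2N)} u_{S^2}$ directly. From there $R(u_{\mathbf 0}, OC_p^{\mathcal E(N)}) \le \tfrac{1}{2N} + 2\nu(p)$ and Corollary~\ref{gio:keycor} gives the stated accuracy.
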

\begin{figure}
	\includegraphics[scale=0.3]{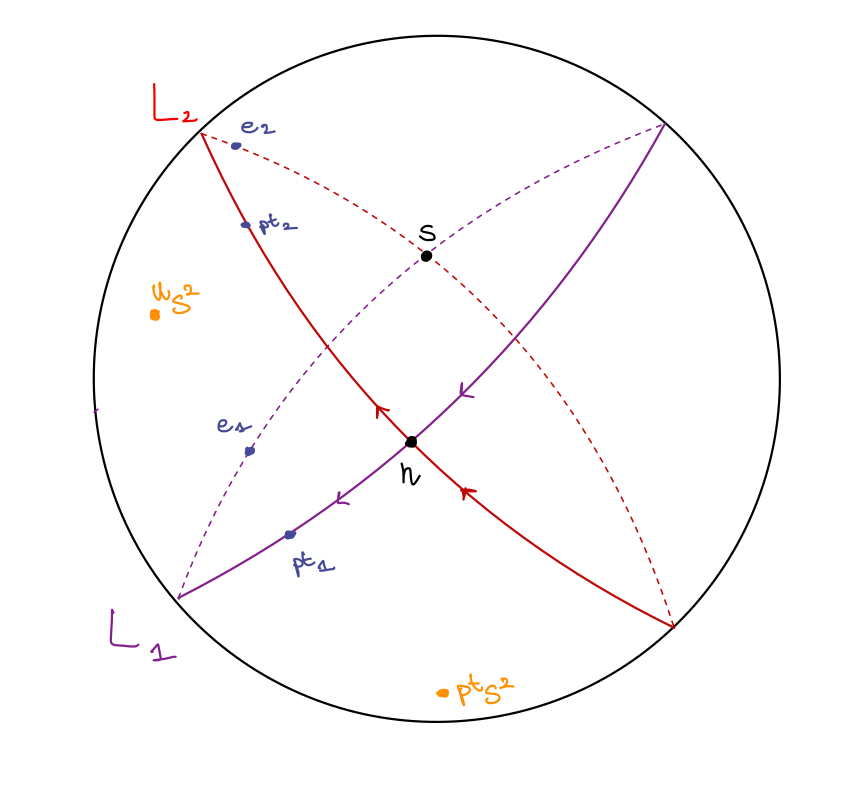}
	\centering
	\caption{The geometric situation described in Proposition \ref{OCS2} in the case $N=2$.} \label{fig:S2approx}
\end{figure}
\begin{rem}
\begin{proof}
	We will number our equators so that looking from the north pole, they are ordered clockwise starting from $L_1$. We drop $p$ from the notation and write $C_i:=CF(L_{i},L_{i+1})$ and $C_i':= CF(L_{i+1},L_i)$ for any $i=1,\ldots, N$\footnote{With the convention $L_{N+1}=L_1$.}. For any $i$, we denote by $n_i, s_i \in C_i$  and $n_i',s_i'\in C_i'$ the north and south poles respectively. To simplify the computations in the remainings of the proof, we impose the following additional restrictions on the functions $f^{L_i}_p$ in the $p$-Floer datum of each equator $L_i$: we require that flowing from the north pole to the south pole, one encounters first $\text{pt}_i$ and then $e_i$, before getting to the south pole. We also require that the critical points $u_{S^2}$ and $\text{pt}_{S^2}$ of the Morse function $f^{S^2}_p$ in the $p$-Floer datum of the ambient manifold $f_p^{S^2}\colon S^2\to \mathbb{R}$ both lie in the area between $L_1$ and $L_2$, but not on $L_1$ or $L_2$.
	
	We claim that $$\vec{\gamma}:=\sum_{i=1}^N n_i\otimes s_i'\in \bigoplus_{i=1}^N C_i\otimes C_i'\subset CC_*(\mathcal{E}(N))$$ is a Hochschild cycle, and moreover $$OC_p^{\mathcal{E}(N)}\left(\sum_{i=1}^N n_i\otimes s_i'\right)=T^{\frac{1}{2N}}u_M.$$ This would prove the statement as $$\mathbb{A}_{CC}\left(\sum_{i=1}^N n_i\otimes s_i'\right) = \mathbb{A}_{CC}(n_i\otimes s_i') = \mathbb{A}(n_i)+\mathbb{A}(s_i')\leq 2\nu(p)$$ by definition of the filtration on the Hochschild complex.
	
	We first prove that $\vec{\gamma}$ is a Hochschild cycle. Let $i\in\{1,\ldots, N\}$. We compute $$\mu_2(n_i,s_i') = T^{\frac{1}{2N}}e_i \text{  and  } \mu_2(s_i',n_i) = T^{\frac{1}{2N}}e_{i+1}$$ by just counting slices of the sphere as in Figure \ref{fig:S2approx}. By definition of the Hochschild differential we then have $$d_{CC}(n_i\otimes s_i')=T^{\frac{1}{2N}}(e_i+e_{i+1}),$$ hence $$d_{CC}(\vec{\gamma}) = T^{\frac{1}{2N}}\sum_{i=1}^N \left(e_i+e_{i+1}\right) = 0$$ as we work with $\mathbb{Z}_2$ coefficients. This shows that $\vec{\gamma}$ is an Hochschild cycle.
	
	The computation of $OC_p^{\mathcal{B}}(\vec{\gamma})$ is easy in this setting. Because of the choice of $f^{S^2}_p$ we have $$OC_p^\mathcal{B}(\vec{\gamma}) = OC_p^\mathcal{B}(n_1\otimes s_1') = T^{\frac{1}{2N}}u_{S^2}$$ by degree reasons.
\end{proof}

	Another way to prove the above proposition is to show that the element $\vec{\gamma}$ is homologous, in $CC(\mathcal{E}(N))$, to $T^{-\frac{N-1}{2N}}\text{pt}_{L_i}\otimes \text{pt}_{L_i}\in CF(L_i,L_i)\otimes CF(L_i,L_i)$ for any $i$.
\end{rem}


\subsubsection{Approximability of non-contractible circles on the $2$-torus}
Consider the standard symplectic $2$-torus $\mathbb{T}^2=\mathbb{R}^2/\mathbb{Z}^2$ with unit volume. We are interested in the class of non-contractible Lagrangians $\lagwex(\mathbb{T}^2)$. We will use coordinates $(x,y)$ on $\mathbb{T}^2$ and denote by $(n,m)$ the lattice coordinates on $\mathbb{Z}^2$. We recall that, for us, the weakly exact case is a particular instance of the monotone setting. 

We introduce the following notation: $$L_y:=\{x=0\}\subset \mathbb{T}^2\text{    and    }L^\theta_x:=\{y=\theta\}\subset \mathbb{T}^2 \text{  for any $\theta\in S^1$} $$ and denote the unique intersection point between $L_y$ and $L_x^\theta$ by $a^\theta\in L^\theta_x\pitchfork L_y.$ We denote $L_x:=L_x^0$ and $a:=a^0$. In this section, we will work with the family $$\mathcal{N}:=\{L_y\}\cup\{L_x^\theta\bigm| \theta\in S^1\}\subset \lagwex(\mathbb{T}^2).$$ 
We will write $a^\theta$ seen as the generator of Floer complexes as $$a^\theta_{xy}\in CF(L^\theta_x,L_y) \text{   and   } a^\theta_{yx}\in CF(L_y,L^\theta_x)$$ for any choice of perturbation datum $p\in \mathcal{P}$, which we omit from the notation. In this section, we will only consider (without loss of generality) perturbation data $p\in \mathcal{P}$ satisfying the following:
\begin{enumerate}
	\item Consider the Morse function $f^{\mathbb{T}^2}_p\colon \mathbb{T}^2\to \mathbb R$ in the $p$-Floer datum for $\mathbb{T}^2$. We assume that $f^{\mathbb{T}^2}_p$ has exactly four critical points: a unique maximum $u_{\mathbb{T}^2}\in \mathbb{T}^2$, two saddle points $s_1,s_2\in \mathbb{T}^2$ and a unique minimum $\text{pt}_{\mathbb{T}^2}\in \mathbb{T}^2$. We further assume that all these critical points do not lie on the Lagrangian $L_y$ introduced above.
	\item Let $L\in \lagwex(\mathbb{T}^2)$ and consider the Morse function $f^L_p\colon L\to \mathbb{R}$ in the $p$-Floer datum of $L$. We assume that $f^L_p$ has a unique maximum $e_L\in L$ and a unique minimum $\text{pt}_L\in L$. We will always assume that both $e_L$ and $\text{pt}_L$ do not lie on the Lagrangian $L_y$ introduced above. In the special cases where $L$ is one of the Lagrangians $L_y$ or $L_x^\theta$ in $\mathcal{N}$, we denote the critical points as $e_y$, $pt_y$ or $e_x^\theta$, $pt_y^\theta$.
\end{enumerate}
\begin{rem}
	The role of $x$ and $y$ are intercheangeable. Moreover, we could work with the countable subfamily $\mathcal{N}_c$ containing $L_y$ and $L_x^\theta$ for all $\theta \in \mathbb{Q}/\mathbb{Z}$.
\end{rem}

As for the case of the $2$-sphere in the subsection above, we first prove a warm up low-accuracy retract-approximation result, before moving to a more general case. In the following proposition we will work with the Lagrangian $L_x$, but the same results hold for all $L_x^\theta$.
\begin{prop}
	Let $p\in \mathcal{P}$. Then $\lagwex(\mathbb{T}^2)$ is retract approximable in $\mathcal{A}_p$ by $\mathcal{B}_{xy}:=\{L_x,L_y\}$ with accuracy $\frac{1}{2}+3\nu(p)$.
\end{prop}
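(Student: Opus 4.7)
The proof proceeds by exhibiting an explicit Hochschild cycle and invoking Corollary \ref{gio:keycor}. In contrast to the sphere case, where the minimal holomorphic polygon joining equators was a $2$-cornered slice, for the torus the minimal relevant polygon is a lattice rectangle with $4$ boundary punctures. Accordingly, the natural candidate cycle lies in $CC_3$ rather than $CC_1$, and takes the form
\begin{equation*}
\vec{\gamma} \;:=\; T^{-1}\cdot\bigl(a_{xy}\otimes a_{yx}\otimes a_{xy}\otimes a_{yx}\bigr)\;\in\; CF(L_x,L_y)\otimes CF(L_y,L_x)\otimes CF(L_x,L_y)\otimes CF(L_y,L_x).
\end{equation*}
Since by our choice of perturbation data the Hamiltonian $H_p^{L_x,L_y}$ is constant of size at most $\nu(p)$, each generator $a_{xy},a_{yx}$ has action $\leq \nu(p)$, so $\mathcal{A}(\vec{\gamma})\leq 1+4\nu(p)$.

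The first step is to verify that $\vec{\gamma}$ is a Hochschild cycle. Lifting to the universal cover $\mathbb{R}^2$, in which $L_x$ and $L_y$ lift to horizontal and vertical lattice lines meeting only at integer points, one reads off that $\mu_1=0$ (no non-degenerate bigons) and that $\mu_2(a_{xy},a_{yx})=0=\mu_2(a_{yx},a_{xy})$ (non-degenerate triangles with edges on horizontal and vertical lattice lines cannot close up). The contributions of $\mu_k$ for $k\geq 3$ count lattice rectangles and related polygons. The key observation is that the two non-cyclic $\mu_3$ terms in $d_{CC}(\vec{\gamma})$, namely $\mu_3(a_{xy},a_{yx},a_{xy})\otimes a_{yx}$ and $a_{xy}\otimes\mu_3(a_{yx},a_{xy},a_{yx})$, both evaluate to $T\cdot a_{xy}\otimes a_{yx}$ and therefore cancel modulo $2$, and similarly for the two cyclic $\mu_3$ contributions. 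An analogous pairwise-partner argument, using the reflection symmetry of the contributing rectangular regions in $\mathbb{R}^2$, handles $\mu_4$ and higher.

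The second step is to compute $OC^{\mathcal{B}_{xy}}_p(\vec{\gamma})$. The open-closed map applied to $a_{xy}\otimes a_{yx}\otimes a_{xy}\otimes a_{yx}$ counts holomorphic disks with four boundary punctures and one interior marked point flowing along $-\nabla f^{\mathbb{T}^2}_p$ to a critical point of $f^{\mathbb{T}^2}_p$. Lifted to $\mathbb{R}^2$, such disks are precisely the lattice rectangles with sides on lifts of $L_x$ and $L_y$; the minimal ones are unit squares of area $1$, each covering a fundamental domain of $\mathbb{T}^2$ once, and by the degree constraint only the unique maximum $u_{\mathbb{T}^2}$ receives a contribution. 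After multiplying by the $T^{-1}$ prefactor, and making a mild $(1-T)$-correction to $\vec{\gamma}$ of the same action level that absorbs the higher-area rectangle contributions via the standard geometric series trick, we obtain $OC^{\mathcal{B}_{xy}}_p(\vec{\gamma})=u_{\mathbb{T}^2}$ at filtration level $1+4\nu(p)$. Since $\mathbb{T}^2$ is weakly exact we have $\mathbf{d}=0$ and $u_{\mathbf{0}}=u_{\mathbb{T}^2}$, so $R\bigl(u_{\mathbf{0}},OC^{\mathcal{B}_{xy}}_p\bigr)\leq 1+4\nu(p)$. Corollary \ref{gio:keycor} then yields retract-approximability with accuracy $\tfrac{1+4\nu(p)}{2}+\nu(p)=\tfrac{1}{2}+3\nu(p)$, as claimed.

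The main difficulty is the cycle verification, because the higher $\mu_k$-operations do not vanish individually but only after the non-cyclic and cyclic parts of the Hochschild differential are combined; the universal-cover picture, in which every contributing lattice rectangle has a natural partner obtained by transposing a diagonal pair of corners, is what makes the pairwise cancellations modulo $2$ transparent.
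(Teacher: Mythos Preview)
Your overall strategy is the same as the paper's: take the length-$4$ Hochschild chain $a_{xy}\otimes a_{yx}\otimes a_{xy}\otimes a_{yx}$, show it is a cycle, compute its image under $OC$, and apply Corollary~\ref{gio:keycor}. The final arithmetic $\tfrac12(1+4\nu(p))+\nu(p)=\tfrac12+3\nu(p)$ is also correct. But two of the intermediate computations are wrong.

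\textbf{Cycle check.} Your value $\mu_3(a_{xy},a_{yx},a_{xy})=T\,a_{xy}$ is incorrect. The contributing $4$-gons in the universal cover are \emph{all} $n\times m$ lattice rectangles, not just the unit square; and for each rectangle there are \emph{two} ways to place the output puncture (at either of the two diagonally opposite $a_{xy}$-type corners). Hence $\mu_3(a_{xy},a_{yx},a_{xy})=2\sum_{n,m>0}T^{nm}a_{xy}=0$ over $\mathbb{Z}_2$, and likewise $\mu_4(a_{xy},a_{yx},a_{xy},a_{yx})=2\sum_{n,m>0}nT^{nm}e_x=0$. Each cyclic contraction therefore vanishes \emph{individually}; no pairwise-partner mechanism is needed, and the one you describe is built on the wrong counts. (Your $\mu_2$ argument via ``triangles cannot close up'' also does not match the model in use here: $CF(L_x,L_x)$ is the Morse complex, and the vanishing comes from the two Morse trajectories on the circle $L_x$ from $a$ to $\text{pt}_x$ cancelling in pairs.)

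\textbf{Open-closed computation.} By the same rectangle count one has $OC(a_{xy}\otimes a_{yx}\otimes a_{xy}\otimes a_{yx})=\sum_{n,m>0}nm\,T^{nm}u_{\mathbb{T}^2}$, which over $\mathbb{Z}_2$ reduces to $\sum_{k\geq 0}T^{(2k+1)^2}u_{\mathbb{T}^2}$. This is a genuine theta-like series with lowest exponent~$1$; a ``$(1-T)$-correction'' does not invert it. What one actually uses is that the inverse of this Novikov series has valuation $-1$ (hence lies at filtration level $\leq 1$), so multiplying the Hochschild cycle by that inverse yields a cycle at level $\leq 1+4\nu(p)$ mapping to $u_{\mathbb{T}^2}$, i.e.\ $R(u_{\mathbb{T}^2},OC_p^{\mathcal{B}_{xy}})\leq 1+4\nu(p)$.
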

\begin{proof}
	Consider the Hochschild chain $\vec{a}=a_{xy}\otimes a_{yx}\otimes a_{xy}\otimes a_{yx}\in CC_*(\mathcal{B}_{xy})$. We show it is a cycle. For this, we compute all possible cyclic partial contractions.
	\begin{enumerate}
		\item First, both $\mu_2(a_{xy},a_{yx})\in CF(L_x,L_x)$ and $\mu_2(a_{yx},a_{xy})\in CF(L_y,L_y)$ vanish, as the only allowed contributions are Morse trajectories to $\text{pt}_x$ and $\text{pt}_y$ respectively, which come in pairs.
		\item The terms $\mu_3(a_{xy},a_{yx},a_{xy})\in CF(L_x,L_y)$ and $\mu_3(a_{yx},a_{xy},a_{yx})$ vanish as well. For instance, $$\mu_3(a_{xy},a_{yx},a_{xy}) = 2\sum_{n,m>0}T^{nm}a_{xy}$$ as we can see the "two opposite" $a_{xy}$ in the fundamental square of $\mathbb{T}^2$ as exits in two different ways (for each lattice-polygon $(n,m)$).
		\item The terms $\mu_4(a_{xy},a_{yx},a_{xy},a_{yx})\in CF(L_x,L_x)$ and $\mu_4(a_{yx},a_{xy},a_{yx},a_{x,y})\in CF(L_y,L_y)$ both vanish. For instance $$\mu_4(a_{xy},a_{yx},a_{xy},a_{yx})=2\sum_{n,m>0}nT^{nm}e_x$$ since in a lattice polygon of width $n$ we can have $e_x$ as an exit in $2n$ different ways.
	\end{enumerate}
	It follows that $\vec{a}\in CC_*(B_{xy})$ is a cycle. We compute $$OC^{\mathcal{B}_{xy}}(\vec{a}) = \sum_{n,m>0}nmT^{nm}u_{\mathbb{T}^2}$$ since in every $(n,m)$ lattice polygon we can see $u$ as an exit in $nm$ different position (one for every fundamental square embedded in the polygon). Since we are working over $\mathbb{Z}_2$ it follows that $$OC^{\mathcal{B}_{xy}}(\vec{a}) = \sum_{n>0}n^2T^{n^2}u_{\mathbb{T}^2} =  \sum_{n\geq0}T^{(2n+1)^2}u_{\mathbb{T}^2}.$$ Note that the smallest Novikov exponent in this expression is $1$. Since intersection points lie at filtration level $\leq \nu(p)$ by definition, the claim follows.
\end{proof}
\begin{rem}
	More formally, given the Novikov element $$f(T)= \sum_{n\geq0}T^{(2n+1)^2}$$ we have $$f^{-1}(T) = T^{-1}\sum_{n\geq 0}g_nT^n$$ where $g_0=1$ and $$g_n = \sum_{e\in E\setminus\{0\}, \ e\leq n}g_{n-e}$$ where $E= \{(2n+1)^2-1\ : \ n\geq 0\}$. Indeed, writing $\frac{f(T)}{T}= \sum_{n\geq 0}f_nT^n$ where $f_n =1 $ if and only if $n\in E$, we have that $f_0\cdot g_0=1$ while the coefficient of $T^n$ for $n\geq 1$ in $f\cdot f^{-1}$ is 
	\begin{align*}
		\sum_{j=1}^nf_jg_{n-j} = \sum_{j\in E, \ j\leq n}g_{n-j} = g_n+  \sum_{j\in E\setminus\{0\}, \ j\leq n}g_{n-j}=0
	\end{align*}
	It follows directly from our definition of the action filtration on $\Lambda$, that $f^{-1}\in \Lambda$ lies at filtration level $\leq 1$.\\
	Using the formalism of theta-functions we can write $$\sum_{n\geq 0}T^{(2n+1)^2}= \sum_{n\in \mathbb Z}T^{(4n+1)^2}=\theta_{\frac{1}{4},0}(16,0).$$ But also notice that $$\sum_{n\geq 0}n^2T^{n^2}=T\theta'_{0,0}(1,0)$$
\end{rem}
\begin{prop}\label{prop235}
	Let $p\in \mathcal{P}$ and and $N\geq 1$. Consider the family $\mathcal{N}(N)= \{L_y,L_1,\ldots, L_N\}\subset \mathcal{N}$, where $$L_j=L_x^{\frac{j-1}{N}}$$ for $j=1,\ldots, N$. $\lagwex(\mathbb{T}^2)$ is retract approximable in $\mathcal{A}_p$ by $\mathcal{N}(N)$ with accuracy $\frac{1}{2N}+ 3\nu(p)$.
\end{prop}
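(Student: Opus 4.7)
The plan is to follow the strategy of the preceding proposition (the $N=1$ case), applying Theorem \ref{gio:wabomain} through its immediate Corollary \ref{gio:keycor}: if I can produce a Hochschild cycle $\vec{\gamma}\in CC_*(\mathcal{N}(N)_p)$ at action $\leq 4\nu(p)$ whose open-closed image satisfies $OC^{\mathcal{N}(N)}_p(\vec{\gamma})=T^{1/N}u_{\mathbb{T}^2}+O(T^{>1/N})$, then $R(u_{\mathbf{0}},OC^{\mathcal{N}(N)}_p)\leq 1/N+4\nu(p)$, and Corollary \ref{gio:keycor} yields retract-approximability at accuracy $\tfrac{1}{2N}+3\nu(p)$, matching the statement.

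The geometric idea behind the improvement from the $N=1$ case is that $L_1,\ldots,L_N$ together with $L_y$ subdivide the torus into $N$ rectangular strips of area $1/N$, and each such strip supports a rigid $J$-holomorphic $4$-gon with boundary cyclically on $L_j, L_y, L_{j+1}, L_y$ whose lift to the universal cover is the unit rectangle $[0,1]\times [(j-1)/N, j/N]$. Writing $\alpha_j\in CF(L_y,L_j;p)$ and $\beta_j\in CF(L_j,L_y;p)$ for the two Floer incarnations of the unique intersection point $a^{(j-1)/N}\in L_j\cap L_y$, I would set
$$\vec{\gamma}:=\sum_{j=1}^N\beta_j\otimes\alpha_{j+1}\otimes\beta_{j+1}\otimes\alpha_j$$
with indices cyclic mod $N$; each summand is a valid cyclic $4$-term Hochschild chain with Lagrangian tuple $(L_j, L_y, L_{j+1}, L_y)$, and its total action is bounded by $4\nu(p)$.

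The main computation has two parts. First, I would verify $d_{CC}(\vec{\gamma})=0$. The $\mu_k$-contractions of each summand split into intrinsic ones, involving only one of the pairs $(\alpha_j,\beta_j)$ or $(\alpha_{j+1},\beta_{j+1})$, which vanish by the mod-$2$ symmetric pairing of triangles used in the $N=1$ case, and mixed ones, landing in $CF(L_y,L_y;p)$ (producing Morse-type outputs of the form $e_{L_j}+e_{L_{j+1}}$ up to Novikov shifts) or in the Hamiltonian-perturbed complexes $CF(L_j, L_{j+1};p)$. The $CF(L_y,L_y;p)$-outputs will telescope cyclically to zero mod $2$ when summed over $j$, exactly as in the sphere computation of Proposition \ref{OCS2}. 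Second, I would compute $OC^{\mathcal{N}(N)}_p(\vec{\gamma})$ by counting rigid rectangles: the minimal unit rectangle in each summand contributes $T^{1/N}u_{\mathbb{T}^2}$ via a degree count landing on the unique maximum of $f_p^{\mathbb{T}^2}$, while integer dilates contribute strictly higher Novikov orders.

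The main obstacle will be the cancellation of the mixed contractions in $CF(L_j, L_{j+1};p)$: since $L_j$ and $L_{j+1}$ are disjoint, these Floer complexes are built from Hamiltonian chords and depend delicately on the perturbation data $H^{L_j, L_{j+1}}_p$. My plan is to choose the perturbation data for the pairs $(L_j, L_{j+1})$, $j=1,\ldots,N$, coherently invariant under the vertical translation by $1/N$, so that the contributions from consecutive summands $\vec{\gamma}_j$ are exact translates of each other and cancel pairwise mod $2$. Should $\vec{\gamma}$ fail to be a cycle as written, it will need to be corrected by Morse-type generators on $L_y$ (or on the $L_j$'s) at action $\leq \nu(p)$, in the same spirit in which the specific length-$4$ structure $a_{xy}\otimes a_{yx}\otimes a_{xy}\otimes a_{yx}$ is forced already in the warm-up proposition.
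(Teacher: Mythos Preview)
Your strategy and your candidate chain $\vec{\gamma}=\sum_j\beta_j\otimes\alpha_{j+1}\otimes\beta_{j+1}\otimes\alpha_j$ are exactly what the paper uses (there written $\gamma^j=a^j_{xy}\otimes a^{j+1}_{yx}\otimes a^{j+1}_{xy}\otimes a^{j}_{yx}$), and your action bookkeeping $R(u_{\mathbf{0}},OC)\le \tfrac{1}{N}+4\nu(p)$ leading via Corollary~\ref{gio:keycor} to accuracy $\tfrac{1}{2N}+3\nu(p)$ is correct. You also correctly anticipate that $\sum_j\gamma^j$ is not a cycle and must be corrected. What is off is the diagnosis of \emph{why}.

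The complexes $CF(L_j,L_{j+1};p)$ are not the obstacle: $L_j$ and $L_{j+1}$ are disjoint parallel circles, so one may take the Hamiltonian datum constant and then $CF(L_j,L_{j+1};p)=0$; all $\mu_2$-contractions landing there vanish. (Likewise the $\mu_2$-contractions into $CF(L_y,L_y)$ vanish in pairs exactly as in the $N=1$ case.) The genuine obstruction comes from the four $\mu_3$-contractions of each $\gamma^j$, which produce length-$2$ Hochschild chains: one computes $\mu_3(\beta_j,\alpha_{j+1},\beta_{j+1})=\tilde q(T)\,\beta_j$ and its three cyclic companions, with $\tilde q(T)\in\Lambda$ of lowest exponent $1/N$. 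After summing over $j$ the $\mu_4$-outputs $q(T)(e_{L_j}+e_{L_{j+1}})$ telescope (these land in $CF(L_j,L_j)$, not in $CF(L_y,L_y)$ as you wrote), but the $\mu_3$-outputs leave the residue $\tilde q(T)\sum_j\bigl(\beta_j\otimes\alpha_j+\alpha_j\otimes\beta_j\bigr)$, which does \emph{not} telescope. The fix in the paper is to add the length-$3$ correction $\tilde q(T)\sum_j e_{L_j}\otimes\beta_j\otimes\alpha_j$; since $e_{L_j}$ is a strict unit, $d_{CC}$ of this correction is exactly the residue, and its action is $\le 2\nu(p)-\tfrac{1}{N}<4\nu(p)$, so the total cycle remains at action $\le 4\nu(p)$.

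Finally, your $OC$ computation needs a small adjustment: not every summand $\gamma^j$ contributes a minimal $T^{1/N}u_{\mathbb{T}^2}$. Only the strip $j=i$ that contains the unique Morse maximum $u_{\mathbb{T}^2}$ of $f^{\mathbb{T}^2}_p$ sees a rigid rectangle of area $1/N$ through that point; for $j\neq i$ the minimal rectangle through $u_{\mathbb{T}^2}$ has area $1-\tfrac{1}{N}$. The correction terms satisfy $OC(e_{L_j}\otimes\beta_j\otimes\alpha_j)=0$, so the lowest-order output is indeed $T^{1/N}u_{\mathbb{T}^2}$, coming solely from $\gamma^i$.
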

\begin{figure}
	\centering
	\begin{tikzpicture}[x=1cm,y=1cm, line cap=round, line join=round]
		
		\definecolor{myPurple}{RGB}{204,0,0}
		\definecolor{myOrange}{RGB}{0,100,200}
		\definecolor{myGreen}{RGB}{90,170,70}
		
		\def\xL{0}
		\def\xR{7}    
		
		\def\yTop{7.0}
		\def\yAA{6.2}
		\def\yA{4.8}
		\def\yB{4}
		\def\yC{2.8}
		\def\yD{2}
		\def\yE{0.8}
		\def\yBot{0.0}
		
		\draw[myOrange, line width=3pt] (\xL,\yBot) -- (\xL,\yTop);
		\draw[myOrange, line width=3pt] (\xR,\yBot) -- (\xR,\yTop);
		
		\newcommand{\hlinewithdots}[1]{%
			\draw[myPurple, line width=3pt] (\xL,#1) -- (\xR,#1);
			\fill[black] (\xL,#1) circle (1.6pt);
			\fill[black] (\xR,#1) circle (1.6pt);
		}
		
		\hlinewithdots{\yTop}
		\hlinewithdots{\yAA}
		\hlinewithdots{\yA}
		\hlinewithdots{\yB}
		\hlinewithdots{\yC}
		\hlinewithdots{\yD}
		\hlinewithdots{\yE}
		\hlinewithdots{\yBot}
		
		\fill[myPurple] (2.0,\yTop) circle (1.4pt);
		\fill[myPurple] (4.8,\yTop) circle (1.4pt);
		
		\fill[myPurple] (2.1,\yBot) circle (1.4pt);
		\fill[myPurple] (4.9,\yBot) circle (1.4pt);
		
		\foreach \yy in {5.6,5.4,5.2} { \fill[myPurple] (3.5,\yy) circle (1.1pt); }
		\foreach \yy in {3.6,3.4,3.2} { \fill[myPurple] (3.5,\yy) circle (1.1pt); }
		\foreach \yy in {1.6,1.4,1.2} { \fill[myPurple] (3.5,\yy) circle (1.1pt); }
		
		\node[myOrange, anchor=south west, scale=1.2] at (-0.3,\yTop) {$L_y$};
		
		\fill[myOrange] (\xL, {(\yA+\yB)/2}) circle (2.5pt);
		\node[myOrange, anchor=east, scale=1.1] at (-0.2, {(\yA+\yB)/2}) {$e_y$};
		
		\node[black, anchor=east, scale=1.2] at (-0.25,\yBot) {$a^1$};
		\node[black, anchor=east, scale=1.2] at (-0.25,\yE) {$a^2$};
		\node[black, anchor=east, scale=1.2] at (-0.25,\yD) {$a^i$};
		\node[black, anchor=east, scale=1.3] at (-0.35,{(\yE+\yD)/2}) {$\vdots$};
		\node[black, anchor=east, scale=1.3] at (-0.35,{(\yB+\yC)/2}) {$\vdots$};
		
		\node[myPurple, anchor=west, scale=1.2] at (\xR+0.25,\yTop) {$L_1$};
		\node[myPurple, anchor=west, scale=1.2] at (\xR+0.25,\yAA) {$L_{N}$};
		\node[myPurple, anchor=west, scale=1.2] at (\xR+0.25,\yA) {$L_{\ell+1}$};
		\node[myPurple, anchor=west, scale=1.2] at (\xR+0.25,\yB) {$L_{\ell}$};
		\node[myPurple, anchor=west, scale=1.2] at (\xR+0.25,\yC) {$L_{i+1}$};
		\node[myPurple, anchor=west, scale=1.2] at (\xR+0.25,\yD) {$L_i$};
		\node[myPurple, anchor=west, scale=1.2] at (\xR+0.25,\yE) {$L_2$};
		\node[myPurple, anchor=west, scale=1.2] at (\xR+0.25,\yBot) {$L_1$};
		
		\fill[myGreen] (4.1,2.3) circle (1.6pt);
		\node[myGreen, anchor=west, scale=1.2] at (4.2,2.3) {$u_{\mathbb{T}^2}$};
		
		\node[myPurple, anchor=north, scale=1.2] at (2.1,-0.25) {$\text{pt}_1$};
		\node[myPurple, anchor=north, scale=1.2] at (4.9,-0.25) {$e_1$};

		\fill[myPurple] (2,\yTop) circle (2.1pt);
		\fill[myPurple] (4.85,\yTop) circle (2.1pt);
		
		\fill[myPurple] (2,\yBot) circle (2.1pt);
		\fill[myPurple] (4.85,\yBot) circle (2.1pt);
		
	\end{tikzpicture}
	\caption{The fundamental domain of $\mathbb{T}^2$ with the approximating family of Lagrangians $\mathcal{N}(N)$ discussed in Proposition \ref{prop235}.} \label{fig:T2approx}
\end{figure}

\begin{proof}
	We assume that the critical point $u_{\mathbb{T}^2}$ doesn't lie on any Lagrangian in $\mathcal{N}(N)$. We denote by $e_j\in CF(L_j,L_j)$ and $\text{pt}_j\in CF(L_j,L_j)$ the maximum and the minimum of the Morse function $f_p^{L_j}$ for any $j=1,\ldots, N$. Moreover, for any $j$, we denote by $a^j_{xy}\in CF(L_j,L_y)$ and $a^j_{yx}\in CF(L_y,L_j)$ the intersection point $a$ seen as a generator of the Floer complexes. We denote by $i\in \{1,\ldots, N\}$ the unique index such that $u_{\mathbb{T}^2}$ lies in the area between $L_i$ and $L_{i+1}$. Similarly, we denote by $l$ the unique index such that $e_y\in CF(L_y,L_y)$ lies in the $L_y$-segment between $L_l$ and $L_{l+1}$. Figure \ref{fig:T2approx} provides a schematic depiction of the geometric setup.
	
	For any $j=1,\ldots, N$, consider the Hochschild chain\footnote{Here $N+1=1$.} $$\gamma^j:=a^j_{xy}\otimes a^{j+1}_{yx}\otimes a^{j+1}_{xy}\otimes a^{j}_{yx}\in CC_*(\mathcal{N}(N)).$$ Given a real number $A\in \mathbb{R}$, let\footnote{Here $h$ stands for \textit{horizontal} and $v$ for \textit{vertical.}} $$q^h_{A}(T):= \sum_{n,m>0}n(T^{n(m-1+A)}+ T^{n(m-A)}) \text{   and   } \tilde{q}^h_{\frac{1}{N}}(T):`= \sum_{n,m>0}T^{n(m-1+A)}+ T^{n(m-A)}$$ as well as 
	$$q^v_A(T):= \sum_{n,m>0}m(T^{n(m-A)}+ T^{n(m+A)}) \text{   and   } \tilde{q}^v_A(T):= \sum_{n,m>0}T^{n(m-A)}+ T^{n(m+A)} $$ be elements of the Novikov field $\Lambda$. We compute the Hochschild differential of $\gamma^j$ by computing all possible contractions. 
	\begin{enumerate}
		\item We have $$\mu_4(a^{j}_{xy},a^{j+1}_{yx}, a^{j+1}_{xy}, a^{j}_{yx}) = q^h_{\frac{1}{N}}(T)e_j\text{   and   } \mu_4(a^{j+1}_{xy},a^{j}_{yx},a^{j}_{xy},a^{j+1}_{yx})= q^h_{\frac{1}{N}}(T)e_{j+1},$$ while $$\mu_4(a^{j}_{yx},a^{j}_{xy},a^{j+1}_{yx}, a^{j+1}_{xy}) = \mu_4(a^{j+1}_{yx},a^{j+1}_{xy},a^{j}_{yx},a^{j}_{xy})= \begin{cases}
			q^v_{\frac{1}{N}}(T) e_y, \text{  if }j\neq l\\
			q^v_{1-\frac{1}{N}}(T)e_y, \text{  if }j=l.
		\end{cases} $$ 
		\item We have $$\mu_3(a^{j}_{xy},a^{j+1}_{yx}, a^{j+1}_{xy}) = \tilde{q}^h_{\frac{1}{N}}(T)a^j_{xy} \text{   and   }\mu_3(a^{j+1}_{yx},a^{j+1}_{xy},a^{j}_{yx}) = \tilde{q}^h_{\frac{1}{N}}(T)a^{j}_{yx}$$
		while $$\mu_3(a^{j}_{yx},a^{j}_{xy},a^{j+1}_{yx}) = \tilde{q}^h_{\frac{1}{N}}(T)a^{j+1}_{yx} \text{   and   }\mu_3(a^{j+1}_{xy},a^j_{yx},a^{j}_{xy}) = \tilde{q}^h_{\frac{1}{N}}(T)a^{j+1}_{xy}$$
		
	\end{enumerate}
	We now compute the relevant $\mu_3$'s.
	It follows that \begin{align*}
		d_{CC}(\gamma^j) & = q^h_{\frac{1}{N}}(T)(e_j+e_{j+1}) + 2q^v_{c_j}(T)e_y + \tilde{q}^h_{\frac{1}{N}}(T)(2a^j_{xy}\otimes a^j_{yx} + a^{j+1}_{xy}\otimes a^{j+1}_{yx} + a^{j+1}_{yx}\otimes a^{j+1}_{xy}) \\ & = q^h_{\frac{1}{N}}(T)(e_j+e_{j+1}) + \tilde{q}^h_{\frac{1}{N}}(T)(a^{j+1}_{xy}\otimes a^{j+1}_{yx} + a^{j+1}_{yx}\otimes a^{j+1}_{xy})
	\end{align*}
	where $c_j = \frac{1}{N}$ if $j\neq l$ and $c_j=1-\frac{1}{N}$ otherwise.
	Hence $$d_{CC}\left(\sum_{j=1}^N \gamma^j\right) =  \tilde{q}^h_{\frac{1}{N}}(T)\sum_{j=1}^N(a^{j}_{xy}\otimes a^{j}_{yx} + a^{j}_{yx}\otimes a^{j}_{xy}) $$ as by summing over $j$ the summands of the form $e_j$ cancel out in pairs. In particular, $\sum_{j=1}^N\gamma^j$ is not a cycle. However, we claim that $$\vec{\gamma} = \sum_{j=1}^N\gamma^j + \tilde{q}^h_{\frac{1}{N}}(T)\ e_j\otimes a^j_{xy}\otimes a^j_{yx}\in CC_*(\mathcal{N}(N))$$ is a cycle. Indeed: $$d_{CC}(e_j\otimes a^j_{xy}\otimes a^j_{yx}) = \mu_2(e_j,a^j_{xy})\otimes a^j_{yx} + \mu_2(a^j_{yx},e_j)\otimes a^j_{xy} =a^{j}_{xy}\otimes a^{j}_{yx} + a^{j}_{yx}\otimes a^{j}_{xy},$$ since $e_j$ is a strict unit and the product of the point with itself vanishes.\\
	
	We now compute $OC^{\mathcal{N}(N)}(\vec{\gamma})\in CQ(\mathbb{T}^2)$. Recall that $i\in \{1,\ldots, N\}$ is defined as the only index such that the critical point $u=u_{\mathbb{T}^2}$ of the Morse function $f^{\mathbb{T}^2}_p$ used to define $QH_*(M)$ lies in the area between $L_i$ and $L_{i+1}$. Let $j\in \{1,\ldots, N\}$, then \begin{align*}
		OC(\gamma^j) &= \begin{cases}
			\sum_{n,m>0}nm(T^{n(m-\frac{1}{N})}+T^{n(m+\frac{1}{N})})u,  \text{ if }j\neq i\\
			\sum_{n,m>0}nm(T^{n(m-1+\frac{1}{N})}+T^{n(m+1-\frac{1}{N})})u,  \text{ if }j=i
		\end{cases}
	\end{align*}
	On the other hand, for all $j\in \{1,\ldots, N\}$ it is easy to see that we have $$OC(e_j\otimes a^j_{xy}\otimes a^j_{yx}) =0.$$ If $N$ is odd, then we have $$OC(\vec{\gamma}) = OC(\gamma^i) = \sum_{k\geq 0}\sum_{\substack{d=\pm 1 \ (2N)\\ d|2k+1}}T^{\frac{2k+1}{N}}$$ while if $N$ is even we have, for some $j\neq i$: $$OC(\vec{\gamma}) = OC(\gamma^i)+OC(\gamma^j) = \sum_{k\geq 0}\sum_{\substack{d=\pm 1, \pm1+N \ (2N)\\ d|2k+1}}T^{\frac{2k+1}{N}}$$
	In all cases the lowest order term is $T^{\frac{1}{N}}u_{\mathbb{T}^2}$. The claim follows.
	
\end{proof}


In Proposition \ref{prop235} we worked with only one Lagrangian in the $y$ direction. We can generalize the result to families containing $N+M$ circles, $N$ in the $x$ direction and $M$ in the $y$ direction, to get retract-approximation of order $\frac{1}{NM}$. Below we prove this for $N=M$.

\begin{prop}
	Let $p\in \mathcal{P}$ and $N\geq 1$. Consider the family $$\mathcal{N}(N)= \{L^1_y,\ldots, L_y^N,L_x^1,\ldots, L_x^N\}\subset \mathcal{N},$$ where $$L_y^j:= L_y^{\frac{j-1}{N}}\text{    and    }L_x^j=L_x^{\frac{j-1}{N}}$$ for $j=1,\ldots, N$. Then $\mathcal{N}(N)$ retract-approximates $\mathcal{L}^{we}(\mathbb{T}^2)$ with accuracy $\frac{1}{2N^2}+ 3\nu(p)$ in $\mathcal{A}_p$.
\end{prop}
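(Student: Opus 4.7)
The plan is to directly generalize the proof of Proposition~\ref{prop235} by replacing the linear arrangement of $N$ horizontal circles transverse to a single vertical $L_y$ by an $N\times N$ grid with $N$ circles in each direction. The fundamental domain of $\mathbb{T}^2$ is now subdivided into $N^2$ small rectangles of area $1/N^2$ each, so the smallest pseudo-holomorphic polygons entering the relevant $\mu_4$-operations and the open-closed map have symplectic area $1/N^2$ rather than $1/N$. Combined with Corollary~\ref{gio:keycor}, this produces retract-approximation accuracy $\tfrac{1}{2N^2}+3\nu(p)$ by exactly the same bookkeeping as in Proposition~\ref{prop235}.

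First I would fix notation. For $1\le j,k\le N$ let $a^{j,k}\in L_x^j\pitchfork L_y^k$ denote the unique intersection points (with cyclic conventions $L_x^{N+1}=L_x^1$, $L_y^{N+1}=L_y^1$), and write $a^{j,k}_{xy}\in CF(L_x^j,L_y^k)$ and $a^{j,k}_{yx}\in CF(L_y^k,L_x^j)$. For each pair $(j,k)\in\{1,\dots,N\}^2$ I introduce the \emph{plaquette} Hochschild chain
\[
\gamma^{j,k}\;:=\;a^{j,k}_{xy}\otimes a^{j+1,k}_{yx}\otimes a^{j+1,k+1}_{xy}\otimes a^{j,k+1}_{yx}\in CC_*(\mathcal{N}(N)),
\]
whose underlying cyclic sequence of Lagrangians $L_x^j\to L_y^k\to L_x^{j+1}\to L_y^{k+1}\to L_x^j$ traces the boundary of a $(1/N)\times(1/N)$ square in the fundamental domain. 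I would then compute $d_{CC}(\gamma^{j,k})$ by enumerating all cyclic partial contractions via $\mu_r$ with $r=2,3,4$, as in the one-dimensional case. The $\mu_4$-terms give contributions of the form $P^h_{j,k}(T)\,e_{L_x^j}$ and $P^v_{j,k}(T)\,e_{L_y^k}$ that telescope pairwise when summed along the corresponding grid edge; the $\mu_3$-terms give residual two-tensor contributions of the form $\widetilde Q(T)\bigl(a^{j',k'}_{xy}\otimes a^{j',k'}_{yx}+a^{j',k'}_{yx}\otimes a^{j',k'}_{xy}\bigr)$, which are killed (exactly as in Proposition~\ref{prop235}) by subtracting coboundaries of chains of the form $\widetilde Q(T)\,e_{L_x^{j'}}\otimes a^{j',k'}_{xy}\otimes a^{j',k'}_{yx}$, using strict unitality of $e_{L_x^{j'}}$. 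After the correction one obtains an honest Hochschild cycle $\vec\gamma\in CC_*(\mathcal{N}(N))$ lying at the same Hochschild filtration level as its one-dimensional counterpart in Proposition~\ref{prop235}.

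Next I would evaluate $OC^{\mathcal{N}(N)}_p(\vec\gamma)\in CQ(\mathbb{T}^2,\Lambda)$. The correction terms contribute zero by the degree/unitality argument used in Proposition~\ref{prop235}. Among the plaquette terms, only the unique rectangle $(j_0,k_0)$ containing $u_{\mathbb{T}^2}$ (together with its companion in the even-$N$ case, as before) contributes the leading Novikov monomial: the smallest pearly configuration with interior asymptote $u_{\mathbb{T}^2}$ and boundary tracing that rectangle has area $1/N^2$, so
\[
OC^{\mathcal{N}(N)}_p(\vec\gamma)\;=\;T^{1/N^2}\,u_{\mathbb{T}^2}\;+\;(\text{strictly higher Novikov order}).
\]
All higher-order contributions come from larger lattice polygons and are dominated in Novikov filtration by $T^{1/N^2}$. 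Consequently $R(u_{\mathbf 0},OC^{\mathcal{N}(N)}_p)\le 1/N^2+4\nu(p)$, and Corollary~\ref{gio:keycor} yields retract-approximability of $\lagwex(\mathbb{T}^2)$ in $\mathcal{A}_p$ by $\mathcal{N}(N)$ with accuracy $\tfrac{1}{2N^2}+3\nu(p)$.

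The main technical obstacle will be the second step: with $2N^2$ generators and many possible cyclic partial contractions at each plaquette, identifying precisely which $\mu_r$-contributions telescope across the grid and which require explicit coboundary corrections is combinatorially delicate. However, the structure is dictated by the planar combinatorics of the square lattice and propagates the pattern of Proposition~\ref{prop235} plaquette by plaquette, so no new $A_\infty$-phenomenon beyond what already appears in the one-dimensional case is expected.
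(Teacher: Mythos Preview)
Your proposal is correct and follows essentially the same approach as the paper: both define length-4 Hochschild chains indexed by the small squares of the $N\times N$ grid, argue that their sum (after unit-type corrections lifted from Proposition~\ref{prop235}) is a Hochschild cycle, and verify that the open-closed map on the plaquette containing $u_{\mathbb{T}^2}$ has leading term $T^{1/N^2}u_{\mathbb{T}^2}$, from which Corollary~\ref{gio:keycor} gives the claimed accuracy. The paper's own proof is in fact slightly terser than your sketch, carrying out only the key $OC$ computation explicitly and referring back to Proposition~\ref{prop235} for the cycle verification.
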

\begin{proof}
	Essentially, the proof is the same as Proposition \ref{prop235}. We denote by $a^{j,k}$ the unique intersection point in $L_x^j\cap L_y^k$ and write it as $a^{j,k}_{xy}$ when seen as a generator of $CF(L_x^j,L_y^k)$ and as $a^{j,k}_{y,x}$ when seen as a generator of $CF(L_y^k,L_x^j)$. Let $i_x$ and $i_y$ the only indices such that the critical point $u=u_{\mathbb{T}^2}$ of the Morse function defining $QH(\mathbb{T}^2)$ lies in the square bounded by the Lagrangians $L_x^{i_x}$, $L_x^{i_x+1}$, $L_y^{i_y}$ and $L_y^{i_y+1}$. The key fact is that 
	\begin{align*}
		OC(a^{i_x,i_y}_{xy}\otimes a^{i_y,i_x+1}_{yx}\otimes a^{i_x+1,i_y+1}_{xy}\otimes a^{i_y+1,i_x}_{yx})&=\sum_{n,m>0}nm(T^{(n-1+\frac{1}{N})(m-1+\frac{1}{N}}+T^{(n+1-\frac{1}{N})(m+1-\frac{1}{N}})u \\& =\sum_{n>0}n^2(T^{(n-1+\frac{1}{N})^2}+T^{(n+1-\frac{1}{N})^2})u\\ & = \sum_{n\geq 0}T^{(2n+\frac{1}{N})^2}+T^{(2n+2-\frac{1}{N})^2}u\\ & = \sum_{n\in \mathbb{Z}}T^{(2n+\frac{1}{N})^2 }u= \theta_{\frac{1}{2N},0}(4,0)u
	\end{align*} In particular, the smallest exponent in this serie is $\frac{1}{N^2}$. One finishes the proof by showing that summing all possible cyclic combinations of tensors of intersection point plus some deformations involving the units in the Lagrangian Floer complex as in the proof of Proposition \ref{prop235} one gets a Hochschild cycle.
\end{proof}


\subsubsection{End of proof of Corollary \ref{S2T2}}

The proof of Corollary \ref{S2T2} is now a matter of packing the results of the last two sections.
\begin{proof}[Proof of Corollary \ref{S2T2}]
	We begin with the case of the sphere. 
	Let $\varepsilon_0>0$ and consider the family $\mathcal{E}\subset \lag^{\text{(mon,} \mathbf{0} \text{)}}(S^2)$ of straight equators passing through north and south pole introduced above. Let $N_0\geq 1$ such that $\frac{1}{4N_0}<\varepsilon_0$ and choose a subset $\mathcal{E}(N_0)\subset \mathcal{E}$ of cardinality $N_0$ as in Proposition \ref{OCS2}. Let $\delta_0:=\frac{1}{2}\left(\varepsilon_0-\frac{1}{4N_0}\right)$. Then for any perturbation datum $p$ as in the proof of Proposition \ref{OCS2} of size $\nu(p)<\delta_0$, the family $\lag^{\text{(mon,} \mathbf{0} \text{)}}(S^2)$ is $\varepsilon_0$-retract approximated by $\mathcal{E}(N_0)$ in $PD(\fuk(\lag^{\text{(mon,} \mathbf{0} \text{)}}(S^2);p))$. It follows that $\lag^{\text{(mon,} \mathbf{0} \text{)}}(S^2)$ is retract-approximable by $\mathcal{E}$ in the sense of Definition \ref{d:approx-sys-ret}. 
	\\ Let $\varepsilon_1>0$ and consider the family $\mathcal{N}\subset\lagwex(\mathbb{T}^2)$ introduced above. Let $N_1\geq 1$ such that $\frac{1}{2N_1}<\varepsilon_1$ and  choose a family $\mathcal{N}(N_1)$ of cardinality $N_1+1$ and containing $L_y$ as in Proposition \ref{prop235}. Let $\delta_1:= \frac{1}{3}(\varepsilon_1-\frac{1}{2N_1})$. Then for any perturbation datum as in te proof of Proposition \ref{prop235} of size $\nu(p)<\delta_1$, the family $\lagwex(\mathbb{T}^2)$ is $\varepsilon_1$-retract-approximable by $\mathcal{N}(N_1)$ in $PD(\fuk(\lagwex(\mathbb{T}^2);p))$. It follows that $\lagwex(\mathbb{T}^2)$ is retract-approximable by $\mathcal{N}$ in the sense of Definition \ref{d:approx-sys-ret}. \end{proof}

\subsection{Proof of Theorem \ref{thmmain1} ii and iii.}\label{subsec:proof_ThmAii}
Corollary \ref{S2T2} claims retract approximability in the sense of Definition \ref{d:approx-sys-ret} for the classes of Lagrangian submanifolds, $\lag^{\text{(mon,} \mathbf{0} \text{)}}(S^2)$ and $\lagwex(\mathbb{T}^2)$, that appear at the points ii and iii of Theorem \ref{thmmain1}. To conclude the proof of this theorem we need to remark that retract approximability in the sense of  Definition \ref{d:approx-sys-ret} implies retract approximability in the sense of Definition \ref{def:TPC-approx}. This argument has already been discussed in the proof of the point i of Theorem \ref{thmmain1}, in \S\ref{subsubsec:local_approx_d} but we will revisit it here.

First, notice that the approximating data $(\Phi, \F)$ is clear from the proof of Corollary \ref{S2T2}. For instance, for $\lag^{(mon,\bf{0})}(S^{2})$ the maps
$$\Phi_{\eta} : \lag^{(mon,\bf{0})}(S^{2})\longrightarrow \Ob (PD(\fuk(\lag^{\text{(mon,} \mathbf{0} \text{)}}(S^2);p)))$$ are Yoneda embeddings (with $\nu(p) < \eta$, small enough)
and $\F_{\epsilon}$ is a family of sufficiently many great circles going through the north
and south poles on $S^{2}$, as in Proposition \ref{OCS2}.

The only point that remains to be clarified is the relation between the spectral metric on the domain of $\Phi$ and the interleaving metric on the target of the maps $\Phi$.
This relationships is analogous to the one described in \S\ref{subsubsec:local_approx_d} but there 
are some adjustments. 
First, we need to provide a definition of the spectral metric $d_{\gamma}$ that applies to monotone Lagrangians as well as to weakly-exact Lagrangians, and not only to exact Lagrangians
as in \S\ref{subsubsec:local_approx_d}. This is well-known in the literature but, for completeness, we recall this definition here. Consider two Lagrangians that are Hamiltonian istotopic $L$, $L'$ (in one of the classes considered in Theorem \ref{thmmain1})  and a Hamiltonian $H:[0,1]\times M\to \R$ such that the Floer homology $HF(L,L';H)$ is defined. Because $L$ and $L'$ are Hamiltonian isotopic the PSS map  
$QH(L)\to HF(L,L';H)$ is well defined and we let $\gamma ([L];H)\in \R$ be the spectral
invariant in (the persistence module) $HF(L,L';H)$ of the PSS-image of the fundamental class $[L]\in QH(L)$.  We then put, see for instance \cite{Kis-Sh}:

$$d_{\gamma}(L,L')=\limsup_{||H||_{H}\to 0}\  \gamma([L]; H)+ \gamma([L];\overline{H})$$
where $\overline{H}$ is the inverse Hamiltonian $\overline{H}(t,x)=-H(1-t,x)$.
For exact Lagrangians this definition is equivalent to the one in \S\ref{subsubsec:local_approx_d}.
Moreover, the arguments in \S\ref{subsubsec:local_approx_d} -  and in particular, the identity (\ref{eq:equality_int})  and the arguments in \cite{BCZ:tpc} justifying it -  remain true in this context and they show that the maps $\Phi$  restricted to a Hamiltonian isotopy class  are quasi-isometric embeddings. Therefore, this completes the argument
in the case of $\lag^{\text{(mon,} \mathbf{0} \text{)}}(S^2)$ given that all the Lagrangians in this space are Hamiltonian isotopic to the standard equator.

Finally, we consider the space $\lagwex(\mathbb{T}^2)$. The spectral distance, given as above, 
is not defined for two $L$ and $L'$ that are not Hamiltonian isotopic. However, the interleaving type distance $\hat{D}_{int}(-,-)$ from \S\ref{subsubsec:local_approx_d} is defined  on all of
$\lagwex(\mathbb{T}^2)$ and, as explained before, it coincides with the spectral distance on each Hamiltonian isotopy class. Thus
by putting
$$d_{\gamma} (L,L'):= \hat{D}_{int} (L,L'), \forall \ \ L , L'\in \lagwex(\mathbb{T}^2)$$
the proof of Theorem \ref{thmmain1} is complete.

	\newpage

\section{Corollaries of approximability and weighted
  complexity}\label{sec:complex}
The aim of this section is to prove Corollaries \ref{cor:no-t-b},
\ref{cor:quasi-rig}, and \ref{cor:link}, and to discuss some notions
of complexity that are then used to deduce Corollary
\ref{cor:no-t-b}. This type of complexity has interest in itself and
we start the section with a discussion of this topic, that is purely
algebraic in nature, in \S\ref{subsec:complex_alg}.  We continue with
the proof of Corollary \ref{cor:no-t-b} in \S\ref{subsec:non-v_Cor}.
In \S\ref{subsec:qrig-cor} and \S\ref{subsec:Gw-cor}, respectively, we
prove the other two corollaries.

\subsection{Complexity in TPCs}\label{subsec:complex_alg}

Most of this subsection is purely algebraic and can be read
independently of any considerations related to symplectic topology,
Fukaya categories and so forth.  In \S\ref{subsec:wconel} we introduce
the notions of complexity we are interested in, the most important one
being that of {\em weighted cone-length}. We also state the main
result of the subsection, Proposition \ref{prop:lower-bd}, providing a
lower bound for weighted cone-length in terms of a count of bars in a
certain barcode.  The proof of this proposition occupies the rest of
the subsection. In \S\ref{subsec:retr} we establish some algebraic
properties that lead to the inequality in Corollary
\ref{cor:nice-ineq} that reduces the statement of the proposition to
some properties of weighted cone-length in the homotopy category of
filtered chain complexes. These properties are established in
\S\ref{subsec:pers}.  In \S\ref{subsec:proof_Prop} the various
arguments are put together to show Proposition \ref{prop:lower-bd}. In
\S\ref{subsec:entrop} we introduce a notion of weighted categorical
entropy which is a weighted analogue of a similar notion introduced by
Dimitrov-Haiden-Katzarkov-Kontsevich \cite{DHKK:dyn_cat} and, as a
corollary of Proposition \ref{prop:lower-bd}, we estimate this
weighted entropy from below by the barcode entropy as defined by
\c{C}ineli-Ginzburg-Gurel \cite{CGG_bar_ent}. Finally, in
\S\ref{subsubsec:complex_Lag} we return to Lagrangians in cotangent
bundles and make some first comments concerning their complexity.

\

The setting of the subsection is that of a triangulated persistence category $\C$ endowed with the interleaving pseudo-metric 
$\dint$ as in equation (\ref{eq:d-int}). 

\subsubsection{Weighted generating rank and
  cone-length} \label{subsec:wconel} In the setting above consider
$X\subset \Ob(\C)$ and define the $\epsilon$-{\em generating rank}
of $X$ by:
\begin{equation}\label{eq:no-of-gen}
  g_{\C}(X; \epsilon) =
  \inf \bigl\{  \# (\F_{\epsilon})\
  |\  \F_{\epsilon}\subset \Ob (\C)   ,
  \   \forall x\in X, \
  d_{int}\bigl(x, \Ob(\langle\F_{\epsilon}\rangle ^{\Delta})\bigr)
  \leq \epsilon \bigr\}.
\end{equation} 
Here $\# (\mathcal{S})$ indicates the cardinality of the set
$\mathcal{S}$, and $\langle \mathcal{S} \rangle ^{\Delta}$ is defined
in~\S\ref{sb:tpc}.

Finiteness of the $\epsilon$-generating rank of
$X\subset \Ob(\C)$ is equivalent to $\epsilon$-approximability of
$X$ in $\C$, as seen by inspecting Definition \ref{def:simple_approx}.

\

A more important complexity measurement, also related to
approximability, is a weighted version of the classical notion of
cone-length of a topological space in homotopy theory \cite{Gan:cat},
\cite{Cor:cone-LS}, \cite{Cor:cone-SCat}.  We fix some preliminary
notation. Let $\C'$ be a triangulated category and let
$\F\subset \Ob(\C')$.  Consider a sequence
$\eta=(\Delta_{0},\ldots, \Delta_{m})$ of exact triangles in
$\C'$ of the form:
\begin{equation}\label{eq:triang-dec}\Delta_{i} \ \ : \ \ \
  F_{i}\longrightarrow A_{i}\longrightarrow A_{i+1}\longrightarrow
  TF_{i} \ \ , \ 0\leq i \leq m
\end{equation}
with $F_{i}\in \F$ and with $A_{m+1}=A$. We symbolically denote such a
sequence by $$\eta: A_{0} \stackrel{\F}{\rightsquigarrow} A$$ and we
let $\#(\eta)=m+1$ be the number of triangles in the sequence.  We
call the ordered family $(F_{0},\ldots, F_{m})$ the linearization of
$\eta$ and denote it by $\ell(\eta)$. By analogy with classical
topology, we will refer to each exact triangle as in
(\ref{eq:triang-dec}) as a {\em cone attachment over} $F_{i}$.  Assume
now that $\C$ is a TPC. In this case we will use the
notation $$\eta:A_{0}\stackrel{\F^{\Sigma, T}}{\rightsquigarrow} A$$
to refer to a sequence similar to (\ref{eq:triang-dec}) where
each triangle $\Delta_{i}$ is now {\em strict exact in}
$\C$. More specifically this means that each triangle
  $\Delta_i$ is now replaced by a strict exact triangle (in the sense
  of~\cite[Definition~2.42]{BCZ:tpc})
\begin{equation*} \label{eq:triang-dec-weighted} \Delta'_{i} \ \ : \ \
  \ F_{i}\longrightarrow A_{i}\longrightarrow A_{i+1}\longrightarrow
  \Sigma^{-w_i} TF_{i},
\end{equation*}
for some weight $w_i \geq 0$, $i = 0, \ldots, m$.

We emphasize that in this case the $F_{i}$'s are taken to be elements
in the set
$\mathcal{\F}^{\Sigma, T}=\{ T^{i}\Sigma^{\alpha} F \ | \ F\in \F ,
i\in \Z, \alpha\in \R\}$. The weight $w(\eta)$ of the decomposition
$\eta$, is defined as the sum of the weights of each of the triangles
$\Delta'_i$ in $\eta$, $w(\eta) = w_0 + \cdots + w_m$ (see
\cite{BCZ:tpc} for the basics on TPCs). If $\eta$ is of weight $0$, as
it will be the case in most of what follows, then each of the strict
exact triangles in $\eta$ is an exact triangle in the usual
triangulated category $\C^{0}$.
 
\begin{dfn}\label{def:w-cl}
  Fix $\C$ a TPC, as above, a family $\F\subset \Ob(\C)$, and also
  $\epsilon\in [0,\infty)$.  For any two objects $A, B$ of $\C$ the
  {\em $\epsilon$-weight cone-length of $A$ relative to $B$ with
    linearization in $\mathcal{F}$} is given by:
  \begin{equation}\label{eq:Number}
    N_{\C}(A, B;\F, \epsilon):=\inf\left\{  \#(\eta)\ \  | \ \ \eta: B\stackrel{\F^{\Sigma, T}}{\rightsquigarrow} A',  \  \ w(\eta)=0, \ \ \dint (A, A')\leq \epsilon \right\}.
  \end{equation}
\end{dfn}
In case the category $\C$ is clear from the context we write instead
$N(A,B;\F,\epsilon)$. Moreover, we write $N(A;\F,\epsilon)$ when
$B=0$, $$N(A;\F,\epsilon) := N(A, 0;\F,\epsilon)~.~$$ In short,
$N(A,B;\F,\epsilon)$ tells us how many iterated exact triangles in
the (usual) triangulated category $\C^{0}$ are needed to get
$\epsilon$-close to $A$ in the interleaving pseudo-distance,
starting from $B$, by attaching cones over objects of the form
$T^{i}\Sigma^{\alpha}F$, $F\in \F$, $\alpha\in\R$, $i\in \Z$.
 
\
 
The relation between weighted cone-length and approximability is that
if $\F$ is finite and $N_{\C}(A;\F,\epsilon) < \infty$,
$\forall \ A\in\Ob(\C)$, then $\F$ is an $\epsilon$-approximating
family in the sense of Definition \ref{def:simple_approx}, and
$g_{\C}(X;\epsilon) \leq \# (\F)$.
 
\begin{rem}\label{rem:cone-l} a. The definition of
  $N(A,B;\F,\epsilon)$ above is, in some sense, the simplest
  possible as it does not require any TPC machinery. Indeed, $\eta$ in
  (\ref{eq:Number}) consists of exact triangles in the usual sense, in
  $\C^{0}$, and $\dint(-)$ is the interleaving pseudo-metric which is
  already defined in a persistence category, see
  (\ref{eq:d-int}). Another measurement, more natural from the TPC
  perspective, can be defined by
  $$N'(A, B;\F,\epsilon)=\inf\{\ \#(\eta)\ \ | \ \
  \eta:B\stackrel{\F^{\Sigma, T}}{\rightsquigarrow} A\ \ , \ \
  w(\eta)\leq \epsilon \}~.~$$ This does not bring any essential
  new information because basic TPC algebra - see proof of Lemma 2.87
  in \cite{BCZ:tpc} - implies:
  $$N(A;\F,2\epsilon)\leq N'(A;\F,\epsilon)\leq  N(A;\F,\frac{\epsilon}{4})~.~$$

  b. Weighted cone-length satisfies some simple splitting inequalities
  associated with refinement of decompositions. These have
  generally a simpler expression for $N'(-,-)$ rather than for
  $N(-,-)$. For instance, we have
 $$N'(A,A'';\F,\epsilon'+\epsilon'')
 \leq N'(A, A';\F,\epsilon')+N'(A', A'';\F,\epsilon'')$$ which
 leads to the definition of fragmentation metrics on $\Ob (\C)$ (see
 \cite{BCZ:tpc}).

 c. The definition of cone-length above is very flexible.  By changing
 the family of objects over which cones are attached one obtains
 several other variants that are also of interest. One such choice,
 leading to smaller values for the resulting cone-length, is to
 replace $\F^{\Sigma, T}$ by
 $$ \F^{\otimes}=\{ F\otimes V \ |
 \ F\in \F, \ \ \ V \ \mathrm{\ is\ a\ finite\ dimensional, \ graded,\
   filtered \ vector\ space} \}$$ with the convention that
 $F\otimes \k[a] = \Sigma^{v(a)}T^{|a|}F$ where $\k[a]$ is the
 $1$-dimensional vector space generated by $a$ with
 degree $|a|$ and filtration level $v(a)$. Notice that
 $\F^{\Sigma, T}\subset \F^{\otimes}$. The resulting notion of cone
 length is such that, at each stage, one may attach a cone over a
 finite sum of copies of shifts and translates of elements of $\F$.
 We will denote this notion of cone-length by
 $N(A, B;\F^{\otimes},\epsilon)$.

 d. The numbers $N(A,B;\F,\epsilon)$ are decreasing in
 $\epsilon$ and, by taking $\epsilon \to \infty$, we obtain at
 the limit variants of the same notions that are often more familiar
 from standard topology and homological algebra.
\end{rem}
 
Under certain constraints, there exists a useful lower bound for
$N_{\C}(L;\F,\epsilon)$ which is well defined whenever $\F$ is
finite.  The inspiration for much of the algebraic considerations
related to the next result is found in
Dimitrov-Haiden-Katzarkov-Kontsevich \cite{DHKK:dyn_cat}.

\begin{prop} \label{prop:lower-bd}Let $\C$ be a TPC which is the homological category of a pre-triangulated category
  of filtered $A_{\infty}$-modules over a strictly unital filtered
  $A_{\infty}$-category and such that $\hom_{\C}(X,Y)$ is of finite
  type for all $X,Y\in \Ob(\C)$. Assume that $\F\subset \Ob (\C)$ is
  finite and consists of Yoneda modules. Under these assumptions,
  there exists a constant $k(\F)$ depending on the family $\F$ (and on
  $\C$) such that for every $L\in \Ob (\C)$ we have:
  $$N(L;\F,\epsilon)\geq k(\F) \sum_{F\in \F}\#
  \left( \mathcal{B}^{2\epsilon}_{\hom_{\C}(F,L)}\right)$$ where
  $\mathcal{B}^{\delta}_{V}$ is the barcode consisting of the bars of
  length greater than $\delta$ in the persistence module $V$.
\end{prop}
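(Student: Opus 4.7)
The plan is to derive the inequality by applying the covariant Hom functor $\hom_{\C}(F,-)$ to a near-minimal cone decomposition and reading off bar-count information in each persistence module $\hom_{\C}(F,L)$. Fix $\epsilon>0$ and choose a decomposition $\eta: 0 \stackrel{\F^{\Sigma,T}}{\rightsquigarrow} L'$ of weight $0$ with $\dint(L,L')\leq \epsilon$ and $\#(\eta)$ arbitrarily close to $N(L;\F,\epsilon)$. For each $F\in\F$, applying $\hom_{\C}(F,-)$ converts each strict exact triangle $F_i \to A_i \to A_{i+1} \to TF_i$ into an exact triangle of persistence modules
\[
\hom_{\C}(F,F_i) \longrightarrow \hom_{\C}(F,A_i) \longrightarrow \hom_{\C}(F,A_{i+1}) \longrightarrow T\hom_{\C}(F,F_i),
\]
living in the homotopy category $\hfch$ of finite-type filtered chain complexes, by the finite-type hypothesis on Hom's.

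The main algebraic step is an inductive bar-count bound. Set
\[
k_0 := \max_{F,F'\in\F} \#\mathcal{B}(\hom_{\C}(F,F')),
\]
which is finite because $\F$ is finite and Hom's are of finite type. Since each $F_i \in \F^{\Sigma,T}$ and shifts/translations only relabel the barcode of $\hom_{\C}(F,F_i)$ without changing the number of bars, each cone attachment in the Hom direction has "input" bar count $\leq k_0$. The core lemma is that for any exact triangle $U\to V\to W$ in $\hfch$ of finite-type modules one has $\#\mathcal{B}(V)\leq \#\mathcal{B}(U)+\#\mathcal{B}(W)$; this is what I expect to extract from the filtered chain complex arguments in \S\ref{subsec:retr}--\S\ref{subsec:pers}, most notably from Corollary \ref{cor:nice-ineq}. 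Iterating $\#(\eta)$ times, starting from $\hom_{\C}(F,A_0)=\hom_{\C}(F,0)=0$, yields $\#\mathcal{B}(\hom_{\C}(F,L')) \leq k_0\cdot \#(\eta)$.

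To transfer this estimate back to $L$, I would use the metric functoriality of Lemma \ref{l:d-func} to obtain $\dint(\hom_{\C}(F,L),\hom_{\C}(F,L'))\leq \dint(L,L')\leq \epsilon$, then invoke the isometry theorem between interleaving and bottleneck distance for finite-type persistence modules. A bottleneck $\epsilon$-matching pairs every bar of length $>2\epsilon$ in $\hom_{\C}(F,L)$ with some (positive-length) bar in $\hom_{\C}(F,L')$, while unmatched bars have length $\leq 2\epsilon$. Hence
\[
\#\mathcal{B}^{2\epsilon}(\hom_{\C}(F,L)) \leq \#\mathcal{B}(\hom_{\C}(F,L')) \leq k_0 \cdot \#(\eta).
\]
Letting $\#(\eta)\to N(L;\F,\epsilon)$, summing over $F\in\F$, and setting $k(\F) := 1/(k_0\cdot \#\F)$ gives the claimed bound.

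The main technical obstacle will be the inductive exact-triangle bar-count bound, since when expressed purely in TPC terms one has to track precisely how bars "merge" or are "truncated" across an exact triangle; this is cleanest to establish in the concrete model $\hfch$ of filtered chain complexes rather than abstractly. This is where the hypothesis that $\C$ is the persistence homology of a pre-triangulated category of filtered $A_{\infty}$-modules enters: it guarantees that every exact triangle in $\C^{0}$ can be realized by a genuine filtered mapping cone, where the barcode counting inequality becomes a standard statement in persistence algebra. The Yoneda-module hypothesis on $\F$ further ensures that the Hom persistence modules $\hom_{\C}(F,F')$ for $F,F'\in\F$ are concretely computable (they are evaluations of the filtered modules at objects), so that $k_0$ is well-defined and genuinely depends only on $\F$ and $\C$.
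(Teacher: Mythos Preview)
Your overall strategy is sound and gives a valid proof of the stated inequality for $N(L;\F,\epsilon)$, but it is \emph{not} the route the paper takes, and one of your citations is off.

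\textbf{Where you diverge from the paper.} The paper does not argue by an inductive bar-count bound along the image of a decomposition under $\hom_{\C}(F,-)$. Instead it packages everything through the multiplicative inequality of Lemma~\ref{lem:hard-ineq}: first it passes from $\F$ to $G_{\F}=\bigoplus_{F\in\F}F$ (Lemma~\ref{lem:comp-sw}), then from $\C$ to $\hfch$ via $\hom_{\C}(G_{\F},-)$ (Lemma~\ref{lem:coef-ch}), then uses Lemma~\ref{lem:hard-ineq} with $\epsilon''=0$ to trade the generator $\hom_{\C}(G_{\F},G_{\F})$ for $\k$, and finally reads off the bar count from the exact formula $N^{r}_{\hfch}(V;\k,\epsilon)=2\#\mathcal{B}^{2\epsilon}_{H(V)}-\dim H(V)^{\infty}$ of Lemma~\ref{lem:noc-chains}. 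This yields the stronger inequality~(\ref{eq:ineq-r-bar}) for the \emph{retract} cone-length $N^{r}$, which is used later (e.g.\ Corollaries~\ref{cor:entr_rel} and~\ref{cor:ineq_bar2}). Your argument, as written, only bounds $N$, not $N^{r}$, because your bottleneck-matching step needs a genuine $\epsilon$-interleaving between $\hom_{\C}(F,L)$ and $\hom_{\C}(F,L')$, not just a retraction.

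\textbf{A citation to fix.} Your ``core lemma'' --- bar-count subadditivity $\#\mathcal{B}(H(W))\le\#\mathcal{B}(H(U))+\#\mathcal{B}(H(V))$ along an exact triangle $U\to V\to W$ in $\hfch$ --- is \emph{not} the content of Corollary~\ref{cor:nice-ineq}; that corollary is precisely the paper's alternative route. Your subadditivity is, however, true and can be extracted from the paper's ingredients: apply Lemma~\ref{lem:noc-chains} at $\epsilon=0$ to get $2\#\mathcal{B}(H(-))-\dim H(-)^{\infty}=N(-;\k,0)$, use subadditivity of $N(-;\k,0)$ along cone attachments (refinement, as in the proof of Lemma~\ref{lem:hard-ineq}), and combine with $\dim H(W)^{\infty}\le\dim H(U)^{\infty}+\dim H(V)^{\infty}$ from the long exact sequence at the $\infty$-level.

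\textbf{What each approach buys.} Your argument is more elementary and self-contained (it bypasses Lemma~\ref{lem:hard-ineq} entirely), and your constant $k(\F)=1/(k_{0}\cdot\#\F)$ is transparent. The paper's route is more structural, yields the sharper $N^{r}$ bound, and the multiplicative machinery is reused for the weighted-entropy estimates in \S\ref{subsec:entrop}.
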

Recall that a persistence module is of finite type if its barcode
contains finitely many bars. The units of a filtered
$A_{\infty}$-category are always assumed to be in filtration $0$ (see
\S\ref{gio:fyon-def} for our conventions).

The conditions on $\C$ in the statement are satisfied for some of the
TPCs of Fukaya type that are among the main examples of interest in
the paper as well as for the homotopy category
$H^0(\mathcal{F}\ch^{fg})$ of finitely generated filtered
chain complexes over the field $\k$ (our ground field). We denote by
$\hfch$ the corresponding category without the finite generation
condition. Both are TPCs as shown in \cite{BCZ:tpc}.

The proof of the proposition appears in \S\ref{subsec:proof_Prop} and
is based on the results in the next two subsections that have
some interest in themselves.

\subsubsection{Weighted retracts}\label{subsec:retr} We fix here a
triangulated persistence category $\C$ and we refer again to
\cite{BCZ:tpc} for the basic definitions and notation relevant to
TPCs.

\

Similarly to (\ref{eq:Number}) we define the $\epsilon$-weight {\em
  retract} cone-length of $A$ relative to $B$ by:
\begin{equation}\label{eq:Number2}
  N^{r}_{\C}(A, B;\mathcal{F}, \epsilon)=\inf\left\{  \#(\eta)\ \
    | \ \ \eta: B\stackrel{\mathcal{F}^{\Sigma,T}}{\rightsquigarrow} A',
    \  \ w(\eta)=0, \ \ \dret  (A, A') \leq \epsilon \right\}
 \end{equation}
 (we recall that $\dret$ is defined in (\ref{eq:d-rint})).  In what
 follows it is useful to use the following terminology: given two
 objects $A,\bar{A}$ in $\C$ we will say that $A$ is {\em an
   $\epsilon$-retract of} $\bar{A}$ if
 $\dret(A, \bar{A}) < \epsilon$.  It is obvious that
 $$N^{r}_{\C}(A, B;\mathcal{F}, \epsilon)
 \leq N_{\C}(A, B;\mathcal{F}, \epsilon)~.~$$ The point of the
 definition is that the retract cone-length is, in practice, easier to
 estimate as we will see below. As before, if $B=0$ we omit it from
 the notation and we skip $\C$ if it is clear from context. The proof
 of Proposition \ref{prop:lower-bd} will make use \pbred{of} some
 algebraic properties of $N^{r}( -; -, \epsilon)$ and, indeed, the
 argument shows a stronger inequality than the one in the statement of
 the Proposition, namely:
 
 \begin{equation}\label{eq:ineq-r-bar}
   N^{r}(L;\F,\epsilon)\geq k(\F) \sum_{F\in \F}\#
   \left( \mathcal{B}^{2\epsilon}_{\hom_{\C}(F,L)}\right)
\end{equation}

\begin{rem}
  a. There is yet another variant of weighted retract cone-length
  which is defined by:
  $$\tilde{N}^{r}(A, B;\mathcal{F}, \epsilon)=
  \inf\left\{ N'(\bar{A}, B;\mathcal{F}, \epsilon) \ | \ \dret
    (A,\bar{A})=0\right \}.$$ This variant is particularly relevant if
  $\C^{0}$ is split complete in which case the condition
  $$\dret (A,\bar{A})=0$$ means that $A$
  is isomorphic to a direct summand of $\bar{A}$.

  b. It is easy to see using Remark \ref{rem:cone-l} a. that:
  $$N^{r}(A, B;\mathcal{F}, 2\epsilon)\leq
  \tilde{N}^{r}(A, B;\mathcal{F}, \epsilon)~.~$$

  c. The definition in (\ref{eq:Number2}) and the point a.~of the
  remark, are reminiscent of the relation between the
  Lusternik-Schnirelmann (LS) category and cone-length in topology,
  Theorem 1.1 in \cite{Cor:cone-LS}: the LS category of a topological
  space $X$ is the smallest integer $n$ such that $X$ is a homotopy
  factor of an $n$-cone.

\end{rem}

For the triangulated persistence category $\C$ and
$\mathcal{F}\subset \Ob (\C)$, assume that $\mathcal{F}$ is finite.
Let $G_{\F}=\oplus_{F\in\F}F$.  We will identify the family
$\{G_{\F}\}$ with the single object $G_{\F}$. One useful property of
the weighted retract cone-length is: 
\begin{lem}\label{lem:comp-sw}
  With the notation above we have
  $$N^{r}(A, B;G_{\F}, \epsilon) \leq N^{r}(A, B;\mathcal{F},
  \epsilon) \leq N^{r}(A, B;G_{\F}, \epsilon) \cdot
  \#(\mathcal{F}).$$
\end{lem}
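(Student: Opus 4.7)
The plan is to prove each inequality by a direct manipulation of the exact triangles in any sequence $\eta: B \stackrel{\mathcal{F}^{\Sigma,T}}{\rightsquigarrow} A'$ (or its $G_\F$ counterpart), working at the level of the ordinary triangulated category $\C^0$ since all weights are $0$. Both inequalities reduce to elementary octahedral-axiom arguments; the only subtlety is making sure the $\dret$-condition is preserved when passing between the two kinds of linearizations.

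\textbf{First inequality.} Start from a sequence $\eta$ whose linearization $\ell(\eta) = (F_0, \ldots, F_m)$ has entries in $\F^{\Sigma,T}$, with $\dret(A, A') \leq \epsilon$. For each $F_i = T^{j_i}\Sigma^{\beta_i} F_{k_i}$ with $F_{k_i} \in \F$, replace it with $G_i := T^{j_i}\Sigma^{\beta_i} G_\F \in G_\F^{\Sigma,T}$, noting that $F_i$ is a direct summand of $G_i$ and $G_i/F_i =: H_i$ is again a direct sum of shifts/translates of elements of $\F$. One then constructs inductively a new sequence $\tilde\eta$ of cone attachments $G_i \to \tilde A_i \to \tilde A_{i+1}$ such that, at every stage, $A_i$ is a $0$-direct summand of $\tilde A_i$ (with complement a direct sum of $H_0, \ldots, H_{i-1}$ suitably translated). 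This is done concretely by taking the attaching map $G_i \to \tilde A_i$ to be the sum of (the lift of) $F_i \to A_i \hookrightarrow \tilde A_i$ on the $F_i$-summand and the zero map on $H_i$. Then $\tilde A_{m+1}$ splits as $A' \oplus H$ for some iterated extension $H$ of the $TH_i$, so $A'$ is a retract of $\tilde A_{m+1}$. Composing the retraction with the $\epsilon$-retract maps $\Sigma^\epsilon A \to A'$ and $\Sigma^\epsilon A' \to A$ coming from $\dret(A, A') \leq \epsilon$ yields $\dret(A, \tilde A_{m+1}) \leq \epsilon$, and $\#(\tilde\eta) = \#(\eta)$, giving $N^r(A,B; G_\F, \epsilon) \leq N^r(A,B; \F, \epsilon)$.

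\textbf{Second inequality.} Start from a sequence $\eta: B \stackrel{G_\F^{\Sigma,T}}{\rightsquigarrow} A'$ realizing $N^r(A,B; G_\F, \epsilon)$. For each cone attachment $G_i \to A_i \to A_{i+1}$ with $G_i = T^{j_i}\Sigma^{\beta_i} G_\F = \bigoplus_{F \in \F} T^{j_i}\Sigma^{\beta_i} F$, the octahedral axiom applied to the composition $T^{j_i}\Sigma^{\beta_i} F^{(1)} \hookrightarrow G_i \to A_i$ produces an exact triangle $T^{j_i}\Sigma^{\beta_i} F^{(1)} \to A_i \to C_i^{(1)}$ together with an exact triangle $(G_i / T^{j_i}\Sigma^{\beta_i} F^{(1)}) \to C_i^{(1)} \to A_{i+1}$. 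Iterating $\#(\F)-1$ more times on the second triangle (splitting off one summand of $\F$ at a time) refines the single attachment over $G_i$ into exactly $\#(\F)$ consecutive cone attachments, each over a shift/translate of an element of $\F$, and with unchanged endpoints $A_i$ and $A_{i+1}$. Performing this refinement on every triangle of $\eta$ produces a new sequence $\eta'$ with $\ell(\eta') \subset \F^{\Sigma,T}$, $w(\eta') = 0$, the same endpoint $A'$ (so the same $\dret$-bound to $A$), and $\#(\eta') = \#(\eta)\cdot \#(\F)$. This yields $N^r(A,B; \F, \epsilon) \leq N^r(A,B; G_\F, \epsilon)\cdot \#(\F)$.

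\textbf{Main point to be careful about.} The only place where one has to be precise is the first inequality, in verifying that the retract $\dret(A, A') \leq \epsilon$ survives when we enlarge $A'$ to $\tilde A'$: this requires producing a genuine $0$-splitting $\tilde A' \cong A' \oplus H$ in $\C^0$, not merely an almost-isomorphism. The inductive construction of $\tilde\eta$ above yields exactly this splitting, so the composition of retraction maps with the given $\dret$-data goes through cleanly. Everything else is a routine application of the octahedral axiom inside $\C^0$, and no weight bookkeeping is needed since all triangles remain of weight $0$.
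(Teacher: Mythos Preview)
Your proposal is correct and matches exactly what the paper intends: the paper's own ``proof'' simply states that ``the proof of the lemma is a simple exercise in manipulation of exact triangles in triangulated categories, in this case in $\C^{0}$,'' and your argument is precisely that exercise carried out in full. Both inequalities are handled via the expected octahedral-axiom refinements and direct-sum splittings in $\C^0$, and your care about preserving the $\dret$-bound in the first inequality (via the genuine $0$-splitting $\tilde A' \cong A' \oplus H$) is the one point worth making explicit.
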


The proof of the lemma is a simple exercise in manipulation of exact
triangles in triangulated categories, in this case in $\C_{0}$.  The
interest of the statement is that the cone-decomposition giving
$N^{r}(A,B; G_{\F},\epsilon)$ is a sequence of exact triangles as
in (\ref{eq:triang-dec}) but such that each $F_{i}$ is replaced by a
possible shift and/or translate of the single object
$G_{\mathcal{F}}$. As a result, it makes sense to consider the
measurement $N_{\C}^{r}(A,B; G,\epsilon)$ for any three objects
$A,B,G$ in $\C$.

\

Both variants of weighted retract cone-length, $N^{r}$ as well as
$\tilde{N}^{r}$, satisfy the additive splitting inequality in Remark
\ref{rem:cone-l} b but also a different, multiplicative one which
requires some assumptions on $\C$. Without weights, this type of
inequality was first noticed and used by
Dimitrov-Haiden-Katzarkov-Kontsevich \cite{DHKK:dyn_cat}. The
  weighted version will be instrumental here in proving
Proposition~\ref{prop:lower-bd}.

\begin{lem}\label{lem:hard-ineq} Assume that $\C$ is the persistence
 homological category associated with a
  pre-triangulated, filtered $A_{\infty}$-category. For any
  $A,G,G'\in \Ob (\C)$ we have:
  $$ N^{r}_{\C}(A;G,\epsilon)\leq N^{r}_{\C}(A;G',\epsilon')
  N^{r}_{\C}(G'; G,\epsilon'')$$ whenever
  $\epsilon \geq \epsilon ' + N^{r}_{\C}(A;G',\epsilon')
  \epsilon''$.

  The same formula remains true for $\tilde{N}^{r}(-;-,-)$ under a
  different assumption on $\C$, namely that $\C^{0}$ is split
  complete.
\end{lem}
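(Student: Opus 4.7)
The strategy is the classical DHKK-style substitution argument, adapted to the persistence setting: given a short cone decomposition $\eta_{1}$ of (something close to) $A$ using shifts and translates of $G'$, and a short cone decomposition $\eta_{2}$ of (something close to) $G'$ using shifts and translates of $G$, I build a cone decomposition of (something close to) $A$ using shifts and translates of $G$ by expanding each single $G'$-attachment in $\eta_{1}$ into $m$ successive $G$-attachments coming from a shifted/translated copy of $\eta_{2}$. The total count of triangles becomes $nm$, and the approximation errors add up to $\epsilon' + n\epsilon''$, which explains the hypothesis on $\epsilon$. The crucial feature of the ambient $\C$ used here is that it is the persistence homological category of a pre-triangulated filtered $A_\infty$-category, which allows one to realize iterated cone decompositions as genuine twisted complexes and perform the substitution on those twisted complexes, rather than only up to $0$-isomorphism.

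First, I would fix $n := N^{r}_{\C}(A;G',\epsilon')$ and $m := N^{r}_{\C}(G';G,\epsilon'')$, and realize choices achieving these as sequences of strict exact triangles of weight $0$:
\[
\eta_{1}: F_{i} \longrightarrow A_{i} \longrightarrow A_{i+1}, \quad 0 \leq i \leq n-1, \quad A_{0}=0,\ \dret(A,A_{n}) \leq \epsilon',\ F_{i} \in (G')^{\Sigma,T},
\]
\[
\eta_{2}: H_{j} \longrightarrow B_{j} \longrightarrow B_{j+1}, \quad 0 \leq j \leq m-1, \quad B_{0}=0,\ \dret(G',B_{m}) \leq \epsilon'',\ H_{j} \in G^{\Sigma,T}.
\]
Both are lifted to twisted complexes in the ambient pre-triangulated filtered $A_\infty$-category. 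For each $i$, apply the endo-functor $\Sigma^{a_{i}}T^{b_{i}}$ (where $F_{i} = \Sigma^{a_{i}}T^{b_{i}}G'$) to $\eta_{2}$ to produce a decomposition of $\bar F_{i} := \Sigma^{a_{i}}T^{b_{i}}B_{m}$ into $m$ attachments over $G^{\Sigma,T}$, with $\dret(F_{i},\bar F_{i}) \leq \epsilon''$ by shift/translate-invariance of $\dret$. Then, inductively on $i$, I construct $\bar A_{i}$ together with maps witnessing $\dret(A_{i},\bar A_{i}) \leq i\epsilon''$: set $\bar A_{0}=0$ and, at stage $i$, form the composition $\Sigma^{c_{i}}\bar F_{i} \to F_{i} \to A_{i} \to \bar A_{i}$ (where the first arrow is the retraction map $\psi_{i}$ given by $\dret(F_{i},\bar F_{i}) \leq \epsilon''$, and the last is the $i\epsilon''$-isomorphism from the previous step), and define $\bar A_{i+1}$ as its cone; the octahedral axiom together with the defining relation $\psi_{i}\circ\Sigma^{\epsilon''}\varphi_{i} = \eta^{F_{i}}_{2\epsilon''}$ shows that $\bar A_{i+1}$ is $\dret$-close to $A_{i+1}$ with error increased by at most $\epsilon''$. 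Expanding each $\bar F_{i}$-attachment into its $m$-piece $G^{\Sigma,T}$-decomposition at the twisted-complex level yields $nm$ strict exact triangles of weight $0$ building $\bar A_{n}$, and the triangle inequality for $\dret$ gives $\dret(A,\bar A_{n}) \leq \epsilon' + n\epsilon''$, finishing the $N^{r}$ case.

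For the $\tilde N^{r}$ version, in which $\C^{0}$ is split complete, the retract relations $\dret(A,\cdot)=0$ and $\dret(G',\cdot)=0$ become honest direct summand inclusions in $\C^{0}$, so the inductive substitution simplifies: if $F_{i}$ is a summand of $\bar F_{i}$ with complement $C_{i}$, and $A_{i}$ is a summand of $\bar A_{i}$ with complement $D_{i}$, then extending the structural map $F_{i}\to A_{i}$ by zero on $C_{i}$ and postcomposing with the inclusion $A_{i}\hookrightarrow\bar A_{i}$ produces a map $\bar F_{i}\to\bar A_{i}$ whose cone, by the octahedral axiom and splitness of the relevant triangles, decomposes as $A_{i+1}\oplus TC_{i}\oplus D_{i}$, keeping $A_{i+1}$ as a direct summand; the additive weight $w(\eta)$ accumulates to at most $\epsilon'+n\epsilon''$. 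The main obstacle in both cases is verifying cleanly that the iterative substitution map $\bar F_{i}\to\bar A_{i}$ can be chosen so that (i) its cone honestly contains (or $\dret$-approximates) $A_{i+1}$, and (ii) the error from the previous step propagates additively rather than compounding; this is what forces working inside a pre-triangulated filtered $A_\infty$-model of $\C$, where the choices of chain-level maps and their cones can be made coherently via twisted complexes, and is where the hypothesis on $\C$ is used essentially.
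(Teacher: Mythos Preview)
Your proposal is correct and follows essentially the same approach as the paper. The paper's proof packages the key inductive substitution step---replacing each $F_i$ by the retracting $\bar F_i$ inside the iterated cone, at a cost of $\epsilon''$ per substitution---as a direct citation of Lemma~6.12 in \cite{Bi-Co-Sh:LagrSh}, which states precisely that if $\mathcal{N}$ is an $r$-retract of $\mathcal{N}'$ then any iterated cone $\mathcal{M}$ containing $\mathcal{N}$ as one term is an $r$-retract of the corresponding iterated cone $\mathcal{M}'$ with $\mathcal{N}'$ in its place; the paper then applies this lemma once per term in $\ell(\eta)$ and refines via $\eta'$, exactly as you do. Your identification of the ``main obstacle'' (that the retract errors propagate additively and that the cone of the substituted map honestly $\dret$-approximates $A_{i+1}$) is precisely the content of that cited lemma, and your explanation of why the pre-triangulated filtered $A_\infty$ hypothesis is needed---to make chain-level choices coherently via explicit formulas for induced maps on cones---matches the paper's justification verbatim.
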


\begin{rem} a. The relation tying
  $\epsilon, \epsilon', \epsilon''$ in the lemma is
  complicated but in this paper we will only need to use the case when
  $\epsilon''=0$ and $\epsilon=\epsilon'$.

  b. The weighted cone length $N_{\C}(-;-,\epsilon)$ also satisfies
  a multiplicative inequality of the type in Lemma \ref{lem:hard-ineq}
  but the precise formula is more complicated to state and thus we
  skip it here as it is not necessary later in the paper.

  c. The main examples of interest in this paper of categories $\C$ as
  in the first part of the Lemma are the homological category
  of filtered modules over a filtered $A_{\infty}$-category as well
  as, the simplest example, the homotopy category of filtered chain
  complexes.
\end{rem}

\begin{proof}[Proof of Lemma \ref{lem:hard-ineq}] The statement for
  $N^{r}_{\C}(-)$ is an immediate consequence of Lemma 6.12 in
  \cite{Bi-Co-Sh:LagrSh}.  We recall the statement of this lemma
  reformulated for filtered modules. Assume that $\mathcal{M}$ is an
  iterated cone of filtered $A_{\infty}$-modules (over $0$-shift
  maps):
  \begin{eqnarray*}
    \mathcal{M}=\tcn(\K_{s} \to \tcn(\K_{s-1}\to \ldots
    \to\tcn(\mathcal{N}\to \tcn(\K_{i-1} \to \ldots \to \\ \to
    \tcn (\K_{2}\to \K_{1})\ldots ))~.~
  \end{eqnarray*}
  Lemma 6.12 from \cite{Bi-Co-Sh:LagrSh} states that if $\mathcal{N}$
  is an $r$-retract of another filtered module $\mathcal{N}'$, then
  $\mathcal{M}$ is an $r$-retract of a filtered module $\mathcal{M}'$
  which is an iterated cone of the same form as $\mathcal{M}$ but with
  $\mathcal{N}'$ replacing $\mathcal{N}$ in the decomposition above.
  The proof of this result is based on the fact that in a category
  such as those in the statement of \ref{lem:hard-ineq} a homotopy
  commutative square has the property that the map induced between the
  cones over the two horizontal maps in the square can be expressed
  explicitly by a formula involving the two vertical maps and the
  commuting homotopy.

  This result is applied in our context as follows. We start with a
  decomposition in $\C^{0}$
  $\eta: 0\stackrel{(G')^{\Sigma,T}}{\cobto} \bar{A}$ and we assume
  that $A$ is an $\epsilon'$-retract of $\bar{A}$. We also assume
  that there is a second decomposition, also in $\C^{0}$,
  $\eta':0\stackrel{G^{\Sigma,T}}{\cobto}\bar{G}'$ and we assume that
  $G'$ is an $\epsilon''$-retract of $\bar{G}'$. By applying the
  algebraic result mentioned above to each of the terms in the
  linearization $\ell(\eta)$ of $\eta$ we transform $\eta$ into a new
  decomposition
  $\bar{\eta}:0\stackrel{(\bar{G}')^{\Sigma,T}}{\cobto} \bar{A}'$,
  still in $\C^{0}$, such that $A$ is now an
  $(\epsilon' + \#\ell(\eta) \epsilon'')$-retract of $\bar{A'}$.
  Finally, by refining each term in the linearization of $\bar{\eta}$
  by using the decomposition $\eta'$ we obtain the final decomposition
  $\bar{\eta}':0\stackrel{G^{\Sigma}}{\cobto} \bar{A}'$ which shows
  the claim (see Proposition 2.55 in \cite{BCZ:tpc} for the precise
  description of the refinement process).

  The argument for $\tilde{N}^{r}(-;--)$, under the assumption that
  $\C^{0}$ is split-complete, is similar but simpler as it does not
  require Lemma 6.12 from \cite{Bi-Co-Sh:LagrSh}, and we leave it as
  an exercise.
\end{proof}

One more property of weighted retract cone-length is needed.
\begin{lem}\label{lem:coef-ch} Assume that $\C$ satisfies the
  hypotheses in the statement of Proposition \ref{prop:lower-bd}. Let
  any $A, B\in\Ob(\C)$ and assume that $X\in \Ob(\C)$ is a Yoneda
  module. For any $\epsilon\geq 0$ we have:
  $$N^{r}_{\C}(A;B,\epsilon) \geq
  N^{r}_{\hfch}(\hom_{\C}(X,A);\hom_{\C}(X,B), \epsilon)~.~$$
\end{lem}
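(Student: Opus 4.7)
The plan is to exhibit a persistence functor $F\colon \C \to \hfch$ that agrees with $\hom_\C(X,-)$ up to canonical $0$-isomorphism, and then to push any cone decomposition witnessing $N^r_\C(A;B,\epsilon)$ forward through $F$, producing a decomposition of equal length that witnesses the bound for $N^r_{\hfch}(\hom_\C(X,A);\hom_\C(X,B),\epsilon)$.

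Since $X$ is a Yoneda module, write $X = \mathcal{Y}(L)$ for some object $L$ of the underlying filtered $A_\infty$-category $\mathcal{A}$. The filtered Yoneda lemma (see \S\ref{gio:fyon-def}) supplies a natural filtered $0$-quasi-isomorphism $\hom_{F\md_{\mathcal{A}}}(\mathcal{Y}(L),M) \simeq M(L)$ for every filtered $\mathcal{A}$-module $M$. Setting $F(M) := M(L)$ therefore defines a persistence functor $F\colon \C \to \hfch$ that commutes with the shifts $\Sigma^r$ and with the translation $T$, and whose action on morphisms agrees with $\hom_\C(X,-)$ up to a canonical $0$-isomorphism in $\hfch$. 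Because mapping cones of filtered $A_\infty$-modules are formed objectwise, $F$ sends strict exact triangles of weight $0$ to strict exact triangles of weight $0$. A direct adaptation of Lemma~\ref{l:d-func}(1) to $\dret$ in place of $\dint$ shows that every persistence functor is $1$-Lipschitz with respect to the retract pseudo-metric, so that $\dret(F(A),F(A')) \leq \dret(A,A')$ for all $A, A' \in \Ob(\C)$.

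With this functor in hand, the argument proceeds as follows. Pick any sequence of strict exact triangles of weight $0$
$$\eta\colon B \stackrel{\{B\}^{\Sigma,T}}{\rightsquigarrow} A'$$
in $\C$ with $\dret(A, A') \leq \epsilon$ and $\#(\eta)$ arbitrarily close to the infimum $N^r_\C(A;B,\epsilon)$. Applying $F$ to every triangle of $\eta$ produces a sequence $F(\eta)$ of strict exact triangles of weight $0$ in $\hfch$ attaching cones over shifts and translates of $F(B) \cong \hom_\C(X,B)$, ending at $F(A')$ and satisfying $\dret(F(A), F(A')) \leq \epsilon$. This is an admissible decomposition for $N^r_{\hfch}(\hom_\C(X,A);\hom_\C(X,B),\epsilon)$ with $\#(F(\eta)) = \#(\eta)$, and taking the infimum yields the stated inequality.

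The main technical point will be to check that $F$ is genuinely a filtered (not merely persistence) functor landing in $\mathcal{F}\mathbf{Ch}$ and preserving filtered mapping cones on the nose, so that the phrase ``strict exact triangle of weight $0$'' is preserved exactly rather than only up to $0$-quasi-isomorphism. This amounts to verifying compatibility of the filtered Yoneda equivalence $\hom(\mathcal{Y}(L),M) \simeq M(L)$ with the iterative filtered cone construction defining $\mathcal{Y}(\mathcal{A})^\Delta$ in \S\ref{sbsb:fmod}; it is precisely this compatibility that forces the Yoneda hypothesis on $X$, since for a general object $X$ the functor $\hom_\C(X,-)$ is only cohomological and would produce long exact sequences rather than honest exact triangles in $\hfch$.
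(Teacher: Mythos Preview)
Your proposal is correct and follows essentially the same route as the paper: both arguments use the evaluation functor $M \mapsto M(L)$ (equivalently $\hom_{\D}(\mathcal{Y}(L),-)$ via the filtered Yoneda lemma), observe that mapping cones of filtered modules are formed objectwise so that this functor sends weight-$0$ strict exact triangles to weight-$0$ strict exact triangles in $\hfch$, and then push a witnessing decomposition forward while using the $1$-Lipschitz property with respect to $\dret$. The paper phrases this slightly differently---working with $\hom_{\D}(X,-)$ at the chain level and invoking the explicit cone formula for module morphisms from \cite{Se:book-fukaya-categ}---but the content is the same, and your final paragraph correctly isolates the one genuine technical point (that evaluation preserves filtered cones on the nose, which is exactly why the Yoneda hypothesis on $X$ is needed).
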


\begin{proof} Denote by $\D$ the pre-triangulated
  $A_{\infty}$-category of modules such that $\C=H^0\D$. Given
  that the underlying $A_{\infty}$-category is strictly unital, and
  that $X$ is a Yoneda module, we see that the functor
  $\hom_{\D}(X, -)$ transforms exact triangles in $\D$ into exact
  triangles in $[H^0(\mathcal{F}\ch)]^0$. This follows from
  the identification $\mathcal{M}(X)\cong \hom_{\D}(X,\mathcal{M})$,
  for each module $\mathcal{M}$, that is part of the properties of the
  Yoneda embedding functor - see \S\ref{gio:fyon-def} for the
  persistence setting - and the explicit formula giving the cone of a
  module morphism $\phi:\mathcal{M}\to \mathcal{N}$, \S3e in
  \cite{Se:book-fukaya-categ}. By applying $\hom_{\D}(X,-)$ this
  formula reduces to the usual formula giving the cone of the induced
  chain morphism
  $\phi_{X}:\hom(X,\mathcal{M})\to\hom(X,\mathcal{N})$. All these
  constructions respect filtrations and all the cones are taken over
  shift $0$ morphisms. As a result, exact triangles in $\C^{0}$ are
  represented by cone-decompositions in $\D$, these are sent by
  $\hom_{\D}(X,-)$ to corresponding cone-decomposition in
  $\mathcal{F}\ch^0$ that, in turn, give exact triangles in
  $[\hfch]^{0}$. The inequality in the statement is an immediate
  consequence of this observation.
\end{proof}

\

We proceed under the assumptions in the statement of Proposition
\ref{prop:lower-bd} and we also assume, as there, that
$\mathcal{F}\subset \Ob (\C)$ is finite and consists of Yoneda
modules. In particular, we can consider $G_{\F}=\oplus_{F\in\F}F$. In
this case, Lemma \ref{lem:coef-ch} implies that:
\begin{equation} \label{eq:NrC-GF}
  N^{r}_{\C}(A; G_{\F},\epsilon)\geq
  N^{r}_{\hfch}(\hom_{\C}(G_{\F}, A);
  \hom_{\C}(G_{\F},G_{\F}),\epsilon).
\end{equation}

Before we go on we temporarily introduce some simplified
  notation that will be used from now on till the end
  of~\S\ref{subsec:proof_Prop}. Namely, we will use $\k$ to denote two
  different (albeit closely related) things. Recall that throughout
  the paper $\k$ stands for the ground field for our (unfiltered)
  categories and chain complexes, but we will also denote by the same
  $\k$ the filtered chain complex with a single generator in degree
  $0$ and of filtration level $0$. When we want to specify that
  generator, say $x$, we will denote this chain complex also by
  $\k \langle x \rangle$.

Getting back to our considerations and continuing
  from~\eqref{eq:NrC-GF} we deduce from Lemma \ref{lem:hard-ineq}
  that
\begin{eqnarray*}
  N^{r}_{\hfch}(\hom_{\C}(G_{\F}, A);
  \hom_{\C}(G_{\F},G_{\F}), \epsilon) \cdot
  N^{r}_{\hfch}(\hom_{\C}(G_{\F}, G_{\F});\k,0)\geq \\
  \geq 
  N^{r}_{\hfch}(\hom_{\C}(G_{\F}, A); \k,\epsilon)
\end{eqnarray*}
 Combining
this with Lemma~\ref{lem:comp-sw} we deduce the next inequality.

\begin{cor}\label{cor:nice-ineq} Under the assumptions of Proposition
  \ref{prop:lower-bd} and for
  $$k(\F):=1/ N^{r}_{\hfch}(\hom_{\C}(G_{\F}, G_{\F});\k,0)$$ we have
  for each object $A$ of $\C$ and all $\epsilon\geq 0$:
  $$ N^{r}_{\C}(A;\F,\epsilon) \geq  k(\F)
  N^{r}_{\hfch}(\hom_{\C}(G_{\F}, A); \k,\epsilon)~.~$$
\end{cor}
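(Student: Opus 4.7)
The proof is essentially assembled by chaining together the three preceding lemmas (Lemma \ref{lem:comp-sw}, Lemma \ref{lem:coef-ch}, and Lemma \ref{lem:hard-ineq}) along with a book-keeping of directions of inequalities. Since the text immediately preceding the corollary already sketches the argument, my plan is to make that sketch fully rigorous by verifying that the three lemmas can indeed be applied in the required configurations.

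First, I would apply the left-hand inequality of Lemma \ref{lem:comp-sw} with $B=0$ to get
\[
N^{r}_{\C}(A;\F,\epsilon) \geq N^{r}_{\C}(A;G_{\F},\epsilon).
\]
Next, I would apply Lemma \ref{lem:coef-ch} with $X=G_{\F}$ and $B=G_{\F}$. A small but important check here is that although Lemma \ref{lem:coef-ch} is stated for a single Yoneda module $X$, the finite direct sum $G_{\F}=\bigoplus_{F\in\F}F$ is additively compatible with $\hom_{\mathcal{D}}(-,\mathcal{M})$, so the proof of Lemma \ref{lem:coef-ch} (which only relies on the fact that $\hom_{\mathcal{D}}(X,-)$ sends cones in $\mathcal{D}$ to cones in $\mathcal{F}\mathbf{Ch}$ preserving filtration and $0$-shift) goes through verbatim. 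This step yields
\[
N^{r}_{\C}(A;G_{\F},\epsilon) \geq N^{r}_{\hfch}\bigl(\hom_{\C}(G_{\F},A);\,\hom_{\C}(G_{\F},G_{\F}),\,\epsilon\bigr).
\]

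Finally, I would invoke Lemma \ref{lem:hard-ineq} in the category $\hfch$, which satisfies the hypothesis of being the persistence homological category of a pre-triangulated filtered $A_{\infty}$-category. The application is with objects $A'=\hom_{\C}(G_{\F},A)$, $G'=\hom_{\C}(G_{\F},G_{\F})$, $G=\mathbf{k}$, and with the parameters $\epsilon'=\epsilon$, $\epsilon''=0$ (in which case the hypothesis $\epsilon\geq \epsilon'+N^{r}(A';G',\epsilon')\epsilon''$ is automatic). This gives
\[
N^{r}_{\hfch}\bigl(\hom_{\C}(G_{\F},A);\,\mathbf{k},\,\epsilon\bigr) \leq N^{r}_{\hfch}\bigl(\hom_{\C}(G_{\F},A);\,\hom_{\C}(G_{\F},G_{\F}),\,\epsilon\bigr)\cdot N^{r}_{\hfch}\bigl(\hom_{\C}(G_{\F},G_{\F});\,\mathbf{k},\,0\bigr),
\]
which, after dividing by $N^{r}_{\hfch}(\hom_{\C}(G_{\F},G_{\F});\mathbf{k},0) = 1/k(\F)$, becomes the required lower bound for the middle term.

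Concatenating the three displays produces exactly the inequality in the corollary. The main subtlety I anticipate (and the only point requiring care beyond algebraic bookkeeping) is the application of Lemma \ref{lem:coef-ch} to the non-indecomposable object $G_{\F}$; I would therefore briefly justify, either in this proof or by a small reformulation of Lemma \ref{lem:coef-ch}, that finite direct sums of Yoneda modules inherit the relevant exactness property of $\hom_{\mathcal{D}}(-,-)$ in the first variable. Everything else is a direct instantiation of previously established results.
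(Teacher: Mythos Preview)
Your proposal is correct and follows essentially the same route as the paper: the argument is exactly the chaining of Lemma~\ref{lem:comp-sw}, Lemma~\ref{lem:coef-ch} (applied with $X=G_{\F}$), and Lemma~\ref{lem:hard-ineq} (with $\epsilon''=0$) that the paper sketches immediately before the corollary. Your observation that Lemma~\ref{lem:coef-ch} is stated for a single Yoneda module while $G_{\F}$ is only a finite direct sum of Yoneda modules is a genuine point the paper leaves implicit, and your justification via additivity of $\hom_{\mathcal{D}}(-,-)$ in the first variable is the right way to close it.
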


\subsubsection{Weighted cone-length for filtered chain
  complexes}\label{subsec:pers}
The aim of this subsection is to examine the various notions of
weighted cone-length that we introduced before in the very basic but
important example when the triangulated persistence category $\C$ is
the homotopy category of filtered, finite dimensional chain complexes,
$\hfch^{fg}$, over the base field $\k$.


\begin{ex}\label{ex:element} The following statements hold in the
  category $\hfch^{fg}$.

  a) Each object $V$ in $\hfch^{fg}$ can be written as a finite direct
  sum, unique up to permutation, of translations of elementary
  filtered chain complexes of two types
  $$E_{2}(a, b)=\k(a, b: da=0, db=a), \ \mathrm{and}\ \ E_{1}(c)=\k(c
  : dc=0).$$ For each of the generators $x$ of these chain complexes
  we denote by $v(x) \in \mathbb{R}$ its filtration level. For
  $E_2(a,b)$ we assume $v(b)\geq v(a)$ with degrees $|a|=0$,
  $|b|=1$. For $E_1(c)$ we assume $v(c)=0$ and $|c|=0$. Note
    that this notation differs form other conventions common in the
    literature. For example, in~\cite[Section~2.5.2]{BCZ:tpc}
    $E_2(a,b)$ denotes a chain complex with two generators as above
    but $a$, $b$ stand for the filtration levels of these two
    generators rather than for the generators themselves (and
    similarly for $E_1(c)$). In our notation the barcode of the
    persistence homology $H(E_2(a,b))$ is $[v(a), v(b))$ while
    in~\cite{BCZ:tpc} this barcode is $[a,b)$.

  b) The translation $T$ acts by shifting degrees in the obvious way:
  $|T^{k}x|=|x|+k$ and the shift functor changes the filtration levels
  by $v(\Sigma^{\alpha} x)=v(x)+\alpha$.

  c) Recall that $\k\langle x \rangle$ also stands for the
  $1$-dimensional filtered chain complex over $\k$ with one
    generator $x$, with $|x|=0$, $v(x)=0$. We then have:
  $$E_{2}(a,b)=\tcn (\ \Sigma^{v(b)}\k \langle b \rangle \ \to \
  \Sigma^{v(a)}\k\langle a \rangle\ )$$ where the cone is taken over
  the map $b\to a$. As for $E_1(c)$, we have:
  $$E_{1}(c)\cong \Sigma^{v(c)}\k\langle c \rangle ~.~$$
  As a result, for $\C=\hfch^{fg}$ and $X=\mathcal{O}b(\hfch^{fg})$ we
  have $g(X;0)=1$ (see (\ref{eq:no-of-gen})) with
  $\mathcal{F}_{0}=\k\langle x\rangle $. Thus $\hfch^{fg}$ is
  $0$-approximable with generating rank equal to $1$.
\end{ex}

The next result is also elementary but less immediate. We fix some
additional notation. For a persistence module $K$, we denote by
$\mathcal{B}_{K}$ its barcode, by $\mathcal{B}^{\delta}_{K}$ the
barcode consisting of only the bars of length $> \delta$ in $K$, and
by $K^{\infty}$ the $\infty$-limit of the persistence module $K$ (it
is a vector space of dimension equal to the number of semi-infinite
bars in $K$). 
\begin{lem}\label{lem:noc-chains}
  Let $V$ be a filtered, finite dimensional chain complex.  Then:
  \begin{equation}\label{eq:identi} N(V,0;\k,\epsilon)=
    N^{r}(V,0;\k,\epsilon)= 2\#
    \left(\mathcal{B}^{2\epsilon}_{H(V)}\right) - \dim_{\k}
    (H(V)^{\infty}) ~.~
  \end{equation}
\end{lem}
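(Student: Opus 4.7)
The plan is to use the canonical decomposition of $V$ into elementary filtered chain complexes supplied by Example~\ref{ex:element}(a). Write
\[
V \;=\; \bigoplus_{i=1}^{m} E_{2}(a_i, b_i) \;\oplus\; \bigoplus_{j=1}^{q} E_{1}(c_j),
\]
let $p$ denote the number of indices $i$ with $v(b_i)-v(a_i)>2\epsilon$ (the ``long'' $E_2$ summands), and note that $\#\mathcal{B}^{2\epsilon}_{H(V)}=p+q$ while $\dim_{\k} H(V)^{\infty}=q$, so the right hand side of~\eqref{eq:identi} equals $2p+q$. Since $N(V,0;\k,\epsilon)\geq N^{r}(V,0;\k,\epsilon)$ tautologically, it is enough to establish the upper bound $N(V,0;\k,\epsilon)\leq 2p+q$ and the matching lower bound $N^{r}(V,0;\k,\epsilon)\geq 2p+q$.

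For the upper bound, let $V'$ be the direct sum of the $p$ long $E_2$ summands together with all $q$ copies of $E_1$. The complement $V\ominus V'$ is a direct sum of short $E_2(a_i,b_i)$'s, each at interleaving distance $(v(b_i)-v(a_i))/2\leq \epsilon$ from $0$, so $\dint(V,V')\leq\epsilon$. On the other hand $V'$ is built from $0$ by exactly $2p+q$ cone attachments over objects of $\k^{\Sigma,T}$: each long $E_2(a_i,b_i)$ requires two attachments by Example~\ref{ex:element}(c), and each $E_1(c_j)\cong \Sigma^{v(c_j)}\k$ requires one; since the direct sum $A\oplus B$ with $B\in \k^{\Sigma,T}$ is itself a cone along the zero map $T^{-1}B\to A$, no further attachments are needed to combine the pieces.

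For the lower bound, let $\eta\colon 0\stackrel{\k^{\Sigma,T}}{\rightsquigarrow} V'$ be any weight-$0$ decomposition with $\#(\eta)=N$ and $\dret(V,V')\leq \epsilon$. Every cone attachment over a shift/translate of $\k$ augments the total $\k$-dimension of the underlying chain complex by exactly one, so $\dim_{\k} V'=N$; decomposing $V'\cong \bigoplus E_2(a'_i,b'_i)\oplus \bigoplus E_1(c'_j)$ with $p'$ copies of $E_2$ and $q'$ copies of $E_1$ then gives $2p'+q'=N$. The retract condition, spelled out by maps $\varphi,\psi$ of shift $r\leq \epsilon$ satisfying $\psi\circ \Sigma^{r}\varphi=\eta^{V}_{2r}$, means that every structural map $i^{V}_{\beta,\beta+2r}\colon V^{\beta}\to V^{\beta+2r}$ factors through $V'^{\beta+r}$. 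This factorization yields, by a standard barcode-matching argument for filtered chain complexes (equivalently, by the isometry-type theorem for retract-interleaving), an injection from the bars of $H(V)$ of length $>2\epsilon$ into the bars of $H(V')$ that sends infinite bars to infinite bars. Therefore $p\leq p'$ and $q\leq q'$, which combine to give $N=2p'+q'\geq 2p+q$.

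The only non-routine step is the barcode-matching claim used in the lower bound. In the present setting it can be verified directly by decomposing both $V$ and $V'$ into elementary summands and tracking the factorization of $i^{V}_{\beta,\beta+2r}$ through $V'^{\beta+r}$ generator by generator: a generator of a bar $[\alpha,\alpha+\ell)$ of $H(V)$ with $\ell>2r$ has non-zero image in $V^{\alpha+2r}$ and therefore must originate from a generator of $V'$ whose associated bar is itself of positive length (and of length $\infty$ when the bar of $H(V)$ is infinite), and distinct bars of $V$ must come from distinct bars of $V'$ by linear independence.
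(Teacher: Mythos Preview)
Your upper bound is correct and coincides with the paper's construction. The lower bound, however, has two genuine gaps.

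First, the assertion $\dim_{\k} V' = N$ is not justified. The cone attachments take place in the homotopy category $(\hfch^{fg})^{0}$, where an exact triangle $F_i \to A_i \to A_{i+1}\to TF_i$ determines $A_{i+1}$ only up to $0$-isomorphism (i.e.\ filtered chain homotopy equivalence). Hence $V'$ is merely $0$-isomorphic to an $N$-dimensional complex; as a specific chain complex it can have arbitrary dimension, since zero-length $E_{2}$ summands may be added or removed freely. The paper handles this by lifting $\eta$ to the underlying category $\mathcal{F}{\bf Ch}$, producing an iterated mapping cone $W'$ with $\dim_{\k}W'\leq N$ and a filtered $0$-quasi-isomorphism $\phi\colon V'\to W'$.

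Second, even after replacing $V'$ by such a $W'$, your barcode-matching step does not yield $p\leq p'$. An injection from the bars of $H(V)$ of length $>2\epsilon$ into the bars of $H(V')$ that carries infinite bars to infinite bars only gives $q\leq q'$ and $p+q\leq p'+q'$; from these one cannot deduce $2p+q\leq 2p'+q'$ (take for instance $p=3,\,q=0,\,p'=1,\,q'=3$). Your last paragraph does not close this: that a long finite bar of $V$ ``originates from a generator of $V'$ whose associated bar is of positive length'' does not preclude that bar from being infinite. The paper bypasses the issue by establishing a \emph{chain-level} injection $V_{2\epsilon}\hookrightarrow (V')_{0}$ (using that the retract maps and the homotopy to $\eta^{V}_{2r}$ are filtration-controlled while every bar of $V_{2\epsilon}$ has length $>2r$), and combining it with the injectivity of $\phi|_{(V')_{0}}$ to obtain $2p+q=\dim_{\k} V_{2\epsilon}\leq \dim_{\k} (V')_{0}\leq \dim_{\k} W'\leq N$ directly, without any separate comparison of $p$ with $p'$.
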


\begin{proof}
  For a filtered chain complex $V$ in our class and $\delta\geq 0$,
  let $V_{\delta}$ be the chain complex obtained by writing $V$ as a
  direct sum of elementary terms, as at point a) in Example
  \ref{ex:element}, and eliminating from the sum all the terms
  $E_{2}(a,b)$ with $v(b)-v(a)\leq \delta$. Up to a filtered
    chain homotopy equivalence of $V$ there is an obvious inclusion
  of chain complexes $i_{V,\delta}: V_{\delta}\to V$ as well as a
  projection $j_{V,\delta}:V\to V_{\delta}$.  It is easy to see that:
\begin{itemize}
\item[i.] $V$ and $V_{2\delta}$ are $\delta$ interleaved. This happens
  because the map
  $\eta^{V}_{2\delta}:\Sigma^{\delta}V\to \Sigma_{-\delta}V$ is easily
  seen to be chain homotopic to the composition $i_{V}\circ j_{V}$
  (because $\eta^{E_{2}(a,b)}_{2\delta}$ is null-homotopic if
  $v(b)-v(a)\leq 2\delta$).
\item[ii.] The dimension of $V_{2\delta}$ is given by
  $2\# (\ \mathcal{B}^{2\delta}_{H(V)}\ ) - \dim_{\k} (H(V)^{\infty})$.
  This is because a pair of generators $a,b$ in a term of type
  $E_{2}(a,b)$ (in the direct sum writing of $V_{2\delta}$) are
  associated with a finite bar of the form $[v(a),v(b))$ and each
  generator of type $c$ in a term $E_{1}(c)$ corresponds to one
  semi-infinite bar, and survives to a generator in $H(V)^{\infty}$.
\item[iii.] There exists an obvious cone decomposition
  $\eta:0\stackrel{\langle \k\rangle^{\Sigma,T}}{\cobto} V_{2\delta}$
  with a number of cones equal to the dimension of $V_{2\delta}$.
\end{itemize}
As a result of these three points we conclude that
$N(V,0;\k,\epsilon)\leq 2\# \left(
  \mathcal{B}^{2\epsilon}_{H(V)}\right) - \dim_{\k} (H(V)^{\infty})$.

To conclude the proof of (\ref{eq:identi}) we need to show that, if
$V$ is an $\epsilon$-retract of $W$, then $W$ is at least a
$\dim_{\k}(V_{2\epsilon})$-cone for a decomposition
$\eta':0\cobto W$ with a linearization with terms of the form
$\Sigma^{\alpha}T^{i}\k$ (these are the elements of
$\langle\k\rangle^{\Sigma,T}$).

To show this we first notice that for any $W$ in our class of filtered
chain complexes we have that any cone decomposition $\eta:0\cobto W$
in $(\hfch^{fg})^{0}$ (this is the $0$-level category of the
TPC $\hfch^{fg}$) with linearization terms in
$\langle\k\rangle^{\Sigma,T}$ has at least $\dim_{\k}(W_{0})$ terms,
where $W_{0}$ is obtained from $W$ by omitting all terms
  $E_{2}(a,b)$ such that $v(a)=v(b)$. Indeed, any such decomposition
is the image of a decomposition $\bar{\eta}:0\cobto W'$ in the
underlying category of filtered chain complexes,
$\mathcal{F}{\bf Ch}$, and there is a filtered quasi-isomorphism (with
$0$-shift) $\phi:W\to W'$.  The number of cones in $\bar{\eta}$ is at
least $\dim_{\k}(W')$ for dimension reasons. On the other hand,
$\phi|_{W_{0}}$ is injective (this can be seen by noting that $\phi$
induces an identification $\mathcal{B}_{W}\cong \mathcal{B}_{W'}$ and
$\mathcal{B}_{W}=\mathcal{B}_{W_{0}}$).  Therefore, it remains to show
that if $V$ is an $\epsilon$-retract of $W$, then
$\dim_{\k}(W_{0})\geq \dim_{\k}(V_{2\epsilon})$. If $V$ is an
$\epsilon$-retract of $W$ it is immediate that it also is an
$\epsilon$-retract of $W_{0}$. It is a simple exercise to show that
in this case $V_{2\epsilon}$ injects into $W_{0}$, which concludes
the proof of the lemma.
\end{proof}

\begin{rem}\label{rem:another-ineq}
  An argument similar to the proof of Lemma \ref{lem:noc-chains}
  shows: 
    \begin{equation}\label{eq:identi2} N(0,V;\k,\epsilon)= 2\#
      \left(
      \mathcal{B}^{2\epsilon}_{H(V)}\right) - \dim_{\k} (H(V)^{\infty})~.~
  \end{equation}
 As a result of this identity and (\ref{eq:identi}), and of the
triangular inequality in Remark \ref{rem:cone-l} b) we obtain rough
upper bounds for $N(V,V'; \k,\epsilon)$,
$N^{r}(V,V';\k,\epsilon)$ in terms of the barcodes of
$H(V)$ and of $H(V')$.
\end{rem}

\subsubsection{Proof of Proposition \ref{prop:lower-bd}}
\label{subsec:proof_Prop}
Using the preparatory material from the last two sub-subsections 
we can finalize the proof of Proposition \ref{prop:lower-bd}.

We start with two other simple algebraic remarks concerning filtered
chain complexes. For this let $V$ be a finitely generated, filtered
chain complex.  First, for any $\delta\geq 0$, we obviously have
the inequality
  $$ \#\left( \mathcal{B}^{\delta}_{H(V)}\right)
  - \dim_{\k}(H(V)^{\infty})\geq 0$$ which we rewrite as
$2 \ \#(\mathcal{B}^{\delta}_{H(V)})
  -\dim_{\k}(H(V)^{\infty})\geq \#(\mathcal{B}^{\delta}_{H(V)})$.
Secondly, if $V'$ is another chain complex in our class we have

  $$\# \left( \mathcal{B}^{\delta}_{H(V\oplus V')} \right)=\#
  \left(\mathcal{B}^{\delta}_{H(V)}\right) +\# \left(
    \mathcal{B}^{\delta}_{H(V')}\right )~.~$$  Combining these
relations with Corollary \ref{cor:nice-ineq} and Lemma
\ref{lem:noc-chains} and recalling $G_{\F}=\oplus_{F\in\F}F$ we
deduce: 
\begin{eqnarray*}
  N_{\C}(L;\F,\epsilon) \geq  N^{r}_{\C}(L;\F,\epsilon)
  \geq k(\F) N^{r}_{\hfch}(\hom_{\C}(G_{\F}, L); \k,\epsilon) \geq \\ \geq 
  k(\F)\cdot  \#
  \left( \mathcal{B}^{2\epsilon}_{\hom_{\C}(G_{\F},L)}
  \right)=k(\F)\sum_{F\in\F} \#
  \left(\mathcal{B}^{2\epsilon}_{\hom_{\mathcal{C}}(F,L)}\right)
\end{eqnarray*}

which shows the inequality (\ref{eq:ineq-r-bar}) and also
concludes the proof of the proposition. \qed

\subsubsection{Weighted categorical entropy.}\label{subsec:entrop} We introduce here a weighted notion of categorical entropy that is the weighted analogue of a notion introduced by Dimitrov-Haiden-Katzarkov-Kontsevich in \cite{DHKK:dyn_cat}. 

Consider a triangulated persistence category $\C$ and fix $X\subset \Ob (\C)$ as well as a family $\mathcal{F}\subset \mathcal{O}b(\C)$, as in \S\ref{subsec:wconel}.   

\begin{dfn}\label{def:entropy} Given a (set theoretic) map $\Psi:  X\to X$ and one object $A\in X$, the $\epsilon$-weighted categorical entropy of $\Psi$ at $A$, relative to $\F\subset \C$, is defined by
$$h_{\F}(\Phi; A,\epsilon)=\limsup_{n\to \infty} \frac{\log(N_{\C}(\Psi^{n}A;\F,\epsilon))}{n}$$
\end{dfn}
Notice that this definition is only of interest when $\F$ is an $\epsilon$-approximating family for $X$ in $\C$ (as in Definition \ref{def:simple_approx}) which we will assume from now on, otherwise there is no way to ensure that $N_{\C}(\Psi^{n}A;\F, \epsilon)$ is finite. In other words, TPC approximability is needed for this definition to make sense.

\begin{rem}\label{rem:variants_ent}
a. Of course, a similar notion can be defined using $N_{\C}^{r}(-;--)$ leading to a variant denoted by $h_{\mathcal{F}}^{r}(\Psi; A,\epsilon)$.

b. The numbers $h_{\F}(\Psi;A,\epsilon)$, $h_{\mathcal{F}}^{r}(\Psi; A,\epsilon)$  are decreasing functions in $\epsilon$. It is useful to allow for $\epsilon=\infty$. In that case, the weight constraints disappear and we obtain purely (triangulated) categorical notions. In particular,  when $\F$ is finite, the categorical entropy introduced in \cite{DHKK:dyn_cat}  
is given by  $h_{G_{\F}}(\Psi;A,\infty)$ with $G_{\F}=\oplus_{F\in\F}F$. 

c. By contrast to the un-weighted case, the weighted entropy definition depends on the choice of the family $\F$. 

d. There are also notions of {\em slow} entropy that are defined by formulae of a similar type, in analogy with the standard corresponding dynamical systems definitions. 
\end{rem}

We will now see that Proposition \ref{prop:lower-bd} implies that weighted categorical entropy  admits as lower bound a version of the barcode entropy introduced in \cite{CGG_bar_ent}.

\

To make this relationship precise we define the notion of barcode entropy that we will use here in the setting
of the TPC, $\C$.  We have as before the map $\Psi:X\to X$ and we pick two objects $A \in X$, $B\in \Ob(\C)$. We now consider the 
$\epsilon$-barcode entropy of $\Psi$ at $A$, relative to $B$:
\begin{equation}\label{eq:bar-code_ent}
\hbar(\Psi; A, B;\epsilon)=\limsup_{n\to \infty}\frac{\log\left( \# (\mathcal{B}^{\epsilon}_{\hom_{\C}(B,\Psi^{n}A)})\right)}{n}
\end{equation}

\begin{cor}\label{cor:entr_rel} Assume that the assumptions in Proposition \ref{prop:lower-bd} are satisfied. In particular,  $\mathcal{F}$ is finite and all $\hom_{\C}(-,-)$ are finite type persistence modules.
We have the inequalities:
$$
h_{\F}(\Psi; A,\epsilon)\geq h^{r}_{\F}(\Psi; A,\epsilon) \geq  h_{G_{\F}}^{r}(\Psi; A,\epsilon)\geq \hbar(\Psi; A, G_{\F};2\epsilon)
$$
where $G_{\F}=\oplus_{F\in\F} F$.
\end{cor}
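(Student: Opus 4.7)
The proof of Corollary \ref{cor:entr_rel} is a direct consequence of the inequalities collected throughout \S\ref{subsec:complex_alg}, combined with the elementary fact that multiplicative (and hence logarithmically additive) constants disappear after dividing by $n$ and taking $\limsup$.

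For the first inequality, recall from the paragraph following equation (\ref{eq:Number2}) that for every pair $A, B\in \Ob(\C)$ and every $\epsilon\geq 0$ we have $N^{r}_{\C}(A,B;\F,\epsilon)\leq N_{\C}(A,B;\F,\epsilon)$. Specializing to $A=\Psi^{n}A$, $B=0$, taking logarithms, dividing by $n$ and passing to the $\limsup$ as $n\to \infty$ yields $h_{\F}(\Psi;A,\epsilon)\geq h^{r}_{\F}(\Psi;A,\epsilon)$.

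For the second inequality we apply Lemma \ref{lem:comp-sw}, which gives $N^{r}_{\C}(\Psi^{n}A;G_{\F},\epsilon)\leq N^{r}_{\C}(\Psi^{n}A;\F,\epsilon)$ for every $n$. The same limiting procedure as above yields $h^{r}_{\F}(\Psi;A,\epsilon)\geq h^{r}_{G_{\F}}(\Psi;A,\epsilon)$.

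The main step is the third inequality. Apply Corollary \ref{cor:nice-ineq} with the family consisting of the single object $G_{\F}$, for which the constant $k(\F)$ from that corollary coincides with the constant associated with $G_\F$, to obtain
\[
N^{r}_{\C}(\Psi^{n}A; G_{\F},\epsilon)\ \geq\ k(\F)\, N^{r}_{\hfch}\bigl(\hom_{\C}(G_{\F},\Psi^{n}A);\k,\epsilon\bigr).
\]
By Lemma \ref{lem:noc-chains} together with the trivial inequality $2\#(\mathcal{B}^{2\epsilon}_{H(V)}) - \dim_{\k}(H(V)^{\infty})\geq \#(\mathcal{B}^{2\epsilon}_{H(V)})$, used in the proof of Proposition \ref{prop:lower-bd}, the right-hand side is bounded below by $k(\F)\cdot \#(\mathcal{B}^{2\epsilon}_{\hom_{\C}(G_{\F},\Psi^{n}A)})$. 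Taking logarithms, dividing by $n$ and passing to the $\limsup$, the additive term $\log k(\F)/n$ vanishes and what remains is exactly $\hbar(\Psi;A,G_{\F};2\epsilon)$, as defined in \eqref{eq:bar-code_ent}. Concatenating the three inequalities gives the statement of the corollary. The only step requiring non-trivial input is the third one, and there the essential ingredient is the multiplicative inequality of Lemma \ref{lem:hard-ineq}, which underlies Corollary \ref{cor:nice-ineq} and makes the passage from retract cone-length in $\C$ to barcode counts of the complexes $\hom_{\C}(G_{\F},\Psi^{n}A)$ possible.
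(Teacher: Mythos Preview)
Your proof is correct and follows the same route as the paper, just in more detail: the paper's own argument is the one-liner ``This follows directly from Proposition~\ref{prop:lower-bd}, see also~(\ref{eq:ineq-r-bar})'', and what you have done is unpack that reference into its constituent pieces (the inequality $N^{r}\le N$, Lemma~\ref{lem:comp-sw}, Corollary~\ref{cor:nice-ineq}, and Lemma~\ref{lem:noc-chains}). One small remark: rather than re-invoking Corollary~\ref{cor:nice-ineq} for the single-object family $\{G_{\F}\}$ (which would, strictly speaking, require $G_{\F}$ itself to be a Yoneda module), it is slightly cleaner to appeal directly to \eqref{eq:NrC-GF} and the displayed inequality just before Corollary~\ref{cor:nice-ineq}, both of which are already stated for $G_{\F}$ and together give $N^{r}_{\C}(A;G_{\F},\epsilon)\geq k(\F)\,N^{r}_{\hfch}(\hom_{\C}(G_{\F},A);\k,\epsilon)$.
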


This follows directly from Proposition \ref{prop:lower-bd}, see also (\ref{eq:ineq-r-bar}).

\subsubsection{Weighted complexity of Lagrangians} \label{subsubsec:complex_Lag}
We return here to the geometric context from \S\ref{subsec:back} and discuss some implications of the nearby approximability Theorem \ref{thm:nearby} from the point of view of the complexity measurements
introduced earlier in this section. 

We start with some definitions of complexity that are natural from the point of view of TPC-approximability - 
see Definition \ref{def:TPC-approx} as well as Remarks \ref{rem:def-TPC-approx} and \ref{rem:relation-TPCapp}. 

\begin{dfn} Assume $(X,d)$ is TPC-approximable and that, for a fixed $\epsilon >0$, the quasi-isometric embeddings $$\Phi = \{\Phi_{\eta}\} \ , \ \Phi_{\eta}: (X,d)\to (\Ob (\mathscr{Y}_{\eta}), \bar{d}_{\mathrm{int}}^{\ \eta})~.~$$
 that are part of the $\epsilon$-TPC approximating data are fixed. 
 \begin{itemize}
\item[i.] The $\epsilon$-generating rank of $(X,d)$ relative to $\Phi$ is defined by 
$$g_{\Phi}(X;\epsilon)= \limsup_{\eta\to 0}\   g_{\mathscr{Y}_{\eta}}(\Phi_{\eta}(X);\epsilon)  $$ See  (\ref{eq:no-of-gen}) for the definition of $g_{\mathscr{Y}_{\eta}}$.
\item[ii.] At this point  fix also a corresponding family $\mathcal{F}_{\epsilon,\eta}\subset \Ob (\mathscr{Y}_{\eta})$ as in Definition \ref{def:TPC-approx}.  For some $\epsilon'\geq \epsilon$, the $\epsilon'$-cone-length of an element $a\in X$ relative to the family $\F_{\epsilon}= \{\F_{\epsilon,\eta}\}_{0< \eta<\epsilon }$  is given by:
$$N_{\Phi}(a; \F_{\epsilon},\epsilon')= \limsup_{\eta\to 0} \  N_{\mathscr{Y}_{\eta}}(\Phi_{\eta}(a); \F_{\epsilon,\eta}, \epsilon') ~.~$$
\end{itemize}
\end{dfn}

Notice that for $\epsilon_{1}\geq \epsilon_{2}\geq \epsilon$ we have $ N_{\Phi}(a; \F_{\epsilon},\epsilon_{1})\leq N_{\Phi}(a; \F_{\epsilon},\epsilon_{2})$.
These notions have properties very similar to those of the corresponding measurements from \S\ref{subsec:wconel}. In particular, given a map $\Psi : X\to X$ and assuming that $(X,d)$ is TPC-approximable we define the $\epsilon'$-weighted categorical entropy of $\Psi$ at $a\in X$, relative
to the approximating data ($\Phi$, $\F_{\epsilon}=\{\F_{\epsilon,\eta}\})$, for $\epsilon'\geq \epsilon$ by

\begin{equation}\label{eq:cat_entropy2}
h_{\Phi,\F_{\epsilon}}(\Psi;a,\epsilon')=\limsup_{n\to\infty}\frac{\log  N_{\Phi}(\Psi^{n}a;\F_{\epsilon},\epsilon')}{n}~.~
\end{equation}

We also have similar measurements for the corresponding retract notions that will be denoted by  
$N^{r}_{\Phi}(a; \F_{\epsilon},\epsilon')$, $h^{r}_{\Phi,\F_{\epsilon}}(\Psi;a,\epsilon')$
respectively. 
\begin{rem}\label{rem:tb_complex} 
 Assume that the metric space $(X,d)$ is totally bounded and that it is TPC-approximable (respectively, retract-approximable) with a system
  of $(A,\eta)$-quasi-isometric embeddings $\Phi=\{\Phi_{\eta}\}$ and a corresponding family $\{\F_{\epsilon,\eta}\}$ as above. 
Then, for each fixed $\epsilon$, there exists $K >0$ depending on $(X,d)$, $\Phi$ and $\{\F_{\epsilon,\eta}\}$ 
such that $$N_{\Phi}(a;\F_{\epsilon},2\epsilon)\leq K$$ (respectively, $N^{r}_{\Phi}(a;\F_{\epsilon},2\epsilon)\leq K$) 
for all $a\in X$. This is a simple exercise: the bound $K$ is given by considering a finite $\epsilon_{1}$-net of $(X,d)$, $x_{1},\ldots, x_{k}\subset X$
such that $\epsilon_{1}/A < \epsilon$ and  taking $$K=\max_{i} N_{\mathscr{Y}_{\eta'}} (\Phi_{\eta'}(x_{i});\mathcal{F}_{\epsilon,\eta'}, \epsilon_{1}) \ \ (\mathrm{respectively}\ K=\max_{i} N^{r}_{\mathscr{Y}_{\eta'}} (\Phi_{\eta'}(x_{i});\mathcal{F}_{\epsilon,\eta'}, \epsilon) \ )$$  for some $\eta' < \epsilon$. In particular, if $(X,d)$ is totally bounded, 
for all $x\in X$ and any map $\Psi:X\to X$
the entropies $h_{\Phi,\F_{\epsilon}}(\Psi; a, 2\epsilon)$ and $h^{r}_{\Phi,\F_{\epsilon}}(\Psi; a, 2\epsilon)$ vanish. Thus, to show that $(X,d)$ is not totally bounded, it is enough to find one $\epsilon>0$ and a sequence $a_{k}\in X$ such that 
$N_{\phi}(a_{k};\F_{\epsilon}, 2\epsilon)\to \infty$ in $k$.
\end{rem}

We now turn to geometry.  We have shown in \S\ref{sec:nearby} that the metric space $$\mathcal{L}(N)=(\lag^{(ex)}(D^{\ast}N), d_{\gamma})$$ is approximable in the sense of Definition \ref{def:TPC-approx}. 
In \S\ref{subsubsec:local_approx_d} and \S\ref{subsubsec:amb_approx_d} we produced two types of TPC  $\epsilon$- approximating data for this space: $(\Phi, \F_{\epsilon})$ in the first case and $(\Phi', \F'_{\epsilon})$ in the second, which is only {\em retract} approximating. We referred to the first type as {\em local} and to the second as {\em ambient}.

\

To improve readability we recall the basic features of these two choices of approximating data.
The family $\Phi=\{\Phi_{\eta}\}$ of quasi-isometric embeddings 
consists of Yoneda embeddings - see \S\ref{subsec:nearby-TPC}:
$$\Phi_{\eta}= \mathcal{Y}_{\epsilon', p}: \lag^{(ex)}(D^{\ast}N)\to \Ob (\msc_{p})~.~$$ 
Here  $\msc_{p}$ are persistence homology categories of filtered $A_{\infty}$-modules over the filtered 
Fukaya category $\fuk(\lag^{(ex)}(D^{\ast}N), p)$ constructed for the choice of perturbations $p$, with $\nu(p) < \eta$ and $\epsilon'<\epsilon$. 
The family $\F_{\epsilon}$ consists of modules corresponding to a finite family 
$\{ F_{0},\ldots,  F_{x_{m}}\}$ of fibers of $D^{\ast}N$ where $\{ x_{0},x_{1},\ldots, x_{m}\}$ are the critical points of an auxiliary Morse function 
$\varphi_{N,K,\delta}:N\to [0, K]$.
The family $\Phi'$ is very similar except that the place of $\msc_{p}$ is taken  by the persistence derived category 
$$\mathscr{D}_{p}(E)=PD(\fuk(\mathcal{L}ag^{(ex)}(E), p))$$
that is constructed using the auxiliary Lefschetz fibration $\bar{h}:E\to \mathbb{C}$ (see \S\ref{subsec:Lef-Dehn})
 that itself depends on $\varphi_{N,K,\delta}$. The family $\F'_{\epsilon}$ consists of the Yoneda modules of 
Lagrangian spheres $\hat{S}_{x_{i}}$, one for each critical point of $\varphi_{N,K,\delta}$ and such that
$\hat{S}_{x_{i}}\cap D^{\ast}N=F_{x_{i}}$.

In essence, one can view the approximability results in $\msc_{p}$ as obtained by pull-back from those
in $\mathscr{D}_{p}(E)$. Notice though that $(\Phi',\F'_{\epsilon})$  is only  retract $\epsilon$-approximating,  as indicated in Corollary  \ref{thm:nearby_far}.

\begin{rem}\label{rem:coherence}
a. Recall that the comparison functors $\mathcal{H}_{p,q}:\msc_{p}\to \msc_{q}$ are TPC-functors for $p \preceq q$.  
They also preserve the generating families $\F_{\epsilon}$  in the sense that they send
the module  corresponding to a fibre $F_{x_{i}}$ in the $p$-category $\msc_{p}$ to a module $0$-isomorphic to the  module associated to the same $F_{x_{i}}$ in $\msc_{q}$.  As a result, with $\epsilon$ fixed, our approximating families
$\F_{\epsilon}$ are no longer dependent of $\eta$. Moreover, for each $L\in \lag^{(ex)}(D^{\ast}N)$
we have $$N_{\msc_{p}}(L;\F_{\epsilon}, \epsilon)\geq  N_{\msc_{q}}(L;\F_{\epsilon}, \epsilon)$$
as soon as $p\preceq q$. The same remark also applies to the system of categories $\{\mathscr{D}_{p}(E)\}_{p}$ and
the family $\F'_{\epsilon}$, and leads to the same inequality, with $\mathscr{D}_{-}(E)$ in the place of $\msc_{-}$ and as well with $N^{r}(-;-\epsilon)$ replacing $N(-;-\epsilon)$.

b. In the setting above we also have the inequality:
$$N_{\msc_{q}}(L;\F_{\epsilon}, \epsilon)\geq  N_{\msc_{p}}\left(L;\F_{\epsilon}, \epsilon + c |\eta(p)-\eta(q)|\right)$$ also under the assumption
$p\preceq q$ and for $c$ a constant independent of $L$. 
\end{rem}

\

Given that the number of elements in $\F_{\epsilon}$ and in $\F'_{\epsilon}$ is
equal to the number of critical points of $\varphi_{N,K,\delta}$  we deduce:
$$g_{\Phi}(\mathcal{L}(N);\epsilon)\leq  \# \ \Crit (\varphi_{N,K, \delta})$$
as well as the same inequality for $\Phi'$.
When $\epsilon \to 0$ the number of elements in $\F_{\epsilon}$ and $\F'_{\epsilon}$ 
goes to $\infty$ at a rate that can be estimated through the construction in  \S\ref{subsec:Morse}.

\

We now discuss in the same context and with the same notation the $\epsilon$-weight cone-length
$N_{\Phi}(L;\F_{\epsilon}, \epsilon)$ for $L\in \lag^{(ex)}(D^{\ast}N)$.

An upper bound for this cone-length can be obtained by counting all the exact triangles $\Delta_{i,j}$ from (\ref{eq:tr-ref}) that  give the decomposition of $L$. As a result, $m(i)\leq 2 \#(\mathcal{B}_ {HF(\hat{S}_{x_{m-i}}, L_{i})})$ and 
 thus:
 \begin{equation}\label{eq:upperBars}N_{\Phi}(L;\F_{\epsilon},\epsilon)\leq\sum_{i=0}^{m}2 \#(\mathcal{B}_ {HF(\hat{S}_{x_{m-i}}, L_{i})}) ~.~
 \end{equation}
The Lagrangians $L_{i}\subset E$ are related through Dehn  twists  - as discussed in \S\ref{subsubsec:back_to_alg}: $L_{m+1}=L$,  $L_{m-i}=\tau_{\hat{S}_{i}}(L_{m-i+1})$.
As a result, we can obtain iteratively an upper bound for $\#(\mathcal{B}_{HF(\hat{S}_{x_{m-i}}, L_{i})})$, and, moreover it is easy to see that there exists a constant $k'(\F_{\epsilon})$, independent of $L$, as well as $\epsilon''>0$, $\epsilon'' \ll \epsilon$, depending again on $\F_{\epsilon}$ and not on $L$, such that
\begin{equation}\label{eq:upper_bound}
N_{\Phi}(L;\F_{\epsilon}, \epsilon)\leq k'(\F_{\epsilon})\ \max_{S\in \F_{\epsilon}}\ \left( \#(\mathcal{B}^{\epsilon''}_{HF(S,L)}) \right)
\end{equation}
These estimates are, of course, first established for $N_{\mathscr{Y}_{\eta}}(\Phi_{\eta}(a); \F_{\epsilon,\eta}, \epsilon)$ in  $\msc_{p}^{\epsilon}$ for $\nu(p)$ sufficiently small. By making $\nu(p)\to 0$ we see that they apply  also to $N_{\Phi}(L;\F_{\epsilon}, \epsilon)$, as stated above. 
The estimates  (\ref{eq:upperBars})  and (\ref{eq:upper_bound}) remain obviously true also for $\Phi',\F'_{\epsilon}$ by using $N^{r}_{\Phi'}( - )$ in that case.  

\

In the opposite direction, from Proposition \ref{prop:lower-bd}  together with the stronger form in (\ref{eq:ineq-r-bar}) we deduce a lower bound for weighted cone-length and entropy relative to the data $(\Phi',\F'_{\epsilon})$:

\begin{cor} \label{cor:ineq_bar2} For each $L\in \lag^{(ex)}(D^{\ast}N)$ and $\epsilon'\geq \epsilon$ we 
have:
$$N^{r}_{\Phi'}(L;\F'_{\epsilon},\epsilon')\geq k(\F'_{\epsilon}) \sum_{S\in \F'_{\epsilon}} \#(\mathcal{B}^{2\epsilon'}_{HF(S,L)})$$
for a constant $k(\F'_{\epsilon})$  only depending on the family $\F'_{\epsilon}$ and we also have:
$$h^{r}_{\Phi',\F'_{\epsilon}}(\Psi; L ,\epsilon')\geq  \hbar(\Psi; L, G_{\F'_{\epsilon}}; 2\epsilon')$$ 
for $G_{\F'_{\epsilon}}=\oplus_{S\in \F'_{\epsilon}} S$.
\end{cor}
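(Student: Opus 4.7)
\emph{Proof outline.} The first inequality should be essentially a direct application of the strengthened form \eqref{eq:ineq-r-bar} of Proposition~\ref{prop:lower-bd} in the TPC setting of \S\ref{subsubsec:amb_approx_d}, transferred to the limit through the system $\{\mathscr{D}_p(E)\}_p$. The plan is the following.

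First, I would fix a perturbation datum $p \in \mathcal{P}$ with $\nu(p)$ sufficiently small so that the approximability provided by Corollary~\ref{thm:nearby_far} holds in $\mathscr{D}_p(E) = PD(\fuk(\mathcal{L}ag^{(ex)}(E), p))$. By construction, $\mathscr{D}_p(E)$ is the persistence homological category of a pre-triangulated category of filtered $A_{\infty}$-modules over the strictly unital filtered Fukaya category, the family $\F'_\epsilon = \{\hat{S}_{x_0},\ldots, \hat{S}_{x_m}\}$ is finite and consists of Yoneda modules of closed Lagrangian spheres, and the persistence modules $\hom_{\mathscr{D}_p(E)}(\hat{S}_{x_i}, L)$ identify with the Floer persistence modules $HF(\hat{S}_{x_i}, L; p)$, which are of finite type because all Lagrangians involved are compact and the underlying filtered Fukaya category is finite type. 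Thus all hypotheses of Proposition~\ref{prop:lower-bd} are satisfied, and \eqref{eq:ineq-r-bar} applied in $\mathscr{D}_p(E)$ yields, for each $\epsilon'\geq \epsilon$,
\[
N^{r}_{\mathscr{D}_p(E)}(L;\F'_\epsilon,\epsilon') \;\geq\; k_p(\F'_\epsilon)\sum_{S\in \F'_\epsilon} \#\bigl(\mathcal{B}^{2\epsilon'}_{HF(S,L;p)}\bigr).
\]

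Second, I would pass to the limit $\nu(p)\to 0$. By Remark~\ref{rem:coherence}, the comparison functors $\mathcal{H}_{p,q}$ send the Yoneda modules of the $\hat{S}_{x_i}$ to (modules $0$-isomorphic to) the corresponding Yoneda modules on the $q$-side, so the persistence modules $\hom_{\mathscr{D}_p(E)}(G_{\F'_\epsilon}, G_{\F'_\epsilon})$ form a coherent family linked by $0$-isomorphisms; consequently their $0$-retract cone-length in $\hfch$ is independent of $p$, which allows us to set $k(\F'_\epsilon) := k_p(\F'_\epsilon)$ (the common value) in accordance with the formula of Corollary~\ref{cor:nice-ineq}. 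The same coherence argument shows that the barcodes $\mathcal{B}^{2\epsilon'}_{HF(S,L;p)}$ stabilize as $\nu(p)\to 0$, so taking $\limsup_{\nu(p)\to 0}$ on both sides and recalling the definition of $N^r_{\Phi'}$ gives the first inequality of the corollary.

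Third, the entropy inequality follows by a routine exponential growth computation: applying the first inequality to $\Psi^n L$, taking logarithms, dividing by $n$ and passing to the $\limsup$, the factor $k(\F'_\epsilon)$ and the finite cardinality of $\F'_\epsilon$ contribute nothing to the exponential rate; the bound
\[
\sum_{S\in \F'_\epsilon}\#\bigl(\mathcal{B}^{2\epsilon'}_{HF(S,\Psi^n L)}\bigr)
\;\geq\; \tfrac{1}{\#\F'_\epsilon}\#\bigl(\mathcal{B}^{2\epsilon'}_{HF(G_{\F'_\epsilon},\Psi^n L)}\bigr),
\]
together with the additivity of barcodes under direct sums of the source, then matches the definition~\eqref{eq:bar-code_ent} of $\hbar(\Psi; L, G_{\F'_\epsilon}; 2\epsilon')$.

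The only mildly delicate point is the uniformity of $k_p(\F'_\epsilon)$ in $p$ needed to make the $\limsup$ argument clean; as indicated above, this is handled by the fact that the comparison functors act as $0$-isomorphisms on the relevant Yoneda modules, so no real obstacle arises and the corollary reduces to the inequality \eqref{eq:ineq-r-bar} already proved.
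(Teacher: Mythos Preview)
Your proposal is correct and follows the same approach as the paper: verify that the hypotheses of Proposition~\ref{prop:lower-bd} hold in each $\mathscr{D}_p(E)$ (finite-type hom's, $\F'_\epsilon$ consisting of Yoneda modules), apply the strengthened inequality~\eqref{eq:ineq-r-bar} there, and pass to the limit $\nu(p)\to 0$. You in fact supply more detail than the paper does, particularly on the $p$-independence of $k(\F'_\epsilon)$ and on the routine derivation of the entropy bound from the first inequality; both additions are sound (note that by additivity of barcodes under direct sums of the source your displayed inequality in the third step is actually an equality, but this is immaterial for the $\limsup$).
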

Corollary \ref{cor:ineq_bar2} is stated for the ambient $\epsilon$-TPC -approximating data $(\Phi',\F'_{\epsilon})$. This is essential for two reasons: first, the category  $\mathscr{D}_{p}(E)$ has the property that $\hom_{\mathscr{D}_{p}(E)}(A,B)$ is a persistence module of finite type $\forall A, B\in \Ob (\mathscr{D}_{p}(E))$, as noted in Remark  \ref{rem:compare_data}, and, secondly, the family $\F'_{\epsilon}$ consists of Yoneda modules. Thus, we may apply 
Proposition \ref{prop:lower-bd} for each $\mathscr{D}_{p}(E)$ 
and by then letting $\nu(p)\to 0$ the statement follows. In contrast,  it is not clear whether a similar statement is valid
for the local approximating data $(\Phi, \F_{\epsilon})$. Indeed, even the constant $k(-)$, introduced in Corollary \ref{cor:nice-ineq}, is not {\em apriori} well-defined for $\F_{\epsilon}$. 

\begin{rem}\label{rem:entrop_bars}
In the setting above,  by combining  Corollary \ref{cor:ineq_bar2}
and (\ref{eq:upper_bound}) we deduce

\begin{equation}\label{eq:entropy2}\hbar(\Psi; L, G_{\F'_{\epsilon}}; \epsilon'') \geq h^{r}_{\Phi',\F'_{\epsilon}}(\Psi; L ,\epsilon) \geq 
\hbar(\Psi; L, G_{\F'_{\epsilon}}; 2\epsilon) \ \  \mathrm{for\ }\ 0<\epsilon'' \ll \epsilon
\end{equation}
which shows that, in this situation ($M=D^{\ast}N$ etc),  weighted categorical entropy is very closely related to the barcode entropy from \cite{CGG_bar_ent}.  In \cite{DHKK:dyn_cat} the authors obtain an upper bound for (the unweighted) categorical entropy by an algebraic approach that is  more general than the one used here so that, in principle, one expects an inequality such as (\ref{eq:entropy2}) to remain true in
wider generality.
\end{rem}

\subsection{Non-vanishing weighted categorical entropy and Corollary \ref{cor:no-t-b}}
\label{subsec:non-v_Cor} The aim of this subsection is to produce a class of examples with non-vanishing $\epsilon$-weighted categorical entropy in the sense of (\ref{eq:cat_entropy2}) but with vanishing  ``classical'' categorical entropy. We formulate this more precisely in the next statement. 

\begin{prop}\label{cor:hyp_ent} Let $(N,\mathtt{g})$  be a closed hyperbolic manifold. Consider the metric space 
$(\lag^{(ex)}(D^{\ast}N), d_{\gamma})$ and, for some small $\epsilon$, fix the ambient $\epsilon$-TPC - approximating data
$(\Phi',\F'_{\epsilon})$ as recalled in \S\ref{subsubsec:complex_Lag}.
There exists  a Lagrangian $L\in\lag^{(ex)}(D^{\ast}N)$ Hamiltonian isotopic to the zero section and a Hamiltonian diffeomorphism $\Psi: D^{\ast}N\to D^{\ast}N$ with support inside $D^{\ast}N$, such that
$$h^{r}_{\Phi',\F'_{\epsilon}}(\Psi; L, 2\epsilon)>h^{r}_{\Phi',\F'_{\epsilon}}(\Psi;L,\infty)=0$$
where the relevant $\epsilon$-weighted entropy is defined in (\ref{eq:cat_entropy2}).
\end{prop}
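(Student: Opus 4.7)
My plan is to construct $L$ and $\Psi$ geometrically so that $\Psi^n L$ has exponentially many short Floer chords with fibers, then verify the two entropy bounds separately, with strict positivity following from Corollary~\ref{cor:ineq_bar2} and the results on barcode entropy of \c{C}ineli--Ginzburg--Gurel. Concretely, take $L$ to be the zero section (equipped with some choice of grading and primitive) and $\Psi := \phi^1_H$, where $H \colon D^*N \to \mathbb{R}$ is compactly supported in the interior of $D^*N$ and agrees, on a smaller sub-bundle $D^*_r N$, with a reparametrization of the kinetic energy Hamiltonian $\tfrac12\|p\|^2$ associated to the hyperbolic metric $\mathtt g$. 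Standard cut-off constructions (e.g.\ $H(q,p) = \chi(\|p\|)\cdot \|p\|^2/2$) provide such $H$ whose Hamiltonian flow on $D^*_r N$ is conjugate to a sub-arc of the geodesic flow.

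The vanishing $h^r_{\Phi', \F'_\epsilon}(\Psi; L, \infty) = 0$ is the soft half of the statement. Since $\Psi$ is Hamiltonian and supported inside $D^*N \subset E$, the iterate $\Psi^n L$ is Hamiltonian isotopic to $L$ in the ambient Lefschetz total space $E$, hence infinity-isomorphic to $L$ in each $\mathscr{D}_p(E)^\infty$. By definition of $\dret$ with parameter $\infty$ this means $\Psi^n L$ is an $\infty$-retract of the same iterated cones over $\F'_\epsilon$ that retract $L$, so $N^r_{\Phi'}(\Psi^n L;\F'_\epsilon, \infty) = N^r_{\Phi'}(L;\F'_\epsilon, \infty)$ is bounded independently of $n$, and the corresponding entropy vanishes.

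Strict positivity $h^r_{\Phi', \F'_\epsilon}(\Psi; L, 2\epsilon) > 0$ follows from Corollary~\ref{cor:ineq_bar2}, which gives
\[
h^r_{\Phi', \F'_\epsilon}(\Psi; L, 2\epsilon) \;\geq\; \hbar\bigl(\Psi;\, L,\, G_{\F'_\epsilon};\, 4\epsilon\bigr),
\]
reducing matters to positivity of the barcode entropy defined in~\eqref{eq:bar-code_ent}. Recall from~\S\ref{subsubsec:complex_Lag} that each $\hat S_{x_i} \in \F'_\epsilon$ meets $D^*N$ exactly along a cotangent fiber $F_{x_i}$. Because $\Psi$ is supported in the interior of $D^*N$, choosing the Lefschetz model $\bar h\colon E\to\mathbb{C}$ and the perturbation data $p$ so that $\hat S_{x_i}$ agrees with $F_{x_i}$ on a large enough neighborhood of $D^*N$, one obtains a filtered identification
\[
HF\bigl(\hat S_{x_i},\, \Psi^n L\bigr)\;\cong\;HF\bigl(F_{x_i},\, \Psi^n L\bigr),
\]
up to a uniform action shift absorbed into the constants; the generators on the right are, after small Hamiltonian perturbations, in bijection with $H$-Hamiltonian chords of time length $n$ from $L$ to $F_{x_i}$, i.e.\ with geodesic chords of total length $\sim n$ from the zero section to the point $x_i$. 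Margulis--Manning asymptotics on the closed hyperbolic manifold $N$ then guarantee that the number of such chords with length in any fixed interval grows exponentially in~$n$.

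The main technical input is to promote this exponential count of chords to exponential growth of bars of length $> 4\epsilon$; this is precisely the barcode-entropy theorem of~\cite{CGG_bar_ent} adapted to Lagrangian Floer persistence modules on cotangent bundles, once $\epsilon$ is chosen smaller than a threshold determined by the hyperbolic data and the profile of $H$. The harder part is ensuring the scales match: one must simultaneously pick $H$ so that the hyperbolic behavior is visible at an action scale strictly exceeding $4\epsilon$, keep the perturbation sizes $\nu(p)$ well below that scale so that the passage through the system $\widehat{\msc}(D^*N)$ does not absorb the growth, and verify that summing the contributions of the finitely many $x_i \in \F'_\epsilon$ preserves strict positivity. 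With this in place one obtains $\hbar(\Psi; L, G_{\F'_\epsilon}; 4\epsilon) > 0$, completing the proof of Proposition~\ref{cor:hyp_ent}. The hyperbolic case of Corollary~\ref{cor:no-t-b} is then immediate: the exponential growth of $N^r_{\Phi'}(\Psi^n L; \F'_\epsilon, 2\epsilon)$ together with Remark~\ref{rem:tb_complex} forces the space $(\lag^{(ex)}(D^*N), d_\gamma)$ to fail to be totally bounded, since on a totally bounded space all such weighted cone-lengths would be uniformly bounded.
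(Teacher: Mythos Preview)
There is a genuine gap in your construction. With $L$ equal to the zero section and $\Psi=\phi^1_H$ for a Hamiltonian of the form $H(q,p)=\chi(\|p\|)\cdot\tfrac12\|p\|^2$ (or any function of $\|p\|$ alone), the norm $\|p\|$ is conserved along the flow, so the zero section is fixed pointwise: $\Psi^n L=L$ for all $n$. Consequently $HF(F_{x_i},\Psi^n L)=HF(F_{x_i},L)$ has a single generator (the intersection point $x_i$) and the barcode is constant in $n$; the quantity $\hbar(\Psi;L,G_{\F'_\epsilon};4\epsilon)$ vanishes and Corollary~\ref{cor:ineq_bar2} gives nothing. Your description of generators as ``geodesic chords of total length $\sim n$ from the zero section to $x_i$'' is incorrect for this reason: a Hamiltonian chord of $nH$ from $F_{x_i}$ to the zero section must end with $\|p\|=0$, hence start there, hence be constant.

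The paper avoids this by choosing both $L$ and the Hamiltonian rather carefully. The Lagrangian $L$ is \emph{not} the zero section: it is built from the graph of $d\varphi$ for a Morse function $\varphi$ with large gradient (Proposition~\ref{prop:Morse}) and then modified so that inside a smaller disk bundle it coincides with a disjoint union of fibers $F_{x_i}$. The Hamiltonian $G=\eta(\|p\|)$ has a specific profile (constant near $0$ and near the boundary, linear of slope $\sigma_G$ on a middle annulus) so that each geodesic from $x$ to $x_i$ of length $<n\sigma_G$ produces \emph{two} chords in $CF(F_x,F_{x_i};nG)$ whose action gap grows linearly in $n$ (Lemma~\ref{lem:fibers}). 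Because $L$ looks like fibers inside, $CF(F_x,L;nG)$ splits as a direct sum of these fiber--fiber complexes plus a perturbation dropping filtration by a fixed $\xi>0$ independent of $n$ (Lemma~\ref{lem:defor}); an elementary algebraic lemma (Lemma~\ref{lem:bar_counts2}) then transfers the exponential bar count. None of this is a black-box citation of~\cite{CGG_bar_ent}: the lower bound on the number of long bars is proved directly.

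Your treatment of the $\epsilon=\infty$ vanishing and the reduction via Corollary~\ref{cor:ineq_bar2} to a barcode count, as well as the identification $HF(\hat S_{x_i},\Psi^n L)\cong HF(F_{x_i},\Psi^n L)$ via convexity, are all correct and match the paper. The missing piece is entirely in the choice of $(L,\Psi)$ and the mechanism producing many long bars.
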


The result 
means that filtered categorical algebra is sensitive to the dynamics in a cotangent bundle in ways in which the non-filtered version is not.  Moreover, in view of Remark \ref{rem:tb_complex}, this proposition implies one part of Corollary \ref{cor:no-t-b} from the introduction, namely that the metric space $(\lag^{(ex)}(D^{\ast}N), d_{\gamma})$ is not totally bounded. The second part of the Corollary that refers to $M=S^{2}$ and with 
$\lag(S^{2})$ consisting of equators is addressed in \S\ref{subsec:tb_S2}.

\

The vanishing of the non-weighted categorical entropy, $h^{r}_{\Phi',\F'_{\epsilon}}(\Psi;L,\infty)$, in the statement is immediate  because  all $\Psi^{n}L$ are Hamiltonian isotopic to the zero section and thus, without weight constraints, they admit cone-decompositions with the same number of terms. 

\

Till now our arguments did not depend on the ground field $\k$, however from this point on we will
assume in this subsection that $\k=\Z_{2}$.


\

The proof of Proposition \ref{cor:hyp_ent} makes essential use of Corollary \ref{cor:entr_rel} and  
is contained in the next five sub-subsections.  The proposition is stated for the unit disk bundle but in the proof
we will perform a series of constructions involving several nested disk bundles that, through appropriate rescaling, we will assume are all included in $E$ with $\bar{h}:E\to\mathbb{C}$, the Lefschetz fibration from \S\ref{sec:nearby}. The largest $r$ needed is $r=10$. Most of the argument takes place inside this disk bundle and is described in the  first four subsections. We return to $E$ in \ref{subsubsec:back_to_E} to conclude. 

\subsubsection{The choice of  $\Psi$ and $L$}\label{subsubsec:choices}
We will consider an auxiliary Hamiltonian flow $\phi$ generated by a smooth
Hamiltonian $G : T^{\ast}(N)\to \R$ which is defined by 
\begin{equation}\label{eq:Ham_G}
G(q,p)= \eta ( ||p|| )
\end{equation}
where $|| - ||$ is the norm with respect to the metric induced by $\mathtt{g}$ and $\eta:[0,\infty)\to [0,\infty)$ is smooth with the properties that: $\eta(x)= \sigma_{G}x- k_{G}$ for $2\leq x\leq 7$ with $0<k_{G}<2$ and $1\leq\sigma_{G}< \frac{3}{2}$ fixed constants chosen in a way that  will be further discussed later below;  $\eta$ is non-decreasing; $\eta$ is constant equal to $0$ for $0\leq x\leq 1$
and constant equal to some value $=\mathrm{Var}(G) <10$  for $x\geq 8$; $\eta''(x)>0$ for $x\in (1,2)$
and $\eta''(x)<0$ for $x\in (7,8)$ see Figure \ref{fig:shape_G}.  
\begin{figure}
\includegraphics[scale=0.82]{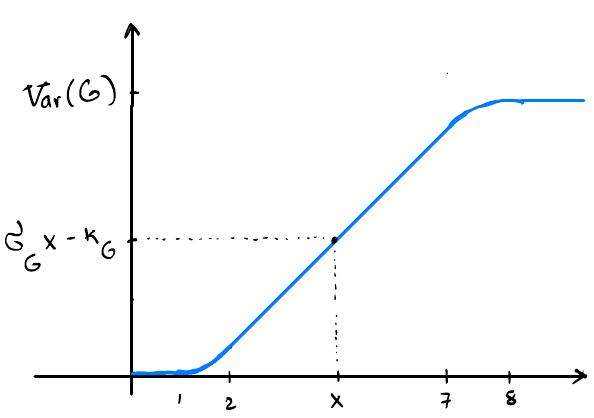}
  \centering
  \caption{The graph of the function $\eta$.} \label{fig:shape_G}
\end{figure}

We notice that $\eta'(x)\leq \frac{3}{2}$, $\forall x\in [0,\infty)$. 
The Hamiltonian diffeomorphism $\Gamma=\phi_{1}^{G}$ generated by $G$
is supported inside $D^{\ast}_{8}(N)$. The Hamiltonian diffeomorphism in the 
statement is the inverse of $\Gamma$:
$$\Psi=\Gamma^{-1}~.~$$

We now describe the Lagrangian $L$. This Lagrangian submanifold is obtained in two steps.
The first is to consider a function $\varphi:N\to [0,\frac{1}{2}]$ as the one constructed in Proposition \ref{prop:Morse} but for $\delta<10$  and define a Lagrangian submanifold $L'$ by
 $$L'=\mathrm{graph}\ (d\varphi)~.~$$
 We denote by $x_{i}$, $1\leq i \leq m$, the critical points of $\varphi$ and take this function $\varphi$ in such a way that $L'\subset \mathrm{Int}(D^{\ast}_{10}(N))$
 and such that $L'\cap D^{\ast}_{9}(N)$ is a disjoint union:
 
 $$L'\cap D^{\ast}_{9}(N) =\coprod_{i=1}^{m} Z_{i}$$ with each
 $Z_{i}\subset W_{i}$ where $W_{i}$ is a small tubular neighbourhoud of $F_{x_{i}}$ (this is the fiber of $T^{\ast}N$ over the critical point $x_{i}$ of $\varphi$). Finally, $L$ is obtained by modifying $L'$ by a Hamiltonian isotopy in such a way that (see  Figure \ref{fig:Lag}):
 $$L\subset \mathrm{Int}(D^{\ast}_{10}(N)) \ , \ L\cap D_{9}^{\ast}(N)=\coprod_{i=1}^{m} F_{x_{i}}\cap D_{9}^{\ast}(N)~.~$$
 \begin{figure}
\includegraphics[scale=0.97]{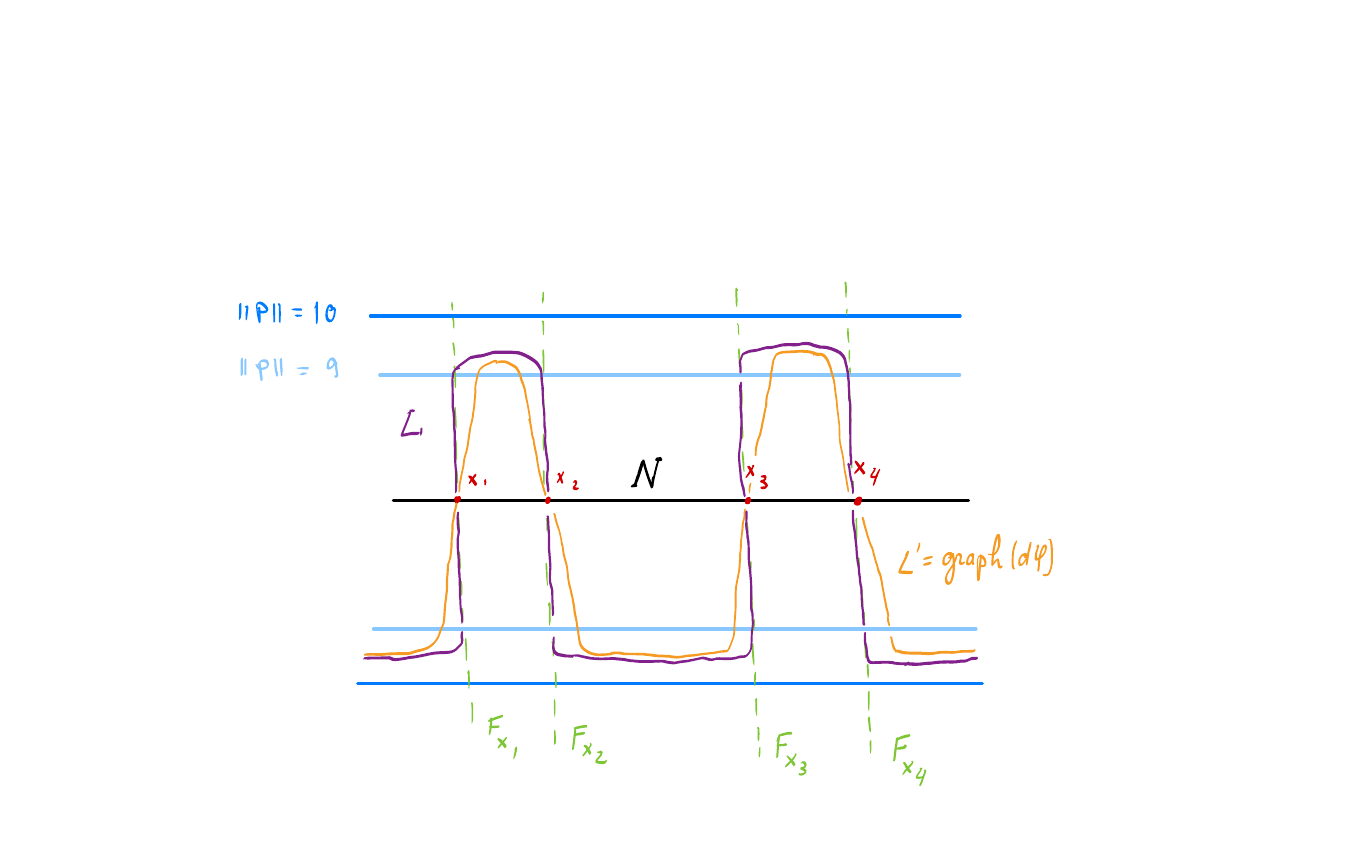}
  \centering
  \caption{The shape of the Lagrangian $L$.} \label{fig:Lag}
\end{figure}

\begin{rem} The constants $8,9,10$ might need to be replaced by $8 < k < k'$ because in the construction in the proof of Proposition \ref{prop:Morse} we do not directly obtain an upper bound for $||d\varphi||$ while keeping $\mathtt{g}$ fixed (see also Remark \ref{rem:conf_metric}). However, the choice  $k=9$, $k'=10$ is simply a choice of convenience and has no real incidence on the proof. 
 \end{rem}
 
 We will fix some additional notation for the Lagrangian $L$. We let 
 \begin{equation}\label{eq:pieces}
 L_{i}=F_{x_{i}}\cap D^{\ast}_{9}(N)
 \end{equation}
 so that $L\cap D^{\ast}_{9}(N)=\coprod_{i} L_{i}$. We also consider a primitive
 $f_{L}:L\to \R$ of $\lambda|_{L}$ which, up to a shift, is a small perturbation of the primitive $\varphi$ of $\lambda|_{L'}$. In view of this,  we may assume that $f_{L}$ has values inside the interval $[-1,1]$. Of course, $f_{L}$ is constant on each $L_{i}$, with a constant $k_{i}$ on each such component, such that each of these constants belongs to $[-1,1]$ and we may assume
 also that $k_{1}=0$.

\subsubsection{Floer homology} In this subsection we establish some properties of certain Floer homology groups that will be essential for the proof. All the Floer homologies considered here are defined in $T^{\ast}(N)$ for an almost complex structure that is convex outside of $D^{\ast}_{8}(N)$.  The maximum principle implies that the Floer trajectories contributing to the relevant differentials are contained in $D^{\ast}_{10}(N)$. The ground field is $\k=\Z_{2}$.

We consider two exact Lagrangian manifolds $K$, $K'$ with primitives
$f_{K}$ and $f_{K'}$, respectively and with fixed choices of gradings, as in the rest of the paper.
In our arguments these Lagrangians $K$, $K'$
are either compact,  included in the interior of  $D^{\ast}_{10}(N)$, or they coincide with fibers $F_{x}$, for $x\in N$. We also consider a Hamiltonian $H: T^{\ast}(N)\to \R$  and an almost complex structure $J=\{J_{t}\}_{0\leq t\leq 1}$ on $T^{\ast}(N)$ that is autonomous and coincides with a convex almost complex structure $J^{\infty}$ 
outside (the interior of) $D^{\ast}_{8}(N)$. We assume that the Hamiltonian $H$ is constant in a neighbourhood of $\partial D^{\ast}_{10}(N)$ and 
that $K$ and $\phi^{-H}_{1}(K')$ intersect transversely. 
We denote by $\mathcal{P}(K,K'; H)$ the Hamiltonian orbits 
$\gamma:[0,1]\to D_{10}^{\ast}(N)$ associated with the Hamiltonian flow $\phi^{H}_{t}$ of $X^{H}$ with $\gamma(0)\in K$, $\gamma(1)\in K'$. There is an action functional on the path-space 
 $$\mathcal{P}(K,K')=\{ \gamma :[0,1]\to D_{10}^{\ast}(N) \ |\ \  \gamma \ \mathrm{smooth,}\  \ \gamma (0)\in K, \ \gamma (1)\in K' \ \}$$
\begin{equation}\label{eq:action1}
\mathcal{A}_{K,K';H}(\gamma)=\int_{0}^{1}H(\gamma(t))dt - \int_{0}^{1}\lambda(\dot{\gamma}(t))dt+f_{K'}(\gamma(1))-f_{K}(\gamma(0))\end{equation}
whose critical points are the elements in $\mathcal{P}(K,K'; H)$. 
As a vector space, the Floer complex $CF(K,K'; H,J)$, when it is defined (thus assuming regularity)
is spanned by $\mathcal{P}(K,K'; H)$.  The differential of the Floer complex counts solutions  $u: \R\times [0,1] \to D_{10}^{\ast}(N)$ of Floer's equation:
\begin{equation}\label{eq:Floer1}
\frac{\partial u}{\partial s} + J_{t} \frac{\partial u}{\partial t} +\nabla_{\rho_{t}} H(u)=0
\end{equation}
where the gradient is taken with respect to the Riemannian metric $\rho_{t}=\omega (- , J_{t})$
and $u(\R\times \{0\})\subset K$, $u(\R\times \{1\})\subset K'$.
We denote by $CF^{a}(K,K'; H,J)$ the subcomplex of $CF(K,K';H,J)$ which is spanned by the Hamiltonian chords $x$ such that $\mathcal{A}_{K,K';H}(x)\leq a$.

 In our application we will have $H=nG$ (with $G$ from (\ref{eq:Ham_G}) )for successively larger values of $n\in \N$. In particular, $H$ is constant outside $D^{\ast}_{8}(N)$.
 This type of Hamiltonian of the form $H=h(||p||)$, with $h$ smooth, has been extensively studied in the literature (see \cite{Bi-Po-Sa:Propagation}\cite{Mac-Sch:top-entr}).  The basic situation of interest for us is when both $K$ and $K'$ are fibers. For convenience, we introduce a special shortened  notation 
$$ C(x,y; n) := CF(F_{x},F_{y}; nG, J)$$
where $F_{x}$ and $F_{y}$ are two different fibers of $T^{\ast}N$.
On both fibers $F_{x}$ and $F_{y}$ we take as primitives the identically zero functions. Regularity can be achieved in this case because, in all our examples, all Floer trajectories will be included inside $D^{\ast}_{8}(N)$ and we have the freedom to pick $\{J_{t}\}$ as desired there while keeping it equal to $J^{\infty}$ outside of $D^{\ast}_{8}(N)$. We emphasize that in the Floer complexes we take into account all the homotopy classes of paths. 
Denote by $$\mathcal{G}_{x,y}=\{\  \bar{\gamma} \ \ |\ \  \bar{\gamma}: [0,\ell]\to N \ \mathrm{is\ a\ geodesic \ } ,\ 
||\dot{\bar{\gamma}}(t)||=1\ \ \ \bar{\gamma}(0)=x, \ \bar{\gamma}(\ell)=y \  \} $$  the geodesic arcs in $N$, parametrized with unit speed length, that go from $x$ to $y$. We denote the length of such a  geodesic arc $\bar{\gamma}$ by $\ell(\bar{\gamma})$. The associated length spectrum is:

$$\mathcal{L}_{x,y}=\{\ \ell(\bar{\gamma}) \ | \ \bar{\gamma}\in \mathcal{G}_{x,y} \} \subset [0,\infty)~.~ $$

Because $(N, \mathtt{g})$ is hyperbolic each homotopy class of paths (with fixed ends) contains a unique geodesic arc in $\mathcal{G}_{x,y}$  and $\mathcal{L}_{x,y}$ is discrete. In particular, $\mathcal{L}_{x,y}$ is countable. It follows, that
the set of quotients $Q\mathcal{L}_{x,y}=\{ \frac{\ell}{n}\ | \  \ell \in \mathcal{L}_{x,y}\ , \  n\in \N^{\ast}\}$ is also countable. We now assume:
\begin{equation}\label{eq:assumption_1}
\sigma_{G}\not \in  Q\mathcal{L}_{x,y}~.~
\end{equation}
This is obviously a generic assumption because the set $Q\mathcal{L}_{x,y}$ is of
zero Lebesgue measure.
\begin{lem}\label{lem:fibers} Under the assumption above the complexes $CF(x,y;n)$ are well defined and, for any $\delta>0$, the number of bars of length $\geq \delta$ in the persistence module $HC(x,y;n)$ increases with $n$ at least as fast as  $\frac{e^{hn}}{hn}$  where $h=\frac{5}{8}h_{top}$ with $h_{top}$ the topological entropy of the geodesic flow.
\end{lem}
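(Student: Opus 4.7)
The plan is to identify time-$1$ Hamiltonian chords of $nG$ between $F_x$ and $F_y$ with geodesics from $x$ to $y$ in $N$, to bound below the action gap of the chord pair produced by each geodesic, and then to use the Margulis asymptotic for closed hyperbolic manifolds together with a Morse--Bott argument to extract the long bars in $HC(x,y;n)$.

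First, since $G(q,p)=\eta(\|p\|)$ depends only on $\|p\|$, the Hamiltonian flow of $nG$ preserves $\|p\|$ and, on the sphere bundle $\{\|p\|=r\}$, projects to a reparametrization of the geodesic flow with speed $n\eta'(r)$. Time-$1$ chords from $F_x$ to $F_y$ are thus in bijection with pairs $(\bar\gamma,r)$ satisfying $\bar\gamma\in\mathcal{G}_{x,y}$ and $n\eta'(r)=\ell(\bar\gamma)$. From the prescribed shape of $\eta$, the derivative $\eta'$ vanishes on $[0,1]\cup[8,\infty)$, equals $\sigma_G$ on $[2,7]$, and is strictly monotone on the two transition intervals. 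The hypothesis $\sigma_G\notin Q\mathcal{L}_{x,y}$ rules out chords in the linear region, so each geodesic with $\ell<n\sigma_G$ produces exactly two chords, one in each transition interval. Non-degeneracy follows from the absence of conjugate points on a hyperbolic manifold; combined with convexity at infinity and the maximum principle, this makes $C(x,y;n)$ a well-defined finitely generated filtered complex.

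Using \eqref{eq:action1} with the trivial primitive on fibers, the action of a chord at height $r$ whose associated geodesic has length $\ell=n\eta'(r)$ works out to
\[
\mathcal{A}(c)\;=\;n\bigl(\eta(r)-r\,\eta'(r)\bigr).
\]
Evaluating at endpoints, the lower-branch action $A_1(\ell)$ decreases monotonically from $0$ at $\ell=0$ to $-nk_G$ at $\ell=n\sigma_G$, while the upper-branch action $A_2(\ell)$ decreases from $n\,\mathrm{Var}(G)$ to $-nk_G$. The gap $A_2(\ell)-A_1(\ell)=n\bigl(\eta(r_2)-\eta(r_1)\bigr)-(r_2-r_1)\,\ell$ is strictly positive, strictly decreasing in $\ell$, and of size linear in $n$. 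A direct estimate then shows that for any $\delta>0$ one can find $\alpha>0$ with $\alpha\geq\tfrac58$ (for $\sigma_G$ chosen close to $1$ but outside the countable set $Q\mathcal{L}_{x,y}$) such that every geodesic of length $\leq\alpha n$ produces a chord pair with action gap $\geq\delta$. By the Margulis--Huber asymptotic $\#\{\bar\gamma\in\mathcal{G}_{x,y}:\ell(\bar\gamma)\leq L\}\sim e^{h_{top}L}/(h_{top}L)$ on a closed hyperbolic manifold, this gives $\sim e^{hn}/(hn)$ candidate pairs with $h=\tfrac58\,h_{top}$.

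The remaining step, which is also the main difficulty, is to show that each such candidate pair contributes a distinct finite bar of length $\geq\delta$ in the barcode of $HC(x,y;n)$. The plan is a filtered Morse--Bott argument: in the degenerate limit $\eta'\equiv\sigma_G$ on $[1,8]$ the critical locus of the action functional is a one-parameter family per geodesic, and under the smooth perturbation given by the actual profile of $\eta$ each such family collapses to exactly one cancelling pair of chords, producing a single finite bar of length $A_2(\ell)-A_1(\ell)$. The obstacle is to rule out non-diagonal cancellations, i.e., Floer trajectories pairing an upper chord of one geodesic with a lower chord of a different geodesic. This is expected to follow from the uniqueness of geodesics in each free homotopy class on a hyperbolic manifold (so that distinct geodesics give distinct classes under an Abbondandolo--Schwarz-type identification of the Floer complex with a Morse complex of the energy functional on the based path space) together with an action--energy estimate forbidding trajectories of energy less than the action gap. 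Once this diagonal pairing is established, the count from the preceding paragraph yields the claimed lower bound $e^{hn}/(hn)$ on bars of length $\geq\delta$.
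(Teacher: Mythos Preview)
Your overall strategy matches the paper's: identify chords with pairs $(\bar\gamma, r)$ where $n\eta'(r)=\ell(\bar\gamma)$, compute the action $n\eta(r)-r\ell$, bound the action gap between the two chords coming from each geodesic, and count geodesics via the Margulis asymptotic. The action formula and the gap bound (the paper gets $\geq 5n-7\ell$, yielding $\ell\leq\tfrac58 n$ for large $n$) are essentially what you wrote.

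The divergence is in your ``remaining step'', which you flag as the main difficulty and propose to handle by a Morse--Bott degeneration together with an Abbondandolo--Schwarz identification and an action--energy estimate. The paper's argument here is both more elementary and complete. A Floer strip $u:\R\times[0,1]\to T^*N$ between two chords is itself a homotopy, rel endpoints $x$ and $y$, between the projections of those chords to $N$; hence the complex $C(x,y;n)$ splits as a direct sum over homotopy classes of paths from $x$ to $y$. Since $N$ is hyperbolic, each such class contains exactly one geodesic, so each summand has exactly the two generators $\gamma_n,\gamma'_n$. The Hamiltonian $nG$ is compactly supported, so $HC^\infty(x,y;n)=0$. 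Over $\mathbb{Z}_2$ this forces $d(\gamma'_n)=\gamma_n$ in every summand, producing exactly one bar per geodesic, of length equal to the action gap. No Morse--Bott argument, no path-space Morse theory, and no energy estimate is needed: the splitting by homotopy class rules out ``off-diagonal'' differentials for free.

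Two minor corrections: the relevant classes are homotopy classes of \emph{paths from $x$ to $y$}, not free homotopy classes; and the factor $\tfrac58$ does not require tuning $\sigma_G$ close to $1$ --- the inequality $5n-7\ell\geq\delta$ uses only $\sigma_G\geq 1$, which is built into the setup.
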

\begin{rem}\label{rem:thanks_VG} i. We thank Viktor Ginzburg who suggested to the third author the method to estimate the numbers of bars in $HC(x,y;n)$ as in the proof below.

ii. The quotient $\frac{5}{8}$ in the statement is an artefact of the choice of $\eta$,
it can be brought as close as needed to $1$ by diminishing appropriately the length of the transition regions $(1,2)$ and $(7,8)$ in the definition of $\eta$.
\end{rem}

\begin{proof}  We will use here some of the calculations in \cite{Bi-Po-Sa:Propagation}, see Equation (16) and  Lemma 5.3.2 there, adapted to Lagrangian Floer homology. These calculations imply:
\begin{itemize} 
\item[i.]  The complex $C(x,y;n)$ is well defined and finite dimensional if $n\sigma_{G}\not\in\mathcal{L}_{x,y}$ which is the case due to  (\ref{eq:assumption_1}). The only contributions to the differential of $C(x,y;n)$ is provided by solutions to (\ref{eq:Floer1})
(with $H=nG$) that are contained in $D^{\ast}_{8}(N)$. 
\item[ii.] The generators $\gamma$ of $C(x,y;n)$ are in bijection with geodesic segments $\bar{\gamma}$ in $N$ that join the point $x$ to the point $y$ in the sense that if $\gamma(t)=(q(t),p(t))$, then $q(t)$ is a geodesic from $x$ to $y$ parametrized such that $||\dot{q}(t)||=\ell$; $\bar{\gamma}$ is the same geodesic, re-parametrized with unit speed, and of length $\ell(\bar{\gamma})=\ell$. 
\item[iii.] For each such generator, $\gamma(t)=(q(t),p(t))$, we have that 
$$\frac{p(t)}{r}=\frac{\dot{q}(t)}{||\dot{q}(t)||}\ \ \ \mathrm{with}\ \  r>0\  \ \mathrm{such\ that}\ \ \  n\eta'(r)=\ell~.~$$ In particular, $||p(t)||$ is constant and is equal to $r$ and $\ell\leq n\sigma_{G}$.
\item[iv.] The action of the generator $\gamma$ as above is:
$$\mathcal{A}_{n}(\gamma):=\mathcal{A}_{F_{x},F_{y}; nG}(\gamma)=n\eta(r)- r\ell$$
\end{itemize}
As a result of these remarks, each geodesic arc $\bar{\gamma}$ from $x$ to $y$ 
of length $\ell(\bar{\gamma}) < n\sigma_{G}$ gives rise to a generator of 
$C(x,y;n)$ given by 
\begin{equation}\label{eq:sol}
\gamma(t)=(q(t),\frac{r\dot{q}(t)}{\ell(\bar{\gamma})})
\mathrm{\ \ \ for\ each}\  r\ \mathrm{such\ that\ } \eta'(r)=\frac{\ell(\bar{\gamma})}{n} 
\end{equation}
 where, as above,  $q(t)$ is the geodesic $\bar{\gamma}$ reparametrized such that $||\dot{q}(t)||=\ell(\bar{\gamma})$ (we identify here $T^{\ast}N$ and $TN$ using $\mathtt{g}$).
In view of the properties of the function $\eta$, for a given value $\ell < n\sigma_{G}$, there are exactly two such values $r_{n}(\ell), r'_{n}(\ell)$  with one value, say, $r_{n}(\ell)\in (1,2)$ and the other $r'_{n}(\ell)\in (7,8)$. 

Fix a geodesic arc $\bar{\gamma}$. Denote by $\gamma_{n}$ and $\gamma'_{n}$ the two corresponding Hamiltonian chords associated  to $\bar{\gamma}$ and to the values $r_{n}=r_{n}(\ell(\bar{\gamma}))$ and $r'_{n}=r'_{n}(\ell(\bar{\gamma}))$, respectively, through formula 
(\ref{eq:sol}). They appear as generators of $C(x,y;n)$ as soon as $\sigma_{G}>\frac{\ell(\bar{\gamma})}{n}$, in particular for all $n > \ell (\bar{\gamma})$
(because $\sigma_{G}\geq 1$). 

We now consider the action values of the two relevant generators. We have:
\begin{eqnarray*} \mathcal{A}_{n}(\gamma'_{n})-\mathcal{A}_{n}(\gamma_{n}) =\ n(\eta(r'_{n})-\eta(r_{n})) -\ell(\bar{\gamma}) (r'_{n}-r_{n}) \  \geq \\ \geq  n (\eta(7)-\eta(2)) -7\ell(\bar{\gamma})\geq 5n - 7\ell(\bar{\gamma}) 
\end{eqnarray*} with the last inequality resulting from $\eta(7)-\eta(2)= 5\sigma_{G}$ and $\sigma_{G}\geq 1$. 

Fix $\delta>0$ as in the statement. We deduce: 
\begin{equation}
\label{eq:length_bound} 
\mathcal{A}_{n}(\gamma'_{n})-\mathcal{A}_{n}(\gamma_{n}) \geq \delta \ \ 
\mathrm{if}\ \ 
n\  \geq \  \frac{7}{5} \ \ell(\bar{\gamma})+\frac{\delta}{5} ~.~
\end{equation}
In summary, for $n\geq \ell(\bar{\gamma})$,  $\gamma_{n}$
and $\gamma'_{n}$ appear as generators in $CF(x,y;n)$ and when 
$n\geq \frac{7}{5}\ell(\bar{\gamma})+\frac{\delta}{5}$ their actions are separated by no less than $\delta$.  

The standard Floer homology  $HC^{\infty}(x,y; n)$ vanishes
(forgetting the persistence structure is indicated by the superscript $\infty$) and the Floer differential in $C(x,y; n)$ is very simple.
Indeed, Floer trajectories can only join Hamiltonian chords that project to  paths in $N$ in the same homotopy class. Given that each geodesic in $\mathcal{G}_{x,y}$
is unique in its homotopy class, it follows that when $n>\frac{7}{5}\ell(\bar{\gamma})+\frac{\delta}{5}$, we have $d(\gamma'_{n})=\gamma_{n}$ (recall that we work over $\Z_{2}$ and that $HC^{\infty}(x,y;n)$ vanishes) and the 
couple $(\gamma_{n},\gamma'_{n})$ defines a bar of length at least $\delta$ in the
persistence module $HC(x,y;n)$. Thus, for $n$ big enough, the number of bars of length at least $\delta$ in $HC(x,y;n)$ is at least the number of geodesic arcs in $\mathcal{G}_{x,y}$ of length $\leq \frac{5}{8}n < \frac{5}{7} n(1 -\frac{\delta}{5n})$.
By classical results, such as in 
 \cite{Parry_Pollicott:Zeta} Chapter 9, 
the number of geodesics in $\mathcal{G}_{x,y}$ of length less than $T$ increases
as fast as $$\frac{e^{Th_{top}}}{Th_{top}}~.~$$ By replacing $T$ with $\frac{5}{8} n$
we obtain the result.
 \end{proof}

We next turn our attention to the filtered complex  
 $$C(x ,L; n):=CF(F_{x}, L; nG,J)$$
 where $L$
is the Lagrangian fixed in \S\ref{subsubsec:choices}.  Recall that $L$ has the property $L\cap D^{\ast}_{9}(N) =\cup_{i} L_{i}$ with $L_{i}=F_{x_{i}}\cap D^{\ast}_{9}(N)$ (see (\ref{eq:pieces})).
 
We assume that $x\not= x_{i}$ for all the critical points $x_{i}$ of $\varphi$ and we add two additional assumptions relative to $F_{x}$,$L$, and $G$:
\begin{equation}\label{eq:sigma_2}
\sigma_{G}\not\in Q\mathcal{L}_{x,x_{i}}, \ \ \forall x_{i} \end{equation}
As mentioned after (\ref{eq:assumption_1}), this is a generic assumption on the choice
of $\sigma_{G}$. Note that:
\begin{equation}\label{eq:inters}
L\cap F_{x}= \{z_{x}\} \subset D^{\ast}_{10}(N)\backslash D^{\ast}_{9}(N)~.~
\end{equation}
To make sense of this recall from \S\ref{subsubsec:choices} that $L$ is a small deformation of $L'$ which is the graph of $d\varphi$. It follows that, under the assumption that $x\not= x_{i}$, $\forall i$, we have that $F_{x}$ intersects $L$ in a single point - denoted by $z_{x}$ - which lies outside of $D^{\ast}_{9}(N)$. 
 
Given that $G$ is constant outside of $D^{\ast}_{8}(N)$ we deduce that the  underlying
 vector space of $C(x,L; n)$ can be written as:
 \begin{equation} C(x,L;n)=\oplus_{i}\Sigma^{k_{i}} C(x,x_{i};n)\oplus \Z_2 \langle z_{x} \rangle ~.~
 \end{equation} 
 Moreover, for the same reason, the complexes $C(x,x_{i},n)$ are finite dimensional over $\Z_{2}$. The shift $\Sigma^{k_{i}}$ appears here because the value of the primitive 
 $f_{L}$ on the component $L_{i}$ is not $0$, but rather $k_{i}\in [-1,1]$. The action of the point $z_{x}$ is $f_{L}(z_{x})$, see (\ref{eq:action1}). Denote by $d^{x_{i};n}$ the differential of the complex $C(x,x_{i};n)$ and let 
 $D^{n}$ be the differential of the complex $C(x,L;n)$.  Finally, recall that for a filtered chain complex $C$ we denote by $C^{a}$ the filtration $\leq a$ subcomplex. 
 \begin{lem} \label{lem:defor}With the notation above there exists a constant $\xi>0$ that depends
 on $(N,\mathtt{g})$, on $L$, on $x$, and on the almost complex structure $J$ (but not on $n$) such 
 that $$ D^{n} = \oplus_{i} d^{x_{i};n} + \bar{D}^{n}$$ with the property that
 $$\bar{D}^{n}(C^{\alpha}(x,L;n))\ \subset \ C^{\alpha-\xi}(x,L;n) \ , \ \forall \ \alpha  
 ~.~$$
 \end{lem}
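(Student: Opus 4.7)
The plan is to split $D^n$ into a diagonal part $\bigoplus_i d^{x_i;n}$ plus a residual $\bar{D}^n$, and then establish a uniform-in-$n$ lower bound on the action drop of $\bar{D}^n$ via a monotonicity estimate in the region where $G$ is constant.

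First, I would identify the diagonal part of $D^n$. By the description of Hamiltonian chords derived in the proof of Lemma~\ref{lem:fibers}, the chords generating the summand $\bigoplus_i \Sigma^{k_i} C(x, x_i; n)$ of $C(x,L;n)$ project to paths in $T^\ast N$ of constant $\|p(t)\| = r$ with $r \in (1,2) \cup (7,8) \subset (0,8)$. Hence they lie strictly inside $D_8^\ast N$, and in particular inside $D_9^\ast N$ where $L$ coincides with $\coprod_i L_i \subset \coprod_i F_{x_i}$. A Floer trajectory $u$ of $nG$ whose two asymptotic chords are of this type and whose right boundary $u(\cdot,1)$ remains in a single component $L_i$ is then a solution of exactly the same Floer equation and Lagrangian boundary conditions as a trajectory contributing to $d^{x_i;n}$, and vice versa. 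This identifies $\bigoplus_i d^{x_i;n}$ inside $D^n$, and defines $\bar{D}^n$ as the count of the remaining trajectories --- those whose right boundary enters the annular region $U := D_{10}^\ast N \setminus \mathrm{Int}(D_9^\ast N)$, or those involving the exceptional generator $z_x$.

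The crucial step is the uniform action drop for $\bar{D}^n$. The profile $\eta$ is constant on $[8,\infty)$, so $nG$ is constant on $T^\ast N \setminus D_8^\ast N$ for every $n$, and the Floer equation~(\ref{eq:Floer1}) reduces there to the genuine $J$-holomorphic equation $\partial_s u + J\,\partial_t u = 0$ (the gradient term vanishes identically). The portion of any residual trajectory $u$ lying in $T^\ast N \setminus D_8^\ast N$ is thus a genuine $J$-holomorphic strip with boundary on $F_x$ and on $L$, whose Floer energy equals its symplectic area $\int u^\ast \omega$. By the definition of $\bar{D}^n$, every such $u$ contains a nontrivial $J$-holomorphic piece $\tilde u$ whose right boundary either passes through $L \cap U$ or reaches $z_x$; in either case the image of $\tilde u$ is not contained in an arbitrarily small neighborhood of a single point of $F_x \cup L$, because the path $u(\cdot,1)$ must traverse a definite distance (at least $9-8=1$ in the radial coordinate $\|p\|$) from the nearest fiber chord endpoint. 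Applying Sikorav's monotonicity lemma for $J$-holomorphic curves with totally real boundary, uniformly on the compact region $\overline{U}$ (where $J = J^\infty$ is convex at infinity), yields a constant $\xi = \xi(F_x, L, J, N) > 0$, independent of $n$, such that
\[
\int \tilde u^\ast \omega \;\geq\; \xi.
\]
Since total Floer energy equals action drop, $\mathcal{A}_n(\gamma_-) - \mathcal{A}_n(\gamma_+) \geq \xi$ for every trajectory counted by $\bar{D}^n$, which is the desired inclusion $\bar{D}^n\bigl(C^\alpha(x,L;n)\bigr) \subset C^{\alpha-\xi}(x,L;n)$.

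The main obstacle to overcome is the uniformity of $\xi$ in $n$: a priori the Hamiltonian $nG$ grows linearly, which could seem to distort any energy estimate. The point that makes the argument go through is that $nG$ becomes \emph{constant} precisely on the region where the monotonicity estimate is carried out, so its growth contributes nothing to the $J$-holomorphic piece $\tilde u$, and the constant $\xi$ depends only on the fixed geometric data $F_x$, $L$, $J$, $U$. Subordinate technical points --- the maximum principle confining Floer trajectories to $D_{10}^\ast N$, transversality achieved by perturbing $J$ only inside $D_8^\ast N$ while keeping $J^\infty$ convex outside, and the smooth block-decomposition of $D^n$ --- are standard and already implicit in the setup of the earlier subsections.
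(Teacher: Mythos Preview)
Your proposal is correct and takes essentially the same approach as the paper: split $D^n$ into strips whose $L$-boundary stays in the fiber-like region of $L$ versus those that escape, observe that the escaping pieces are genuinely $J$-holomorphic outside $D_8^*N$ because $nG$ is constant there (this is precisely what makes $\xi$ independent of $n$), and then apply Sikorav's monotonicity to extract a uniform lower energy bound. The paper is only slightly more explicit in the monotonicity step --- it fixes an intermediate annulus $K = D_9^*N \setminus \mathrm{Int}(D_{8.5}^*N)$, picks a boundary point $p$ of the strip on $L \cap K$, and applies monotonicity in a ball $B_{r'}(p) \subset K$ chosen small enough to miss $F_x$ --- whereas you gesture at the same mechanism via the ``definite radial distance $\geq 1$'' without pinning down the ball; this is a matter of precision rather than a gap.
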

 
 \begin{proof} The Floer strips that contribute to the differential $D^{n}$ are of two types: 
 those that stay inside $D^{\ast}_{8}(N)$ - in this case they appear in one of the differentials
 $d^{x_{i};n}$, and strips $u$, called below of {\em second type}, such that $\mathrm{Im}(u)\not\subset D^{\ast}_{8}(N)$. Recall that the Lagrangian $L$ coincides with a union of fibers
 inside $D^{\ast}_{9}(N)$. Using the maximum principle,  this implies that 
 any strip $u$ of the second  type has to reach the non-fiber-like region of $L$. In other words, there exists a point $(s_{0},0)\in \R\times\{0\}$ such that $u(s_{0},0)\not\in D^{\ast}_{9}(N)$. 
 The aim of the proof is to show that there is a constant $\delta$, as in the statement, such 
 that the energy
 $$E(u)=\int ||\frac{\partial u}{\partial s}||^{2}\ dsdt $$
 satisfies $E(u)\geq \delta$ for any strip $u$ of the second type.
 
The argument is based on standard monotonicity  results such as in Sikorav \cite{Sik:est}. 
The first step is to fix the constant $\xi$. Let $K= D^{\ast}_{9}N\backslash \mathrm{Int}(D^{\ast}_{8.5}(N))$. The results in  \cite{Sik:est} - see Proposition 4.7.2 there -  imply that there exist positive constants $r_{0}$ and $C$ that depend only on $J$, $(N,\mathtt{g})$, and $L$ with the following property.
For each point $p\in L\cap K$, if $v:\Sigma\to K$  is a
$J$-holomorphic curve with $\Sigma$ compact with boundary, such that:
\begin{itemize}
\item[i.] There is  some point $x\in \partial\Sigma$ with $v(x)=p$,
\item[ii.] $v(\Sigma)\subset B_{r}(p)\subset K$ with $r\leq r_{0}$ (where $B_{r}(a)$ is the ball in $T^{\ast}(N)$ of radius $r$ with center $a$),
\item[iii.] $v(\partial \Sigma)\subset \partial B_{r}(p)\cup L$,
\end{itemize}  
 then the symplectic area $A(v)$ of $v$ satisfies:
 $$A(v)\geq Cr^{2}~.~$$
 
Now pick some $r'<r_{0}$ such that for each point in $z\in L\cap (D^{\ast}_{8.8}(N)\backslash \mathrm{Int}(D^{\ast}_{8.6}(N)))$ we have 
\begin{equation}\label{eq:dep_x}
B_{r'}(z)\subset K\backslash F_{x}.
\end{equation}
 We put:
 $$\xi= C(r')^{2}/2~.~$$

 With this notation we return to a Floer strip of the second type $u:\R\times [0,1]\to D^{\ast}_{10}(N)$. We want to show  $E(u)\geq \xi$. We know that there is some $s_{0}$ with the property that the point $u(s_{0},0)$ is in $L\backslash D^{\ast}_{9}(N)$. Moreover, at least one of the asymptotic limits of $u$ is a Hamiltonian chord of $nG$ (for some $n$) and is therefore inside $D^{\ast}_{8}(N)$. It follows that there is also  some point
 $s_{1}\in \R$ such that $p=u(s_{1},0)\in  L\cap (D^{\ast}_{8.8}(N)\backslash \mathrm{Int}(D^{\ast}_{8.6}(N)))$.

 Denote by $u_{r}$ the restriction of $u$ to the set $S_{r}=u^{-1}(B_{r}(p))\subset \R\times [0,1]$ for $ 0 < r \leq r'$. By making use of Sard's theorem, for a generic choice of $r$ the set $S_{r}$ is a compact surface with boundary. We pick such a regular $r_{1}> \frac{r'}{\sqrt{2}}$. We notice that $u_{1}:S_{r_{1}}\to B_{1}(p)$ has the property that $u_{1}(\partial S_{r_{1}})\subset \partial B_{r_{1}}(p)\cup L$. We also have, of course, $u_{1}(S_{r_{1}})\subset B_{r_{1}}(p)\subset K$. Recall that $G$ is constant outside of $D^{\ast}_{8}(N)$ and thus $\nabla (nG)$ vanishes on $K$.
 As a result $u_{1}$ is $J$-holomorphic and thus we obtain:
 $$E(u)\geq A(u_{1})\geq Cr_{1}^{2}\geq \xi$$
 which concludes the proof. 
   \end{proof}
  
  \begin{rem}\label{rem:dep_onx}
  The constant $\xi$ in the lemma depends very lightly on the point $x$. Indeed, if we  assume that $F_{x}$ does not intersect a fixed neighbourhood
 $U$ of $L\cap D^{\ast}_{9}(N)$, then we may replace condition
 (\ref{eq:dep_x}) by $B_{r'}(z) \subset K\cap U$ and under this assumption
 $\xi$ is independent of $x$ and only depends on $L$, $J$, $(N,\mathtt{g})$, and $U$.
  
  \end{rem}

  \subsubsection{An auxiliary algebraic  result}
We will next use the following simple  statement. 
  
  \begin{lem}\label{lem:bar_counts2}
  Suppose that $C$ is a filtered, finite dimensional chain complex over $\Z_2$ that is $\Z_2$-graded. 
  Fix $\delta>0$.
  If the differential $D$ of $C$ can be written as
  $$D=d+D'$$ with $d^{2}=0$ and $D'$ such that $D'(C^{\alpha})\subset C^{\alpha-\delta}$
  for all $\alpha\in\R$, then for any $\epsilon<\delta$, we 
  have $$\#(\ \mathcal{B}^{\epsilon}_{H(C))}\ )\geq \#(\ \mathcal{B}^{\epsilon}_{H(C,d)} \ )~.~$$
  \end{lem}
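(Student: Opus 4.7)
The plan is to prove the estimate via a filtered Barannikov-type normal form for $(C,d)$ and then to transfer this normal form, by a filtration-descending induction, to a basis adapted to $D$. The whole argument exploits the fact that the perturbation $D'=D-d$ strictly lowers the filtration by the definite amount $\delta$, which is strictly larger than the bar-length threshold $\epsilon$; this gap is what allows the bookkeeping of bar birth/death levels to go through.

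First I would fix a Barannikov normal form for $(C,d)$: there is a basis $\{x_i,y_i\}_{i\in I}\cup\{z_k\}_{k\in K}$ of $C$ with $dx_i=y_i$, $dy_i=dz_k=0$, and $v(x_i)>v(y_i)$, so that $\{y_i\}\cup\{z_k\}$ spans $\ker d$. The barcode of $H(C,d)$ then consists of the finite bars $[v(y_i),v(x_i))$ and the semi-infinite bars $[v(z_k),\infty)$, and $\#(\mathcal{B}^{\epsilon}_{H(C,d)})$ counts those whose length strictly exceeds $\epsilon$. Next I would carry out the key filtration-descending induction. For any basis element $w$, $Dw=dw+D'w$ with $D'w\in C^{\leq v(w)-\delta}$, so $D$ agrees with $d$ at the leading filtration level of $w$. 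Starting from the highest filtration level and working downwards, I would define $\tilde w=w+$(correction in $C^{\leq v(w)-\delta}$) so that, stage by stage, the pairing behavior of $D$ mirrors that of $d$. The relation $D^{2}=0$, combined with $d^{2}=0$, enforces a compatibility $dD'+D'd+(D')^{2}=0$ on the obstructions appearing at each stage; these obstructions all live at filtration levels already treated in the induction, so they can be absorbed by adjusting previously-modified basis vectors of strictly smaller filtration. The outcome is a basis $\{\tilde x_i,\tilde y_i\}_{i}\cup\{\tilde z_k\}_{k}$ with $v(\tilde w)=v(w)$ and $D\tilde x_i=\tilde y_i$, while the $\tilde z_k$ either remain $D$-cycles or pair up with other $\tilde z_{k'}$ via $D$.

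Finally I would read off the barcode of $H(C,D)$ from this adapted basis. Each finite bar $[v(y_i),v(x_i))$ of $H(C,d)$ yields an identical pair $(\tilde x_i,\tilde y_i)$ for $D$ and hence an identical bar $[v(y_i),v(x_i))$ in $H(C,D)$; so finite bars of $H(C,d)$ of length $>\epsilon$ inject into bars of $H(C,D)$ of length $>\epsilon$. Each semi-infinite bar $[v(z_k),\infty)$ produces either a semi-infinite bar of $H(C,D)$ (when $D\tilde z_k=0$) or, when two $\tilde z_k$'s are paired by $D$, a finite bar of length at least $\delta$; since $\delta>\epsilon$, this still contributes to $\mathcal{B}^{\epsilon}_{H(C,D)}$. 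Summing gives the inequality $\#(\mathcal{B}^{\epsilon}_{H(C,D)})\geq \#(\mathcal{B}^{\epsilon}_{H(C,d)})$.

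The main obstacle will be the combinatorial bookkeeping in the inductive change of basis: one has to order the induction carefully by filtration and check that corrections applied at lower levels do not spoil the $D$-pairings already established higher up; here the $\mathbb{Z}_2$-grading is convenient because it removes all sign subtleties and forces $|x_i|\neq |y_i|$. A second delicate point is the borderline case in which two semi-infinite bars of $H(C,d)$ are merged into a single finite bar of $H(C,D)$ by the induction; it is precisely the strict inequality $\epsilon<\delta$, together with the fact that the $D'$ correction moves filtration by at least $\delta$, that ensures any such merged bar has length at least $\delta$ and therefore still contributes to the left-hand count, so no long bar of $H(C,d)$ is lost under the correspondence.
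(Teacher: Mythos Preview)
Your inductive change-of-basis is too optimistic: the conclusion that ``each finite bar $[v(y_i),v(x_i))$ of $H(C,d)$ yields an identical bar in $H(C,D)$'' is simply false. Take $C=\langle p,q,r,s\rangle$ over $\Z_2$, with $\Z_2$-grading $|p|=|r|=0$, $|q|=|s|=1$, filtrations $v(p)=10$, $v(r)=8$, $v(q)=4$, $v(s)=0$, and $dp=q$, $dr=s$. The bars of $(C,d)$ are $[4,10)$ and $[0,8)$. Now set $\delta=3$ and $D'p=s$, $D'r=q$ (both drop filtration by $\geq 3$), $D'q=D's=0$; one checks $D^2=0$. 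Then $Dp=Dr=q+s$, so $p+r$ is a nontrivial $D$-cycle, and the barcode of $(C,D)$ is $[0,\infty)$, $[4,8)$, $[10,\infty)$. Neither original finite bar survives, $(C,D)$ acquires two infinite bars although $(C,d)$ had none, and in your induction the step at $r$ fails outright: any correction $\tilde r=r+(\text{terms in }C^{\leq 5})$ still has $D\tilde r=q+s$, of filtration $4$, not $0$. So the mechanism you describe does not produce the claimed normal form, and the dichotomy ``$\tilde z_k$ is a $D$-cycle or pairs with another $\tilde z_{k'}$'' breaks down as well.

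The paper's argument avoids this trap by working from the opposite end: rather than trying to preserve \emph{all} $d$-bars under $D$, it eliminates only the \emph{short} ones. For a shortest bar $(a_0,b_0)$ with $v(b_0)-v(a_0)\leq\epsilon<\delta$, one has $v(D'b_0)\leq v(b_0)-\delta<v(a_0)$, so $a_0':=Db_0=a_0+D'b_0$ still has $v(a_0')=v(a_0)$; the $2$-dimensional piece $\langle a_0',b_0\rangle$ is a genuine $D$-subcomplex, and one constructs a filtered $D$-retract onto a complement $C_1$ on which the hypotheses persist. Iterating removes all short bars, leaving a retract $(C_k,D_k)$ of $(C,D)$ on which both $d$ and $D'_k$ drop filtration by more than $\epsilon$; hence every bar of $(C_k,D_k)$ is long, and a dimension count finishes. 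The key point you missed is that only for \emph{short} bars does the condition $\epsilon<\delta$ force $D$ and $d$ to agree at the leading order in the way your induction needs; for long bars it need not, as the example shows.
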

  
  \begin{proof}
  The result is most likely known by experts but we give a proof for completeness. We denote by 
  $C_{0}$ the complex $(C,d)$. It is easy to see that it is enough to show the statement when $C_{0}$ is acyclic which we will now assume. We write 
  $C_{0}$ as a direct sum of terms $E_{2}(a,b)$   with filtrations given by a valuation $v$ with values in $\R$ (see \S\ref{subsec:pers}). There are two types of terms of this form,
  those such that $v(a)\leq v(b)\leq v(a)+\epsilon$ - they will be denoted
  by $E_{2}^{s}(a,b)$, and those with $v(b)> v(a)+\epsilon$, denoted by 
  $E_{2}^{l}(a,b)$. The number  $n_{l}$ of terms of type $E_{2}^{l}$ equals the number of bars in $\mathcal{B}_{HC_{0}}$ of length greater than $\epsilon$. We now consider one $E_{2}^{s}$ term, $E_{2}^{s}(a_{0},b_{0})$,
such that $v(b_{0})-v(a_{0})\leq \epsilon$ is minimal among all terms $E_{2}^{s}$. We let $C_{1}$ be the subspace of $C$ generated by the generators of $C$ different from $\{a_{0},b_{0}\}$ so that as $d$-chain complexes we have
$C_{0}=E^{s}_{2}(a_{0},b_{0})\oplus C_{1}$.    We consider
the element $$a'_{0}= a_{0} + D'(b_{0})~.~$$ It has the property $a'_{0}=D(b_{0})$. We notice that $v(a'_{0})=v(a_{0})$ because $\epsilon < \delta$ and thus $v(D'(b_{0})) < v(a_{0})$. Therefore, we also have $D'(b_{0})\in C_{1}$.
Let $$ E_{2}(a'_{0}, b_{0})=\Z_2 \langle \  a'_{0},\  b_{0} \ : \ D(b_{0})= a'_{0}, D(a'_{0})=0 \ \rangle ~.~$$ 
Then $E_{2}(a'_{0}, b_{0})$ is a $D$-subcomplex of $C$. We have the projection $p_{1}: C\to C_{1}$ defined on generators as the identity on $C_{1}$ and that sends $b_{0}$ to $0$, and $a_{0}$ to $D'(b_{0})$. This projection has as kernel $E_{2}(a'_{0}, b_{0})$  and is 
filtration preserving in the sense that it sends $C^{\alpha}$ to $C^{\alpha}_{1}$, for all $\alpha\in\R$.
The differential $D$ induces a differential $D_{1}$ on $C_{1}$ which is the unique linear map making  $p_{1}$ a chain map. Moreover, $D_{1}$ can be written:
$$D_{1}= d + D'_{1}$$
where $D'_{1}=p_{1} \circ D'$. It is easy to see that $D'_{1}$ also drops the filtration by
at least $\delta$, just as $D'$ in the statement. Assume for the moment that the map $p_{1}$ admits a section $j_{1}: (C_{1}, D_{1}) \to (C, D)$ which is a filtration preserving chain map. In that case, we can reapply iteratively the same procedure to $(C_{1},D_{1})$
to successively eliminate all the $E_{2}^{s}(a,b)$'s. We are left at the end with a complex $(C_{k}, D_{k})$ whose dimension is $2 n_{l}$  and whose differential $D_{k}=d+D'_{k}$ drops filtration by more than $\epsilon$ ($d$ drops differential by more than $\epsilon$ on $C_{k}$, and $D'_{k}$ by at least  $\delta > \epsilon$). As a result, all the bars 
in $HC_{k}$ are longer than $\epsilon$ and there are at least $n_{l}$ of them.
By our iterative construction, $(C_{k},D_{k})$ is a filtration preserving retract of $(C,D)$ and therefore the number of bars of length more than $\epsilon$ in $HC$ is at least $n_{l}$, as claimed. 

Thus, to finish the proof we are left to construct the section $j_{1}$. 
We will construct a linear map $j_{1}:C_{1}\to C$ that is filtered and has the property that $p_{1}\circ j_{1}=id$ and, additionally, $\mathrm{Im}(j_{1})$ is
a subcomplex of $C$. This implies that $j_{1}$ is also a chain map.
As a vector space, we have the splitting  $C=E_{2}(a_{0}',b_{0})\oplus C_{1}$.
Let $x\in C_{1}$. There are two possibilities. Assume that, relative to this splitting, $D(x)=w\in C_{1}$. In that case, we put $j_{1}(x)=x$. On the other hand, assume that $D(x)= a_{0}'+w$ with $w\in C_{1}$. In this case we put  $j_{1}(x)= x+ b_{0}$. We notice $v(x)\geq v(b_{0})$. Moreover, $D(x+b_{0})\in C_{1}$ and, with this definition, $j_{1}$ is different from the identity only in degree $|a_{0}|+1$.  We also have that $j_{1}$ is filtration preserving in the stronger sense that $v(j_{1}(y))=v(y)$ for all $y\in C_{1}$. It is clear that $p_{1}\circ j_{1}=id$.  We now want to remark that $\bar{C}_{1}=\mathrm{Im}(j_{1})$ is closed with respect to $D$. First, the construction
above means that for each $x\in \bar{C}_{1}$,  $D(x)$ can not 
have the form $a'_{0}+w$, $w\in \bar{C}_{1}$. Indeed,  all $x\in \bar{C}_{1}$ in degree $|a_{0}|+1$ satisfy $D(x)\in C_{1}$ and $C_{1}$ and $\bar{C}_{1}$ coincide in degree $|a_{0}|$. Now assume that for some $x\in \bar{C}_{1}$, $D(x)=b_{0} + \zeta$, $\zeta\in \bar{C}_{1}$.  Then  $0=D^{2}(x)= a'_{0} + D(\zeta)$ which leads to a contradiction. Thus $\bar{C}_{1}$ is a $D$-subcomplex in $C$ and this concludes the proof.
 \end{proof}
 
 \begin{rem} We have used at certain points in the proof the fact that we work over $\Z_2$ but the result remains true with a similar proof over any field $\k$.
 
 \end{rem}
 
\subsubsection{The main step in the proof of Proposition \ref{cor:hyp_ent}} The aim of this subsection is to  show:

\begin{lem}\label{lem:parrila}  For $\epsilon$ sufficiently small and for  any $\epsilon$-approximating family $\mathcal{F}_{\epsilon}$ for $\lag^{(ex)}(D^{\ast}_{10}N)$ in $\msc_{p}(10)$, with $\nu(p)$ sufficiently small - as in Theorem \ref{thm:nearby}, thus with $\F_{\epsilon}$ consisting of fibers - there is a Hamiltonian isotopy $\Psi$ with support in $D^{\ast}_{10}(N)$, a Lagrangian $L$, both as in \S \ref{subsubsec:choices}, and one fiber, $F_{x}\in \F_{\epsilon}$ such that the number of bars:
$$\# (\mathcal{B}^{\epsilon}_{HF(F_{x},\Psi^{n}L)})$$ grows in $n$ at least as fast as  $\frac{e^{hn}}{hn}$, where $h$ is the constant in Lemma \ref{lem:fibers}. 
\end{lem}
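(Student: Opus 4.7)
The strategy is to combine the three preparatory lemmas~\ref{lem:fibers}, \ref{lem:defor}, and \ref{lem:bar_counts2} by means of a naturality identification that transfers the bar-growth from the model complex $CF(F_x, L; nG, J)$ studied in \S\ref{subsubsec:choices} to the persistence module $HF(F_x, \Psi^n L)$ computed in $\msc_p(10)$. Concretely, I would first fix any $\epsilon$-approximating family $\mathcal{F}_\epsilon = \{F_{y_1},\ldots, F_{y_l}\}$ (which consists of fibers by Theorem~\ref{thm:nearby}) and pick one of its members $F_x := F_{y_k}$. Then I would construct $L$ as in \S\ref{subsubsec:choices}, choosing the Morse function $\varphi$ so that none of its critical points $x_1,\ldots, x_m$ coincides with the base point $x$; this guarantees~(\ref{eq:inters}) and, by Remark~\ref{rem:dep_onx}, yields a uniform lower bound $\xi_0 > 0$ for the constant $\xi$ produced by Lemma~\ref{lem:defor}. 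Finally I would choose the slope $\sigma_G$ in the definition of $G$ outside the countable set $\bigcup_{i=1}^m Q\mathcal{L}_{x,x_i}$, so that Lemma~\ref{lem:fibers} applies to every pair $(x,x_i)$. The smallness condition on $\epsilon$ is then made explicit by requiring $2\epsilon < \xi_0$.

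\textbf{Identifying the two persistence modules.} The central reduction is that, after accounting for the $\nu(p)$-sized perturbation data of the Fukaya category, the persistence module $\hom_{\msc_p(10)}(F_x, \Psi^n L)$ is $\nu(p)$-interleaved with $H(CF(F_x, L; nG, J))$. Since $G$ is autonomous one has $\Psi^n = \phi_1^{-nG}$, and the standard naturality bijection sends a Hamiltonian chord $\gamma$ of $nG$ from $F_x$ to $L$ to the intersection point $\gamma(0) \in F_x \cap \Psi^n L$; a direct inspection of~(\ref{eq:action1}) shows this bijection preserves the action filtration up to an $n$-independent constant (given by the primitive of $\lambda|_{\Psi^n L}$ induced by $f_L$). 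The extra $\nu(p)$-small perturbation prescribed by $p$ then yields a continuation map whose energy estimate is uniform in $n$, since $G$ vanishes outside $D^*_8(N)$ and all relevant Floer trajectories remain in the compact region $D^*_{10}(N)$; this produces the advertised $\nu(p)$-interleaving. Taking $\nu(p) \ll \epsilon$ makes the error harmless for counting bars of length at least $\epsilon$.

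\textbf{Extracting exponential growth.} Lemma~\ref{lem:defor} decomposes the Floer differential on $CF(F_x, L; nG, J)$ as $D^n = \bigoplus_{i=1}^m d^{x_i;n} + \bar D^n$ with $\bar D^n$ lowering filtration by at least $\xi \geq \xi_0$. With $\epsilon < \xi_0/2$, Lemma~\ref{lem:bar_counts2} applies and yields
\[
\#\bigl(\mathcal{B}^{\epsilon}_{H(CF(F_x, L; nG, J))}\bigr)\ \geq\ \sum_{i=1}^m \#\bigl(\mathcal{B}^{\epsilon}_{HC(x,x_i;n)}\bigr).
\]
Lemma~\ref{lem:fibers} shows that each summand on the right grows at least like $e^{hn}/(hn)$ with $h = (5/8)h_{top}$, so the same bound holds for the left-hand side, and hence, by the identification of the previous paragraph, for $\#(\mathcal{B}^\epsilon_{HF(F_x, \Psi^n L)})$ as well.

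\textbf{Main obstacle.} The most delicate step will be to make the identification of the second paragraph genuinely filtered. It is easy to see that the two complexes are isomorphic after forgetting filtrations; what requires care is showing that the continuation map associated with the passage from the bare Hamiltonian $nG$ to the Fukaya-perturbation of size $\nu(p)$ only shifts actions by $O(\nu(p))$ uniformly in $n$. The key to this is that $G$ is constant at infinity, so the family of Floer trajectories involved in the comparison stays in a fixed compact region independent of $n$, and the two Hamiltonians being compared differ there by a function bounded in $C^0$-norm by $\nu(p)$. Granted this, the remainder of the argument is purely the algebra of Lemmas~\ref{lem:fibers}, \ref{lem:defor}, and \ref{lem:bar_counts2}.
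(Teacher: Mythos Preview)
Your overall strategy—combine Lemmas~\ref{lem:fibers}, \ref{lem:defor}, \ref{lem:bar_counts2}, then transfer via naturality—is the paper's, but the order in which you make your choices creates a circularity. The statement requires a smallness threshold on $\epsilon$ fixed \emph{before} the family $\F_\epsilon$ is given. You instead pick an arbitrary $F_x\in\F_\epsilon$, build $L$ around it (choosing the Morse function $\varphi$ so that $x\notin\Crit(\varphi)$), and only then extract $\xi_0$ and ``require'' $2\epsilon<\xi_0$. Since $\xi_0$ depends on $L$, which depends on $x$, which depends on $\F_\epsilon$, your threshold on $\epsilon$ has become a function of the very family it was supposed to precede. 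Your appeal to Remark~\ref{rem:dep_onx} does not help: that remark provides uniformity in $x$ for a \emph{fixed} $L$ and neighbourhood $U$, not across varying $L$. The paper avoids this by reversing the order: fix $L$ and a neighbourhood $U$ of $L\cap D^*_9(N)$ first; use Remark~\ref{rem:dep_onx} to obtain an $x$-independent $\delta>0$; set the threshold $\epsilon<\delta$; then, given any $\F_\epsilon$, invoke the density of approximating families (as $\epsilon\to 0$ the union $\cup_{F\in\F_\epsilon}F$ becomes dense in $D^*_{10}(N)$) to find some $F_x\in\F_\epsilon$ with $F_x\cap U=\emptyset$.

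Your second paragraph also overcomplicates the transfer. The quantity $HF(F_x,\Psi^n L)$ in the lemma is honest Floer homology with zero Hamiltonian, not $\hom_{\msc_p(10)}(F_x,\Psi^n L)$, so no $\nu(p)$-perturbation needs to be negotiated here (that comparison is deferred to \S\ref{subsubsec:back_to_E}). Naturality of Floer's equation gives an \emph{exact} filtered chain isomorphism $CF(F_x,\Psi^n L;0,J_n)\cong CF(F_x,L;nG,J)$ with $J_n$ the pullback of $J$ by $\Psi^{-n}$, and the resulting persistence module is independent of the (convex-at-infinity) almost complex structure. The ``main obstacle'' you anticipate—uniform-in-$n$ continuation estimates—therefore never arises.
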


%
\begin{proof}
We start the argument by choosing a Lagrangian $L$ as in \S\ref{subsubsec:choices}.  We also fix a small neighbourhood $U$ of $L\cap D_{9}^{\ast}(N)$. We pick $\delta$ to be the constant given by Lemma \ref{lem:defor} under the assumption that $F_{x}\cap U=\emptyset$.
As noted in Remark \ref{rem:dep_onx} this constant is then independent 
of $x$. We pick $\epsilon < \delta$ and let $\F_{\epsilon}$ be an $\epsilon$-approximating family for $\msc_{p}(10)$. If $\epsilon$ is small enough there is 
at least one element $F_{x}$ of $\F_{\epsilon}$ that does not intersect $U$ (this follows from the fact that when $\epsilon\to 0$ the union $\cup_{F\in \F_{\epsilon}}F$ becomes dense in $D^{\ast}_{10}(N)$).
At this point we can pick the Hamiltonian $G$. We pick $G$ as in \S\ref{subsubsec:choices} subject to the assumption (\ref{eq:sigma_2}) which is generic.  This means that the complex $CF(F_{x}, L; nG, J)$ is defined for 
each $n$, it is finite dimensional over $\Z_{2}$, and that it satisfies the statement 
in Lemma \ref{lem:defor}.
This means that 
$$CF(F_{x},L;nG,J)=\oplus \Sigma^{k_{i}}CF(F_{x},F_{x_{i}};nG,J)\oplus \Z_2 \langle z_{x}\rangle $$
as filtered vector spaces, with $F_{x_{i}}$ as in (\ref{eq:pieces}) and 
that the differential $D^{n}$ of $CF(F_{x},L;nG,J)$ satisfies:
$$D^{n}=\oplus_{i}d^{x_{i};n}+\bar{D}^{n}$$ with 
$\bar{D}^{n}$ dropping filtration by at least $\xi$ and with $d^{x_{i};n}$ the differential in $CF(F_{x},F_{x_{i}};nG,J)$. 
In this situation we can  apply Lemma \ref{lem:bar_counts2}
to deduce :

$$\# (\mathcal{B}^{\epsilon}_{HF(F_{x},L;nG,J) })\geq  \sum_{i}\# (\mathcal{B}^{\epsilon}_{HF(F_{x},F_{x_{i}};nG,J)})~.~$$
We use  Lemma \ref{lem:fibers} to conclude that
$\# (\mathcal{B}^{\epsilon}_{HF(F_{x},L;nG,J) })$ increases in $n$ as fast 
as $\frac{e^{hn}}{nh}$.

The argument concludes by noting that by the naturality of Floer's equation, there is a filtered chain isomorphism:
$$CF(F_{x}, \Psi^{n} L; 0, J_{n})\to CF(F_{x},L;nG, J)~.~$$ 
Here $\Psi = \Theta^{-1}$ where $\Theta=\phi_{1}^{G}$ is induced by the Hamiltonian flow of $G$. The almost complex structure
$J_{n}$ is such that $\Theta^{n}_{\ast}J_{n}=J$. Because $G$ is constant outside 
$D^{\ast}_{8}(N)$, the almost complex structures $J_{n}$ coincide with $J$ there and thus the complexes $CF(F_{x}, \Psi^{n} L; 0, J_{n})$ are well defined
because $|| - ||^{2}$  is  $J_{n}$- convex outside $D^{\ast}_{8}(N)$. The persistence
module $HF(F_{x}, \Psi^{n} L; 0, J^{*})$ is in fact independent of the almost complex structure $J^{\ast}$, as long as it coincides with $J$ outside $D^{\ast}_{8}(N)$.

\end{proof}

\subsubsection{End of the proof of Proposition \ref{cor:hyp_ent}}\label{subsubsec:back_to_E} 
The constructions in the subsections above can be appropriately rescaled so  that they take place 
in the unit disk bundle $D^{\ast}N$ instead of $D^{\ast}_{10}(N)$ and, further, 
this disk bundle is included in the total space of  the Lefschetz  fibration $\bar{h}:E\to \mathbb{C}$ that was instrumental in the proof of Theorem \ref{thm:nearby}.
 Moreover, we may assume as in \S\ref{subsec:exact-seq} that the TPC $\epsilon$-approximating
 family $\F'_{\epsilon}$ consists of Lagrangian spheres $\hat{S}_{x_{i}}$ that each intersects 
 $D^{\ast}N$ in a fiber $F_{x_{i}}$ of $D^{\ast}N$. The collection of these fibers is precisely 
 the $\epsilon$-TPC-approximating family $\F_{\epsilon}$, see \S\ref{subsubsec:complex_Lag} for a brief
 recap of the context.
 
 \
 
We now recall the conclusion of  Lemma \ref{lem:parrila}: the number of bars $\# (\mathcal{B}^{\epsilon}_{HF(F_{x},\Psi^{n}L)})$ grows in $n$ at least as fast as  $\frac{e^{hn}}{hn}$. It is easy to see that the constructions leading to this result (that all take place in $D^{\ast}(N)$) can be adjusted so  that we have the same rate of growth for  $\# (\mathcal{B}^{4\epsilon}_{HF(F_{x},\Psi^{n}L)})$ which is what we will assume from now on.
Recall that $F_{x}$ is one of the fibers in the approximating family $\F_{\epsilon}$ and $\Psi^{n}L\subset  \mathrm{Int}(D^{\ast}N)$. 
Therefore, $F_{x}\cap \Psi^{n}L= \hat{S}_{x}\cap \Psi^{n}L \subset \mathrm{Int}(D^{\ast}N)$.  Due to the convexity of $\partial D^{\ast}N$ we deduce that the Floer complexes $CF(F_{x}, \Psi^{n}L;0, J_{n})$ and $CF(\hat{S_{x}}, \Psi^{n}L;0,J_{n})$ coincide. 
As a result 
$$\# (\mathcal{B}^{4\epsilon}_{HF(\hat{S}_{x},\Psi^{n}L)})$$  also grows in $n$ at least 
as fast as  $\frac{e^{hn}}{hn}$.

Both $\hat{S}_{x}$ and $\Psi^{n}L$ are Yoneda modules so that
$HF(\hat{S}_{x},\Psi^{n}L)=\hom_{\mathscr{D}_{p}(E)}(\hat{S}_{x},\Psi^{n}L)$. 
As a result $\hbar  (\Psi; L, \hat{S_{x}}, 4\epsilon)$, as defined in  (\ref{eq:bar-code_ent}) does not vanish. Given that $G_{\F'_{\epsilon}}=\oplus_{S\in \F'_{\epsilon}}S$
we also have $\hbar (\Psi; L, G_{\F'_{\epsilon}},4\epsilon)>0$.
At this point we can apply Corollary \ref{cor:ineq_bar2}
 and we deduce 
 $$ h^{r}_{\Phi',\F'_{\epsilon}}(\Psi; L,2\epsilon)>0$$
 which is the desired statement. \qed
 

\subsection{Complexity of equators on $S{^2}$}\label{subsec:tb_S2}
The aim of this subsection is to complete the proof of Corollary \ref{cor:no-t-b} by showing that the space of equators on $S^{2}$
endowed with the spectral metric is not totally bounded.  We will use the notation in \S\ref{sec:split-app}. In particular,
the set of equators on the $2$-sphere is denoted by
$\lag^{\text{(mon,} \mathbf{0} \text{)}}(S^2)$. Here 
the notation reflects the fact that equators are monotone, and $\mathbf{0}$ indicates that the $\Z_{2}$-number of Maslov -$2$  $J$-holomorphic disks with boundaries on an equator $L$, and passing through a fixed point $x\in L$, vanishes. Recall  from Proposition \ref{OCS2} that this class of Lagrangians
is retract approximable  with an $\frac{1}{4N}+2\nu(p)$-approximating family of the form
$L_{1},\ldots, L_{N}$ where $L_{i}$ is a great circle passing through the north and south poles of $S^{2}$ and is  at angle $\frac{\pi}{2N}$ from $L_{i-1}$. We fix  $L_{1}$ such that it passes through  the point $(1,0,0)$. We assume that the metric
on $S^{2}$ is such that its area is equal to $1$.


Notice that in this case, by contrast to the case of $M=D^{\ast}N$, the approximating data $(\Phi, \F)$ is simpler in the sense
that the categories $\mathscr{Y}_{\epsilon,\eta}$ do not depend
on $\epsilon$, the only dependence is on $p$ (we may assume $\eta=\nu(p)$) and for each $N$, the family $\mathcal{E}(N)=\{L_{1},\ldots, L_{N}\}\subset \mathcal{E}$ where $\mathcal{E}$ is the set of all great circles passing through the two poles.   
\begin{prop} \label{prop:complex_S2}
There exists a Hamiltonian diffeomorphism $\Psi\colon S^{2}\to S^{2}$,  $L\in\mathcal{E}$,   $N\in \N$, and  $\epsilon' \geq  2 (\frac{1}{4N}+2\nu(p))$, such that:
\begin{equation*}
N^{r}(\Psi ^{k}L, \mathcal{E}(N), \epsilon') \stackrel{k\to \infty}{\longrightarrow} \infty~.~
\end{equation*}
\end{prop}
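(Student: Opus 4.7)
The plan is to reduce the statement to a growth estimate on barcodes of certain Floer persistence modules, in direct analogy with the proof of Proposition~\ref{cor:hyp_ent}, and then to invoke a Hamiltonian diffeomorphism of the sphere with positive topological entropy. Since the family $\mathcal{E}(N)$ consists of Yoneda modules of honest Lagrangians in a closed monotone setting, the finite type assumption on hom persistence modules holds, and Proposition~\ref{prop:lower-bd} (in the refined retract form~\eqref{eq:ineq-r-bar}) applies. This yields, with $G_{\mathcal{E}(N)} = \bigoplus_{i=1}^N L_i$,
\begin{equation*}
N^{r}(\Psi^{k}L;\mathcal{E}(N),\epsilon') \;\geq\; k(\mathcal{E}(N))\,\sum_{i=1}^{N} \#\!\left(\mathcal{B}^{2\epsilon'}_{HF(L_i,\Psi^{k}L)}\right),
\end{equation*}
so it suffices to exhibit $\Psi$ and $L$ so that, for at least one index $i$, the right-hand side diverges in $k$.

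First, I would fix a Hamiltonian diffeomorphism $\Psi\colon S^{2}\to S^{2}$ of positive topological entropy. Such diffeomorphisms are classical (e.g.\ smoothed Smale-horseshoe type maps, supported in a small disk away from the north and south poles, or Polterovich-Shelukhin egg-beater maps); since $H^{1}(S^{2})=0$ any area-preserving diffeomorphism on $S^{2}$ is Hamiltonian. Next I would pick the equator $L$ to be any reference equator that meets the support of the horseshoe transversely in a way compatible with the standard "test curve" needed for barcode entropy arguments. The central analytic input is then the barcode entropy theorem of \c{C}ineli-Ginzburg-Gurel \cite{CGG_bar_ent}: for appropriate pairs of Lagrangians, the barcode entropy $\hbar(\Psi;L,L_i;\delta)$ is positive and bounded below by a relative topological entropy, which is itself positive for well-chosen $\Psi$. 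Consequently, for some sufficiently small $\delta_{0}>0$ and some $L_i \in \mathcal{E}(N)$ chosen to cross the horseshoe region, $\#(\mathcal{B}^{\delta_{0}}_{HF(L_i,\Psi^{k}L)})$ grows exponentially in $k$.

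To conclude, I would first fix $\delta_{0}>0$ as provided by the barcode entropy argument, then choose $N$ large and $p$ with $\nu(p)$ small so that $\epsilon':=\tfrac{\delta_{0}}{2}$ satisfies $\epsilon' \geq 2\bigl(\tfrac{1}{4N}+2\nu(p)\bigr)$; this is possible because $\delta_{0}$ is independent of $N$ and $\nu(p)$. Combining the lower bound above with the exponential growth of $\#(\mathcal{B}^{2\epsilon'}_{HF(L_i,\Psi^{k}L)})$ forces $N^{r}(\Psi^{k}L;\mathcal{E}(N),\epsilon')\to\infty$ as $k\to\infty$, and Remark~\ref{rem:tb_complex} then yields the non-total-boundedness half of Corollary~\ref{cor:no-t-b} for $S^{2}$.

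The main obstacle is the last step of the middle paragraph: verifying that an equator from the explicit family $\mathcal{E}(N)$ (rather than an abstract generic test Lagrangian) actually witnesses positive relative barcode entropy for $\Psi$ at a fixed scale $\delta_{0}$ independent of $N$. One way to handle this is to choose $\Psi$ first so that its horseshoe dynamics live inside a small open disk $U\subset S^{2}$ disjoint from the poles, and to arrange $L$ to be the equator crossing $U$ transversely; then every sufficiently fine family $\mathcal{E}(N)$ contains an equator $L_i$ passing through $U$, and one can transplant the barcode entropy argument of \cite{CGG_bar_ent} to the pair $(L_i,\Psi^{k}L)$ by exhibiting an exponential number of "horseshoe-braided" Floer strips of uniformly bounded action difference. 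Alternatively, one may bypass $\mathcal{E}(N)$ entirely by using that $N^{r}(-;\mathcal{E}(N),\epsilon')$ dominates $N^{r}(-;\{L'\},\epsilon')$ for any single $L'\in\langle\mathcal{E}(N)\rangle^{\Delta}$ of one's choice.
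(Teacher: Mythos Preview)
Your overall strategy—lower-bound $N^{r}(\Psi^{k}L;\mathcal{E}(N),\epsilon')$ by a barcode count via Proposition~\ref{prop:lower-bd} and then exhibit growth of that count—is the same as the paper's. However, there is a genuine gap and a substantive difference in execution.

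\textbf{The gap: Novikov coefficients.} You invoke Proposition~\ref{prop:lower-bd} by asserting that the finite-type hypothesis on $\hom_{\C}(X,Y)$ holds because the Lagrangians are honest closed monotone objects. This is not correct as stated. In the monotone setting of~\S\ref{sb:fuk-mon} the Floer complexes are defined over the Novikov field $\Lambda$, and the filtration is by $\Lambda_{0}$-submodules. Viewed as persistence modules over $\mathbb{Z}_{2}$, the homologies $HF(L_{i},\Psi^{k}L)$ have \emph{infinitely} many bars (each bar has an $\R$-family of $T^{\alpha}$-translates), so the finite-type hypothesis of Proposition~\ref{prop:lower-bd} fails outright, and the count $\#(\mathcal{B}^{2\epsilon'}_{-})$ is infinite for every $\epsilon'$. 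The paper addresses exactly this point: it passes to the Usher--Zhang \emph{concise} barcode $\bar{\mathcal{B}}$ for Floer-type complexes over $\Lambda$, checks that the analogue of Lemma~\ref{lem:noc-chains} holds in that framework, and deduces the analogues of Proposition~\ref{prop:lower-bd} and~\eqref{eq:ineq-r-bar} with $\bar{\mathcal{B}}$ in place of $\mathcal{B}$. Without this step your displayed inequality is not meaningful.

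\textbf{The different route.} Once the barcode lower bound is in place, you aim for exponential growth via a positive-entropy Hamiltonian and the \c{C}ineli--Ginzburg--G\"urel machinery. The paper instead takes a far more elementary path: $\Psi$ is the Dehn twist along the horizontal equator, supported in a thin annulus $U$ of area $<\tfrac{1}{4}$, and $L$ is a vertical equator. Then $\#(L_{1}\cap\Psi^{k}L)=2k+2$, and an open-mapping argument shows that any Floer strip connecting two intersection points inside $U$ must exit $U$ and hence has area at least $\tfrac{3}{32}$. Since the $\mathbb{Z}_{2}$-Floer homology (obtained by $T=1$) is two-dimensional, this forces at least $k$ bars of length $>\tfrac{2}{32}$ in the concise barcode. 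Setting $\epsilon'=\tfrac{1}{32}$ finishes the argument with only \emph{linear} growth, which is all the statement requires. This avoids constructing a horseshoe, avoids invoking~\cite{CGG_bar_ent}, and gives explicit constants. Your approach would also work in principle, but the invocation of barcode entropy for the \emph{specific} pair $(L_{i},L)$ with $L_{i}\in\mathcal{E}(N)$ is left vague, and the CGG lower bound by topological entropy requires hypotheses you do not verify.

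Finally, the ``alternative'' in your last paragraph is confused: for $L'\in\langle\mathcal{E}(N)\rangle^{\Delta}$ the inequality goes the wrong way (more generators make cone-length \emph{smaller}, cf.\ Lemma~\ref{lem:hard-ineq}), so this does not help bound $N^{r}(-;\mathcal{E}(N),\epsilon')$ from below.
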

In view of this proposition and of Remark \ref{rem:tb_complex} it follows that the space
$(\lag^{\text{(mon,} \mathbf{0} \text{)}}(S^2), d_{\gamma})$
is not totally bounded.

\begin{proof}[Proof of Proposition \ref{prop:complex_S2}]
We start by identifying $\Psi$: this is the Dehn twist along the 
horizontal equator on the sphere with support inside the annulus
$$U=\{(x,y,z)\in S^{2} \ |  -a  < z < a  \}$$ with $a>0$ picked in such
a way that the area of $U$ is smaller than $\frac{1}{4}$.
Next we pick $L$ to be the great circle on $S^{2}$ that passes through the north and south poles and also through the 
point $(0,1,0)$. Note that $L$ intersects transversely $L_{1}$, the first element in the family $\mathcal{E}(N)$.

The idea of the proof is simple: we would like to use an analogue of the first  inequality in Corollary \ref{cor:ineq_bar2}, for a fixed $\epsilon'$ and some $N$ (with $\mathcal{E}(N)$ in the place of $\F'_{\epsilon}$),  and then show that the number of bars of length larger than $2\epsilon'$ in 
$HF(L_{1},\Psi^{k}L)$ goes  to infinity with $k$. The key issue to be resolved is that we work here in the monotone context and over the Novikov field $\La$, as in \S\ref{sb:fuk-mon}. As a result, the algebraic
results in Proposition \ref{prop:lower-bd}  and (\ref{eq:ineq-r-bar}) need to be adjusted to this context. This can be achieved by using the results of Usher-Zhang \cite{Usher-Zhang:perh} as we will briefly
review next.

Assume that $C$ is a finite dimensional, filtered chain-complex defined over the Novikov field  $\La$.  The homology $H(C)$ is a persistence module in the classical sense over the field $\Z_{2}$.  However, counting bars in the usual way does not make sense because if one bar of type $[a,b)$ appears, all translates of type $T^{\alpha}[a,b)$ also appear for all $\alpha\in\R$.  The tools required to deal with this problem are introduced in \cite{Usher-Zhang:perh} together with a specific terminology.  First, the methods in 
\cite{Usher-Zhang:perh} apply to rings of Novikov type but more general than our choice here, that are denoted there by $\La^{\mathcal{K},\Gamma}$ where $\mathcal{K}$ is the ground field ( $\Z_{2}$ in our case), and $\Gamma$ is an additive subgroup of $\R$. In our case, $\Gamma=\R$. Moreover, \cite{Usher-Zhang:perh} applies to so-called {\em non-Archimedean normed vector spaces that are orthogonalizable} which means vector spaces $C$ over $\La$ endowed with a filtration function $\ell :C\to \R\cup \{-\infty\}$ with the properties typical for action functionals together with a property of admitting special ``orthogonal'' bases. Such complexes are called of Floer-type  in  Definition 4.1 in \cite{Usher-Zhang:perh} and, in particular, all Floer complexes in our paper fit into this class.  All Floer-type complexes   will always be finite dimensional over $\La$ in our case. To such a Floer-type chain complex $C$ \cite{Usher-Zhang:perh} assigns
a bar-code $\bar{\mathcal{B}}_{HC}$, called there the {\em concise} bar-code of $C$, that consists
of intervals of non-negative length, possibly infinite and that all have $0$ as
lower bound (this is, of course, different from the case considered earlier in this section; the fact that the lower bound of the bars is always $0$ reflects  the issue mentioned above having to do with the action of $\La$). It is shown that such a bar-code determines $C$ up to filtered chain-homotopy equivalence (which explains why we use $H(C)$ in the notation). The construction of this barcode parallels the standard construction over a non filtered field: it goes through a writing of the differential of the complex $C$, $d:C\to C$,
 in  a special basis over $\La$ such that the matrix of the differential is upper triangular and only contains $0$'s and $1$'s, with at most a single $1$ on each row and column.  We denote by $\bar{\mathcal{B}}^{\delta}_{H(C)}$ the 
barcode formed by eliminating from $\bar{\mathcal{B}}_{H(C)}$ all
the bars of length $\leq\delta$, just as in  \S\ref{subsec:pers}. The construction of the barcode $\bar{\mathcal{B}}_{H(C)}$ easily shows that the analogue of 
Lemma \ref{lem:noc-chains} remains true in this context with the place of 
$V$ being taken by a Floer-type finite dimensional complex over $\La$ and 
with $\bar{\mathcal{B}}$ in the place of $\mathcal{B}$. As result 
 Proposition \ref{prop:lower-bd}  and (\ref{eq:ineq-r-bar}) remain also true
 with the  modification that the base $A_{\infty}$-category is a Fukaya
 filtered category  over $\La$ and that $\bar{\mathcal{B}}$ takes the place of $\mathcal{B}$. 
 
 In brief, to prove the proposition we need to show that 
 \begin{equation}\label{eq:estimate_bars3}
 \#\  \left(\bar{\mathcal{B}}^{2\epsilon'}_{HF(L_{1},\Psi^{k}L)}\right)\stackrel{k\to\infty}{\longrightarrow} \infty ~.~
 \end{equation} 

At this point we pick the value  $\epsilon'= \frac{1}{32}$. We will consider the complex $CF(L_{1}, \Psi^{k}L)$ defined over $\La$. The generators of this complex  are the intersection points $L_{1}\cap \Psi^{k}L$. The number of these intersection points is $2k+2$: two of them
are the north and south poles and there are two others for each successive application of $\Psi$.  Floer trajectories are holomorphic and by using the open mapping theorem it is easy to see that if a Floer trajectory starts at a point $\in L_{1}\cap \Psi^{k}L \cap U$ then it  can not remain entirely inside $U$ and thus it fills a connected region of $S^{2}\backslash ( U \cup L_{1} \cup L)$. As a result each such Floer trajectory has area at least $\frac{3}{32}$. At the same time the Floer homology with $\Z_{2}$ coefficients $HF(L_{1}, \Psi^{k}L;\Z_{2})$ is well defined (this is obtained by making $T=1$) and is isomorphic
to $HF(L_{1}, L;\Z_{2})$ which is of dimension $2$ over $\Z_{2}$ and is generated by the north and the south poles. It follows that $\bar{\mathcal{B}}_{HF(L_{1},\Psi^{k}L)}$ contains at least $k$ bars and each of them is of length more than $\frac{2}{32}=2\epsilon'$. This concludes the proof.
\end{proof}

\begin{rem} \label{rem:equators1}
It is easy to reformulate Proposition \ref{prop:complex_S2} in terms of the non-vanishing of an entropy type measurement that one may call $\epsilon$-weighted  {\em slow} (categorical) entropy which is defined just as in formula (\ref{eq:cat_entropy2}) except that $\log (n)$ is used at the denominator, in the place of $n$. In these terms, what we have shown in the proof above is that the slow $\epsilon$-weighted retract entropy of $\Psi$ with respect to $L$, both chosen as in the proof, is  at least $1$ as soon as $\epsilon \leq \frac{1}{32}$. Because  $\Psi$ is a Hamiltonian diffeomorphism, it is easy to see that for $\epsilon=\infty$ the slow entropy vanishes. Relations between entropy and Floer theoretic calculations have been initiated in  \cite{FS:vol-growth}. See  \cite{DawFlBar} \cite{DawA3} for some other results involving bar-code entropy estimates.

\end{rem}


\subsection{Quasi-rigidity and Corollary \ref{cor:quasi-rig}}\label{subsec:qrig-cor}

We start by restating Corollary \ref{cor:quasi-rig} in a more precise way.

\begin{cor}\label{cor:quasi-rig2} Fix  a closed  Riemannian manifold $(N,\mathtt{g})$, its unit cotangent
bundle $D^{\ast}N$ as well as an $\epsilon > 0$.  Fix also a family $\F =\{F_{0},\ldots, F_{l}\}$ of fibres of $D^{\ast}N$ that  $\epsilon$-approximates $\lag^{(ex)}(D^{\ast}N)$,  as constructed in Theorem \ref{thm:nearby}
(see Definition \ref{d:approx-sys}).  Consider an exact symplectomorphism  $\phi :D^{\ast}N\to D^{\ast}N$ with support in the
interior of $D^{\ast}N$ and let
 $$\chi (\phi; \F) =\max_{F\in\F}\ \{ \max h_{\phi(F)} - \min h_{\phi(F)}\}~.~$$
 For any $L\in \mathcal{L}ag^{(ex)}(D^{\ast}N)$, we have $$d_{\gamma}(L,\phi(L)) \leq 4(\epsilon + N(L;\F, \epsilon) \chi(\phi;\F))~.~$$
\end{cor}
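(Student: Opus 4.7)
The plan is to unfold a cone decomposition of $L$ over the approximating family $\F$, push it forward by the TPC endofunctor induced by $\phi$, and control the deformation introduced at each cone attachment. Write $N := N(L; \F, \epsilon)$. The first step exploits the definition of this quantity: for every perturbation datum $p$ with $\nu(p)$ small enough there exists a weight-zero sequence of strict exact triangles in $\msc_p^0$,
$$\Delta_i : F^{(i)} \to A_i \to A_{i+1}, \qquad 0 \leq i \leq N-1,$$
with $F^{(i)} \in \F^{\Sigma, T}$, $A_0 = 0$, and $\dint(L, A_N) \leq \epsilon$.

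Next I would apply the TPC endofunctor $PD(\widehat{\phi}^{\fuk})$ from \S\ref{sbsb:symplecto-fun} to the above sequence, obtaining strict exact triangles $\phi(\Delta_i) : \phi(F^{(i)}) \to \phi(A_i) \to \phi(A_{i+1})$. Since TPC functors are non-expanding (Lemma \ref{l:d-func}), this gives $\dint(\phi(L), \phi(A_N)) \leq \epsilon$ for free. The substantive part of the argument is then the fibre-by-fibre estimate
$$\dint(F, \phi(F)) \leq \chi(\phi; F), \qquad \forall\, F \in \F.$$
The geometric meaning is transparent: $F$ is endowed with the trivial primitive, while by \eqref{eq:phi-obj} the image $\phi(F) = (\phi(\bar F), h_{\phi(F)}, \phi_*\theta_F)$ carries a primitive of oscillation exactly $\chi(\phi; F)$. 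I would extract continuation morphisms in both directions from the tautological Lagrangian cobordism traced by a Hamiltonian isotopy joining $\mathrm{id}$ to $\phi$; its shadow is bounded by the oscillation of $h_{\phi(F)}$, and the cobordism-induced morphism machinery of \cite{Bi-Co-Sh:LagrSh} translates the shadow bound into the required interleaving bound.

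With this estimate in hand, I would run a straightforward induction on $i$ invoking the standard stability of iterated cones in a TPC (a $(\delta,\mu)$-compatible pair of exact triangles produces outputs at interleaving distance $\leq \delta+\mu$, by a direct application of the explicit cone formula of Lemma~6.12 in~\cite{Bi-Co-Sh:LagrSh} to the natural map of triangles $\Delta_i \to \phi(\Delta_i)$ supplied by the functoriality of $\phi$). This yields $\dint(A_N, \phi(A_N)) \leq 2 N\, \chi(\phi; \F)$, and the triangle inequality then gives
$$\dint(L, \phi(L)) \leq \dint(L, A_N) + \dint(A_N, \phi(A_N)) + \dint(\phi(A_N), \phi(L)) \leq 2\epsilon + 2N\, \chi(\phi; \F).$$
Converting to $d_\gamma$ via the identification $d_\gamma = \widehat{D}_{\text{int}}$ from \eqref{eq:equality_int}, together with $\bar{D}_{\text{int}} \leq 2\sdint \leq 2\dint$ on each $\msc_p^0$, and passing to the limit $\nu(p)\to 0$, produces the desired bound $d_\gamma(L, \phi(L)) \leq 4(\epsilon + N\, \chi(\phi; \F))$.

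The hard part will be the fibre-by-fibre estimate $\dint(F, \phi(F)) \leq \chi(\phi; F)$. Its delicacy lies in refining the naive bound by the Hofer norm $\|\phi\|_H$—which may be very large even when $\chi(\phi;F)$ is tiny—to an expression that only sees the oscillation of $\phi^*\lambda - \lambda$ restricted to each individual fibre. It is precisely this refinement that gives the corollary its geometric teeth in the limiting case $\phi(F) = F$ for every $F \in \F$: then $\chi(\phi; \F) = 0$ and the estimate collapses to $d_\gamma(L, \phi(L)) \leq 4\epsilon$, a uniform bound independent of $L$.
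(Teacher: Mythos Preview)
Your overall architecture (decompose $L$ as an iterated cone, push forward by the functor induced by $\phi$, compare term by term) is sound, but the execution has a real gap and misses the geometric mechanism the paper actually uses.

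The paper does \emph{not} work in $\msc_p$ with the fibre modules $F_{x_i}$. It works in the ambient category $\mathscr{D}_p(E)$ of the Lefschetz fibration $\bar h:E\to\mathbb{C}$, using the Lagrangian \emph{spheres} $\hat S_{x_i}$ (which are closed, hence honest Yoneda objects). The iterated cone $C_L$ is represented by a filtered twisted complex $(\oplus_i\Sigma^{s_i}\hat S_{x_{j(i)}},\,A)$ with matrix entries in $CF(\hat S_{x_j},\hat S_{x_s};p)$. The crucial geometric point is then: all pairwise intersections $\hat S_{x_j}\cap\hat S_{x_s}$ lie \emph{outside} $D^*N$, and by an open-mapping argument no Floer polygon for these spheres can cross the collar region $U''\setminus U$. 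Since $\phi$ is supported in $D^*N$, the pushforward $\phi_*$ therefore leaves the matrix $A$ literally unchanged (same generators, same trajectories); in the general case the only effect is a uniform filtration shift on each entry, bounded by $2\chi(\phi;\F)$, because the primitives on $\bar S_i=\phi(\hat S_{x_i})$ differ from those on $\hat S_{x_i}$ by at most this amount. Accumulating over $m$ terms in the twisted complex yields $\dint(\mathcal H_{\phi_*p,q}(\bar\phi(C_L)),\mathcal H_{p,q}(C_L))\le 2m\chi(\phi;\F)$ directly, with no induction and no fibre-by-fibre interleaving estimate at all.

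Your proposed ``hard part'' $\dint(F,\phi(F))\le\chi(\phi;F)$ is exactly what the paper's argument \emph{avoids}. You correctly identify that a cobordism/shadow argument bounds this by the Hofer norm of an isotopy, not by the oscillation of the primitive, and you do not supply the refinement. Moreover, in $\msc_p$ the fibres are modules rather than Yoneda objects, so it is not even clear that the endofunctor $PD(\widehat{\phi}^{\fuk})$ (defined via $\phi$ acting on closed Lagrangians and then pushing forward modules) sends the module $F_{x_i}$ to the module one would want to call ``$\phi(F_{x_i})$''. Passing to $E$ and the spheres resolves both issues at once: the objects become geometric, and the Floer data defining the twisted complex becomes invisible to $\phi$.
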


We recall that for an exact Lagrangian $L\subset D^{\ast}N$ we denote by $h_{L}:L\to \R$ a choice of a primitive on $L$ for the exact $1$-form $\lambda$. This notation is used for $L\in\lag^{(ex)}(D^{\ast} N)$, for fibers $F$ of $D^{\ast}N$, as well as for Lagrangians such as $\phi(F)$ which coincide with the fiber $F$ outside the support of $\phi$. Notice also that $Var (h_{L})=\max h_{L} - \min h_{L}$ is independent of the choice of primitive $h_{L}$.

\begin{rem}\label{rem:quasi-isom}
a. We refer to the property in the Corollary \ref{cor:quasi-rig2} as  {\em quasi-rigidity} because its meaning is that  the size of the action of $\phi$ on the whole of $\lag^{(ex)}(D^{\ast}N)$ is constrained by that on the finite family of fibers $\F$. In particular, notice that if $\phi(F)=F$ for all the fibers $F\in \F$, then, for such an $F$, $Var(h_{\phi(F)})=0$ and  thus $\chi(\phi;\F)=0$. Therefore, in this case
we have $$d_{\gamma}(L,\phi(L)) \leq 4\epsilon \ \ , \forall \ L\in\lag^{(ex)}(D^{\ast}N)~.~$$

b. In both Corollaries \ref{cor:link2} and \ref{cor:quasi-rig2} there is a factor two 
on the right side of the respective stated inequalities that could be avoided by using in the definition of TPC-approximability the pseudo-metric $D_{\mathrm{int}}$ from  (\ref{eq:d-int3}) instead of $\dint$. 

c. It is likely that in the statement of the corollary it is possible to replace $N(L;\F,\epsilon)$ by
$\# (\F )$, the number of elements in the $\epsilon$-approximating family $\F$.
\end{rem}

\begin{proof}[Proof of Corollary \ref{cor:quasi-rig2}] The argument makes use of the 
Lefschetz fibration $\bar{h}:E\to\mathbb{C}$, the Lagrangian spheres  $\hat{S}_{x_{i}}\subset E$ and  of the setting  leading to the definitions of the local and ambient approximability data  in \S\ref{subsubsec:local_approx_d} and \S\ref{subsubsec:amb_approx_d}, respectively. The construction of $\bar{h}$ and its properties are in \S\ref{subsec:Giroux} and \S\ref{subsec:Lef-Dehn}.  

Given $L\in\lag^{(ex)}(D^{\ast}N)$ there exists an iterated cone $C_{L}$ in the category
$\mathscr{D}_{p}(D^{\ast}N)$ (see \S\ref{subsubsec:amb_approx_d}) as given in equation (\ref{eq:iterated_cones4}) :
\begin{equation}\label{eq:iterated_cones7}
 \Delta_{i} \ : \ Z_{i}\longrightarrow X_{i}\longrightarrow X_{i+1} \ , \ 0\leq i < m \ , 
\end{equation}
with $Z_{j}$ of the form $\Sigma^{l(j)}\hat{S}_{x_{s(j)}}$ or $=0$,
with each $\Delta_{i}$  exact in
 $(\mathscr{D}_{p}(E))^{0}$. Here $X_{0}=0$, 
$C_{L}=X_{m}$,  and there is a Lagrangian $X_{L}$  disjoint from $D^{\ast}N$ such that 
$\dint (L\oplus X_{L},C_{L}) <\epsilon + c_{\epsilon}\nu(p)$ - see  Corollary \ref{thm:nearby_far}. We will also assume that this decomposition is of minimal length, thus $m=N(L;\F,\epsilon)$.

The exact symplectomorphism $\phi$ from the statement naturally extends to $E$ by the identity in the exterior of $D^{\ast}N$. We denote this extension still by $\phi$. Notice that $\phi(X_{L})=X_{L}$.

With these conventions, recall from \S\ref{sbsb:symplecto-fun}, see (\ref{eq:phi-fun-2}) and (\ref{eq:phi-fun-3}), that the symplectic diffeomorphism
$\phi$ induces a TPC functor $\bar{\phi}:=PD(\phi) :\mathscr{D}_{p}(D^{\ast}N)\to \mathscr{D}_{\phi_{\ast}p}(D^{\ast}N)$ (we recall that the space of perturbations $\mathcal{P}$ is closed under the action of $\phi$).  

Denote $L'=L\oplus X_{L}$.  Notice that $\dint(L,\phi(L))= \dint(L\oplus X_{L},\phi (L)\oplus X_{L})$ and the interleaving distance $\dint(X,\phi(L))$ is the same in $D^{\ast}N$ and in $E$ (for a choice of perturbation data $p$ on $E$ that extends the choice on $D^{\ast}N$). Let $p, \phi_{\ast}p \preceq q$.
In $\mathscr{D}_{q}(D^{\ast}N)$ we have the inequality: 
$$\dint(\phi(L'),L')\leq \dint(\phi(L'), \mathcal{H}_{\phi_{\ast}p, q}(\bar{\phi}(C_{L})) )+\dint(\mathcal{H}_{\phi_{\ast}p, q}(\bar{\phi}(C_{L})) ,\mathcal{H}_{p,q}(C_{L}))
+\dint( \mathcal{H}_{p,q}(C_{L}),  L')~.~$$
At the same time we also have
$$ \dint(\phi(L'), \bar{\phi}(C_{L}) ) \leq \dint(L',C_{L}) < \epsilon +c_{\epsilon}\nu(p) $$ 
in $\mathscr{D}_{\phi_{\ast}p} (D^{\ast}N)$
because $\bar{\phi}$ is a persistence functor and therefore non-dilating with respect to $\dint$ (see Lemma \ref{l:d-func} i).
The functors $\mathcal{H}_{p,q}$, $\mathcal{H}_{\phi_{\ast}p,q}$  too are non-dilating  with respect to the respective interleaving pseudo-metrics.
 Thus,  in $\mathscr{D}_{q}(D^{\ast}N)$ we have 
$$ \dint(\phi(L'), \mathcal{H}_{\phi_{\ast}p,q}(\bar{\phi}(C_{L}) )) < \epsilon+c_{\epsilon}\nu(p),\ \dint( \mathcal{H}_{p,q}(C_{L}), L') < \epsilon +c_{\epsilon}\nu(p)~.~$$ 

Starting from this point we  first prove the statement  under the additional 
assumption: \begin{equation}\label{eq:fix_F}
\phi(F)=F \ , \ \forall \ F\in \F ~.~\end{equation}
Under this assumption we first notice that the statement follows if we can show that:
\begin{equation}\label{eq:equality_tr_dist}
\dint (\mathcal{H}_{\phi_{\ast}p, q}(\bar{\phi}(C_{L}), \mathcal{H}_{p,q}(C_{L}))=0
 \end{equation}
 for some $q$ such that $p ,\phi_{\ast}p \preceq q$  by  taking into account the relationships among
$\dint$, $D_{\mathrm{int}}$, and $d_{\gamma}$ as  in equations (\ref{eq:d-int3}), (\ref{eq:ineq_var_inter}) and (\ref{eq:equality_int}) and taking $p$ such that $\nu(p)\to 0$.

We now proceed to justify (\ref{eq:equality_tr_dist}) under  the additional assumption (\ref{eq:fix_F}).
The iterated cone $C_{L}$ belongs to the category 
$$\mathscr{D}_{p}(E)=PD(\fuk(\mathcal{L}ag^{(ex)}(E), p))~.~$$
Recall that there is another model for this category in terms of twisted complexes as 
described in \S\ref{sbsb:ftw} that we will denote by $\mathscr{D}^{Tw}_{p}(E)$.
This TPC  is the persistence homological category of the filtered twisted complexes over
the filtered $A_{\infty}$-category with objects the exact, closed Lagrangians in $E$. With
the notation in \S\ref{sbsb:ftw}:
$$\mathscr{D}^{Tw}_{p}(E)=PH\left(FTw\left(\fuk(\mathcal{L}ag^{(ex)}(E), p) \right)\right).$$
The Yoneda embedding extends to a $0$-equivalence of TPCs:
$$\mathcal{Y}_{p}: \mathscr{D}^{Tw}_{p}(E) \to \mathscr{D}_{p}(E)$$ which is compatible (up to $0$-isomorphism) with the comparison functors $\mathcal{H}_{p,q}$, when $p\preceq q$.
In particullar, the iterated cone $C_{L}$ is $0$-isomorphic to a filtered twisted complex 
$$\bar{C}_{L;p}=(\oplus_{i}Z'_{i}, A_{\bar{C}_{L,p}})$$ 
over the shifted Lagrangian spheres $Z'_{i}=\Sigma^{s_{i}}\hat{S}_{x_{j(i)}}$ that 
appear in the decomposition  (\ref{eq:iterated_cones7}).  Such a twisted complex
is characterized by an upper triangular $m\times m$ matrix, $A_{\bar{C}_{L;p}}$, with elements in $CF (\hat{S}_{x_{j}}, \hat{S}_{x_{s}}; p)$ (up to obvious shifts), as described in \cite{Se:book-fukaya-categ} (see also  the filtered $\lambda$-lemma  in \cite{Bi-Co-Sh:LagrSh} and \S\ref{lambdamap}). 
Here $CF(\hat{S}_{x_{i}}, \hat{S}_{x_{j}}; p)$ is the filtered Floer chain complex of $\hat{S}_{x_{i}}$ and $\hat{S}_{x_{j}}$  computed in $E$ with respect to the perturbation data $p$. To emphasize the dependence on the perturbation data we included $p$ in the notation. 

It is easy to track the effect of $\bar{\phi}$ by using twisted complexes through the push-forward of twisted complexes: this transports the Lagrangians involved by $\phi$, it pushes-forward the perturbation data
$p  \to \phi_{\ast}p$ and sends the matrix $A_{\bar{C}_{L;p}}$ to a matrix
$\phi_{\ast}(A_{\bar{C}_{L;p}})$, element to element, by the obvious transformation:
\begin{equation}\label{eq:ident1}
CF(\hat{S}_{x_{j}}, \hat{S}_{x_{s}}; p)\stackrel{\phi_{\ast}}{\longrightarrow} CF (\phi(\hat{S}_{x_{j}}), \phi(\hat{S}_{x_{s}}); \phi_{\ast} p)= CF(\hat{S}_{x_{j}}, \hat{S}_{x_{s}}; \phi_{\ast}p)
\end{equation}
(where we use our assumption that $\phi$ keeps each $\hat{S}_{x_{i}}$ invariant).
We denote the resulting twisted complex by $\phi_{\ast}(\bar{C}_{L,p})$. It has
the form:
$$\phi_{\ast}(\bar{C}_{L,p})=(\oplus_{i}Z'_{i}, A'_{\phi_{\ast}p})\in FTw\left(\fuk(\mathcal{L}ag^{(ex)}(E), \phi_{\ast}p ) \right) $$
where $A'_{\phi_{\ast}p}=\phi_{\ast}(A_{\bar{C}_{L,p}})$. 

At this point we need to be more precise about the various choices of Floer data for the 
pairs $(\hat{S}_{x_{i}},\hat{S}_{x_{j}})$. Due to the properties of the systems of categories
involved here, as described in \S\ref{s:sys-tpc} and in \S\ref{s:pdfuk}, any such choice that is part of our perturbation data family $\mathcal{P}$  is acceptable for this argument. Similarly, we need to discuss the two relevant continuation functors $\mathcal{H}_{p,q}$ and $\mathcal{H}_{\phi_{\ast}p,q}$. It is useful to revisit Figure \ref{fig:new_fibr}  a part of which is reproduced in 
Figure \ref{fig:new_fibr2} with minor adjustments. 

\begin{figure} [h]
  \includegraphics[scale=0.75]{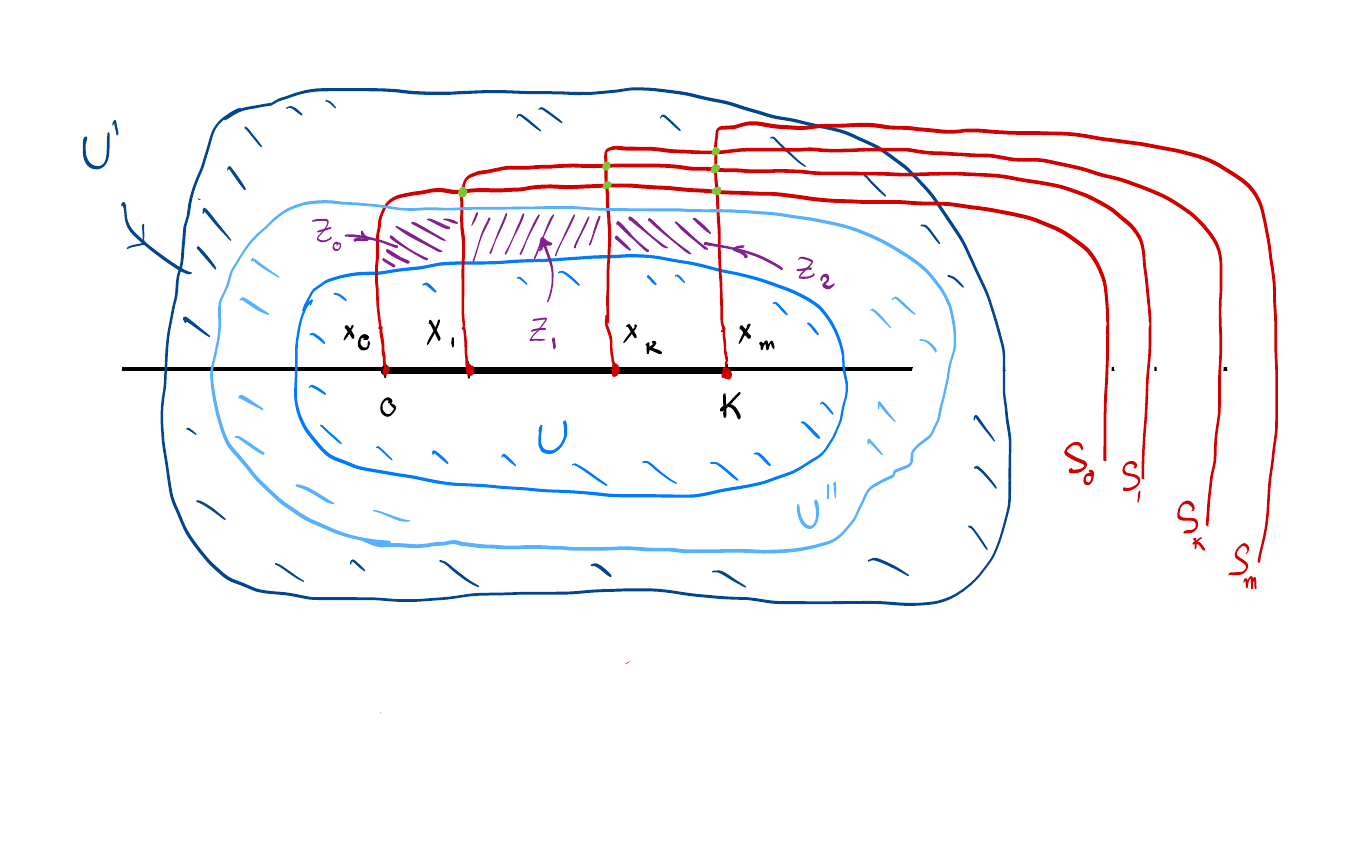}
  \centering
  \caption{The Lagrangian spheres and their intersections.} \label{fig:new_fibr2}
\end{figure}
Recall that $U$ in the picture contains the projection onto $\mathbb{C}$ of 
the disk bundle $D^{\ast}N\subset E$ and that the spheres $\hat{S}_{x_{i}}$ can be assumed 
 to coincide with the spheres $S_{x_{i}}$ in the picture outside of $D^{\ast}N$ (this requires slightly modifying some of the $U$, $U'$ in ways irrelevant for the rest of the
 argument). We may also assume that each two spheres $\hat{S}_{x_{i}}$, $\hat{S}_{x_{j}}$
 interesect transversely.  The Floer data for all the pairs $(\hat{S}_{x_{i}},\hat{S}_{x_{j}})$ is chosen such that the Hamiltonian $H^{\hat{S}_{x_{i}},\hat{S}_{x_{j}}}$ is constant outside
 $D^{\ast}N$. Thus the generators of the respective Floer complexes are the intersection points
 between the relevant spheres. The Morse functions on each of the spheres $\hat{S}_{x_{i}}$ are picked to have exactly two critical points, a maximum and a  minimum, both 
 outside $\bar{h}^{-1}(U')$. The almost complex structure $J^{i,j}=J^{\hat{S}_{x_{i}},\hat{S}_{x_{j}}}$ is picked such that $\bar{h}$ is $(J^{i,j}, i)$-holomorphic over $W=\bar{h}^{-1}(U''\backslash U)$. Here, the set $U''$ represents a neighbourhood of $D^{\ast}N$ that is smaller than $U'$, we could take it as the projection of $D^{\ast}_{2}N$, and such that all the intersection points of the Lagrangian spheres $\hat{S}_{x_{i}}$ are outside $U''$.  Further choices of almost complex structures on $E$ used to define 
 higher operations as well as the continuation maps $\mathcal{H}_{p,q}$ and $\mathcal{H}_{\phi_{\ast}p,q}$ are from the same class, in the sense that they make the projection $\bar{h}$ holomorphic. 

With these choices a simple application of the open mapping theorem shows that there are no Floer strips (or even polygons) that have as inputs intersection
points of paris of distinct $\hat{S}_{x_{i}}$, have output at another such intersection point, have boundaries on (some of) these spheres,  and that intersect the region $U''\backslash U$. For instance, if one such curve would intersect the region $Z_{1}$ in the picture it is easy to see that it also would have to extend to the regions $Z_{0}$ and $Z_{2}$ and, by pursuing this argument, one reaches a contradiction.

Recall that the support of $\phi$ is inside $D^{\ast}N\subset \bar{h}^{-1}(U)$. This means
that in fact $\bar{C}_{L,p}$ coincides with $\phi_{\ast}(\bar{C}_{L,p})$ - the two matrixes $A'_{\phi_{\ast}p}$ and $A_{\bar{C}_{L,p}}$ coincide. Moreover, 
again because the Floer type strips or polygons can not cross the region $U''\backslash U$,
is follows that, with obvious choices of perturbation data,  $\mathcal{H}_{p,q}$ and $\mathcal{H}_{\phi_{\ast}p,q}$ act in the same way on $\bar{C}_{L,p}$ and $\phi_{\ast}(\bar{C}_{L,p})$. 
Therefore,
$$\dint (\mathcal{H}_{p,q}(\bar{C}_{L,p}), \mathcal{H}_{\phi_{\ast}p, q}(\phi_{\ast}(\bar{C}_{L,p})))=0$$
which implies  (\ref{eq:equality_tr_dist}) and concludes the proof under the additional assumption (\ref{eq:fix_F}).

We now drop this assumption (\ref{eq:fix_F}) and consider the general case. Our aim is to show

\begin{equation}\label{eq:equality_tr_dist2}
\dint (\mathcal{H}_{\phi_{\ast}p, q}(\bar{\phi}(C_{L}), \mathcal{H}_{p,q}(C_{L}))\leq 
2m \chi(\phi;\F)~.~
 \end{equation}
which is sufficient to end the proof of the Corollary.
Denote $\bar{S}_{i}=\phi(\hat{S}_{x_{i}})$. These are exact Lagrangians spheres with the 
property that $\bar{S}_{i}$ coincides with $\hat{S}_{x_{i}}$ outside $D^{\ast}N$ and are so that the $\bar{S}_{i}$'s are pairwise disjoint inside $D^{\ast}N$. 
As a result, the same argument used under the assumption (\ref{eq:fix_F}) also
shows in the current more general context that the matrix $A'_{\phi_{\ast}p}$ that defines the twisted complex 
$$\phi_{\ast}(\bar{C}_{L,p})=(\oplus_{i}\phi(Z'_{i}), A'_{\phi_{\ast}p})\in FTw\left(\fuk(\mathcal{L}ag^{(ex)}(E), \phi_{\ast}p ) \right ) $$
is identified as a linear map - without taking into account filtrations - 
to $A_{\bar{C}_{L,p}}$ under the transformation $\hat{S}_{x_{i}}\to \bar{S}_{i}$ (that is akin to a basis change). However, the elements of the $m \times m$ matrix $A'_{\phi_{\ast}p}$ have potentially different filtration levels compared to  $A_{\bar{C}_{L},p}$. To understand this change 
notice that the filtered complex $CF(\bar{S}_{i}, \bar{S}_{j})$ is isomorphic to
$\Sigma^{k_{i,j}}CF (\hat{S}_{x_{i}}, \hat{S}_{x_{j}})$ where $|k_{i,j}|\leq Var(h_{\phi(F_{x_{i}})}) +Var(h_{\phi(F_{x_{j}})}) \leq 2\chi(\phi;\F)$. These shifts cumulate and given that we have $m$ terms in the twisted complexes under consideration we deduce (\ref{eq:equality_tr_dist2}) which concludes the proof.
\end{proof}

\subsection{Lagrangian Gromov width and Corollary \ref{cor:link}}\label{subsec:Gw-cor}

We start by recalling the definition of the Gromov width (\cite{Bar-Cor:Serre}, \cite{Bar-Cor:NATO}) that is relevant here. For this consider a symplectic manifold
$(M,\omega)$, a Lagrangian submanifold $L\subset M$ and a closed subset $K\subset M$. The {\em Gromov width of 
$L$ relative to $K$} is defined by:
\begin{eqnarray*}\mathscr{W}(L; K)=\sup \Bigl\{ \pi \frac{r^{2}}{2} \ | \ \exists\  j: (B_{r}, \omega_{0})\to (M,\omega) , \  j \ \text{is an embedding}, \\ j^{\ast}\omega=\omega_{0}, \ \ j^{-1}(L)= B_{r}\cap \R^{n}, \ j(B_{r})\cap K=\emptyset \Bigr\} 
\end{eqnarray*}
where $(B_{r},\omega_{0})\subset (\mathbb{C}^{n},\omega_{0})$ is the standard symplectic $r$-ball in $\mathbb{C}^{n}$, centred at the origin. 

The aim of this subsection is to show Corollary \ref{cor:link}.  
We reformulate here this corollary in a more precise way.

\begin{cor}\label{cor:link2} 
Assume that $(\mathcal{L}ag (M), d_{\gamma})$ is TPC retract $\epsilon$-approximable  in the sense of Definition \ref{d:approx-sys-ret}  in the system $\widehat{\msc}$ of  Fukaya type TPCs, by the  familiy $\F_{\epsilon}$ such that each element of $\F_{\epsilon}$ is a Yoneda module. 
For each $L\in \lag(M)$ we have
$$\mathscr{W}(L;  \cup_{F\in \F_{\epsilon}}F)\leq 2\epsilon ~.~$$\end{cor}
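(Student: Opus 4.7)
The plan is to deduce this from Corollary 6.13 of \cite{Bi-Co-Sh:LagrSh}, which bounds the Gromov width of a Lagrangian relative to the ends of a Lagrangian cobordism in terms of the shadow of that cobordism. The whole point of the hypothesis that the elements of $\F_\epsilon$ are Yoneda modules is precisely to guarantee that we can realize the abstract TPC decomposition by a genuine geometric cobordism.

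First, I would unpack the assumption of TPC retract $\epsilon$-approximability. By Definition~\ref{d:approx-sys-ret}, for every $\delta > 0$ one can choose a parameter $p \in \mathcal{P}$ with $\nu(p) < \delta$ such that, inside the TPC $\msc_p$, the object $L$ admits an iterated cone $C_L$ whose linearization consists of shifts and translates of the Yoneda modules associated with the finite family $\F_\epsilon$, and such that $d_{\text{r-int}}(L, C_L) < \epsilon + c_\epsilon \nu(p)$. Because all objects involved (both $L$ and all $F \in \F_\epsilon$) are Yoneda modules of honest Lagrangians, the filtered $\lambda$-lemma (as developed in the TPC setting, cf.\ \S\ref{sbsb:ftw} and the twisted-complex model used in the proof of Corollary~\ref{cor:quasi-rig2}) lifts this algebraic cone decomposition to a Lagrangian cobordism $V$ whose negative ends are shifts (by primitives) of the Lagrangians $F \in \F_\epsilon$ and whose positive end is (stably equivalent to) $L$, with shadow bounded by a quantity that goes to $\epsilon$ as $\nu(p) \to 0$.

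Next, I would apply Corollary 6.13 of \cite{Bi-Co-Sh:LagrSh} to $V$: this result gives the estimate $\mathscr{W}(L; \cup_i F_i) \leq 2 \cdot \mathrm{shadow}(V)$, where the ends $F_i$ run through the Lagrangians appearing in the negative end of $V$ (shifts by primitives do not affect the union of underlying submanifolds). Letting $\nu(p) \to 0$ along the system $\widehat{\msc}$, the shadow of the resulting cobordisms converges to at most $\epsilon$, so we conclude
\[
\mathscr{W}(L;\, \cup_{F \in \F_\epsilon} F) \leq 2\epsilon,
\]
as desired. The factor $2$ matches the factor of $2$ that accompanies retract-approximability (reflecting the passage $\dret \leftrightarrow \dint$, cf.\ Remark~\ref{rem:relation-TPCapp}(d)).

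The main obstacle, and the reason the Yoneda hypothesis on $\F_\epsilon$ is needed, is the geometric realization step: an arbitrary object in a TPC generated by Yoneda modules need not itself be a Yoneda module, so one must be careful that the cone decomposition supplied by retract $\epsilon$-approximability can be matched by a Lagrangian cobordism whose ends are precisely the geometric Lagrangians underlying $\F_\epsilon$ (and $L$), with shadow controlled by the TPC weight. Once this geometric realization is in hand, the rest is a direct invocation of the shadow-Gromov-width inequality. The last sentence of the corollary, namely the density of $\cup_{F \in \F_\epsilon} F$ as $\epsilon \to 0$, follows immediately: otherwise there would be a ball in the complement of all $F$'s that intersects some $L$ in a Lagrangian disk of symplectic size bounded below, contradicting the bound $\mathscr{W}(L;\cup F) \leq 2\epsilon$ for small enough $\epsilon$.
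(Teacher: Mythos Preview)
Your proposal contains a genuine gap: the step where you ``lift this algebraic cone decomposition to a Lagrangian cobordism $V$'' is not available, and is not what the filtered $\lambda$-lemma does. The $\lambda$-lemma (in \cite{Bi-Co-Sh:LagrSh} and as used in \S\ref{sbsb:ftw}) lets you represent iterated cones of Yoneda modules by filtered twisted complexes; it does \emph{not} produce a geometric Lagrangian cobordism from an algebraic decomposition. There is no general ``realize a twisted complex by a cobordism'' result. So the middle step of your argument simply does not go through.

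Correspondingly, you have mis-identified what Corollary~6.13(i) of \cite{Bi-Co-Sh:LagrSh} says. That corollary is not a shadow--Gromov-width bound for cobordisms; it is an \emph{algebraic} inequality: to a triple $(\alpha,a,\eta)$ consisting of a morphism $\alpha:L\to a$ in the filtered module category, a module $a$, and an iterated cone decomposition $\eta:0\rightsquigarrow a$ with linearization in geometric Lagrangians, one attaches a weight $w_p((\alpha,a,\eta))$ (see (6.16), (6.19) in \cite{Bi-Co-Sh:LagrSh}); the corollary then gives $\tfrac{1}{2}\delta(L;\cup F)\le w_p$, where $\delta$ is twice the Gromov width used here. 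The paper's proof extracts such a triple directly from retract $\epsilon$-approximability, with $\rho(\alpha)\le r_1+r_2<2\epsilon+c_\epsilon\nu(p)$, feeds it into Corollary~6.13(i), and lets $\nu(p)\to 0$. No cobordism is ever constructed. The reason the Yoneda hypothesis on $\F_\epsilon$ is needed is that the proof of Corollary~6.13 in \cite{Bi-Co-Sh:LagrSh} counts $J$-holomorphic disks with boundary on the underlying Lagrangians of the modules in the linearization, so those modules must be geometric; it is \emph{not} needed for any cobordism realization.
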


\begin{rem}\label{rem:width-fibers}
a. Corollary \ref{cor:link2} does not directly apply to the case of $M=D^{\ast}N$ because the $\epsilon$-generating 
families $\F_{\epsilon}$ consist  of modules $\mathscr{F}_{x_{i}}$ that correspond to fibers $F_{x_{i}}$ 
but that are not Yoneda modules in the categories $\msc_{p}$ (these are part of the local approximating 
data $(\Phi, \F_{\epsilon})$ - see \S\ref{subsubsec:local_approx_d}).
However, the Corollary does apply to the ambient approximating data $(\Phi',\F'_{\epsilon})$ from \S\ref{subsubsec:amb_approx_d} because, in this case, the elements of $\F'_{\epsilon}$ are represented
by Lagrangian spheres $\hat{S}_{x_{i}}\subset E$. Therefore, recalling $F_{x_{i}}=\hat{S}_{x_{i}}\cap D^{\ast}N$, we deduce that the conclusion of the Corollary remains valid in the case of $M=D^{\ast}N, \ \F_{\epsilon}=\{ F_{x_{1}},\ldots, F_{x_{l}}\}$.

b. The inequality in the statement immediately implies that, when $\epsilon\to 0$, the family $\F_{\epsilon}$ becomes dense in $M$. Indeed, assume that a family $\mathcal{F}_{\epsilon}$ avoids some  ball 
$B\subset M$. Take $L$ that passes through the center $x$ of this ball and also pick an embedding 
$j$, as in the definition of width, with $j(0)=x$ and such that $j(B_{r})\subset B$. We deduce in this case that 
$\epsilon \geq \pi\frac{{r}^{2}}{2}$ and therefore $\epsilon$ is bounded away from zero under the hypothesis
that $\F_{\epsilon}$ does not intersect $B$.

c. In this remarks consider the case $M=D^{\ast}N$. A fiber is determined 
by its base point, and the disk bundle $D^{\ast}N$ is determined by the Riemannian metric $\mathtt{g}$. In view of this,  the Corollary indicates that there are some symplectic invariants associated with pairs $(\mathcal{S},\mathtt{g})$ where $\mathcal{S}$ is a configuration of $k$ distinct points in the base. To see this, for any finite 
family $\mathcal{S}\in N$ denote by $\mathcal{F}_{\mathcal{S}}$ the corresponding family of fibers inside $D^{\ast}N$ (the metric defining the disk bundle being $\mathtt{g}$). Now define the {\em approximability precision} of $\mathcal{S}$ by:
$$\epsilon_{\mathcal{S}}= \inf \{\  \epsilon > 0 \ | \  \mathrm{there\ exists}\ \epsilon-\mathrm{approximating\ data} \ (\Phi, \mathcal{F}_{\mathcal{S}}) \ \mathrm{as\ in}\  \S\ref{subsubsec:local_approx_d} \ \}~.~$$
This is non-zero, by point b, and it is finite. Indeed,  for $\mathcal{S}$ consisting of  a single point, it follows from the arguments in \cite{Bi-Co:mspectral} (which are partially a precursor to the proof of Theorem \ref{thm:nearby}) that $\epsilon_{\mathcal{S}}$ is already finite. This quantity is, obviously, independent of the point in question and is an upper bound for all $\epsilon_{\mathcal{S}}$, for arbitrary finite $\mathcal{S}$. However, for $\mathcal{S}$ with more than a single element this number generally depends on the placement of the points in $N$. For instance, by taking $\mathcal{S}\subset D$ with $D\subset N$ a small disk, $\epsilon_{\mathcal{S}}$ certainly will stay away from $0$ (again by point b),
independently of the number of elements in $\mathcal{S}$. 
But, if the same number of fibers are more uniformly distributed along $N$, then, by increasing the number of elements of $\mathcal{S}$ one can make choices such that $\epsilon_{\mathcal{S}}\to 0$ because, by Theorem \ref{thm:nearby},  $\epsilon$-approximating data exists for all $\epsilon$. We will not pursue this topic here but it obviously warrants more
study.
%
\end{rem}

\begin{proof}[Proof of Corollary \ref{cor:link2}] The proof follows directly from Corollary 6.13 i.  in \cite{Bi-Co-Sh:LagrSh}. We only need to relate the notation and conventions in that paper to the context here. The first remark is that \cite{Bi-Co-Sh:LagrSh} works in the setting of weakly-filterered $A_{\infty}$ categories. The setting in the current paper
is that of filtered $A_{\infty}$ categories and thus the results and arguments in \cite{Bi-Co-Sh:LagrSh} apply here too.  
Denote the  TPCs in the system $\widehat{\msc}$ by $\msc_{p}(M)$. Take $L\in\lag(M)$. 
Recall from Remark \ref{rem:relation-TPCapp} that there is no difference whether we use $\sdint$ or 
$\dint$ in the definition of retract approximability. Moreover, from the description of $\sdint$ 
in Lemma 4.2.3, we deduce that there exists an element $a\in \Ob(\msc_{p}(M))$, a cone decomposition
$\eta: 0\cobto a$ with linearization in $\F_{\epsilon}$, and a couple of maps $\alpha\in \hom^{r_{1}}(L, a)$, $\beta\in \hom^{r_{2}}(a,L)$ such that $\beta\circ \alpha = i_{0,r_{1}+r_{2}}(id_{L})$ with $r_{1}+r_{2} < 2\epsilon +c_{\epsilon}\nu(p)$ where $p\in\mathcal{P}$ is
a choice of approriate perturbation data. By inspecting the definition of the quantity $\rho$ in (6.16) in \cite{Bi-Co-Sh:LagrSh} we see 
that $\rho(\alpha) < 2\epsilon + c_{\epsilon}\nu(p)$ (in brief, this $\rho(\alpha)$ infimizes $r_{1}+r_{2}$ such that there is a morphism $\beta$ as before). Moreover, this means that, for $\nu(p)$ small enough, we have, 
with the notation in \cite{Bi-Co-Sh:LagrSh}, $w_{p}( (\alpha, a,\eta)) < 2\epsilon +c_{\epsilon}\nu(p)$ (this $w_{p}$ in (6.19)
\cite{Bi-Co-Sh:LagrSh} is a measurement assigned to triples $(\alpha, a, \eta)$ that infimizes $\rho (\alpha)$ over all choices of $\alpha : L\to a$; it also depends on the perturbation data $p$).  From 
Corollary 6.13. i \cite{Bi-Co-Sh:LagrSh} we 
deduce $2\epsilon \geq \frac{1}{2}\delta (L; \cup_{F\in \F_{\epsilon}}F)$. The quantity $\delta (-;-)$
defined on p. 85 in \cite{Bi-Co-Sh:LagrSh} is the double of the Gromov width as defined here, at the beginning of 
this section. This concludes the proof of the corollary.   
\end{proof}

\begin{rem}\label{rem:direct_cor}
 a. As mentioned in the introduction,  there are some purely geometric implications of  Corollaries \ref{cor:quasi-rig2} and \ref{cor:link2} that are independent of approximability as well as of
 the machinery used for their proofs.  The first statement can be  read as saying that for each $\epsilon>0$ there exists a finite  family of Lagrangians in the class providing approximations (fibers for $T^{\ast}N$, equators for $S^{2}$ etc)   such that the Gromov width of any $L\in \lag^{(ex)}(M)$ relative to this family is less than $\epsilon$. Similarly, focusing on the case of $M=D^{\ast}N$, for any $\epsilon>0$ there exists a finite family of fibers $\F$ such that any exact symplectic diffeormorphism  satisfying $\phi(F)=F$ for all $F\in\F$  only moves each $L\in \lag^{(ex)}(D^{\ast}N)$ less than $\epsilon$ in the spectral metric. 
 
 b. Some of these geometric consequences can also be obtained by more direct means. Indeed, 
 it was noticed by Egor Shelukhin that for any $\epsilon>0$ one can prove by methods unrelated to Fukaya category techniques  that there exists a finite family $\F$ of fibers of $D^{\ast}N$ such that any  Hamiltonian diffeomorphism $\phi$ induced by a Hamiltonian isotopy with support inside $D^{\ast}N$ and disjoint from the fibers in $\F$, has spectral 
 norm smaller than $\epsilon$.  The proof uses the same type of  Morse functions $\varphi$ as constructed in Proposition \ref{prop:Morse} and a result of Usher \cite{Ush10}. This result also implies
 that the Gromov width of the complement of the fibers inside $D^{\ast}M$ can be bounded above by $\epsilon$. This Gromov width  result was also noticed by Mark Gudiev by means of a direct displacement argument using the Morse functions $\varphi$ as in Proposition \ref{prop:Morse}.
\end{rem}


\subsubsection{Gromov width, cone length and sizes of approximating
  families} \label{sbsb:gwidth-2}

Let $(M, \omega)$ be a symplectic manifold of one of the types
described in~\S\ref{s:pdfuk} and let $\lag$ be a subset of the
collection of Lagrangians in $M$, as in~\S\ref{sb:sys-fuk}. Recall
that we have the system of filtered $A_{\infty}$-categories
$\widehat{\fuk}(\lag)$ and its filtered derived system of TPC's
$PD(\widehat{\fuk}(\lag))$.

Let $\mathcal{S}$ be a collection of subsets of $M$ and
$\mathcal{L} \subset \lag$ a collection of Lagrangians from
$\lag$. For $w > 0$ define the $w$-density index of $\mathcal{S}$
relative $\mathcal{L}$ to be:
\begin{equation} \label{eq:density-rel} I(\mathcal{S}, \mathcal{L}; w)
  := \inf \bigl\{k \geq 1 \mid \exists \, S_1, \ldots, S_k \in
  \mathcal{S} \text{ such that } \mathscr{W}(L; S_1 \cup \cdots \cup
  S_k) \leq w, \; \forall L \in \mathcal{L} \bigr\}.
\end{equation}
Here and elsewhere in the paper we use the convention that
$\inf \emptyset = \infty$, $\sup \emptyset = -\infty$. Therefore if
$L \subset S_1 \cup \cdots \cup S_k$ for some $L \in \lag$,
$S_1, \ldots, S_k \in \mathcal{S}$, then we have
$\mathscr{W}(L; S_1 \cup \cdots \cup S_k) = -\infty$ hence
$\mathscr{W}(L; S_1 \cup \cdots \cup S_k) \leq w$ is automatically
satisfied in~\eqref{eq:density-rel}.

For a subset $\mathcal{K} \subset \lag$ we denote by
$$\widetilde{\mathcal{K}} \subset
\widetilde{\Ob}(PD(\widehat{\fuk}(\lag)))$$ the collection of
equivalence classes (see~\S\ref{s:approximability}) of the objects
$\mathcal{Y}_p(K) \in \Ob(PD(\fuk(\lag; p)))$, $K \in \mathcal{K}$,
$p \in \mathcal{P}$, where
$\mathcal{Y}_p: \fuk(\lag; p) \longrightarrow F\md_{\fuk(\lag; p)}$ is
the Yoneda embedding (see~\S\ref{gio:fyon-def}
and~\S\ref{sbsb:yoneda-sys}).

The following is an immediate consequence of
Corollary~\ref{cor:link2}.
\begin{cor} \label{c:approx-index} Let
  $\mathcal{L}, \mathcal{F} \subset \lag$ be two collections of
  Lagrangians. Let $w_0 > 0$ and $\mathcal{F}^0 \subset \mathcal{F}$
  be a finite subset such that $\widetilde{\mathcal{L}}$ is
  $w_0$-retract-approximable by $\widetilde{\mathcal{F}^0}$ in
  $PD(\widehat{\fuk}(\lag))$. Then
  \begin{equation} \label{eq:size-index} |\mathcal{F}^0| \geq
    I(\mathcal{F}, \mathcal{L}; w_0).
  \end{equation}
  Moreover, there exists $L \in \mathcal{L}$ such that
  $N^r(L; \widetilde{\mathcal{F}^0}, w_0) \geq I(\mathcal{F},
  \mathcal{L}; w_0)$.
\end{cor}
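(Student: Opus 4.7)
The plan is to read both inequalities off Corollary~\ref{cor:link2}, applied to suitable subfamilies of $\mathcal{F}^0$. The first inequality~\eqref{eq:size-index} is essentially a tautology once the Gromov-width bound is in place; the moreover statement requires localising that bound to the cone-linearisation of a single Lagrangian.

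For~\eqref{eq:size-index}, I would invoke Corollary~\ref{cor:link2} directly. By hypothesis, $\widetilde{\mathcal{L}}$ is $w_0$-retract-approximable in $PD(\widehat{\fuk}(\lag))$ by the finite family $\widetilde{\mathcal{F}^0}$, whose elements are Yoneda modules. The conclusion of Corollary~\ref{cor:link2} therefore applies and yields
\[
\mathscr{W}\Bigl(L;\, \bigcup_{F \in \mathcal{F}^0} F\Bigr) \leq 2 w_0 \quad \text{for every } L \in \mathcal{L}.
\]
The factor $2$ is the one discussed in Remark~\ref{rem:quasi-isom}b and can be absorbed by the convention tying TPC-retract-approximability to the shift-invariant pseudo-metric used in the definition of $I(-,-;w_0)$; once this is done, $\mathcal{F}^0$ itself is a competitor in the infimum defining $I(\mathcal{F}, \mathcal{L}; w_0)$ in~\eqref{eq:density-rel}, and $|\mathcal{F}^0| \geq I(\mathcal{F}, \mathcal{L}; w_0)$ follows at once.

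For the moreover statement, I would localise the previous argument. Fix for every $L \in \mathcal{L}$ a length-minimal strict exact cone decomposition with linearisation in $(\widetilde{\mathcal{F}^0})^{\Sigma, T}$ realising $k_L := N^r(L; \widetilde{\mathcal{F}^0}, w_0)$, and let $\mathcal{F}^0_L \subset \mathcal{F}^0$ be the (at most $k_L$) distinct base Lagrangians appearing in the linearisation; the extra shifts and translations do not affect the underlying subsets of $M$. Since the proof of Corollary~\ref{cor:link2} only uses the retract-approximation of the one object under consideration (via the filtered $\lambda$-lemma of \cite{Bi-Co-Sh:LagrSh}), applying it to $L$ with the subfamily $\widetilde{\mathcal{F}^0_L}$ gives
\[
\mathscr{W}\Bigl(L;\, \bigcup_{F \in \mathcal{F}^0_L} F\Bigr) \leq w_0.
\]

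It then remains to locate a single $L \in \mathcal{L}$ whose $\mathcal{F}^0_L$ is ``large enough''. The hard part, which I expect to be the only delicate point, is isolating such an $L$: since distinct Lagrangians in $\mathcal{L}$ could use disjoint pieces of $\mathcal{F}^0$, one cannot simply pigeonhole $\sum_L |\mathcal{F}^0_L|$. My plan is to argue by contradiction: assuming $k_L < I(\mathcal{F}, \mathcal{L}; w_0)$ for every $L$ would allow one to replace $\mathcal{F}^0$ by the proper subfamily $\mathcal{F}^0_\ast := \bigcup_{L} \mathcal{F}^0_L$ (still $w_0$-retract-approximating $\widetilde{\mathcal{L}}$, and still covered individually by subfamilies of size $< I$), and then iterate the localisation to contradict the very definition of $I(\mathcal{F}, \mathcal{L}; w_0)$ as an infimum over families simultaneously controlling all widths. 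If this iterative cleaning does not terminate with the desired bound, one falls back on the interpretation where $\mathcal{F}^0$ is assumed minimal from the outset, in which case $\mathcal{F}^0 = \mathcal{F}^0_\ast$ and the bound $k_{L} \geq I(\mathcal{F}, \mathcal{L}; w_0)$ follows for at least one $L$ by the combinatorial structure of the cone decompositions.
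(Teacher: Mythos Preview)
Your argument for~\eqref{eq:size-index} is correct and is exactly what the paper intends: Corollary~\ref{cor:link2} gives $\mathscr{W}(L;\cup_{F\in\mathcal{F}^0}F)\le w_0$ for every $L\in\mathcal{L}$ (modulo the factor $2$ you correctly trace to Remark~\ref{rem:quasi-isom}b), so $\mathcal{F}^0$ itself is an admissible competitor in the infimum~\eqref{eq:density-rel} and the bound follows. The paper treats the whole corollary as immediate from this single observation.

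Your argument for the ``moreover'' clause has a genuine gap, and neither of your fallbacks closes it. The localised bound $\mathscr{W}(L;\cup_{F\in\mathcal{F}^0_L}F)\le w_0$ with $|\mathcal{F}^0_L|\le k_L$ is fine, but the contradiction cannot close for the structural reason you yourself flag: $I(\mathcal{F},\mathcal{L};w_0)$ demands a \emph{single} family working uniformly for all $L$, whereas the $\mathcal{F}^0_L$'s vary with $L$. Passing to $\mathcal{F}^0_\ast=\bigcup_L\mathcal{F}^0_L$ gains nothing --- by part~1 applied to $\mathcal{F}^0_\ast$ one again has $|\mathcal{F}^0_\ast|\ge I$, so the iteration is stationary. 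And minimality of $\mathcal{F}^0$ only says each element is needed by \emph{some} $L$, not that some $L$ requires many cones. A concrete obstruction to your method: on $S^2$ take $\mathcal{L}=\mathcal{F}=\mathcal{F}^0=\{F_1,F_2\}$ two great circles through the poles; then $N^r(F_i;\widetilde{\mathcal{F}^0},w_0)=1$ for each $i$ (a single cone suffices), yet for small $w_0$ neither circle alone bounds the relative width of the other below $w_0$, so $I(\mathcal{F},\mathcal{L};w_0)=2$. Your localisation produces one-element subfamilies whose union is all of $\mathcal{F}^0$, so nothing shrinks and no contradiction arises. This configuration is not excluded by anything in your sketch; the ``moreover'' does not follow from the per-$L$ width bound alone.
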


\begin{rems} \label{r:approx-index}
  \begin{enumerate}
  \item Point (a) of Remark~\ref{rem:width-fibers} applies here as well.
    Namely, the conclusion of Corollary~\ref{c:approx-index} continues
    to hold for $M = D^*N$ with $\mathcal{L} = \lag$ being the
    collection of all closed exact Lagrangians in $D^*N$ and with
    $\mathcal{F}$ being any collection of cotangent fibers.
  \item If $\mathcal{S}' \subset \mathcal{S}''$,
    $\mathcal{L}' \supset \mathcal{L}''$, $w' \leq w''$, then
    \begin{equation} \label{eq:ineq-index} I(\mathcal{S}', \mathcal{L}';
      w') \geq I(\mathcal{S}'', \mathcal{L}'';w'').
    \end{equation}
    In particular, for every $\mathcal{S}$ and $\mathcal{L}$ we have
    $I(\mathcal{S}, \mathcal{L}; w) \geq I(\mathcal{S}, \emptyset;
    w)$. Note that the latter invariant depends on the collection
    $\mathcal{S}$ itself and does not involve any Lagrangian
    submanifolds.
  \item The inequality~\eqref{eq:size-index} may become useful if we
    fix a collection $\mathcal{F}$ that (retract) approximates
    $\mathcal{L}$ and want to bound from below the size of any
    possible finite family $\mathcal{F}^0 \subset \mathcal{F}$ that
    retract-$w_0$-approximates $\mathcal{L}$. Note that there is
    no particular assumption on the collection of Lagrangian $\mathcal{F}$
    in Corollary~\ref{c:approx-index}, besides containing
    $\mathcal{F}^0$ (and being a subset of $\lag$ of course). In
    particular we do not even need assume that $\mathcal{F}$ retract
    approximates $\mathcal{L}$. But in view of~\eqref{eq:ineq-index}
    the most effective bounds from below in~\eqref{eq:size-index} on
    $|\mathcal{F}^0|$ are obtained when $\mathcal{F}$ is minimal as
    possible and in applications it makes sense to assume that
    $\mathcal{F}$ retract approximates $\mathcal{L}$.
  \end{enumerate}
\end{rems}

Here is a simple example showing how Corollary~\ref{c:approx-index}
can be used in practice. Consider $M=S^2$ endowed with its standard
symplectic structure $\omega$ normalized such that
$\text{Area}(S^2, \omega) = 1$. Let $\lag$ be the collection of all
closed monotone Lagrangian in $S^2$ (in this case, all the embedded
closed curves which separate $S^2$ into two domains of area
$\tfrac{1}{2}$). Let $\mathcal{F} \subset \lag$ be the collection of
all great circles passing through the north and south poles of $S^2$.
By Proposition~\ref{OCS2}, $\widetilde{\mathcal{F}}$
retract-approximates $\widetilde{\lag}$ in $PD(\widehat{\fuk}(\lag))$.

We claim that
$I(\mathcal{F}, \emptyset; w) \geq \lceil \tfrac{1}{4w}
\rceil$. Indeed, if $F_1, \dots, F_k \in \mathcal{F}$ then at least
one of the connected components of
$S^2 \setminus (F_1 \cup \cdots \cup F_k)$ must have area
$\geq \tfrac{1}{2k}$ hence
$\mathcal{W}(\emptyset; F_1 \cup \cdots \cup F_k) \geq \tfrac{1}{4k}$.
It follows that
$I(\mathcal{F}, \emptyset; w) \geq \lceil \tfrac{1}{4w} \rceil$. (In
fact, it is not hard to see that we have equality here.)

It follows from Corollary~\ref{c:approx-index} that for every
$\mathcal{L} \subset \lag$, every $w_0>0$, and every finite collection
$\mathcal{F}^0 \subset \mathcal{F}$ which $w_0$-retract-approximates
$\mathcal{L}$ we have:
$$|\mathcal{F}^0| \geq I(\mathcal{F}, \mathcal{L}; w_0) \geq
I(\mathcal{F}, \emptyset; w_0) \geq \Bigl\lceil \frac{1}{4w_0} \Bigr
\rceil.$$
 

\appendix

\section{Filtered $A_{\infty}$-categories} \label{a:fil-ai}
\subsection{Filtered $A_\infty$-categories, $A_\infty$-functors, $A_\infty$-natural transformations and shift $A_\infty$-functors}\label{a:sb:fil-ai}
 Filtered
$A_{\infty}$-categories have already appeared in the literature, see
e.g.~\cite[Section~3.2]{BCZ:tpc}
and~\cite[Chapter~2]{Bi-Co-Sh:LagrSh}. We will therefore go over this
subject very briefly and cover below mainly aspects of the theory that
have not been addressed in the literature.

Fix a field $\k$ and $\k_0 \subset \k$ a subring. For most of the
applications in this paper we will either have
$\k_0 = \k = \mathbb{Z}_2$, or $\k = \Lambda$ (the universal Novikov
field with coefficients in $\mathbb{Z}_2$) and
$\k_0 = \Lambda_0 \subset \Lambda$ the positive Novikov ring
(see~\eqref{eq:Nov-ring} and~\eqref{eq:Nov-ring-0}
in~\S\ref{sb:fuk-mon}).

In the followings, we say that $(C,d)$ is a filtered chain complex over $\k$ if $(C,d)$ is chain complex over $\k$ and comes endowed with an increasing filtration parametrized by the real line of $\k_0$-modules that are preserved by $d$.

Let $\mathcal{A}$ be an $A_{\infty}$-category over $\k$. For
$X_0, X_1 \in \Ob(\mathcal{A})$ we abbreviate:
$$\mathcal{A}(X_0,X_1):= \hom_{\mathcal{A}}(X_0,X_1),$$ and
more generally, for a tuple of objects
$\vec{X} := (X_0, \ldots, X_d)$, $d \geq 1$, we write
$$\mathcal{A}(\vec{X}) := \mathcal{A}(X_0, X_1) \otimes_{\k} \cdots
\otimes_{\k} \mathcal{A}(X_{d-1}, X_d)$$ and denote by
$\mu_d: \mathcal{A}(\vec{X}) \longrightarrow \mathcal{A}(X_0, X_d)$
the $A_{\infty}$-operations.

A filtered $A_{\infty}$-category (over $(\k,\k_0)$) is an
$A_{\infty}$-category $\mathcal{A}$ over $\k$ such that the spaces of
morphisms $\mathcal{A}(X, Y) = \hom_{\mathcal{A}}(X, Y)$ between every
two objects $X, Y$ are filtered chain complexes over $\k$
For every
$\alpha \in \mathbb{R}$ we denote by
$\mathcal{A}^{\alpha}(X,Y) \subset \mathcal{A}(X,Y)$ the
$\k_0$-submodule which is the filtration level parametrized by
$\alpha$. More generally, for $\alpha \in \mathbb{R}$ and a tuple
$\vec{X} = (X_0, \ldots, X_d)$ of objects of $\mathcal{A}$ we write
$$\mathcal{A}^\alpha(\vec{X}) \subset \mathcal{A}(\vec{X})$$ for the
$\k_0$-submodule generated by the images of all the $\k_0$-modules
$$\mathcal{A}^{\alpha_1}(X_0, X_1) \otimes_{\k_0} \cdots \otimes_{\k_0}
\mathcal{A}^{\alpha_d}(X_{d-1}, X_d), \text{ with } \alpha_1 + \cdots
+ \alpha_d \leq \alpha,$$ under the obvious ($\k_0$-linear) map
$$\mathcal{A}(X_0, X_1) \otimes_{\k_0} \cdots \otimes_{\k_0}
\mathcal{A}(X_{d-1}, X_d) \longrightarrow \mathcal{A}(\vec{X}).$$ We
then require that for every $d \geq 1$, every $\vec{X}$ and every
$\alpha \in \mathbb{R}$, we have:
$$\mu_d (\mathcal{A}^{\alpha}(\vec{X}))
\subset \mathcal{A}^{\alpha}(X_0, X_d).$$


A filtered $A_{\infty}$-category $\mathcal{A}$ is called strictly
unital (in the filtered sense) if it is strictly unital as an
$A_{\infty}$-category without filtrations and moreover for every
$X \in \Ob(\mathcal{A})$ the strict unit $e_X \in \mathcal{A}(X,X)$
lies in filtration level $0$, i.e.~$e_X \in \mathcal{A}^0(X,X)$.
Unless otherwise stated, all the filtered $A_{\infty}$-categories
$\mathcal{A}$ in this paper are implicitly assumed to be strictly
unital.

An example of filtered $A_\infty$-category is the $dg$-category $\mathcal{F}\mathbf{Ch}$ of filtered chain complexes over $\k$.

The concept of $A_{\infty}$-functors has a filtered counterpart.  An
$A_{\infty}$-functor
$\mathcal{F}: \mathcal{A} \longrightarrow \mathcal{B}$ between two
$A_{\infty}$-categories is said to be filtered if for every $d \geq 1$
and every tuple of objects $\vec{X}$ its $d$-th order term
$\mathcal{F}_d$ preserves filtrations in the sense that
$$\mathcal{F}_d(\mathcal{A}^{\alpha}(\vec{X})) \subset
\mathcal{B}^{\alpha}(\mathcal{F}X_0, \mathcal{F}X_d), \; \forall
\alpha \in \mathbb{R}.$$ All the filtered $A_{\infty}$-functors in
this paper will be implicitly assumed to be strictly unital.

One can define pre-natural transformations between filtered
$A_{\infty}$-functors as follows.  Let
$\mathcal{F}, \mathcal{G}: \mathcal{A} \longrightarrow \mathcal{B}$ be
two filtered $A_{\infty}$-functors. A pre-natural transformation
$T: \mathcal{F} \longrightarrow \mathcal{G}$ of shift $s$ is a
pre-natural transformation (in the usual $A_{\infty}$-sense) such that
its $d$-th order term $T_d$ shifts filtrations by not more than $s$, namely we have
a family of elements
$T^0 \in \mathcal{B}^{s}(\mathcal{F} X, \mathcal{G}X)$ for every
$X \in \Ob(\mathcal{A})$ and moreover, for every $d \geq 0$ and every
tuple of objects $\vec{X} = (X_0, \ldots, X_d)$ we have:
$$T_d (\mathcal{A}^{\alpha}(\vec{X})) \subset
\mathcal{B}^{\alpha + s}(\mathcal{F}X_0, \mathcal{G}X_d), \; \forall d
\geq 1, \, \alpha \in \mathbb{R}.$$

We will always endow our $A_{\infty}$-categories $\mathcal{A}$ with a
shift functor. This is not a single functor but in fact a system of
filtered $A_{\infty}$-functors
$\{\Sigma^r : \mathcal{A} \longrightarrow \mathcal{A}\}_{r \in
  \mathbb{R}}$, $r \in \mathbb{R}$, all having trivial higher order
terms ($\Sigma^r_d = 0, \, \forall d \geq 2$), and together with
filtered natural transformations
$\eta_{r,s}: \Sigma^r \longrightarrow \Sigma^s$ of shift $s-r$, for
every $r,s \in \mathbb{R}$, and such that:
\begin{enumerate}
\item $\Sigma^0 = \id$. \label{i:shift-a}
\item $\Sigma^r \circ \Sigma^s = \Sigma^{r+s}$ for all $r,s \in \mathbb{R}$.
\item The higher order terms $(\eta_{r,s})_d, d \geq 1$ of
  $\eta_{r,s}$ vanish.
\item For every $X \in \Ob(\mathcal{A})$ the $0$-term of $\eta_{r,s}$
  assigned to $X$ is a cycle
  $(\eta^X_{r,s})_0 \in \mathcal{A}^{s-r}(\Sigma^rX, \Sigma^sX)$.
  Moreover, we have $(\eta^X_{0,0})_0 = e_X \in \mathcal{A}^0(X,X)$.
\item For every $X \in \Ob(\mathcal{A})$ and every $t \in \mathbb{R}$
  we have
  $\mu_2 \bigl( (\eta^X_{r,s})_0, (\eta^X_{s,t})_0 \bigr) =
  (\eta^X_{r, t})_0$.
\item For every $X \in \Ob(\mathcal{A})$ and every $t \in \mathbb{R}$
  we have
  $\Sigma_1^t \bigl( (\eta^X_{r,s})_0 \bigr) = (\eta^X_{r+t, s+t})_0$.
  \label{i:shift-b}
\end{enumerate}
It follows that each $\eta_{r,s}$ is an isomorphism and its inverse is
$\eta_{s,r}$. By composing from the left and from the right with
$\eta^X_{0,r}$ and $\eta^Y_{s,0}$ respectively we obtain isomorphisms
$$\mathcal{A}^{\alpha}(\Sigma^rX, \Sigma^sY) \cong
\mathcal{A}^{\alpha+r-s}(X,Y), \; \forall \, \alpha \in \mathbb{R}$$
that are compatible with the filtration
$\mathcal{A}^{\beta'}(- , -) \subset \mathcal{A}^{\beta''}(-,-)$,
$\beta' \leq \beta''$.

\begin{rem}
  The conditions~\eqref{i:shift-a}-\eqref{i:shift-b} may look
  excessively strong in the $A_{\infty}$-context and can probably be
  relaxed to give a weaker yet meaningful definition of shift
  functors. A more appropriate name for $\Sigma$ as above might be a
  {\em strict} shift functor, but we will not use this term in this
  paper.
\end{rem}

From now on all filtered $A_{\infty}$-functors will be assumed to be
compatible with the shift functor in the sense that
$\mathcal{F} \circ \Sigma^r = \Sigma^r \circ \mathcal{F}$ for every
$r \in \mathbb{R}$ and
$(\eta^{\mathcal{F} X}_{r,s})_0 = \mathcal{F}_1(\eta^{X}_{r,s})_0$ for
every $r, s \in \mathbb{R}$.

\subsection{Filtered $A_{\infty}$-modules and
  bimodules} \label{a:fmod} Let $\mathcal{A}$ be a filtered
$A_{\infty}$-category. A filtered left $\mathcal{A}$-module
$\mathcal{M}$ is an $\mathcal{A}$-module with the following additional
structures and properties. For every $X \in \Ob(\mathcal{A})$ the
chain complex $(\mathcal{M}(X), \mu^{\mathcal{M}}_1)$ (of vector
spaces over $\k$) is filtered by an increasing filtration of
subcomplexes of $\k_0$-modules which is parametrized by
$\mathbb{R}$. For $\alpha \in \mathbb{R}$ we denote by
$\mathcal{M}^{\alpha}(X) \subset \mathcal{M}(X)$ the $\alpha$-level of
this filtration. For $\alpha \in \mathbb{R}$ and a tuple of objects
$\vec{X} = (X_0, \ldots, X_k)$ denote by
\begin{equation} \label{eq:fil-AM}
  (\mathcal{A}(\vec{X}) \otimes \mathcal{M}(X_k))^{\alpha} \subset
  \mathcal{A}(\vec{X}) \otimes_{\k} \mathcal{M}(X_k)
\end{equation}
the $\k_0$-submodule generated by the images of all the
$\k_0$-submodules
$$\mathcal{A}^{\alpha'_1}(X_0, X_1) \otimes_{\k_0} \cdots
\otimes_{\k_0}\mathcal{A}^{\alpha'_k}(X_{k-1}, X_k) \otimes_{\k_0}
\mathcal{M}^{\alpha''}(X_k), \text{ with } \alpha'_1 + \cdots+
\alpha'_k + \alpha'' \leq \alpha,$$ under the obvious map
$$\mathcal{A}(X_0, X_1) \otimes_{\k_0} \cdots
\otimes_{\k_0}\mathcal{A}(X_{k-1}, X_k) \otimes_{\k_0}
\mathcal{M}(X_k) \longrightarrow \mathcal{A}(\vec{X}) \otimes_{\k}
\mathcal{M}(X_k).$$ We require that the higher operations
$\mu^{\mathcal{M}}_{k|1}$, $k \geq 1$, preserves filtrations. Namely, for every tuple of objects
$\vec{X} = (X_0, \ldots, X_k)$ in $\mathcal{A}$ and every
$\alpha \in \mathbb{R}$ we have:
$$\mu^{\mathcal{M}}_{k|1} \bigl( (\mathcal{A}(\vec{X}) \otimes
\mathcal{M}(X_k))^{\alpha}) \subset \mathcal{M}^{\alpha}(X_0).$$ For
uniformity we will denote the differential $\mu_1^{\mathcal{M}}$ also
by $\mu_{0|1}^{\mathcal{M}}$.\\
Alternatively, one can view filtered $\mathcal{A}$-modules as filtered $A_\infty$-functors $\mathcal{A}\to \F\textbf{Ch}^{\text{op}}$.

Pre-homomorphisms between filtered modules are defined as follows. Let
$\mathcal{M}$, $\mathcal{N}$ be filtered $A_{\infty}$-modules. A
module pre-homomorphism $t: \mathcal{M} \longrightarrow \mathcal{N}$
of shift $\leq s \in \mathbb{R}$ is a pre-homomorphism such that for
every $k \geq 0$, every tuple $\vec{X}$, and $\alpha \in \mathbb{R}$ 
we have
$$t_{k|1}( (\mathcal{A}(\vec{X}) \otimes
\mathcal{M}(X_k))^{\alpha}) \subset \mathcal{N}^{\alpha + s}(X_0).$$

Filtered $A_{\infty}$-modules and their pre-homomorphisms form a
filtered $dg$-category $F\md_{\mathcal{A}}$, where
$\hom^{s}_{F\md_{\mathcal{A}}}(\mathcal{M}, \mathcal{N})$ consists of
all pre-homomorphisms $t: \mathcal{M} \longrightarrow \mathcal{N}$ of
shift $\leq s$. Note that $F\md_{\mathcal{A}}$ comes with a natural shift functor $\Sigma$.
$\Sigma$. Its action on objects
$\mathcal{M} \in \Ob(F\md_{\mathcal{A}})$ is given by:
$$(\Sigma^r \mathcal{M})^{\alpha}(X) := \mathcal{M}^{\alpha-r}(X),
\; \forall r \in \mathbb{R}, \alpha \in \mathbb{R}, X \in
\Ob(\mathcal{A}).$$

Unless otherwise stated, all the modules in this paper will be assumed
to be strictly unital. We will also assume all the filtered modules to
be compatible with the shift functor in the obvious sense, i.e. $\mathcal{M}(\Sigma^rX) = (\Sigma^r\mathcal{M})(X)$ for all objects $X\in \Ob(\mathcal{a})$ and all $r\in \mathbb{R}$.

Right $\mathcal{A}$-modules can be defined in a completely analogous
way to left modules. To distinguish the two, whenever confusion may
arise we will denote the category of filtered left $\mathcal{A}$-modules by
$F\md^l_{\mathcal{A}}$ and the right modules by
$F\md^r_{\mathcal{A}}$.

Given two filtered $A_{\infty}$-categories $\mathcal{A}, \mathcal{B}$
we also have the filtered $(\mathcal{A}, \mathcal{B})$ modules. They
are defined by similar means to the above. Filtered bimodules too will
be assumed in this paper to be strictly unital and compatible with the
shift functor. They form a filtered $dg$-category which we
denote by $F\text{bimod}_{\mathcal{A}, \mathcal{B}}$.

\subsection{The filtered Yoneda embedding}
\subsubsection{Definition}\label{gio:fyon-def}
Let $\mathcal{A}$ be a filtered $A_\infty$-category with structure maps $(\mu_d)_{d\geq 1}$. Recall that we always assume that our $A_\infty$-categories are strictly unital. We upgrade the Yoneda embedding (see \cite[Section (1l)]{Se:book-fukaya-categ}) to a filtered version, that is, we define a filtered $A_\infty$-functor $$\mathcal{Y}\colon \mathcal{A}\to F\md_\mathcal{A}.$$ We will work with left $A_\infty$-modules, but the same can be analogously developed for right ones. Whenever confusion may arise we will denote by $\mathcal{Y}^l\colon \mathcal{A}\to F\md^l_\mathcal{A}$ the left Yoneda embedding and by $\mathcal{Y}^r\colon \mathcal{A}\to F\md_\mathcal{A}^r$. Sometimes we will need to specicify the $A_\infty$-category in the notation and we will write the Yoneda embedding as $\mathcal{Y}_\mathcal{A}$.\\

Let $L$ be an object of $\mathcal{A}$. We define $\mathcal{Y}(L)$ via $$\mathcal{Y}(L)(L'):= \mathcal{A}(L',L)$$ for any object $L'$ of $\mathcal{A}$ and endow it with the $A_\infty$-structure given by $\mu^{\mathcal{Y}(L)}_{l|1}:=\mu_{l+1}$ for $l\geq 0$. It is straightforward to see that $\mathcal{Y}(L)$ indeed defines a filtered $A_\infty$-module.\\
We now define higher operations of the functor $\mathcal{Y}$. To ease the notation, we will often write $\mathcal{Y}_L$ instead of $\mathcal{Y}(L)$ in the rest of this subsection. Consider two objects $L_0$ and $L_1$ of $\mathcal{A}$ and an element $c\in \mathcal{A}^{\leq \alpha}(L_0,L_1)$ for some $\alpha \in \mathbb{R}$. We define its image $\mathcal{Y}_1(c)\in F\md_\mathcal{A}(\mathcal{Y}_{L_0},\mathcal{Y}_{L_1})$ as the pre-morphism of modules with components $\mathcal{Y}_1(c)_{l|1}\colon \mathcal{A}(\vec{X})\otimes \mathcal{Y}_{L_0}(X_l)\to \mathcal{Y}_{L_1}(X_0)$ given by $$\mathcal{Y}_1(c)_{l|1}(x_1,\ldots, x_l,y):= \mu_{l+1}(x_1,\ldots,x_l, y,c).$$ We define the higher components $\mathcal{Y}_d$ of the Yoneda embedding in a similar manner using contraction maps, that is, given a tuple $\vec{L}=(L_0,\ldots, L_d)$ and an element $\vec{c}\in \mathcal{A}(\vec{L})$, we define the pre-morphism of modules $\mathcal{Y}_d(\vec{c})\in F\md_\mathcal{A}(\mathcal{Y}_{L_0},\mathcal{Y}_{L_d})$ via $$\mathcal{Y}_d(\vec{c})_{l|1}(x_1,\ldots, x_l,y) := \mu_{l+d+1}(x_1,\ldots, x_l,y,\vec{c}).$$ Notice that $\mathcal{Y}$ is a filtered $A_\infty$-functor, since the maps $\mu_d$, $d\geq 1$, are filtered by assumption.


\subsubsection{The $\lambda$-map}\label{lambdamap}
Consider an object $L$ of $\mathcal{A}$ and a filtered $A_\infty$-module $\mathcal{M}$. We have the following map, usually simply called the $\lambda$-map: $$\lambda\colon \mathcal{M}(L)\to F\md_\mathcal{A}(\mathcal{Y}(L),\mathcal{M})$$ defined for $c\in \mathcal{M}(L)$ via $$\lambda(c)_{l|1}(x_1,\ldots, x_l,y):= \mu^\mathcal{M}_{l+1|1}(x_1,\ldots, x_l,y,c).$$ It is clear that $\lambda$ is well-defined (i.e. that it lands in the category of \textit{filtered} modules), simply because $\mathcal{A}$ is a filtered $A_\infty$-category. Similarly to \cite[Lemma 2.12]{Se:book-fukaya-categ} and \cite[Proposition 2.5.1]{Bi-Co-Sh:LagrSh} we have the following result. This result is usually proved using spectral sequence, but strict unitality of $\mathcal{A}$ allows us for a more explicit proof, which we present here.
\begin{prop}
	The map $\lambda$ is a filtered quasi-isomorphism, with filtered quasi-inverse via filtered chain-homotopies. In particular, it induces an isomorphism in persistence homology.
\end{prop}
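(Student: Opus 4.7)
The plan is to give an explicit filtered quasi-inverse and chain homotopy, exploiting strict unitality of $\mathcal{A}$ (and of $\mathcal{M}$) so as to bypass the usual spectral-sequence argument entirely. First I would define the candidate inverse
$$\kappa\colon F\md_{\mathcal{A}}(\mathcal{Y}(L),\mathcal{M})\longrightarrow \mathcal{M}(L),\qquad \kappa(t):=t_{0|1}(e_L),$$
where $e_L\in\mathcal{A}^0(L,L)=\mathcal{Y}(L)^0(L)$ is the strict unit. Because $e_L$ sits in filtration level $0$, the map $\kappa$ is filtered of shift $0$, and the $(0|1)$-component of the $A_\infty$-relations for $t$ shows it is a chain map. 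The identity $\kappa\circ\lambda=\id_{\mathcal{M}(L)}$ is then immediate: for $c\in\mathcal{M}(L)$ one has
$$\kappa(\lambda(c))=\lambda(c)_{0|1}(e_L)=\mu^{\mathcal{M}}_{1|1}(e_L,c)=c,$$
by strict unitality of $\mathcal{M}$.

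The main step is to construct a filtered chain homotopy $H$ of shift $0$ between $\id$ and $\lambda\circ\kappa$ on the hom complex $F\md_{\mathcal{A}}(\mathcal{Y}(L),\mathcal{M})$. I would set
$$H(t)_{l|1}(x_1,\ldots,x_l,y):=t_{(l+1)|1}(x_1,\ldots,x_l,y,e_L),$$
regarding $y\in\mathcal{A}(X_l,L)=\mathcal{Y}(L)(X_l)$ as the $(l+1)$st algebra input (with target forced to be $L$) and the inserted $e_L\in\mathcal{Y}(L)(L)$ as the new module input. Since $e_L$ lies in filtration $0$, the operator $H$ is also filtered of shift $0$. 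The claim will then be that
$$\id-\lambda\circ\kappa=\partial H+H\partial,$$
where $\partial$ is the differential on the module hom complex.

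To verify this identity I would expand both sides and split the result into four families of terms: (i) outer applications of $\mu^{\mathcal{M}}_{k|1}$ wrapping an output of $t$; (ii) inner applications of $\mu_k$ of $\mathcal{A}$ on algebra inputs of $t$; (iii) contractions $\mu_2(y,e_L)=y$ pairing the ``new'' algebra input $y$ with the inserted $e_L$; and (iv) contractions $\mu_k(\ldots,y,e_L,\ldots)$ or $\mu^{\mathcal{M}}_{k|1}(\ldots,e_L,\ldots,c)$ with $k\geq 3$ or $k\geq 2$ respectively, in which $e_L$ lands in an interior position. Families (i) and (ii) cancel pairwise between $\partial H(t)$ and $H(\partial t)$, as in the unfiltered proof; family (iv) is killed identically by strict unitality of $\mathcal{A}$ and $\mathcal{M}$. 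The only surviving contributions are then the single term $t_{l|1}(x_1,\ldots,x_l,y)$, produced by $\mu_2(y,e_L)=y$, and the single term
$$\mu^{\mathcal{M}}_{l+1|1}(x_1,\ldots,x_l,y,t_{0|1}(e_L))=\lambda(\kappa(t))_{l|1}(x_1,\ldots,x_l,y),$$
obtained when all algebra inputs of the outermost $\mu^{\mathcal{M}}$ land to the left of the distinguished $e_L$. This yields exactly $\id-\lambda\kappa=\partial H+H\partial$, so $\lambda$ is a filtered quasi-isomorphism with filtered homotopy inverse $\kappa$; the persistence-homology isomorphism follows automatically since $\lambda$, $\kappa$ and $H$ are filtered of shift $0$.

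The main obstacle will be the combinatorial bookkeeping of terms in the expansion of $\partial H+H\partial$: one must match each term of $\partial H(t)_{l|1}$ with a corresponding term of $H(\partial t)_{l|1}$ and check, case by case, that the $e_L$ appearing in every contribution other than the two distinguished ones occupies an interior position of some $\mu$ or $\mu^{\mathcal{M}}$ operation and is thus annihilated by strict unitality. Everything else in the argument is formal.
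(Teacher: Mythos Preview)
Your proposal is correct and follows essentially the same approach as the paper: the quasi-inverse $\kappa$ (called $\theta$ there) and the homotopy $H$ are defined identically, and the homotopy identity is the same. Your organization of the cancellation into four families is a more conceptual presentation of exactly the explicit term-by-term computation the paper carries out.
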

\begin{proof}
	The fact that $\lambda$ is filtered is obvious, as $\mathcal{A}$ is filtered. We will prove the rest of the statement by explicitly constructing a quasi-inverse and the chain homotopies. First, we define a candidate quasi-inverse $$\theta\colon F\md_\mathcal{A}(\mathcal{Y}(L),\mathcal{M})\to \mathcal{M}(L)$$ as follows: given a pre-module morphism $\varphi\in F\md_\mathcal{A}(\mathcal{Y}(L),\mathcal{M}) $, we set $$\theta(\varphi):= \varphi_{0|1}(e_L)$$ where we recall that $e_L\in \mathcal{A}(L,L)$ stands for the (given) choice of strict unit for $L$. Clearly $\theta$ is a chain map. It is easy to see that given $c\in \mathcal{M}(L)$, $\theta(\lambda(c))=c$, exactly because $e_L$ is a \textit{strict} unit, so no chain homotopy is needed in this direction. Moreover, since $e_L\in \mathcal{A}^{\leq 0}(L,L)$ by our definition of filtered $A_\infty$-category, we get that $\theta$ is a filtered chain map, i.e. it sends pre-morphisms with shift $\alpha$ to $\mathcal{M}^{\leq \alpha}(L)$ for any $\alpha\in \mathbb R$.\\ We now define a candidate chain homotopy for the other composition: we define $$H\colon F\md_\mathcal{A}(\mathcal{Y}(L),\mathcal{M})\to F\md_\mathcal{A}(\mathcal{Y}(L),\mathcal{M})$$ by sending $\varphi \in F\md_\mathcal{A}(\mathcal{Y}(L),\mathcal{M})$ to the pre-morphism of modules $H(\varphi)$ with components $$H(\varphi)_{l|1}(x_1,\ldots, x_l,y):= \varphi_{l+1|1}(x_1,\ldots, x_l,y,e_L)$$ for any $l\geq 0$. Again, since strict units lie at vanishing filtration level, the map $H$ preserves $F\md_\mathcal{A}(\mathcal{Y}(L),\mathcal{M})^{\leq \alpha}$ for any $\alpha\in \mathbb R$. We prove that $$\lambda\circ \theta + id_{F\md_\mathcal{A}(\mathcal{Y}(L),\mathcal{M})} = \mu_1^{\md}\circ H + H \circ \mu_1^{\md},$$ that is, that $\theta$ is a right quasi-inverse of $\lambda$ via the chain-homotopy $H$. Let $\varphi\in F\md_\mathcal{A}(\mathcal{Y}(L),\mathcal{M})$ and $l\geq 0$, we compute:
	\begin{enumerate}
		\item first, the term \begin{align*}
			\mu_1^{\md}(H(\varphi))_{l|1}(x_1,\ldots, x_l,y)  =&  \sum_{i=0}^l \mu^\mathcal{M}_{i|1}(x_1,\ldots, x_i, \varphi_{l-i+1|1}(x_{i+1},\ldots, x_l,y,e_L)) \\ & + \sum_{i=0}^l \varphi_{i+1|1}(x_1,\ldots, x_i, \mu_{l-i+1}(x_{i+1},\ldots, x_l,y),e_L) \\
			& + \sum_{j=0}^{l-1}\sum_{i=1}^{l-j} \varphi_{l-i+2|1}(x_1,\ldots, x_j, \mu_{i}(x_{j+1},\ldots, x_{j+i}),x_{j+i+1},\ldots, x_l,y,e_L)
		\end{align*}
		\item then the term \begin{align*}
			H(\mu_1^{\md}\varphi)_{l|1}(x_1,\ldots, x_l,y)  =& \ \   \mu_1^{\md}\varphi_{l+1|1}(x_1,\ldots, x_l,y,e_L) \\  =&   \sum_{i=0}^l \mu^\mathcal{M}_{i|1}(x_1,\ldots, x_i, \varphi_{l-i+1|1}(x_{i+1},\ldots, x_l,y,e_L) + \mu_{l+1|1}^\mathcal{M}(x_1,\ldots, x_l,y,\varphi_1(e_L))\\ & +\sum_{i=0}^l \varphi_{i|1}(x_1,\ldots, x_i, \mu_{l-i+2}(x_{i+1},\ldots, x_l,y,e_L))
			\\ & + \sum_{j=0}^{l-1}\sum_{i=1}^{l-j} \varphi_{l-i+2|1}(x_1,\ldots, x_j, \mu_{}(x_{j+1},\ldots, x_{j+i}),x_{j+i+1},\ldots, x_l,y,e_L) \\ & + \sum_{j=0}^l \varphi_{j+1|1}(x_1,\ldots, x_j, \mu_{l-j+1}(x_{j+1},\ldots, x_l,y),e_L)
		\end{align*}
		and note that the second big sum after the last equality equals $\varphi_{l|1}(x_1,\ldots, x_l,y)$ since $e_L$ is a strict unit.
		\item finally, we compute \begin{align}
			(\lambda\circ \theta + id)(\varphi)_{l|1}(x_1,\ldots, x_l,y) = \mu_{l+1|1}^\mathcal{M}(x_1,\ldots, x_l,y,\varphi_1(e_L)) + \varphi_{l|1}(x_1,\ldots, x_l,y).
		\end{align}
		
	\end{enumerate}
	We see that summing all the contributions above we get the expected equality at the $l|1$ level. This proves the proposition.
\end{proof}
The following result is an easy corollary of the above. We denote by $\mathcal{Y}(\mathcal{A})=\text{image}(\mathcal{Y})$ the subcategory of $F\md_\mathcal{A}$ consisting of Yoneda modules.
\begin{cor}
	The induced persistence functor $H(\mathcal{Y})\colon H(\mathcal{A})\to H(\text{image}(\mathcal{Y}))$ is an equivalence of persistence categories.
\end{cor}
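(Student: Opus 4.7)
The plan is to derive this corollary directly from the preceding proposition. Two things need to be verified for $H(\mathcal{Y})$ to be an equivalence of persistence categories: essential surjectivity onto $H(\mathrm{image}(\mathcal{Y}))$, and that for every pair $L_0, L_1 \in \Ob(\mathcal{A})$ the induced map on persistence modules of morphisms,
\[
H(\mathcal{Y}_1)\colon H(\mathcal{A}(L_0,L_1)) \;\longrightarrow\; H\bigl(F\md_{\mathcal{A}}(\mathcal{Y}(L_0),\mathcal{Y}(L_1))\bigr),
\]
is an isomorphism of persistence modules (i.e.\ a bijection at each persistence level, compatible with the structural maps).

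Essential surjectivity is built into the setup: by definition, $\mathcal{Y}(\mathcal{A}) = \mathrm{image}(\mathcal{Y})$ is the full subcategory of $F\md_{\mathcal{A}}$ whose objects are exactly $\mathcal{Y}(L)$ for $L \in \Ob(\mathcal{A})$, so every object of $H(\mathrm{image}(\mathcal{Y}))$ is literally of the form $H(\mathcal{Y})(L)$.

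The main point is the isomorphism on hom's. Here I would apply the preceding proposition with the test module $\mathcal{M} := \mathcal{Y}(L_1)$. Since $\mu^{\mathcal{Y}(L_1)}_{k|1} = \mu_{k+1}$, unpacking the definitions gives, for $c \in \mathcal{A}(L_0,L_1)$,
\[
\lambda(c)_{l|1}(x_1,\ldots,x_l,y) \;=\; \mu^{\mathcal{Y}(L_1)}_{l+1|1}(x_1,\ldots,x_l,y,c) \;=\; \mu_{l+2}(x_1,\ldots,x_l,y,c),
\]
which coincides exactly with the first-order term $\mathcal{Y}_1(c)$ of the Yoneda functor at $L_0$. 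Thus in this case $\lambda = \mathcal{Y}_1$, and the proposition supplies a filtered quasi-inverse $\theta$ together with a filtered chain homotopy $H$. Because $\mathcal{Y}_1$, $\theta$, and $H$ all preserve the filtration, each of them restricts to a map on the subcomplex of filtration level $\leq \alpha$ for every $\alpha \in \mathbb{R}$; taking $H^0$ level-by-level produces two-sided inverses at every persistence level, and compatibility with the structural inclusions $i_{\alpha,\beta}$ follows because the relevant diagrams commute already at the chain level.

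Finally, one has to check that $H(\mathcal{Y})$ commutes with the persistence shift functors $\Sigma$ on both sides. This is immediate from our standing convention that filtered $A_\infty$-functors are compatible with $\Sigma$, combined with the explicit definition of $\Sigma$ on $F\md_{\mathcal{A}}$ given in Appendix~\ref{a:fmod}: $(\Sigma^r \mathcal{Y}(L))^\alpha(X) = \mathcal{Y}(L)^{\alpha-r}(X) = \mathcal{A}^{\alpha-r}(X,L) = \mathcal{A}^\alpha(X,\Sigma^r L) = \mathcal{Y}(\Sigma^r L)^\alpha(X)$. There is no serious obstacle; the only substantive observation is the identification $\lambda = \mathcal{Y}_1$ for $\mathcal{M} = \mathcal{Y}(L_1)$, after which the corollary is a direct application of the preceding proposition together with the compatibility of all involved data with filtrations and shifts.
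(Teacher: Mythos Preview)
Your proof is correct and is exactly the argument the paper has in mind: the corollary is stated immediately after the proposition on the $\lambda$-map with no proof, precisely because the key identification $\lambda = \mathcal{Y}_1$ for $\mathcal{M} = \mathcal{Y}(L_1)$ (together with the obvious essential surjectivity onto the image) makes it an immediate consequence. Your additional remarks on shift-compatibility are sound and fill in routine details that the paper leaves implicit.
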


\subsection{Persistence Hochschild homology}\label{app:PHH}
Let $\mathcal{A}$ be an $A_{\infty}$-category and $\mathcal{M}$ an
$(\mathcal{A}, \mathcal{A})$-bimodule. Denote by
$CC(\mathcal{A}, \mathcal{M})$ the Hochschild chain complex of
$\mathcal{A}$ with coefficients in $\mathcal{M}$ (a.k.a.~the cyclic
bar complex). Recall that this chain complex decomposes into a direct
sum
$CC(\mathcal{A}, \mathcal{M}) = \oplus_{d \geq 0} CC(\mathcal{A},
\mathcal{M}; d)$, with
$$CC(\mathcal{A}, \mathcal{M}; d) = \bigoplus_{\vec{X}}
\mathcal{M}(X_d,X_0) \otimes_{\k} \mathcal{A}(\vec{X}) ,$$ where the
direct sum runs over all tuples $\vec{X} = (X_0, \ldots, X_d)$ of
objects in $\mathcal{A}$. For $d=0$ we set $\mathcal{A}(\vec{X}) = \k$
so that
$CC(\mathcal{A}, \mathcal{M}; 0) := \bigoplus_{X \in \Ob(\mathcal{A})}
\mathcal{M}(X,X)$. We endow $CC(\mathcal{A}, \mathcal{M})$ with
the Hochschild differential $d_{CC}$ (see e.g.~\cite{Gan:thesis,
  Sher:form-hodge, Ri-Sm:WF-OC}). Its homology is denoted by
$HH(\mathcal{A}, \mathcal{M})$ and is called the Hochschild homology
of $\mathcal{A}$ with coefficients in $\mathcal{M}$.

In case $\mathcal{M} = \Delta_{\mathcal{A}}$ is the diagonal
$(\mathcal{A}, \mathcal{A})$-bimodule we write
$CC(\mathcal{A}, \mathcal{A})$ and $HH(\mathcal{A}, \mathcal{A})$ for
the Hochschild chain complex and homology respectively, with
coefficients in $\Delta_{\mathcal{A}}$. Sometimes we even abbreviate these to $CC(\mathcal{A})$ and $HH(\mathcal{A})$.

Assume now that $\mathcal{A}$ is a filtered $A_{\infty}$-category and
$\mathcal{M}$ is a filtered $(\mathcal{A}, \mathcal{A})$-bimodule.
For every $\alpha \in \mathbb{R}$, $d \geq 0$, and
$\vec{X} = (X_0, \ldots, X_d)$ denote by
$$( \mathcal{M}(X_d, X_0) \otimes \mathcal{A}(\vec{X}) )^{\alpha}
\subset \mathcal{M}(X_d,X_0) \otimes_{\k} \mathcal{A}(\vec{X})$$ the
$\k_0$-submodule generated by the images of all the submodules
$$\mathcal{M}^{\alpha'}(X_d, X_0) \otimes_{\k_0}
\mathcal{A}^{\alpha''_1}(X_0, X_1) \otimes_{\k_0} \cdots
\otimes_{\k_0}\mathcal{A}^{\alpha''_d}(X_{d-1}, X_d) , \text{ with }
\alpha' + \alpha''_1 + \cdots + \alpha''_d \leq \alpha,$$ under the
obvious map
$$\mathcal{M}(X_d, X_0) \otimes_{\k_0} \mathcal{A}(X_0, X_1) \otimes_{\k_0} \cdots
\otimes_{\k_0}\mathcal{A}(X_{d-1}, X_d) \longrightarrow
\mathcal{M}(X_d, X_0) \otimes_{\k} \mathcal{A}(\vec{X}).$$ Denote
$$CC^{\alpha}(\mathcal{A}, \mathcal{M}; d) := \bigoplus_{\vec{X}}
( \mathcal{M}(X_d,X_0) \otimes \mathcal{A}(\vec{X}) )^{\alpha}$$ which
is a $\k_0$-submodule of $CC(\mathcal{A}, \mathcal{M}; d)$. Finally,
denote by
$$CC^{\alpha}(\mathcal{A}, \mathcal{M}) := \bigoplus_{d \geq 0}
CC^{\alpha}(\mathcal{A}, \mathcal{M}; d).$$ This gives us an
increasing filtration of $CC(\mathcal{A}, \mathcal{M})$ by
$\k_0$-submodules
$CC^{\alpha}(\mathcal{A}, \mathcal{M}) \subset CC(\mathcal{A},
\mathcal{M})$, $\alpha \in \mathbb{R}$. Since $\mathcal{A}$,
$\mathcal{M}$ are filtered the Hochschild differential preserves this
filtration, i.e.
$$d_{CC}(CC^{\alpha}(\mathcal{A}, \mathcal{M})) \subset
CC^{\alpha}(\mathcal{A}, \mathcal{M})$$ so
$(CC(\mathcal{A}, \mathcal{M}), d_{CC})$ becomes a filtered chain
complex. We denote by $PHH(\mathcal{A}, \mathcal{M})$ its persistence
homology.

The Hochschild complex $CC(\mathcal{A},\mathcal{M})$ admits also a discrete filtration, traditionally called the \textit{length} filtration, $(F^NCC(\mathcal{A},\mathcal{M}))_{N\geq 0}$ defined for $N\geq 0$ as $$F^NCC(\mathcal{A},\mathcal{M}):=\bigoplus_{d\leq N}CC(\mathcal{A},\mathcal{M};d).$$ It is easy to see that $d_{CC}$ preserves this filtration too, and hence induces a differential on each $F^NCC(\mathcal{A},\mathcal{M})$. We denote by $F^HH(\mathcal{A},\mathcal{M})$ the resulting homology. The real filtration defined above is obviously compatible with the length filtrations. We thus get a sequence $F^NPHH(\mathcal{A},\mathcal{M})$, $N\geq 0$, of persistence modules.

Note that if $\mathcal{A}$ is filtered then the diagonal bimodule
$\Delta_{\mathcal{A}}$ is also filtered and we denote the
corresponding filtered chain complex and its persistence homology by
$CC(\mathcal{A}, \mathcal{A})$ and $PHH(\mathcal{A}, \mathcal{A})$
respectively.

We briefly describe the grading formalism for Hochschild homology used in our geometric application in \S\ref{sec:split-app}. Let $(X,\omega)$ be a monotone symplectic manifold of real dimension $2n$ and $\mathcal{A}$ be a Fukaya category build from Lagrangians in $(X,\omega)$. Floer complexes are defined over the Novikov field $\k=\La$ and are $\mathbb{R}/2\mathbb{Z}$ graded, with the Novikov variable having degree $0$. In particular, the following has to be understood $\mod 2$. The $d$th order $A_\infty$-map $\mu_d$ of $\mathcal{A}$ has cohomological degree $2-d$, hence homological degree $(d-2)+(d-1)n$. This leads to the following grading on the Hochschild complex $CC_*(\mathcal{A},\mathcal{A})$ of $\mathcal{A}$: given $\vec{x}:=x_1\otimes \cdots \otimes x_k\in \mathcal{A}(X_0,\ldots,X_k,X_0)$ we set $$\deg(\vec{x}):= \sum_{i=1}^k \deg(x_i)+k-1-n.$$ It is easy to see that $d_{CC}$ has degree $-1$.
\subsection{Shifted categories} \label{a:shifted} Let $\mathcal{A}$ be
a filtered $A_{\infty}$-category with a shift functor $\Sigma$, and
let $r \geq 0$. Define the normalized $r$-shift $S^r \mathcal{A}$ of
$\mathcal{A}$ to be the filtered $A_{\infty}$-category with the same
objects as $\mathcal{A}$ and morphism spaces $(S^r\mathcal{A})(L,L')$
between two objects $L$ and $L'$:
$$(S^r \mathcal{A})^{\alpha}(L,L'):=
\begin{cases} \mathcal{A}^{\alpha}(L,L'), \text{ if } L=\Sigma^lL'
  \text{ for some } l\in \mathbb R, \\ \mathcal{A}^{\alpha - r}
  (L,L'), \text{ otherwise.}
\end{cases}
$$
The $A_{\infty}$-operations $\mu^{\mathcal{A}}_d$ on $\mathcal{A}$
naturally induce maps $\mu^{S^r \mathcal{A}}_d$ on $S^r
\mathcal{A}$. The shift and translation functors $\Sigma$ and $T$
carry over to $S^r \mathcal{A}$ in the obvious way.

There is an obvious filtered $A_\infty$-functor
$\eta_r^{\mathcal{A}} : S^r \mathcal{A} \longrightarrow \mathcal{A}$
which is the identity on objects, is induced by the identity on
morphisms, and has trivial higher operations.

\subsection{Functors with Linear Deviation} \label{a:func-LD}

For readability, we repeat here the definition of
$A_{\infty}$-functors with linear deviation (LD functors, for short).

Let $\mathcal{A}$ be a filtered $A_{\infty}$-category.  Given a tuple
$\vec{X} = (X_0, \ldots, X_d)$ of objects from $\mathcal{A}$ we define
its reduced tuple $\vec{X}_R := (X_{i_0}, \ldots, X_{i_{d_R}})$ by
omitting from $\vec{X}$ subsequent (in the cyclic order) objects that
are equal up to a shift. The objects forming $\vec{X}_R$ are well
defined only up to shifts, but the length of $\vec{X}_R$ (or rather, its length-$1$)
$0 \leq d_R \leq d$ is well defined. We call it the reduced length of
$\vec{X}$ and denote it by $d_R$ or $d_R(\vec{X})$ whenever we want to
emphasize its dependence on $\vec{X}$. Note that in case every two
consecutive objects in $\vec{X}$ are different (up to shifts) then
$d_R(\vec{X}) = d$. At the other extremity, if all the objects in
$\vec{X}$ are equal up to shifts, then $d_R(\vec{X}) = 0$.

Let $\mathcal{A}$, $\mathcal{B}$ be two filtered
$A_{\infty}$-categories. Let $s\geq 0$. An $A_{\infty}$-functor
$\mathcal{F}: \mathcal{A} \longrightarrow \mathcal{B}$ is said to have
linear deviation rate $s$ if for every $d \geq 1$, every tuple
of objects $\vec{X} = (X_0, \ldots, X_d)$ from $\Ob(\mathcal{A})$ and
every $\alpha \in \mathbb{R}$ we have:
$$\mathcal{F}_d \bigl(\mathcal{A}^{\alpha}(X_0, \ldots, X_d)\bigr)
\subset \mathcal{B}^{\alpha + d_R(\vec{X}) s}(\mathcal{F}X_0,
\mathcal{F}X_d).$$ We will refer to such functors as LD-functors (LD
stands for Linear Deviation).

Let
$\mathcal{F}, \mathcal{G}: \mathcal{A} \longrightarrow \mathcal{B}$ be
two LD-functors, both with deviation rate $\leq s$.  A pre-natural
transformation $T: \mathcal{F} \longrightarrow \mathcal{G}$ is said to
have linear deviation rate $s$ and shift $\leq r$ if for every
$d \geq 0$ it's $d$-th order component $T_d$ shifts filtration levels
by $\leq r + ds$.

LD-functors $\mathcal{F}: \mathcal{A} \longrightarrow \mathcal{B}$
with a given deviation rate $s \geq 0$, and their pre-natural
transformations form a filtered $A_{\infty}$-category
$\text{fun}^{\text{LD};s}(\mathcal{A}, \mathcal{B})$. To emphasize the
given deviation rate we will sometimes denote the objects of these
category by $(\mathcal{F}, s)$. Note that for every $s' \leq s''$ we
have an obvious (faithful but not full) inclusion
$\text{fun}^{\text{LD};s'}(\mathcal{A}, \mathcal{B}) \subset
\text{fun}^{\text{LD};s''}(\mathcal{A}, \mathcal{B})$ of filtered
$A_{\infty}$-categories.

\subsubsection{Homotopy between LD-functors} \label{a:homotopy} We
also have the notion of homotopy between LD-functors, which is an
adaptation of the notion
from~\cite[Section~(1h)]{Se:book-fukaya-categ} to the filtered and LD
setting. Let
$\mathcal{F}, \mathcal{G}: \mathcal{A} \longrightarrow \mathcal{B}$ be
two LD-functors with deviation $\leq s$ and assume that both functors
{\em act in the same way on objects}.  Consider the pre-natural
transformation $D = \{D_d\}_{d \geq 0}$ between $\mathcal{F}$ and
$\mathcal{G}$ defined by $D_0 := 0$ and
$D_d := \mathcal{F}_d - \mathcal{G}_d$. We say that $\mathcal{F}$ and
$\mathcal{G}$ are $r$-homotopic, as LD-functors with deviation
$\leq s$ if there exists a pre-natural transformation
$T \in hom^r_{\text{fun}^{\text{LD};s}(\mathcal{A}, \mathcal{B})}\bigl(
(\mathcal{F},s), (\mathcal{G}, s) \bigr)$ with vanishing $0$-term,
$T^0=0$, and with deviation rate $\leq s$ and shift $\leq r$, such
that $D = \mu_1(T)$.

Recall that filtered functors are a special case of LD-functors, and
homotopies between them (as defined above; i.e.~with deviation $s=0$)
are interesting too. In this paper we will need homotopies both
between filtered functors as well as between LD-functors.

Next we briefly discuss the relation between homotopy of functors and
persistence Hochschild homology. Recall from \S\ref{app:PHH}, that if $\mathcal{A}$ is a filtered
$A_{\infty}$-category, then its Hochschild chain complex
$CC(\mathcal{A}, \mathcal{A})$ inherits a filtration from
$\mathcal{A}$ and consequently the homology of the latter becomes a
persistence module which we call the persistence Hochschild homology
$PHH(\mathcal{A}, \mathcal{A})$ of $\mathcal{A}$. Note that its
$\infty$-limit $PHH^{\infty}(\mathcal{A}, \mathcal{A})$ coincides with
the usual Hochschild homology $HH(\mathcal{A}, \mathcal{A})$.
\begin{prop} \label{p:homotopy-HH} Let
  $\mathcal{F}, \mathcal{G}: \mathcal{A} \longrightarrow \mathcal{B}$
  be two filtered functors. If $\mathcal{F}, \mathcal{G}$ are
  $0$-homotopic then the chain maps induced by $\mathcal{F}$,
  $\mathcal{G}$ on the filtered Hochschild chain complexes
  $\mathcal{F}_{\scriptscriptstyle CC},
  \mathcal{G}_{\scriptscriptstyle CC}:CC(\mathcal{A}, \mathcal{A})
  \longrightarrow CC(\mathcal{B}, \mathcal{B})$ are $0$-chain
  homotopic. In particular the induced maps on the persistence
  Hochschild homologies
  $\mathcal{F}_{\scriptscriptstyle PHH},
  \mathcal{G}_{\scriptscriptstyle PHH}: PHH(\mathcal{A}, \mathcal{A})
  \longrightarrow PHH(\mathcal{B}, \mathcal{B})$ coincide as maps of
  persistence modules.
\end{prop}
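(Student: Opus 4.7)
The plan is to construct an explicit filtered chain homotopy between $\mathcal{F}_{\scriptscriptstyle CC}$ and $\mathcal{G}_{\scriptscriptstyle CC}$, using the pre-natural transformation $T$ that exhibits the $0$-homotopy between $\mathcal{F}$ and $\mathcal{G}$. This is a filtered refinement of the classical fact that homotopic $A_\infty$-functors induce chain-homotopic maps on Hochschild complexes, so the core of the argument is well known at the unfiltered level; the new content here is the verification that all constructions respect filtrations.

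First, I would spell out the formula for $\mathcal{F}_{\scriptscriptstyle CC}$ on a generator $\gamma_0 \otimes \gamma_1 \otimes \cdots \otimes \gamma_d \in \mathcal{M}(X_d, X_0) \otimes \mathcal{A}(\vec X)$ of $CC(\mathcal{A}, \mathcal{A}; d)$: it is the usual cyclic sum over decompositions of the inputs into consecutive blocks, each block being fed into a single higher component of $\mathcal{F}$, with one distinguished block containing the ``$\gamma_0$-slot.'' Since $\mathcal{F}$ is a filtered $A_\infty$-functor, each $\mathcal{F}_k$ preserves the filtration, and the definition of the filtration $CC^\alpha$ as a sum over tensor products of filtered pieces immediately gives $\mathcal{F}_{\scriptscriptstyle CC}(CC^\alpha) \subset CC^\alpha$. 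The same remark applies to $\mathcal{G}_{\scriptscriptstyle CC}$.

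Next, I would introduce the explicit chain homotopy $H: CC(\mathcal{A}, \mathcal{A}) \longrightarrow CC(\mathcal{B}, \mathcal{B})$ obtained by the standard ``insert $T$ exactly once'' recipe: partition the cyclic input into three consecutive blocks, apply higher components of $\mathcal{G}$ to the blocks sitting before the distinguished $T$-block (in cyclic order), apply $T$ to one middle block, and apply higher components of $\mathcal{F}$ to the blocks sitting after; sum over all such partitions. Because $T$ has vanishing $0$-term, $H$ is well defined on every length. The key filtered observation is that each $T_d$ shifts the filtration by $\leq 0$ (by the hypothesis that $T$ has shift $r = 0$ and deviation rate $s = 0$) and each $\mathcal{F}_k$, $\mathcal{G}_k$ is filtered; therefore every summand in $H$ sends $CC^\alpha$ into $CC^\alpha$, i.e.\ $H$ is a filtered pre-morphism of shift $\leq 0$.

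The hard part is the verification of the chain homotopy identity
\[
d_{CC} \circ H + H \circ d_{CC} \;=\; \mathcal{F}_{\scriptscriptstyle CC} - \mathcal{G}_{\scriptscriptstyle CC}.
\]
This is a purely algebraic $A_\infty$ calculation that does not involve filtrations: each side is a large sum over combinatorial configurations, and after organizing them one finds that almost all terms cancel in pairs via the $A_\infty$-relations for $\mathcal{B}$ together with the functor equations for $\mathcal{F}$ and $\mathcal{G}$; the surviving terms are precisely those where the $T$-insertion has been ``absorbed'' via the homotopy equation $\mu_1(T) = \mathcal{F} - \mathcal{G}$, producing the right-hand side. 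I would carry this out by reducing to the formally identical unfiltered calculation (found, for instance, in the arguments behind Morita-invariance of Hochschild homology, e.g.\ in Sheridan's treatment cited in the paper), noting that no filtration-shifting occurs in the bookkeeping.

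Finally, since $H$ is filtered of shift $\leq 0$, it descends to a chain homotopy between the induced maps at every filtration level $\alpha$, and therefore the induced morphisms of persistence modules $\mathcal{F}_{\scriptscriptstyle PHH}, \mathcal{G}_{\scriptscriptstyle PHH}: PHH(\mathcal{A}, \mathcal{A}) \longrightarrow PHH(\mathcal{B}, \mathcal{B})$ coincide. The main obstacle is notational: writing the homotopy $H$ and the identity above compactly enough to make the cancellations transparent; the filtered part, by contrast, is essentially automatic once one notes that $T_d$ does not shift filtration.
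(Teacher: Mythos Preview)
Your approach is correct and is the natural one. The paper itself states this proposition without proof (it appears in the appendix as a stated fact, with the next subsection beginning immediately afterward), so there is nothing to compare against; your outline of the explicit ``insert $T$ once'' chain homotopy, together with the observation that each $T_d$ has shift $\leq 0$ and hence $H$ preserves $CC^{\alpha}$, is exactly what is needed, and the algebraic identity $d_{CC}\circ H + H\circ d_{CC} = \mathcal{F}_{\scriptscriptstyle CC} - \mathcal{G}_{\scriptscriptstyle CC}$ is indeed the standard unfiltered computation.
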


\subsubsection{From LD-functors to filtered functors} \label{a:ld-fil}

Let $\mathcal{F} : \mathcal{A} \longrightarrow \mathcal{B}$ be an LD
$A_{\infty}$-functor with deviation rate $\leq r$. Using \S\ref{a:shifted}, define a new
$A_{\infty}$-functor
$$\eta^r \mathcal{F}: S^r \mathcal{A} \longrightarrow \mathcal{B},
\quad \eta^r \mathcal{F} :=\mathcal{F} \circ \eta^{\mathcal{A}}_r,$$
which we call the normalized $r$-shift of $\mathcal{F}$. It is
straightforward to see that $\eta^r \mathcal{F}$ is a {\em filtered}
$A_{\infty}$-functor.

\subsection{Filtered bimodules associated with
  functors} \label{a:func-fil-bimod} Given a filtered
$A_{\infty}$-functor
$\mathcal{F}: \mathcal{A} \longrightarrow \mathcal{B}$, define the
$(\mathcal{B},\mathcal{A})$-bimodule
$$\underline{\mathcal{F}}:=
(\text{id}_\mathcal{B} \otimes \mathcal{F})^*\Delta_{\mathcal{B}},$$
where $\Delta_{\mathcal{B}}$ stands for the diagonal bimodule of
$\mathcal{B}$. In other words, given $Y \in \Ob(\mathcal{B})$,
$X \in \Ob(\mathcal{A})$, we have
$$\underline{\mathcal{F}}(Y,X):= \mathcal{B}(Y,\mathcal{F}(X)).$$
On bimodule-composable elements, the structure maps
$\mu^{\underline{\mathcal{F}}}_{l|1|r}$, for $l,r\geq 0$, of
$\underline{\mathcal{F}}$ are given by
\begin{equation*}
  \begin{aligned}
    & \mu^{\underline{\mathcal{F}}}_{l|1|r}(b_1,\ldots, b_l,y,a_1,\ldots, a_r)
    := \\
  & \sum_{j=1}^r \sum_{\substack{s_1 + \cdots + s_j = r \\ 1 \leq s_i}}
\mu^{\mathcal{B}}_{l+j+1}(b_1,\ldots, b_l, y,
\mathcal{F}_{s_1}(a_1,\ldots, a_{s_1}),\ldots,
    \mathcal{F}_{s_j}(a_{r-s_j},\ldots, a_r)).
  \end{aligned}
\end{equation*}
Note that since $\mathcal{F}$ is filtered, the bimodule
$\underline{\mathcal{F}}$ is filtered too.

The assignment $\mathcal{F} \longmapsto \underline{\mathcal{F}}$
extends to a filtered $A_{\infty}$-functor
$\mathcal{U}: F\text{fun}(\mathcal{A}, \mathcal{B}) \longrightarrow
F\text{bimod}_{\mathcal{B}, \mathcal{A}}$. Its 1st order term
$\mathcal{U}_1$ has the following description. Let
$\mathcal{F}, \mathcal{G}; \mathcal{A} \longrightarrow \mathcal{B}$ be
two filtered $A_{\infty}$-functors and
$T: \mathcal{F} \longrightarrow \mathcal{G}$ a pre-natural
transformation between them, of some given shift $\alpha$
(i.e.~$T \in \hom^{\alpha}_{F\text{fun}(\mathcal{A},
  \mathcal{B})}(\mathcal{F}, \mathcal{G})$). Define the
$(\mathcal{B}, \mathcal{A})$-bimodule pre-homomorphism
\begin{equation*}
  \begin{aligned}
    & \mathcal{U}_1(T)\colon \underline{\mathcal{F}} \longrightarrow \underline{\mathcal{G}},
    \\
    & \mathcal{U}_1(T)_{l|1|r}(b_1,\ldots, b_l, y,a_1,\ldots, a_r):= \\
    & \sum_{j=1}^r\sum_{i=1}^j\sum_{\substack
      {s_1 + \cdots + s_j = r  \\ 1 \leq s_k, \, 
    \forall k \neq i}}
    \mu^{\mathcal{B}}_{l+1+j}\bigl(b_1,\ldots, b_l,y, (\mathcal{F}_{s_1},\ldots,
    \mathcal{F}_{s_{i-1}},T_{s_i},\mathcal{G}_{s_{i+1}}, \ldots,
    \mathcal{G}_{s_j})(a_1,\ldots, a_r)\bigr).
  \end{aligned}
\end{equation*}
It is straightforward to verify that $\mathcal{U}_1(T)$ is bimodule
pre-homomorphism of the same shift $\alpha$. Given composable natural
transformations $T_1,\ldots, T_d$ between $A_\infty$-functors from
$\mathcal{A}$ to $\mathcal{B}$, the pre-homomorphism of
$(\mathcal{B},\mathcal{A})$-bimodules $\mathcal{U}_d(T_1,\ldots, T_d)$
is defined by a similar formula.

\begin{lem}[Theorem~8.2.1.2 in \cite{Lfv:thesis}]
  The component $\mathcal{U}_1$ of the functor $\mathcal{U}$ induces a
  quasi-isomorphism.
\end{lem}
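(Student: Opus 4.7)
The plan is to argue by imitating the $\lambda$-map proof of filtered Yoneda (\S\ref{lambdamap}): construct an explicit filtered quasi-inverse to $\mathcal{U}_1$ by evaluating bimodule pre-homomorphisms on the strict units of $\mathcal{B}$, and then write down filtered chain homotopies realizing the two compositions. Throughout, the fact that $\mathcal{A}$ and $\mathcal{B}$ are strictly unital with units in filtration level $0$ is what forces everything to preserve filtrations.

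More precisely, let $\mathcal{F},\mathcal{G}:\mathcal{A}\to\mathcal{B}$ be filtered $A_\infty$-functors. Given a bimodule pre-homomorphism $\varphi:\underline{\mathcal{F}}\to\underline{\mathcal{G}}$ of shift $\leq\alpha$, define a pre-natural transformation $\Theta(\varphi):\mathcal{F}\to\mathcal{G}$ by
\[
  \Theta(\varphi)_d(a_1,\ldots,a_d) := \varphi_{0|1|d}\bigl(e_{\mathcal{F}(X_0)},\,a_1,\ldots,a_d\bigr)
  \in \mathcal{B}(\mathcal{F}X_0,\mathcal{G}X_d),
\]
for a composable string $(a_1,\ldots,a_d)\in\mathcal{A}(X_0,\ldots,X_d)$. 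Since the unit $e_{\mathcal{F}(X_0)}$ lies in $\mathcal{B}^{\leq 0}$, one sees immediately that $\Theta(\varphi)$ has shift $\leq\alpha$, so $\Theta$ is filtered. First I would check, by directly unpacking the bimodule pre-homomorphism equation for $\varphi$, that $\Theta$ is a chain map between the relevant filtered hom-complexes, and that $\Theta\circ\mathcal{U}_1=\mathrm{id}$ on the nose (this uses only strict unitality on the $\mathcal{B}$-side and requires no homotopy, exactly as in the proof of Proposition \ref{lambdamap}).

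Next I would construct the homotopy for the other composition, $\mathcal{U}_1\circ\Theta\simeq\mathrm{id}$. Define a pre-homomorphism of bimodules
\[
  H(\varphi)_{l|1|r}(b_1,\ldots,b_l,y,a_1,\ldots,a_r)
  := \varphi_{l+1|1|r}\bigl(b_1,\ldots,b_l,y,e_{\mathcal{F}(X_0)},a_1,\ldots,a_r\bigr),
\]
so that $H$ also preserves filtrations (again because the unit sits at level $0$). The verification that $\mu^{\md}_1(H(\varphi)) + H(\mu^{\md}_1(\varphi)) = \mathcal{U}_1(\Theta(\varphi)) - \varphi$ is now purely formal: it is a matter of writing the bimodule pre-homomorphism equation for $\varphi$ with one of the right-$\mathcal{A}$ inputs set to $e_{\mathcal{F}(X_0)}$ and isolating the terms that force strict unitality of $\mathcal{F}$ (and of $\mu_2^{\mathcal{B}}$) to fire, exactly parallel to the computation in the $\lambda$-map proof but with $\mathcal{A}$-inputs on both sides of the distinguished entry.

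The main obstacle is not conceptual — the classical unfiltered statement is Lefèvre-Hasegawa's Theorem 8.2.1.2 — but book-keeping: one must ensure that the rather large sum defining $\mathcal{U}_1$, which couples all higher components of $\mathcal{F}$, $\mathcal{G}$, and the natural transformation, reassembles neatly after applying strict unitality. The key cancellation is that $\mu^{\mathcal{B}}_{l+1+j}(\ldots,\mathcal{F}_{s_1},\ldots,\mathcal{F}_{s_{i-1}},T_{s_i},\mathcal{G}_{s_{i+1}},\ldots,\mathcal{G}_{s_j})$ restricted to inputs containing a unit collapses, via the strict unit axioms, to a contribution matching exactly the $d$-th order of $T$. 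Since the construction of $\Theta$ and $H$ involves only insertion of strict units of filtration level $0$ and the filtered $A_\infty$-operations of $\mathcal{B}$, all maps and homotopies are automatically filtered, proving that $\mathcal{U}_1$ is a filtered quasi-isomorphism. Passing to persistence homology then yields the stated result.
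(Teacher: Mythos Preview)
The paper does not give a proof of this lemma; it is simply stated as a citation of Theorem~8.2.1.2 in Lef\`evre-Hasegawa's thesis. Your sketch supplies the natural explicit argument in the strictly unital filtered setting and is correct in outline. It mirrors exactly the strategy the paper itself uses for the filtered Yoneda $\lambda$-map a few paragraphs earlier (constructing the quasi-inverse and the homotopy by inserting strict units, which lie at filtration level $0$), so methodologically it is fully in the paper's spirit. The benefit of writing it out as you do, rather than citing the unfiltered reference, is that the filtered quasi-inverse and filtered homotopies come for free; invoking the thesis alone still leaves a word to be said about why those constructions respect filtrations. One small point to watch: your claim that $\Theta\circ\mathcal{U}_1=\mathrm{id}$ on the nose relies on recovering the $0$-th order term $T^0$, so make sure the displayed formula for $\mathcal{U}_1(T)_{l|1|r}$ is read so that the $r=0$ case produces $\mu_2^{\mathcal{B}}(y,T^0)$ (the summation bound $j\leq r$ as literally written would make that sum empty).
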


\subsection{Pull back and push forward of filtered
  $A_{\infty}$-modules} \label{ap:pshf}

Let $\mathcal{A}$, $\mathcal{B}$ be filtered $A_{\infty}$-categories
and $\mathcal{F} : \mathcal{A} \longrightarrow \mathcal{B}$ a filtered
$A_{\infty}$-functor. Let $\mathcal{M}$ be a filtered
$\mathcal{B}$-module. Then the pull back $\mathcal{F}^*\mathcal{M}$
can be endowed with the structure of a filtered $\mathcal{A}$-module
in a straightforward way. However, if instead of assuming that
$\mathcal{F}$ is filtered we only assume it to be an LD-functor then
the pull back $\mathcal{F}^*\mathcal{M}$ is in general not filtered,
but rather an $LD$-module (a concept which will not be used in this
paper).

At the same time, there is a way to define push-forward of filtered
modules by LD-functors and still get filtered modules. We present this
construction next.

Let $\mathcal{A}, \mathcal{B}$ be filtered $A_{\infty}$-categories and
$\mathcal{F}: \mathcal{A} \longrightarrow \mathcal{B}$ an LD-functor
with deviation rate $\leq r$. Let $\mathcal{M}$ be a filtered
$\mathcal{A}$-module. The push-forward $(\mathcal{F},r)_* \mathcal{M}$
of $\mathcal{M}$ by $\mathcal{F}$ is defined to be the filtered
$\mathcal{B}$-module:
\begin{equation} \label{eq:psh-m} (\mathcal{F},r)_* \mathcal{M} :=
  \underline{\eta^r \mathcal{F}} \otimes_{S^r \mathcal{A}}
  (\eta_r^{\mathcal{A}})^* \mathcal{M}.
\end{equation}
Note that the value of the parameter $r$ affects the filtration
structure on the push-forward module, and therefore we include it in
the notation. In other words, if $r < r'$ then
$(\mathcal{F},r')_* \mathcal{M}$ has a different filtration structure
than $(\mathcal{F},r)_* \mathcal{M}$. Of course, if one forgets the
filtrations, then the push-forward of $\mathcal{M}$ depends only on
$\mathcal{F}$, and gives the same result for all $r$'s.

Given a pre-homomorphism
$f : \mathcal{M}_0 \longrightarrow \mathcal{M}_1$ beetween filtered
$\mathcal{A}$-modules, define its pushforward
$(\mathcal{F},r)_* f : (\mathcal{F},r)_* \mathcal{M}_0 \longrightarrow
(\mathcal{F},r)_* \mathcal{M}_1$ as follows. The higher order terms
$((\mathcal{F}, r)_*f)_{l|1}$, $l \geq 1$, are defined to be $0$, and
the linear order term is:
$$((\mathcal{F}, r)_*f)_{0|1}
(b\otimes a_1\otimes \cdots \otimes a_d\otimes m)= \sum_{i=1}^d
b\otimes a_1\otimes \cdots \otimes a_i\otimes
f_{d-i|1}(a_{i+1},\ldots, a_d,m).$$

The construction above extends to a filtered $A_{\infty}$-functor
$$PF: \text{fun}^{\text{LD};r}(\mathcal{A}, \mathcal{B})
\longrightarrow F\text{fun}(F\md_{\mathcal{A}}, F\md_{\mathcal{B}}),$$
whose action on the object $(\mathcal{F}, r)$ is the filtered
functor
$(\mathcal{F},r)_* :F\md_{\mathcal{A}} \longrightarrow
F\md_{\mathcal{B}}$. To define the action of $PF$ on morphisms, let
$\mathcal{F}, \mathcal{G} \in \Ob
(\text{fun}^{\text{LD};r}(\mathcal{A}, \mathcal{B}))$, and let
$T\colon \mathcal{F}\to \mathcal{G}$ be a pre-natural transformation
(in the category $\text{fun}^{\text{LD};r}(\mathcal{A}, \mathcal{B})$
of LD-functors with deviation rate $\leq r$) of shift $\rho \geq
0$. The image $PF_1(T)$ of $T$ under the push-forward functor $PF$ is
defined for an object $\mathcal{M} \in \Ob(F\md_{\mathcal{A}})$ as the
pre-module homomorphism
$PF_1(T) \in \hom_{F\md_{\mathcal{B}}}(\mathcal{F}_*\mathcal{M},
\mathcal{G}_*\mathcal{M})$ given by:
\begin{align*}
  PF(T)_{l|1}(b_1,\ldots, b_l, y\otimes \vec{a}\otimes m) & :=
  \sum_{j=0}^d \mathcal{U}_1(T)_{l|1|j}(b_1,\ldots, b_l,y,a_1,\ldots,
  b_j)\otimes a_{j+1}\otimes \cdots \otimes a_d\otimes m \\ =
  \sum_{j=0}^d \sum_{l=1}^j\sum_{i=1}^l\sum_{\substack{s_1 + \cdots +
      s_l = j \\ 1 \leq s_k, \, \forall k \neq i}}
  \mu^{\mathcal{B}}_{l+1+j}\bigl( & b_1,\ldots, b_l,y,(\mathcal{F}_{s_1},\ldots,
  \mathcal{F}_{s_{i-1}},T_{s_i},\mathcal{G}_{s_{i+1}}, \ldots,
  \mathcal{G}_{s_l})(a_1,\ldots, a_j)\bigr)  \\
  & \otimes a_{j+1}\otimes
  \cdots \otimes a_d\otimes m.
\end{align*}
The image of a composable tuple $T_1,\ldots, T_d$ of
$A_\infty$-pre-natural transformations under $PF_d$ is defined in a
very similar manner. It is straightforward to verify that $PF$ is a
filtered (strictly unital) $A_{\infty}$-functor.

The next proposition shows that, up to shifts, the push forward of the
Yoneda module of an objects is the same as the Yoneda module of the
image of this object by the given functor. We will denote here by
$\mathcal{Y}$ the filtered Yoneda embedding, both for $\mathcal{A}$
and for $\mathcal{B}$.
\begin{prop} \label{p:pushf-yon} Let
  $\mathcal{F}: \mathcal{A} \longrightarrow \mathcal{B}$ be an
  LD-functor with deviation rate $\leq r$. For every
  $X \in \Ob(\mathcal{A})$ the push forward
  $(\mathcal{F},r)_* \mathcal{Y}(X)$ of the Yoneda module
  $\mathcal{Y}(X)$ is $0$-quasi-isomorphic to
  $\Sigma^r \mathcal{Y}(\mathcal{F} X)$.
\end{prop}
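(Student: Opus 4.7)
The plan is to adapt the proof of the filtered Yoneda lemma recalled in \S\ref{lambdamap} to the push-forward setting, with careful bookkeeping of the extra filtration shift forced by the LD deviation rate $r$. Unravelling the definition~\eqref{eq:psh-m}, a typical element of $((\mathcal{F}, r)_* \mathcal{Y}(X))(Y)$ is the equivalence class of a pure tensor
$$b \otimes a_1 \otimes \cdots \otimes a_d \otimes m \in \mathcal{B}(Y, \mathcal{F} X_0) \otimes (S^r\mathcal{A})(X_0, X_1) \otimes \cdots \otimes (S^r\mathcal{A})(X_{d-1}, X_d) \otimes \mathcal{A}(X_d, X),$$
modulo the relations coming from the bar-tensor product over $S^r\mathcal{A}$. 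I would define a candidate filtered module morphism $\Phi: (\mathcal{F}, r)_* \mathcal{Y}(X) \longrightarrow \Sigma^r \mathcal{Y}(\mathcal{F} X)$ by the natural ``full contraction'' formula, sending the above pure tensor (in the $l=0$ component) to the sum
$$\sum \mu^{\mathcal{B}}_{k+1}\bigl(b,\, \mathcal{F}_{s_1}(\vec{a}_{I_1}),\, \ldots,\, \mathcal{F}_{s_{k-1}}(\vec{a}_{I_{k-1}}),\, \mathcal{F}_{s_k}(\vec{a}_{I_k},\, m)\bigr) \;\in\; \mathcal{B}(Y, \mathcal{F}X),$$
with the sum running over partitions of $(a_1, \ldots, a_d, m)$ into consecutive blocks whose last block always contains $m$; higher order terms $\Phi_{l|1}$ are defined by inserting the $b_i$'s as the leftmost inputs of $\mu^{\mathcal{B}}$. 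Compatibility with the $\otimes_{S^r\mathcal{A}}$ tensor relations is a direct consequence of the $A_\infty$-functor equations for $\eta^r\mathcal{F}$.

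The key filtration computation is that $\Phi$ shifts filtrations by exactly $r$, which is what the $\Sigma^r$ on the target is designed to absorb. Since $\eta^r \mathcal{F}: S^r\mathcal{A} \longrightarrow \mathcal{B}$ is genuinely filtered (see~\S\ref{a:ld-fil}), each intermediate term $\mathcal{F}_{s_j}(\vec{a}_{I_j})$ sends a filtered $S^r\mathcal{A}$-tensor to a filtered element of $\mathcal{B}$ with no extra shift. The only asymmetric contribution comes from the last block $\mathcal{F}_{s_k}(\vec{a}_{I_k}, m)$, whose final input $m$ lives in $\mathcal{A}(X_d, X)$ rather than in $S^r\mathcal{A}$: comparing the definition of reduced length with the definition of $(S^r\mathcal{A})^\alpha(-,-)$ shows that this asymmetry costs exactly one extra factor of $r$ in filtration, independently of the shape of the partition. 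This is precisely the shift $\Sigma^r$ appearing in the target.

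For the quasi-inverse I would take the naive formula $\Psi_{0|1}(b) := b \otimes e_X$, viewed in the $d=0$ summand of the push-forward, with all higher components $\Psi_{l|1}$ vanishing. Since $e_X$ is strict and lies in $\mathcal{A}^0(X,X) = (S^r\mathcal{A})^0(X,X)$ (this is the ``shift'' case of the $S^r$-filtration applied to $L=L'=X$), $\Psi$ is a filtered module morphism of shift $0$ and $\Phi \circ \Psi = \id$ holds on the nose by strict unitality of $\mathcal{B}$ and of $\mathcal{F}$. The identity $\Psi \circ \Phi \simeq \id$ should follow from an explicit filtered chain homotopy $H$ modelled on the map of the same name in the proof of the $\lambda$-map quasi-isomorphism, obtained by inserting $e_X$ at the end of the tensor and then contracting the rightmost $\mathcal{F}$-block. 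Verifying $\mu^{\md}_1 \circ H + H \circ \mu^{\md}_1 = \id - \Psi \circ \Phi$ reduces to a direct application of the $A_\infty$-equations for $\mathcal{A}$, the $A_\infty$-functor equations for $\mathcal{F}$ and the bimodule equations for $\underline{\eta^r\mathcal{F}}$, with the strict-unit condition on $e_X$ ensuring the homotopy is filtered of shift $0$.

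The main obstacle I expect is the filtration bookkeeping sketched in the second paragraph: the notion of reduced length governing the LD inequality interacts non-trivially with the bar-tensor relations over $S^r\mathcal{A}$, because contractions in the bar complex can merge or split shift-equivalence classes of consecutive objects and thereby alter which entries contribute the $r$-shift penalty. Once one shows that the $r$-shift contribution is always concentrated in the terminal block containing $m$, so that the target filtration level of $\Phi$ is uniformly $\alpha + r$ and that of $H$ is uniformly $\alpha$, the remaining verifications are straightforward translations of the arguments in~\S\ref{lambdamap}.
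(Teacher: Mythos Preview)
The paper does not supply a proof of this proposition, so there is nothing to compare against; your bar-resolution approach via the contraction $\Phi$ and the unit-insertion $\Psi$ is the standard route and is almost certainly what the authors have in mind.

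There is, however, a sign error in your key filtration claim for $\Phi$. With the paper's convention $(\Sigma^r \mathcal{M})^{\alpha}(Y) = \mathcal{M}^{\alpha-r}(Y)$ (see~\S\ref{a:fmod}), a map $M\to\Sigma^r N$ is $0$-filtered precisely when the underlying map $M\to N$ has shift $\leq -r$, not $\leq +r$: the $\Sigma^r$ on the target \emph{shrinks} each filtration level and therefore penalises rather than absorbs a positive shift. Since your own analysis correctly gives the terminal-block contribution as $+r$, your $\Phi$ viewed as a map into $\Sigma^{r}\mathcal{Y}(\mathcal{F}X)$ actually has shift $\leq 2r$, not $\leq 0$. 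The fix is simply to swap the roles of your two maps: it is $\Psi\colon \Sigma^r\mathcal{Y}(\mathcal{F}X)\to(\mathcal{F},r)_*\mathcal{Y}(X)$, $b\mapsto b\otimes e_X$, that furnishes the required $0$-quasi-isomorphism, since an element $b\in(\Sigma^r\mathcal{Y}(\mathcal{F}X))^{\alpha}(Y)=\mathcal{B}^{\alpha-r}(Y,\mathcal{F}X)$ lands at push-forward level $\alpha-r\leq\alpha$. Your $\Phi$ and $H$ then serve, as \emph{unfiltered} maps, to witness that $\Psi$ is a quasi-isomorphism; the filtration obstacle you anticipate is real but is confined to the side where it does no harm.
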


To simplify the notation, when the deviation rate of a functor is
clear from the context, we will sometimes omit the $r$ from the pair
$(\mathcal{F}, r)$ and simply write $\mathcal{F}_* \mathcal{M}$.

\subsection{Filtered twisted complexes}\label{ap:ftc} In this section, we introduce filtered twisted complexes. The constructions developed here are upgrades of \cite[Chapter 3]{Se:book-fukaya-categ} from the unfiltered to the filtered setting, and of \cite[Section 2.5.1]{BCZ:tpc} from the $dg$ to the $A_\infty$ case.
\subsubsection{The filtered $A_\infty$-categories of filtered twisted complexes} Let $\mathcal{A}$ be a filtered $A_\infty$-category, without a choice of shift functor.
We will write $\k_0^r[d]$ for interval persistence module over $\k_0$ with a unique generator in degree $d\in \mathbb Z$ placed at filtration level $r\in \mathbb R$ and given an object $L$ of $\mathcal{A}$ write $\Sigma^rL[d]$ for the formal tensor product $\k_0^r[d]\otimes L$. 
\begin{dfn}
	We define the (standard) shift completion of $\mathcal{A}$ as the category $\mathcal{A}^\Sigma$ with objects $$\text{Ob}(\mathcal{A}^\Sigma) := \left\{ \Sigma^rL[d]\ : \ L\in \text{Ob}(\mathcal{A}), \ d\in \mathbb{Z}, \ r\in \mathbb R \right\},$$ morphism spaces $$\mathcal{A}^\Sigma(\Sigma^rL_0[d_0],\Sigma^sL_1[d_1]) := \Sigma^{-(r-s)}\mathcal{A}(L_0,L_1)[d_1-d_0]$$ that is, $\mathcal{A}(\Sigma^rL_0[d_0],\Sigma^sL_1[d_1]) $ is the chain complex $\mathcal{A}(L_0,L_1)$ shifted up in degree by $d_1-d_0$ and in filtration by $r-s$, and with the same $A_\infty$-operations of $\mathcal{A}$.
\end{dfn}
\begin{rem}
	\begin{enumerate}
		\item It is easy to see that $\mathcal{A}^\Sigma$ is a filtered $A_\infty$-category. 
		\item On the $A_\infty$-category $\mathcal{A}^\Sigma$ there is an immediate choice of shift $A_\infty$-functor (in the sense of \cite[Section 3.2.1]{BCZ:tpc}), which on objects is $\Sigma^r(L) := \Sigma^rL$. 
		\item In some texts the prefix $\Sigma$ denotes the additive enlargement of an $A_\infty$-category (e.g. \cite[Section (3k)]{Se:book-fukaya-categ}), which is not the case here.
	\end{enumerate}
\end{rem}
\begin{dfn}
	We define the filtered additive enlargement $\mathcal{A}^\oplus$ of $\mathcal{A}$ as the additive enlargement of $\mathcal{A}^\Sigma$ in the sense of \cite[Section (3k)]{Se:book-fukaya-categ}, that is, as the $A_\infty$-category with objects given by formal sums $$\overline{L} = \bigoplus_{i=1}^n\Sigma^{r_i}L_i[d_i],$$ where $n\in \mathbb N$ and $\Sigma^{r_i}L_i[d_i]$ is an object of $\mathcal{A}^\Sigma$ for any $i$; and morphisms spaces between objects $\overline{L}=\bigoplus_{i=1}^n\Sigma^{r_i}L_i[d_i]$ and $\overline{L}'=\bigoplus_{i=1}^m\Sigma^{r_i'}L'_i[d'_i]$ given by $n\times m$ matrices $$f = (f_{ij}), \text{  where } f_{ij}\in \mathcal{A}^\Sigma\left(\Sigma^{r_i}L_i[d_i],\Sigma^{r_j'}L_j'[d_j']\right)$$ with usual matrix grading. Given an object $\overline{L}=\bigoplus_{i=1}^n\Sigma^{r_i}L_i[d_i]$ we will write $|\overline{L}|=n$ and call it the length of $\overline{L}$. The $A_\infty$-maps $\mu_d^\oplus$, $d\geq 1$, are defined by extending $\mu_d$ matrixwise, that is, given $\overline{L}_0,\ldots, \overline{L}_d$ objects of $\mathcal{A}^\oplus$ and matrices $f^1,\ldots,f^d$ with $f^i\in \mathcal{A}^\oplus(\overline{L}_{i-1},\overline{L}_i)$ we set for $i=1,\ldots, |\overline{L}_0|$ and $j=1,\ldots, |\overline{L}_d|$: $$\mu_d^\oplus\left(f^1,\ldots, f^d\right)_{ij}:= \sum_{i_1=1}^{|\overline{L}_1|}\cdots \sum_{i_{d-1}=1}^{|\overline{L}_{d-1}|}\mu_d\left(f^1_{ii_1},\ldots, f^d_{i_{d-1}j}\right).$$
	Moreover, we filter morphism spaces in $\mathcal{A}^\oplus$ by setting the filtration level of a matrix to be the maximum of the filtration level of its entries, that is: $$(\mathcal{A}^\oplus)^{\leq \alpha}(\overline{L},\overline{L}') := \left\{f= (f_{ij})\in \mathcal{A}^\oplus(\overline{L},\overline{L}'):\ f_{ij}\in (\mathcal{A}^\Sigma)^{\leq \alpha}\left(\Sigma^{r_i}L_i[d_i], \Sigma^{r_j'}L_j'[d_j']\right )\text{  for any }i,j\right\}.$$
\end{dfn} 

\begin{rem}
	\begin{enumerate}
		\item We will not write all the sums in the definition of $\mu_d^\oplus$, but rather use the shorthand notation $\sum_{i_1,\ldots, i_{d-1}}$.
		\item It is straightforward to see that $\mathcal{A}^\oplus$ is a filtered $A_\infty$-category.
	\end{enumerate}
\end{rem}

\begin{dfn}\label{defFTW}
	A filtered (one-sided) twisted complex of $\mathcal{A}^\Sigma$ is a pair $$(\overline{L}, q=q_{\overline{L}}):= \left(\bigoplus_{i=1}^n \Sigma^{r_i}L_i[d_i], (q_{ij})\right)$$ for some $n\in \mathbb N$ such that:
	\begin{enumerate}
		\item $\overline{L}$ is an object of $\mathcal{A}^\oplus$, that is, for any $i=1,\ldots, n$, $ \Sigma^{r_i}L_i[d_i]$ is an object of $\mathcal{A}^\Sigma$,
		\item $q$ is a morphism in $\mathcal{A}^\oplus (\overline{L}, \overline{L})$ lying at degree $1$ and vanishing filtration level.
		\item the matrix $q$ is strictly upper triangular, that is $q_{ij}=0$ for any $i\geq j$,
		\item\label{cond4} the matrix $q$ satisfies $$\sum_{d\geq 1}\mu_d^\oplus (q,\ldots, q)=0,$$ 
		to which we will refer as the Maurer-Cartan identity of $q$.
	\end{enumerate}
	The matrix $q=q_{\overline{L}}$ is called the differential of $\overline{L}$ and will be often dropped from the notation.
\end{dfn}
\begin{rem} As $q$ is strictly upper triangular, any entry $\mu_d^\oplus(q,\ldots, q)_{i,j}$, $i,j=1,\ldots, |\overline{L}|$, can be written as $$\sum_{i<i_1<\cdots < i_{d-1}<j} \mu_d(q_{ii_1}, q_{i_1i_2},\ldots , q_{i_{d-1}j}).$$ Moreover, the sum in the Maurer-Cartan identity of $q$ is a finite sum: indeed, for any $d\geq |\overline{L}|$ we have $\mu_d^\oplus(q,\ldots, q)=0$ since the longest possible term is $$\mu_{|L|-1}(q_{12},q_{23},\ldots, q_{|\overline{L}|-1|\overline{L}|})$$ in the matrix $\mu^\oplus_{|L|-1}(q,\ldots, q)$, which has only one non-zero entry in position $(1,|\overline{L}|)$.
\end{rem}
\noindent Given a twisted complex over a filtered $A_\infty$-category in the sense of \cite[Section (3l) and Remark 3.26]{Se:book-fukaya-categ}, it might be the case that the differential is not filtration-preserving (i.e. Condition \ref{cond4} above does not hold). The analogue of Lemma 2.96 in \cite{BCZ:tpc} holds: there are shifts $r_i$ turning this twisted complex in a filtered one.
\begin{dfn}
	We define the filtered pre-triangulated completion $FTw(\mathcal{A})$ of $\mathcal{A}$ as follows:
	\begin{enumerate}
		\item The objects of $FTw(\mathcal{A})$ are filtered one-sided twisted complexes over $\mathcal{A}$,
		\item Given two filtered twisted complexes $\overline{L}$ and $\overline{L}'$, the morphism space $FTw(\mathcal{A})(\overline{L},\overline{L}')$ is defined as $\mathcal{A}^\oplus(\overline{L},\overline{L}')$, with the same filtration, 
		\item The $A_\infty$-operations are deformed by the differentiasl in the following way: given any $d\geq 1$ and twisted complexes $$\left(\overline{L_i} = \bigoplus_{k=1}^{n_i}\Sigma^{r_{i,k}}L_{i,k}[d_{i,k}],q_i\right)$$ for any $i=0,\ldots, d$ and morphisms $f^i\in FTw(\mathcal{A})(\overline{L_{i-1}},\overline{L_i})$ for any $i=1,\ldots, d$ we define $\mu_d^{Tw}(f^1,\ldots, f^d)$ as the $|\overline{L}_0|\times |\overline{L}_d|$ matrix  $$\mu_d^{Tw}(f^1,\ldots, f^d) := \sum_{k_0,\ldots, k_d\geq 0}\mu_{d+k_0+\cdots + k_d}^{\oplus}\left(q_0^{\otimes k_0}, f^1,q_1^{\otimes k_1},\ldots, q_{d-1}^{\otimes k_{d-1}},f^d, q_d^{\otimes k_d}\right)$$
	\end{enumerate}
	Moreover, given $N\geq 1$ we define $FTw^N\mathcal{A}$ to be the full $A_\infty$-subcategory of $FTw\mathcal{A}$ with filtered twisted complexes $\overline{L}$ of length $|\overline{L}|\leq N$ as objects.
\end{dfn}
\begin{rem}\label{finitenesssum} 
a. The fact that $FTw(\mathcal{A})$ and $FTw^N(\mathcal{A})$ are a $A_\infty$-category is a consequence of differentials satisfying the Maurer-Cartan identity above.

b. There is an obvious filtered full and faithful $A_\infty$-functors $\mathcal{I}^N_\mathcal{A}\colon \mathcal{A}\to FTw^N(\mathcal{A})$ for all $N\geq 1$, as well as $\mathcal{I}_\mathcal{A}\colon \mathcal{A}\to FTw(\mathcal{A})$.

c. The shift functor $\Sigma$ on $\mathcal{A}^\Sigma$ induces a shift functor, still denoted by $\Sigma$, on $FTw(\mathcal{A})$. It is defined on an object $\overline{L}= \bigoplus_{i=1}^n \Sigma^{r_i}L_i[d_i]$ by $$\Sigma^r\overline{L} = \bigoplus_{i=1}^n \Sigma^{r_i+r}L_i[d_i]$$ and viewing each entry $q_{ij}$ of the differential $q$ of $\overline{L}$ as a morphism $$q_{ij}\in FTw(\mathcal{A})(\Sigma^{r_i+r}L_i[d_i], \Sigma^{r_j+r}L_j[d_j])\cong FTw(\mathcal{A})(\Sigma^{r_i}L_i[d_i], \Sigma^{r_j}L_j[d_j]).$$

d. The sum in the definition of $\mu_d^{Tw}$ is a finite sum, for basically the same reason that the Maurer-Cartan identity consists of a finite sum. For instance, in $\mu_1^{Tw}f$ for $f\in FTw(\mathcal{A})(\overline{L},\overline{L}')$ we sum elements of the form $\mu_{1+k_0+k_1}(q^{\otimes k_0},f,q'^{\otimes k_1})$, whose $(i,j)$-entry is $$\sum_{i_1,\ldots, i_{k_0}=1}^{|\overline{L}|}\sum_{i_{k_0+1},\ldots, i_{k_0+k_1}=1}^{|\overline{L}'|}\mu_{1+k_0+k_1}\left(q_{ii_1},\ldots, q_{i_{k_0-1}i_{k_0}},f_{i_{k_0}i_{k_0+1}},q'_{i_{k_0+1}i_{k_0+2}},\ldots, q'_{i_{k_0+k_1}j}\right)$$ so that, by lower triangularity of the differentials, $\mu_{1+k_0+k_1}(q^{\otimes k_0},f,q'^{\otimes k_1})=0$ whenever $k_0\geq |\overline{L}|$ or $k_1\geq |\overline{L}'|$. In particular, the longest possible summand is $$\mu_{|\overline{L}|+|\overline{L}'|-1}\left(q_{12},\ldots, q_{|\overline{L}|-1|\overline{L}|},f_{|\overline{L}|1},q'_{12},\ldots, q'_{|\overline{L}'|-1|\overline{L}'|}\right)$$ in the $(1,|\overline{L}'|)$-entry. It is easy to see that the same holds in $\mu_d^{Tw}(f^1,\ldots, f^d)$ whenever $k_i\geq |\overline{L_i}|$ for some $i$.

\end{rem}

\begin{dfn}
	Let $\overline{L}$ and $\overline{L}'$ be two filtered twisted complexes in $FTw(\mathcal{A})$ as above, $f\in FTw(\mathcal{A})(\overline{L}, \overline{L}')$ be a degree zero morphism such that $\mu_1^{FTw(\mathcal{A})}f=0$, and $\lambda\in \mathbb R$ such that $\lambda\geq \mathbb{A}(f)$. We define the $\lambda$-filtered mapping cone of $f$ as $$\text{Cone}^{\lambda}(f) := \left(\overline{L}'\oplus \Sigma^{\lambda}\overline{L}[1], \ q_{co}\right) \text{   where  }q_{co} := \begin{pmatrix}
		q' & f \\
		0 & \Sigma^\lambda q[1]
	\end{pmatrix}.$$
\end{dfn}
\begin{lem}
	$\text{Cone}^{\lambda}(f)$ is a filtered twisted complex.
\end{lem}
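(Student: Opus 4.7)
The plan is to verify the four conditions in Definition \ref{defFTW} for the pair $(\overline{L}'\oplus \Sigma^{\lambda}\overline{L}[1], q_{co})$. Conditions (1) and (3) are immediate: $\overline{L}'\oplus \Sigma^{\lambda}\overline{L}[1]$ is manifestly an object of $\mathcal{A}^{\oplus}$ (being a finite direct sum of objects of $\mathcal{A}^{\Sigma}$), and block upper triangularity of $q_{co}$ together with the strict upper triangularity of $q$ and $q'$ shows that $q_{co}$ is itself strictly upper triangular.

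For condition (2), I would check degree and filtration separately. The entries $q'$ and $\Sigma^\lambda q[1]$ are the differentials of the twisted complexes $\overline{L}'$ and $\Sigma^\lambda \overline{L}[1]$, so by the shift-invariance built into $\mathcal{A}^\Sigma$ they have degree $1$ and filtration level $0$. The block $f$, viewed inside $FTw(\mathcal{A})(\overline{L}, \overline{L}')$, has degree $0$ and filtration level $\mathbb{A}(f)$; once reinterpreted as a morphism $\Sigma^{\lambda}\overline{L}[1]\longrightarrow \overline{L}'$, the translation raises degree by $1$ and the shift by $\Sigma^{\lambda}$ lowers filtration level by $\lambda$. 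Since $\lambda \geq \mathbb{A}(f)$ the resulting filtration level is $\leq 0$, as required.

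The main step is the Maurer--Cartan identity (4). Here I would unpack $\mu^{\oplus}_d(q_{co},\ldots,q_{co})$ blockwise. Because $q_{co}$ is block upper triangular, the $(1,1)$ and $(2,2)$ blocks of the total sum $\sum_{d\geq 1}\mu^{\oplus}_d(q_{co},\ldots,q_{co})$ equal $\sum_{d\geq 1}\mu^{\oplus}_d(q',\ldots,q')$ and $\sum_{d\geq 1}\mu^{\oplus}_d(\Sigma^{\lambda}q[1],\ldots,\Sigma^{\lambda}q[1])$ respectively, both of which vanish by the Maurer--Cartan identity for $q'$ and $q$ together with the shift invariance of the $A_\infty$-operations. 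The essential observation concerns the off-diagonal $(1,2)$-block, where each summand is of the form
$$\mu^{\oplus}_{1+k_0+k_1}\bigl((q')^{\otimes k_0},\, f,\, q^{\otimes k_1}\bigr),$$
and the total over all $d\geq 1$, $k_0,k_1\geq 0$ with $k_0+k_1+1=d$, reproduces exactly the expansion of $\mu^{Tw}_1 f$ as defined in the definition of $FTw(\mathcal{A})$. By hypothesis $\mu^{Tw}_1 f=0$, so the $(1,2)$-block vanishes as well.

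The only subtlety is keeping track of the shift $\Sigma^{\lambda}$ and the translation $[1]$ in the second summand when reorganizing the block entries; this is routine because in the category $\mathcal{A}^{\oplus}$ the $A_\infty$-operations are defined entrywise on matrices (see the definition of $\mu^{\oplus}_d$) and commute with $\Sigma^r$ and $[k]$ in the obvious way. Finiteness of all the sums involved is guaranteed by Remark \ref{finitenesssum}(d). With all four conditions verified, $\mathrm{Cone}^{\lambda}(f)$ is a filtered twisted complex.
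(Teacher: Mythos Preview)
Your argument is correct and matches what the paper would have you do; the paper itself omits the proof, treating the verification as routine. One small slip: in your displayed formula for the off-diagonal block you write $\mu^{\oplus}_{1+k_0+k_1}\bigl((q')^{\otimes k_0}, f, q^{\otimes k_1}\bigr)$, but comparing with the paper's definition of $\mu_1^{Tw}f$ (Remark~\ref{finitenesssum}(d)) the source differential $q$ should appear to the left of $f$ and the target differential $q'$ to the right, i.e.\ $\mu^{\oplus}_{1+k_0+k_1}\bigl(q^{\otimes k_0}, f, (q')^{\otimes k_1}\bigr)$; this is consistent with your own description of $f$ as a morphism $\Sigma^{\lambda}\overline{L}[1]\to \overline{L}'$. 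The substance of the argument is unaffected.
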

\begin{rem}
	Note that the subcategories $FTw^N(\mathcal{A})$ are not pre-triangulated, as the cone construction doesn't preserve the length filtration.
\end{rem}
\begin{prop}
	Let $\mathcal{A}$ be a filtered $A_\infty$-category. Then $H^0(FTw(\mathcal{A}))$ is a TPC.
\end{prop}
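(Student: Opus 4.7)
The plan is to verify the axioms of a TPC for $\msc := H^0(FTw(\mathcal{A}))$ in three stages: the persistence category structure, the triangulated structure on the $0$-level, and the compatibility between the two.

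First, I would establish the persistence category structure. Since $FTw(\mathcal{A})$ is a filtered $A_{\infty}$-category (by construction of the $A_{\infty}$-operations $\mu_d^{Tw}$ out of the filtered operations $\mu_d^{\oplus}$), the persistence homology category $PH(FTw(\mathcal{A}))$ is a PC by the general principles recalled in~\S\ref{sbsb:fai}. The shift functor $\Sigma$ on $FTw(\mathcal{A})$ noted in Remark~\ref{finitenesssum}(c) induces a shift functor on $\msc$, together with the natural transformations $\eta_{r,s}$. A translation functor $T$ is given at the level of objects by $(\overline{L}, q) \mapsto (\overline{L}[1], q[1])$ where the $[1]$ is the one appearing in the indexing of summands in $\overline{L}$; one checks directly that $T$ is a PC-isomorphism functor commuting with $\Sigma$, and that the grading shift identities $\hom_{\msc}(TA,B) \cong \hom_{\msc}(A,B)[-1]$ etc.\ hold.

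Next, I would show that $\msc^0$ is triangulated. Distinguished triangles are declared to be those isomorphic to triangles of the form
$$\overline{L} \xrightarrow{\ f\ } \overline{L}' \longrightarrow \mathrm{Cone}^{0}(f) \longrightarrow T\overline{L}$$
for $f \in FTw(\mathcal{A})(\overline{L}, \overline{L}')$ a degree $0$ cycle at filtration level $\leq 0$, where $\mathrm{Cone}^{0}(f)$ is the filtered mapping cone with $\lambda = 0$. The axioms (TR1)--(TR3) follow from the standard $A_{\infty}$-arguments adapted to the filtered case: existence of cones is the $\lambda = 0$ case of the filtered cone construction, and the rotation axiom together with the lifting of morphisms of triangles are verified by the same homotopical algebra used in the unfiltered case (see~\cite[Chapter~I, \S 3(l)]{Se:book-fukaya-categ}), with the additional check, which follows from $\mathbb{A}(f) \leq 0$, that all homotopies and comparison maps produced are of filtration level $\leq 0$. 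The octahedral axiom (TR4) is the main technical step; it is proved exactly as in the unfiltered twisted complex setting by exhibiting an explicit filtered twisted complex interpolating three cones, but one must keep track throughout that every matrix entry that appears has non-positive filtration, which holds because all input morphisms are at level $\leq 0$ and $\mu_d^{Tw}$ preserves filtrations.

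Finally I would verify the TPC compatibility axioms from~\cite{BCZ:tpc}. The crucial axiom is that for every $X \in \Ob(\msc)$ and every $r \geq 0$ the morphism $\eta_r^X \colon \Sigma^r X \to X$ fits into a distinguished triangle with $r$-acyclic third term. For $X = (\overline{L}, q)$ this is established by realizing $\eta_r^X$ at the chain level by the diagonal matrix with entries the standard inclusions $\Sigma^r L_i[d_i] \to L_i[d_i]$, and observing that $\mathrm{Cone}^0(\eta_r^X)$ is isomorphic, as a filtered twisted complex, to the mapping cylinder whose identity morphism becomes null-homotopic after composition with $\eta_r$; equivalently, the identity of $\mathrm{Cone}^0(\eta_r^X)$ is nullhomotopic as a morphism shifted by $r$. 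The remaining TPC axioms, namely the compatibility of $\Sigma$ with exact triangles and the naturality of the $\eta_{r,s}$, follow from the observation that $\Sigma^r$ applied to a mapping cone of $f$ is the mapping cone of $\Sigma^r f$ at the shifted filtration level, which is a direct consequence of the definitions.

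The main obstacle is the verification of the octahedral axiom, together with the filtration-level bookkeeping in the $r$-acyclicity axiom; both are conceptually straightforward but require carefully unpacking the matrix formulas for $\mu_d^{Tw}$ and for the filtered mapping cone, and explicitly exhibiting the homotopies witnessing the various identities.
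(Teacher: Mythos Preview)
The paper does not actually give a proof of this proposition: it is stated in the appendix without proof, and earlier in \S\ref{sbsb:ftw} the paper simply asserts that ``standard arguments show that $PH(FTw\mathcal{A})$ is a TPC'', referring the reader to~\cite[Section~2.5]{BCZ:tpc} where the analogous statement is proved for filtered dg-categories. Your proposal is therefore not being compared against an existing argument but is supplying the omitted details, and the outline you give is exactly the expected one: it parallels the dg proof in~\cite{BCZ:tpc}, with the only additional ingredient being that the homotopical algebra of twisted complexes (verification of (TR1)--(TR4)) is carried out in the $A_\infty$-setting following~\cite[Chapter~I, \S3]{Se:book-fukaya-categ} rather than in the dg setting, while tracking filtrations throughout.

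One small point to tighten: your description of the $r$-acyclicity of $\mathrm{Cone}^0(\eta_r^X)$ is a bit imprecise. The cleanest way to phrase it is that $\mathrm{Cone}^0(\eta_r^X)$ is, as an object of $\mathcal{A}^{\oplus}$, the direct sum $\overline{L} \oplus \Sigma^r\overline{L}[1]$ with a differential that includes the identity matrix (viewed at filtration level $-r$) as its off-diagonal block; the contracting homotopy is then the obvious one coming from the identity on $\overline{L}$, and it shifts filtration by exactly $r$. This is the twisted-complex analogue of the elementary fact that $\mathrm{Cone}(\Sigma^r C \hookrightarrow C)$ is $r$-acyclic for a filtered chain complex $C$.
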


\begin{rem}
	Filtered twisted complexes might be described by allowing tensoring with graded filtered vector spaces (not persistence modules!) which are not one dimensional. Given a finitely and freely generated filtered graded vector $V := \bigoplus_{i\in I}\Lambda\cdot \gamma_i$ over $\Lambda$ and an object $L$ of $\mathcal{A}$, we can define the tensor $V\otimes L$ simply as the direct sum (or in other words, $0$-filtered mapping cone of the zero maps) of the elements $\Sigma^{|\gamma_i|}L[\deg(\gamma_i)] = \Lambda\cdot \gamma_i\otimes L$, where $\deg(\gamma_i)$ denotes the degree of $\gamma_i$ in $V$. We will often denote $\Sigma^{|\gamma_i|}L[\deg(\gamma_i)]$ simply as $L\cdot \gamma_i$. If $V$ is a filtered chain complex, i.e. it carries a differential $d_V$, then the tensor product inherits a deformed differential as follows: write $d_V\gamma_i = \sum_j\alpha^i_j\gamma_j$ for any $i\in I$, then the differential on $V\otimes L$ is the matrix with $(i,j)$ entry equal to $\alpha^i_je_L$, where $e_L\in \mathcal{A}(L,L)$ is the strict unit. This definition extends to the whole category $FTw(\mathcal{A})$ (i.e. it's not only well-defined for images of objects of $\mathcal{A}$) in an obvious way.  Another possibility (cfr. \cite[Section (3l)]{Se:book-fukaya-categ}) is to define the category $FTw$ as having as objects tensors of the form $\bigoplus V_i\otimes L_i$, where $V_i$ is a finite dimensional filtered vector space and $L_i$ is an object of $\mathcal{A}$, together with an upper triangular differential $q$. In this case morphisms between $\bigoplus_i V_i\otimes L_i$ and $\bigoplus_j W_j\otimes L'_j$ are elements of $$\bigoplus_{i,j}\text{Lin}(V_i,W_j)\otimes \mathcal{A}(L_i,L_j')$$ and the $A_\infty$-maps are modified by considering compositions of linear maps (see \cite[Equation (3.17)]{Se:book-fukaya-categ}). It is striaghtforward to see that the two descriptions are equivalent.
\end{rem}


\subsubsection{The filtered extended Yoneda embedding}\label{extendedYoneda}
We extend the Yoneda embedding $\mathcal{Y}_\mathcal{A}\colon \mathcal{A}\to F\md_{\mathcal{A}}$ introduced above to a filtered $A_\infty$-functor $$\widetilde{\mathcal{Y}_\mathcal{A}}\colon FTw\mathcal{A}\to F
\md_{\mathcal{A}}$$ via $$\widetilde{\mathcal{Y}_\mathcal{A}}:= \mathcal{I}_\mathcal{A}^*\circ \mathcal{Y}_{FTw\mathcal{A}}$$ where $\mathcal{I}_\mathcal{A}^*$ is the pullback of the $A_\infty$-functor $\mathcal{I}_\mathcal{A}\colon \mathcal{A}\to FTw(\mathcal{A})$ introduced in Remark \ref{finitenesssum}b., $\mathcal{Y}_{FTw\mathcal{A}}$ is the filtered Yoneda embedding for the filtered $A_\infty$-category $FTw\mathcal{A}$ and $\circ$ is the usual composition of $A_\infty$-functors. As pullbacks of filtered $A_\infty$-functors preserve filtered $A_\infty$-modules and are themselves filtered, the functor $\widetilde{\mathcal{Y}_A}$ is indeed a filtered $A_\infty$-functor. Note that this is just an immediate extension to the filtered world of the definition of the extended Yoneda embedding in \cite[Section (3s)]{Se:book-fukaya-categ}.
\begin{prop}
	The induced functor $H(\widetilde{\mathcal{Y}_\mathcal{A}})\colon PD(\mathcal{A})\to H(F	\md_{\mathcal{A}})$ gives a TPC equivalence between $PD(\mathcal{A})$ and the image of $H(\widetilde{\mathcal{Y}_\mathcal{A}})$.
\end{prop}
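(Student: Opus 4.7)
The plan is to decompose the statement into three substeps: (a) $\widetilde{\mathcal{Y}_\mathcal{A}}$ is a filtered, strictly unital $A_\infty$-functor whose induced homological counterpart is automatically compatible with the TPC structures on both sides; (b) $H(\widetilde{\mathcal{Y}_\mathcal{A}})$ is homologically full and faithful at the persistence level; (c) its essential image is precisely the triangulated persistence completion $\mathcal{Y}(\mathcal{A})^\Delta$, so that the source can be identified with $PD(\mathcal{A})$ via the usual comparison between the twisted-complex and module models of $\S\ref{sbsb:fmod}$ and $\S\ref{sbsb:ftw}$. Substep (a) follows by inspection, since $\widetilde{\mathcal{Y}_\mathcal{A}} = \mathcal{I}_\mathcal{A}^* \circ \mathcal{Y}_{FTw\mathcal{A}}$ is a composition of filtered strictly unital $A_\infty$-functors (pullback along a filtered functor preserves filtered modules and filtered homs, cf. Appendix~\ref{ap:pshf}).

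For (b), fix twisted complexes $\overline{L}_0, \overline{L}_1$ and consider the induced map
\[
\hom_{PH(FTw\mathcal{A})}(\overline{L}_0, \overline{L}_1) \longrightarrow \hom_{H(F\md_\mathcal{A})}\bigl(\widetilde{\mathcal{Y}_\mathcal{A}}(\overline{L}_0), \widetilde{\mathcal{Y}_\mathcal{A}}(\overline{L}_1)\bigr).
\]
When $\overline{L}_0 = \mathcal{I}_\mathcal{A}(L)$ lies in the image of $\mathcal{I}_\mathcal{A}$, this reduces to the filtered $\lambda$-lemma of $\S\ref{lambdamap}$ applied to the $\mathcal{A}$-module $\widetilde{\mathcal{Y}_\mathcal{A}}(\overline{L}_1)$, yielding the filtered quasi-isomorphism
\[
\widetilde{\mathcal{Y}_\mathcal{A}}(\overline{L}_1)(L) \;=\; FTw\mathcal{A}(L,\overline{L}_1) \;\xrightarrow{\;\lambda\;}\; \hom_{F\md_\mathcal{A}}\bigl(\mathcal{Y}_\mathcal{A}(L), \widetilde{\mathcal{Y}_\mathcal{A}}(\overline{L}_1)\bigr).
\]
The general case is handled by induction on $|\overline{L}_0|$: any such $\overline{L}_0$ is a filtered mapping cone of a strictly shorter twisted complex along a morphism coming from $\mathcal{A}$, so both sides fit into long exact persistence sequences and the filtered five-lemma propagates the isomorphism.

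For (c), the strict upper-triangularity of $q_{\overline{L}}$ together with the fact that its entries lie in filtration $\leq 0$ allow one to realise $\widetilde{\mathcal{Y}_\mathcal{A}}(\overline{L})$ as an iterated filtered mapping cone of shifted Yoneda modules $\Sigma^{r_i}\mathcal{Y}_\mathcal{A}(L_i)[d_i]$ over $0$-shift morphisms, placing it inside $\mathcal{Y}(\mathcal{A})^\Delta$; conversely, every iterated filtered cone of Yoneda modules records exactly the data of an upper-triangular $q$ satisfying Maurer–Cartan, giving essential surjectivity. Combined with (a) and (b), this identifies $H^0(FTw\mathcal{A}) \cong PD(\mathcal{A})$ and exhibits $H(\widetilde{\mathcal{Y}_\mathcal{A}})$ as a TPC equivalence onto its image.

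The main obstacle will be the inductive step of (b). One must verify that the long exact persistence sequence induced by a filtered mapping cone in $FTw\mathcal{A}$ maps, at every filtration level simultaneously, to the corresponding long exact persistence sequence of filtered hom-modules in $F\md_\mathcal{A}$. This reduces to a compatibility of the form $\widetilde{\mathcal{Y}_\mathcal{A}}(\mathrm{Cone}^\lambda(f)) \simeq \mathrm{Cone}^\lambda(\widetilde{\mathcal{Y}_\mathcal{A}}(f))$ at the filtered chain level, which is where the precise behaviour of the extended Yoneda functor with respect to the filtered cone construction of Appendix~\ref{ap:ftc} must be invoked carefully.
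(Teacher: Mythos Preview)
Your proposal is correct, but it is considerably more elaborate than the paper's own argument. The paper disposes of the persistence equivalence in a single sentence (``the fact that $H(\widetilde{\mathcal{Y}_\mathcal{A}})$ gives an equivalence of persistence categories is clear'') and then checks only the two TPC compatibilities: that the functor commutes with shifts (immediate from how $\Sigma$ is defined on $FTw\mathcal{A}$ and on modules), and that it is triangulated at the $0$-level (immediate because any filtered $A_\infty$-functor sends filtered cones to filtered cones at the chain level). In other words, the paper treats what you call the ``main obstacle'' as a non-issue and your step~(a) as the only thing genuinely to verify.

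The reason the paper can afford this brevity is that full faithfulness in persistence homology follows \emph{directly} from the filtered $\lambda$-lemma of \S\ref{lambdamap} applied to the category $FTw\mathcal{A}$ itself (not to $\mathcal{A}$): for any $\overline{L}_0,\overline{L}_1$ the map $\lambda$ gives a filtered quasi-isomorphism $FTw\mathcal{A}(\overline{L}_0,\overline{L}_1)\to F\md_{FTw\mathcal{A}}(\mathcal{Y}_{FTw\mathcal{A}}(\overline{L}_0),\mathcal{Y}_{FTw\mathcal{A}}(\overline{L}_1))$, and pulling back along the full and faithful $\mathcal{I}_\mathcal{A}$ does no harm. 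So your induction on $|\overline{L}_0|$ is a valid alternative route, but it is not needed, and the cone-compatibility you flag as delicate is automatic for any filtered $A_\infty$-functor. Your step~(c), identifying the essential image with $\mathcal{Y}(\mathcal{A})^\Delta$, is a useful observation that the paper leaves implicit.
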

\begin{proof}
	The fact that $H(\widetilde{\mathcal{Y}_\mathcal{A}})$ gives an equivalence of persistence categories is clear. We need to prove that it is a TPC functor, that is, it is compatible with the shift functors and that it is triangulated at the $0$-level. The fact that it is compatible with shift functors is a direct consequence of the fact that the shift functor on $FTw\mathcal{A}$ is compatible with modules by construction. The fact that the $0$-level is triangulated follows from the Yoneda embedding being an $A_\infty$-functor at the chain level.
\end{proof}
\subsubsection{Filtered twistings}\label{filtwistings}
Let $X$ and $Y$ be objects of $\mathcal{A}$, and consider the filtered twisted complex $Y\otimes \mathcal{A}(X,Y)$ with differential $$e_Y\otimes \mu_1\in FTw(Y\otimes \mathcal{A}(X,Y),Y\otimes \mathcal{A}(X,Y)) = \mathcal{A}(Y,Y)\otimes \text{Lin}(\mathcal{A}(X,Y), \mathcal{A}(X,Y))$$ where $e_Y\in \mathcal{A}(Y,Y)$ is the strict unit of $Y$ and $\text{Lin}(-,-)$ denotes the space of linear maps between two vector spaces. In other words, given a basis $\vec{a}=(a_1,\ldots, a_n)$ of $\mathcal{A}(Y,X)$, the twisted complex above is $\bigoplus_{i=1}^n \Sigma^{|a_i|}Y[\text{deg}(a_i)]$ with differential given by the matrix $q_{ij} = \alpha_{ij}e_Y$, where $e_Y\in \mathcal{A}(Y,Y)$ is the strict unit for $Y$ and the $\alpha_{ij}\in \k$ are the coeffients in $\mu_1(a_i)=\sum_{j=1}^n\alpha_{ij}a_j$. We define the filtered morphism of filtered twisted complexes $$\xi\colon Y\otimes \mathcal{A}(Y,X)\to X$$ to be the map corresponding to the identity under the canonical filtered isomorphism of chain complexes $$FTw(Y\otimes \mathcal{A}(Y,X), X)\cong \text{Lin}(\mathcal{A}(Y,X),\mathcal{A}(Y,X)).$$ 
\begin{dfn}
	We define the twisting $T_YX$ of $X$ by $Y$ as the twisted complex given by the cone of $\xi$, i.e. $T_YX = \text{Cone}(\xi)$
\end{dfn}
We can write $$T_YX= X\oplus \bigoplus_{i=1}^n \Sigma^{|a_i|}Y[\text{deg}(a_i)+1]$$ with differential given by the matrix $\begin{pmatrix}
	0& \vec{a} \\ \vec{0}^T & (q_{ij})
\end{pmatrix}$.
Note that the strict unit $e_X\in \mathcal{A}(X,X)$ induces a morphism of twisted complexes $i\colon X\to T_YX$ as a vector $(e_X,0,\ldots,0)$.

We are interested in describing the image of $T_Y X$ under the extended filtered Yoneda embedding $\widetilde{\mathcal{Y}_\mathcal{A}}$ introduced in \S\ref{extendedYoneda}.
\begin{lem}
	We have that $$\tilde{\mathcal{Y}}_{T_YX} \cong \text{Cone}(\phi)$$
	where $\phi\colon \mathcal{Y}_Y\otimes \mathcal{A}(Y,X)\to \mathcal{Y}_X$ is the filtered  morphism of modules given by $$\phi_{l|1}(x_1,\ldots, x_l,a\otimes b):= \mu_{s+2}(x_1,\ldots, a,b)$$ and $\mathcal{Y}_Y\otimes \mathcal{A}(Y,X)$ has a module structure as defined in \cite[Section (3c)]{Se:book-fukaya-categ}.
\end{lem}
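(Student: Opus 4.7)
\medskip

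\noindent The plan is to deduce the identification from general properties of the extended filtered Yoneda embedding together with a direct computation of its value on the defining cone triangle of $T_YX$. Roughly, $\widetilde{\mathcal{Y}_{\mathcal{A}}}$ is a filtered $A_{\infty}$-functor from $FTw\mathcal{A}$ to $F\md_{\mathcal{A}}$, so it carries filtered mapping cones in $FTw\mathcal{A}$ to filtered mapping cones in $F\md_{\mathcal{A}}$ (up to a filtered $0$-quasi-isomorphism); applied to $T_YX=\mathrm{Cone}(\xi)$ this gives $\widetilde{\mathcal{Y}}_{T_YX}\cong \mathrm{Cone}(\widetilde{\mathcal{Y}}_{\xi})$, and the task then reduces to identifying the source, target, and action of $\widetilde{\mathcal{Y}}_{\xi}$ with those of $\phi$.

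First I would record the description of $\widetilde{\mathcal{Y}}_{(\overline{L},q)}$ on an object $Z\in\Ob(\mathcal{A})$: by construction $\widetilde{\mathcal{Y}}_{(\overline{L},q)}(Z) = FTw\mathcal{A}(\mathcal{I}_{\mathcal{A}}(Z),(\overline{L},q))$, with module structure coming from the $A_\infty$-operations $\mu^{Tw}$ of $FTw\mathcal{A}$. Applied to the trivially twisted complex $Y\otimes \mathcal{A}(Y,X)$ (whose differential is $e_Y\otimes \mu_1$), one writes any morphism $\mathcal{I}_{\mathcal{A}}(Z)\to Y\otimes \mathcal{A}(Y,X)$ as $\sum_i y_i\otimes a_i$ with $\{a_i\}$ a basis of $\mathcal{A}(Y,X)$ and $y_i\in \mathcal{A}(Z,Y)$. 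Because the differential on $Y\otimes\mathcal{A}(Y,X)$ factors through the strict unit $e_Y$, strict unitality of $\mathcal{A}$ forces the deformed operations $\mu^{Tw}$ to collapse: only the $\mu_2$-contractions with $e_Y$ survive, and a direct computation shows that the resulting module structure is precisely the standard tensor module $\mathcal{Y}_Y\otimes \mathcal{A}(Y,X)$ in the sense of Seidel. This gives a canonical filtered isomorphism
\[
\widetilde{\mathcal{Y}}_{Y\otimes \mathcal{A}(Y,X)} \;\cong\; \mathcal{Y}_Y\otimes \mathcal{A}(Y,X).
\]
Similarly (and more trivially) $\widetilde{\mathcal{Y}}_X=\mathcal{Y}_X$.

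Next I would compute $\widetilde{\mathcal{Y}}_{\xi}$. Since $\widetilde{\mathcal{Y}_{\mathcal{A}}}=\mathcal{I}_{\mathcal{A}}^*\circ \mathcal{Y}_{FTw\mathcal{A}}$, the component $(\widetilde{\mathcal{Y}}_{\xi})_{l|1}(x_1,\dots,x_l,y)$ equals $\mu^{Tw}_{l+1}(x_1,\dots,x_l,y,\xi)$. Writing $y=\sum_i y_i\otimes a_i$ and recalling that $\xi$ is the morphism whose $i$-th component is $a_i\in \mathcal{A}(Y,X)$, the matrix expansion of $\mu^{Tw}_{l+1}$ yields only two kinds of summands: those that insert differentials from $Y\otimes\mathcal{A}(Y,X)$ and those that directly feed into $\mu_{l+2}$. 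The former all involve an $e_Y$, hence vanish by strict unitality for every $l\geq 1$, while the latter give $\sum_i \mu_{l+2}(x_1,\dots,x_l,y_i,a_i)=\mu_{l+2}(x_1,\dots,x_l,a,b)$ in the notation of the statement, where $a\otimes b$ is the element $y$. This is exactly $\phi_{l|1}(x_1,\dots,x_l,a\otimes b)$, so under the identification above one obtains $\widetilde{\mathcal{Y}}_{\xi}=\phi$.

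Finally, I would assemble the pieces. The extended filtered Yoneda embedding applied to the strict triangle $Y\otimes \mathcal{A}(Y,X)\xrightarrow{\xi}X\to T_YX$ in $FTw\mathcal{A}$ yields a strict triangle in $F\md_{\mathcal{A}}$ with first two terms $\mathcal{Y}_Y\otimes \mathcal{A}(Y,X)\xrightarrow{\phi}\mathcal{Y}_X$, forcing $\widetilde{\mathcal{Y}}_{T_YX}\cong \mathrm{Cone}(\phi)$ as filtered $A_\infty$-modules. The main technical obstacle in this plan is Step~2, namely the careful bookkeeping needed to verify that the $A_\infty$-module operations on $\widetilde{\mathcal{Y}}_{Y\otimes \mathcal{A}(Y,X)}$ coincide on the nose with those of the Seidel-type tensor module $\mathcal{Y}_Y\otimes \mathcal{A}(Y,X)$; every other step reduces to strict unitality and to the general compatibility of $\widetilde{\mathcal{Y}_{\mathcal{A}}}$ with filtered mapping cones.
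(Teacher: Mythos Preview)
Your proposal is correct and follows essentially the same approach as the paper: both identify $\widetilde{\mathcal{Y}}_{Y\otimes\mathcal{A}(Y,X)}$ with the Seidel-type tensor module, compute the image of $\xi$ under the extended Yoneda embedding to be $\phi_{l|1}(x_1,\dots,x_l,a\otimes b)=\mu_{l+2}(x_1,\dots,x_l,a,b)$, and then invoke compatibility with cones. The paper's proof is terser, writing $\xi=\sum_i a_i\otimes\psi_{a_i}$ and computing directly via the $\lambda$-map, while you spell out more explicitly the role of strict unitality in collapsing the $\mu^{Tw}$-deformations and in matching the module structures; but the content of the argument is the same.
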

\begin{proof}
	It is easy to see, that as an element of $FTw(Y\otimes \mathcal{A}(Y,X), X) = \mathcal{A}(Y,X)\otimes \text{Lin}(A(Y,X),\Lambda)$, the morphism $\xi$ corresponds to the element $$\sum_{i=1}^na_i\otimes \psi_{a_i}$$ where $a_i\in \mathcal{A}(Y,X)$ is a basis element as above, and $\psi_{a_i}\colon A(Y,X)\to \Lambda$ is the map sending $a_i$ to $1$ and all the other basis elements to $0$. Under the Yoneda embedding, i.e. as a morphism of $A_\infty$-modules $\mathcal{Y}_Y\otimes \mathcal{A}(Y,X)\to \mathcal{Y}_X$, $\xi$ becomes the map \begin{align*}
		\phi_{l|1}(x_1,\ldots, x_l,b\otimes c) & = \sum_{i=1}^n \psi_{a_i}(c)\lambda(a_i)_{l|1}(x_1,\ldots, x_l,b)\\ & = \sum_{i=1}^n \psi_{a_i}(c)\mu_{l+2}(x_1,\ldots, x_l,b,a_i) = \mu_{l+2}(x_1,\ldots, x_l,b,c).
	\end{align*}
	This ends the proof.
\end{proof}
\newcommand{\Tau}{\mathcal{T}}
We can easily extend this construction to the case where $X$ is itself a filtered twisted complex over $\mathcal{A}$. In this case, under the extended Yoneda embedding, $X$ corresponds to a filtered $A_\infty$-module $\mathcal{M}$ and $T_YX$ corresponds to the $0$-cone of the full contraction map $$\phi\colon \mathcal{Y}_Y\otimes \mathcal{M}(Y)\to \mathcal{M}$$ (cfr. \cite[Section (5a)]{Se:book-fukaya-categ}). We will write the above cone as $$\Tau_Y\mathcal{M}:= \text{Cone}(\phi).$$ The analogous of the lemma above holds in this case too.

\begin{lem}
	Let $Y$ be an object of $\mathcal{A}$ and $X$ be a filtered twisted complex over $\mathcal{A}$. Then there is a filtered quasi-isomorphism of $A_\infty$-modules $$\widetilde{\mathcal{Y}}_{T_YX} \to \Tau_Y\widetilde{\mathcal{Y}}_X.$$
\end{lem}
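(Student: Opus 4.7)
The plan is to reduce this to the previous lemma by leveraging the fact that the filtered extended Yoneda embedding $\widetilde{\mathcal{Y}} \colon FTw\mathcal{A} \to F\md_{\mathcal{A}}$ is a strictly unital, filtered $A_\infty$-functor, and hence commutes (up to filtered quasi-isomorphism) with the formation of filtered mapping cones. First, I would promote the construction of $T_Y(-)$ from the case of objects of $\mathcal{A}$ to the case of twisted complexes: given a filtered twisted complex $X$, define the twisted complex $Y \otimes FTw\mathcal{A}(Y,X)$ exactly as in \S\ref{filtwistings} (with $\mathcal{A}(Y,X)$ replaced by $FTw\mathcal{A}(Y,X)$, where the latter is treated as a filtered graded vector space equipped with its differential), together with the evaluation morphism $\xi \colon Y \otimes FTw\mathcal{A}(Y,X) \to X$ in $FTw\mathcal{A}$ obtained as the image of the identity under the canonical identification $FTw\mathcal{A}(Y \otimes FTw\mathcal{A}(Y,X), X) \cong \mathrm{Lin}(FTw\mathcal{A}(Y,X), FTw\mathcal{A}(Y,X))$. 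By definition $T_Y X = \mathrm{Cone}(\xi)$.

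Next I would establish the key identification on the module side: the filtered $A_\infty$-module $\widetilde{\mathcal{Y}}_{Y \otimes FTw\mathcal{A}(Y,X)}$ is canonically filtered isomorphic to $\mathcal{Y}_Y \otimes \mathcal{M}(Y)$, where $\mathcal{M} := \widetilde{\mathcal{Y}}_X$ and the right-hand side is endowed with the module structure of \cite[Section (3c)]{Se:book-fukaya-categ}. This is essentially formal: $\mathcal{M}(Y) = FTw\mathcal{A}(\mathcal{I}_\mathcal{A}(Y), X)$ by construction of $\widetilde{\mathcal{Y}}$, and the tensor product construction for filtered twisted complexes is compatible with the tensor product of Yoneda modules with a filtered graded vector space because the Yoneda embedding is additive on the direct-sum-of-shifts structure of $\mathcal{A}^{\oplus}$.

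Under this identification, I would then verify that $\widetilde{\mathcal{Y}}_1(\xi)$ is exactly the full contraction map $\phi \colon \mathcal{Y}_Y \otimes \mathcal{M}(Y) \to \mathcal{M}$ appearing in the definition of $\Tau_Y \mathcal{M}$. This step is a direct computation of matrix coefficients, literally identical to the one carried out in the preceding lemma, but performed over the filtered twisted complex $X$ rather than over an object of $\mathcal{A}$; it uses only strict unitality of $\mathcal{A}$ and the definition of $\mu^{Tw}$.

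Finally, since $\widetilde{\mathcal{Y}}$ is a filtered $A_\infty$-functor, the standard argument (the filtered $\lambda$-lemma together with the explicit formula for cones of module morphisms) produces a canonical filtered module morphism
$$
\widetilde{\mathcal{Y}}_{\mathrm{Cone}(\xi)} \longrightarrow \mathrm{Cone}\bigl(\widetilde{\mathcal{Y}}_1(\xi)\bigr)
$$
which is a filtered quasi-isomorphism (it is a $0$-quasi-isomorphism on each row of the mapping cone, so the comparison of the two filtered chain complexes is a filtered quasi-isomorphism by the standard five-lemma-style argument in the filtered setting). Composing with the identifications above yields the desired filtered quasi-isomorphism $\widetilde{\mathcal{Y}}_{T_Y X} \to \Tau_Y \widetilde{\mathcal{Y}}_X$. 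The only nontrivial step is verifying the compatibility between the twisted-complex tensor product $Y \otimes FTw\mathcal{A}(Y,X)$ (equipped with its deformed differential inherited from the matrix of $q_X$) and the module-level tensor product $\mathcal{Y}_Y \otimes \mathcal{M}(Y)$, especially regarding how the differential of $\mathcal{M}(Y)$ interacts with the module structure; this is handled by tracking the finite sums in Remark \ref{finitenesssum}(d) and invoking strict unitality to ensure the chain-level identification does not introduce higher-order corrections beyond those already encoded in $\phi$.
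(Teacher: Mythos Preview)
The paper does not actually supply a proof of this lemma; it states it immediately after remarking that ``the analogous of the lemma above holds in this case too,'' referring to the preceding lemma (for $X$ an object of $\mathcal{A}$) and to \cite[Section (5a)]{Se:book-fukaya-categ}. Your proposal is a correct and reasonably detailed fleshing-out of exactly that analogy: promote the twisting construction to the case $X \in FTw\mathcal{A}$, identify $\widetilde{\mathcal{Y}}$ applied to $Y \otimes FTw\mathcal{A}(Y,X)$ with $\mathcal{Y}_Y \otimes \mathcal{M}(Y)$, check that $\widetilde{\mathcal{Y}}_1(\xi)$ is the full contraction map $\phi$, and use that filtered $A_\infty$-functors send cones to cones up to filtered quasi-isomorphism. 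This is the intended approach and there is no gap.
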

\subsubsection{Functoriality of filtered twisted complexes}
Let $\mathcal{A}$ and $\mathcal{B}$ be filtered $A_\infty$-categories. Recall that, for any $s\geq 0$, we defined the filtered $A_\infty$-category $\text{fun}^{\text{LD};s}(\mathcal{A}, \mathcal{B})$ of $A_\infty$-functors with linear deviation rate $s$ in \S\ref{a:func-LD}.\\
We introduce the following notations: for $N\geq 1$ and $s\geq0$ we write \begin{equation}\label{eq:FTwNQ}
	FTw^N\mathcal{Q}_s= \text{fun}^{\text{LD};s}(FTw^N\mathcal{A}, FTw^N\mathcal{B})
\end{equation} as well as $$FTw\mathcal{Q}_s= \text{fun}^{\text{LD};s}(FTw\mathcal{A}, FTw\mathcal{B})$$ Recall that $FTw^N\mathcal{A}$ is the full $A_\infty$-subcategory ()of the $A_\infty$-category $FTw\mathcal{A}$ of filtered twisted complexes) admitting only twisted complexes of length $\leq N$ as objects (see \S\ref{ap:ftc}). In this section we define filtered $A_\infty$-functors $$FTw^N\colon \text{fun}^{\text{LD};s}(\mathcal{A}, \mathcal{B})\to FTw^N\mathcal{Q}_{Ns}$$ for any $s\geq 0$ and $N\geq 1$. Note that in the target category, the deviation is $Ns$. Unfortunately, these functors do not extend to a functor $FTw$ because the deviation of functors in the image depends on the length of the twisted complexes it is applied to, as will be apparent from the discussion below. The best we can do is to define functors $$\overline{FTw}\colon \text{fun}^{\text{LD};s}(\mathcal{A}, \mathcal{B})\to \overline{FTw\mathcal{Q}_s}$$ where $\overline{FTw\mathcal{Q}_s}$ is the $A_\infty$-category of $A_\infty$-functors from $FTw\mathcal{A}$ to $FTw\mathcal{B}$ such that, for any $N\geq 1$, when restricted to the subcategory $FTw^N\mathcal{A}$ they have linear deviation rate $Ns$.\\

Let $s\geq 0$ and $(\mathcal{F},s)$ be an $A_\infty$-functor with linear deviation $s$. Then $\mathcal{F}$ immediately extends to an $A_\infty$-functor $\mathcal{F}^\Sigma\colon \mathcal{A}^\Sigma\to \mathcal{B}^\Sigma$ with deviation $s$ between the shift completions of $\mathcal{A}$ and $\mathcal{B}$. We define the extension $(\mathcal{F},s)^\oplus\colon \mathcal{A}^\oplus\to \mathcal{B}^\oplus$ to the filtered additive enlargments as follows (we drop the $s$ from the notation, but the induced functor depends on this choice):
\begin{enumerate}
	\item on an object $\overline{L}=\bigoplus_{i=1}^n\Sigma^{r_i}L_i[d_i]$, $\mathcal{F}^\oplus$ acts as \begin{equation}\label{eq:FTw}
		\mathcal{F}^\oplus\left(\bigoplus_{i=1}^n\Sigma^{r_i}L_i[d_i]\right):= \bigoplus_{i=1}^n\Sigma^{r_i-(i-1)s}\mathcal{F}(L_i)[d_i]
	\end{equation}
	\item given objects $\overline{L}_0,\ldots, \overline{L}_d$ of $\mathcal{A}^\oplus$ and morphisms $f^i\in \mathcal{A}^\oplus(\overline{L_{i-1}},\overline{L_i})$ for any $i=1,\ldots, d$, we set $\mathcal{F}^\oplus_d(f^1,\ldots,f^d)\in \mathcal{B}^\oplus(\mathcal{F}\overline{L}_0,\mathcal{F}\overline{L}_d)$ to be the matrix with $(i,j)$-entry equal to $$\sum_{i_1=1}^{|\overline{L}_1|}\cdots \sum_{i_{d-1}=1}^{|\overline{L}_{d-1}|}\mathcal{F}_d\left(f^1_{ii_1},\ldots, f^d_{i_{d-1}j}\right)$$ for $i=1,\ldots, |\overline{L_0}|$ and $j=1,\ldots, |\overline{L_d}|$.
\end{enumerate}
We can then easily extend $\mathcal{F}$ to an $A_\infty$-functor $FTw\mathcal{F}\colon FTw\mathcal{A}\to FTw\mathcal{B}$ by setting $$FTw\mathcal{F}(\overline{L}):= \mathcal{F}^\oplus(\overline{L})\text{  and  } q_{Tw\mathcal{F}(\overline{L})} := \sum_{d\geq 1}\mathcal{F}^\oplus_d(q_{\overline{L}},\ldots, q_{\overline{L}})$$ for any object $(\overline{L},q_{\overline{L}})$ of $FTw\mathcal{A}$, and, given objects $(\overline{L}_0,q_0),\ldots, (\overline{L}_d,q_d)$ of $FTw\mathcal{A}$ and morphisms $f^i\in FTw\mathcal{A}(\overline{L_{i-1}},\overline{L_i})$ for any $i=1,\ldots, d$, we set
$$(FTw\mathcal{F})_d(f^1,\ldots, f^d) := \sum_{k_0,\ldots, k_d\geq 0}\mathcal{F}^\oplus_{d+k_0+\cdots + k_d}\left(q_0^{\otimes k_0}, f^1,q_1^{\otimes k_1},\ldots, q_{d-1}^{\otimes k_{d-1}},f^d, q_d^{\otimes k_d}\right)$$
\begin{rem}
	\begin{enumerate}
		\item The sums in the definition of $q_{Tw\mathcal{F}(\overline{L})}$ and $FTw\mathcal{F}_d$ are finite. 
		\item The $s$-shifts in the image of $\overline{L}$ in (\ref{eq:FTw}) is essential in order for the image $q_{Tw\mathcal{F}(\overline{L})}$ of $q_{\overline{L}}$ to be at filtration level $\leq 0$, as we prove in the next lemma. Moreover, we remark here again that the induced functors $\mathcal{F}^\oplus$ and $FTw\mathcal{F}$ obviously depend on the choice of deviation $s$, although this fact is not explicitly reflected by the notation.
	\end{enumerate}
\end{rem}
\begin{lem}
	Let $(\mathcal{F},s)$ be an object of $\text{fun}^{\text{LD};s}(\mathcal{A}, \mathcal{B})$. Let $(\overline{L},q_{\overline{L}})$ be a filtered twisted complex. Then its image $(FTw\mathcal{F}(\overline{L}),q_{Tw\mathcal{F}(\overline{L})})$ under $FTw\mathcal{F}$ is a filtered twisted complex as well.
\end{lem}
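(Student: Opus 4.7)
The plan is to check each of the four defining conditions of Definition~\ref{defFTW} for $(FTw\mathcal{F}(\overline{L}), q_{Tw\mathcal{F}(\overline{L})})$, noting that three of them are essentially structural and the only nontrivial point is the filtration estimate, which is precisely where the shifts $-(i-1)s$ in formula~\eqref{eq:FTw} are designed to do their work.

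First I would dispense with the routine conditions. That $FTw\mathcal{F}(\overline{L})$ is an object of $\mathcal{B}^{\oplus}$ is built into the definition of $\mathcal{F}^{\oplus}$. Degree $1$ for the total differential follows because each $\mathcal{F}_{d}$ raises cohomological degree by $1-d$, so each summand $\mathcal{F}^{\oplus}_{d}(q_{\overline{L}},\ldots,q_{\overline{L}})$ applied to a tuple of degree-$1$ entries produces a morphism of degree $d + (1-d) = 1$. Strict upper triangularity is preserved by the matrix formula defining $\mathcal{F}^{\oplus}_{d}$ since any nonzero entry $\mathcal{F}_{d}(q_{ii_{1}},\ldots,q_{i_{d-1}j})$ requires $i<i_{1}<\cdots<i_{d-1}<j$. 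The Maurer--Cartan identity for $q_{Tw\mathcal{F}(\overline{L})}$ reduces, after expanding $\mu^{\oplus}_{d}$ entrywise and grouping by the number of $q_{\overline{L}}$-insertions, to applying the $A_{\infty}$-relations for $\mathcal{F}$ to tuples of $q_{\overline{L}}$'s, combined with the Maurer--Cartan identity $\sum_{d\geq 1}\mu^{\oplus}_{d}(q_{\overline{L}},\ldots,q_{\overline{L}})=0$; this is the same algebraic manipulation as in the unfiltered case (cf.~\cite[(3.21)]{Se:book-fukaya-categ}) and does not interact with the filtration at all.

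The hard part, and the whole point of inserting the shifts $-(i-1)s$ in~\eqref{eq:FTw}, is to verify that $q_{Tw\mathcal{F}(\overline{L})}$ lies at filtration level $\leq 0$. Writing $\overline{L}=\bigoplus_{i=1}^{n}\Sigma^{r_{i}}L_{i}[d_{i}]$ and using the definition of $\mathcal{A}^{\Sigma}$, the hypothesis that $q_{\overline{L}}\in (\mathcal{A}^{\oplus})^{\leq 0}(\overline{L},\overline{L})$ amounts to saying that each entry $q_{ij}$, regarded as an element of $\mathcal{A}(L_{i},L_{j})$, has filtration level $\leq r_{i}-r_{j}$. I would fix $1\leq i<j\leq n$ and consider a representative summand
\[
\mathcal{F}_{d}(q_{i\,i_{1}},q_{i_{1}i_{2}},\ldots,q_{i_{d-1}\,j}),\qquad i<i_{1}<\cdots<i_{d-1}<j,
\]
of the $(i,j)$-entry of $\mathcal{F}^{\oplus}_{d}(q_{\overline{L}},\ldots,q_{\overline{L}})$. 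The sum of the filtration levels of its inputs, viewed in the unshifted $\mathcal{A}(L_{\bullet},L_{\bullet})$'s, telescopes to $r_{i}-r_{j}$. Because $\mathcal{F}$ has linear deviation rate $s$ (see~\S\ref{a:func-LD}), the output lies in $\mathcal{B}^{r_{i}-r_{j}+d_{R}s}(\mathcal{F}L_{i},\mathcal{F}L_{j})$, where $d_{R}$ is the reduced length of the tuple $(L_{i},L_{i_{1}},\ldots,L_{i_{d-1}},L_{j})$.

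It remains to translate this back into filtration levels in $\mathcal{B}^{\Sigma}(\Sigma^{r_{i}-(i-1)s}\mathcal{F}L_{i}[d_{i}],\Sigma^{r_{j}-(j-1)s}\mathcal{F}L_{j}[d_{j}])$. By the definition $\mathcal{A}^{\Sigma}(\Sigma^{r}L_{0},\Sigma^{s}L_{1})=\Sigma^{s-r}\mathcal{A}(L_{0},L_{1})$, the filtration level of the same element in the shifted hom-space is obtained by adding the quantity
\[
(r_{j}-(j-1)s)-(r_{i}-(i-1)s)=r_{j}-r_{i}-(j-i)s.
\]
Adding this to the bound $r_{i}-r_{j}+d_{R}s$ just obtained yields the filtration estimate $(d_{R}-(j-i))s$. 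Since the chain $i<i_{1}<\cdots<i_{d-1}<j$ forces $j-i\geq d$, while the reduced length always satisfies $d_{R}\leq d$, we get $(d_{R}-(j-i))s\leq 0$, which is the desired bound. This is the step where the $-(i-1)s$ offsets are not just cosmetic: any smaller offset would leave a positive residue proportional to $s$ and break filtration-preservation. Summing over $d$ (a finite sum, by Remark~\ref{finitenesssum}d) and over the choice of intermediate indices gives the same bound for the whole $(i,j)$-entry, completing the verification and hence the lemma.
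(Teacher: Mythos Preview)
Your proof is correct and follows essentially the same approach as the paper's. The paper likewise dismisses the Maurer--Cartan condition as routine and concentrates on the filtration bound, arguing via the chain $j-i\geq d$ and the shift isomorphism; the only cosmetic difference is that the paper bounds the deviation by $ds$ directly (using $d_R\leq d$ implicitly) rather than carrying $d_R$ explicitly as you do.
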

\begin{rem}
	The fact that the image $FTw\mathcal{F}(\overline{L})$ preserves the structure of a filtered twisted complex although $\mathcal{F}$ is not in general a filtered $A_\infty$-functor may seem a bit counterintuitive, but might be taken as an hint that functors with linear deviation behave well with respect to filtered homological constructions and are hence the right setting.
\end{rem}
\begin{proof}
	It is easy to see that $q_{Tw\mathcal{F}(\overline{L})}$ satisfies the Maurer-Cartan equation. It remains to check that it lies at non-positive filtration level. We write $q$ for $q_{\overline{L}}$. Let $d\in \{1,\ldots, |\overline{L}|-1\}$ and consider the $(i,j)$-entry of $\mathcal{F}^\oplus_d(q,\ldots, q)$, which equals $$\sum_{i<i_1<\cdots < i_{d-1}<j} \mathcal{F}_d(q_{ii_1},\ldots, q_{i_{d-1}j}).$$ Assume that $\mathcal{F}_d(q_{ii_1},\ldots, q_{i_{d-1}j})$ is non-zero for some $i_1,\ldots, i_{d-1}$; in particular, $j-i\geq d$, as otherwise, by triangularity of $q$, this term would be zero. Note that $\mathcal{F}_d(q_{ii_1},\ldots, q_{i_{d-1}j})$ lies in $$(\mathcal{B}^\Sigma)^{\leq ds} (\Sigma^{r_i}\mathcal{F}(L_i)[d_i], \Sigma^{r_j}\mathcal{F}(L_j)[d_j])\subset (\mathcal{B}^\Sigma)^{\leq (j-i)s} (\Sigma^{r_i}\mathcal{F}(L_i)[d_i], \Sigma^{r_j}\mathcal{F}(L_j)[d_j])$$ which is isomorphic to $$ (\mathcal{B}^\Sigma)^{\leq 0} (\Sigma^{r_i-(i-1)s}\mathcal{F}(L_i)[d_i], \Sigma^{r_j-(j-1)s}\mathcal{F}(L_j)[d_j]).$$ By our definition of the action of $FTw\mathcal{F}$ on objects of $FTw\mathcal{A}$, it follows directly that $q_{Tw\mathcal{F}(\overline{L})}$ lies at vanishing filtration level.
\end{proof}
\begin{rem}
	Since the induced functor $FTw\mathcal{F}$ preserves lengths of twisted complexes by definition, it restricts to a functor $Tw^N\mathcal{A}\to Tw^N\mathcal{B}$. We denote this functor by $FTw^N\mathcal{F}$, i.e. highlighting the $N$, because of the following proposition.
\end{rem}
\begin{prop}
	Let $N\geq 1$ and let $(\mathcal{F},s)$ be an object of $\text{fun}^{\text{LD};s}(\mathcal{A}, \mathcal{B})$. Then the induced functor $$Tw^N\mathcal{F}\colon FTw^N\mathcal{A}\to FTw^N\mathcal{B}$$ is an $A_\infty$-functor with deviation rate $N\cdot s$, that is, an object of $FTw^NQ_{Ns}$ (see (\ref{eq:FTwNQ})).
\end{prop}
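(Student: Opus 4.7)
The proof will mirror and generalize the strategy already used to show that $FTw\mathcal{F}(\overline{L})$ is itself a filtered twisted complex (the case $d=0$), in which triangularity of the differential $q_{\overline{L}}$ combined with the pre-shifts $-(i-1)s$ built into the definition of $\mathcal{F}^\oplus$ on objects exactly absorbed the filtration growth coming from the linear deviation of $\mathcal{F}$. The aim is to extend that bookkeeping across transitions induced by the morphisms $f^t$. Fix a tuple $\vec{L}=(\overline{L}_0,\ldots,\overline{L}_d)$ in $FTw^N\mathcal{A}$ of reduced length $d_R(\vec{L})$ and filtered morphisms $f^t\in FTw\mathcal{A}^{\leq\alpha_t}(\overline{L}_{t-1},\overline{L}_t)$, writing $\alpha:=\sum_{t=1}^d\alpha_t$. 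By the explicit formula,
\[
(FTw\mathcal{F})_d(f^1,\ldots,f^d)=\sum_{k_0,\ldots,k_d\geq 0}\mathcal{F}^\oplus_{D}\bigl(q_0^{\otimes k_0},f^1,q_1^{\otimes k_1},\ldots,f^d,q_d^{\otimes k_d}\bigr),
\]
with $D=d+k_0+\cdots+k_d$; by strict upper-triangularity of each $q_t$ and $|\overline{L}_t|\leq N$ the sum is finite, and each matrix entry decomposes into $\mathcal{F}_D$ applied to a path of component indices $j_0,j_1,\ldots,j_D$ which is strictly increasing within each $\overline{L}_t$ and jumps across complexes precisely at the positions occupied by the $f^t$'s.

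The second step is a filtration estimate for a single summand. Summing the bounds on the filtration levels of the successive inputs in $\mathcal{A}^\Sigma$ telescopes, thanks to the compatibility between the shifts $r_{t,j}$ and the filtration structure on morphism spaces of $\mathcal{A}^\Sigma$, to
\[
\alpha+r_{0,j_0}-r_{d,j_D}
\]
in $\mathcal{A}(L_{0,j_0},L_{d,j_D})$. Applying $\mathcal{F}_D$ adds at most $d_R^{\mathcal{A}}(\mathrm{path})\cdot s$, where $d_R^{\mathcal{A}}(\mathrm{path})$ is the reduced length of the corresponding sequence of underlying $\mathcal{A}$-objects. Translating the result back into the target space
\[
\mathcal{B}^\Sigma\bigl(\Sigma^{r_{0,j_0}-(j_0-1)s}\mathcal{F}(L_{0,j_0}),\,\Sigma^{r_{d,j_D}-(j_D-1)s}\mathcal{F}(L_{d,j_D})\bigr),
\]
the entry acquires filtration level at most
\[
\alpha+\bigl(d_R^{\mathcal{A}}(\mathrm{path})+j_0-j_D\bigr)\,s.
\]

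The combinatorial heart of the argument is to bound $d_R^{\mathcal{A}}(\mathrm{path})+j_0-j_D$ by $d_R(\vec{L})\cdot N$. Group the path into its $d+1$ intra-complex segments. Within a segment of length $k_t$ inside $\overline{L}_t$ the positions strictly increase, so the contribution of that segment to $d_R^{\mathcal{A}}(\mathrm{path})$ is absorbed by the telescoping of the $(j_\bullet-1)s$ shifts built into $\mathcal{F}^\oplus$; this is the same cancellation already exploited in the $d=0$ case. For a transition across $f^t$, if $\overline{L}_{t-1}$ and $\overline{L}_t$ coincide up to a shift (so the transition contributes nothing to $d_R(\vec{L})$), the two endpoints of the transition are identified via that shift and contribute $0$ to $d_R^{\mathcal{A}}(\mathrm{path})$ beyond what is absorbed; if instead $\overline{L}_{t-1}$ and $\overline{L}_t$ are genuinely distinct in $FTw^N\mathcal{A}$, the transition contributes at most $N$ to the overall estimate since component indices lie in $\{1,\ldots,N\}$. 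Summing over all $d+1$ segments and $d$ transitions gives the desired bound, and feeding it back into Step 2 yields each matrix entry at filtration level $\leq\alpha+d_R(\vec{L})\cdot Ns$; since the filtration on $\mathcal{A}^\oplus$-matrices is the maximum over entries and the sum over $(k_0,\ldots,k_d)$ preserves filtrations, this proves the stated LD-bound. The $A_\infty$-relations for $FTw^N\mathcal{F}$ follow formally from those of $\mathcal{F}$ together with the Maurer–Cartan identities for each $q_t$, exactly as in the unfiltered situation.

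The main obstacle is the combinatorial step: the clean absorption of within-complex deviation by the pre-shifts $(i-1)s$, already present for $d=0$, must be combined with a careful accounting at transitions between consecutive twisted complexes, distinguishing shift-equivalent transitions (which contribute nothing to $d_R(\vec{L})$ and whose endpoints may be identified to kill any reduced-length contribution in $\mathcal{A}$) from genuinely distinct ones (for which an $Ns$ budget is available per unit of $d_R(\vec{L})$). Everything else — finiteness of the sum, $A_\infty$-compatibility, and matching with the shift functor on $FTw^N\mathcal{B}$ — follows from the structural properties of $\mathcal{A}^\oplus$ and $FTw$ already established above.
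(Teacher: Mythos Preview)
Your bookkeeping in the first two steps is correct and produces the right intermediate bound $\alpha+(d_R^{\mathcal{A}}(\mathrm{path})+j_0-j_D)\,s$ for a single matrix entry. The gap is in the combinatorial step. Your claim that when $\overline{L}_{t-1}$ and $\overline{L}_t$ coincide up to a shift the transition ``endpoints are identified via that shift and contribute $0$ beyond what is absorbed'' is false: the entry $f^t_{b_{t-1},a_t}$ may jump from any position $b_{t-1}$ in $\overline{L}_{t-1}$ to any position $a_t$ in $\overline{L}_t$, and a backward jump $a_t<b_{t-1}$ is not absorbed by the pre-shifts. Concretely, take $d=1$, $N=2$, $\overline{L}_0=\overline{L}_1=L_1\oplus L_2$ with $L_1,L_2$ not shift-equivalent and nontrivial $q_{12}$, and look at the summand $\mathcal{F}_3(q_{12},f^1_{21},q_{12})$. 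The underlying $\mathcal{A}$-path is $L_1\to L_2\to L_1\to L_2$ with $d_R^{\mathcal{A}}(\mathrm{path})=3$, $j_0=1$, $j_D=2$, so your own formula gives a filtration shift of $2s$; but $d_R(\vec{L})=0$, so the bound $d_R(\vec{L})\cdot Ns=0$ you aim for is violated. In your segment decomposition this single transition contributes $(b_0-a_1)+1=2$, not $0$.

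The paper does not attempt the $d_R$-refinement at all. It singles out the extremal entry (the $(1,n^d)$ entry of the summand with $k_t=n^t-1$ for every $t$), bounds the $\mathcal{F}_D$-shift crudely by $Ds$, and after subtracting the $(n^d-1)s$ pre-shift obtains $\sum_{t=0}^{d-1}n^t\,s\leq d\cdot Ns$. That is the $d$-bound, not the $d_R$-bound, and it is all the paper's argument actually yields. Your Steps~1--2 reproduce exactly this if you drop the transition analysis and use $d_R^{\mathcal{A}}(\mathrm{path})\leq D$ together with $b_t-a_t\geq k_t$.
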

\begin{proof}
	Consider objects $(\overline{L^0},q_0),\ldots, (\overline{L^d},q_d)$ of $FTw^N\mathcal{A}$ of length $n^0,\ldots, n^d\leq N$ respectively, and morphisms $f^i\in FTw\mathcal{A}^{\alpha_i}(\overline{L_{i-1}},\overline{L_i})$ for any $i=1,\ldots,d$. The "most shifted" (in general) non-zero contribution to the matrix $$FTw\mathcal{F}_d(f^1,\ldots, f^d)\in FTw\mathcal{B}(Tw\mathcal{F}\overline{L_0},Tw\mathcal{F}\overline{L_d})$$ is the $(1,n^d)$ entry of the summand $$\mathcal{F}^\oplus_{d + (n^0-1)+\cdots + (n^d-1)}(q_0^{n^0-1},f^1,\ldots, f^d,q_d^{n^d-1}).$$
	This term lies at filtration level $$\leq \sum_{i=1}^d\alpha_i + \sum_{j=0}^d(n^j-1)s+ds = \sum_{i=1}^d\alpha_i + \sum_{j=0}^dn^js-s$$ in $\mathcal{B}^\Sigma(\Sigma^{r_1}\mathcal{F}(L^0_1),\Sigma^{r_{n^d}}\mathcal{F}(L^d_{n^d}))$, i.e. at level $$\leq \sum_{i=1}^d\alpha_i + \sum_{j=0}^{d-1}n^js$$ in 
	$\mathcal{B}^\Sigma(\Sigma^{r^0_1}\mathcal{F}(L^0_1),\Sigma^{r^d_{n^d}-(n^d-1)s}\mathcal{F}(L^d_{n^d}))$. As by assumption $n^0,\ldots, n^d\leq N$, we have $\sum_{j=0}^{d-1}n^js\leq dNs$ and the claim follows.
	
\end{proof}
\begin{rem}
	The fact that $FTw\mathcal{F}$ acts on twisted complexes shifting summand by a quantity depending on the order is needed in order to have induced functors with shift that is sublinear on the order. The fact that the shift depend on the order seems counterintuitive, but it is in line with upper triangularity of differentials, which of course depends on the order of the summands.
\end{rem}

So far, we defined the $A_\infty$-functor $FTw$ just at the level of objects. We now define the first order term $FTw_1$. Let $(\mathcal{F},s)$ and $(\mathcal{G},s)$ be objects of $\text{fun}^{\text{LD};s}(\mathcal{A}, \mathcal{B})$ and consider an $A_\infty$-pre-natural transformation $T$ from $(\mathcal{F},s)$ to $(\mathcal{G},s)$ with shift $\rho$ (see \S\ref{a:func-LD} for the definition). We define the '$A_\infty$-pre-natural transformation'\footnote{The quotation marks are there as this is formally a natural transformation between functors that are not well-defined.} $FTw_1T$ with $d$th term, $d\geq 0$, $$(FTw_1T)_d(f_1,\ldots, f_d):= \sum_{k_0,\ldots, k_d\geq 0}T^\oplus_{d+k_0+\cdots + k_d}\left(q_0^{\otimes k_0}, f^1,q_1^{\otimes k_1},\ldots, q_{d-1}^{\otimes k_{d-1}},f^d, q_d^{\otimes k_d}\right)$$ where $T^\oplus$ is the matrixwise extension of $T$, defined analogously to the case of functors above. As in the case of functors, we can restrict to subcategories of twisted complexes of length $\leq N$ and define $FTw_1^NT$ for any $N\geq 1$. Taking filtrations into account, we have the following result.
\begin{lem}
	Let $(\mathcal{F},s)$ and $(\mathcal{G},s)$ be objects of $\text{fun}^{\text{LD};s}(\mathcal{A}, \mathcal{B})$ and consider an $A_\infty$-pre-natural transformation $T\colon (\mathcal{F},s)\to(\mathcal{G},s)$ of shift $\rho$ between them. Let $N\geq 1$. Then $FTw^N_1T$ is an $A_\infty$-pre-natural transformation between $FTw^N\mathcal{F}$ and $FTw^N\mathcal{G}$ of shift $\leq \rho$, that is $$FTw^N_1T\in FTw^NQ_{Ns}(FTw^N\mathcal{F},FTw^N\mathcal{G})^\rho.$$
\end{lem}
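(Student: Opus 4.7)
The plan is to follow the same pattern as the proof of the preceding proposition on $FTw^N\mathcal{F}$, simply tracking the additional shift $\rho$ coming from the pre-natural transformation $T$. Take composable morphisms $f^i\in FTw^N\mathcal{A}^{\alpha_i}(\overline{L_{i-1}},\overline{L_i})$, where $\overline{L_j}=\bigoplus_{k=1}^{n^j}\Sigma^{r^j_k}L^j_k[d^j_k]$ has length $n^j\leq N$. The $(a,b)$-entry of $(FTw_1^N T)_d(f^1,\ldots,f^d)$ is a finite sum of matrix-entries of operations of the form
$$T^\oplus_{D}\bigl(q_0^{\otimes k_0},f^1,q_1^{\otimes k_1},\ldots,f^d,q_d^{\otimes k_d}\bigr), \quad D:=d+k_0+\cdots+k_d,$$
and, by strict upper-triangularity of each $q_j$, only terms with $k_j\leq n^j-1\leq N-1$ contribute. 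Thus in the worst case $D\leq d+(d+1)(N-1)$.

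First, I would isolate the ``most shifted'' contribution, namely the $(1,n^d)$-entry (all other entries are more favourable, by the same argument). Each $q_j$-entry lies at filtration $\leq 0$ in $\mathcal{A}^\Sigma$; since $T_D$ shifts filtrations by at most $\rho+Ds$ (the assumption on $T$ as a pre-natural transformation of shift $\rho$ between LD-functors of deviation rate $s$), the resulting element sits at filtration
$$\leq \sum_{i=1}^{d}\alpha_i+\rho+Ds$$
in $\mathcal{B}^\Sigma\bigl(\Sigma^{r^0_1}\mathcal{F}(L^0_1),\Sigma^{r^d_{n^d}}\mathcal{G}(L^d_{n^d})\bigr)$. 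Next I would convert this into the genuinely relevant hom-space, namely the one whose source is the first summand of $FTw^N\mathcal{F}(\overline{L_0})$, which is $\Sigma^{r^0_1}\mathcal{F}(L^0_1)$, and whose target is the $n^d$-th summand of $FTw^N\mathcal{G}(\overline{L_d})$, which is $\Sigma^{r^d_{n^d}-(n^d-1)s}\mathcal{G}(L^d_{n^d})$. The change of target shift simply subtracts $(n^d-1)s$ from the filtration level, yielding the bound
$$\leq \sum_{i=1}^{d}\alpha_i+\rho+Ds-(n^d-1)s.$$

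Finally I would estimate $D-(n^d-1)=d+\sum_{j=0}^{d-1}k_j\leq d+d(N-1)=dN$, which is the only genuinely combinatorial step in the argument. This gives the bound
$$\leq \sum_{i=1}^{d}\alpha_i+\rho+dNs$$
on the filtration of the $(1,n^d)$-entry in the correct hom-space, which is exactly what is required for $(FTw^N_1T)_d$ to have filtration shift $\leq\rho+dNs$, i.e.\ for $FTw^N_1T$ to be a pre-natural transformation of shift $\leq\rho$ in the category $FTw^N\mathcal{Q}_{Ns}$ of LD-functors with deviation rate $Ns$. The fact that $FTw^N_1T$ satisfies the pre-natural transformation identities follows from the corresponding identities for $T$ by a matrixwise extension identical to the one used in the preceding proposition, so no new difficulty arises there. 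The main (and essentially only) subtlety is precisely the arithmetic in the last step, where one must trade the apparently growing contribution $Ds$ against the target shift $(n^d-1)s$ built into $FTw^N\mathcal{G}$.
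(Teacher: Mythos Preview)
Your proposal is correct and follows exactly the approach the paper intends: the lemma is stated without proof, as a direct adaptation of the proof of the preceding proposition on $FTw^N\mathcal{F}$, and your argument carries out precisely that adaptation, replacing the functor shift $Ds$ by the pre-natural transformation shift $\rho+Ds$ and then performing the same arithmetic trade-off against the built-in target shift $(n^d-1)s$. Two minor remarks: the equality $D-(n^d-1)=d+\sum_{j=0}^{d-1}k_j$ should strictly be $\leq$ (it is an equality only when $k_d=n^d-1$), and the claim that the $(1,n^d)$-entry is ``most shifted'' is a slight overstatement, since a general $(a,b)$-entry yields the same final bound $dNs$ once one accounts for both the source shift $(a-1)s$ and the target shift $(b-1)s$ in $FTw^N\mathcal{F}(\overline{L_0})$ and $FTw^N\mathcal{G}(\overline{L_d})$; but neither point affects the validity of the argument.
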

\begin{rem}
	In other words, the above lemma tells us that $FTw_1$ sends $A_\infty$-pre-natural transformations with linear deviation $s$ and shift $\leq \rho$ to $A_\infty$-pre-natural transformations with linear deviation $Ns$ and shift $\rho$.
\end{rem}
We define the higher order terms $FTw_d$, $d\geq 2$, to be zero. In particular, we proved the following result, which was announced at the beginning of this section.
\begin{cor}
	For any $N\geq 1$ and $s\geq 0$, the $A_\infty$-functor $$FTw^N\colon \text{fun}^{\text{LD};s}(\mathcal{A}, \mathcal{B})\to \text{fun}^{\text{LD};Ns}(FTw^N\mathcal{A},FTw^N\mathcal{B}) = FTw^N\mathcal{Q}_{Ns}$$ is filtered.
\end{cor}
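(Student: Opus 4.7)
The proof will essentially assemble the two lemmas proved immediately before the corollary. The plan is to observe that the filtered $A_\infty$-functor property amounts to two separate requirements: $FTw^N$ being a well-defined (strictly unital) $A_\infty$-functor with correct source and target, and its higher order terms $FTw^N_d$, $d \geq 1$, shifting filtrations by the prescribed amount. Since $FTw^N_d$ is defined to vanish for $d \geq 2$, the filtration-preserving requirement is automatic for $d \geq 2$ and needs only to be verified for $d = 1$ (together with the requirement on objects).

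First I will check that $FTw^N$ lands in the correct target category. This is precisely the content of the preceding proposition, which establishes that for each object $(\mathcal{F}, s) \in \text{fun}^{\text{LD};s}(\mathcal{A}, \mathcal{B})$ the induced functor $FTw^N \mathcal{F}$ has linear deviation rate $\leq N s$, and therefore is a bona fide object of $FTw^N \mathcal{Q}_{Ns}$. Strict unitality of $FTw^N\mathcal{F}$ follows from that of $\mathcal{F}$, since the strict unit of a twisted complex $(\overline{L}, q_{\overline{L}})$ is the diagonal matrix with entries $e_{L_i}$ and $\mathcal{F}_1(e_{L_i}) = e_{\mathcal{F}(L_i)}$.

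Next I will verify the filtered $A_\infty$-functor equations. Because the higher order terms $FTw^N_d$ for $d \geq 2$ are set to zero, the only non-trivial relations to check at each order involve $FTw^N_1$; these reduce to the statement that $FTw^N_1$ sends pre-natural transformations $T$ to pre-natural transformations in such a way that $\mu_1$ is respected, which amounts to a direct (if tedious) unpacking of the definition of $(FTw^N_1 T)_d$ as a sum indexed by insertions of the differentials $q_i$ between the $f^j$'s. The filtration-preserving statement at order $d = 1$ is exactly the preceding lemma: a pre-natural transformation $T \in \text{fun}^{\text{LD};s}(\mathcal{A}, \mathcal{B})(\mathcal{F}, \mathcal{G})^{\rho}$ of shift $\leq \rho$ is sent to a pre-natural transformation $FTw^N_1 T$ of shift $\leq \rho$ in $FTw^N \mathcal{Q}_{Ns}(FTw^N \mathcal{F}, FTw^N \mathcal{G})$.

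The only subtle step — and really the reason the construction works at all — is that although the deviation rate grows from $s$ to $Ns$, the shift parameter $\rho$ on morphisms is preserved. This was the main content of the preceding lemma and relies on the normalizing $(i-1)s$-shifts in the definition \eqref{eq:FTw} of $\mathcal{F}^\oplus$ on objects of length $\leq N$, which absorb exactly the extra filtration drift created by iterated insertions of the $q_i$'s. Putting these observations together, $FTw^N$ satisfies all the conditions of a filtered $A_\infty$-functor, completing the proof. I do not expect any genuinely difficult step here: the corollary is really a packaging of the two preceding lemmas together with the triviality of the $FTw^N_d$ for $d \geq 2$.
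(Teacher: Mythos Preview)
Your proposal is correct and matches the paper's approach exactly: the paper does not give a separate proof of this corollary but simply states, after defining $FTw_d = 0$ for $d \geq 2$, that ``In particular, we proved the following result, which was announced at the beginning of this section.'' Your write-up is a faithful unpacking of precisely this assembly of the preceding proposition (on objects) and lemma (on $FTw^N_1$).
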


\bibliography{biblio_approx8.bib}
\end{document}